   \DeclareFontFamily{U}{wncy}{}
    \DeclareFontShape{U}{wncy}{m}{n}{<->wncyr10}{}
    \DeclareSymbolFont{mcy}{U}{wncy}{m}{n}
    \DeclareMathSymbol{\Sha}{\mathord}{mcy}{"58}
\newcommand{\defnword}[1]{\textbf{#1}}
\newcommand{\comment}[1]{}
\newcommand{\on}[1]{\operatorname{#1}}
\DeclareSymbolFontAlphabet{\mathbbl}{bbold}
\DeclareSymbolFontAlphabet{\mathbb}{AMSb}
\numberwithin{equation}{subsubsection}
\newtheorem{introthm}{Theorem}
\newtheorem{proposition}[subsubsection]{Proposition}
\newtheorem{theorem}[subsubsection]{Theorem}
\newtheorem*{thm*}{Theorem}
\newtheorem{lemma}[subsubsection]{Lemma}
\newtheorem*{lem*}{Lemma}
\newtheorem{corollary}[subsubsection]{Corollary}
\theoremstyle{definition}
\newtheorem{definition}[subsubsection]{Definition}
\newtheorem{example}[subsubsection]{Example}
\newtheorem{remark}[subsubsection]{Remark}
\newtheorem{notation}[subsubsection]{Notation}
\newtheorem{construction}[subsubsection]{Construction}
\newtheorem*{assump*}{Assumption}
\newcommand{\Int}{\mathbb{Z}}
\newcommand{\Aff}{\mathbb{A}}
\newcommand{\Field}{\mathbb{F}}
\newcommand{\Rat}{\mathbb{Q}}
\newcommand{\Lie}{\on{Lie}}
\newcommand{\Dieu}{\mathbb{D}}
\newcommand{\Map}{\on{Map}}
\newcommand{\BT}[2][G,\mu]{\mathrm{BT}^{#1}_{#2}}
\newcommand{\Prism}{\mathbbl{\Delta}}
\newcommand{\alphap}{\mbox{$\hspace{0.12em}\shortmid\hspace{-0.62em}\alpha_p$}}
\newcommand{\mup}[1][p]{\mbox{$\raisebox{-0.59ex}
  {$l$}\hspace{-0.18em}\mu\hspace{-0.88em}\raisebox{-0.98ex}{\scalebox{2}
  {$\color{white}.$}}\hspace{-0.416em}\raisebox{+0.88ex}
  {$\color{white}.$}\hspace{0.46em}_{#1}$}{}}
\newcommand{\Rees}{\mathcal{R}}
\newcommand{\defn}{\overset{\mathrm{defn}}{=}}
\newcommand{\colim}{\on{colim}}
\newcommand{\cofib}{\on{cofib}}
\newcommand{\fib}{\on{fib}}
\newcommand{\gr}{\on{gr}}
\newcommand{\Rg}{\mathscr{O}}
\newcommand{\Reg}[1]{\Rg_{#1}}
\newcommand{\Hom}{\on{Hom}}
\newcommand{\Ext}{\on{Ext}}
\newcommand{\Gm}{\mathbb{G}_m}
\newcommand{\Ga}{\mathbb{G}_a}
\newcommand{\dR}{\on{dR}}
\newcommand{\Sig}{\mathfrak{S}}
\newcommand{\pow}[1]{[\vert#1\vert]}
\newcommand{\Mod}[2][ ]{\on{Mod}_{#2}^{#1}}
\newcommand{\Spec}{\on{Spec}}
\newcommand{\Spf}{\on{Spf}}
\newcommand{\et}{\on{\acute{e}t}}
\newcommand{\Fzip}{F\mathrm{Zip}}
\newcommand{\mathhyph}{\text{-}}
\newcommand{\Fil}{\on{Fil}}
\newcommand{\mx}{\mathfrak{m}}
\newcommand{\ad}{\on{ad}}
\newcommand{\GL}{\on{GL}}
\begin{document}
\title{Perfect $F$-gauges and finite flat group schemes}

\author{Keerthi Madapusi}
\address{Keerthi Madapusi\\
Department of Mathematics\\
Maloney Hall\\
Boston College\\
Chestnut Hill, MA 02467\\
USA}
\email{madapusi@bc.edu}

\author{Shubhodip Mondal}
\address{Shubodip Mondal\\
Department of Mathematics\\Purdue University\\ 150 N. University Street\\ West Lafayette, IN 47907, USA
}
\email{mondalsh@purdue.edu}

\begin{abstract}
We show an equivalence of categories, over general $p$-adic bases, between finite locally $p^n$-torsion commutative group schemes and $\Int/p^n\Int$-modules in perfect $F$-gauges of Tor amplitude $[-1,0]$ with Hodge-Tate weights $0,1$. By relating fppf cohomology of group schemes and syntomic cohomology of $F$-gauges, we deduce some consequences: These include the representability of relative fppf cohomology of finite flat group schemes under proper smooth maps of $p$-adic formal schemes, as well as a reproof of a purity result of \v{C}esnavi\v{c}ius-Scholze. We also give a general criterion for a classification in terms of objects closely related to Zink's windows over frames and Lau's divided Dieudonn\'e crystals, and we use this to recover several known classifications, and also give some new ones.
\end{abstract}

\maketitle

\tableofcontents

\section{Introduction}

The goal of this article is to give a classification of finite flat group schemes over general $p$-adic formal bases, as well as to give some applications of this classification.

\subsection{Classification by perfect $F$-gauges}
\label{sub:classification_by_perfect_f_gauges}

To state our first main result, we need the $p$-adic cohomological stacks that arose in recent work of Bhatt-Lurie~\cites{bhatt2022absolute,bhatt2022prismatization,bhatt_lectures} and Drinfeld~\cite{drinfeld2022prismatization}. These authors have shown that one can associate with every $p$-adic formal scheme $X$ a $p$-adic formal stack\footnote{This is actually a \emph{derived} formal stack that is in general not a classical object. We will attempt to ignore this fact in this introduction.} $X^{\mathrm{syn}}$, its \emph{syntomification}, whose coherent cohomology computes the $p$-adic syntomic cohomology of $X$. If $X = \Spf R$ is affine, we will also denote this by $R^{\mathrm{syn}}$.\footnote{We have adopted this notation from the lecture notes of Bhatt~\cite{bhatt_lectures} and it is also employed in~\cite{gmm}.} Perfect complexes on this stack  and its mod-$p^n$ fibers---which are examples of objects known as $F$-\emph{gauges over $X$}---have a naturally defined subset of integers associated with them, which are called the \emph{Hodge-Tate weights}. We prove:

\begin{introthm}
 [Theorem~\ref{thm:main}]  \label{introthm:main} Suppose that $X$ is a $p$-adic formal scheme.
Let $\mathrm{FFG}(X)$ be the category of finite locally free $p$-power torsion commutative group schemes over $X$, and let $\mathsf{P}^{\mathrm{syn}}_{\{0,1\}}(X)$ be the category of perfect \footnote{Here perfect means dualizable object in the category of $F$-gauges.} $F$-gauges over $X$ with Hodge-Tate weights in $\{0,1\}$, with Tor amplitude in $[-1,0]$, and with cohomology sheaves killed by a power of $p$.\footnote{See \S\ref{sec:ffg} in the body of the paper for the precise definition of this category.} Then there is a canonical (covariant) \emph{exact} equivalence of categories
\[
\mathcal{G}:\mathsf{P}^{\mathrm{syn}}_{\{0,1\}}(X)\xrightarrow{\simeq}\mathrm{FFG}(X)
\]
compatible with Cartier duality.\footnote{In the body of the paper, we only state results for $p$-adic formal affine schemes $X = \Spf R$. However, since our constructions and statements will satisfy fpqc and in particular Zariski descent, they globalize immediately to the case of $p$-adic formal schemes, and in fact to $p$-adic formal algebraic stacks.}
\end{introthm}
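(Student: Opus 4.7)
The plan is to reduce to the affine case $X = \Spf R$ using the fpqc descent mentioned in the footnote to the theorem, and then construct $\mathcal{G}$ via prismatic Dieudonn\'e theory as an intermediary. The strategy is to match both sides against the linear-algebra classification of $p$-divisible groups: perfect $F$-gauges on $R^{\mathrm{syn}}$ with Hodge-Tate weights in $\{0,1\}$ and Tor amplitude $[0,0]$ should correspond to Dieudonn\'e realizations of $p$-divisible groups, and relaxing the Tor amplitude to $[-1,0]$ with $p$-power torsion cohomology then captures finite flat group schemes, which are realized as fibers of $p^n$-multiplication on two-term complexes of $p$-divisible groups.

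First, I would construct $\mathcal{G}$ by sending a perfect $F$-gauge $\mathcal{E}\in \mathsf{P}^{\mathrm{syn}}_{\{0,1\}}(X)$ to the fppf sheaf that represents a suitably twisted syntomic cohomology of $\mathcal{E}$, with quasi-inverse sending $G\in\mathrm{FFG}(X)$ to its prismatic Dieudonn\'e complex $\mathcal{M}_{\Prism}(G)$, reinterpreted as a perfect object on $R^{\mathrm{syn}}$ (i.e., as a prismatic $F$-crystal equipped with the Nygaard filtration provided by the Hodge filtration on the Hodge-Tate realization). The Hodge-Tate weight constraint $\{0,1\}$ matches the classical condition that the Hodge filtration on the Dieudonn\'e module has a single nontrivial step, while Tor amplitude $[-1,0]$ encodes the two-term resolution of $G$ by $p$-divisible groups.

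Second, to verify that $\mathcal{G}$ is an equivalence, I would first establish it over quasisyntomic $R$---where prismatic Dieudonn\'e theory directly equates finite flat $p$-power torsion group schemes with admissible prismatic Dieudonn\'e modules---and then bootstrap to arbitrary $p$-adic formal bases by fpqc descent on both sides. The $F$-gauge side descends because $R \mapsto R^{\mathrm{syn}}$ is compatible with fpqc covers and perfect complexes descend on any stack, while the finite flat group scheme side descends classically. The key computational input is to check that the admissibility condition on the prismatic Dieudonn\'e module is captured exactly by the three conditions \emph{perfect}, \emph{Hodge-Tate weights in $\{0,1\}$}, and \emph{Tor amplitude in $[-1,0]$ with $p$-power torsion cohomology}.

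Third, exactness and compatibility with Cartier duality are formal consequences of the construction. The $F$-gauge side carries a natural duality $\mathcal{E}\mapsto \SHom(\mathcal{E},\mathcal{O}\{1\}[1])$ involving the Breuil-Kisin twist $\mathcal{O}\{1\}$, which interchanges Hodge-Tate weights $0$ and $1$; this matches classical Cartier duality because $\mu_{p^n}$ corresponds to $\mathcal{O}\{1\}/p^n$ and $\Int/p^n\Int$ to the constant gauge $\mathcal{O}/p^n$. For exactness, a short exact sequence of $G$'s maps to a fiber sequence of Dieudonn\'e complexes whose terms automatically lie in $\mathsf{P}^{\mathrm{syn}}_{\{0,1\}}(X)$, and conversely any fiber sequence of $F$-gauges in this category corresponds to an honest short exact sequence of group schemes because the $t$-structure on finite flat group schemes lifts compatibly.

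The main obstacle will be essential surjectivity and $t$-exactness in full generality: showing that every perfect $F$-gauge with the stipulated numerical invariants actually comes from a genuine locally free $p^n$-torsion commutative group scheme, rather than a derived or stacky enhancement thereof. This requires careful control of the $t$-structure on $\mathsf{P}^{\mathrm{syn}}_{\{0,1\}}$ and verifying that the correspondence preserves not just cofiber sequences of perfect complexes but honest abelian short exact sequences. A closely related technical point is the descent step beyond the quasisyntomic setting, where one loses the direct site-theoretic tools and must rely on the descent formalism for $X^{\mathrm{syn}}$ together with the absolute nature of the Bhatt-Lurie stack to propagate the equivalence to general $p$-adic formal bases.
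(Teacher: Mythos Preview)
Your proposal has a genuine gap at the descent step. You plan to establish the equivalence first over quasisyntomic $R$ and then ``bootstrap to arbitrary $p$-adic formal bases by fpqc descent.'' But an arbitrary $p$-complete ring need not admit an fpqc cover by quasisyntomic rings, so this reduction does not go through. The paper's argument never attempts such a reduction; instead it works over general $R$ throughout, leveraging the equivalence for \emph{truncated Barsotti--Tate groups} (already known from~\cite{gmm} over arbitrary bases) rather than the quasisyntomic classification. The functor $\mathcal{G}$ is defined directly as $\tau^{\le 0}$ of syntomic cohomology, and its inverse is not constructed explicitly beyond the quasisyntomic case (see the paper's Remark following Theorem~A).

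A second gap is your claim that exactness is a ``formal consequence of the construction.'' It is not. The paper spends all of Section~\ref{sec:exactness_of_the_syntomic_dieudonne_functor} proving exactness for the $\mathrm{BT}_n$ equivalence: one must show that the map $\Gamma_{\mathrm{syn}}(\mathcal{N}) \to \underline{\Ext}(G_3,G_1)$ of formal stacks is an isomorphism, and this is obtained by proving that the target is \emph{smooth} (Theorem~\ref{thm:smoothness_of_extensions}) via a Grothendieck-style obstruction-theoretic argument. Finally, you are missing the paper's central technical device, the \emph{syntomic analogue of Raynaud's theorem} (Theorem~\ref{thm:syntomic_raynaud}): every $\mathcal{M}\in\mathsf{P}^{\mathrm{syn}}_{n,\{0,1\}}(R)$ is, pro-\'etale locally, the cofiber of a map of vector bundle $F$-gauges. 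This is what allows full faithfulness and the reduction of exactness and Cartier duality to the BT case via five-lemma arguments; its proof (Section~\ref{sec:raynaud}) is a substantial Bartling--Hoff-style analysis of isogenies of $F$-gauges and does not fall out of descent.
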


\begin{remark}
The category $\mathsf{P}^{\mathrm{syn}}_{\{0,1\}}(X)$ admits an alternate description that does not require the introduction of the syntomification of $p$-adic formal schemes, but only the absolute prismatization as described in~\cite{bhatt2022absolute}, or even just the absolute prismatic cohomology of semiperfectoid rings. The reader interested mainly in this and more concrete classifications can skip ahead to \S~\ref{subsec:classification_by_divided_dieudonne_complexes} and \S~\ref{subsec:some_concrete_classifications} below, where they will also find a discussion of some already known classifications and their connection with the work here.
\end{remark}

\begin{remark}
The compatibility with Cartier duality takes the following shape (cf.~\cite[Proposition~3.87]{Mondal2024-cy}): There is a canonical object $\mathcal{O}^{\mathrm{syn}}\{1\}$, called the \emph{Breuil-Kisin twist}, that is a line bundle $F$-gauge over $X$. By our conventions in the current paper, the Hodge-Tate weight of $\mathcal{O}^{\mathrm{syn}}\{1\}$ is $1$. We can tensor $\mathcal{O}^{\mathrm{syn}}\{1\}$ with any perfect complex $\mathcal{M}$ over $X^{\mathrm{syn}}$ to obtain the twist $\mathcal{M}\{1\}${}. If $\mathcal{M}$ belongs to $\mathsf{P}^{\mathrm{syn}}_{\{0,1\}}(X)$, then so does $\mathcal{M}^\vee[1]\{1\}$, and we now have a canonical isomorphism of finite flat group schemes
\[
\mathcal{G}(\mathcal{M}^\vee[1]\{1\})\xrightarrow{\simeq}\mathcal{G}(\mathcal{M})^*,
\]
where the right hand side is the Cartier dual of $\mathcal{G}(\mathcal{M})$.
\end{remark}

\begin{remark}
The functor $\mathcal{G}$ is given by truncated syntomic cohomology: Given $\mathcal{M}\in \mathsf{P}^{\mathrm{syn}}_{\{0,1\}}(X)$ and any map $\Spec R\to X$ with $R$ $p$-nilpotent, the values of $\mathcal{G}(\mathcal{M})$ on $R$ are given by $\tau^{\le 0}R\Gamma(R^{\mathrm{syn}},\mathcal{M}\vert_{R^{\mathrm{syn}}})$. As such, it is completely canonical, compatible with arbitrary base-change and satisfies quasisyntomic (and in fact fpqc) descent.
\end{remark}

\begin{remark}\label{compatiblity1}
When $X$ is $p$-quasisyntomic, the inverse of the functor $\mathcal{G}$ is given by the functor that sends $G \in \mathrm{FFG}(X)$ to $\mathcal{M}(G^*)\left \{1 \right\}$, where the $F$-gauge $\mathcal{M}$ is constructed using Nygaard filtered prismatic cohomology of the (higher) classifying stack $B^2G$ as in \cite{Mondal2024-cy}. Note that $\mathcal{M}$ is equivalent to the classical contravariant Dieudonn\'e module of $G$ over a perfect field (cf.~\cite{Mondal2020-cy}). While we do not give an explicit description of the inverse of $\mathcal{G}$ over a general base $X$, it could be seen as a moduli theoretic extension of the functor $\mathcal{M}$ (see the discussion before Theorem \ref{introthmK}).
\end{remark}{}

\begin{remark}
The theorem above generalizes and refines our previous results in~\cite{gmm} and~\cite{Mondal2024-cy}, in different directions. The first cited reference proves Theorem~\ref{introthm:main}, but only for $n$-truncated Barsotti-Tate groups (for some $n\ge 1$) over $X$, matching them up with mod-$p^n$ vector bundle $F$-gauges in $\mathsf{P}^{\mathrm{syn}}_{\{0,1\}}(X)$.\footnote{The exactness of the equivalence was not addressed in \emph{loc. cit.} however. This is the content of \S~\ref{sec:exactness_of_the_syntomic_dieudonne_functor} of this article.} In the second reference, one finds an equivalence of $\mathrm{FFG}(X)$ with a certain full subcategory of $\mathsf{P}^{\mathrm{syn}}_{\{0,1\}}(X)$, but only for $X$ that are $p$-quasisyntomic.
\end{remark}

We also show that the classification in Theorem~\ref{introthm:main} is compatible with various cohomological realizations on either side. As a particular instance, we have
\begin{introthm}
   [Syntomic cohomology and fppf cohomology, Proposition~\ref{prop:syntomic_and_fppf}]
\label{introthm:fppf_and_syntomic}
Suppose that we have $\mathcal{M}\in \mathsf{P}^{\mathrm{syn}}_{\{0,1\}}(X)$ with $G = \mathcal{G}(\mathcal{M})$. Then there is a canonical isomorphism
\[
   R\Gamma_{\mathrm{fppf}}(X,G)\xrightarrow{\simeq}R\Gamma(X^{\mathrm{syn}},\mathcal{M}).
\]
\end{introthm}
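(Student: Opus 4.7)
The plan is to identify both sides with the derived global sections of a single fppf sheaf of complexes on $X$. Consider the presheaf
\[
\mathcal{P}: (\Spf A \to X) \longmapsto R\Gamma(A^{\mathrm{syn}}, \mathcal{M}|_{A^{\mathrm{syn}}})
\]
on the big fppf site of $X$. First I would verify that $\mathcal{P}$ is an fpqc (hence fppf) sheaf of complexes, using the fpqc descent of the syntomification $A \mapsto A^{\mathrm{syn}}$ and of perfect complexes on it, as developed in the work of Bhatt--Lurie and Drinfeld. Granting this, fpqc descent also gives $R\Gamma(X^{\mathrm{syn}}, \mathcal{M}) \simeq R\Gamma_{\mathrm{fpqc}}(X, \mathcal{P})$, which agrees with $R\Gamma_{\mathrm{fppf}}(X, \mathcal{P})$ by the standard comparison between the two topologies for sheaves coming from representable objects.

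Next I would analyze $\mathcal{P}$ as an object of $D(X_{\mathrm{fppf}})$. The defining formula for $\mathcal{G}$ from Remark~3 above, namely $\mathcal{G}(\mathcal{M})(A) = \tau^{\leq 0} R\Gamma(A^{\mathrm{syn}}, \mathcal{M}|_{A^{\mathrm{syn}}})$, immediately yields $\mathcal{H}^0(\mathcal{P}) = G$ and $\mathcal{H}^i(\mathcal{P}) = 0$ for $i<0$ as fppf sheaves on $X$. The crux of the proof is then to establish the vanishing $\mathcal{H}^i(\mathcal{P}) = 0$ in the fppf topology for all $i > 0$. Once this is proved, $\mathcal{P} \simeq G[0]$ in $D(X_{\mathrm{fppf}})$ and taking derived global sections delivers the theorem.

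For the vanishing of $\mathcal{H}^{>0}(\mathcal{P})$, the strategy is to work locally in the fppf topology on $X$. I would reduce to bases $A$ that are quasi-syntomic (or semiperfectoid), where, as promised in the introduction and in Remark~\ref{compatiblity1}, $\mathcal{M}$ admits a concrete description in terms of divided Dieudonn\'e crystals, and the syntomic cohomology can be computed by a short two-term Nygaard-type complex whose length is controlled by the Hodge-Tate weights $\{0,1\}$ and the Tor amplitude $[-1,0]$. On such a base one may further d\'evisse via the local filtration into Hodge-Tate weight pieces to the two rank-one building blocks $\mathcal{O}^{\mathrm{syn}}$ and $\mathcal{O}^{\mathrm{syn}}\{1\}$, whose syntomic cohomologies recover $\mathbb{Z}/p^n$ and $\mu_{p^n}$ respectively. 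The main obstacle will be this local vanishing of higher syntomic cohomology, which at its heart is the Kummer-style identification of the sheafification of syntomic $\mathbb{Z}_p(1)$ with $\mu_{p^\infty}$, bootstrapped along a short exact sequence to the general two-term case via a $5$-lemma argument on the long exact sequences of cohomology sheaves.
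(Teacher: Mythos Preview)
Your overall shape---identify both sides with $R\Gamma_{\mathrm{fppf}}(X,\mathcal{P})$ for $\mathcal{P}(A)=R\Gamma(A^{\mathrm{syn}},\mathcal{M})$ and then show $\mathcal{P}\simeq G[0]$ in $D(X_{\mathrm{fppf}})$---is sound and is essentially equivalent to the paper's assertion that $B^mG\simeq\Gamma_{\mathrm{syn}}(\mathcal{M}[m])$ for all $m$. The gap is in how you propose to establish the key vanishing $\mathcal{H}^{>0}(\mathcal{P})=0$.

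The proposed d\'evissage ``via the local filtration into Hodge-Tate weight pieces to the two rank-one building blocks $\mathcal{O}^{\mathrm{syn}}$ and $\mathcal{O}^{\mathrm{syn}}\{1\}$'' does not exist. Having Hodge-Tate weights in $\{0,1\}$ constrains only the associated graded of the Hodge filtration on $M=x_{\dR}^*\mathcal{M}$; it does not give a filtration of the $F$-gauge $\mathcal{M}$ itself by objects of pure weight, and even objects of pure weight are not direct sums of Breuil--Kisin twists. Concretely, the $F$-gauge corresponding to $\alpha_p$ over a perfect field is a rank-one level-$1$ vector bundle that is neither $\mathcal{O}^{\mathrm{syn}}/p$ nor $\mathcal{O}^{\mathrm{syn}}\{1\}/p$, so no $5$-lemma reduction to $\Int/p^n$ and $\mu_{p^n}$ is available. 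Working over a semiperfectoid base writes $R\Gamma(S^{\mathrm{syn}},\mathcal{M})$ as a fiber of a map between complexes in amplitude $[-1,0]$, placing it in degrees $[-1,1]$, but this alone does not kill $H^1$ fppf-locally.

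The paper handles this step quite differently. For vector bundle $F$-gauges (the truncated BT case, Proposition~\ref{prop:syntomic_and_fppf}) it invokes the representability theorem (Theorem~\ref{thm:perfect_f-gauges_repble}) to see that $\Gamma_{\mathrm{syn}}(\mathcal{M}[m])$ is a smooth Artin $m$-stack whose tangent complex agrees with that of $B^mG$; the resulting comparison map is then \'etale, and one checks bijectivity on algebraically closed fields, where the vanishing of $H^{>0}(\kappa^{\mathrm{syn}},\mathcal{M})$ reduces (after passing to $n=1$) to the elementary surjectivity of $V-\varphi^*$ on a finite-dimensional space. The general case (Proposition~\ref{prop:flat_via_syntomic_ffg}) is then deduced from this via the Raynaud-type resolution of $\mathcal{M}$ by vector bundle $F$-gauges (Theorem~\ref{thm:syntomic_raynaud}), not via any Hodge-Tate-weight d\'evissage. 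If you want to salvage your approach, the correct local reduction is to vector bundle $F$-gauges via that Raynaud theorem, followed by the geometric-point computation the paper performs.
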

A variant of the above theorem for cohomology of $G$ in the quasisyntomic topology was proven in \cite[Proposition~3.84]{Mondal2024-cy}.
As a consequence of Theorem~\ref{introthm:fppf_and_syntomic}, and general representability results from~\cite{gmm} for the cohomology of $F$-gauges with Hodge-Tate weights bounded by $1$, we obtain the following generalization of a result of Bragg-Olsson~\cite[Theorem 1.8]{bragg2021representability}:
\begin{introthm}
 [Representability of relative fppf cohomology, Theorem~\ref{thm:bragg_olsson_smooth}]
\label{introthm:bragg_olsson}  
Let $\pi:X\to S$ be a proper smooth map of $p$-adic formal algebraic spaces, and that $G\in \mathrm{FFG}(X)$. If $R^i_{\mathrm{fppf}}\pi_*G$ is represented by a flat formal algebraic space over $S$ for all $i<n$, then $R^n_{\mathrm{fppf}}\pi_*G$ is also represented by a finitely presented formal algebraic space over $S$.
\end{introthm}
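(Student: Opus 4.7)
The plan is to translate the statement, via Theorem~\ref{introthm:fppf_and_syntomic}, into a representability question about the higher direct images of the perfect $F$-gauge $\mathcal{M} := \mathcal{G}^{-1}(G)\in \mathsf{P}^{\mathrm{syn}}_{\{0,1\}}(X)$ along the induced map $\pi^{\mathrm{syn}}: X^{\mathrm{syn}}\to S^{\mathrm{syn}}$, and then to invoke the representability results of~\cite{gmm} for cohomology of perfect $F$-gauges with Hodge--Tate weights bounded by $1$. The proper smoothness of $\pi$ makes $\mathcal{N} := R\pi^{\mathrm{syn}}_*\mathcal{M}$ a perfect complex on $S^{\mathrm{syn}}$ whose formation commutes with arbitrary base change $T\to S$. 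Combined with Theorem~\ref{introthm:fppf_and_syntomic} applied over $T$, this identifies $R^i_{\mathrm{fppf}}\pi_*G$ with the fppf sheafification of $T\mapsto H^i(T^{\mathrm{syn}},\mathcal{N}|_{T^{\mathrm{syn}}})$.

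The main step is a d\'evissage on the truncation triangle $\tau^{<n}\mathcal{N}\to\mathcal{N}\to\tau^{\ge n}\mathcal{N}$. The inductive hypothesis that $R^i_{\mathrm{fppf}}\pi_*G$ is represented by a flat formal algebraic space for $i<n$, combined with Theorem~\ref{introthm:main} applied over arbitrary test bases, should pin down the structure of the lower cohomology sheaves $\mathcal{H}^i(\mathcal{N})$ well enough that the resulting long exact sequence lets us peel them off. Representability of $R^n_{\mathrm{fppf}}\pi_*G$ then reduces to representability of the $H^0$ of $\mathcal{H}^n(\mathcal{N})$, and if the flatness hypothesis forces this cohomology sheaf to lie in $\mathsf{P}^{\mathrm{syn}}_{\{0,1\}}(S)$ we may apply $\mathcal{G}$ directly to obtain representability by a finite flat group scheme — in particular by a finitely presented formal algebraic space. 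More generally, when $\mathcal{H}^n(\mathcal{N})$ falls outside the image of $\mathcal{G}$ but still carries Hodge--Tate weights $\le 1$, one invokes~\cite{gmm} to obtain representability by a finitely presented formal algebraic space.

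The hard part will be showing that the flatness of the lower direct images suffices to confine the Hodge--Tate weights of $\mathcal{H}^n(\mathcal{N})$ to $\{0,1\}$, since a priori a smooth proper pushforward along a map of relative dimension $d$ can produce weights in $\{0,1,\ldots,d+1\}$. Verifying this weight collapse — a syntomic avatar of Hodge--Tate degeneration for smooth proper maps, activated by the flatness of the lower $R^i_{\mathrm{fppf}}\pi_*G$ — is the technical heart of the argument; once secured, the representability conclusion follows by combining Theorem~\ref{introthm:main} with the machinery of~\cite{gmm}.
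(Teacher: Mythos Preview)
Your overall plan---translate to $F$-gauges, push forward, and invoke the representability machinery of~\cite{gmm}---matches the paper's strategy. But the part you flag as ``the technical heart'' is based on a misconception, and the d\'evissage you propose is both unnecessary and unlikely to work as stated.

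The key point you have backwards is the direction in which Hodge--Tate weights move under proper smooth pushforward. If $\mathcal{M}$ has weights in $[a,b]$ and $\pi$ has relative dimension $d$, then $R\pi^{\mathrm{syn}}_*\mathcal{M}$ has weights in $[a-d,\,b]$: the \emph{upper} bound is preserved. (This is Proposition~\ref{prop:pushforwards_proper_smooth}; it comes down to the fact that the new contributions to the Hodge filtration come from $\wedge^k\Omega^1_{X/S}$ for $0\le k\le d$, which only push the filtration \emph{deeper}.) So with $\mathcal{M}\in\mathsf{P}^{\mathrm{syn}}_{\{0,1\}}(X)$, the pushforward $\mathcal{N}$ already has weights $\le 1$, and no ``weight collapse'' is needed. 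The representability theorem from~\cite{gmm} (Theorem~\ref{thm:perfect_f-gauges_repble}) only requires the upper weight bound $\le 1$, not weights in $\{0,1\}$, so it applies directly to the whole complex $\mathcal{N}$: for every $n$, $\Gamma_{\mathrm{syn}}(\mathcal{N}[n])$ is a finitely presented derived Artin stack, and its classical truncation is $(\tau^{\le n}R\pi_*G)[n]$.

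Your proposed d\'evissage via $\tau^{<n}\mathcal{N}\to\mathcal{N}\to\tau^{\ge n}\mathcal{N}$ is therefore unnecessary; it is also problematic because truncations and individual cohomology sheaves $\mathcal{H}^i(\mathcal{N})$ of a perfect complex on $S^{\mathrm{syn}}$ need not be perfect, so you cannot feed them back into Theorem~\ref{introthm:main} or the representability results. The flatness hypothesis on the lower $R^i_{\mathrm{fppf}}\pi_*G$ plays no role in controlling weights. In the paper it enters only at the very end: once $(\tau^{\le n}R\pi_*G)[n]$ is known to be a finitely presented Artin $n$-stack, the passage to $R^n_{\mathrm{fppf}}\pi_*G$ itself is a general fact about stacks (\cite[Theorem~5.2]{bragg2021representability}): if the lower pieces are flat, one can peel them off and the top cohomology sheaf is representable.
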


\begin{remark}
   Suppose that $G = \mup$. In this special case, the cited result of Bragg-Olsson already proves the theorem when $X$ and $S$ are schemes of finite type over a field. But in the generality here, even this case appears to be new. Here, it says that $R^1\pi_*\mup$ is represented by a finitely presented formal algebraic space over $S$ (which can also be deduced from the theory of the Picard scheme), and that, when this algebraic space is also flat over $S$, then $R^2\pi_*\mup$ is also representable.
\end{remark}

We can also use Theorem~\ref{introthm:fppf_and_syntomic} and an idea of Bhatt-Lurie in ~\cite[Corollary~8.5.7]{bhatt2022absolute} to recover the following (special case of a) result of \v{C}esnavi\v{c}ius-Scholze~\cite[Theorem 7.1.2]{Cesnavicius2019-pn}:
\begin{introthm}
   [\v{C}esnavi\v{c}ius-Scholze purity theorem, Corollary~\ref{cor:cesnavicius_scholze}]
\label{introthm:cesnavicius-scholze}
Let $X$ be a qcqs scheme and let $Z\subset X_{(p=0)}$ be a constructible closed subset such that, for all $z\in Z$, $\Reg{X,z}$ is a local complete intersection ring of dimension $\ge d$. Then, if $G$ is a finite locally free $p$-power torsion group scheme over $X$, the map
\[
   H^i_{\mathrm{fppf}}(X,G)\to H^{i}_{\mathrm{fppf}}(X\backslash Z,G)
\]
is injective for $i<d$ and an isomorphism for $i<d-1$.
\end{introthm}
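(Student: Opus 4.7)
The plan is to combine Theorem~\ref{introthm:fppf_and_syntomic} with a local-cohomology argument modeled on Bhatt--Lurie's proof of the analogous statement for $\mup[p^n]$ in \cite[Corollary~8.5.7]{bhatt2022absolute}. The first step is a reformulation: the conclusion about the map $H^i_{\mathrm{fppf}}(X,G)\to H^i_{\mathrm{fppf}}(X\setminus Z,G)$ for $i<d$ and $i<d-1$ is, via the fiber sequence
\[
R\Gamma_Z(X_{\mathrm{fppf}},G)\to R\Gamma_{\mathrm{fppf}}(X,G)\to R\Gamma_{\mathrm{fppf}}(X\setminus Z,G),
\]
equivalent to the vanishing $H^i_Z(X_{\mathrm{fppf}},G)=0$ for $i<d$. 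Writing $\mathcal{M}=\mathcal{G}^{-1}(G)\in\mathsf{P}^{\mathrm{syn}}_{\{0,1\}}(X)$ and invoking Theorem~\ref{introthm:fppf_and_syntomic} on both $X$ and $X\setminus Z$, this reduces to the syntomic-side statement
\[
H^i\bigl(\mathrm{fib}(R\Gamma(X^{\mathrm{syn}},\mathcal{M})\to R\Gamma((X\setminus Z)^{\mathrm{syn}},\mathcal{M}))\bigr)=0\quad\text{for }i<d.
\]

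The next step is to bound this local cohomology on the syntomification in terms of local cohomology on $X$ itself. The structure sheaf $\mathcal{O}^{\mathrm{syn}}$ on $X^{\mathrm{syn}}$ is built, through the Nygaard filtration and Frobenius, from prismatic and Hodge-Tate pieces. For $\mathcal{M}$ of Hodge-Tate weights in $\{0,1\}$ and Tor amplitude $[-1,0]$, one obtains (by filtering $\mathcal{M}$ along its Hodge-Tate weights and then along the two-step Nygaard filtration) a finite filtration whose associated graded pieces, after base change to the Hodge-Tate locus and to the de~Rham/crystalline locus, are perfect complexes on $X$ built from the structure sheaf $\mathcal{O}_X$ and the cotangent complex $L_{X/\mathbb{Z}_p}[-1]$ (with its weight-one twist), each with Tor amplitude in a bounded range depending only on the weights $\{0,1\}$. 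Thus the local cohomology $R\Gamma_Z(X^{\mathrm{syn}},\mathcal{M})$ is controlled, up to a bounded shift, by $R\Gamma_Z(X,\mathcal{O}_X\otimes^L K)$ and $R\Gamma_Z(X,L_{X/\mathbb{Z}_p}\otimes^L K)$ for various perfect complexes $K$ of $\mathbb{Z}_p$-modules, all in cohomological degrees $\ge 0$ (the possible shift by $-1$ cancels with one Hodge-Tate weight).

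The last step is the classical depth input: at each $z\in Z$ the ring $\Reg{X,z}$ is a local complete intersection of dimension $\ge d$, so $\mathcal{O}_X$ has depth $\ge d$ along $Z$, and the same holds for each exterior power of the conormal sheaf, which via the transitivity triangle controls $L_{X/\mathbb{Z}_p}$. This gives $\mathcal{H}^i_Z(\mathcal{O}_X)=\mathcal{H}^i_Z(L_{X/\mathbb{Z}_p})=0$ for $i<d$, and hence by the boundedness of the filtration and the Tor-amplitude bounds, $H^i_Z(X^{\mathrm{syn}},\mathcal{M})=0$ for $i<d$, completing the proof.

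The main obstacle I expect is the bookkeeping in the second step: tracking how the Hodge-Tate weight bounds and the Tor-amplitude bounds combine across the syntomic-to-Hodge-Tate devissage, so that no piece of $\mathcal{M}$ contributes local cohomology in degrees $<d$. This is where the key compatibility with the Bhatt--Lurie depth estimates for $\mathcal{O}^{\mathrm{syn}}\{i\}$ $(i\in\{0,1\})$ from \cite[Ch.~8]{bhatt2022absolute} must be invoked carefully.
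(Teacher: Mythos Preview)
Your overall strategy matches the paper's: translate to syntomic cohomology via Theorem~\ref{introthm:fppf_and_syntomic}, then imitate the Bhatt--Lurie depth argument from \cite[Corollary~8.5.7]{bhatt2022absolute}. But the execution has two genuine gaps that are more than bookkeeping.

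First, you invoke Theorem~\ref{introthm:fppf_and_syntomic} directly on the scheme $X$ and on $X\setminus Z$, but that theorem (and the syntomification $X^{\mathrm{syn}}$) is only set up for $p$-adic \emph{formal} schemes. In the paper's proof (see Theorem~\ref{thm:purity}), the passage from the scheme $X$ to its $p$-completion $\hat X$ is handled by an excision step (Proposition~\ref{prop:beauville_laszlo} and Corollary~\ref{cor:beauville_laszlo}): one shows that $R\Gamma_{\mathrm{fppf}}(\Spec R,G)$ sits in a Cartesian square with $R\Gamma(\hat R^{\mathrm{syn}},\mathcal{M})$, $R\Gamma_{\et}(\Spec R[1/p],G)$, and the \'etale realization $R\Gamma_{\mathrm{qsyn}}(\Spf\hat R,\mathcal{M}_{-}[1/\mathcal{I}])^{\varphi=\id}$. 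Only after subtracting the \'etale/generic-fiber contribution do you get an expression for $R\Gamma_Z(X,G)$ amenable to prismatic analysis. This step is not optional, and it does not follow from Theorem~\ref{introthm:fppf_and_syntomic} alone.

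Second, your description of the filtration is not correct as stated. There is no ``filtration of $\mathcal{M}$ along its Hodge-Tate weights'': the Hodge filtration lives on $M=x_{\dR}^*\mathcal{M}$, not on $\mathcal{M}$ itself, and syntomic cohomology is a fiber of $\varphi_0-\mathrm{can}$ on $\Fil^0\mathcal{M}_{-}\to\mathcal{M}_{-}$, not a filtered object in the way you suggest. The paper (following Bhatt--Lurie) instead reduces, after the excision step, to a base over $\Int_p[\zeta_{p^\infty}]^{\wedge}_p$ so that $I=([p]_q)$; then $R\Gamma_Z$ is expressed via $R\Gamma_{(q-1)}$ of $\Fil^0\mathcal{M}_{-}$ and $\mathcal{M}_{-}$. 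The fiber of $\mathrm{can}$ contributes $\gr^{-1}_{\mathrm{Hdg}}M[-1]$, and $R\Gamma_{(q-1)}(\mathcal{M}_{-})$ is controlled via the fiber sequence $\mathcal{M}_{-}\xrightarrow{[p]_q}\mathcal{M}_{-}\to\overline{\mathcal{M}}_{-}$ together with the (infinite, not finite) conjugate filtration on $\overline{\mathcal{M}}_{-}$, whose associated gradeds are $N\otimes_R\wedge^i\mathbb{L}_{R/R_0}[-i]$ for all $i\ge 0$. The $p$-quasisyntomic hypothesis then places each piece in $D^{\ge 0}$, which is what makes the depth bound go through. Your sketch collapses all of this into a two-step filtration involving only $\mathcal{O}_X$ and $L_{X/\Int_p}[-1]$, which is not what actually happens.
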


\subsection{Classification by divided Dieudonn\'e complexes}
\label{subsec:classification_by_divided_dieudonne_complexes}

The classification in terms of $F$-gauges, while powerful in its generality, might appear somewhat abstract. In Section~\ref{sec:divided_dieudonne}, we deduce several more explicit classifications that recover known ones and also yield some new examples. 

First, under some weak finiteness conditions, we extract from Theorem~\ref{introthm:main} a somewhat less abstract classification that is very close to those found in~\cite{MR4530092} and~\cite{lau2018divided}. To explain this, let $\Prism_{-,\mathrm{cl}}$ be the sheaf of rings $S\mapsto H^0(\Prism_{S})_{\mathrm{sep}}$ on the site $R_{\mathrm{qsyn}}$ of quasisyntomic semiperfectoid $R$-algebras $S$, equipped with the quasisyntomic topology. This assigns to each $S$ the classically complete quotient of the classical truncation of its initial prism. For instance, if $S$ is a semiperfect $\Field_p$-algebra, then $\Prism_{S,\mathrm{cl}}$ is simply the classical $p$-adically completed divided power envelope of the surjection $W(S^\flat)\to S$, usually denoted $A_{\mathrm{crys}}(S)$. There is a natural map $\Prism_{-,\mathrm{cl}}\to \mathcal{O}$, where $\mathcal{O}$ is the structure sheaf $S\mapsto S$. There is also a lift of the mod-$p$ Frobenius $\varphi:\Prism_{-,\mathrm{cl}}\to \Prism_{-,\mathrm{cl}}$. Moreover, $\Prism_{-,\mathrm{cl}}$ admits a canonical generalized Cartier divisor $I_{-,\mathrm{cl}}\to \Prism_{-,\mathrm{cl}}$ arising from the prism structure with (derived) quotient $\overline{\Prism}_{-,\mathrm{cl}}$. The precomposition of the quotient map with the Frobenius lift factors through a map $\mathcal{O}\to \overline{\Prism}_{-,\mathrm{cl}}$

The following objects can be used to classify $p$-divisible groups under some conditions:
\begin{definition}
   [Prismatic divided Dieudonn\'e modules]
Let $\mathrm{DDC}_{\Prism_{\mathrm{cl}}}^{[0,0]}(R)$ be the category of tuples 
\[
(\mathcal{M}_{-}\xrightarrow{\Psi_{\mathcal{M}}}\varphi^*\mathcal{M}_{-},\Fil^0_{\mathrm{Hdg}}M\subset M,\xi) 
\]
where:
\begin{enumerate}
   \item $\mathcal{M}_{-}$ is a (quasicoherent) vector bundle over $\Prism_{-,\mathrm{cl}}$ of finite rank;
   \item $M$ is the finite locally free $R$-module corresponding to the base-change of $\mathcal{M}_{-}$ along $\Prism_{-,\mathrm{cl}}\to \mathcal{O}$ and $\Fil^0_{\mathrm{Hdg}}M\subset M$ is a local direct summand;
   \item $\Psi_{\mathcal{M}}:\mathcal{M}_{-}\to \varphi^* \mathcal{M}_{-}$ is a map of $\Prism_{-,\mathrm{cl}}$-modules whose cofiber is equipped with an isomorphism $\xi$ to $ \overline{\Prism}_{-,\mathrm{cl}}\otimes_R\gr^{-1}_{\mathrm{Hdg}}M$ where $\gr^{-1}_{\mathrm{Hdg}}M = M/\Fil^0_{\mathrm{Hdg}}M$.
\end{enumerate}
\end{definition}

\begin{remark}
\label{remark:alternate_windows}
   One can alternately define the category $\mathrm{DDC}^{[0,0]}_{\Prism_{\mathrm{cl}}}(R)$ using the approach taken in~\cite{lau2018divided}, as a category of windows over a sheaf theoretic frame with underlying sheaf of rings $\Prism_{-,\mathrm{cl}}$. The reader can extrapolate this from Remark~\ref{rem:windows}. We have chosen the presentation here because it is the one that generalizes most easily when one wants to classify finite flat group schemes.
\end{remark}

\begin{remark}
   The usual convention would have been to consider \emph{effective} objects equipped with $\varphi$-semilinear endomorphisms rather than a map $\Psi_{\mathcal{M}}$ as above. The reason for our choice here is that it is the sort of structure that arises naturally when considering $F$-gauges of Hodge-Tate weights $0,1$. It might also be worth emphasizing once again here that the equivalences of categories we construct in this paper are naturally \emph{covariant}.
\end{remark}

To classify finite flat group schemes, we need to consider perfect complexes.
\begin{definition}
   [Prismatic divided Dieudonn\'e complexes]
Let $\Mod{\Int/p^n\Int}(\mathrm{DDC}_{\Prism_{\mathrm{cl}}}^{[-1,0]}(R))$ be the category of tuples 
\[
(\mathcal{M}_{-}\xrightarrow{\Psi_{\mathcal{M}}}\varphi^* \mathcal{M}_{-},\Fil^0_{\mathrm{Hdg}}M\to M,\xi)
\]
where:
\begin{enumerate}
   \item $\mathcal{M}_{-}$ is a (quasicoherent) perfect complex over $\Prism_{-,\mathrm{cl}}$ with Tor amplitude $[-1,0]$ and with cohomology killed by $p^n$;
   \item $M$ is the perfect complex over $R$ corresponding to the base-change of $\mathcal{M}_{-}$ along $\Prism_{-,\mathrm{cl}}\to \mathcal{O}$ and $\Fil^0_{\mathrm{Hdg}}M\to M$ is a map of perfect complexes with Tor amplitude in $[-1,0]$ and whose cofiber $\gr^{-1}_{\mathrm{Hdg}}M$ also has Tor amplitude in $[-1,0]$;
   \item $\Psi_{\mathcal{M}}:\mathcal{M}_{-}\to \varphi^* \mathcal{M}_{-}$ is a map of complexes over $\Prism_{-,\mathrm{cl}}$ whose cofiber is equipped with an isomorphism $\xi$ to $ \overline{\Prism}_{-,\mathrm{cl}}\otimes_R\gr^{-1}_{\mathrm{Hdg}}M$.
\end{enumerate}
\end{definition}

\begin{introthm}
   [Classification in terms of divided Dieudonn\'e complexes, Theorem~\ref{thm:classical_prismatization_enough}]
\label{introthm:classical_enough}
Suppose that one of the following holds: 
\begin{enumerate}
   \item $R$ is $p$-quasisyntomic.
   \item $R/pR$ is $F$-finite and $F$-nilpotent~\cite{lau2018divided}.
\end{enumerate}
Then there is a canonical exact equivalence of categories
\[
   \mathrm{FFG}_n(R) \xrightarrow{\simeq}\Mod{\Int/p^n\Int}(\mathrm{DDC}_{\Prism_{\mathrm{cl}}}^{[-1,0]}(R))
\]
and also an equivalence of categories
\[
   \mathrm{BT}(R) \xrightarrow{\simeq}\mathrm{DDC}_{\Prism_{\mathrm{cl}}}^{[0,0]}(R).
\]
\end{introthm}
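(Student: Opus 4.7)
My plan is to deduce the theorem from Theorem~\ref{introthm:main} by reinterpreting the category $\mathsf{P}^{\mathrm{syn}}_{\{0,1\}}(R)$ of perfect $F$-gauges with Hodge-Tate weights in $\{0,1\}$ as a category of divided Dieudonn\'e data on the classical prismatic side. First I would unpack the syntomic description: an object of $\mathsf{P}^{\mathrm{syn}}_{\{0,1\}}(R)$ is, by the structure of $R^{\mathrm{syn}}$, a perfect complex on the absolute prismatization of $R$ equipped with its Frobenius and its Nygaard filtration. Since the Hodge-Tate weights lie in $\{0,1\}$, the Nygaard filtration has only one nontrivial jump, whose graded piece is controlled by the Hodge filtration $\Fil^0_{\mathrm{Hdg}}M\to M$ of the Hodge-Tate reduction. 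Unwinding the Rees-style reconstruction of the Nygaard filtration, the Frobenius becomes a map $\Psi_{\mathcal{M}}\colon \mathcal{M}_{-}\to\varphi^{*}\mathcal{M}_{-}$ whose cofiber is canonically identified with $\overline{\Prism}_{-,\mathrm{cl}}\otimes_{R}\gr^{-1}_{\mathrm{Hdg}}M$, which is precisely the datum of an object of $\mathrm{DDC}^{[-1,0]}_{\Prism_{\mathrm{cl}}}(R)$.

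The essential input is then to justify working with the classical truncation $\Prism_{-,\mathrm{cl}}$ rather than the full derived prismatic complex. Under hypothesis (1), I would argue by quasisyntomic descent: for quasisyntomic semiperfectoid $S$ the complex $\Prism_{S}$ is already classically concentrated in degree $0$, so its classical $p$-completion recovers $\Prism_{S,\mathrm{cl}}$, and the fpqc descent property built into $R^{\mathrm{syn}}$ (used analogously in~\cite{Mondal2024-cy}) then shows that the gauge datum is fully captured by its values on the classical sheaves $\Prism_{-,\mathrm{cl}}$ and $\overline{\Prism}_{-,\mathrm{cl}}$ over the quasisyntomic site of $R$. Under hypothesis (2), I would invoke the analogous result of~\cite{lau2018divided}: when $R/pR$ is $F$-finite and $F$-nilpotent, Lau's results allow one to replace the derived prismatic site by a classical crystalline-style frame with underlying ring sheaf $\Prism_{-,\mathrm{cl}}$, yielding the same identification of categories. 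Combining either case with Theorem~\ref{introthm:main} and restricting to $p^{n}$-torsion objects gives the first equivalence. For the second, a $p$-divisible group over $R$ corresponds to a compatible system $(G_{n})$ of $p^{n}$-torsion subgroups, which under the first equivalence is sent to a compatible system in $\Mod{\Int/p^{n}\Int}(\mathrm{DDC}^{[-1,0]}_{\Prism_{\mathrm{cl}}}(R))$; passing to the inverse limit, the perfect complexes $\mathcal{M}_{-}$ assemble into a vector bundle and $\Fil^{0}_{\mathrm{Hdg}}M\subset M$ becomes the inclusion of a local direct summand of a finite locally free $R$-module, matching the definition of $\mathrm{DDC}^{[0,0]}_{\Prism_{\mathrm{cl}}}(R)$.

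The main obstacle I anticipate is the second step. The quasisyntomic case requires careful bookkeeping of how Nygaard-filtered data descend from semiperfectoid covers of $R$ and the verification that, in the Hodge-Tate weight $\{0,1\}$ range, the classical truncation is lossless. The $F$-finite $F$-nilpotent case requires a precise translation between Lau's crystalline-style frame and the Bhatt-Lurie syntomification; checking that the Frobenius $\Psi_{\mathcal{M}}$, the generalized Cartier divisor data, and the identification of $\cofib(\Psi_{\mathcal{M}})$ with $\overline{\Prism}_{-,\mathrm{cl}}\otimes_{R}\gr^{-1}_{\mathrm{Hdg}}M$ all transfer correctly under these comparisons is where the real work lies.
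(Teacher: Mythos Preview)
Your overall architecture is right: the first step is exactly Corollary~\ref{cor:ffg_classification}, which identifies $\mathsf{P}^{\mathrm{syn}}_{n,\{0,1\}}(R)$ with $\Mod{\Int/p^n\Int}(\mathrm{DDC}^{[-1,0]}_{\Prism}(R))$ over the \emph{derived} prismatization, and case (1) does reduce to the fact that $R^{\Prism}$ is already classical when $R$ is $p$-quasisyntomic. Your treatment of case (1) and of the passage to $\mathrm{BT}(R)$ is essentially what the paper does.

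For case (2), however, there is a genuine gap. You frame the remaining work as a translation problem between Lau's crystalline-style frame and the syntomification, but this misidentifies the difficulty. The categories $\mathrm{DDC}_{\Prism}(R)$ and $\mathrm{DDC}_{\Prism_{\mathrm{cl}}}(R)$ are defined in exactly parallel fashion; there is nothing to translate. The actual content is proving that the surjection $\Prism_R\to\Prism_{R,\mathrm{cl}}$ (for semiperfectoid $R$, after quasisyntomic descent) induces an equivalence of divided Dieudonn\'e complexes, and this is \emph{not} a formal consequence of anything in~\cite{lau2018divided}. The mechanism the paper uses is a unique lifting principle (Proposition~\ref{prop:deformation_theory}): for a surjective map of prismatic frames $\underline{A}'\to\underline{A}$, the functor $\mathrm{DDC}_{\underline{A}'}(R)\to\mathrm{DDC}_{\underline{A}}(R)$ is an equivalence provided the semilinear operator $\dot{\varphi}_1$ on the kernel $K\{1\}$ is topologically locally nilpotent. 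Verifying this nilpotence for $K=\fib(\Prism_R\to\Prism_{R,\mathrm{cl}})$ is the heart of the matter (Proposition~\ref{prop:F-nilp-nilpotent}), and it is here---not in any dictionary---that Lau's input enters: one uses his explicit ``straight weak lift'' $A=W(R^\flat)/K_0$ and the ladder of $\delta$-rings $A_m$ with their intermediate divided-power structures (Lemma~\ref{lem:straight_weak_nilpotence}) to factor the problem and exploit the $F$-nilpotence hypothesis $J^{p^r}\subset\varphi(J)$ to force the relevant cotangent-complex operators to be nilpotent. Your proposal contains no hint of this nilpotence criterion or of how $F$-nilpotence would be used, so as written it would not go through.
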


\begin{remark}
   Via the translation to the language of windows and frames, as explained in Remark~\ref{remark:alternate_windows}, the presentation can be brought closer to that found in~\cite{lau2018divided} or~\cite{MR4530092}. Indeed, in case (1), the second assertion of the theorem is simply a reinterpretation of the main result of~\cite{MR4530092}; see Remark~\ref{rem:anschutz_lebras_comp}. In case (2), we are obtaining a generalization of the main result of~\cite{lau2018divided}, which deals with the case where $R$ is an $\Field_p$-algebra.
\end{remark}

\begin{remark}
   The proof of the theorem involves first proving a more general version that works for all $R$ but uses the untruncated derived absolute prismatic cohomology sheaf $\Prism_{-}$, in the guise of the (derived) prismatization $R^\Prism$. We then make crucial use of a construction of Lau from~\cite{lau2018divided} to show that the classical prismatization already does the job under further finiteness hypotheses.
\end{remark}

\subsection{Some concrete classifications}
\label{subsec:some_concrete_classifications}
A substantial chunk of this paper is devoted to the proof of results leading up to Theorem~\ref{introthm:classical_enough} as well as some more explicit classifications. To give some further instances of the latter, we need another definition. This expands on the notion of Breuil windows, which was introduced in~\cite{vasiu_zink}, following the work of Breuil~\cite{breuil:pdiv} and Kisin~\cite{kisin:f_crystals}. 
\begin{definition}
   [A generalization of Breuil windows, Definition~\ref{def:bk_windows}]
Suppose that $(A,I')$ is a $p$-torsion-free prism with $R = A/I'$. Set $I = \varphi^*I'$ and $\overline{A} = A/I$. For each $n\ge 1$, let $\mathrm{BK}_{\underline{A},n}(R)$ be the category of tuples $(\mathsf{N},F_{\mathsf{N}},V_{\mathsf{N}})$ where $\mathsf{N}$ is a $p^n$-torsion $A$-module of projective dimension $1$ and 
\[
   F_{\mathsf{N}}:\varphi^* \mathsf{N}\to \mathsf{N}\;;\; V_{\mathsf{N}}:I'\otimes_A \mathsf{N}\to \varphi^* \mathsf{N}
\]
are $A$-module maps such that $V_{\mathsf{N}}\circ (1\otimes F_{\mathsf{N}})$ and $F_{\mathsf{N}}\circ V_{\mathsf{N}}$ are the canonical maps $I'\otimes \varphi^* \mathsf{N}\to \varphi^*\mathsf{N}$ and $I'\otimes_A \mathsf{N}\to \mathsf{N}$, respectively.
\end{definition}

\begin{remark}
\label{remark:ffg_to_bk}
   In this situation, there is a canonical functor $ \mathrm{FFG}_n(R)\to \mathrm{BK}_{\underline{A},n}(R)$. See Remark~\ref{rem:bk_syntomic}.
\end{remark}

\begin{definition}
   [Nilpotence conditions]
Suppose that $\mathfrak{c}\subset B\defn (R/pR)_{\mathrm{red}}$ is a finitely generated ideal. An object $(\mathsf{N},F_{\mathsf{N}},V_{\mathsf{N}})$ in $\mathrm{BK}_{\underline{A},n}(R)$ is $\mathfrak{c}$-\defnword{nilpotent} if $F_{\mathsf{N}^*}:\varphi^*\mathsf{N}^*\to \mathsf{N}^*$\footnote{Here, $*$ denotes Cartier duality; see Remark~\ref{rem:bk_cartier_duality}.} is nilpotent after base-change to $B/\mathfrak{c}$. Write $\mathrm{BK}^{\mathfrak{c}\mathhyph\mathrm{nilp}}_{\underline{A},n}(R)$ for the subcategory of $\mathrm{BK}_{\underline{A},n}(R)$ spanned by such objects.
\end{definition}

\begin{introthm}
   [Classification of connected group schemes, Corollary~\ref{cor:nilpotent_connected}]
Suppose that $B$ is $\mathfrak{c}$-adically derived complete. Then the functor from Remark~\ref{remark:ffg_to_bk} restricts to an exact equivalence of categories
\[
  \mathrm{FFG}^{\mathfrak{c}\mathhyph\mathrm{conn}}_n(R)\xrightarrow{\simeq} \mathrm{BK}^{\mathfrak{c}\mathhyph\mathrm{nilp}}_{\underline{A},n}(R)
\]
where the left hand side is the subcategory of $\mathrm{FFG}_n(R)$ spanned by the objects whose restriction to $\Spec B/\mathfrak{c}$ is connected.
\end{introthm}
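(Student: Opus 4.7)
The plan is to build on Theorem~\ref{introthm:main}, showing that, under the hypothesis of $\mathfrak{c}$-nilpotence on the BK side, the evaluation of an $F$-gauge at the specific prism $(A,I')$ loses no information. The functor in question composes the inverse of $\mathcal{G}$ with evaluation at $(A,I')$; for a general $F$-gauge over $R$ this evaluation is not an equivalence, because the full $F$-gauge records compatible data over the whole absolute prismatic site of $R$, while a BK window records only the data at a single prism. The content of the theorem is that $\mathfrak{c}$-nilpotence supplies the convergence needed to spread the BK window data over the rest of the prismatic site, in the spirit of Kisin's original classification of connected $p$-divisible groups over a mixed-characteristic discrete valuation ring.

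First I would verify that the functor interchanges the two conditions. Over a perfect field of characteristic $p$, the classical Dieudonn\'e dictionary equates connectedness of a finite flat $p$-power torsion group scheme with nilpotence of appropriate Frobenius or Verschiebung operators on its Dieudonn\'e module or that of its Cartier dual; under the covariant conventions of this paper, this becomes the condition $F_{\mathsf{N}^*}$ nilpotent. Base-change compatibility of the functor, together with reduction to perfect residue fields of geometric points of $\Spec B/\mathfrak{c}$, then yields the correspondence over the mod-$\mathfrak{c}$ fiber.

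For full faithfulness and essential surjectivity, I would work directly with the $F$-gauge formalism of Theorem~\ref{introthm:main}, treating $\mathfrak{c}$-nilpotence as a crystal-like descent condition. A $\mathfrak{c}$-nilpotent BK window should determine a unique $F$-gauge over all of $R^{\mathrm{syn}}$ by a crystalline spreading construction: the value at another prism $(B',J)$ of $R$ is produced from the data at $(A,I')$ via a canonical formal series involving powers of $F_{\mathsf{N}^*}$, which converges thanks to nilpotence together with the $\mathfrak{c}$-adic derived completeness hypothesis. Morphisms are treated similarly. One complementary check is to compare this construction against case (2) of Theorem~\ref{introthm:classical_enough} on any fpqc localization of the closed fiber that happens to be $F$-finite and $F$-nilpotent, which pins down the essential image. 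The main obstacle will be making this spreading rigorous: controlling the convergence of the formal series, verifying that the resulting $F$-gauge satisfies the correct Tor-amplitude and Hodge-Tate weight constraints, and checking compatibility with the Cartier dual and Breuil-Kisin twist structures.
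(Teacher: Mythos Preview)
Your verification that the functor exchanges connectedness with $\mathfrak{c}$-nilpotence is correct and matches the paper (Remark~\ref{rem:connectedness_and_nilpotence}). The gap is in the construction of the inverse. Your proposal to ``spread'' a BK window to an $F$-gauge over all of $R^{\mathrm{syn}}$ by a convergent series in powers of $F_{\mathsf{N}^*}$ is not a proof as stated: you have not specified along which map of prisms you are lifting, nor why that map has the surjectivity needed for the iterative argument to make sense, nor how to globalize beyond a single prism. Over a general $R$ there is no direct map from $(A,I')$ to the initial prism of each semiperfectoid cover, so ``spreading over the prismatic site from $(A,I')$'' does not have an obvious meaning. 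Your appeal to case~(2) of Theorem~\ref{introthm:classical_enough} is also misplaced: that result imposes hypotheses on $R/pR$, not on the group scheme, and is proved \emph{after} the present corollary in the paper's logical order.

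The paper's argument supplies exactly the missing structure. First it reduces by quasisyntomic descent (Remark~\ref{rem:adm_dieu_qsynt_descent}) to $R$ semiperfectoid, where both $\underline{\Prism}_R$ and the base-changed $\underline{A}$ map \emph{surjectively} to the Witt frame $\underline{W(R)}$. A unique lifting principle for nilpotent divided Dieudonn\'e complexes along surjective frame maps (Proposition~\ref{prop:deformation_theory_nilpotent}, whose proof does use a convergent-series argument of the kind you gesture at) then gives equivalences $\mathrm{DDC}^{\mathrm{nilp}}_{\underline{\Prism}_R}(R)\simeq\mathrm{DDC}^{\mathrm{nilp}}_{\underline{W(R)}}(R)\simeq\mathrm{DDC}^{\mathrm{nilp}}_{\underline{A}}(R)$; the Witt frame is the common target that lets one compare the two. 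This handles the case where $\mathfrak{c}$ is nilpotent. For general $\mathfrak{c}$, the paper passes to the tower $R/{}^{\mathbb{L}}(t_1^k,\ldots,t_m^k)$ and invokes an integrability result for divided Dieudonn\'e complexes (Proposition~\ref{prop:adm_dieu_integrable}) together with the $\mathfrak{c}$-adic completeness hypothesis. Both the Witt-frame intermediary and the two-step reduction are essential and absent from your outline.
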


\begin{remark}
When $R$ is an $\Field_p$-algebra (so that $I' = pA$), this is essentially due to Zink and Lau; see Remark~\ref{rem:lau_zink_lifts}. The special case where $R$ is a regular complete local ring is also already known by work of Lau~\cite{MR2679066}. 
\end{remark}

\begin{remark}
    The methods used for the proof here also recover the classification, due to Zink and Lau, of connected group schemes in terms of Witt vector displays; see Remark~\ref{rem:lau_zink_connected}.
\end{remark}

\begin{introthm}
   [Classification for semiperfectoid rings, Corollary~\ref{cor:semiperf_bk_case}, Remark~\ref{rem:cotangent_complex_semiperfectoid_deg_0}]
Suppose that $R$ is semiperfectoid with $R/pR$ $F$-nilpotent and that $R_0\to R$ is a surjection from a perfectoid ring. Set $K = \ker(A_{\mathrm{inf}}(R_0)\to A)$, and suppose that the semilinear operator $\overline{\delta}$ on $K/K^2$ induced from the $\delta$-structures on $A_{\mathrm{inf}}(R_0)$ and $A$ is topologically locally nilpotent. Then the functor from Remark~\ref{remark:ffg_to_bk} is an exact equivalence.
\end{introthm}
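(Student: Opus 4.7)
The strategy is to combine Theorem~\ref{introthm:classical_enough} with an identification, in the semiperfectoid case, of the divided Dieudonn\'e complex category with the Breuil-Kisin window category.

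Since $R$ is semiperfectoid it is in particular $p$-quasisyntomic, so case (1) of Theorem~\ref{introthm:classical_enough} provides an exact equivalence $\mathrm{FFG}_n(R) \simeq \Mod{\Int/p^n\Int}(\mathrm{DDC}_{\Prism_{\mathrm{cl}}}^{[-1,0]}(R))$. My plan is then to produce an exact equivalence between the right-hand side and $\mathrm{BK}_{\underline{A},n}(R)$ that intertwines with the functor from Remark~\ref{remark:ffg_to_bk}.

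To simplify the DDC side, I would exploit that $R$ is a covering object in its own quasisyntomic site, so quasisyntomic sheaves are controlled by their values on $R$. Using the $\overline{\delta}$-nilpotence hypothesis on $K/K^2$---together with Remark~\ref{rem:cotangent_complex_semiperfectoid_deg_0} and the $F$-nilpotence of $R/pR$---to force the derived prismatic cohomology $\Prism_R$ to be classical and $p$-torsion-free, one gets $\Prism_{R,\mathrm{cl}} = A$ as classical prisms. Under this identification, $\Mod{\Int/p^n\Int}(\mathrm{DDC}_{\Prism_{\mathrm{cl}}}^{[-1,0]}(R))$ becomes the category of tuples $(\mathcal{M}, \Psi, \Fil^0_{\mathrm{Hdg}} M, \xi)$ where $\mathcal{M}$ is a perfect $A$-complex of Tor amplitude $[-1,0]$ with $p^n$-torsion cohomology, $\Psi : \mathcal{M} \to \varphi^*\mathcal{M}$ is an $A$-linear map, and the Hodge filtration lives on $M = R \otimes_A \mathcal{M}$.

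I would then match this simplified category with $\mathrm{BK}_{\underline{A},n}(R)$ directly. Because $\mathcal{M}$ has Tor amplitude $[-1,0]$ and $p^n$-torsion cohomology, it is concentrated in degree $0$ and is a $p^n$-torsion $A$-module $\mathsf{N}$ of projective dimension $\le 1$---exactly the underlying module of a BK window. Fixing a generator $d$ of $I'$, the map $V_{\mathsf{N}}$ is extracted from $\Psi$ via $V_{\mathsf{N}}(d \otimes x) = \Psi(x)$ under the canonical identification of $\cofib(\Psi)$ with $\overline{A} \otimes_R \gr^{-1}_{\mathrm{Hdg}} M$; $F_{\mathsf{N}}$ is then reconstructed from the isomorphism $\xi$ together with the Hodge filtration data, and the relations $F_{\mathsf{N}} \circ V_{\mathsf{N}} = d \cdot \mathrm{id}$ and $V_{\mathsf{N}} \circ (1 \otimes F_{\mathsf{N}}) = d \cdot \mathrm{id}$ are forced by the cofiber description of $\Psi$. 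The inverse construction proceeds symmetrically, and compatibility with the functor of Remark~\ref{remark:ffg_to_bk} is verified by comparing syntomic cohomology computations on both sides.

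The main obstacle I expect is the first step of the third paragraph: showing that $\Prism_R$ is classical and identifies with $A = A_{\mathrm{inf}}(R_0)/K$ as a classical $p$-torsion-free prism under the stated nilpotence hypothesis. This is a genuine structural input, likely requiring a Lau-style argument in the spirit of~\cite{lau2018divided} in which the $\overline{\delta}$-nilpotence of $K/K^2$ is leveraged to prove that a Koszul-type derived quotient of $A_{\mathrm{inf}}(R_0)$ by $K$ collapses to its classical truncation in degree zero. Once this prismatic input is in place, the remaining identification with BK windows reduces to a bookkeeping exercise translating between two equivalent module-theoretic descriptions of the same data.
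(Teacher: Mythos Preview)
Your proposal has a genuine gap, and the issue is precisely the step you flagged as the ``main obstacle''---but the obstacle is worse than you think, because the statement you want to prove there is false.

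First, a minor correction: semiperfectoid does \emph{not} imply $p$-quasisyntomic in general (only quasiregular semiperfectoid does), so case (1) of Theorem~\ref{introthm:classical_enough} is not directly available. The paper instead uses the $F$-nilpotence of $R/pR$ (via Proposition~\ref{prop:F-nilp-nilpotent}) to pass from $\underline{\Prism}_R$ to $\underline{\Prism}_{R,\mathrm{cl}}$; this is a deformation-theoretic step, not case (1).

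The real problem is your claim that the $\overline{\delta}$-nilpotence forces $\Prism_{R,\mathrm{cl}} = A$ as classical prisms. This is simply not true: the map $\Prism_{R,\mathrm{cl}} \to A$ is surjective but typically has a large kernel $K^A_{\mathrm{cl}}$. For instance, if $R = R_0/(x)$ for $x$ in a perfectoid $R_0$, then $A$ is a quotient of $A_{\mathrm{inf}}(R_0)$ by a single element, while $\Prism_R$ is a prismatic envelope---a much larger ring involving iterated $\delta$-operations. The nilpotence hypothesis does \emph{not} make $K^A_{\mathrm{cl}}$ vanish; what it does is make the divided Frobenius operator $\dot{\varphi}^{\underline{A}}_{1,\mathrm{cl}}$ on $K^A_{\mathrm{cl}}\{1\}$ topologically locally nilpotent. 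The paper's Proposition~\ref{prop:deformation_theory} (a unique lifting principle) then shows that this nilpotence is exactly what is needed for the base-change functor $\mathrm{DDC}_{\underline{\Prism}_{R,\mathrm{cl}}}(R) \to \mathrm{DDC}_{\underline{A}}(R)$ to be an equivalence---even though the underlying rings differ. The translation from the cotangent-complex condition on $H^{-1}(\mathbb{L}_A) \simeq K/K^2$ to this divided-Frobenius nilpotence is carried out in Remarks~\ref{rem:cotangent_complex_semiperfectoid} and~\ref{rem:cotangent_complex_semiperfectoid_deg_0}. So the correct argument is a two-step deformation along $\underline{\Prism}_R \to \underline{\Prism}_{R,\mathrm{cl}} \to \underline{A}$, followed by the identification $\mathrm{DDC}^{[-1,0]}_{\underline{A}} \simeq \mathrm{BK}_{\underline{A},n}$ (which is Proposition~\ref{prop:BK_gauge}); no identification of prisms is possible or needed.
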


\begin{remark}
This theorem recovers the following already known classifications:
\begin{enumerate}
   \item (Gabber, Lau) Over perfect rings $R$ in terms of $p$-power torsion Dieudonn\'e modules over $W(R)$ (Remark~\ref{rem:perfect_case}).
   \item (Scholze-Weinstein, Ansch\"utz-LeBras, Lau) Over perfectoid rings $R$ in terms of $p$-power torsion Breuil-Kisin modules over $A_{\mathrm{inf}}(R)$ (Example~\ref{ex:perfect_prisms}).
   \item (Lau) Over $F$-nilpotent semiperfect rings, in terms of torsion Dieudonn\'e modules~\cite[\S 10.3]{MR3867290}. Lau's result applies more generally, however.
\end{enumerate}
\end{remark}

We also obtain the following generalization of the main Theorem from~\cite{MR1235021}, where it is shown when the residue field is perfect (Example~\ref{ex:complete_local_dejong}).
\begin{introthm}
   [Classification over complete local $\Field_p$-algebras, Remark~\ref{rem:complete_local_rings}]
\label{introthm:dejong}
Suppose that $R$ is a complete local Noetherian ring with maximal ideal $\mx$ whose residue field admits a finite $p$-basis. Suppose also that the divided Frobenius operator $`d\varphi/p'$ on $\widehat{\Omega}^1_{A}$ induces a nilpotent endomorphism of $\Omega^1_{R/\Field_p}/\mx\Omega^1_{R/\Field_p}$, and that a certain operator $\gamma_{\underline{A}}$ on $\alphap(R) = \{a\in R:\; a^p=0\}$ is nilpotent mod-$\mx$. Then the functor from Remark~\ref{remark:ffg_to_bk} is an exact equivalence.
\end{introthm}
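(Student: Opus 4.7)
The approach is to deduce this from the semiperfectoid classification in Corollary~\ref{cor:semiperf_bk_case}. Since the residue field $k$ of $R$ admits a finite $p$-basis, lifts of that $p$-basis together with a system of generators of $\mathfrak{m}$ produce a surjection $R_0 \twoheadrightarrow R$ from a perfectoid (here: perfect) $\mathbb{F}_p$-algebra $R_0$, built as the $\mathfrak{m}$-adic completion of the perfection of the corresponding polynomial $\mathbb{F}_p$-algebra; in particular $R$ is semiperfectoid. At the same time, Cohen's structure theorem and the chosen lifts of the $p$-basis yield a $p$-adic formally smooth lift $A$ of $R$ equipped with a Frobenius lift $\varphi$, giving the $p$-torsion-free prism $(A, pA)$ relative to which the functor of Remark~\ref{remark:ffg_to_bk} is defined.

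The heart of the proof is to analyze the kernel $K = \ker(A_{\mathrm{inf}}(R_0) \to A)$ and the induced semilinear operator $\overline{\delta}$ on $K/K^2$. The surjection $R_0 \twoheadrightarrow R$ decomposes conceptually into a ``geometric'' part, reflecting the formally smooth structure coming from the $p$-basis and parameters, and a ``$p$-torsion'' part, encoded by $\alphap(R)$. Correspondingly, one constructs a two-step filtration on $K/K^2$ whose graded pieces are identified with $\widehat{\Omega}^1_{A} \otimes_{A} R$ and $\alphap(R)$, and with respect to which $\overline{\delta}$ restricts to the divided Frobenius $d\varphi/p$ on the first piece and to the operator $\gamma_{\underline{A}}$ on the second.

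Granting this identification, the two nilpotence hypotheses of the theorem yield mod-$\mathfrak{m}$ nilpotence of $\overline{\delta}$ on each graded piece, hence on $K/K^2$ itself. A standard $\mathfrak{m}$-adic successive approximation, using the completeness of $R$, then promotes this to the topological local nilpotence required by Corollary~\ref{cor:semiperf_bk_case}. The remaining hypothesis of that corollary, namely $F$-nilpotence of $R/pR = R$ in the sense of Lau, follows from the nilpotence of $\gamma_{\underline{A}}$ mod $\mathfrak{m}$ combined with $\mathfrak{m}$-adic completeness. Applying the corollary then delivers the claimed exact equivalence.

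The main obstacle is the second paragraph: constructing the filtration on $K/K^2$ and verifying both that its graded pieces are $\widehat{\Omega}^1_{A} \otimes_{A} R$ and $\alphap(R)$ and that $\overline{\delta}$ restricts to $d\varphi/p$ and $\gamma_{\underline{A}}$ under this identification. This demands careful bookkeeping with the $\delta$-structures on $A_{\mathrm{inf}}(R_0)$ and on $A$ and with the discrepancy between the chosen Frobenius lift on $A$ and the Teichm\"uller-type lift inherited from $R_0$; it is exactly this discrepancy that injects the $\alphap(R)$-summand into $K/K^2$ and forces the appearance of the second nilpotence hypothesis.
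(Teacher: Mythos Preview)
Your approach has a fundamental gap: the ring $R$ is \emph{not} semiperfectoid in general, so Corollary~\ref{cor:semiperf_bk_case} does not apply. For an $\Field_p$-algebra, being semiperfectoid means being semiperfect, i.e.\ having surjective Frobenius. But if $b$ is part of a $p$-basis of the residue field, or if $x\in\mx$ is a regular parameter, then neither $b$ nor $x$ has a $p$-th root in $R$ (think of $R=\Field_p[[t]]$). Your proposed surjection $R_0\twoheadrightarrow R$ from the completed perfection of a polynomial ring would have to send $B_i^{1/p}$ and $X_j^{1/p}$ to $p$-th roots of $b_i$ and $x_j$ in $R$, which do not exist; the map simply cannot be defined. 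Consequently the entire analysis of $K=\ker(A_{\mathrm{inf}}(R_0)\to A)$ never gets off the ground.

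The paper's route is quite different and does not attempt to exhibit $R$ as semiperfectoid. Instead it shows that the obstruction complex $C^A_{\mathrm{cl}}$ of Construction~\ref{const:CA_construction} is nullhomotopic. This is Proposition~\ref{prop:simple_cotangent_vanishing}: using the Bragg--Olsson/Bhatt--Lurie comparison between $\mup$- and $\alphap$-cohomology (Remarks~\ref{rem:alphap_cotangent_complex}--\ref{rem:direct_cotangent_complex}), one identifies $C^A/{}^{\mathbb{L}}p$ with the fiber of $\mathrm{id}-f_{\underline{A}}$ on $\mathbb{L}_{R/\Field_p}[-1]$, and the two nilpotence hypotheses (on $\Omega^1_{R/\Field_p}$ and, via Lemma~\ref{lem:gamma_delta_compatibility}, on $\alphap(R)$) force this map to be an automorphism. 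One then invokes Corollary~\ref{cor:complete_local_CA_vanishing}, whose proof (via Proposition~\ref{prop:constant_rank}) is a d\'evissage in the style of de~Jong: the functor is already an equivalence on $\mathfrak{m}$-connected objects by Corollary~\ref{cor:nilpotent_connected}, and the vanishing of $C^A_{\mathrm{cl}}$ controls the gluing with the \'etale part through the long exact sequence of Remark~\ref{rem:ordinary_nilpotence_condition}. Quasisyntomic descent to semiperfectoids is used inside those earlier results, but never to assert that $R$ itself is semiperfectoid.
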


\begin{remark}
Some of the results flowing into the proof of Theorem~\ref{introthm:dejong} can also be used to recover results of Lau from~\cite{MR2679066} for regular local rings, which have their antecedents in~\cite{kisin:f_crystals} and related works. See Example~\ref{ex:regular_complete_local}.
\end{remark}

\subsection{Strategy of the proof of Theorem~\ref{introthm:main}}
\label{subsec:strategy_of_proof}
Starting from the recent work in \cite{MR4530092}, \cite{gmm} and \cite{Mondal2024-cy}, 
the basic idea behind the proof of Theorem~\ref{introthm:main} is quite simple, and one that appears quite often in the subject: Reduce the classification of finite flat group schemes to that of $p$-divisible groups using Raynaud's theorem~\cite[Th\'eor\`eme 3.1.1]{bbm:cris_ii}, which tells us that every such group scheme is Zariski locally the kernel of an isogeny of $p$-divisible groups. However, there are subtleties arising from the lack of functoriality of these choices of $p$-divisible groups. Using our functor $\mathcal{G}$ partially resolves this issue. Indeed, using Raynaud's theorem and~\cite[Theorem A]{gmm}, one finds that our functor $\mathcal{G}$ is Zariski locally essentially surjective. But, in order to see that it is fully faithful and (hence) essentially surjective on the nose, we find ourself needing to \emph{a priori} show an $F$-gauge analogue of Raynaud's theorem (as well as certain exactness properties of our constructions):

\begin{introthm}
   [Raynaud for $F$-gauges, Theorem~\ref{thm:syntomic_raynaud}]
\label{introthm:raynaud}
Given $\mathcal{M}\in \mathsf{P}^{\mathrm{syn}}_{\{0,1\}}(R)$, there exists an affine $p$-completely pro-\'etale cover $\Spf R'\to \Spf R$, and a cofiber sequence of perfect $F$-gauges over $R'$
\[
   \mathcal{V}_1\to \mathcal{V}_2\to \mathcal{M}\vert_{R^{',\mathrm{syn}}}
\]
where $\mathcal{V}_1$ and $\mathcal{V}_2$  are vector bundle $F$-gauges over $R'$ of Hodge-Tate weights $\{0,1\}$ 
\end{introthm}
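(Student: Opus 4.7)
\emph{Strategy of the proof.} The plan is to use pro-\'etale descent to replace $R$ by a perfectoid cover $R'$ on which $\mathsf{P}^{\mathrm{syn}}_{\{0,1\}}$ admits an explicit description in Breuil-Kisin terms, and then to construct the required cofiber sequence at the level of Breuil-Kisin modules over $A_{\mathrm{inf}}(R')$.

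Both $\mathsf{P}^{\mathrm{syn}}_{\{0,1\}}$ and the property of being a vector bundle $F$-gauge of Hodge-Tate weights $\{0,1\}$ satisfy $p$-completely faithfully flat descent, so we may freely pass to a $p$-completely pro-\'etale cover $R\to R'$, and may arrange $R'$ to be a product of absolutely integrally closed $p$-adic valuation rings. Writing $A=A_{\mathrm{inf}}(R')$ with distinguished element $\xi$ generating $\ker\theta$, the category $\mathsf{P}^{\mathrm{syn}}_{\{0,1\}}(R')$ is then equivalent to the category of $p^n$-torsion Breuil-Kisin modules $(\mathsf{N},F_{\mathsf{N}},V_{\mathsf{N}})$ as in Definition~\ref{def:bk_windows} (and their perfect complex variants), with vector bundle $F$-gauges of weights $\{0,1\}$ corresponding exactly to finite projective $\mathsf{N}$, and perfect complex objects of Tor amplitude $[-1,0]$ corresponding to $\mathsf{N}$ of projective dimension $\le 1$ over $A$. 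The theorem is thus reduced to showing that every such $(\mathsf{N},F_{\mathsf{N}},V_{\mathsf{N}})$ is the cokernel of a map of Breuil-Kisin modules $(\mathsf{N}_1,F_1,V_1)\to(\mathsf{N}_2,F_2,V_2)$ with $\mathsf{N}_1,\mathsf{N}_2$ finite projective over $A$.

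To produce such a presentation, one begins with a projective resolution $0\to L_1\to L_0\twoheadrightarrow\mathsf{N}\to 0$ of the underlying $A$-module, lifts $F_{\mathsf{N}}$ through the projective $L_0$ to obtain $F_0\colon\varphi^* L_0\to L_0$, and dually lifts $V_{\mathsf{N}}$ to $V_0$ via the Cartier duality mechanism (see Remark~\ref{rem:bk_cartier_duality}); these then restrict to $F_1, V_1$ on $L_1$. The hard part is arranging that the Breuil-Kisin compatibilities $V_0\circ(1\otimes F_0)=1\otimes\xi$ and $F_0\circ V_0=1\otimes\xi$ hold on $L_0$ and not merely modulo $L_1$. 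The discrepancy defines an $\Ext^1$-obstruction class, which I would trivialize by enlarging $L_0,L_1$ by auxiliary free summands (allowing the errors to be absorbed) and, if necessary, by a further pro-\'etale refinement of $R'$ where the required splittings become available—using crucially that $A_{\mathrm{inf}}$ of a product of absolutely integrally closed $p$-adic valuation rings has enough global sections to kill such obstructions. This step is the $F$-gauge analogue of Raynaud's classical strategy of embedding a finite flat group scheme into an auxiliary Barsotti-Tate (or abelian) scheme, now transposed to linear algebra over $A_{\mathrm{inf}}$; the simultaneous correction of $F$ and $V$ to satisfy the $\xi$-compatibilities is the principal technical obstacle.
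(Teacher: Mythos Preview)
There is a fundamental gap in your reduction step. You propose to pass via a $p$-completely pro-\'etale cover $R\to R'$ to a product of absolutely integrally closed $p$-adic valuation rings (in particular a perfectoid ring), and then to work with Breuil--Kisin modules over $A_{\mathrm{inf}}(R')$. But pro-\'etale (or ind-\'etale) maps are flat, hence inject nilpotents: if $R$ is a non-reduced $\Field_p$-algebra such as $\Field_p[x]/(x^2)$, then every ind-\'etale $R$-algebra is again non-reduced, so can never be perfect, let alone perfectoid. The reduction to perfectoid rings simply does not exist for general $R$; it requires a quasisyntomic cover, and quasisyntomic descent for $\mathsf{P}^{\mathrm{syn}}_{\{0,1\}}$ is much weaker than pro-\'etale descent (the statement you are proving is precisely what one needs to control this). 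Even setting this aside, your treatment of the ``hard part'' --- lifting $F_{\mathsf{N}},V_{\mathsf{N}}$ to a projective resolution while preserving the $\xi$-compatibilities --- is only a plan, not an argument: you assert the obstruction can be absorbed into auxiliary summands or killed by further localization, but give no mechanism.

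The paper's route is entirely different and moduli-theoretic. It never reduces to perfectoid rings. Instead, it first proves an $m$-truncated analogue (Lemma~\ref{lem:local_vect_bundle}): for each $m$, after an \'etale cover one can write $\mathcal{M}_m$ as a cofiber of mod-$p^m$ vector bundle $F$-gauges. This is proved by showing that the relevant map of moduli stacks $\mathcal{Y}_m\to\mathcal{X}_m$ is smooth, using the Grothendieck--Messing deformation theory of Theorems~\ref{thm:HTwts01_representable} and~\ref{thm:perfect_f-gauges_repble}, and then checking surjectivity on algebraically closed points directly (Lemma~\ref{lem:checking_on_points}). The second and more delicate step (Proposition~\ref{prop:lifting_isogenies}) is a finite-presentation result: there is a constant $M(h,n)$ such that any height-$n$ isogeny of mod-$p^N$ vector bundle $F$-gauges with $N\gg 0$ can be lifted to an untruncated isogeny after controlled loss of precision. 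This is proved by deformation theory to reduce to perfect $\Field_p$-algebras, where one argues with explicit Witt-vector matrices. Finally, Lemma~\ref{lem:enough_to_check_mod_n} shows that matching $\mathcal{M}$ modulo $p^n$ already pins down the isomorphism class of the cofiber. The pro-\'etale localization enters only once, to lift a single mod-$p^m$ vector bundle $F$-gauge $\mathcal{V}^{-1}_m$ to an untruncated one; this uses the formal smoothness of $\mathrm{Vect}^{\mathrm{syn}}_{\{0,1\}}$.
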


In the process of proving Theorem~\ref{introthm:raynaud}, we also obtain the following linear algebraic reinterpretation of a classical fact about $p$-divisible groups.
\begin{introthm}
   [Bartling-Hoff for $F$-gauges, Corollary~\ref{cor:bartling_hoff}]
Suppose that $\mathcal{V}$ is a vector bundle $F$-gauge over $R$ with Hodge-Tate weights $0,1$. Consider the formal prestack $\mathcal{X}_V(n)$ over $\Spf R$ parameterizing maps $f:\mathcal{V}\to \mathcal{V}'$ of vector bundle $F$-gauges such that there exists $\hat{f}:\mathcal{V}'\to \mathcal{V}$ with $f\circ \hat{f}$ and $\hat{f}\circ f$ both equal to multiplication by $p^n$. Then $\mathcal{X}_V(n)$ is represented by a finitely presented formal scheme over $\Spf R$.
\end{introthm}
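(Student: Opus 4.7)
\emph{Plan.} The strategy is to use the equivalence of \cite{gmm} between vector bundle $F$-gauges of Hodge-Tate weights $\{0,1\}$ and $p$-divisible groups to translate $\mathcal{X}_V(n)$ into a moduli of finite flat subgroup schemes, and then to invoke standard Quot-scheme representability.

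Let $H$ be the $p$-divisible group over $\Spf R$ corresponding to $\mathcal{V}$. Since that equivalence is covariant, exact, and compatible with base change, over any $p$-nilpotent $R$-algebra $S$ a morphism $f\colon \mathcal{V} \to \mathcal{V}'$ of vector bundle $F$-gauges corresponds to a morphism $\phi\colon H_S \to H'$ of $p$-divisible groups, and the existence of a $\hat{f}$ satisfying $f\hat{f} = \hat{f}f = p^n$ translates to the assertion that $\phi$ is an isogeny with $\ker\phi \subset H_S[p^n]$; in the latter case, $\hat{\phi}$ is automatically unique, determined by factoring $[p^n]\colon H_S \to H_S$ through the quotient $H_S \twoheadrightarrow H_S/\ker\phi \simeq H'$. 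The assignment $(\mathcal{V}',f) \mapsto \ker\phi$ therefore identifies $\mathcal{X}_V(n)$ with the functor sending $S$ to the set of finite locally free closed subgroup schemes $K \subset H_S[p^n]$; the inverse $K \mapsto (H_S/K,\, H_S \twoheadrightarrow H_S/K)$ is well-defined since $K \subset H_S[p^n]$ forces $H_S/K$ to be $p$-divisible. In particular $\mathcal{X}_V(n)$ is already a sheaf.

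It therefore suffices to show that for any finite locally free commutative group scheme $G$ over $\Spf R$ (which we apply to $G = H[p^n]$), the functor of finite locally free closed subgroup schemes of $G$ is represented by a finitely presented formal $R$-scheme. For each $r \ge 0$, the functor of finite locally free closed subschemes of $G$ of rank $r$ is representable by a closed-and-open subfunctor of the Quot scheme $\mathrm{Quot}^{r}_{\mathcal{O}_G/R}$, hence projective over $R$; the subgroup-scheme conditions (containing the identity section, and stability under multiplication and inversion) are further closed conditions, cut out by asking that certain maps of finite presentation factor through the universal subscheme. Summing over $r \le \mathrm{rank}(G)$ finishes the argument. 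The main obstacle is the translation in the preceding paragraph: one must verify that the $F$-gauge equivalence genuinely converts the two-sided inverse condition into the isogeny-with-$p^n$-kernel condition, functorially in $S$. This rests on the exactness of the equivalence (matching cofibers with kernels) together with the aforementioned uniqueness of $\hat{\phi}$ on the group-scheme side.
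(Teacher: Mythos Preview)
Your argument is correct, and indeed the paper explicitly acknowledges this route: the remark immediately following Corollary~\ref{cor:bartling_hoff} says that ``from the optic of $p$-divisible groups, this finite presentation can be easily deduced from that of the scheme parameterizing finite flat subgroups of the $p^n$-torsion of a fixed $p$-divisible group,'' and the remark following Proposition~\ref{prop:translate_isogenies_to_maps_of_stacks} notes that the translation via Theorem~\ref{thm:dieudonne} ``yields another proof.'' So you have found the argument the authors deliberately set aside.

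The paper's own proof is entirely different and intrinsic to the $F$-gauge side. It proceeds by first establishing (Proposition~\ref{prop:translate_isogenies_to_maps_of_stacks}) an effective bound $M(h,n)$ such that the tower $\mathcal{X}_{\mathcal{V}}(n) \to \mathcal{X}_{N,\mathcal{V}}(n)$ of mod-$p^N$ truncated isogeny spaces admits a canonical retraction once $N$ is large relative to $M(h,n)$; finite presentation then follows formally. The bound $M(h,n)$ is produced by using Grothendieck--Messing deformation theory (Lemma~\ref{lem:Zn_groth_messing}) to reduce to perfect $\Field_p$-algebras, and then over $W(R)$ for $R$ perfect by explicit matrix arguments controlling the affine scheme $\mathcal{M}(h,n)$ of pairs $(A,B)$ with $AB = BA = p^n I_h$ under the $\GL_h$-action (Lemmas~\ref{lem:t(h,n)_props}--\ref{lem:r(h,n)_props}). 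Your approach is shorter and more conceptual for this particular statement, but the paper's linear-algebraic method is chosen precisely because it does \emph{not} pass through the Dieudonn\'e equivalence: it therefore generalizes to settings (more general $F$-gauges, Witt vector displays as in Bartling--Hoff) where no such dictionary with $p$-divisible groups is available, and it also yields the effective uniformity in Proposition~\ref{prop:translate_isogenies_to_maps_of_stacks} needed later in the proof of Theorem~\ref{thm:syntomic_raynaud}. One small note: your invocation of exactness at the end is not actually needed---the translation only uses that Theorem~\ref{thm:dieudonne} is an equivalence of $\Int$-linear categories compatible with base change, so that the conditions $f\hat f = \hat f f = p^n$ transport verbatim.
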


\begin{remark}
   Via Theorem~\ref{introthm:main}---or more directly, its antecedent in~\cite{gmm} for truncated Barsotti-Tate groups---$\mathcal{V}$ corresponds to a $p$-divisible group $\mathcal{G}$ over $R$, and the above theorem translates to the assertion that isogenies $\mathcal{G}\to \mathcal{G}'$ of degree $\leq p^n$ are represented by a finitely presented formal scheme. This is of course a consequence of the fact that finite flat subgroups of $\mathcal{G}[p^n]$ are parameterized by a projective formal scheme over $\Spf R$. The point here is that we are able to give a `linear algebraic' proof of this in the setting of $F$-gauges, which generalizes to other contexts where such a translation is not possible. This is partly inspired by a result of Bartling and Hoff~\cite{bartling_hoff}, who did the same, but in the context of the Witt vector displays of Lau and Zink.
\end{remark}

With Theorem~\ref{introthm:raynaud} in hand, the remaining key point for establishing full faithfulness turns out to be the exactness of the equivalence established in~\cite{gmm}. For quasisyntomic rings, this exactness is shown in~\cite{Mondal2024-cy}. Our strategy is to reduce to this case (see Remark \ref{compatiblity1}) by proving the next result, which was anticipated by Grothendieck~\cite{GrothLetter}. This is combined with the $F$-gauge analogue of this smoothness, shown in~\cite{gmm}, to complete the reduction.
\begin{introthm}
   [Smoothness of Ext stacks, Theorem~\ref{thm:smoothness_of_extensions}]\label{introthmK}
Suppose that we are given two $n$-truncated Barsotti-Tate group schemes $G(n),H(n)$ over $\Spf R$ with $p^{n-1}$-torsion subgroups $G(n-1),H(n-1)$. Then the formal prestack $\underline{\Ext}(G(n),H(n))$ over $\Spf R$ parameterizing $n$-truncated Barsotti-Tate groups exhibited as extensions of $G(n)$ by $H(n)$ is $p$-completely smooth over $\Spf R$. Moreover, the natural map
\[
   \underline{\Ext}(G(n),H(n))\to \underline{\Ext}(G(n-1),H(n-1))
\]
is smooth and surjective.
\end{introthm}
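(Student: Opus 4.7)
The plan is to translate the statement into the world of vector bundle $F$-gauges via the equivalence from~\cite{gmm} between $n$-truncated Barsotti-Tate groups and vector bundle $F$-gauges of Hodge-Tate weights in $\{0,1\}$, and then deduce it from the analogous $F$-gauge smoothness result established in that paper.

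First, I would note that the restriction of $\mathcal{G}$ to vector bundle $F$-gauges is an exact equivalence compatible with base change, so the inverse functor identifies the formal prestack $\underline{\Ext}(G(n),H(n))$ with the corresponding prestack $\underline{\Ext}(\mathcal{V}_G, \mathcal{V}_H)$ of extensions of the associated vector bundle $F$-gauges $\mathcal{V}_G, \mathcal{V}_H$. The observation enabling this translation is that the full subcategory of vector bundle $F$-gauges inside $\mathsf{P}^{\mathrm{syn}}_{\{0,1\}}(R)$ is closed under extensions: being a vector bundle is local on $R^{\mathrm{syn}}$ and stable under taking extensions.

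Second, I would verify $p$-completely smoothness of $\underline{\Ext}(\mathcal{V}_G,\mathcal{V}_H) \to \Spf R$ via the infinitesimal lifting criterion. Given a square-zero thickening $R_1 \twoheadrightarrow R_0$ of $p$-nilpotent $R$-algebras with kernel $I$ and an extension $0 \to \mathcal{V}_H|_{R_0} \to \mathcal{E}_0 \to \mathcal{V}_G|_{R_0} \to 0$ over $R_0$, the obstruction to lifting $\mathcal{E}_0$ to an extension $\mathcal{E}_1$ over $R_1$ with the prescribed ends lies in an $\Ext^2$ group in the category of perfect $F$-gauges. This obstruction is controlled by the syntomic cohomology of the $F$-gauge $\mathcal{V}_G^\vee \otimes \mathcal{V}_H \otimes I$, which has Hodge-Tate weights in $\{-1,0,1\}$. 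A careful analysis, carried out in~\cite{gmm}, shows that this obstruction vanishes. The surjectivity and smoothness of the map to $\underline{\Ext}(G(n-1),H(n-1))$ follow from an analogous argument applied to the coefficient sequence $0 \to p^{n-1}\mathbb{Z}/p^n \to \mathbb{Z}/p^n \to \mathbb{Z}/p^{n-1} \to 0$.

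The main obstacle is the $\Ext^2$ vanishing underlying the infinitesimal lifting. Syntomic cohomology of $F$-gauges of low Hodge-Tate weight does not vanish in degree $2$ in general, so one must exploit the specific structure of obstructions arising from square-zero deformations of vector bundle extensions; the relevant vanishing uses the local-freeness of the Hodge filtration data and is the content of the $F$-gauge smoothness statement in~\cite{gmm}. A further technical subtlety is ensuring that the identification of Ext prestacks in the first step is compatible with the deformation theory, so that an obstruction class on the $F$-gauge side really corresponds to the obstruction on the group-scheme side.
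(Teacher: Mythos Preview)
Your approach is circular. The first step---identifying $\underline{\Ext}(G(n),H(n))$ with $\underline{\Ext}(\mathcal{V}_G,\mathcal{V}_H)$ via the inverse of $\mathcal{G}$---presupposes that the equivalence $\mathcal{G}$ between vector bundle $F$-gauges and $n$-truncated Barsotti--Tate groups is \emph{exact}, i.e.\ that it matches extensions with extensions. But this exactness is precisely Theorem~\ref{thm:exactness}, and in the paper's logical structure Theorem~\ref{thm:smoothness_of_extensions} is proved \emph{in order to establish} Theorem~\ref{thm:exactness} (see Remark~\ref{rem:geometric_statement}). The paper explicitly notes that the exactness ``was not addressed'' in~\cite{gmm} and is the content of \S\ref{sec:exactness_of_the_syntomic_dieudonne_functor}. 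What \emph{is} known a priori is only that the map $\Gamma_{\mathrm{syn}}(\mathcal{N})\to \underline{\Ext}(G_3,G_1)$ exists and is an isomorphism on $p$-quasisyntomic inputs (Remarks~\ref{rem:exts_to_ext} and~\ref{rem:exactness_qrsp}); to upgrade this to an identification of stacks one needs smoothness of the \emph{target}, which is the theorem you are trying to prove.

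The paper therefore argues directly on the group-scheme side, following Grothendieck--Illusie: it sets up an obstruction class $\mathrm{ob}(f_0(n))\in \Ext^2_{\Int/p^n\Int}(G_0(n),I\otimes_R t_{H(n)})$ via the Mazur--Roberts fiber sequence (Proposition~\ref{prop:obst_theory}), and then kills it using the functoriality in $n$ together with the vanishing of the restriction map $\Ext^2_{\Int}(G_0(n),I\otimes_R t_{H(n)})\to \Ext^2_{\Int}(G_0(n-1),I\otimes_R t_{H(n)})$ from~\cite[Corollaire 2.2.7]{MR0801922}. Surjectivity on geometric points is handled separately (Lemma~\ref{lem:surj_geometric_points}), and there the $F$-gauge translation \emph{is} legitimate because exactness over algebraically closed fields is already available. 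Your sketch does not supply any of this; invoking ``the $F$-gauge smoothness statement in~\cite{gmm}'' only gives smoothness of $\Gamma_{\mathrm{syn}}(\mathcal{N})$, not of $\underline{\Ext}(G(n),H(n))$, until the two are identified---and that identification is the output, not the input.
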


\section*{Acknowledgments}
K. M. was partially supported by NSF grant DMS-2200804. S.M. was supported by the Institute for Advanced Study, University of British Columbia and a start up grant from Purdue University. 

We thank Luc Illusie for sharing with us Grothendieck's letter~\cite{GrothLetter}. We also thank Bhargav Bhatt, K\k{e}stutis {\v{C}}esnavi\v{c}ius, Brian Conrad, Hai Long Dao, Eike Lau and Shizhang Li for helpful comments and conversations.

\section{Conventions}

Our conventions and notational choices will be as in~\cite{gmm}. In particular, we will freely use $\infty$-categories and all our constructions will be derived unless otherwise noted. Since we will be working mainly with (derived) $p$-complete objects, all tensor products and cotangent complexes appearing here are the $p$-complete versions, again, unless otherwise noted. 

\section{Cohomological stacks and $F$-gauges}
\label{sec:cohomological_stacks_and_f_gauges}

We will be using the formal stacks defined by Drinfeld and Bhatt-Lurie~\cites{bhatt2022absolute,bhatt2022prismatization,bhatt_lectures,drinfeld2022prismatization} geometrizing $p$-adic cohomology theories. For a quick summary and a refresher on the notation, the reader is referred to~\cite[\S 6]{gmm}. Here, we recall what is needed in this paper.

\subsection{Prismatization}

In this subsection, we review the story of the prismatization of $p$-complete animated commutative rings from~\cite{bhatt2022prismatization}. Most of this material will not be used until Section~\ref{sec:divided_dieudonne}. Unless otherwise specified, $R$ will always denote a derived $p$-complete animated commutative ring. We assume that the reader is familiar with animated $\delta$-rings and prisms; see for instance~\cite[\S 2]{bhatt2022prismatization} or~\cite[\S 5.3]{gmm}. We will also assume familiarity with the theory of absolute prismatic cohomology from~\cite{bhatt2022absolute}.

\begin{definition}
   [Cartier-Witt divisors]
A \defnword{Cartier-Witt divisor} on $R$ is a surjective map $\pi:W(R)\twoheadrightarrow \overline{W(R)}$ of animated commutative rings such that two properties hold:
\begin{itemize}
   \item $I = \fib(\pi)$ is a locally free $W(R)$-module of rank $1$;
   \item The map $\pi_0(I) \simeq I\otimes_{W(R)}W(\pi_0(R))\to W(\pi_0(R))$ is a Cartier-Witt divisor in the sense of~\cite[\S 3.1.1]{bhatt2022absolute}. 
\end{itemize}
The second condition means that, Zariski-locally on $\Spec R$, we have a $W(\pi_0(R))$-linear isomorphism $\pi_0(I)\simeq W(\pi_0(R))$ such that the composition $W(\pi_0(R))\simeq \pi_0(I)\to W(\pi_0(R))$ is given by multiplication by a \defnword{distinguished element} $d\in W_{\mathrm{dist}}(\pi_0(R))$, given in Witt coordinates by $(d_0,d_1,\ldots)$ with $d_0\in \pi_0(R)$ nilpotent mod-$p$ and with $d_1\in \pi_0(R)^\times$. We will usually denote the Cartier-Witt divisor by the map $I\to W(R)$.
\end{definition}

\begin{definition}
[Prismatizations]
The $p$-adic formal prestack $\Int_p^{\Prism}$ (also known as the \defnword{Cartier-Witt stack} $\mathrm{WCart}$) is the fpqc sheaf on $p$-complete animated commutative rings whose values on $R$ are given by the $\infty$-groupoid of Cartier-Witt divisors on $R$. Over $\Int_p^{\Prism}$ we have the ring stack $\Ga^{\Prism}$ associating with any Cartier-Witt divisor $I\to W(R)$ the quotient $\overline{W(R)}$. For any $p$-complete animated commutative ring $C$, we can now use the process of \emph{transmutation} to get its \defnword{prismatization} $C^\Prism$, which is the formal (derived) prestack over $\Int_p^\Prism$ parameterizing maps $C\to \overline{W(R)} = \Ga^\Prism(R)$ of $p$-complete animated commutative rings. 
\end{definition}

\begin{remark}
   The prismatization of $\Int_p$ is the Cartier-Witt stack. We have $\Ga^\Prism = (\Int_p[x]^{\wedge}_p)^{\Prism}$.
\end{remark}

\begin{remark}
   The assignment $\Spf C\to C^{\Prism}$ is an \'etale sheaf that preserves all limits and takes $p$-completely \'etale maps to $p$-completely \'etale maps of formal $p$-adic stacks. In particular, one can extend it to a functor $X\to X^{\Prism}$ on $p$-adic formal (derived) algebraic spaces such that $(\Spf C)^{\Prism} = C^{\Prism}$.
\end{remark}

\begin{remark}
   [$\delta$-structure on $C^\Prism$]
\label{rem:delta_structure_prismatization}
There is a canonical endomorphism $\varphi:C^\Prism\to C^\Prism$ lifting the Frobenius endomorphism of the $\Field_p$-stack $C^\Prism\otimes\Field_p$: This takes a Cartier-Witt divisor $I\to W(R)$ to its pullback $F^*I\to W(R)$ under the endomorphism $F:W(R)\to W(R)$ and replaces $C\to \overline{W(R)}$ with its composition with the map $\overline{W(R)}\to W(R)/{}^{\mathbb{L}}F^*I$ induced by $F$. 
\end{remark}

\begin{definition}
[The Hodge-Tate locus]
The \defnword{Hodge-Tate locus} $\Int_p^{\mathrm{HT}}\to \Int_p^\Prism$ is the locus where the map $I\otimes_{W(R)}R\to R$ vanishes. For any $R$, we set $R^{\mathrm{HT}} = R^\Prism\times_{\Int_p^\Prism}\Int_p^{\mathrm{HT}}$, and refer to it as the Hodge-Tate locus of $R^\Prism$.
\end{definition}

\begin{remark}
\label{rem:hodge-tate_locus}
There is a canonical map $\Spf \Int_p\to \Int_p^{\mathrm{HT}}$ given by the Cartier-Witt divisor $W(\Int_p)\xrightarrow{V(1)}W(\Int_p)$. This presents $\Int_p^{\mathrm{HT}}$ as the classifying stack $B\Gm^\sharp$. See~\cite[Theorem 3.4.13]{bhatt2022absolute}.
\end{remark}

\begin{construction}
   [The de Rham point]
\label{const:de_rham_point}
There is a canonical map $x_{\dR}:\Spf C\to C^\Prism$ obtained as follows: The underlying Cartier-Witt divisor is $W(C)\xrightarrow{p}W(C)$, and the map $C\to W(C)/{}^{\mathbb{L}}p$ is obtained as the one canonically factoring the composition $W(C)\xrightarrow{F}W(C)\to W(C)/{}^{\mathbb{L}}p$.
\end{construction}

\begin{remark}
[Prisms and the prismatization]
\label{rem:prisms_and_prismatization}
Suppose that we have $(A,I,R\to \overline{A})$ in the (animated) absolute prismatic site for $R$. Endow $A$ with the $(p,I)$-adic topology. As in~\cite[Construction 3.10]{bhatt2022prismatization}, we find a canonical map $\iota_{(A,I)}:\Spf A\to R^{\Prism}$ classifying the Cartier-Witt divisor $I\otimes_AW(A)\to W(A)$ obtained from the prism structure on $A$, along with the structure map 
\[
R\xrightarrow{\overline{\varphi}}\overline{A}\to \overline{A}\otimes_AW(R)=W(R)/^{\mathbb{L}}(I\otimes_AW(R)). 
\]
\end{remark}

The next result follows from~\cite[Corollary 7.18]{bhatt2022prismatization}. See also~\cite[Theorem 3.3.7]{holeman2023derived}.
\begin{theorem}
[Prismatizations via prismatic cohomology I]
\label{thm:bl_holeman}
Suppose that $R$ is semiperfectoid. Then the absolute prismatic cohomology $\Prism_R$ underlies the initial (animated) prism $(\Prism_R,I_R,R\to \overline{\Prism}_R)$ for $R$, and the associated map $\Spf \Prism_R \to R^\Prism$ is an isomorphism.
\end{theorem}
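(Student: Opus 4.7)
The plan is to construct $\Prism_R$ explicitly as a prismatic envelope built from a perfectoid cover of $R$, establish its universal property as the initial animated prism for $R$, and then deduce the identification with $R^{\Prism}$ from the adjunction between $\delta$-rings and ordinary rings.

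Since $R$ is semiperfectoid, I would begin by choosing a surjection $R_0 \twoheadrightarrow R$ from a perfectoid ring. The perfect prism $(A_{\inf}(R_0), I_{R_0})$ then maps to $R$ via $A_{\inf}(R_0) \twoheadrightarrow R_0 \twoheadrightarrow R$; let $\tilde J$ denote the kernel of this composition. The candidate for $\Prism_R$ is the derived $(p,I_{R_0})$-completed prismatic envelope, obtained by animating the $\delta$-$A_{\inf}(R_0)$-algebra formed by freely adjoining divisions $\tilde x / d$ for $\tilde x \in \tilde J$ and $d$ a distinguished generator of $I_{R_0}$. This construction naturally carries an animated $\delta$-structure, and the ideal $I_R := I_{R_0} \cdot \Prism_R$ is a generalized Cartier divisor, producing an animated prism $(\Prism_R, I_R, R \to \overline{\Prism}_R)$. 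That the underlying ring agrees with the absolute prismatic cohomology follows from the standard formula for derived prismatic cohomology of semiperfectoid rings. The universal property of the envelope yields the initial-prism property: given any animated prism $(B, J, R \to \overline B)$, perfectness of $A_{\inf}(R_0)$ produces a unique lift of $R_0 \to \overline B$ to $A_{\inf}(R_0) \to B$, which then factors uniquely through $\Prism_R$ by the envelope property.

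For the identification $\Spf \Prism_R \simeq R^\Prism$, the forward map is the one of Remark~\ref{rem:prisms_and_prismatization}. To exhibit an inverse on $C$-valued points, note that a $C$-point of $R^\Prism$ is a Cartier-Witt divisor $I \to W(C)$ together with a structure map $R \to \overline{W(C)}$. The key observation is that a Cartier-Witt divisor on $C$ is precisely an animated prism structure on $W(C)$ equipped with its natural $\delta$-structure: the distinguished-element condition on a generator of the fiber translates exactly into the $\delta$-theoretic condition making $I$ a prism ideal. Together with the map $R \to \overline{W(C)}$, this exhibits $(W(C), I, R \to \overline{W(C)})$ as an object of the animated absolute prismatic site of $R$. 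The initial-prism property now yields a unique map of animated prisms $\Prism_R \to W(C)$, which via the adjunction $\Hom_{\delta\text{-rings}}(\Prism_R, W(C)) \simeq \Hom_{\mathrm{rings}}(\Prism_R, C)$ (since $W$ is right adjoint to the forgetful functor) corresponds uniquely to a map $\Prism_R \to C$. Unwinding Remark~\ref{rem:prisms_and_prismatization} shows this is inverse to the forward map, and since both sides are fpqc sheaves the natural transformation of functors of points upgrades to an isomorphism of formal $p$-adic stacks.

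The principal technical obstacle is the careful handling of the animated/derived setting: verifying that the derived prismatic envelope inherits a genuine animated prism structure (with honest generalized Cartier divisor), ensuring that the adjunction between $\delta$-rings and ordinary rings behaves well in the animated context, and justifying the promised equivalence between Cartier-Witt divisors on an animated ring $C$ and animated prism structures on $W(C)$. These are the points requiring real work, and they constitute the technical content of the cited references to Bhatt-Lurie and Holeman.
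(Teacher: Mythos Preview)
The paper does not give its own proof of this theorem; it simply records the statement and cites \cite[Corollary~7.18]{bhatt2022prismatization} and \cite[Theorem~3.3.7]{holeman2023derived}. Your sketch is essentially the argument found in those references: build $\Prism_R$ as a derived prismatic envelope over $A_{\inf}(R_0)$ for a perfectoid presentation $R_0\twoheadrightarrow R$, deduce initiality, and then identify points of $R^{\Prism}$ with maps out of $\Prism_R$ via the $(\text{forget},W)$ adjunction for animated $\delta$-rings. So there is nothing to compare at the level of strategy; you have correctly reconstructed the cited proof.

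Two small points worth tightening. First, when you pass from the $\delta$-ring map $g:\Prism_R\to W(C)$ to the ring map $f:\Prism_R\to C$, you should note explicitly why the resulting $f$ is continuous for the $(p,I_R)$-adic topology (so that it actually defines a point of $\Spf\Prism_R$): this uses that $C$ is $p$-nilpotent and that the image of $I_R$ in $W(C)$ generates a Cartier--Witt divisor, hence maps to something nilpotent in $C$. Second, to close the loop you must check that the Cartier--Witt divisor produced by Remark~\ref{rem:prisms_and_prismatization} from $f$ agrees with the original one; this amounts to verifying that the $\delta$-ring map $\Prism_R\to W(\Prism_R)\xrightarrow{W(f)}W(C)$ coincides with $g$, which is exactly the unit--counit identity for the adjunction. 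You gesture at this (``unwinding Remark~\ref{rem:prisms_and_prismatization}''), but it is the step that makes the two constructions genuinely inverse.
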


\begin{definition}
   A map $R\to S$ in $\mathrm{CRing}^{p\text{-comp}}$ is \defnword{quasisyntomic} if it is $p$-completely flat (that is, $S/{}^{\mathbb{L}}p$ is flat over $R/{}^{\mathbb{L}}p$), and if $\mathbb{L}_{S/R}$ has $p$-complete Tor amplitude $[-1,0]$: that is, $\mathbb{L}_{S/R}/{}^{\mathbb{L}}p$ has Tor amplitude $[-1,0]$ over $S/{}^{\mathbb{L}}p$. The map $R\to S$ is a \defnword{quasisyntomic cover} if it is quasisyntomic and $S/{}^{\mathbb{L}}p$ is faithfully flat over $R/{}^{\mathbb{L}}p$.
\end{definition}

\begin{proposition}
[Prismatizations via prismatic cohomology II]
\label{prop:qsyn_to_flat_covers}
For any $R\in \mathrm{CRing}^{p\text{-comp}}$, let $R\to R_{\infty}$ be a quasisyntomic cover with $R_\infty$ semiperfectoid. Then the map $\Spf \Prism_{R_\infty} \simeq R_{\infty}^\Prism\to R^\Prism$ is a flat cover. If $R$ is also semiperfectoid, then the map is in fact $(p,I)$-completely faithfully flat.
\end{proposition}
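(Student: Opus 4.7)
The plan is to establish the second (semiperfectoid) assertion first and then bootstrap to the general case via a pullback argument, with both reductions ultimately relying on the quasisyntomic descent property of absolute prismatic cohomology.

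First, assuming $R$ is also semiperfectoid, I would use Theorem~\ref{thm:bl_holeman} to identify $R^\Prism \simeq \Spf \Prism_R$ and $R_\infty^\Prism \simeq \Spf \Prism_{R_\infty}$ as the formal spectra of the respective initial (animated) prisms, whereupon the assertion reduces to showing that the induced map $\Prism_R \to \Prism_{R_\infty}$ is $(p, I_R)$-completely faithfully flat. This is the standard quasisyntomic descent property of absolute prismatic cohomology for semiperfectoid rings developed in the framework of~\cite{bhatt2022absolute,bhatt2022prismatization}.

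For general $R$, I would check flatness and surjectivity of $R_\infty^\Prism \to R^\Prism$ after pullback to a flat cover of $R^\Prism$ by affines. Using Remark~\ref{rem:prisms_and_prismatization}, such a cover is provided by the maps $\iota_{(A,I)}: \Spf A \to R^\Prism$ coming from prisms $(A, I, R \to \overline{A})$, and by passing to perfectoidizations I may further restrict to the case where $\overline{A}$ is perfectoid and $(A, I) = (A_{\mathrm{inf}}(\overline{A}), \ker \theta)$ is the associated perfect prism. Under this restriction, Theorem~\ref{thm:bl_holeman} realizes $\iota_{(A, I)}$ as the composition $\Spf A \simeq \overline{A}^\Prism \to R^\Prism$ induced by $R \to \overline{A}$. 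Since the prismatization functor sends pushouts of rings to fiber products of prestacks (an immediate consequence of the transmutation definition), I would identify the base change
\[
\Spf A \times_{R^\Prism} R_\infty^\Prism \simeq \overline{A}^\Prism \times_{R^\Prism} R_\infty^\Prism \simeq (\overline{A} \,\hat{\otimes}_R^{\mathbb{L}}\, R_\infty)^\Prism
\]
with $\overline{A}'^{\Prism}$, where $\overline{A}' := \overline{A} \,\hat{\otimes}_R^{\mathbb{L}}\, R_\infty$. The ring $\overline{A}'$ is (derived) semiperfectoid as the derived $p$-completed pushout of a perfectoid and a semiperfectoid ring, so Theorem~\ref{thm:bl_holeman} applies again to give $\overline{A}'^{\Prism} \simeq \Spf \Prism_{\overline{A}'}$. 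The base-changed map becomes $\Spf \Prism_{\overline{A}'} \to \Spf \Prism_{\overline{A}} = \Spf A$, and its $(p, I)$-completely faithful flatness follows by applying the already-established second assertion to the quasisyntomic cover $\overline{A} \to \overline{A}'$ of (derived) semiperfectoid rings.

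The main obstacle will be verifying the compatibilities underlying this fiber-product identification: namely, that the prismatization functor commutes with the relevant derived $p$-completed pushouts, and that $\overline{A}'$ is indeed (derived) semiperfectoid so that Theorem~\ref{thm:bl_holeman} applies. Both should follow from the basic properties of the prismatization functor developed in~\cite{bhatt2022prismatization} and the definition of (animated) semiperfectoid rings, but the details require careful handling of derived versus classical structures.
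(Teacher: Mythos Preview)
The paper's own proof consists of two citations: \cite[Lemma~6.3]{bhatt2022prismatization} for the flat-cover assertion over general $R$, and \cite[Corollary~6.12.7]{gmm} for the $(p,I)$-completely faithfully flat assertion in the semiperfectoid case. Your treatment of the semiperfectoid case is in the same spirit and is fine.

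Your bootstrap argument for general $R$, however, has a genuine circularity. You assert that the maps $\iota_{(A,I)}:\Spf A\to R^\Prism$ coming from objects of the absolute prismatic site of $R$ form a flat cover of $R^\Prism$, invoking Remark~\ref{rem:prisms_and_prismatization}. But that remark only \emph{constructs} these maps; it does not show they jointly cover. In fact, the most natural way to produce such a covering family is precisely via $\Spf\Prism_{R_\infty}\simeq R_\infty^\Prism\to R^\Prism$ for a semiperfectoid quasisyntomic cover $R\to R_\infty$---and showing that this is a flat cover is exactly the first assertion you are trying to prove. The further reduction to \emph{perfect} prisms ``by passing to perfectoidizations'' is also not innocuous: neither the perfection of a prism nor the perfectoidization of $\overline{A}$ is in general faithfully flat over the original, so you cannot freely replace $(A,I)$ by a perfect prism without losing the covering property.

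Your fiber-product identification via transmutation is correct, and if you had an independent supply of affine flat covers of $R^\Prism$ the bootstrap from the semiperfectoid case would go through. But the argument as written assumes what it sets out to prove. The proof in \cite[Lemma~6.3]{bhatt2022prismatization} avoids this by working directly with the functor of points and the structure of the ring stack $\Ga^\Prism$, rather than by first covering $R^\Prism$ by prisms.
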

\begin{proof}
   See~\cite[Lemma 6.3]{bhatt2022prismatization} for the first statement and~\cite[Corollary 6.12.7]{gmm} for the second.
\end{proof}

\begin{remark}
[Relative affineness of prismatizations and base-change for prisms]
\label{rem:base_change_prisms}
Suppose that $S$ is an $R$-algebra such that $S^\Prism\to R^\Prism$ is relatively formally affine. If we have $(A,I,R\to\overline{A})$ in the absolute prismatic site for $R$, then the base-change
\[
\Spf A\times_{R^\Prism}S^\Prism\to S^\Prism
\]
is of the form $\Spf B$ for some $(p,I)$-complete $A$-algebra $B$. Moreover, $B$ is in fact a(n animated) $\delta$-ring: The Frobenius lift is obtained from that on $A$ and the endomorphism $\varphi$ of $S^{\Prism}$. Therefore, $B$ underlies a prism $(B,J)$ over $(A,I)$ equipped with a map $S\to \overline{B}$. 
\end{remark}

\begin{remark}
[Semiperfectoid base-change for prisms]
\label{rem:base_change_prisms_semiperfectoid}
If $S$ is a semiperfectoid $R$-algebra, then $S^\Prism\to R^\Prism$ is relatively formally affine, and so Remark~\ref{rem:base_change_prisms} applies. Indeed, it suffices to check this after $p$-completely faithfully flat base-change. Here, we can choose a quasisyntomic cover $R\to R_\infty$ such that $R_\infty$ is semiperfectoid. In this case, $R_\infty^\Prism\to R^\Prism$ is a flat cover by Proposition~\ref{prop:qsyn_to_flat_covers}, and
\[
R_{\infty}^\Prism\times_{R^\Prism}S^\Prism\simeq \Spf\Prism_{R_\infty}\times_{R^\Prism}\Spf\Prism_S\to S^\Prism
\]
is affine by~\cite[Corollary 3.2.9]{bhatt2022absolute}.
\end{remark}

\begin{remark}
[Flat covers of prisms from quasisyntomic maps]
\label{rem:quasisyntomic_base_change}
Choose a quasisyntomic cover $R\to R_\infty$ with $R^{\otimes_R m}_{\infty}$ semiperfectoid for all $m\ge 1$.  Then by Remark~\ref{rem:base_change_prisms_semiperfectoid}, we have
\[
\Spf A\times_{R^\Prism}(R^{\otimes_R \bullet + 1}_\infty)^{\Prism}\simeq \Spf(A^{(\bullet)}_\infty)
\]
for a cosimplicial prism $(A^{(\bullet)}_\infty,I^{(\bullet)}_\infty)$ such that $A^{(m)}_\infty \simeq A^{\otimes_A (m+1)}_{\infty}$. Here we have set $(A_\infty,I_\infty) \defn (A^{(0)}_\infty,I^{(0)}_\infty)$: this is a prism over $(\Prism_{R_\infty},I_{R_\infty})$. Moreover $\Spf A_\infty\to \Spf A$ is $(p,I)$-completely faithfully flat. All these assertions follow from Proposition~\ref{prop:qsyn_to_flat_covers}.
\end{remark}

\begin{remark}
[Base-change for prisms along closed immersions]
\label{rem:base_change_prisms_closed_immersions}
Suppose that $R\to S$ is a surjective map. Then $S^\Prism\to R^\Prism$ is relatively formally affine and so Remark~\ref{rem:base_change_prisms} applies. To see this, we can use Proposition~\ref{prop:qsyn_to_flat_covers} to reduce to the case where $R$, and therefore $S$, are semiperfectoid, and here the result is clear from Theorem~\ref{thm:bl_holeman}.
\end{remark}

\subsection{Syntomification}

Here, we review some facts about the Nygaard filtered prismatization and syntomification of $p$-complete rings. Since we will not need many details about these stacks in what follows, we will be somewhat terse, and direct the reader to the references given above for precise definitions and explanation of the notation used.

\begin{remark}
   [Filtered prismatization of $\Int_p$]
To begin we have the \defnword{filtered prismatization} $\Int_p^{\mathcal{N}}$, which is a $p$-adic formal prestack over $\Aff^1/\Gm\times \Int_p^\Prism$. For its definition on classical inputs, see~\cite[\S 5.3]{bhatt_lectures} and for its values on animated inputs, see~\cite[Definition 6.4.4]{gmm}. Over this prestack we have a \defnword{filtered Cartier-Witt divisor}, which is a map $M\xrightarrow{d} W$ of $W$-module schemes that is the fiber of a map $W\to W/_dM$ of animated $W$-algebras (these are all sheaves in the flat topology over $\Int_p^{\mathcal{N}}$). This map sits in a commutative diagram of $W$-modules with exact rows
\begin{align}\label{eqn:filt_cartier-witt_diagram}
\begin{diagram}
    L\otimes\Ga^\sharp&\rTo&M&\rTo&F_*M'\\
    \dTo^{t^\sharp}&&\dTo^d&&\dTo^{F_*d'}\\
    \Ga^\sharp&\rTo&W&\rTo^F&F_*W
\end{diagram}
\end{align}
where $t:L\to \Ga$ is the tautological line bundle with cosection over $\Aff^1/\Gm$ and $M'\to W$ is obtained by sheafifying the tautological Cartier-Witt divisor over $\Int_p^\Prism$ into a map of $W$-modules.
\end{remark}

\begin{remark}
   [The de Rham and Hodge-Tate embeddings]
There are two physically disjoint open immersions $j_{\dR},j_{\mathrm{HT}}:\Int_p^{\Prism}\to \Int_p^{\mathcal{N}}$. The first is the pre-image of the open substack $\Gm/\Gm\subset \Aff^1/\Gm$, and is the locus where the right square in~\eqref{eqn:filt_cartier-witt_diagram} is Cartesian. The second is the locus where $M\xrightarrow{d}W$ is obtained from a Cartier-Witt divisor and $d' = F^*d$. 
\end{remark}

\begin{definition}
   [Filtered prismatization of $p$-complete rings]
The sheaf of animated commutative rings $W/_dM$ is represented by a ring stack $\Ga^{\mathcal{N}}\to \Int_p^{\mathcal{N}}$. We can therefore use transmutation to associate with each $C\in \mathrm{CRing}^{p\text{-comp}}$ its \defnword{filtered prismatization} $C^{\mathcal{N}}\to \Int_p^{\mathcal{N}}$ which parameterizes maps of $p$-complete animated commutative rings $C\to \Ga^{\mathcal{N}}(R)$. 
\end{definition}

\begin{construction}
   [The filtered de Rham point]
\label{const:filtered_de_rham}
There is a canonical map $x^{\mathcal{N}}_{\dR}:\Aff^1/\Gm\times \Spf C\to C^{\mathcal{N}}$ whose restriction over the open $\Gm/\Gm\times\Spf C$ is the de Rham point $x_{\dR}$ from Construction~\ref{const:de_rham_point}. The underlying filtered Cartier-Witt divisor associates with every cosection $t:L\to R$ over a $p$-nilpotent $C$-algebra $R$ the map 
\[
F_*W\oplus (L\otimes \Ga^\sharp)\xrightarrow{(V, \mathrm{can}\circ t^\sharp)}W
\]
where $\mathrm{can}:\Ga^\sharp = W[F]\to W$ is the canonical map. The quotient by this map is also a quotient of $\Ga$, and so its $R$-points are naturally equipped with the structure of a $C$-algebra.
\end{construction}

\begin{definition}
   [Syntomification of $p$-complete rings]
The de Rham and Hodge-Tate embeddings for $\Int^{\mathcal{N}}$ pullback to disjoint open immersions $j_{\dR},j_{\mathrm{HT}}:C^{\Prism}\to C^{\mathcal{N}}$. The \defnword{syntomification} $C^{\mathrm{syn}}$ is the coequalizer in $p$-adic formal stacks. of these open immersions.
\end{definition}

\begin{remark}
   [Nygaard filtered absolute prismatic cohomology]
In~\cite[\S 5.5]{bhatt2022absolute}, Bhatt and Lurie construct the \defnword{Nygaard filtration} $\Fil^\bullet_{\mathcal{N}}\Prism_R$ on absolute prismatic cohomology. When $R$ is quasiregular semiperfectoid (qrsp), we have
\[
\Fil^i_{\mathcal{N}}\Prism_R = \{x\in \Prism_R:\;\varphi(x)\in \Fil^i_{I_R}\Prism_R\},
\]
where $\Fil^\bullet_{I_R}\Prism_R$ is the $I_R$-adic filtration. When $R$ is perfectoid, then $\Fil^\bullet_{\mathcal{N}}\Prism_R$ has the structure of a $(p,I_R)$-complete filtered animated commutative ring; see~\cite[Lemma 6.11.6]{gmm}. In general, for any semiperfectoid ring $R$, the Frobenius lift $\varphi:\Prism_R\to \Prism_R$ lifts to a map
\[
   \Phi:\Fil^\bullet_{\mathcal{N}}\Prism_R\to \Fil^\bullet_{I_R}\Prism_R
\]
of $(p,I_R)$-complete filtered animated commutative rings.
\end{remark}

\begin{remark}
   [Rees stacks]
\label{rem:nygaard_rees_stack}
We refer the reader to~\cite[\S 4.3,5.2]{gmm} for the conventions on filtered animated commutative rings and Rees stacks used here. For semiperfectoid $R$, associated with $\Fil^\bullet_{\mathcal{N}}\Prism_R$ is the Rees stack $\Rees(\Fil^\bullet_{\mathcal{N}}\Prism_R)$. This is a formal stack over $\Aff^1/\Gm$, and is equipped with two open immersions
\[
   \sigma,\tau:R^\Prism\simeq \Spf \Prism_R\to \Rees(\Fil^\bullet_{\mathcal{N}}\Prism_R),
\]
where $\tau$ is the pullback of $\Gm/\Gm\to \Aff^1/\Gm$, while $\sigma$ is obtained as the composition
\[
   \Spf \Prism_R\xrightarrow{\simeq}\Rees(\Fil^\bullet_{I_R,\pm}\Prism_R) \hookrightarrow \Rees(\Fil^\bullet_{I_R}\Prism_R) \xrightarrow{\Rees(\Phi)}\Rees(\Fil^\bullet_{\mathcal{N}}\Prism_R).
\]
Here, $\Fil^\bullet_{I_R,\pm}\Prism_R$ is the two-sided $I_R$-adic filtration on $\Prism_R$ given by $\Fil^m_{I_R,\pm}\Prism_R = I_R^{\otimes m}$ for all integers $m$.
\end{remark}

\begin{remark}
\label{rem:sigma_tau_pullbacks}
Quasicoherent sheaves over $\Rees(\Fil^\bullet_{\mathcal{N}}\Prism_R)$ are equivalent as a symmetric monoidal $\infty$-category to that of $(p,I_R)$-complete filtered complexes over $\Fil^\bullet_{\mathcal{N}}\Prism_R$. Pullback along $\tau$ amounts to forgetting the filtration, and pullback along $\sigma$ amounts to filtered base-change to $\Fil^\bullet_{I_R,\pm}\Prism_R$ followed by taking $\Fil^0$. Symbolically one can write this as
\[
   \Fil^\bullet \mathcal{M}\mapsto \mathcal{M}_\sigma = \sum_mI^{-\otimes m}\varphi^*\Fil^m \mathcal{M}.
\]
\end{remark}

\begin{theorem}
[Filtered prismatization of semiperfectoid rings]
   \label{thm:semiperfectoid_nygaard}
Suppose that $R$ is a semiperfectoid ring. Then there is an isomorphism of $\Aff^1/\Gm$-stacks
\[
   \Rees(\Fil^\bullet_{\mathcal{N}}\Prism_R)\xrightarrow{\simeq}R^{\mathcal{N}}
\]
identifying $\tau$ with $j_{\dR}$ and $\sigma$ with $j_{\mathrm{HT}}$
\end{theorem}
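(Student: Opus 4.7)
The plan is to build a morphism from the Rees stack to $R^{\mathcal{N}}$ via the universal property of the filtered prismatization, verify its compatibility with the two distinguished opens, and then promote it to an isomorphism by descending the unfiltered statement of Theorem~\ref{thm:bl_holeman}.

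First, I would produce a morphism $\Rees(\Fil^\bullet_{\mathcal{N}}\Prism_R) \to R^{\mathcal{N}}$ over $\Aff^1/\Gm$. By the definition of $R^{\mathcal{N}}$, such a morphism amounts to a filtered Cartier-Witt divisor $M \xrightarrow{d} W$ fitting in the diagram~\eqref{eqn:filt_cartier-witt_diagram}, together with a map $R \to W/_dM$ of $p$-complete animated commutative rings. Over $\Rees(\Fil^\bullet_{\mathcal{N}}\Prism_R)$, the tautological line bundle with cosection from $\Aff^1/\Gm$ plays the role of $L$, while the Frobenius lift $\Phi:\Fil^\bullet_{\mathcal{N}}\Prism_R \to \Fil^\bullet_{I_R,\pm}\Prism_R$ supplies the comparison between the filtered piece $M$ and the unfiltered piece $M'$; the latter arises from the Cartier-Witt divisor attached to the prism $(\Prism_R, I_R)$ as in Remark~\ref{rem:prisms_and_prismatization}. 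The required $R$-algebra structure on the quotient is induced from the structure map $R \to \overline{\Prism}_R$.

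Next, I would check that this morphism identifies $\tau$ with $j_{\dR}$ and $\sigma$ with $j_{\mathrm{HT}}$. Pulling back along $\tau$ inverts the filtration parameter, so the construction specializes to the unfiltered map $\Spf \Prism_R \xrightarrow{\sim} R^\Prism$ of Theorem~\ref{thm:bl_holeman} composed with $j_{\dR}$, matching Construction~\ref{const:de_rham_point} on the nose. For $\sigma$, the construction from Remark~\ref{rem:nygaard_rees_stack}, which first embeds $\Spf \Prism_R \simeq \Rees(\Fil^\bullet_{I_R,\pm}\Prism_R)$ and then applies $\Rees(\Phi)$, forces the right square of~\eqref{eqn:filt_cartier-witt_diagram} to become Cartesian; this is precisely the defining condition of the Hodge-Tate locus, so the composite lands in $j_{\mathrm{HT}}$.

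Finally, for the isomorphism assertion, I would use faithfully flat descent along a perfectoid cover $R \to R_\infty$ provided by Proposition~\ref{prop:qsyn_to_flat_covers}. Over a perfectoid ring, both sides admit very explicit descriptions: the Nygaard filtration on $\Prism_{R_\infty}$ is described in~\cite[Lemma 6.11.6]{gmm}, and the corresponding filtered prismatization can be matched directly with a Rees construction, so the comparison is transparent there. Descending back to $R$ using fpqc descent then yields the general semiperfectoid case. The main obstacle I anticipate is this last descent step: one must verify that the \emph{filtered} data on both sides descends coherently, not merely the underlying unfiltered data (the latter being already handled by Theorem~\ref{thm:bl_holeman}). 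The crux is the tautological relation that for semiperfectoid $R$ the Nygaard filtration is defined exactly so that $\varphi(\Fil^i_{\mathcal{N}}\Prism_R) \subset I_R^i \cdot \Prism_R$, which is precisely the numerical constraint encoded by the filtered Cartier-Witt divisor via diagram~\eqref{eqn:filt_cartier-witt_diagram}.
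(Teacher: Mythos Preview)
The paper does not give a proof of this statement; it simply cites \cite[Theorem 6.11.7]{gmm}. So there is no in-paper argument to compare against in detail, and your outline is a reasonable attempt to sketch what such an argument might look like. That said, your proposal contains a genuine error in the identification of the two open loci, and the descent step is not fully justified.

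You have the characterizations of $j_{\dR}$ and $j_{\mathrm{HT}}$ swapped. According to the paper's own description of the de Rham and Hodge--Tate embeddings, $j_{\dR}$ is the locus where the \emph{right} square in~\eqref{eqn:filt_cartier-witt_diagram} is Cartesian, while $j_{\mathrm{HT}}$ is the locus where $M\xrightarrow{d}W$ arises from a Cartier--Witt divisor with $d' = F^*d$. Your argument for $\sigma\leftrightarrow j_{\mathrm{HT}}$ claims that applying $\Rees(\Phi)$ forces the right square to be Cartesian and that this is ``precisely the defining condition of the Hodge--Tate locus''; that is the condition for the de Rham locus, not the Hodge--Tate one. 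Relatedly, in your discussion of $\tau\leftrightarrow j_{\dR}$ you invoke Construction~\ref{const:de_rham_point}, but that construction produces the de Rham \emph{point} $x_{\dR}:\Spf C\to C^{\Prism}$, which is a different object from the de Rham \emph{embedding} $j_{\dR}:C^{\Prism}\to C^{\mathcal{N}}$; the relevant input is the isomorphism $\Spf\Prism_R\simeq R^{\Prism}$ of Theorem~\ref{thm:bl_holeman} composed with $j_{\dR}$, not $x_{\dR}$.

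For the descent step, Proposition~\ref{prop:qsyn_to_flat_covers} concerns only the unfiltered prismatization. To descend the \emph{filtered} isomorphism you would need the analogue for $R^{\mathcal{N}}$, namely Proposition~\ref{prop:nygaard_qsynt_descent}, together with a verification that the Rees construction for the Nygaard filtration satisfies the corresponding descent (which amounts to faithfully flat descent for $\Fil^\bullet_{\mathcal{N}}\Prism_{-}$ on the semiperfectoid side). You gesture at this but do not name the right ingredients; as written, the argument does not close.
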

\begin{proof}
   See~\cite[Theorem 6.11.7]{gmm}.
\end{proof}

\begin{proposition}
   \label{prop:nygaard_qsynt_descent}
If $R\to S$ is a quasisyntomic cover, then $S^{\mathcal{N}}\to R^{\mathcal{N}}$ is surjective in the $p$-completely flat topology. In fact, if $R$ and $S$ are semiperfectoid, then this map is faithfully flat. Moreover, there exists a quasisyntomic cover $R\to R_\infty$ such that $R_\infty^{\otimes_R m}$ is semiperfectoid for all $m$.
\end{proposition}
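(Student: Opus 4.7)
The plan is to treat the three assertions in reverse order of logical dependence: first I would construct the special semiperfectoid cover from the last clause, then use Theorem~\ref{thm:semiperfectoid_nygaard} to prove faithful flatness in the semiperfectoid case by reducing to Rees stacks of Nygaard filtered prismatic cohomology, and finally derive the general flat surjectivity by fpqc descent from this base case.

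For the last assertion, I would construct $R_\infty$ by adjoining compatible $p$-power roots. Fix a surjection $\widehat{P} \defn \Int_p\langle x_i : i \in I\rangle \twoheadrightarrow R$ from a $p$-completed polynomial algebra on a set of topological generators, and set $\widehat{P}_\infty \defn \Int_p\langle x_i^{1/p^\infty} : i \in I\rangle$. Then $R_\infty \defn R\,\widehat{\otimes}_{\widehat{P}}\,\widehat{P}_\infty$ is a quotient of the perfectoid ring $\widehat{P}_\infty$, hence semiperfectoid, and $R \to R_\infty$ is a quasisyntomic cover base-changed from the standard one $\widehat{P}\to \widehat{P}_\infty$. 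The crucial point is that each tensor power $R_\infty^{\widehat{\otimes}_R m}$ is a quotient of the perfectoid ring $\widehat{P}_\infty^{\widehat{\otimes}_{\widehat{P}} m}$, hence again semiperfectoid.

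For the semiperfectoid case, Theorem~\ref{thm:semiperfectoid_nygaard} identifies the map $S^{\mathcal{N}} \to R^{\mathcal{N}}$ with the map of Rees stacks $\Rees(\Fil^\bullet_{\mathcal{N}}\Prism_S)\to \Rees(\Fil^\bullet_{\mathcal{N}}\Prism_R)$ over $\Aff^1/\Gm$. Faithful flatness of a $\Gm$-equivariant morphism over $\Aff^1/\Gm$ can be tested on the two strata: the open $\Gm/\Gm$ and the origin $B\Gm$. The pullback over $\Gm/\Gm$ is the underlying unfiltered map $\Spf \Prism_S \to \Spf \Prism_R$, which is $(p,I_R)$-completely faithfully flat by Proposition~\ref{prop:qsyn_to_flat_covers}. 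The pullback at the origin is the induced map of associated gradeds $\gr^\bullet_{\mathcal{N}}\Prism_R \to \gr^\bullet_{\mathcal{N}}\Prism_S$, and for this I would invoke the Bhatt-Lurie identification of each $\gr^i_{\mathcal{N}}\Prism$ with a conjugate-filtration piece of the Hodge-Tate cohomology $\overline{\Prism}$, combined with the $p$-completely faithful flatness of $\overline{\Prism}_R \to \overline{\Prism}_S$ (again from Proposition~\ref{prop:qsyn_to_flat_covers}).

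For the general flat surjectivity, choose $R \to R_\infty$ as above and a semiperfectoid quasisyntomic cover $S\,\widehat{\otimes}_R R_\infty \to S_\infty$; then $R_\infty \to S_\infty$ is a quasisyntomic cover of semiperfectoid rings, so $S_\infty^{\mathcal{N}} \to R_\infty^{\mathcal{N}}$ is faithfully flat by the previous step. Since all tensor powers $R_\infty^{\widehat{\otimes}_R m}$ are semiperfectoid, the \v{C}ech simplicial stack $(R_\infty^{\widehat{\otimes}_R \bullet +1})^{\mathcal{N}}$ exhibits $R^{\mathcal{N}}$ as its colimit (using that $C\mapsto C^{\mathcal{N}}$ converts the fpqc equalizer presentation of $R$ into a colimit of stacks), making $R_\infty^{\mathcal{N}} \to R^{\mathcal{N}}$ a flat cover; the analogous construction for $S$ and composition then yields the desired $p$-completely flat surjectivity. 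The main obstacle is the associated graded computation inside the semiperfectoid case: while the Bhatt-Lurie comparison between $\gr^\bullet_{\mathcal{N}}\Prism$ and conjugate-filtered Hodge-Tate cohomology is established in~\cite{bhatt2022absolute}, carefully tracking the Frobenius twist and filtration indexing to extract a clean $p$-completely faithfully flat statement aligned with the Rees stack picture will require real care.
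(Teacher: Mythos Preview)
The paper does not actually prove this proposition: its proof consists solely of the citation ``See~\cite[Corollaries 6.12.3, 6.12.5]{gmm}.'' Your sketch is therefore not comparable to the paper's own argument, but it is a plausible outline of how those cited corollaries are established, and the three-step structure (build $R_\infty$, handle the semiperfectoid case via Rees stacks, descend) is the natural one.

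One point worth tightening in the semiperfectoid step: checking faithful flatness separately on the open stratum $\Gm/\Gm$ and the closed stratum $B\Gm$ of $\Aff^1/\Gm$ is not by itself a valid criterion, since flatness over a stratification does not imply flatness on the total space. What actually works is that faithful flatness of the associated graded map $\gr^\bullet_{\mathcal{N}}\Prism_R \to \gr^\bullet_{\mathcal{N}}\Prism_S$ alone, together with $(p,I,t)$-completeness of the Rees algebras, implies faithful flatness of the Rees map by derived Nakayama; the check on the open locus is then a consequence rather than an independent input. Your identification of the associated graded computation as ``the main obstacle'' is accurate, and the Bhatt--Lurie description of $\gr^i_{\mathcal{N}}\Prism$ in terms of conjugate-filtered Hodge--Tate cohomology (as in~\cite[\S 5.5]{bhatt2022absolute}) is indeed what makes it go through.
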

\begin{proof}
   See~\cite[Corollaries 6.12.3, 6.12.5]{gmm}.
\end{proof}

\subsection{$F$-gauges}
\label{subsec:f_gauges}

\begin{definition}
   An \defnword{$F$-gauge} over $R$ is a quasicoherent sheaf $\mathcal{M}$ over $R^{\mathrm{syn}}$ (over $R^{\mathrm{syn}}\otimes\Int/p^n\Int)$. It is \defnword{perfect} (resp. \defnword{a vector bundle $F$-gauge}) if it is a perfect complex (resp. a vector bundle) over $R^{\mathrm{syn}}$. One obtains corresponding notions for $F$-gauges of \defnword{level $n$} by replacing $R^{\mathrm{syn}}$ with $R^{\mathrm{syn}}\otimes\Int/p^n\Int$ in the definitions.
\end{definition}

\begin{remark}
   When $R = \kappa$ is a perfect field, this notion is an instance of the \emph{Frobenius gauges} or \emph{$\varphi$-gauges} introduced by Fontaine and Jannsen~\cite{Fontaine2021-pr}. The general definition here was introduced by Bhatt-Lurie under the term \emph{prismatic $F$-gauges}~\cite[Definition 6.1.1]{bhatt_lectures}. For economy of language, we have dropped the adjective `prismatic' here.
\end{remark}

\begin{construction}
\label{const:fil_hdg}
   Given an $F$-gauge $\mathcal{M}$ over $R$, its pullback along the map $x^{\mathcal{N}}_{\dR}$ from Construction~\ref{const:filtered_de_rham} yields a quasicoherent sheaf over $\Aff^1/\Gm\times\Spf R$, which is equivalent to a $p$-complete filtered complex $\Fil^\bullet_{\mathrm{Hdg}}M$ over $R$.
\end{construction}

\begin{example}
   [Breuil-Kisin twist]
Over $\Int_p^{\mathrm{syn}}$ we have a canonical line bundle, the \defnword{Breuil-Kisin twist} $\mathcal{O}\{1\}$; see~\cite[\S 6.6]{gmm} for a quick summary of its construction and properties. We will denote its pullback over $R^{\mathrm{syn}}$ by the same symbol. For any $F$-gauge $\mathcal{M}$ over $R$, we will set $\mathcal{M}\{1\} = \mathcal{M}\otimes \mathcal{O}\{1\}$.
\end{example}

\begin{definition}
   The \defnword{Hodge-Tate weights} of an $F$-gauge $\mathcal{M}$ are the integers $m$ such that $\gr^{-m}_{\mathrm{Hdg}}M$ is not nullhomotopic.
\end{definition}

\begin{remark}
   With this convention, the Breuil-Kisin twist has Hodge-Tate weight $1$.
\end{remark}

\begin{remark}
   [$F$-gauges over a semiperfectoid ring]
Using Theorem~\ref{thm:semiperfectoid_nygaard} and Remark~\ref{rem:sigma_tau_pullbacks}, we see that giving an $F$-gauge over a semiperfectoid ring $R$ is equivalent to giving a $(p,I_R)$-complete filtered complex $\Fil^\bullet \mathcal{M}$ over $\Fil^\bullet_{\mathcal{N}}\Prism_R$ equipped with an isomorphism $\mathcal{M}_\sigma\simeq \mathcal{M}$. 
\end{remark}

\begin{remark}
   [Quasisyntomic descent]
By Proposition~\ref{prop:nygaard_qsynt_descent}, $F$-gauges satisfy quasisyntomic descent, and one can use this descent to reduce many questions to the situation of semiperfectoid rings, which is addressed by the previous remark.
\end{remark}

\begin{remark}
[Cohomology in the quasisyntomic site]
\label{rem:qsynt_site}
The assignment
\[
\Fil^\bullet_{ \mathcal{N}}\Prism_{-}:C\mapsto \Fil^\bullet_{\mathcal{N}}\Prism_C
\]
is a sheaf of animated filtered commutative rings over the site $R_{\mathrm{qsyn}}$ formed by semiperfectoid algebras $C$ that are quasisyntomic over $R$, equipped with the $p$-quasisyntomic topology. An $F$-gauge $\mathcal{M}$ can be viewed as a sheaf of filtered modules 
\[
\Fil^\bullet \mathcal{M}_{-}: C\mapsto \Fil^\bullet \mathcal{M}_{C}
\]
over this sheaf. Moreover the structure of an $F$-gauge on $\mathcal{M}$ yields maps
\[
   \varphi_i:\Fil^i \mathcal{M}_{-} \to \mathcal{I}^{\otimes i}\otimes_{\Prism_{-}}\mathcal{M}_{-}
\]
where $\mathcal{I}$ is the sheafification of the assignment $C\mapsto I_C$ on semiperfectoid $R$-algebras $C$. Unwinding definitions, one now finds a canonical isomorphism
\[
R\Gamma(R^{\mathrm{syn}},\mathcal{M})\xrightarrow{\simeq}\fib\left(R\Gamma_{\mathrm{qsyn}}(\Spf R,\Fil^0 \mathcal{M}_{-})\xrightarrow{\varphi_0 - \mathrm{can}}R\Gamma_{\mathrm{qsyn}}(\Spf R,\mathcal{M}_{-})\right),
\]
where $\mathrm{can}:\Fil^0 \mathcal{M}_{-}\to \colim_{n\mapsto -\infty}\Fil^n \mathcal{M}_{-}\simeq \mathcal{M}_{-}$ is the natural map.
\end{remark}

\section{Representability and Dieudonn\'e theory}
\label{sec:recollection_of_some_representability_results}

The purpose of this section is to recall some results from~\cite{gmm} regarding the representability of stacks associated with perfect $F$-gauges and the classification of truncated Barsotti-Tate group schemes in terms of $F$-gauges.

\subsection{Stacks of perfect $F$-gauges of Hodge-Tate weights $0,1$}
For integers $a\leq b$, consider the $p$-adic formal prestack $\mathrm{Perf}^{\mathrm{syn},[a,b]}_{n,\{0,1\}}$: This associates with every $R\in \mathrm{CRing}^{p\text{-nilp}}$ the $\infty$-groupoid $\mathrm{Perf}^{[a,b]}_{\{0,1\}}(R^{\mathrm{syn}}\otimes\Int/p^n\Int)_{\simeq}$ of perfect $F$-gauges of level $n$ over $R$ with Hodge-Tate weights $0,1$ and with Tor amplitude in $[a,b]$. 

Over this prestack we have a canonical filtered perfect complex $\Fil^\bullet_{\mathrm{Hdg}}M_{\mathrm{taut}}$ obtained via Construction~\ref{const:fil_hdg}.

\begin{theorem}
\label{thm:HTwts01_representable}
The prestack $\mathrm{Perf}^{\mathrm{syn},[a,b]}_{n,\{0,1\}}$ is represented by a $p$-adic formal locally finitely presented derived Artin stack over $\Int_p$ with cotangent complex $(\gr^{-1}_{\mathrm{Hdg}}M_{\mathrm{taut}})^\vee\otimes \Fil^0_{\mathrm{Hdg}}M_{\mathrm{taut}}$. Moreover, if $(C'\twoheadrightarrow C,\gamma)$ is a nilpotent divided power thickening of $p$-complete algebras, then we have a Cartesian square
\[
\begin{diagram}
\mathrm{Perf}^{\mathrm{syn},[a,b]}_{n,\{0,1\}}(C')&\rTo&\mathrm{Perf}^{[a,b]}_{\{0,1\}}(\Aff^1/\Gm\times\Spec C'/{}^{\mathbb{L}}p^n)\\
\dTo&&\dTo\\
\mathrm{Perf}^{\mathrm{syn},[a,b]}_{n,\{0,1\}}(C)&\rTo&\mathrm{Perf}^{[a,b]}_{\{0,1\}}(\Aff^1/\Gm\times\Spec C/{}^{\mathbb{L}}p^n)\times_{\mathrm{Perf}^{[a,b]}(C/{}^{\mathbb{L}}p^n)}\mathrm{Perf}^{[a,b]}(C'/{}^{\mathbb{L}}p^n).
\end{diagram}
\]
\end{theorem}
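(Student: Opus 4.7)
The plan is to apply a derived Artin representability criterion, with the main technical input being the identification of $F$-gauges of Hodge-Tate weights $\{0,1\}$ in terms of a mildly enriched category of filtered perfect complexes. By quasisyntomic descent (Proposition~\ref{prop:nygaard_qsynt_descent}), it suffices to consider $R$ semiperfectoid, where Theorem~\ref{thm:semiperfectoid_nygaard} identifies an $F$-gauge with a $(p,I_R)$-complete filtered perfect complex $\Fil^\bullet\mathcal{M}$ over $\Fil^\bullet_{\mathcal{N}}\Prism_R$ together with a Frobenius identification $\mathcal{M}_\sigma \simeq \mathcal{M}$. The weight constraint forces $\gr^i_{\mathrm{Hdg}} M = 0$ for $i \notin \{0,-1\}$, so the Hodge filtration on the pullback to the de Rham point reduces to a two-step piece $\Fil^0_{\mathrm{Hdg}}M \hookrightarrow M$ with cofiber $\gr^{-1}_{\mathrm{Hdg}}M$.

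For representability, I would verify the derived Artin axioms on this concrete presentation. Descent, nilcompleteness, and local finite presentation all inherit from the analogous properties of perfect complexes on the relevant Rees stacks, using the fact that $\mathrm{Perf}^{[a,b]}$ of any qcqs derived stack is itself a locally finitely presented derived Artin stack. Infinitesimal cohesiveness follows similarly. The nontrivial ingredients are the explicit form of the cotangent complex and the crystal-type Cartesian square.

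For the cotangent complex at a point $M$, the tangent space classifies first-order deformations of the pair (filtered perfect complex, Frobenius identification). The Frobenius identification is rigidifying: deformations of the underlying prismatic/Rees data are essentially uniquely determined by the deformation of the de Rham pullback, thanks to the weight bound on graded pieces. What remains free at first order is the Hodge filtration, i.e., a deformation of the local direct summand $\Fil^0_{\mathrm{Hdg}}M \subset M$ inside the fixed $M$, and standard deformation theory of filtrations of a perfect complex gives a tangent space $\SHom(\gr^{-1}_{\mathrm{Hdg}}M, \Fil^0_{\mathrm{Hdg}}M)$ whose dual is the stated cotangent complex.

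The Cartesian square on a nilpotent PD thickening $C' \twoheadrightarrow C$ encodes the crystalline nature of $F$-gauges along such thickenings: the filtered de Rham point $x^{\mathcal{N}}_{\dR}$ of Construction~\ref{const:filtered_de_rham} deforms canonically in this setting, so specifying an $F$-gauge over $C'$ should be equivalent to specifying one over $C$ together with a lift of its filtered de Rham realization to $\Aff^1/\Gm \times \Spec C'/p^n$ whose unfiltered pullback agrees with the induced lift to $\Spec C'/p^n$. I expect the main obstacle to be the careful verification of this square: one must check that the identification $\mathcal{M}_\sigma \simeq \mathcal{M}$ extends uniquely across nilpotent PD thickenings given the filtered lift, which amounts to a compatibility between the syntomification functor and divided power extensions in the spirit of the crystalline comparison for prismatic cohomology, together with a bookkeeping of the Tor-amplitude bound $[a,b]$ that ensures finite presentation is preserved.
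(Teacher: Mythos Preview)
The paper's own proof is a single-line citation to \cite[Theorem 8.13.1]{gmm}; no argument is reproduced here. Your sketch is therefore an attempt to reconstruct what that external reference contains, and its broad shape---descend to semiperfectoids, use the Rees-stack description of $F$-gauges, verify Artin's axioms, read off the cotangent complex from deformations of the Hodge filtration, and interpret the PD-thickening square as a crystalline-type comparison---is plausible and likely close to the actual argument.

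Two concrete issues. First, your tangent-space computation has the arrows reversed: standard deformation theory of a subobject $\Fil^0_{\mathrm{Hdg}}M \subset M$ with cofiber $\gr^{-1}_{\mathrm{Hdg}}M$ gives tangent space $\SHom(\Fil^0_{\mathrm{Hdg}}M,\gr^{-1}_{\mathrm{Hdg}}M)$, not $\SHom(\gr^{-1}_{\mathrm{Hdg}}M,\Fil^0_{\mathrm{Hdg}}M)$; the dual of the former is indeed the stated cotangent complex $(\gr^{-1}_{\mathrm{Hdg}}M)^\vee\otimes\Fil^0_{\mathrm{Hdg}}M$, whereas what you wrote is already equal to that expression and cannot be its own dual in general. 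Second, and more substantively, the assertion that ``the Frobenius identification is rigidifying'' is the entire content of the theorem and you have not said why the weight bound $\{0,1\}$ forces this. The mechanism is that for such weights the filtered module $\Fil^\bullet\mathcal{M}$ over $\Fil^\bullet_{\mathcal{N}}\Prism_R$ is determined by $\Fil^0\mathcal{M}\to\mathcal{M}$ together with the Frobenius, and the associated-graded pieces in the Nygaard direction collapse to something controlled by the two-step Hodge filtration; making this precise (and showing that no higher obstruction appears when lifting across a nilpotent PD thickening) is where the genuine work lies. Your sketch names this step but does not supply it.
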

\begin{proof}
This follows from~\cite[Theorem 8.13.1]{gmm} and its proof. Here, $\mathrm{Perf}^{[a,b]}_{\{0,1\}}(\Aff^1/\Gm\times\Spec B)$ is the $\infty$-groupoid of filtered perfect complexes over $B$ with Tor amplitude in $[a,b]$, and with their associated graded complexes supported in graded degrees $0,-1$. 
\end{proof}

\subsection{Sections of $F$-gauges of Hodge-Tate weights $\le 1$}

\begin{theorem}
\label{thm:perfect_f-gauges_repble}
Let $\mathcal{M}$ be a perfect $F$-gauge of level $n$ over $R\in \mathrm{CRing}^{p\text{-comp}}$ with Hodge-Tate weights bounded by $1$; then the $p$-adic formal prestack given on $\mathrm{CRing}^{p\text{-nilp}}_{R/}$ given by
\[
C\mapsto \tau^{\le 0}R\Gamma\left(C^{\mathrm{syn}}\otimes\Int/p^n\Int,\mathcal{M}\vert_{C^{\mathrm{syn}}\otimes\Int/p^n\Int}\right)
\]
is represented by a finitely presented derived $p$-adic formal Artin stack $\Gamma_{\mathrm{syn}}(\mathcal{M})$ over $\Spf R$. Moreover, if $(C'\twoheadrightarrow C,\gamma)$ is a nilpotent divided power thickening of in $\mathrm{CRing}^{p\text{-comp}}_{R/}$, then we have a Cartesian square
\[
\begin{diagram}
\Gamma_{\mathrm{syn}}(\mathcal{M})(C')&\rTo&C'\otimes_R\Fil^0_{\mathrm{Hdg}}M_n\\
\dTo&&\dTo\\
\Gamma_{\mathrm{syn}}(\mathcal{M})(C)&\rTo&(C\otimes_R\Fil^0_{\mathrm{Hdg}}M_n)\times_{C\otimes_RM_n}(C'\otimes_RM_n).
\end{diagram}
\]
Here, $\Fil^\bullet_{\mathrm{Hdg}}M_n$ is the filtered perfect complex over $R$ obtained by viewing $\mathcal{M}$ as a perfect complex over $R^{\mathrm{syn}}$ and pulling back along $x^{\mathcal{N}}_{\dR,R}$. In particular, $\Gamma_{\mathrm{syn}}(\mathcal{M})$ admits a perfect tangent complex over $R$ given by the pullback of $\gr^{-1}_{\mathrm{Hdg}}M_n[-1]$. 
\end{theorem}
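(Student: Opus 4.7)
The plan is to read off representability from the fiber-sequence description of syntomic cohomology given in Remark \ref{rem:qsynt_site}, combined with the Hodge-Tate weight bound $\le 1$, which collapses the higher qsyn cohomology of $\mathcal{M}_{-}$ and $\Fil^0\mathcal{M}_{-}$ onto connective, explicitly representable $R$-modules. Concretely, after base change to any $C \in \mathrm{CRing}^{p\text{-comp}}_{R/}$, one has
\[
R\Gamma(C^{\mathrm{syn}}_n,\mathcal{M})\simeq \fib\bigl(R\Gamma_{\mathrm{qsyn}}(\Spf C,\Fil^0\mathcal{M}_{-})\xrightarrow{\varphi_0-\mathrm{can}}R\Gamma_{\mathrm{qsyn}}(\Spf C,\mathcal{M}_{-})\bigr),
\]
so the strategy is to analyze each term and take $\tau^{\le 0}$ of the resulting fiber.

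To represent the two qsyn cohomology complexes as functors of $C$, I would use the filtered de Rham point $x^{\mathcal{N}}_{\dR,R}$ from Construction \ref{const:filtered_de_rham}. Pullback along this map identifies the underlying module and the $\Fil^0$-piece of $\mathcal{M}$ with $M_n$ and $\Fil^0_{\mathrm{Hdg}}M_n$; after base change to $C$ these are simply $C\otimes_R M_n$ and $C\otimes_R\Fil^0_{\mathrm{Hdg}}M_n$. The Hodge-Tate weight bound forces the graded pieces of the Hodge filtration to live only in filtration degrees $\ge -1$, so the relevant qsyn cohomology collapses in degree $\le 0$ onto these $R$-modules, any higher-degree contribution sitting safely outside $\tau^{\le 0}$. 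The key input here is the Hodge-Tate comparison together with quasisyntomic descent, applied exactly as in the proof of Theorem \ref{thm:HTwts01_representable}.

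With that in hand, $\Gamma_{\mathrm{syn}}(\mathcal{M})$ is exhibited as the derived fiber of the map $\varphi_0-\mathrm{can}$ between two finitely presented affine derived formal schemes over $\Spf R$ (the total spaces of $\Fil^0_{\mathrm{Hdg}}M_n$ and $M_n$), hence is itself a finitely presented derived $p$-adic formal Artin stack. The tangent complex at a section is the fiber of $\Fil^0_{\mathrm{Hdg}}M_n\to M_n$, which equals $\gr^{-1}_{\mathrm{Hdg}}M_n[-1]$, matching the claim. The Cartesian square for nilpotent divided power thickenings $(C'\twoheadrightarrow C,\gamma)$ is the standard cohesiveness of the affine fiber presentation: lifting a section amounts to lifting its image in $\Fil^0_{\mathrm{Hdg}}M_n$ from $C$ to $C'$ compatibly with its image in $M_n$.

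The main obstacle is verifying the collapse of higher qsyn cohomology in the second step. One must show that under the Hodge-Tate weight bound, the cohomology of $\mathcal{M}_{-}$ and $\Fil^0\mathcal{M}_{-}$ in the qsyn topology contributes only in degrees $\ge 0$, so that $\tau^{\le 0}$ of the fiber is genuinely represented by the two affine formal schemes identified above. This is precisely where the weight hypothesis enters and where a careful qsyn-local dévissage along the Hodge filtration, comparing $\mathcal{M}$ locally to a vector bundle $F$-gauge of the same weights, is needed. Once that vanishing is established, the representability, the tangent complex formula, and the Cartesian square all follow formally from the fiber presentation.
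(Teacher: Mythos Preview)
The paper does not actually prove this theorem here; its proof is the single line ``See~\cite[Theorem 8.12.1]{gmm}.'' So the relevant comparison is between your sketch and the argument in that reference, whose shape is hinted at in Remark~\ref{rem:f-gauges_repble_close_look}.

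Your sketch has a genuine gap at the central step. You assert that the quasisyntomic cohomology groups $R\Gamma_{\mathrm{qsyn}}(\Spf C,\mathcal{M}_{-})$ and $R\Gamma_{\mathrm{qsyn}}(\Spf C,\Fil^0\mathcal{M}_{-})$ ``collapse in degree $\le 0$'' onto $C\otimes_R M_n$ and $C\otimes_R\Fil^0_{\mathrm{Hdg}}M_n$. But $M_n=x_{\dR}^*\mathcal{M}$ is the fiber of $\mathcal{M}$ at the single de Rham point $\Spf C\to C^{\Prism}$, whereas $R\Gamma_{\mathrm{qsyn}}(\Spf C,\mathcal{M}_{-})\simeq R\Gamma(C^{\Prism},\mathcal{M})$ is the prismatic cohomology of $C$ with coefficients in $\mathcal{M}$. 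Pullback along $x_{\dR}$ gives a map from the latter to the former, not an isomorphism in low degrees, and the Hodge--Tate weight bound does not change this. For a concrete counterexample take $\mathcal{M}=\mathcal{O}^{\mathrm{syn}}\{1\}/p$: here $\Fil^0_{\mathrm{Hdg}}M_1=0$, so your fiber presentation yields the zero functor, whereas in fact $\Gamma_{\mathrm{syn}}(\mathcal{M})=\mu_p$ (by Theorem~\ref{thm:dieudonne} and Cartier duality), which has nontrivial points over $C=\Field_p[t]/(t^p)$.

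What you have correctly isolated is the \emph{deformation-theoretic} content: along nilpotent PD thickenings, a Grothendieck--Messing principle does reduce lifting sections to lifting de Rham data, and that is precisely what the Cartesian square in the statement expresses. But this infinitesimal description does not globalize to a presentation of $\Gamma_{\mathrm{syn}}(\mathcal{M})$ as the fiber of a map between the linear stacks attached to $\Fil^0_{\mathrm{Hdg}}M_n$ and $M_n$. As Remark~\ref{rem:f-gauges_repble_close_look} indicates, already for $n=1$ over $\Field_p$-algebras the actual structure involves $F$-zip cohomology and the Bragg--Olsson stacks $\mathsf{S}_{(N,\psi)[1]}$ built from the sheaf $H^1_{\Prism}:S\mapsto\mathbb{L}_{S/\Field_p}$, not merely the de Rham realization. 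The representability and the PD deformation square come from that finer analysis, not from the collapse you posit.
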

\begin{proof}
See ~\cite[Theorem 8.12.1]{gmm}.
\end{proof}

\begin{corollary}
[Deformation theory for syntomic cohomology]
\label{cor:def_theory_f_gauges}
With the notation as above, let $\iota_n:R^{\mathrm{syn}}\otimes\Int/p^n\Int\to R^{\mathrm{syn}}$ be the tautological closed immersion, and set $I = \fib(C'\twoheadrightarrow C)$. Then we have a canonical fiber sequence
\[
R\Gamma(C^{',\mathrm{syn}},\iota_{n,*}\mathcal{M}\vert_{C^{',\mathrm{syn}}}) \to R\Gamma(C^{\mathrm{syn}},\iota_{n,*}\mathcal{M}\vert_{C^{\mathrm{syn}}})\to I\otimes_R\gr^{-1}_{\mathrm{Hdg}}M_n.
\]
\end{corollary}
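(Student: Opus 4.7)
The plan is to deduce this fiber sequence from the Cartesian square in Theorem~\ref{thm:perfect_f-gauges_repble}, which describes the deformation theory of the representing stack $\Gamma_{\mathrm{syn}}(\mathcal{M})$.

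By adjunction for the closed immersion $\iota_n$, each side of the target fiber sequence can be rewritten as cohomology on $(-)^{\mathrm{syn}} \otimes \Int/p^n\Int$. The defining property of $\Gamma_{\mathrm{syn}}(\mathcal{M})$ then identifies $\Gamma_{\mathrm{syn}}(\mathcal{M})(C)$ with the connective truncation $\tau^{\leq 0}$ of $R\Gamma(C^{\mathrm{syn}}, \iota_{n,*}\mathcal{M}\vert_{C^{\mathrm{syn}}})$, and similarly for $C'$. Since $\tau^{\leq 0}$ preserves fibers, the Cartesian square equates $\tau^{\leq 0}$ of the fiber of $R\Gamma(C^{',\mathrm{syn}}, \iota_{n,*}\mathcal{M}\vert_{C^{',\mathrm{syn}}}) \to R\Gamma(C^{\mathrm{syn}}, \iota_{n,*}\mathcal{M}\vert_{C^{\mathrm{syn}}})$ with the fiber of the right vertical map $C' \otimes_R \Fil^0_{\mathrm{Hdg}} M_n \to (C \otimes_R \Fil^0_{\mathrm{Hdg}} M_n) \times_{C \otimes_R M_n} (C' \otimes_R M_n)$.

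A direct application of the total-fiber lemma identifies this latter fiber with $\fib(I \otimes_R \Fil^0_{\mathrm{Hdg}} M_n \to I \otimes_R M_n)$, where $I = \fib(C' \twoheadrightarrow C)$. The cofiber sequence $\Fil^0_{\mathrm{Hdg}} M_n \to M_n \to \gr^{-1}_{\mathrm{Hdg}} M_n$, valid since $\mathcal{M}$ has Hodge-Tate weights bounded by $1$, now yields $I \otimes_R \gr^{-1}_{\mathrm{Hdg}} M_n[-1]$ as the answer. This establishes the claimed fiber sequence after applying $\tau^{\leq 0}$.

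To upgrade this to the untruncated fiber sequence of unbounded complexes asserted in the corollary, I would apply Theorem~\ref{thm:perfect_f-gauges_repble} to each shift $\mathcal{M}[j]$, $j \in \Int$. Each such shift still has Hodge-Tate weights bounded by $1$, produces an analogous Cartesian square, and, after shifting back, gives the identification in the $\tau^{\leq j}$ range. Passing to the limit $j \to \infty$ yields the full fiber sequence. The main obstacle is verifying the coherence of these shifted identifications, which should follow from the manifest functoriality of the representability in Theorem~\ref{thm:perfect_f-gauges_repble} with respect to the $F$-gauge input.
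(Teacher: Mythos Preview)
Your proposal is correct and follows essentially the same approach as the paper: apply Theorem~\ref{thm:perfect_f-gauges_repble} to the shifts $\mathcal{M}[j]$ to obtain the fiber sequence after $\tau^{\le j}$, then pass to the colimit over $j$. The paper's proof is terser (it omits your explicit identification of the fiber of the right vertical map via the total-fiber lemma and the cofiber sequence for $\gr^{-1}_{\mathrm{Hdg}}M_n$), but the substance is the same; your coherence concern is not addressed in the paper either and is handled by the evident functoriality of the Cartesian square in the $F$-gauge input.
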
  
\begin{proof}
Apply the theorem to $\mathcal{M}[j]$ for $j\ge 1$ to get canonical fiber sequences of $(-j)$-connective animated abelian groups
\[
\tau^{\le j}R\Gamma(C^{',\mathrm{syn}},\iota_{n,*}\mathcal{M}\vert_{C^{',\mathrm{syn}}})\to \tau^{\le j}R\Gamma(C^{\mathrm{syn}},\iota_{n,*}\mathcal{M}\vert_{C^{\mathrm{syn}}})\to I\otimes_R\tau^{\leq j}\gr^{-1}_{\mathrm{Hdg}}M_n.
\]
Now, one finishes by taking the colimit over $j$.
\end{proof}

\begin{remark}
[Shape of stacks given by syntomic cohomology]
\label{rem:f-gauges_repble_close_look}
The proof of Theorem~\ref{thm:perfect_f-gauges_repble} in~\cite{gmm} yields more information. When $R$ is an $\Field_p$-algebra and $n=1$, then we have a Cartesian diagram (see the proof of~\cite[Corollary 8.7.6]{gmm})
\[
\begin{diagram}
\Gamma_{\mathrm{syn}}(\mathcal{F})&\rTo&\Gamma_{\Fzip}(\mathsf{F})\\
\dTo&&\dTo\\
\Gamma_{\Fzip}(\mathsf{F})&\rTo&\mathsf{S}_{(N,\psi)[1]},
\end{diagram} 
\]
where the other objects involved are as follows:
\begin{enumerate}
   \item $\Gamma_{\Fzip}(\mathsf{M})$ is the \emph{$F$-zip cohomology} associated with an $F$-zip as defined in~\cite{Pink2015-ye}. Explicitly, its values on an $R$-algebra $C$ are given by 
   \[
    \Gamma_{\Fzip}(\mathsf{F})(C) \simeq \tau^{\le 0}\fib\left(C\otimes_R\left(\Fil^{\mathrm{conj}}_0 F \times_F \Fil^0_{\mathrm{Hdg}}F\right)\to C\otimes_R\gr^{\mathrm{conj}}_0\mathsf{F}\right)
   \]
   where $\Fil^\bullet_{\mathrm{Hdg}}F$ and $\Fil^{\mathrm{conj}}_0F$ are filtered perfect complexes over $R$ with the same underlying perfect complex $F$.

   \item $\mathsf{S}_{(N,\psi)[1]}$ is a derived Artin stack over $\Gamma_{\Fzip}(\mathsf{F})$, whose classical truncation is in the category of (classical) $\Gamma_{\Fzip}(\mathsf{F})_{\mathrm{cl}}$-stacks obtained by taking the subcategory spanned by height $1$ finite flat $p$-torsion group schemes and their iterated classifying stacks, and then closing this up under pullbacks; see the proof of~\cite[Theorem 7.1.5]{gmm}. This stack was first systematically considered by Bragg-Olsson~\cite{bragg2021representability}. Explicitly, it is the cofiber of a map
   \[
   N\otimes_RZ^1_{\Prism} \to N\otimes_RH^1_{\Prism},
   \]
   where $N$ is a perfect complex over $R$, $H^1_{\Prism}$ is the quasisyntomic sheaf of $R$-modules $C\mapsto \mathbb{L}_{C/\Field_p}$, and $Z^1_{\Prism}$ is another quasisyntomic sheaf of $R$-modules sitting in a fiber sequence
   \[
    \fib(\Ga\xrightarrow{\varphi}\Ga)\to Z^1_{\Prism}\to H^1_{\Prism}.
   \]
\end{enumerate}
\end{remark}

\begin{remark}
[Fpqc descent for syntomic cohomology]
\label{rem:fpqc_descent_for_syntomic_cohomology}
The stack $\Gamma_{\mathrm{syn}}(\mathcal{F})$ satisfies $p$-complete fpqc descent. To see this, one can reduce using derived descent to the case of $\Field_p$-algebras, and then by d\'evissage to the case where $n=1$. Here, the description in Remark~\ref{rem:f-gauges_repble_close_look} shows that it suffices to establish fpqc descent for $\Gamma_{\Fzip}(\mathsf{F})$ and for $\mathsf{S}_{(N,\psi)}[1]$ separately. Given the explicit descriptions of these sheaves in the remark, we are reduced to checking that perfect complexes and the cotangent complex satisfy fpqc descent, and this is well-known. 

Alternatively, one can also follow the proof of \cite[Proposition 7.4.7]{bhatt2022absolute} and prove the more general result that $\Gamma_{\mathrm{syn}}(\mathcal{F})$ satisfies $p$-complete fpqc descent for \emph{any} $F$-gauge $\mathcal{F}$, with bounded above Hodge-Tate weights. To do this, one uses the description of syntomic cohomology given in Remark~\ref{rem:qsynt_site}, but shows in addition that one can use the Nygaard \emph{completed} sheaves instead. With this, the desired descent statement can be ultimately reduced as in the argument of Bhatt-Lurie to the fact that the $p$-complete cotangent complex of an animated commutative ring tensored with a module satisfies $p$-complete fpqc descent (see \cite[Proposition~3.2]{lm24}).
\end{remark}

\subsection{Dieudonn\'e theory and $F$-gauges}

The following theorem is a generalization of work of Ansch\"utz-Le Bras~\cite{MR4530092}:
\begin{theorem}
\label{thm:dieudonne}
Suppose that $R$ is a classical $p$-complete ring in $\mathrm{CRing}^{p\text{-comp}}$.  Set 
\[
\mathrm{Vect}_{\{0,1\}}(R^{\mathrm{syn}}\otimes\Int/p^n\Int) = \mathrm{Perf}^{[0,0]}_{\{0,1\}}(R^{\mathrm{syn}}\otimes\Int/p^n\Int).
\]
Then the functor $\mathcal{M}\mapsto \Gamma_{\mathrm{syn}}(\mathcal{M})$ yields a canonical equivalence of ($\infty$-)categories
\[
\mathrm{Vect}_{\{0,1\}}(R^{\mathrm{syn}}\otimes\Int/p^n\Int)\xrightarrow{\simeq}\BT[]{n}(R),
\]
where the right hand side is category of $n$-truncated Barsotti-Tate groups over $R$. This equivalence is compatible with arbitrary base-change and satisfies fpqc descent. Moreover, it is compatible with Cartier duality: There is a canonical non-degenerate pairing
\[
\Gamma_{\mathrm{syn}}(\mathcal{M}^\vee\{1\})\times\Gamma_{\mathrm{syn}}(\mathcal{M})\to \mu_{p^n}
\]
yielding an isomorphism $\Gamma_{\mathrm{syn}}(\mathcal{M}^\vee\{1\})\xrightarrow{\simeq}\Gamma_{\mathrm{syn}}(\mathcal{M})^*$.
\end{theorem}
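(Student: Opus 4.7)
The plan is to reduce the classification to the case of semiperfectoid bases, where it becomes a version of the Ansch\"utz--Le Bras theorem, and to propagate this to arbitrary classical $p$-complete $R$ using the smoothness of both moduli stacks together with $p$-completely fpqc descent. The compatibility with base change and fpqc descent in the statement is then essentially built in, since $\Gamma_{\mathrm{syn}}$ is manifestly compatible with base change and satisfies fpqc descent by Remark~\ref{rem:fpqc_descent_for_syntomic_cohomology}.

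First I would verify that $\Gamma_{\mathrm{syn}}(\mathcal{M})$ actually lands in $\BT[]{n}(R)$. By Theorem~\ref{thm:perfect_f-gauges_repble} it is representable by a finitely presented $p$-adic formal derived Artin stack whose tangent complex is (the pullback of) $\gr^{-1}_{\mathrm{Hdg}}M_n[-1]$. For a vector bundle $F$-gauge of level $n$ with Hodge--Tate weights in $\{0,1\}$, this graded piece is a finite projective $R/p^n$-module concentrated in degree $0$, so $\Gamma_{\mathrm{syn}}(\mathcal{M})$ is formally smooth over $\Spf R$, classical, and killed by $p^n$. Finiteness and flatness, as well as the $\BT[]{n}$-axioms on multiplication by $p$, can be checked after a $p$-completely faithfully flat base change: using Proposition~\ref{prop:qsyn_to_flat_covers} I would reduce to a semiperfectoid $R$, where Theorem~\ref{thm:semiperfectoid_nygaard} translates a vector bundle $F$-gauge into filtered Breuil--Kisin data over $\Fil^\bullet_{\mathcal{N}}\Prism_R$ and Ansch\"utz--Le Bras then identifies these with Barsotti--Tate groups.

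Full faithfulness and essential surjectivity would follow by the same descent device combined with a deformation-theoretic bootstrap. Theorem~\ref{thm:HTwts01_representable} represents the source as a locally finitely presented derived Artin stack $p$-completely smooth over $\Spf\Int_p$, while the target $\BT[]{n}$ is classically known to be smooth; the tangent modules of both sides, by Corollary~\ref{cor:def_theory_f_gauges} and the Grothendieck--Messing theory of extensions respectively, are given by $\operatorname{Hom}(\Fil^0_{\mathrm{Hdg}}M_n,\gr^{-1}_{\mathrm{Hdg}}M_n)$. Once one knows essential surjectivity and full faithfulness at a $p$-completely faithfully flat perfectoid cover (Ansch\"utz--Le Bras), the equivalence of tangent modules together with the smoothness on both sides spreads it out to $\Spf R$. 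The main obstacle I expect is precisely matching, at the level of the universal object, the description of the tangent module provided by Corollary~\ref{cor:def_theory_f_gauges} with the Grothendieck--Messing formula on the $\BT[]{n}$-side; a priori these are two different recipes and one needs a canonical identification under $\Gamma_{\mathrm{syn}}$.

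Finally, for Cartier duality, I would first observe that the evaluation pairing $\mathcal{M}^\vee\otimes \mathcal{M}\to \mathcal{O}^{\mathrm{syn}}$ twisted by the Breuil--Kisin line bundle gives a symmetric monoidal pairing of $F$-gauges $\mathcal{M}^\vee\{1\}\otimes \mathcal{M}\to \mathcal{O}^{\mathrm{syn}}\{1\}$. Applying $\Gamma_{\mathrm{syn}}$ produces a bilinear map with target $\Gamma_{\mathrm{syn}}(\mathcal{O}^{\mathrm{syn}}\{1\})$, which is canonically $\mu_{p^n}$ (essentially by design of the Breuil--Kisin twist). Non-degeneracy then reduces by fpqc descent once more to the perfectoid case, where it is classical, yielding the required isomorphism $\Gamma_{\mathrm{syn}}(\mathcal{M}^\vee\{1\})\xrightarrow{\simeq}\Gamma_{\mathrm{syn}}(\mathcal{M})^*$.
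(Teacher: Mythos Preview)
The paper's own proof is a one-line citation to \cite[Theorem 11.1.4]{gmm}, with the only added remark being that compatibility with arbitrary base-change and fpqc descent follows because the equivalence is realized as an isomorphism of smooth $p$-adic formal algebraic stacks $\mathrm{Vect}^{\mathrm{syn}}_{n,\{0,1\}}\xrightarrow{\simeq}\BT[]{n}$. Your proposal instead sketches a direct argument, and what you outline is essentially the strategy carried out in \cite{gmm}: use representability and smoothness of both moduli stacks (Theorem~\ref{thm:HTwts01_representable} on one side, classical on the other), match tangent complexes via Grothendieck--Messing, and bootstrap from the semiperfectoid case where Ansch\"utz--Le Bras applies. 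So your approach and the paper's are the same at the level of content; you are reconstructing the cited result rather than invoking it.

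Two small points. First, the ``main obstacle'' you flag---identifying the tangent module from Corollary~\ref{cor:def_theory_f_gauges} with the Grothendieck--Messing description---is exactly the comparison handled in \cite{gmm} via the identification of the filtered Dieudonn\'e crystal (see Proposition~\ref{prop:lie_complex} in this paper for the statement). Second, in your Cartier duality paragraph, the target of the pairing should be the level-$n$ Breuil--Kisin twist $\mathcal{O}^{\mathrm{syn}}\{1\}\otimes\Int/p^n\Int$ rather than $\mathcal{O}^{\mathrm{syn}}\{1\}$ itself; the identification of its syntomic sections with $\mu_{p^n}$ is \cite[Proposition 11.4.1]{gmm} (compare Remark~\ref{rem:cartier_duality_pairing} later in the paper, which spells out the factoring through $\mu_{p^n}$ more carefully).
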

\begin{proof}
This is~\cite[Theorem 11.1.4]{gmm}. The only point to be explained perhaps is the compatibility with arbitrary base-change and fpqc descent. This is a by-product of the fact that the equivalence is obtained via an isomorphism of $p$-adic formal (smooth) algebraic stacks $\mathrm{Vect}^{\mathrm{syn}}_{n,\{0,1\}}\xrightarrow{\simeq}\BT[]{n}$, where the source is the stack $\mathrm{Perf}^{\mathrm{syn},[0,0]}_{n,\{0,1\}}$ from Theorem~\ref{thm:HTwts01_representable} with a different name, and where the target is the stack of $n$-truncated Barsotti-Tate group schemes.
\end{proof}

We now present some complements to the theorem, beginning with the observation that fppf cohomology of $n$-truncated Barsotti-Tate group schemes can be computed using syntomic cohomology.
\begin{proposition}
[Syntomic cohomology and fppf cohomology]
\label{prop:syntomic_and_fppf}
Suppose that $R$ is $p$-nilpotent. If $G$ is an $n$-truncated Barsotti-Tate group scheme over $R$ presented as $G = \Gamma_{\mathrm{syn}}(\mathcal{M})$ for $\mathcal{M}\in \mathrm{Vect}_{\{0,1\}}(R^{\mathrm{syn}}\otimes\Int/p^n\Int)$, then there is a canonical isomorphism
\[
R\Gamma_{\mathrm{fppf}}(\Spec R,G)\xrightarrow{\simeq}R\Gamma(R^{\mathrm{syn}},\iota_{n,*}\mathcal{M}) \simeq R\Gamma(R^{\mathrm{syn}}\otimes\Int/p^n\Int,\mathcal{M}).
\]
In fact, for every $m\ge 1$ there is a canonical isomorphism of smooth Artin $m$-stacks
\begin{align}\label{eqn:higher_classifying_stacks}
B^mG \xrightarrow{\simeq}\Gamma_{\mathrm{syn}}(\mathcal{M}[m]).
\end{align}
\end{proposition}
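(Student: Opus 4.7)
The plan is to first establish equation~\eqref{eqn:higher_classifying_stacks}, i.e.~the identification of higher classifying stacks $B^{m} G \simeq \Gamma_{\mathrm{syn}}(\mathcal{M}[m])$ for each $m \geq 1$, and then to extract the derived fppf-to-syntomic comparison as a formal consequence. Indeed, once we have such equivalences of $p$-adic formal prestacks, passing to $R$-valued points and extracting homotopy groups identifies $H^{m}_{\mathrm{fppf}}(\Spec R, G) = \pi_{0} B^{m} G(R)$ with $H^{m}(R^{\mathrm{syn}}, \iota_{n,*}\mathcal{M}) = \pi_{0} \Gamma_{\mathrm{syn}}(\mathcal{M}[m])(R)$, the latter via the universal property in Theorem~\ref{thm:perfect_f-gauges_repble}; compatibility as $m$ varies yields the derived statement.

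To construct the comparison map, I would exploit the exactness of $\Gamma_{\mathrm{syn}}$: the formula $\Gamma_{\mathrm{syn}}(\mathcal{F})(C) = \tau^{\leq 0} R\Gamma(C^{\mathrm{syn}}, \iota_{n,*}\mathcal{F})$ together with the exactness of $R\Gamma$ and of $\tau^{\leq 0}$ sends cofiber sequences of perfect $F$-gauges to fiber sequences of prestacks of connective spectra. Applying this to the cofiber sequences $\mathcal{M}[k-1] \to 0 \to \mathcal{M}[k]$ inductively yields canonical identifications $\Gamma_{\mathrm{syn}}(\mathcal{M}[k]) \simeq B \Gamma_{\mathrm{syn}}(\mathcal{M}[k-1])$ in the appropriate sheaf $\infty$-topos, which combined with the base case $\Gamma_{\mathrm{syn}}(\mathcal{M}) \simeq G$ from Theorem~\ref{thm:dieudonne} iterates to produce the morphism $B^{m} G \to \Gamma_{\mathrm{syn}}(\mathcal{M}[m])$.

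To verify this map is an equivalence, my plan is to use fpqc descent: the target satisfies it by Remark~\ref{rem:fpqc_descent_for_syntomic_cohomology} and the source inherits it from the flatness of $G$, so one reduces to the case where $R$ is semiperfectoid. In that setting, Theorem~\ref{thm:semiperfectoid_nygaard} presents $R^{\mathrm{syn}}$ as the Rees stack of the Nygaard filtered prismatic cohomology, and Remark~\ref{rem:qsynt_site} furnishes an explicit description of $R\Gamma(R^{\mathrm{syn}}, \iota_{n,*}\mathcal{M})$ in terms of quasisyntomic cohomology over $R$. One can then invoke the quasisyntomic analogue of the proposition proved in~\cite[Proposition 3.84]{Mondal2024-cy}, which already identifies $R\Gamma_{\mathrm{qsyn}}(\Spf R, G)$ with $R\Gamma(R^{\mathrm{syn}}, \iota_{n,*}\mathcal{M})$.

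The main obstacle---the content genuinely beyond~\cite{Mondal2024-cy}---is closing the chain by identifying $R\Gamma_{\mathrm{fppf}}(\Spec R, G)$ with $R\Gamma_{\mathrm{qsyn}}(\Spf R, G)$. Since quasisyntomic covers are fpqc, one obtains a natural restriction map in one direction; the crux is to show that higher fppf gerbes banded by $G$ also trivialize after a quasisyntomic cover. My intended strategy is to use Raynaud's theorem to realize $G$ locally as the kernel of an isogeny of $p$-divisible groups and invoke the five-lemma on the long exact sequences in each topology, thereby reducing to the corresponding comparison for the Barsotti-Tate truncations sitting inside $p$-divisible groups, where sufficiently refined quasisyntomic covers (including pro-\'etale or perfectoid ones) should suffice to kill the relevant higher classes.
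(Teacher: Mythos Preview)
Your overall architecture—prove $B^m G \simeq \Gamma_{\mathrm{syn}}(\mathcal{M}[m])$ first and then extract the cohomology statement by taking colimits over $m$—matches the paper's. Your construction of the comparison map is also essentially the paper's (though note that exactness only gives $\Omega\,\Gamma_{\mathrm{syn}}(\mathcal{M}[k])\simeq\Gamma_{\mathrm{syn}}(\mathcal{M}[k-1])$, hence a \emph{map} $B\Gamma_{\mathrm{syn}}(\mathcal{M}[k-1])\to\Gamma_{\mathrm{syn}}(\mathcal{M}[k])$, not an identification; the paper phrases this directly as the map $(\tau^{\le 0}R\Gamma)[m]\to(\tau^{\le m}R\Gamma)[m]$). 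Where you diverge is in verifying that this map is an equivalence, and there your argument has a gap.

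The paper's verification is direct and self-contained. By Theorem~\ref{thm:perfect_f-gauges_repble}, $\Gamma_{\mathrm{syn}}(\mathcal{M}[m])$ is a finitely presented derived Artin stack with tangent complex the pullback of $\gr^{-1}_{\mathrm{Hdg}}M_n[m-1]$, which has Tor amplitude in $[-m,-m+1]$; so $\Gamma_{\mathrm{syn}}(\mathcal{M}[m])$ is a \emph{smooth} classical Artin $m$-stack. Since $B^m G$ has the same tangent complex, the comparison map is \'etale, and it remains to check bijectivity on points over algebraically closed fields $\kappa$. This amounts to $H^i(\kappa^{\mathrm{syn}},\mathcal{M})=0$ for $i>0$, which by d\'evissage reduces to level $n=1$ and the surjectivity of $V-\varphi^*:M\to\varphi^*M$—the classical fact that $T-\mathrm{id}$ is surjective for any $\varphi$-semilinear endomorphism $T$ of a finite-dimensional $\kappa$-vector space.

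Your route—descend to semiperfectoids and invoke the quasisyntomic comparison from~\cite{Mondal2024-cy}—founders at the step you yourself flag as the obstacle. Your proposed attack on $R\Gamma_{\mathrm{fppf}}\simeq R\Gamma_{\mathrm{qsyn}}$ via Raynaud's theorem and the five-lemma is circular: $G$ is \emph{already} an $n$-truncated Barsotti-Tate group, so presenting it as the kernel of an isogeny of $p$-divisible groups reduces you to the very same fppf-versus-quasisyntomic comparison for the truncations of those $p$-divisible groups. You would need an independent argument for this comparison (for instance, that finite syntomic covers trivialize higher gerbes banded by finite flat group schemes), which your sketch does not supply. The paper's deformation-theoretic argument bypasses any comparison of topologies.
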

\begin{proof}
The second assertion implies the first: Indeed, evaluating both sides of~\eqref{eqn:higher_classifying_stacks} on $R$ and shifting gives isomorphisms
\[
\tau^{\leq m}R\Gamma_{\mathrm{fppf}}(\Spec R,G)\xrightarrow{\simeq}\tau^{\le m}R\Gamma(R^{\mathrm{syn}},\iota_{n,*}\mathcal{M}) 
\]
whose colimit over $m$ yields the isomorphism in the first assertion.

Now, note that $B^mG$ is smooth over $R$ (by the flatness of $G$), and the description of the tangent complex of $G = \Gamma_{\mathrm{syn}}(\mathcal{M})$ in Theorem~\ref{thm:perfect_f-gauges_repble} shows that the tangent complex of $B^mG$ is given by the pullback of $\gr^{-1}_{\mathrm{Hdg}}M[-m+1]$, a perfect complex with Tor amplitude in $[-m,-m+1]$. The same theorem also shows that $\Gamma_{\mathrm{syn}}(\mathcal{M}[m])$ is a locally finitely presented derived algebraic stack with the same tangent complex. In particular, this implies that $\Gamma_{\mathrm{syn}}(\mathcal{M}[m])$ is in fact a \emph{smooth} (and hence classical) Artin $m$-stack over $R$.

Now, $B^mG$ is the fppf sheafification of the $\Mod[cn]{\Int/p^n\Int}$-valued presheaf 
\[
C\mapsto \mathcal{G}(C)[m] \simeq \left(\tau^{\leq 0}R\Gamma(C^{\mathrm{syn}},\mathcal{M})\right)[m],
\]
on $R$-algebras. The canonical map
\[
\left(\tau^{\le 0}R\Gamma(C^{\mathrm{syn}},\mathcal{M})\right)[m]\to (\tau^{\le m}R\Gamma(C^{\mathrm{syn}},\mathcal{M}))[m]
\]
sheafifies to a natural map of smooth Artin stacks $B^mG\to \Gamma_{\mathrm{syn}}(\mathcal{M}[m])$, which the description of tangent complexes above shows is actually an \'etale map. To finish, it is enough to know that, when $C =\kappa$ is an algebraically closed field, $H^i(\kappa^{\mathrm{syn}},\mathcal{M})$ vanishes for $i>0$. By d\'evissage, we can reduce what remains to be shown to the case $n=1$. Now, giving $\mathcal{M}$ is equivalent to giving maps of finite dimensional $\kappa$-vector spaces
\[
M \xrightarrow{V}\varphi^*M\xrightarrow{F}M
\]
such that the sequence is exact in the middle. Furthermore, we have
\[
R\Gamma(\kappa^{\mathrm{syn}},\mathcal{M})\simeq \fib(M \xrightarrow{V - \varphi^*}\varphi^*M).
\]
Therefore, we have to know the surjectivity of $V-\varphi^*$, which reduces to the classical fact that, for any $\varphi$-semilinear endomorphism $T:\kappa^h\to \kappa^h$, the map
\[
T-\mathrm{id}:\kappa^h\to \kappa^h
\]
is surjective.
\end{proof}

\begin{remark}
\label{rem:crystalline_realization_bt}
   By~\cite[Remark 6.8.2]{gmm}, every vector bundle $F$-gauge over $R^{\mathrm{syn}}\otimes\Int/p^n\Int$ gives rise to a crystal of vector bundles $\Dieu(\mathcal{M})$ over the mod-$p^n$ crystalline site of $\Spf R$ relative to $\Int_p$ with its $p$-adic divided powers. Moreover, this crystal is \emph{filtered} in the sense that its evaluation $\Dieu(\mathcal{M})(C')$ on any (animated) divided power thickening of $p$-nilpotent $R$-algebras $C'\twoheadrightarrow C$ admits a canonical Hodge filtration $\Fil^\bullet_{\mathrm{Hdg}}\Dieu(\mathcal{M})(C')$ compatible with the (animated) divided power filtration on $C'$. Concretely, this is obtained via quasisyntomic descent from the universal divided power thickening $A_{\mathrm{crys}}(C) \simeq \Prism_C\twoheadrightarrow C$ of semiperfect $R$-algebras $C$, where the filtration arises from the filtered $\Fil^\bullet_{\mathcal{N}}\Prism_C$-module underlying the restriction of $\mathcal{M}$ to $C^{\mathrm{syn}}\otimes\Int/p^n\Int$. On the other hand, if $\mathcal{M}$ is associated with an $n$-truncated Barsotti-Tate group scheme $G$ by Theorem~\ref{thm:dieudonne}, then we also have the Dieudonn\'e crystal $\Dieu(G^*)$ defined in~\cite[\S 3.1, Th\'eor\`eme 3.3.10]{bbm:cris_ii}, given as the internal Ext sheaf $\underline{\Ext}^1(G^*,\mathcal{O}^{\mathrm{crys}})$ in the big crystalline topos of $\Spf R$ equipped with the fppf topology with $\mathcal{O}^{\mathrm{crys}}$ its structure sheaf. This has a filtration by a subsheaf $\underline{\Ext}^1(G^*,\mathcal{I}^{\mathrm{crys}})$, where $\mathcal{I}^{\mathrm{crys}}\subset \mathcal{O}^{\mathrm{crys}}$ is the tautological divided power ideal sheaf in the crystalline topos, whose evaluation on $C'\twoheadrightarrow C$ gives a canonical submodule $\Fil^0_{\mathrm{Hdg}}\Dieu(G^*)(C')\subset \Dieu(G^*)(C')$.
\end{remark}

\begin{proposition}
[The filtered Dieudonn\'e crystal via $F$-gauges]
\label{prop:lie_complex}In the above set up, there is a canonical isomorphism of crystals
\[
   \Dieu(\mathcal{M})\xrightarrow{\simeq}\Dieu(G^*).
\]
mapping $\Fil^0_{\mathrm{Hdg}}\Dieu(\mathcal{M})(C')$ isomorphicaly onto $\Fil^0_{\mathrm{Hdg}}\Dieu(G^*)(C')$ for all divided power thickenings $C'\twoheadrightarrow C$ of $p$-nilpotent $R$-algebras. In particular, if $\Lie(G)\defn e^*\mathbb{L}^\vee_{G/R}$ is the Lie complex of $G$ over $R$ (where $e\in G(R)$ is the identity section), then there is a canonical isomorphism of perfect complexes
\[
\gr^{-1}_{\mathrm{Hdg}}M_n\xrightarrow{\simeq}\Lie(G)[1].
\]
\end{proposition}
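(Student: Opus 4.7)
Both $\Dieu(\mathcal{M})$ and $\Dieu(G^*)$ are filtered crystals on the crystalline site of $\Spf R$, and both constructions satisfy fpqc descent---the $F$-gauge side by the quasisyntomic descent inherent in the $F$-gauge formalism (Remark~\ref{rem:qsynt_site} combined with Proposition~\ref{prop:nygaard_qsynt_descent}), and the internal Ext side by standard properties of the big crystalline topos. My first step would therefore be to reduce to the case where $R$ is quasiregular semiperfectoid, in which case both crystals are entirely determined by their values on the initial PD thickening $\Prism_R \twoheadrightarrow R$ together with the Hodge filtration.

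To construct the comparison map, I would proceed through Cartier duality. Theorem~\ref{thm:dieudonne} identifies $G^*$ with $\Gamma_{\mathrm{syn}}(\mathcal{M}^\vee\{1\})$, and the higher classifying stack formula~\eqref{eqn:higher_classifying_stacks} then identifies $BG^*$ with $\Gamma_{\mathrm{syn}}(\mathcal{M}^\vee\{1\}[1])$. Restricting Proposition~\ref{prop:syntomic_and_fppf} to the de Rham (crystalline) locus yields, for any PD thickening $C'\twoheadrightarrow C$ of $p$-nilpotent $R$-algebras, a natural transformation from the value of $\mathcal{M}$ at the de Rham point $\Spf C' \to C^{\mathrm{syn}}$ to $\underline{\Ext}^1(G^*_C,\mathcal{O}^{\mathrm{crys}})(C')$. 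This is my candidate map $\Dieu(\mathcal{M}) \to \Dieu(G^*)$, and by construction it carries the Hodge filtration on the left to the subsheaf arising from $\mathcal{I}^{\mathrm{crys}}$ on the right.

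To verify that this map is a filtered isomorphism, I would use that both sides are functorial in $\mathcal{M}$ and hence descend from the smooth moduli stack $\mathrm{Vect}^{\mathrm{syn}}_{n,\{0,1\}} \simeq \BT[]{n}$ (Theorems~\ref{thm:HTwts01_representable} and~\ref{thm:dieudonne}). Checking on geometric points reduces to the case of a perfect field, where both constructions recover classical Dieudonn\'e theory. For genuine (untruncated) Barsotti-Tate groups the comparison is essentially contained in~\cite{MR4530092}; the truncated version then follows by exactness, using Raynaud's theorem to present an $n$-truncated Barsotti-Tate group locally as the kernel of an isogeny between Barsotti-Tate groups.

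The Lie complex identification follows by specializing the filtered-crystal isomorphism to the trivial thickening $R \twoheadrightarrow R$ and taking $\gr^{-1}_{\mathrm{Hdg}}$: the $F$-gauge side yields $\gr^{-1}_{\mathrm{Hdg}}M_n$ by construction, while the standard calculation of~\cite{bbm:cris_ii} identifies $\gr^{-1}_{\mathrm{Hdg}}\Dieu(G^*)(R)$ with $\Lie(G)[1]$---the shift reflecting that, for $n$-truncated $G$, $\Lie(G)$ is a perfect complex in degrees $[-1,0]$. Alternatively, this identification can be read off directly from the description of the tangent complex of $\Gamma_{\mathrm{syn}}(\mathcal{M})$ at the identity in Theorem~\ref{thm:perfect_f-gauges_repble}. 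The main obstacle I anticipate is the careful tracking of filtrations and Breuil-Kisin twists in the derived setting, and in particular matching the filtration coming from $\Fil^\bullet_{\mathcal{N}}\Prism_R$ with the divided-power-ideal filtration on $\mathcal{I}^{\mathrm{crys}}$; this is where one has to lean most heavily on the explicit computations available in the qrsp case.
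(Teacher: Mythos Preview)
Your proposal is correct and follows essentially the same approach as the paper: reduce via quasisyntomic descent to the qrsp case and invoke the comparison between prismatic and crystalline Dieudonn\'e theory from~\cite[\S 4.3]{MR4530092} (the paper also cites~\cite{Mondal2024-cy} as an alternative). The detour through Raynaud's theorem is unnecessary---the prismatic-crystalline comparison applies directly to $n$-truncated Barsotti-Tate groups over qrsp rings, so once you have reduced to that case you are done without passing through the untruncated theory.
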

\begin{proof}
This follows from the argument used for~\cite[Proposition 11.7.4]{gmm}, and reduces in the end to the comparison between prismatic and crystalline Dieudonn\'e theory appearing in~\cite[\S 4.3]{MR4530092} or \cite{Mondal2024-cy} (cf.~Remark \ref{compatiblity1}).
\end{proof}

The next result---which is immediate from Propositions~\ref{prop:syntomic_and_fppf} and~\ref{prop:lie_complex} and Corollary~\ref{cor:def_theory_f_gauges}---should be compared with~\cite[Theorem 5.2.8]{Cesnavicius2019-pn}. In \emph{loc. cit.}, one finds the statement for arbitrary finite locally free commutative group schemes, but only for square-zero extensions. The proof there is via the B\'egueri resolution by smooth affine commutative group schemes. We will recover the more general assertion in Corollary~\ref{cor:mazur_roberts_ffg} below using our main classification theorem.
\begin{corollary}
[The Mazur-Roberts carpet]
\label{cor:deformation_theory_bt_ns}
Suppose that $(C'\twoheadrightarrow C,\gamma)$ is a nilpotent divided power extension of discrete $p$-nilpotent $R$-algebras with $I = \ker(C'\twoheadrightarrow C)$. Then we have a canonical fiber sequence
\[
R\Gamma_{\mathrm{fppf}}(\Spec C',G)\to R\Gamma_{\mathrm{fppf}}(\Spec C,G)\to \Lie(G)\otimes_RI[1]
\]
\end{corollary}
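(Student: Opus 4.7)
The plan is to assemble the stated fiber sequence by chaining together the three cited results, with essentially no extra work. First, I would apply Proposition~\ref{prop:syntomic_and_fppf} to replace the fppf cohomology of $G$ on both $\Spec C'$ and $\Spec C$ by the syntomic cohomology of the $F$-gauge $\mathcal{M}$ attached to $G$ under Theorem~\ref{thm:dieudonne}; that is,
\[
R\Gamma_{\mathrm{fppf}}(\Spec C',G)\simeq R\Gamma(C^{',\mathrm{syn}},\iota_{n,*}\mathcal{M}\vert_{C^{',\mathrm{syn}}}),
\]
and similarly for $C$. This requires that $C$ and $C'$ be $p$-nilpotent, which is part of the hypothesis.

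Next, I would feed the nilpotent divided power thickening $(C'\twoheadrightarrow C,\gamma)$ into Corollary~\ref{cor:def_theory_f_gauges}, specialized to the $F$-gauge $\mathcal{M}$. This provides the canonical fiber sequence
\[
R\Gamma(C^{',\mathrm{syn}},\iota_{n,*}\mathcal{M}\vert_{C^{',\mathrm{syn}}})\to R\Gamma(C^{\mathrm{syn}},\iota_{n,*}\mathcal{M}\vert_{C^{\mathrm{syn}}})\to I\otimes_R\gr^{-1}_{\mathrm{Hdg}}M_n.
\]
Combining this with the previous identification already yields a fiber sequence of the desired shape; the only remaining task is to identify the third term with $\Lie(G)\otimes_R I[1]$.

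For that identification I would invoke Proposition~\ref{prop:lie_complex}, which supplies a canonical isomorphism $\gr^{-1}_{\mathrm{Hdg}}M_n\xrightarrow{\simeq}\Lie(G)[1]$ of perfect complexes over $R$. Tensoring with $I$ and using the symmetric monoidal structure gives $I\otimes_R\gr^{-1}_{\mathrm{Hdg}}M_n\simeq \Lie(G)\otimes_R I[1]$, and splicing this into the fiber sequence above produces the claim. There is no real obstacle here: the content is entirely in the three ingredients already established; the only thing to check is that the constructions are natural enough that the identifications are compatible, but since each of Propositions~\ref{prop:syntomic_and_fppf}, \ref{prop:lie_complex} and Corollary~\ref{cor:def_theory_f_gauges} is stated as a \emph{canonical} isomorphism/fiber sequence functorial in $\mathcal{M}$ and in the thickening, this compatibility is automatic.
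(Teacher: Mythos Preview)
Your proposal is correct and matches the paper's approach exactly: the paper states that the corollary is immediate from Propositions~\ref{prop:syntomic_and_fppf} and~\ref{prop:lie_complex} and Corollary~\ref{cor:def_theory_f_gauges}, and your write-up simply spells out how these three ingredients chain together.
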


\begin{remark}
[\'Etale realization of $F$-gauges]
\label{rem:etale_realization_bt}
Via~\cite[Constructions 6.3.1, 6.3.2]{bhatt_lectures}, we find a canonical functor
\[
T_{\et}: \mathrm{Perf}(R^{\mathrm{syn}})\to D^b_{\mathrm{lisse}}(\Spf(R)^{\ad}_\eta,\Int_p)
\]
where the right hand side is the bounded derived category of lisse $\Int_p$-sheaves on the adic generic fiber of $\Spf R$. Since both sides satisy quasisyntomic descent, it suffices to specify the equivalence for semiperfectoid $R$. This is given as
\[
T_{\et}(\mathcal{M}) = \left(\mathcal{M}[1/I_R]^{\wedge}\right)^{\varphi = \mathrm{id}}.
\]
Here, we are viewing $\mathcal{M}$ as a perfect complex of $\Prism_R$-modules, where $(\Prism_R,I_R)$ is the initial prism for $R$, and $\mathcal{M}[1/I_R]^{\wedge}$ is the $p$-adic completion of $\mathcal{M}[1/I]$. The $F$-gauge structure on $\mathcal{M}$ yields a canonical $\varphi$-semilinear isomorphism $\mathcal{M}[1/I_R]^{\wedge}\xrightarrow{\simeq} \mathcal{M}[1/I_R]^{\wedge}$, and $T_{\et}(\mathcal{M})$ is the module of invariants for this operator. Note that the $p$-adic completion is not required if $\mathcal{M}$ has cohomology sheaves supported along the $p=0$ locus.
\end{remark}  

\begin{proposition}
[Compatibility with \'etale realization]
 \label{prop:etale_realization_bt}
With the notation of Proposition~\ref{prop:syntomic_and_fppf}, suppose that $R$ is instead $p$-complete. Then there is a canonical isomorphism $T_{\et}(\mathcal{M})\simeq G^{\ad}_\eta$, where $G^{\ad}_\eta$ is the adic generic fiber of $G$, viewed as a perfect complex of lisse $\Int_p$-sheaves over $\Spf(R)^{\ad}_\eta$.
 \end{proposition}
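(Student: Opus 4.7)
The plan is to reduce to the case of a perfectoid base $R$ via quasisyntomic descent, and then invoke the Scholze-Weinstein / Ansch\"utz-Le Bras comparison on such bases. Both $T_{\et}(\mathcal{M})$ (by its construction in Remark~\ref{rem:etale_realization_bt}) and $G^{\ad}_\eta$ yield perfect complexes of lisse $\Int_p$-sheaves on $\Spf(R)^{\ad}_\eta$ that are functorial in $R$ and satisfy quasisyntomic descent. Combining Proposition~\ref{prop:qsyn_to_flat_covers} with the fact that every $p$-complete ring admits a perfectoid quasisyntomic cover reduces the claim to the perfectoid case.

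When $R$ is perfectoid, the prism $(A_{\mathrm{inf}}(R), I_R)$ is the initial object of the absolute prismatic site by Theorem~\ref{thm:bl_holeman}, and by Theorem~\ref{thm:semiperfectoid_nygaard} a vector bundle $F$-gauge $\mathcal{M}$ over $R$ with Hodge-Tate weights $\{0,1\}$ amounts to a finite projective $p^n$-torsion $A_{\mathrm{inf}}(R)$-module equipped with a compatible Frobenius structure. On the one hand, Theorem~\ref{thm:dieudonne} identifies $\mathcal{M}$ with the prismatic Dieudonn\'e module of $G = \Gamma_{\mathrm{syn}}(\mathcal{M})$; by the Scholze-Weinstein and Ansch\"utz-Le Bras classification to be recalled in Example~\ref{ex:perfect_prisms}, the $p^n$-torsion points of $G$ over any perfectoid affinoid $(S, S^+)$ above $\Spf(R)^{\ad}_\eta$ are computed by $\mathcal{M}(S^+)[1/I_{S^+}]^{\varphi = \mathrm{id}}$. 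On the other hand, this is by definition the value of $T_{\et}(\mathcal{M})$ on $(S, S^+)$, so matching the two yields the desired isomorphism.

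The main obstacle is verifying that the equivalence of Theorem~\ref{thm:dieudonne}, implemented via the Nygaard filtered prismatization, specializes on perfectoid bases to the classical Scholze-Weinstein / Ansch\"utz-Le Bras equivalence. Unwinding the description of syntomic cohomology in Remark~\ref{rem:qsynt_site}, the compatibility reduces to the claim that, for a perfectoid $S^+$, the complex $\tau^{\le 0}R\Gamma((S^+)^{\mathrm{syn}}\otimes\Int/p^n\Int, \mathcal{M})$ is computed by $\mathcal{M}(S^+)[1/I_{S^+}]^{\varphi = \mathrm{id}}$. This holds because the Frobenius becomes an isomorphism after inverting $I_{S^+}$, so the fiber sequence defining syntomic cohomology degenerates to the usual $\varphi - \mathrm{id}$ sequence on the localization; gluing these identifications pro-\'etale locally on $\Spf(R)^{\ad}_\eta$ then produces the asserted isomorphism of lisse $\Int_p$-sheaves.
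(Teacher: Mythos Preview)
Your reduction step has a genuine gap. You claim that ``every $p$-complete ring admits a perfectoid quasisyntomic cover,'' but this is false: perfectoid rings are reduced, so a non-reduced $p$-complete ring such as $\Field_p[x]/(x^2)$ admits no faithfully flat map to a perfectoid ring at all, let alone a quasisyntomic one. What Proposition~\ref{prop:qsyn_to_flat_covers} and Proposition~\ref{prop:nygaard_qsynt_descent} give you is a \emph{semiperfectoid} quasisyntomic cover, and your perfectoid-specific appeal to Scholze--Weinstein / Ansch\"utz--Le Bras does not apply there without further work.

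The paper's proof handles this differently: it uses the $p$-complete smoothness of the moduli stack $\BT[]{n}\simeq\mathrm{Vect}^{\mathrm{syn}}_{n,\{0,1\}}$ (Theorem~\ref{thm:dieudonne}) to reduce to the case where $R$ itself is $p$-quasisyntomic. Once $R$ is quasisyntomic, the result is already established in~\cite[Proposition 3.99]{Mondal2024-cy}. If you want to salvage your approach, you would first need this smoothness reduction to land in the quasisyntomic world, and only then could you descend further to qrsp or perfectoid bases.

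Your final paragraph is also too loose to stand as an argument. You assert that for perfectoid $S^+$ the syntomic cohomology $\tau^{\le 0}R\Gamma((S^+)^{\mathrm{syn}}\otimes\Int/p^n\Int,\mathcal{M})$ is computed by $\mathcal{M}(S^+)[1/I_{S^+}]^{\varphi=\mathrm{id}}$, but the left side is $G(S^+)$ (integral points, computed via $\fib(\Fil^0\mathcal{M}\to\mathcal{M})$ as in Remark~\ref{rem:qsynt_site}), while the right side is the \'etale realization on the generic fiber. These agree only after one knows that integral points of $G$ over $S^+$ coincide with generic-fiber sections of $G^{\ad}_\eta$ over $(S,S^+)$ and that the latter match the $\varphi$-invariants---which is precisely the content of the comparison you are trying to prove, not a formal degeneration.
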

 \begin{proof}
   It is enough to consider the case where $R$ is quasisyntomic (since the moduli stack of $n$-truncated BTs is $p$-completely smooth), where this follows from~\cite[Proposition 3.99]{Mondal2024-cy}.
 \end{proof}  

\begin{remark}
[Compatibility with \'etale cohomology]
\label{rem:etale_realization_cohom}
The assignment 
\[
\mathcal{M}_{-}[1/\mathcal{I}]:C\mapsto \mathcal{M}_C[1/I_C]
\]
is a sheaf on $R_{\mathrm{qsyn}}$. We now have a canonical isomorphism~\cite[Remark 3.100]{Mondal2024-cy}
\[
R\Gamma_{\mathrm{qsyn}}(\Spf R,\mathcal{M}_{-}[1/\mathcal{I}])^{\varphi = \mathrm{id}}\xrightarrow{\simeq}R\Gamma_{\et}(\Spf(R)^{\ad}_\eta,G^{\ad}_\eta).
\]
Note that the $p$-adic completion is unnecessary here since we are dealing with $\Int/p^n\Int$-modules.
\end{remark}    

\section{Exactness}
\label{sec:exactness_of_the_syntomic_dieudonne_functor}

\subsection{From extensions of $F$-gauges to extensions of BTs}

Our goal for this section is to prove the following result:

\begin{theorem}
[Exactness]
\label{thm:exactness}
The functor $\mathcal{M}\mapsto \Gamma_{\mathrm{syn}}(\mathcal{M})$ from Theorem~\ref{thm:dieudonne} is an \emph{exact} equivalence. More precisely, a sequence
\[
\mathcal{M}_1\to \mathcal{M}_2\to \mathcal{M}_3
\]
in $\mathrm{Vect}^{\mathrm{syn}}_{n,\{0,1\}}(R)$ is a fiber sequence if and only if 
\[
0\to \Gamma_{\mathrm{syn}}(\mathcal{M}_1)\to \Gamma_{\mathrm{syn}}(\mathcal{M}_2)\to \Gamma_{\mathrm{syn}}(\mathcal{M}_3)\to 0
\]
is an exact sequence of $n$-truncated Barsotti-Tate group schemes over $R$.
\end{theorem}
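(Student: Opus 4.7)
The plan is to deduce the general statement from the quasisyntomic case (already established in~\cite{Mondal2024-cy}) via flat descent.

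By Proposition~\ref{prop:nygaard_qsynt_descent}, choose a quasisyntomic cover $R \to R_\infty$ with $R_\infty^{\otimes_R m}$ semiperfectoid for every $m \geq 1$; in particular each such tensor power is quasisyntomic. The induced map $R_\infty^{\mathrm{syn}} \to R^{\mathrm{syn}}$ is a flat cover, and perfect complexes on the syntomification satisfy $p$-completely fpqc descent along it (cf.\ Remark~\ref{rem:fpqc_descent_for_syntomic_cohomology}). Similarly, the category of $n$-truncated Barsotti--Tate group schemes satisfies fpqc descent, and the property of a sequence $0 \to G_1 \to G_2 \to G_3 \to 0$ being short exact is local in the flat topology: the kernel description is automatic since forming kernels commutes with pullback, while fppf surjectivity of $G_2 \to G_3$ descends from fpqc surjectivity because $G_3$ is representable by a finite locally free $R$-scheme and so the obstruction $G_3/G_2$ is representable too.

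With these descent facts in place, both implications become formal reductions. For the direction ``fiber sequence $\Rightarrow$ SES'', start with a fiber sequence $\mathcal{M}_1 \to \mathcal{M}_2 \to \mathcal{M}_3$ over $R$, pull it back to $R_\infty$, apply the quasisyntomic exactness of~\cite{Mondal2024-cy} to obtain a SES of BTs there, and descend back to $R$. For the converse, start with a SES $0 \to G_1 \to G_2 \to G_3 \to 0$ of $n$-truncated BTs; passing through the equivalence $\mathcal{G}$ we obtain maps $\mathcal{M}_1 \to \mathcal{M}_2 \to \mathcal{M}_3$ whose composition is canonically null-homotopic, inducing a natural comparison map $\mathcal{M}_1 \to \mathrm{fib}(\mathcal{M}_2 \to \mathcal{M}_3)$. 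By the quasisyntomic case applied to the pulled-back SES over $R_\infty$, this map is an equivalence after pullback, and hence by fpqc descent for perfect complexes it is already an equivalence over $R$.

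The principal obstacle is formulating the descent statements cleanly, in particular verifying that ``fiber sequence of perfect $F$-gauges'' really is local in the $p$-completely fpqc topology on $\Spf R$; once this is in place, the rest is bookkeeping on the already-known quasisyntomic exactness.
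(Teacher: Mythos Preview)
There is a genuine gap in your descent argument. The result you cite from~\cite{Mondal2024-cy} (see Remark~\ref{rem:exactness_qrsp}) establishes exactness only when $R$ is \emph{absolutely} $p$-quasisyntomic, and in particular for qrsp rings. Your claim that the semiperfectoid tensor powers $R_\infty^{\otimes_R m}$ are ``in particular quasisyntomic'' is false in general: they are quasisyntomic \emph{over $R$}, but if $R$ itself is not $p$-quasisyntomic then $\mathbb{L}_{R_\infty}$ need not have Tor amplitude in $[-1,0]$, so $R_\infty$ is semiperfectoid but not qrsp. Thus the input you want to descend is simply not available over the cover.

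This is exactly the difficulty the paper's proof is designed to overcome. The paper translates the converse direction into a map of formal stacks
\[
\Gamma_{\mathrm{syn}}(\mathcal{N})_{\mathrm{cl}}\longrightarrow \underline{\Ext}(G_3,G_1)
\]
over $\Spf R$ (Remark~\ref{rem:geometric_statement}). The source is shown to be a smooth formal Artin stack using the deformation theory of Theorem~\ref{thm:perfect_f-gauges_repble}. The main work is then Theorem~\ref{thm:smoothness_of_extensions}, which proves that the target $\underline{\Ext}(G(n),H(n))$ is also $p$-completely smooth; this requires a careful obstruction-theoretic argument in the style of Grothendieck--Illusie (Proposition~\ref{prop:obst_theory}). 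Once both sides are smooth, one may assume $R$ is $p$-completely smooth by passing to an affine cover of the universal smooth stack, and only then does the quasisyntomic input from~\cite{Mondal2024-cy} apply to show the map is an isomorphism on $p$-quasisyntomic points, hence everywhere. Your descent strategy, without any smoothness input, cannot bridge the gap from quasisyntomic $R$ to arbitrary $R$.
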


\begin{remark}
\label{rem:exts_to_ext}
Suppose that $\mathcal{M}_1\to \mathcal{M}_2\to \mathcal{M}_3$ is a fiber sequence: It corresponds to a map $\mathcal{M}_3\to \mathcal{M}_1[1]$, which gives rise via Proposition~\ref{prop:syntomic_and_fppf} to a map
\[
\Gamma_{\mathrm{syn}}(\mathcal{M}_3)\to \Gamma_{\mathrm{syn}}(\mathcal{M}_1[1])\xleftarrow{\simeq}B\Gamma_{\mathrm{syn}}(\mathcal{M}_1)
\]
classifying the exact sequence
\[
0\to \Gamma_{\mathrm{syn}}(\mathcal{M}_1)\to \Gamma_{\mathrm{syn}}(\mathcal{M}_2)\to \Gamma_{\mathrm{syn}}(\mathcal{M}_3)\to 0
\]
of fppf sheaves. The difficult part of the theorem now is to show that exactness of syntomic cohomology implies that the original sequence is a fiber sequence.
\end{remark}

\begin{remark}
[The case of qrsp rings]
\label{rem:exactness_qrsp}
If $R = \kappa$ is an algebraically closed (or even perfect) field, then the theorem holds. More generally, when $R$ is a qrsp ring, we have an explicit inverse to $\Gamma_{\mathrm{syn}}$, described in~\cite[\S 3]{Mondal2024-cy} (see also~\cite[\S 11.5]{gmm}), which carries exact sequences to fiber sequences~\cite[Remark 3.83]{Mondal2024-cy}. Therefore, the theorem holds in this case as well. Our plan of action now is to reduce to this situation by studying the geometry of the stack of extensions of truncated Barsotti-Tate groups.
\end{remark}

\begin{remark}
[Translation to a geometric statement]
\label{rem:geometric_statement}
Set $\mathcal{N} = \mathcal{M}_3^\vee\otimes \mathcal{M}_1[1]$: this is a perfect $F$-gauge over $R$ with Hodge-Tate weights bounded by $1$. Therefore, by Theorem~\ref{thm:perfect_f-gauges_repble}, $\Gamma_{\mathrm{syn}}(\mathcal{N})$ is represented by a $p$-adic formal locally finitely presented derived Artin stack over $R$ with tangent complex $\gr^{-1}_{\mathrm{Hdg}}N_n[-1]$. Since $\mathcal{N}[-1]$ is a vector bundle over $R^{\mathrm{syn}}\otimes\Int/p^n\Int$, we see that the tangent complex is in fact connective, and so $\Gamma_{\mathrm{syn}}(\mathcal{N})$ is a \emph{smooth}, hence classical, formal Artin stack over $R$. It parameterizes, for each $C\in \mathrm{CRing}^{p\text{-nilp},f}_{R/}$, maps of vector bundles 
\[
\mathcal{M}_3\vert_{C^{\mathrm{syn}}\otimes\Int/p^n\Int}\to \mathcal{M}_1[1]\vert_{C^{\mathrm{syn}}\otimes\Int/p^n\Int}.
\]
Giving such a map is precisely equivalent to giving a fiber sequence of vector bundles
\[
\mathcal{M}_1\vert_{C^{\mathrm{syn}}\otimes\Int/p^n\Int}\to \mathcal{M}_2 \to \mathcal{M}_3\vert_{C^{\mathrm{syn}}\otimes\Int/p^n\Int}.
\]

On the other hand, set $G_i = \Gamma_{\mathrm{syn}}(\mathcal{M}_i)$. Then we also have the classical formal algebraic stack $\underline{\Ext}(G_3,G_1)$ over $R$ parameterizing extensions of $G_3$ by $G_1$ in fppf sheaves of $\Int/p^n\Int$-modules. Remark~\ref{rem:exts_to_ext} can be interpreted as giving a map of formal classical algebraic stacks 
\begin{align}
\label{eqn:stack_map_extensions}
\Gamma_{\mathrm{syn}}(\mathcal{N})_{\mathrm{cl}}\to \underline{\Ext}(G_3,G_1).
\end{align}
Theorem~\ref{thm:exactness} would follow if we knew that this is an isomorphism.

By taking $\Spf R$ to be a formal affine cover of the smooth formal stack $\mathrm{Vect}^{\mathrm{syn}}_{[0,1],n}\times \mathrm{Vect}^{\mathrm{syn}}_{[0,1],n}$, we can assume that $R$ is $p$-completely smooth. Further, Remark~\ref{rem:exactness_qrsp}, combined with quasisyntomic descent, tells us that
\[
\Gamma_{\mathrm{syn}}(\mathcal{N})(C)\to \underline{\Ext}(G_3,G_1)(C)
\]
is an isomorphism for $p$-quasisyntomic $R$-algebras $C$. If we knew that $\underline{\Ext}(G_3,G_1)$ is also smooth, then it would follow that~\eqref{eqn:stack_map_extensions} is an isomorphism.
\end{remark}

\subsection{Smoothness of the stack of extensions}

\begin{remark}
[Truncating extensions of BT group schemes]
Suppose that $G(n)$ and $H(n)$ are $n$-truncated Barsotti-Tate groups, and suppose that we have a short exact sequence $0\to H(n)\to T(n)\to G(n)\to 0$ of  fppf sheaves of $\Int/p^n\Int$-modules over $R$. Then one has the following observations:
\begin{enumerate}
   \item $T(n)$ is an $n$-truncated Barsotti-Tate group.
   \item If $n>k\ge 1$ and we use $(k)$ to denote the $p^k$-torsion subgroups, then the sequence $0\to H(k)\to T(k)\to G(k)\to 0$ is a short exact sequence of $k$-truncated Barsotti-Tate groups.
\end{enumerate}
See~\cite[Lemme 3.3.9]{bbm:cris_ii}.
\end{remark}

By Remark~\ref{rem:geometric_statement}, the following theorem implies Theorem~\ref{thm:exactness}.
\begin{theorem}
\label{thm:smoothness_of_extensions}
Suppose that $G(n),H(n)$ are two $n$-truncated Barsotti-Tate group schemes over $R$. Then:
\begin{enumerate}
   \item  $\underline{\Ext}(G(n),H(n))$ is $p$-completely smooth over $\Spf R$.
   \item If $n\ge 2$, the natural map
   \[
    \underline{\Ext}(G(n),H(n))\xrightarrow{T(n)\mapsto T(n-1)} \underline{\Ext}(G(n-1),H(n-1))
   \]
   is smooth and surjective.
\end{enumerate}
\end{theorem}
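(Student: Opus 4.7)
The plan is to verify formal smoothness of $\underline{\Ext}(G(n), H(n))$ over $\Spf R$ via the infinitesimal lifting criterion, combining the Mazur–Roberts deformation theory for BT groups (Corollary~\ref{cor:deformation_theory_bt_ns}) with the smoothness of the moduli stack $\BT[]{n}$ of $n$-truncated BT groups, which is available here from Theorems~\ref{thm:HTwts01_representable} and~\ref{thm:dieudonne}.

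For part (1), given a square-zero nilpotent divided power thickening $(C' \twoheadrightarrow C, \gamma)$ in $\mathrm{CRing}^{p\text{-nilp}}_{R/}$ with kernel $I$, and an extension $0 \to H(n)_C \to T \to G(n)_C \to 0$ of $n$-truncated BTs, I will first use smoothness of $\BT[]{n}$ to lift $T$ as an $n$-truncated BT to some $\tilde{T}$ over $C'$; the torsor of such lifts is controlled by $\Lie(T) \otimes_R^{\mathbb{L}} I$ via Corollary~\ref{cor:deformation_theory_bt_ns}. Next, I will extend the structural morphisms $H(n)_{C'} \to \tilde{T}$ and $\tilde{T} \to G(n)_{C'}$ compatibly with those given over $C$. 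The obstructions to lifting each morphism live in first cohomology groups of Lie-type perfect complexes tensored with $I$ on the affine $\Spec C'$, and hence vanish; any remaining incompatibility between the two lifted morphisms (i.e., a failure of the composition to be zero on the nose) can be absorbed by shifting $\tilde{T}$ within its torsor of lifts, whose $\Lie(T) \otimes I$-action is transitive on the relevant compatibility data.

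For part (2), surjectivity of the truncation map requires lifting an extension $T(n-1)$ over $C$ to an $n$-truncated extension over $C$. I will first lift $T(n-1)$ to an $n$-truncated BT using the classical smooth surjectivity of $\BT[]{n} \to \BT[]{n-1}$, and then equip the lift with the required extension structure by an obstruction analysis analogous to part (1), this time extending BT morphisms from $p^{n-1}$-torsion subgroups to full $n$-truncated groups. Smoothness of the map follows by the fibered version of part (1), verified over test points of $\underline{\Ext}(G(n-1), H(n-1))$ rather than of $\Spf R$.

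The main obstacle will be the careful formalization of the obstruction and torsor-absorption arguments: verifying both the vanishing of the relevant $H^1$'s on $\Spec C'$ and, crucially, that the $\Lie(T) \otimes_R^{\mathbb{L}} I$-action on lifts of $T$ acts transitively on the space of compatible pairs of morphism lifts. This requires threading the Mazur–Roberts deformation theory both through the BT group $T$ itself and through the internal $\underline{\Hom}$-sheaves governing its extension structure—standard in spirit, but delicate in execution.
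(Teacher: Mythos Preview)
Your proposal has a few genuine gaps. First, your claim that the obstructions to lifting the maps $H(n)_{C'}\to \tilde T$ and $\tilde T\to G(n)_{C'}$ ``live in first cohomology groups of Lie-type perfect complexes tensored with $I$ on the affine $\Spec C'$, and hence vanish'' mislocates the obstruction. These are not coherent cohomology groups on an affine; they are group-scheme $\Ext$ groups of the form $\Ext^1_{\Int/p^n\Int}(H_0(n), I\otimes_R\Lie(T))$. Their vanishing is a nontrivial fact about truncated Barsotti--Tate groups (Proposition~\ref{prop:ext_coherent_sheaf} and Lemma~\ref{lem:ext_coherent_sheaf}), not affine vanishing. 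Second, you invoke Corollary~\ref{cor:deformation_theory_bt_ns} to say the torsor of lifts of $T$ is ``$\Lie(T)\otimes I$'', but that corollary governs the deformation of \emph{fppf cohomology} of a fixed BT, not the deformations of $T$ as a BT group; the latter torsor is controlled by the cotangent complex of $\BT[]{n}$ (Theorem~\ref{thm:HTwts01_representable}), which is not $\Lie(T)\otimes I$. Your absorption step therefore rests on a misidentification.

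The most serious gap is in part~(2). Lifting an $(n-1)$-truncated extension to level~$n$ is \emph{not} a deformation problem along a square-zero thickening: the passage from $G(n-1)$ to $G(n)$ is not governed by the Mazur--Roberts carpet, and ``extending BT morphisms from $p^{n-1}$-torsion subgroups to full $n$-truncated groups'' has no obvious obstruction theory of the sort you describe. The paper handles both~(1) and~(2) in a unified way that is genuinely different from your decomposition: it constructs a single obstruction class $\mathrm{ob}(f_0(n))\in \Ext^2_{\Int/p^n\Int}(G_0(n), I\otimes_R t_{H(n)})$ for lifting the extension \emph{class} directly (Proposition~\ref{prop:obst_theory}), proves this class is compatible with the tower $G_0(n)\supset G_0(n-1)\supset\cdots$, and then kills it using Illusie's result that the restriction map $\Ext^2_{\Int}(G_0(n),-)\to \Ext^2_{\Int}(G_0(n-1),-)$ is identically zero. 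Surjectivity on geometric points (Lemma~\ref{lem:surj_geometric_points}) is verified separately. This tower-functoriality argument is the essential content; your proposal bypasses it without supplying a replacement.
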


\begin{remark}
The statement of the above theorem was anticipated by Grothendieck a long time ago: See the discussion on p. 10 of~\cite{IllusieChicago}, which references p. 15 of Grothendieck's letter~\cite{GrothLetter}. The method of proof we follow here is essentially Grothendieck's as explained in~\cite{MR0801922}: It was used there to prove (among other things) the smoothness of the stack of truncated Barsotti-Tate groups. The key point is to set up the obstruction theory correctly, which we do in Proposition~\ref{prop:obst_theory} below.
\end{remark}

\begin{lemma}
[Surjectivity on geometric points]
\label{lem:surj_geometric_points}
For any algebraically closed field $\kappa$ over $R$ the map
\[
\underline{\Ext}(G(n),H(n))(\kappa)\xrightarrow{T(n)\mapsto T(n-1)} \underline{\Ext}(G(n-1),H(n-1))(\kappa)
\]
is surjective
\end{lemma}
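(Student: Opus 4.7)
The approach is as follows. When $\mathrm{char}\,\kappa \neq p$, both $G(m)$ and $H(m)$ are \'etale group schemes and the lemma is trivial, so we may assume $\mathrm{char}\,\kappa = p$. Over the algebraically closed---hence perfect---field $\kappa$, Theorem~\ref{thm:dieudonne} specialises to an exact equivalence between the category of $m$-truncated Barsotti-Tate groups over $\kappa$ and the category of free $W_m(\kappa)$-modules equipped with a Frobenius-semilinear $F$ and a Verschiebung-semilinear $V$ satisfying $FV = VF = p$, under which the truncation $m \rightsquigarrow m-1$ corresponds to reduction along $W_m(\kappa) \twoheadrightarrow W_{m-1}(\kappa)$; exactness of the equivalence over a field is already available by Remark~\ref{rem:exactness_qrsp}. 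The lemma is thereby equivalent to the assertion that extensions of free $W_{n-1}(\kappa)$-Dieudonn\'e modules lift to extensions of free $W_n(\kappa)$-Dieudonn\'e modules along the restriction functor.

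I would further reduce to the analogous lifting statement for $p$-divisible groups, i.e.\ for free $W(\kappa)$-Dieudonn\'e modules. First, lift $G(n), H(n)$ to $p$-divisible groups $G_\infty, H_\infty$ over $\kappa$: choose bases of $M(G(n))$ and $M(H(n))$, lift the matrices of $F$ and $V$ arbitrarily to $W(\kappa)$-coefficients, and inductively correct them by $p^k$-multiples of mod-$p$ semilinear endomorphisms so that the relations $FV = VF = p$ hold on the nose; the $k$-th obstruction is a pair of semilinear maps of $\kappa$-vector spaces, which can be killed using the bijectivity of the absolute Frobenius on the algebraically closed field $\kappa$. Next, given the extension $0 \to H(n-1) \to T(n-1) \to G(n-1) \to 0$, pick a $W_{n-1}(\kappa)$-module splitting $M(T(n-1)) \simeq M(H(n-1)) \oplus M(G(n-1))$; under this splitting the extension data is encoded in a pair of semilinear crossover maps $\epsilon_F, \epsilon_V \colon M(G(n-1)) \to M(H(n-1))$ subject to the single linear relation $F_H \epsilon_V + \epsilon_F V_G = 0$. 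I would then lift $\epsilon_F, \epsilon_V$ arbitrarily to $W(\kappa)$-semilinear maps $M(G_\infty) \to M(H_\infty)$ using freeness and correct them $p$-adically, order-by-order, to restore the compatibility relation. This produces a $p$-divisible extension $0 \to H_\infty \to T_\infty \to G_\infty \to 0$; setting $T(n) := T_\infty[p^n]$ then yields an $n$-truncated Barsotti-Tate extension of $G(n)$ by $H(n)$ whose $p^{n-1}$-torsion recovers the original one, exactness of $T_\infty[p^n]$ coming from applying the snake lemma to multiplication by $p^n$ on the $p$-divisible extension.

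The main obstacle is the linear-algebraic step to be carried out at each order of $p$: given a $\kappa$-linear map $c \colon M(G(1)) \to M(H(1))$, produce a Frobenius-semilinear $\delta_F$ and a Verschiebung-semilinear $\delta_V$ with $F_H \delta_V + \delta_F V_G = c$. This can be handled by exploiting the $\mathrm{BT}_1$-identities $\ker F = \operatorname{im} V$ and $\ker V = \operatorname{im} F$ satisfied by $M(G(1))$ and $M(H(1))$, combined with the surjectivity of $T - \mathrm{id}$ for any semilinear endomorphism $T$ of a finite-dimensional $\kappa$-vector space---precisely the classical fact invoked at the end of the proof of Proposition~\ref{prop:syntomic_and_fppf}.
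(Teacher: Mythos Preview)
Your overall strategy—translate to classical Dieudonn\'e modules over $W(\kappa)$, lift $G(n)$ and $H(n)$ to $p$-divisible groups, then lift the extension $p$-adically order by order—is a natural classical route and is genuinely different from the paper's proof. The paper instead uses Remark~\ref{rem:geometric_statement} to identify the map in question with
\[
\Gamma_{\mathrm{syn}}(\mathcal{Q}[1])(\kappa)\to \Gamma_{\mathrm{syn}}(\mathcal{Q}[1]\otimes_{\Int/p^n\Int}\Int/p^{n-1}\Int)(\kappa)
\]
for a suitable vector bundle $F$-gauge $\mathcal{Q}$, and then observes that the obstruction lies in $H^2(\kappa^{\mathrm{syn}},\mathcal{Q}\otimes_{\Int/p^n\Int}\Int/p\Int)$, which vanishes because the complex computing it sits in degrees $0,1$. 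That is a two-line argument once the $F$-gauge machinery is in place.

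Your execution, however, has a genuine gap. First, the extension data is subject to \emph{two} relations, $F_H\epsilon_V+\epsilon_FV_G=0$ and $V_H\epsilon_F+\epsilon_VF_G=0$, not one; over $W(\kappa)$ the first does imply the second (via injectivity of $F,V$ on free modules), so this is not fatal for the end result, but it should be noted. More seriously, the ``main obstacle'' you isolate—solving $\bar F_H\delta_V+\delta_F\bar V_G=c$ over $\kappa$ for an \emph{arbitrary} $\kappa$-linear $c$—is in general impossible. For instance, if $G(1)=H(1)$ is the $p$-torsion of a supersingular elliptic curve, so that $\bar F,\bar V$ each have one-dimensional image equal to the other's kernel, then the image of $(\delta_V,\delta_F)\mapsto \bar F_H\delta_V+\delta_F\bar V_G$ is a proper subspace of $\Hom_\kappa(\bar M_G,\bar M_H)$. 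Your appeal to the $\mathrm{BT}_1$-identities and to the surjectivity of $T-\mathrm{id}$ does not address this: the equation is not of the shape $(T-\mathrm{id})(x)=c$. What actually saves the day is that the error $c$ produced by your process is not arbitrary but is constrained by the second relation (concretely, $\bar V_Hc$ lies in the image of $\bar V_G$), and one must use that constraint—or, equivalently, solve both equations simultaneously. Carrying this out correctly is exactly the $\Ext^2$-vanishing that the paper's short syntomic-cohomology argument establishes directly.
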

\begin{proof}
Without loss of generality we can assume $R = \kappa$. By the discussion in Remark~\ref{rem:geometric_statement}, this would follow from the following assertion: Suppose that $\mathcal{Q}$ is a vector bundle $F$-gauge of level $n$ over $\kappa$; then the map 
\[
\Gamma_{\mathrm{syn}}(\mathcal{Q}[1])(\kappa)\to \Gamma_{\mathrm{syn}}(\mathcal{Q}[1]\otimes_{\Int/p^n\Int}\Int/p^{n-1}\Int)(\kappa)
\]
is surjective. This reduces to knowing that we have
\[
H^1(\kappa^{\mathrm{syn}},\mathcal{Q}[1]\otimes_{\Int/p^n\Int}\Int/p\Int) = H^2(\kappa^{\mathrm{syn}},\mathcal{Q}\otimes_{\Int/p^n\Int}\Int/p\Int)=0,
\]
which is clear since $R\Gamma(\kappa^{\mathrm{syn}},\mathcal{Q}\otimes_{\Int/p^n\Int}\Int/p\Int)$ is computed by a two-term complex whose terms are in degrees $0,1$.
\end{proof}

\begin{proposition}
[Ext against coherent sheaves]
\label{prop:ext_coherent_sheaf}
Suppose that $M$ is an $R$-module and that $p^nR  = 0$. Viewing $M$ as a quasicoherent fppf sheaf over $\Spec R$:
\begin{enumerate}
   \item There is a canonical isomorphism
   \[
        \underline{\Ext}^1_{\Int}(G(n),M)\xrightarrow{\simeq}\underline{\Hom}_{\Int/p^n\Int}(G(n),M),
   \]

\item We have\footnote{This is the internal Ext sheaf computed in the category of fppf $\Int/p^n\Int$-modules over $\Spec R$.}
   \[
      \underline{\Ext}^1_{\Int/p^n\Int}(G(n),M) = 0.
   \]
   \item The natural map
   \[
      \underline{\Ext}^2_{\Int/p^n\Int}(G(n),M)\to \underline{\Ext}^2_{\Int}(G(n),M)
   \]
   is an isomorphism.
\end{enumerate}
\end{proposition}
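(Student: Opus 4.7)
The plan is to first observe that parts (1) and (3) are formal consequences of (2), then to prove (2) directly. For the reduction, note that since $p^n R = 0$ forces $p^n M = 0$, multiplication by $p^n$ acts as zero on $M$, whence
\[
R\underline{\Hom}_{\Int}(\Int/p^n\Int, M) \simeq \fib(M \xrightarrow{0} M) \simeq M \oplus M[-1].
\]
Applying the change-of-rings adjunction $R\underline{\Hom}_{\Int}(G(n), -) \simeq R\underline{\Hom}_{\Int/p^n\Int}(G(n), R\underline{\Hom}_{\Int}(\Int/p^n\Int, -))$ (valid because $G(n)$ is already a $\Int/p^n\Int$-module) and using this splitting yields natural decompositions
\[
\underline{\Ext}^i_{\Int}(G(n), M) \simeq \underline{\Ext}^i_{\Int/p^n\Int}(G(n), M) \oplus \underline{\Ext}^{i-1}_{\Int/p^n\Int}(G(n), M).
\]
Granting (2), the $i = 1$ case gives (1) --- the resulting map is the Bockstein sending an $\Int$-extension $E$ of $G(n)$ by $M$ to the $\Int/p^n\Int$-linear morphism $G(n) \to M$ defined by $g \mapsto p^n \tilde g$, for any local lift $\tilde g \in E$ --- while the $i = 2$ case gives (3).

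For the core statement (2), I will show that any extension of fppf sheaves of $\Int/p^n\Int$-modules
\[
0 \to M \to E \to G(n) \to 0
\]
splits fppf-locally on $\Spec R$. Since $G(n) \to \Spec R$ is affine and the pullback $\tilde M$ is quasicoherent on $G(n)$, we have $H^1(G(n), \tilde M) = 0$, so the $M$-torsor $E \to G(n)$ admits a pointed set-theoretic section $s\colon G(n) \to E$. The failure of $s$ to be $\Int/p^n\Int$-linear is captured by the symmetric $2$-cocycle $c(g, h) = s(g + h) - s(g) - s(h) \in M$, and (2) is equivalent to the fppf-local vanishing of its class in $H^2_{\mathrm{sym}}(G(n), M)$.

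The main obstacle is killing this symmetric cocycle. The most direct route uses Raynaud's theorem \cite[Th\'eor\`eme 3.1.1]{bbm:cris_ii} to fppf-locally fit $G(n)$ into an isogeny $0 \to G(n) \to \mathcal{H} \to \mathcal{H}' \to 0$ of $p$-divisible groups over $R$. The associated long exact sequence in $\underline{\Ext}^\bullet_{\mathrm{fppf}}(-, M)$, combined with the vanishing of $\underline{\Ext}^i_{\mathrm{fppf}}(\mathcal{H}', M)$ in low degrees (a consequence of the $p$-divisibility of $\mathcal{H}'$ against the $p^n$-torsion of $M$: pullback along $[p^n]\colon \mathcal{H}' \twoheadrightarrow \mathcal{H}'$ multiplies Ext-classes by $p^n$, which annihilates $M$-valued classes), transfers the vanishing from $p$-divisible groups to $G(n)$. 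An alternative --- more in keeping with the rest of this paper --- is to use the identification $G(n) = \Gamma_{\mathrm{syn}}(\mathcal{M})$ from Theorem~\ref{thm:dieudonne} for a vector bundle $F$-gauge $\mathcal{M}$ of Hodge-Tate weights $\{0, 1\}$, combined with the syntomic-to-fppf comparison of Proposition~\ref{prop:syntomic_and_fppf}, to reinterpret $\underline{\Ext}^\bullet(G(n), M)$ as a syntomic $\Ext$-sheaf whose degree-$1$ vanishing would follow from the Tor-amplitude constraint on $\mathcal{M}$.
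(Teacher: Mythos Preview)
Your reduction of (1) and (3) to (2) via the change-of-rings splitting $R\underline{\Hom}_{\Int}(\Int/p^n\Int, M) \simeq M \oplus M[-1]$ is correct. The gap is in your proof of (2).

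In the Raynaud approach, the claimed vanishing $\underline{\Ext}^i_{\Int}(\mathcal{H}', M) = 0$ for $p$-divisible $\mathcal{H}'$ is false. Your argument shows only that $[p^n]^*$ acts as multiplication by $p^n$, hence as zero, on these sheaves---i.e.\ that they are $p^n$-torsion, not that they vanish. Concretely, the exact sequence $0 \to \mathcal{H}'[p^n] \to \mathcal{H}' \xrightarrow{p^n} \mathcal{H}' \to 0$ yields an injection $\underline{\Hom}(\mathcal{H}'[p^n], M) \hookrightarrow \underline{\Ext}^1_\Int(\mathcal{H}', M)$ (since $\underline{\Hom}(\mathcal{H}', M) = 0$), and the source is typically nonzero---for instance whenever $\mathcal{H}'$ has nontrivial \'etale part over a characteristic-$p$ base. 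There is also a category mismatch: the sequence $0 \to G(n) \to \mathcal{H} \to \mathcal{H}' \to 0$ lives in fppf $\Int$-modules, not $\Int/p^n\Int$-modules, so its long exact sequence computes $\underline{\Ext}^\bullet_\Int$, not the $\underline{\Ext}^1_{\Int/p^n\Int}$ needed for (2). The syntomic alternative is too vague to evaluate: $M$ is a quasicoherent sheaf, not an $F$-gauge, and no mechanism is given for how the Tor-amplitude bound on $\mathcal{M}$ would control extensions by $M$ in fppf $\Int/p^n\Int$-modules.

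The paper does not supply its own argument; it cites \cite[Proposition~2.2.4]{MR0801922}. Illusie's proof there is specific to truncated Barsotti--Tate groups and does not pass through an embedding into a $p$-divisible group.
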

\begin{proof}
   This follows from~\cite[Proposition 2.2.4]{MR0801922}. 
\end{proof}

\begin{notation}
   For any finite flat group scheme $H$ over a base $R$ set
   \[
       t_{H} = H^0(\Lie(H))\;;\; v_{H} = H^1(\Lie(H)).
   \]
\end{notation}

\begin{lemma}
[Tangent complexes of BTs]
\label{lem:tangent_complex_bts}
Consider the tangent complex $\Lie(H(n))$ over $R$, and let $m\ge 1$ be such that $p^m = 0\in R$ with $n\ge m$. Then:
\begin{enumerate}
   \item $\Lie(H(n))$ is perfect with Tor amplitude in $[0,1]$. In fact, $t_{H(n)}$ and $v_{H(n)}$ are both finite locally free over $R$ and the canonical fiber sequence
\[
   t_{H(n)}\to \Lie(H(n))\to v_{H(n)}[-1].
\]
is split. 

   \item For $m\le n'\le n$ (resp. $m\le n'\leq n-m)$, the map $t_{H(n')}\to t_{H(n)}$ (resp. $v_{H(n')}\to v_{H(n)}$) induced from the inclusion $H(n')\subset H(n)$ as the $p^{n'}$-torsion subgroup is an isomorphism (resp. identically zero).

   \item For $m\le n'\le n$ (resp. $m\leq n'\leq n-m$) the map $v_{H(n)}\to v_{H(n')}$ (resp. $t_{H(n)}\to t_{H(n')}$) induced by the multiplication-by-$p^{n-n'}$ map $H(n)\to H(n')$ is an isomorphism (resp. identically zero).
\end{enumerate}
\end{lemma}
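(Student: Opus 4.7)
Plan. The strategy is to reduce to the case where $H(n)$ is globally the $p^n$-torsion subgroup of a Barsotti-Tate group $G$ over $R$, and then perform a direct calculation with Lie complexes. All the assertions in the lemma are \'etale local on $\Spec R$ and compatible with \'etale base change, and the moduli tower $\cdots \to \BT[]{n+2} \to \BT[]{n+1} \to \BT[]{n}$ is classically known to be smooth (see~\cite{MR0801922}; this is also implied in the present setting by Theorem~\ref{thm:HTwts01_representable} combined with the Dieudonn\'e equivalence of Theorem~\ref{thm:dieudonne}). We may therefore pass to an \'etale cover of $\Spec R$, lift $H(n)$ through the tower, and assume that $H(n)=G[p^n]$ for a Barsotti-Tate group $G$ over $R$.

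The fppf short exact sequence $0 \to H(n) \to G \xrightarrow{p^n} G \to 0$ yields a fiber sequence of Lie complexes
\[
   \Lie(H(n)) \to \Lie(G) \xrightarrow{p^n} \Lie(G).
\]
Since $G$ is Barsotti-Tate, $\Lie(G)$ is a finite locally free $R$-module concentrated in cohomological degree $0$, and since $n\ge m$ with $p^mR=0$, the endomorphism $p^n$ of $\Lie(G)$ is identically zero. The fiber of the zero map $\Lie(G)\to \Lie(G)$ splits canonically as $\Lie(G)\oplus \Lie(G)[-1]$, yielding
\[
   \Lie(H(n))\simeq \Lie(G)\oplus \Lie(G)[-1],
\]
which establishes (1) with $t_{H(n)}\simeq v_{H(n)}\simeq \Lie(G)$ and the asserted canonical splitting of $t_{H(n)}\to \Lie(H(n))\to v_{H(n)}[-1]$.

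For (2) and (3) we track the effect of the inclusion $H(n')\hookrightarrow H(n)$ and of the multiplication $p^{n-n'}\colon H(n)\twoheadrightarrow H(n')$ on $\Lie$ via the morphisms of fppf short exact sequences
\[
\begin{tikzcd}[column sep=small,row sep=small]
0\arrow[r]& H(n')\arrow[r]\arrow[d,hook] & G\arrow[r,"p^{n'}"]\arrow[d,equal] & G\arrow[r]\arrow[d,"p^{n-n'}"] & 0\\
0\arrow[r]& H(n)\arrow[r] & G\arrow[r,"p^{n}"] & G\arrow[r] & 0
\end{tikzcd}
\;\text{and}\;
\begin{tikzcd}[column sep=small,row sep=small]
0\arrow[r]& H(n)\arrow[r]\arrow[d,"p^{n-n'}"] & G\arrow[r,"p^{n}"]\arrow[d,"p^{n-n'}"] & G\arrow[r]\arrow[d,equal] & 0\\
0\arrow[r]& H(n')\arrow[r] & G\arrow[r,"p^{n'}"] & G\arrow[r] & 0.
\end{tikzcd}
\]
Applying $\Lie$ and passing to the long exact cohomology sequences identifies $t_{H(k)}=\ker(p^k\colon \Lie(G)\to \Lie(G))$ and $v_{H(k)}=\mathrm{coker}(p^k)$, both of which equal $\Lie(G)$ whenever $k\ge m$; the induced maps on $t$'s and $v$'s then come out to be either the identity on $\Lie(G)$ or multiplication by $p^{n-n'}$, and the latter vanishes precisely when $n-n'\ge m$. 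This yields the asserted isomorphisms and vanishings. The main technical ingredient here is the \'etale local lifting of $H(n)$ to a Barsotti-Tate group; once that is granted, the rest of the argument is a transparent bookkeeping with kernels and cokernels.
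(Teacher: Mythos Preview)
The paper does not give a proof; it simply cites \cite[Proposition 2.2.1]{MR0801922}. Your argument is essentially the standard one underlying that reference and the computations with the fiber sequence $\Lie(H(n))\to \Lie(G)\xrightarrow{p^n}\Lie(G)$ and the two commuting ladders are correct.

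There is one point in the reduction that needs tightening. You invoke smoothness of each map in the tower $\cdots\to\BT[]{n+2}\to\BT[]{n+1}\to\BT[]{n}$ to pass to an \'etale cover over which $H(n)$ lifts to a full Barsotti--Tate group $G$. Smoothness and surjectivity of each transition map only buys you an \'etale-local lift by \emph{one} step; iterating through the infinite tower produces a pro-\'etale (ind-\'etale) cover, not an \'etale one. This is harmless here, since every assertion in the lemma---finite local freeness of $t_{H(n)}$ and $v_{H(n)}$, vanishing of the extension class (which lives in an $\Ext^2$ between locally free sheaves on an affine), and the identities/vanishings in (2) and (3)---is fpqc-local, hence descends from any pro-\'etale cover. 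You should either say ``pro-\'etale'' rather than ``\'etale'', or note (as Illusie does) that over an affine base the obstruction to lifting from $\BT[]{k}$ to $\BT[]{k+1}$ vanishes at every step, so no cover is needed and one obtains a BT group over $R$ itself.
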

\begin{proof}
   See~\cite[Proposition 2.2.1]{MR0801922}.
\end{proof}

\begin{lemma}
   \label{lem:ext_coherent_sheaf}
Suppose that we have a surjection of rings $R\to R_0$ with kernel $I$, and suppose that $n\ge m$. Then for $A = \Int,\Int/p^n\Int$, we have canonical short exact sequences
   \begin{align*}
  0\to \Ext^1_{A}(G(n),I\otimes_R t_{H(n)})\to \Ext^1_{A}(G(n),I\otimes_R \Lie(H(n)))\to \Hom_{A}(G(n),I\otimes_R v_{H(n)})\to 0\\
  0\to \Ext^2_{A}(G(n),I\otimes_R t_{H(n)})\to \Ext^2_{A}(G(n),I\otimes_R \Lie(H(n)))\to \Ext^1_{A}(G(n),I\otimes_R v_{H(n)})\to 0.
   \end{align*}
Moreover, for $M(n) = I\otimes_R t_{H(n)}$ or $M(n) = I\otimes_R v_{H(n)}$, the following hold:
\begin{enumerate}
   \item $\Ext^1_{\Int}(G(n),M)\xrightarrow{\simeq}\Hom_{\Int/p^n\Int}(G(n),M)$;
   \item $\Ext^1_{\Int/p^n\Int}(G(n),M) = 0$;
   \item $\Ext^2_{\Int/p^n\Int}(G(n),M)\xrightarrow{\simeq}\Ext^2_{\Int}(G(n),M)$.
\end{enumerate}
\end{lemma}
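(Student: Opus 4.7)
The plan is to derive the statement formally from two previously established inputs: the canonical splitting of $\Lie(H(n))$ given by Lemma~\ref{lem:tangent_complex_bts}(1), and the $\Ext$-identifications of Proposition~\ref{prop:ext_coherent_sheaf}.

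First, I would invoke Lemma~\ref{lem:tangent_complex_bts}(1) to obtain the canonical splitting
\[
\Lie(H(n))\simeq t_{H(n)}\oplus v_{H(n)}[-1]
\]
in the derived category of $R$-modules. Since $t_{H(n)}$ and $v_{H(n)}$ are finite locally free (hence flat) over $R$, the direct sum decomposition survives tensoring with $I$, giving
\[
I\otimes_R\Lie(H(n))\simeq (I\otimes_R t_{H(n)})\,\oplus\,(I\otimes_R v_{H(n)})[-1].
\]
I would view this as a two-term complex of fppf sheaves of $\Int/p^n\Int$-modules over $\Spec R$, which is legitimate since $p^mR=0$ with $m\le n$ forces $p^nR=0$.

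Next, by the additivity of derived internal $\Hom$ combined with the standard shift identity $\Ext^i_A(G(n),N[-1])\simeq \Ext^{i-1}_A(G(n),N)$, the splitting produces, for $A=\Int$ or $\Int/p^n\Int$ and every $i$, a natural isomorphism
\[
\Ext^i_A(G(n),I\otimes_R\Lie(H(n)))\simeq \Ext^i_A(G(n),I\otimes_R t_{H(n)})\,\oplus\,\Ext^{i-1}_A(G(n),I\otimes_R v_{H(n)}).
\]
Crucially, the canonicity of the splitting forces every connecting map in the long exact sequence attached to the fiber sequence $t_{H(n)}\to\Lie(H(n))\to v_{H(n)}[-1]$ to vanish. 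Specializing to $i=1$ and $i=2$ and reading off the inclusion and projection maps yields precisely the two claimed (in fact canonically split) short exact sequences.

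For the assertions (1)--(3) about $M(n)$, I would apply Proposition~\ref{prop:ext_coherent_sheaf} directly with $M=I\otimes_R t_{H(n)}$ or $M=I\otimes_R v_{H(n)}$: each is an ordinary $R$-module, and the standing hypothesis $p^nR=0$ needed by the proposition is already in force. No genuine obstacle arises; the whole content is the splitting of $\Lie(H(n))$ together with a direct invocation of Proposition~\ref{prop:ext_coherent_sheaf}. The only care required is the purely formal one of interpreting $\Ext^i_A(G(n),-)$ against a two-term perfect complex as derived internal $\Ext$ in the appropriate derived category of fppf sheaves of $A$-modules.
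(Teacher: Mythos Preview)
Your treatment of the two short exact sequences is fine and matches the paper: both you and the paper simply invoke the splitting of $\Lie(H(n))$ from Lemma~\ref{lem:tangent_complex_bts}(1), which forces all connecting maps in the associated long exact sequence to vanish.

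There is, however, a gap in your derivation of (1)--(3). Proposition~\ref{prop:ext_coherent_sheaf} is a statement about the \emph{internal} Ext sheaves $\underline{\Ext}^i_A(G(n),M)$ on the big fppf site, whereas the present lemma concerns the \emph{global} groups $\Ext^i_A(G(n),M)$. You cannot invoke the proposition ``directly''; one must pass through the local-to-global spectral sequence $H^p(\Spec R,\underline{\Ext}^q_A(G(n),M))\Rightarrow \Ext^{p+q}_A(G(n),M)$. The paper supplies exactly the missing ingredient: the sheaf $\underline{\Hom}_A(G(n),M)$ is identified with the quasicoherent sheaf $t_{G(n)^*}\otimes_R M$ (citing \cite[(2.1.6.1)]{MR0801922}), and combining this with Proposition~\ref{prop:ext_coherent_sheaf} shows that $\underline{\Ext}^i_A(G(n),M)$ is quasicoherent for $i=0,1$ and both choices of $A$. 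Since $\Spec R$ is affine, these sheaves have no higher cohomology, the spectral sequence degenerates in the relevant range, and the sheaf-level statements of the proposition descend to the global statements (1)--(3). Without this quasicoherence step your argument is incomplete: for instance, the vanishing $\underline{\Ext}^1_{\Int/p^n\Int}(G(n),M)=0$ alone does not give $\Ext^1_{\Int/p^n\Int}(G(n),M)=0$ unless you also know $H^1(\Spec R,\underline{\Hom}_{\Int/p^n\Int}(G(n),M))=0$.
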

\begin{proof}
  The first part is clear from (1) of Lemma~\ref{lem:tangent_complex_bts}. For the numbered assertions, first note that $\underline{\Hom}_A(G(n),M)\simeq t_{G(n)^*}\otimes_RM$ is a quasicoherent sheaf over $\Spec R$; see~\cite[(2.1.6.1)]{MR0801922}. The lemma now follow from Proposition~\ref{prop:ext_coherent_sheaf} by taking global sections and observing that the quasicoherent sheaf $\underline{\Ext}^i_A(G(n),M)$ for $A = \Int,\Int/p^n\Int$ and $i=0,1$ has no higher cohomology.
\end{proof}

\begin{proposition}
[Obstruction theory for extensions of BTs]
\label{prop:obst_theory}
Suppose that $R$ is an $\Field_p$-algebra and that $\varpi:R\twoheadrightarrow R_0$ is a square zero extension with kernel $I$. Use a subscript $0$ to denote base-change of objects from $R$ to $R_0$. Suppose that we have $T_0(n)\in \underline{\Ext}(G_0(n),H_0(n))(R_0)$ corresponding to a map $f_0(n):G_0(n)\to H_0(n)[1]$ of complexes of fppf sheaves over $R_0$. Then there exists a canonical class
\[
\mathrm{ob}(f_0(n))\in \Ext^2_{\Int/p^n\Int}(G_0(n),I\otimes_R t_{H(n)})\simeq \Ext^2_{\Int}(G_0(n),I\otimes_R t_{H(n)})
\]
with the following properties:
\begin{enumerate}
   \item $T_0(n)$ lifts to $T(n)\in \underline{\Ext}(G(n),H(n))(R)$ if and only if $\mathrm{ob}(f_0(n))$ vanishes.
   
   \item  The class $\mathrm{ob}(f_0(n))$ maps to $\mathrm{ob}(f_0(n-1))$ under the restriction map
   \[
     \Ext^2_{\Int}(G_0(n),I\otimes_Rt_{H(n)})\to \Ext^2_{\Int}(G_0(n-1),I\otimes_Rt_{H(n)})\xleftarrow{\simeq}\Ext^2_{\Int}(G_0(n-1),I\otimes_R t_{H(n-1)})
   \]
   induced by the inclusion $G_0(n)\subset G_0(n-1)$. Here, we are using the following observation from Lemma~\ref{lem:tangent_complex_bts}: The map $t_{H(n-1)}\to t_{H(n)}$ induced from the inclusion $H(n-1)\subset H(n)$ is an isomorphism. 

   \item If $\mathrm{ob}(f_0(n))$ vanishes, then the groupoid of lifts $T(n)$ is a torsor under 
\begin{equation}\label{eqn:ext1_isom}
   \Ext^1_{\Int/p^n\Int}(G_0(n),I\otimes_R\Lie(H(n)))\simeq \Hom_{\Int/p^n\Int}(G_0(n),v_{H_0(n)})
\end{equation}
and the map from the space of lifts of $T_0(n)$ to those of $T_0(n-1)$ is equivariant for the isomorphism
\begin{align}\label{eqn:hom_isom}
   \Hom_{\Int/p^n\Int}(G_0(n),I\otimes_R v_{H_0(n)})&\xrightarrow{\simeq} \Hom_{\Int/p^{n-1}\Int}(G_0(n)\otimes^{\mathbb{L}}_{\Int/p^n\Int}\Int/p^{n-1}\Int,I\otimes_R v_{H_0(n)})\\ 
   &\xrightarrow{\simeq}\Hom_{\Int/p^{n-1}\Int}(G_0(n-1),I\otimes_R v_{H_0(n-1)})\nonumber
\end{align}
induced by the multiplication-by-$p$ map $H_0(n)\to H_0(n-1)$.
\end{enumerate}

\end{proposition}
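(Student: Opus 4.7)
The plan is to construct the obstruction class via a connecting map attached to a fiber sequence in the derived category of fppf sheaves of $\Int/p^n\Int$-modules over $R$, and then verify properties (2) and (3) by the naturality of this construction. First, I would build a canonical fiber sequence
\[
I \otimes_R \Lie(H(n)) \to H(n) \to Rj_* H_0(n)
\]
of fppf sheaves of $\Int/p^n\Int$-modules on $\Spec R$, where $j:\Spec R_0\to \Spec R$ is the canonical closed immersion. This is the sheafification of Corollary~\ref{cor:deformation_theory_bt_ns} applied to $H(n)$; the corollary applies because, as $R$ is an $\Field_p$-algebra, the square-zero ideal $I$ carries the trivial nilpotent divided powers $\gamma_i=0$ for $i\geq 2$.

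Next, I would apply $R\Hom_{R,\Int/p^n\Int}(G(n),-)$ to this fiber sequence. The adjunction $R\Hom_R(G(n),Rj_*(-))\simeq R\Hom_{R_0}(G_0(n),-)$, which uses $j^*G(n)=G_0(n)$, identifies the right-hand term. The resulting long exact sequence produces a connecting map
\[
\partial:\Ext^1_{\Int/p^n\Int}(G_0(n),H_0(n))\to \Ext^2_{\Int/p^n\Int}(G(n),I\otimes_R \Lie(H(n))).
\]
By Lemma~\ref{lem:ext_coherent_sheaf}(2), $\Ext^1_{\Int/p^n\Int}(G(n),I\otimes v_{H(n)})=0$; combined with the second short exact sequence of \emph{loc.\ cit.}, this yields a canonical isomorphism of the target with $\Ext^2_{\Int/p^n\Int}(G(n),I\otimes_R t_{H(n)})$, which equals $\Ext^2_{\Int}(G_0(n),I\otimes_R t_{H(n)})$ by Lemma~\ref{lem:ext_coherent_sheaf}(3). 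I would define $\mathrm{ob}(f_0(n)) := \partial(f_0(n))$ via these identifications. By construction, $\mathrm{ob}(f_0(n))$ vanishes if and only if $f_0(n)$ lifts to $\Ext^1_{\Int/p^n\Int}(G(n),H(n))$, which proves (1).

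For (2), the compatibility with restriction along $G_0(n-1)\subset G_0(n)$ will follow from naturality of $\partial$ in $G$, together with the identification $t_{H(n-1)}\xrightarrow{\simeq}t_{H(n)}$ from Lemma~\ref{lem:tangent_complex_bts}(2) (which requires $n\geq m+1=2$, as is the case here since $R$ is an $\Field_p$-algebra, so $m=1$). For (3), when $\mathrm{ob}(f_0(n))=0$ the same long exact sequence exhibits the space of lifts as a torsor under $\Ext^1_{\Int/p^n\Int}(G(n),I\otimes_R \Lie(H(n)))$; using the vanishing $\Ext^1_{\Int/p^n\Int}(G(n),I\otimes t_{H(n)})=0$ from Lemma~\ref{lem:ext_coherent_sheaf}(2) and the Lie-complex short exact sequence yields the isomorphism~\eqref{eqn:ext1_isom}. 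The equivariance~\eqref{eqn:hom_isom} for the comparison with the $(n-1)$-truncation then follows from functoriality in $H$ applied to the multiplication-by-$p$ map $H(n)\to H(n-1)$, which by Lemma~\ref{lem:tangent_complex_bts}(3) induces the required isomorphism $v_{H_0(n)}\xrightarrow{\simeq}v_{H_0(n-1)}$.

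The main obstacle I anticipate is the careful setup in Step~1: sheafifying Corollary~\ref{cor:deformation_theory_bt_ns} into a natural fiber sequence in the derived fppf topos with enough functoriality in both $G$ and $H$ to propagate cleanly through the connecting-map construction and justify the naturality required for (2) and (3). Once this fiber sequence is in place, the Ext-group identifications provided by Lemmas~\ref{lem:tangent_complex_bts} and~\ref{lem:ext_coherent_sheaf} make the remainder routine.
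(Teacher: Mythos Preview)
Your proposal is correct and follows essentially the same approach as the paper: build the fiber sequence $H(n)\to R\iota_*H_0(n)\to R\iota_*(I\otimes_R\Lie(H(n))[1])$ from the Mazur--Roberts carpet (Corollary~\ref{cor:deformation_theory_bt_ns}), apply $\mathrm{RHom}_{\Int/p^n\Int}(G(n),-)$ with the $(\iota^*,R\iota_*)$-adjunction, and read off the obstruction as the image under the connecting map, using Lemma~\ref{lem:ext_coherent_sheaf} to replace $\Lie(H(n))$ by $t_{H(n)}$ in $\Ext^2$. The paper's treatment of (3) is slightly more explicit about the equivariance, giving the commutative ladder for multiplication-by-$p$ on $T(n)$, but your naturality argument in $H$ amounts to the same thing.
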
 
\begin{proof}
   Let $\iota:(\Spec R_0)_{\mathrm{fppf}}\to (\Spec R)_{\mathrm{fppf}}$ be the map of fppf sites associated with $\varpi$. From Corollary~\ref{cor:deformation_theory_bt_ns} or directly from~\cite[Theorem 5.2.8]{Cesnavicius2019-pn}, one finds a fiber sequence of complexes of fppf sheaves
   \[
      H(n) \to R\iota_*H_0(n)\to R\iota_*(I\otimes_R\Lie(H(n))[1]),
   \]
   where we are treating $I\otimes_R\Lie(H(n))[1]$ as a complex of quasi-coherent sheaves over $(\Spec R_0)_{\mathrm{fppf}}$.

   Applying $\mathrm{RHom}_{\Int/p^n\Int}(G(n),\_\_)$ to this fiber sequence, using the standard adjunction between $\iota^*$ and $R\iota_*$, and taking cohomology yields an exact sequence
   \[
       \Ext^1_{\Int/p^n\Int}(G(n),H(n))\to \Ext^1_{\Int/p^n\Int}(G_0(n),H_0(n))\to \Ext^2_{\Int/p^n\Int}(G_0(n),I\otimes_R\Lie(H(n)))
   \]
   The class $\mathrm{ob}(f_0(n))$ that we seek is exactly where the class of $T_0(n)$ in $\Ext^1_{\Int}(G_0(n),H_0(n))$ ends up on the right hand side via the isomorphism
   \[
      \Ext^2_{\Int/p^n\Int}(G_0(n),I\otimes_R\Lie(H(n)))\xleftarrow{\simeq}\Ext^2_{\Int/p^n\Int}(G_0(n),I\otimes_Rt_{H(n)})
   \]
   obtained from Lemma~\ref{lem:ext_coherent_sheaf}. It is clear that this class has property (1), and it's also not hard to check that it has property (2).

   As for property (3), first note that~\eqref{eqn:ext1_isom} is a consequence of Lemma~\ref{lem:ext_coherent_sheaf}. Moreover, the first isomorphism in~\eqref{eqn:hom_isom} follows from (3) of Lemma~\ref{lem:tangent_complex_bts} and the second from the flatness of $G(n)$ over $\Int/p^n\Int$. Now, we use the exact sequence 
  \begin{align*}
   \Hom(G(n),H(n))\to \Hom(G_0(n),H_0(n))\to \Ext^1_{\Int/p^n\Int}(G_0(n),I\otimes_R\Lie(H(n)))\to\cdots\\
   \cdots\to \Ext^1_{\Int/p^n\Int}(G(n),H(n))\to \Ext^1_{\Int/p^n\Int}(G_0(n),H_0(n)).
  \end{align*}
  Assuming the vanishing of the obstruction class, this shows that the set of isomorphism classes of lifts of $T_0(n)$ is a torsor under the image of $\Ext^1_{\Int}(G_0(n),I\otimes_R\Lie(H(n)))$, and one now upgrades to the statement about the groupoid of lifts in a standard way. For the stated equivariance, note that, for any lift $T(n)$, multiplication-by-$p$ yields a commutative diagram with exact rows where the top right and the bottom left squares are Cartesian:
  \[
     \begin{diagram}
        0&\rTo &H(n)&\rTo&T(n)&\rTo&G(n)&\rTo&0\\
        &&\dEquals&&\dTo&&\dTo_p\\
         0&\rTo &H(n)&\rTo&T'(n)&\rTo&G(n-1)&\rTo&0\\
         &&\dTo_p&&\dTo&&\dEquals\\
        0&\rTo &H(n-1)&\rTo&T(n-1)&\rTo&G(n-1)&\rTo&0
     \end{diagram}
  \]
\end{proof}

\begin{remark}
   [Construction of obstruction classes using higher stacks]
\label{rem:higher_stacks_ob_const}
There is an alternative to the use of the Mazur-Roberts carpet that proceeds by looking at the deformation theory of maps of higher classifying stacks. Here is a sketch:
\begin{enumerate}
   \item We begin by assuming that we have $p$-divisible groups $G,H$ over $R$ such that $G[p^n] = G(n)$ and $H[p^n] = H(n)$, and that we have an extension $T_0\in \underline{\Ext}(G,H)(R_0)$ such that $T_0[p^n] \simeq T_0(n)\in \underline{\Ext}(G(n),H(n))(R_0)$. 

   \item We now note that the obstruction to lifting the map of fppf sheaves of animated abelian groups $f_0(n):G_0(n)\to H_0(n)[1]$ classifying $T_0$ vanishes if and only if the obstruction to lifting the map $B^mG_0(n)\to B^{m+1}H_0(n)$ of higher classifying stacks vanishes for $m\ge 1$. 

   \item By standard deformation theory this latter obstruction class lives in 
   \[
      \Ext^1_{B^mG_0(n)}(f_0(n)^*\mathbb{L}_{B^{m+1}H_0(n)/R_0},\Reg{B^mG_0(n)}\otimes I) = H^{m+2}(B^mG_0(n),I\otimes_R\Lie(H(n))).
   \]

   \item Using classical facts about the cohomology of Eilenberg-MacLane spaces \cite{MR38072}, one sees that there is a short exact sequence
   \begin{equation}\label{eqn:M2_ext_sequence}
      0\to \Ext^2_{\Int}(G_0(n),I\otimes_R\Lie(H(n)))\to H^{m+2}(B^mG_0(n),I\otimes_R\Lie(H(n)))\to \Hom_{\Int}(M_2\otimes^{\mathbb{L}}_{\Int}G_0(n),I\otimes_R\Lie(H(n)))\to 0,
   \end{equation}
   where $M_2 = \pi_2(\Int\otimes_{\mathbb{S}}\Int)$\footnote{This is a smash product of spectra over the sphere spectrum.} is a finite abelian group. Moreover, we have
   \[
      \Ext^1_{\Int}(G_0(n),I\otimes_R\Lie(H(n)))\simeq H^{m+1}(B^mG_0(n),I\otimes_R\Lie(H(n))).
   \]

   \item Since $M_2$ is perfect with Tor amplitude in $[-1,0]$ as a $\Int$-module with perfect dual $M_2^\vee$ with Tor amplitude $[0,1]$, the right hand side of~\eqref{eqn:M2_ext_sequence} can be rewritten using Lemma~\ref{lem:tangent_complex_bts} as 
   \[
      \Hom_{\Int}(G_0(n),I\otimes_R\Lie(H(n))\otimes_{\Int}M_2^\vee)\xrightarrow{\simeq}\Hom_{\Int}(G_0(n),I\otimes_Rt_{H(n)}\otimes_{\Int}M_2^\vee).
   \]
   \item Now, one uses the functoriality of the obstruction class and the invariance of $t_{H(n)}$ under passing to the $p^k$-torsion subgroups $H(k)\subset H(n)$  to show that the image of the obstruction class in $\Hom_{\Int}(G_0(n),I\otimes_Rt_{H(n)}\otimes_{\Int}M_2^\vee)$vanishes, and so it actually lives in $\Ext^2_{\Int}(G_0(n),I\otimes_R\Lie(H(n)))$. See the argument in the proof of Theorem~\ref{thm:smoothness_of_extensions} below.

   \item A similar functoriality argument also shows that the image of the resulting class under the map
   \[
      \Ext^2_{\Int}(G_0(n),I\otimes_R\Lie(H(n)))\to \Ext^1_{\Int}(G_0(n),I\otimes_Rv_{H(n)})
   \]
   vanishes, finally yielding the obstruction class in
   \[
       \Ext^2_{\Int}(G_0(n),I\otimes_R t_{H(n)})\simeq \Ext^2_{\Int/p^n\Int}(G_0(n),I\otimes_Rt_{H(n)})
   \]
\end{enumerate}
\end{remark}

\begin{proof}
   [Proof of Theorem~\ref{thm:smoothness_of_extensions}]
Given Lemma~\ref{lem:surj_geometric_points},  standard arguments reduce us to showing the following statement: Suppose that we are in the situation of Remark~\ref{rem:higher_stacks_ob_const}. Then, for all $n\ge 2$, $T_0(n-1)$ can be lifted to a BT extension $T(n-1)$ of $H(n-1)$ by $G(n-1)$, and every such lift can further be lifted to a BT extension $T(n)$ of $H(n)$ by $G(n)$. By Proposition~\ref{prop:obst_theory}, for all $n\ge 1$, we have the obstruction class
\[
\mathrm{ob}(f_0(n)) \in \mathrm{Ext}^2_{\Int/p^n\Int}(G_0(n), I\otimes_R t_{H(n)}).
\] 
Given (3) of Proposition~\ref{prop:obst_theory}, it is enough now to show that $\mathrm{ob}(f_0(n)) = 0$ for all $n\ge 1$. This follows from (2) of Proposition~\ref{prop:obst_theory} and the fact that the map
\[
   \Ext^2_{\Int}(G_0(n),I\otimes_Rt_{H(n)})\to \Ext^2_{\Int}(G_0(n-1),I\otimes_Rt_{H(n)})
\]
induced via restriction along $G_0(n-1)\subset G_0(n)$ is identically zero; see~\cite[Corollaire 2.2.7]{MR0801922}.
\end{proof}

\begin{proof}[Proof of Theorem \ref{thm:exactness}]
 Follows from Theorem \ref{thm:smoothness_of_extensions} (see Remark \ref{rem:geometric_statement} and Remark \ref{rem:exactness_qrsp}).   
\end{proof}{}

\section{An analogue of a result of Raynaud}
\label{sec:raynaud}

\subsection{Isogenies}

For any stable $\Int$-linear $\infty$-category $\mathcal{C}$, write $\Mod{\Int/p^n\Int}(\mathcal{C})$ for the stable $\infty$-category of $\Int/p^n\Int$-module objects in $\mathcal{C}$. Effectively, we are looking at the $\infty$-category of objects $M\in \mathcal{C}$ equipped with a nullhomotopy for the endomorphism $p^n:M\to M$ given by multiplication by $p^n$. The following observation will be useful:
\begin{lemma}
\label{lem:isogenies_pn}
Suppose that we have a map $f:M_1\to M_2$ in $\mathcal{C}$. Then the following are equivalent:
\begin{enumerate}
   \item Giving $\cofib(f)$ the structure of an object in $\Mod{\Int/p^n\Int}(\mathcal{C})$.
   \item Giving a map $\hat{f}:M_2\to M_1$ equipped with an isomorphism $f\circ \hat{f} \simeq p^n$ (resp. $\hat{f}\circ f\simeq p^n$) as endomorphisms of $M_2$ (resp. of $M_1$).
\end{enumerate}
\end{lemma}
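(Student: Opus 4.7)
The plan is to translate both sides of the equivalence into nullhomotopies in mapping spaces and then match them via the fiber sequences arising from the cofiber sequence $M_1 \xrightarrow{f} M_2 \xrightarrow{g} \cofib(f)$ in $\mathcal{C}$.

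First, I would use the fact that in a stable $\Int$-linear $\infty$-category, a $\Int/p^n\Int$-module structure on an object $X\in\mathcal{C}$ is canonically equivalent to a nullhomotopy of the endomorphism $p^n\cdot\id_X$: this follows from the universal property of $\Int/p^n\Int = \cofib(p^n:\Int\to\Int)$ as an $E_\infty$-ring under $\Int$, together with the $\Int$-linearity of $\mathcal{C}$. So (1) amounts to a nullhomotopy of $p^n\cdot\id_{\cofib(f)}$.

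Next, I would apply $\mathrm{Map}(M_2,-)$ to the cofiber sequence to obtain the fiber sequence of mapping spaces
\[
\mathrm{Map}(M_2,M_1)\xrightarrow{f_*}\mathrm{Map}(M_2,M_2)\xrightarrow{g_*}\mathrm{Map}(M_2,\cofib(f)).
\]
A pair $(\hat{f},\alpha)$ with $\alpha:f\circ\hat{f}\simeq p^n\cdot\id_{M_2}$ is precisely a lift of $p^n\cdot\id_{M_2}$ through $f_*$, which by the fiber sequence is the same datum as a nullhomotopy of $g_*(p^n\cdot\id_{M_2}) = p^n\cdot g$ in $\mathrm{Map}(M_2,\cofib(f))$. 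Thus (2), in the $f\circ\hat{f}\simeq p^n$ formulation, is equivalent to a nullhomotopy of $p^n\cdot g$.

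The bridge between (1) and (2) comes from the dual fiber sequence
\[
\mathrm{Map}(\cofib(f),\cofib(f))\xrightarrow{g^*}\mathrm{Map}(M_2,\cofib(f))\xrightarrow{f^*}\mathrm{Map}(M_1,\cofib(f)),
\]
under which $p^n\cdot\id_{\cofib(f)}$ maps to $p^n\cdot g$, while $f^*(p^n\cdot g) = p^n\cdot(gf)$ carries a canonical nullhomotopy coming from $gf\simeq 0$ in the cofiber sequence. Unwinding this fiber sequence, nullhomotopies of $p^n\cdot\id_{\cofib(f)}$ are identified with nullhomotopies of $p^n\cdot g$ whose pullback along $f$ matches the canonical null of $p^n\cdot gf$. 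A choice of $(\hat{f},\alpha)$ produces such a compatible nullhomotopy via the composition $p^n\cdot g\simeq g\circ f\circ\hat{f}\simeq 0$ (using $\alpha$ followed by the canonical $gf\simeq 0$), and conversely any compatible nullhomotopy yields the required lift $\hat{f}$.

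The main obstacle I anticipate lies in the higher coherence: verifying that the nullhomotopy of $p^n\cdot gf$ obtained on $M_1$ from $(\hat{f},\alpha)$ agrees canonically with the one coming from $gf\simeq 0$, so that the induced map of data spaces is in fact an equivalence and not merely a bijection on components. This reduces to a diagram chase using associativity of composition and the functoriality of the cofiber sequence, which is routine but requires care. For the alternative formulation with $\hat{f}\circ f\simeq p^n\cdot\id_{M_1}$, the same argument applies after replacing $\mathrm{Map}(M_2,-)$ by $\mathrm{Map}(-,M_1)$ and interpreting the data as a lift of $p^n\cdot\id_{M_1}$ through $f^*$.
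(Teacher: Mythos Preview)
The paper does not prove this lemma; it is stated as an observation without proof, so there is no paper argument to compare against. Your proposal, however, misreads the statement: the ``(resp.)'' in (2) is not offering two alternative single-homotopy formulations, each separately equivalent to (1). As the definition of \emph{height-$n$ isogeny} immediately following the lemma makes explicit, both homotopies $\alpha:f\circ\hat f\simeq p^n$ and $\beta:\hat f\circ f\simeq p^n$ are required. With only $\alpha$, the equivalence fails: take $\mathcal{C}=D(\Int)$, $M_1=\Int$, $M_2=0$, $f=0$, $n=1$. Then $\cofib(f)\simeq\Int[1]$ admits no $\Int/p\Int$-structure, yet the datum $(\hat f,\alpha)=(0,\mathrm{triv})$ with $\alpha$ the identity in $\End(0)=0$ certainly exists.

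This counterexample pinpoints the failure of your bridge step. You correctly identify (1) with nullhomotopies of $p^n g$ that are compatible with the canonical null of $p^n gf$, and you flag this compatibility as ``the main obstacle''. But it is not a routine diagram chase: in the example, the null of $p^n g=0$ coming from $(\hat f,\alpha)$ is trivial, and its pullback along $f$ is the trivial loop in $\Map(\Int,\Int[1])\simeq K(\Int,1)$, whereas the canonical null of $p^n gf$ corresponds to the loop $p^n\neq 0$ in $\pi_1=\Int$. The missing compatibility is exactly what $\beta$ supplies: identifying $M_1\simeq\fib(g)$ and unwinding, one finds that after $\alpha$ converts $(\hat f,\alpha)$ into a null of $p^n g$, the datum $\beta$ becomes precisely a homotopy between the induced null of $p^n gf$ and the canonical one. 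So your fiber-sequence strategy is sound, but only once both homotopies are used.
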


\begin{definition}
With $\mathcal{C}$ as above, a \defnword{height-$n$ isogeny} is the data of a pair of maps $M_1\xrightarrow{f}M_2\xrightarrow{\hat{f}}M_2$ along with homotopies $f\circ \hat{f}\simeq p^n$ and $\hat{f}\circ f \simeq p^n$. We will denote such an object simply by the pair $(f,\hat{f})$, and refer to it as a height-$n$ isogeny \defnword{between $M_1$ and $M_2$}.
\end{definition}

\subsection{Raynaud's theorem for $F$-gauges}

For any $R\in \mathrm{CRing}^{p\text{-nilp}}$, let $\mathsf{P}^{\mathrm{syn}}_{n,\{0,1\}}(R)$ be the $\infty$-subcategory of $\Mod{\Int/p^n\Int}(\mathrm{Perf}_{\{0,1\}}(R^{\mathrm{syn}}))$ spanned by the objects $\mathcal{F}$ with Tor amplitude in $[-1,0]$. Our goal in this section is to prove the following syntomic analogue of a theorem of Raynaud~\cite[Th\'eor\`eme 3.1.1]{bbm:cris_ii}:

\begin{theorem}
\label{thm:syntomic_raynaud}
Suppose that $R$ is discrete and that we have $\mathcal{M}\in \mathsf{P}^{\mathrm{syn}}_{n,\{0,1\}}(R)$. Then there exist an ind-\'etale cover $R\to \tilde{R}$, vector bundle $F$-gauges
   \[
    \mathcal{V}^{-1},\mathcal{V}^0\in \mathrm{Vect}_{\{0,1\}}(\tilde{R}^{\mathrm{syn}}),
   \]
   of Hodge-Tate weights $0,1$, a map $f:\mathcal{V}^{-1}\to \mathcal{V}^0$, and an equivalence
\[
   \mathcal{M}\vert_{\tilde{R}^{\mathrm{syn}}}\simeq \cofib(\mathcal{V}^{-1}\xrightarrow{f}\mathcal{V}^0)
\]
of perfect complexes over $\tilde{R}^{\mathrm{syn}}$.
\end{theorem}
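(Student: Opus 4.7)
The plan is to reduce Theorem~\ref{thm:syntomic_raynaud} to Raynaud's classical theorem for finite flat group schemes, using Theorem~\ref{thm:dieudonne} as a dictionary between truncated Barsotti--Tate group schemes and vector bundle $F$-gauges with Hodge--Tate weights $\{0,1\}$, and Theorem~\ref{thm:exactness} to transport short exact sequences across this dictionary. Concretely, I would introduce a formal prestack $\mathcal{Y}(\mathcal{M})$ over $\Spf R$ whose $C$-points (for $p$-nilpotent $R$-algebras $C$) parameterize tuples $(\mathcal{V}^{-1},\mathcal{V}^0,f,\alpha)$ with $\mathcal{V}^{-1},\mathcal{V}^0\in \mathrm{Vect}_{\{0,1\}}(C^{\mathrm{syn}}\otimes\Int/p^n\Int)$, $f:\mathcal{V}^{-1}\to\mathcal{V}^0$ a morphism of $F$-gauges, and $\alpha:\cofib(f)\xrightarrow{\simeq}\mathcal{M}\vert_{C^{\mathrm{syn}}}$ an equivalence. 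Using Theorem~\ref{thm:HTwts01_representable} applied to $\mathrm{Perf}^{\mathrm{syn},[0,0]}_{n,\{0,1\}}$ and $\mathrm{Perf}^{\mathrm{syn},[-1,0]}_{n,\{0,1\}}$, and taking the homotopy fiber of the ``cofiber of a map'' morphism over the $R$-point determined by $\mathcal{M}$, exhibits $\mathcal{Y}(\mathcal{M})$ as a locally finitely presented derived $p$-adic formal Artin stack over $\Spf R$. The theorem will then follow once I show $\mathcal{Y}(\mathcal{M})\to \Spf R$ is formally smooth and surjective on geometric points, since a smooth surjection of locally finitely presented formal stacks admits sections on an ind-\'etale cover.

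For surjectivity on geometric points, fix an algebraically closed field $\kappa$ over $R$. The perfect-field case of the main equivalence, in the form of classical Dieudonn\'e theory due to Gabber and Lau, identifies $\mathcal{M}\vert_\kappa$ with the $F$-gauge coming from some finite flat $p^n$-torsion group scheme $G_\kappa$ over $\kappa$. Raynaud's theorem~\cite[Th\'eor\`eme~3.1.1]{bbm:cris_ii} applied to the Cartier dual $G_\kappa^*$ presents $G_\kappa^*$ as the kernel of an isogeny of $n$-truncated BT groups; Cartier-dualizing yields a short exact sequence $0\to H^{-1}_\kappa\to H^0_\kappa\to G_\kappa\to 0$ with $H^i_\kappa$ truncated BTs. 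By Theorem~\ref{thm:dieudonne} the $H^i_\kappa$ are covered by vector bundle $F$-gauges $\mathcal{V}^i_\kappa$ of Hodge--Tate weights $\{0,1\}$ and the injection $H^{-1}_\kappa\hookrightarrow H^0_\kappa$ by a map $f_\kappa:\mathcal{V}^{-1}_\kappa\to \mathcal{V}^0_\kappa$; by Theorem~\ref{thm:exactness}, $\cofib(f_\kappa)$ is canonically identified with $\mathcal{M}\vert_\kappa$, furnishing the desired $\kappa$-point of $\mathcal{Y}(\mathcal{M})$.

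The main obstacle is formal smoothness of $\mathcal{Y}(\mathcal{M})\to \Spf R$. Given a square-zero extension $C'\twoheadrightarrow C$ of $p$-nilpotent $R$-algebras with kernel $I$, a $C$-point of $\mathcal{Y}(\mathcal{M})$, and the prescribed deformation $\mathcal{M}\vert_{C'}$ of the cofiber, we must lift the presentation to $C'$. Via Theorems~\ref{thm:dieudonne} and \ref{thm:exactness} this translates into lifting a short exact sequence $0\to H^{-1}_C\to H^0_C\to G_C\to 0$ (with $H^i_C$ truncated BTs and $G_C$ finite flat $p^n$-torsion) over $C'$ with a prescribed deformation of the quotient. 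I would run the obstruction-theoretic argument of Proposition~\ref{prop:obst_theory}, producing an obstruction class in $\Ext^2_{\Int/p^n\Int}(G_C,I\otimes_R t_{H^0_C})$, and establish its vanishing by restricting along the inclusion $G_C\hookrightarrow H^0_C$ of the quotient into a truncated BT and invoking Lemma~\ref{lem:tangent_complex_bts}. The delicate point is that the key vanishing argument in Theorem~\ref{thm:smoothness_of_extensions} exploited the multiplication-by-$p$ structure intrinsic to a truncated BT via the inclusion $G(n-1)\subset G(n)$, whereas here the quotient $G_C$ is merely finite flat; adapting that trick---most likely via the higher-stacks formulation of Remark~\ref{rem:higher_stacks_ob_const}, which only uses the abelian-group structure of $G_C$ and standard facts about cohomology of Eilenberg--MacLane spaces---is what I expect to be the hardest technical step. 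Once formal smoothness is in hand, combining it with surjectivity on geometric points and local finite presentation yields the sought ind-\'etale section.
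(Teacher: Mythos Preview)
Your proposal has a genuine gap at the outset: you parameterize $\mathcal{V}^{-1},\mathcal{V}^0\in \mathrm{Vect}_{\{0,1\}}(C^{\mathrm{syn}}\otimes\Int/p^n\Int)$, i.e.\ \emph{level-$n$} vector bundle $F$-gauges, but the theorem asks for $\mathcal{V}^{-1},\mathcal{V}^0\in \mathrm{Vect}_{\{0,1\}}(\tilde{R}^{\mathrm{syn}})$, i.e.\ \emph{integral} ones. Even if your stack $\mathcal{Y}(\mathcal{M})$ were smooth and surjective, you would only obtain the $m$-truncated analogue (this is essentially the paper's Lemma~\ref{lem:local_vect_bundle}). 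The passage from level-$m$ presentations to integral ones is not formal: the paper handles it via Proposition~\ref{prop:lifting_isogenies}, a finite-presentation result for isogenies of $F$-gauges proved by a delicate matrix argument over perfect rings, together with Lemma~\ref{lem:enough_to_check_mod_n} to verify that the resulting integral cofiber really is $\mathcal{M}$ rather than just $\mathcal{M}_n$. You have not anticipated either of these steps.

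Your smoothness argument also has a circularity problem. You propose to translate the deformation problem for $(\mathcal{V}^{-1},\mathcal{V}^0,f,\alpha)$ into one for short exact sequences $0\to H^{-1}_C\to H^0_C\to G_C\to 0$ of group schemes and then run an obstruction argument. But Theorem~\ref{thm:exactness} only matches fiber sequences of \emph{vector bundle} $F$-gauges with short exact sequences of truncated BTs; here $\mathcal{M}\vert_C$ is not a vector bundle, so you cannot yet identify the space of $F$-gauge presentations of $\mathcal{M}\vert_C$ with the space of group-scheme presentations of $G_C$. Establishing that identification over general $C$ is precisely what the main theorem (proved \emph{after} Theorem~\ref{thm:syntomic_raynaud}, using it as input) accomplishes. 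The paper avoids this circularity by proving smoothness of $\mathcal{Y}_m\to \mathcal{X}_m$ entirely on the $F$-gauge side, via the explicit Grothendieck--Messing-type squares of Theorems~\ref{thm:HTwts01_representable} and~\ref{thm:perfect_f-gauges_repble}, reducing to a concrete lifting statement for filtered perfect complexes; no group schemes enter.
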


\begin{remark}
Raynaud's theorem in the context of finite flat group schemes actually allows one to restrict to \emph{Zariski} localization to achieve the analogous result. \emph{A posteriori}, combined with Theorem~\ref{thm:main} below, this implies that Zariski localization is sufficient in Theorem~\ref{thm:syntomic_raynaud} as well. However, our methods, which are moduli theoretic, cannot yield this stronger assertion directly.
\end{remark}

\begin{remark}
   It might be possible to give a more streamlined proof of the theorem using the results of \S~\ref{sec:divided_dieudonne}. However, the more scenic route taken here yields additional information about the moduli of $F$-gauges, such as Corollary~\ref{cor:bartling_hoff} below, and also illustrates principles that will prove useful in other contexts beyond that of this paper.
\end{remark}

The proof will take some preparation. For now, here are some useful corollaries.
\begin{corollary}
\label{cor:bt_in_the_middle}
Suppose that $R$ is discrete and that $\mathcal{M}\in \mathsf{P}^{\mathrm{syn}}_{n,\{0,1\}}(R)$. Then, pro-\'etale locally on $\Spec R$, there exist $\mathcal{V}_n,\mathcal{W}_n\in \mathrm{Vect}^{\mathrm{syn}}_{n,\{0,1\}}(R)$ and cofiber sequences of perfect $F$-gauges
\[
\mathcal{M}\to \mathcal{V}_n\to \mathcal{M}'\;;\; \mathcal{M}'\to \mathcal{W}_n\to \mathcal{M}
\]
with $\mathcal{M}'$ in the image of $\mathsf{P}^{\mathrm{syn}}_{n,\{0,1\}}(R)$.
\end{corollary}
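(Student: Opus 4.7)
The plan is to use Theorem~\ref{thm:syntomic_raynaud} as a black box and then produce everything by manipulating cofiber sequences. First I would apply Raynaud's theorem to $\mathcal{M}$ in order to obtain, after passing to an ind-\'etale (hence pro-\'etale) cover $\tilde{R}$ of $R$, vector bundle $F$-gauges $\mathcal{V}^{-1},\mathcal{V}^0 \in \mathrm{Vect}_{\{0,1\}}(\tilde{R}^{\mathrm{syn}})$ together with a map $f\colon \mathcal{V}^{-1}\to \mathcal{V}^0$ such that $\mathcal{M}\vert_{\tilde{R}^{\mathrm{syn}}}\simeq \cofib(f)$. Since $\mathcal{M}$ carries a $\Int/p^n\Int$-module structure, Lemma~\ref{lem:isogenies_pn} promotes the nullhomotopy of $p^n$ on $\cofib(f)$ to a map $\hat{f}\colon \mathcal{V}^0\to \mathcal{V}^{-1}$ equipped with homotopies $\hat{f}\circ f \simeq p^n$ on $\mathcal{V}^{-1}$ and $f\circ \hat{f}\simeq p^n$ on $\mathcal{V}^0$, i.e.\ a height-$n$ isogeny between $\mathcal{V}^{-1}$ and $\mathcal{V}^0$.

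Next, I would set
\[
   \mathcal{V}_n \defn \cofib\bigl(p^n\colon \mathcal{V}^{-1}\to \mathcal{V}^{-1}\bigr),\quad
   \mathcal{W}_n \defn \cofib\bigl(p^n\colon \mathcal{V}^0\to \mathcal{V}^0\bigr),\quad
   \mathcal{M}' \defn \cofib(\hat{f}).
\]
The objects $\mathcal{V}_n$ and $\mathcal{W}_n$ are vector bundle $F$-gauges of level $n$ with Hodge-Tate weights in $\{0,1\}$, since both properties are inherited from $\mathcal{V}^{-1}$ and $\mathcal{V}^0$. The object $\mathcal{M}'$ has Tor amplitude in $[-1,0]$ because it is the cofiber of a map between vector bundles, its Hodge-Tate weights are in $\{0,1\}$ by applying $\gr^{-m}_{\mathrm{Hdg}}$ to the defining cofiber sequence, and it inherits a $\Int/p^n\Int$-module structure via Lemma~\ref{lem:isogenies_pn} applied to $\hat{f}$ (with companion $f$); hence $\mathcal{M}'\in \mathsf{P}^{\mathrm{syn}}_{n,\{0,1\}}(\tilde{R})$.

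Finally, the two desired cofiber sequences fall out of the octahedral axiom applied to the two factorizations of $p^n$. From $\hat{f}\circ f \simeq p^n$ on $\mathcal{V}^{-1}$ we extract
\[
   \cofib(f)\to \cofib(\hat{f}\circ f)\to \cofib(\hat{f}),\quad\text{i.e.}\quad \mathcal{M}\to \mathcal{V}_n\to \mathcal{M}';
\]
symmetrically, $f\circ \hat{f}\simeq p^n$ on $\mathcal{V}^0$ yields $\mathcal{M}'\to \mathcal{W}_n\to \mathcal{M}$.

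The only substantive step is the invocation of Theorem~\ref{thm:syntomic_raynaud}; once that is available, everything else is formal manipulation in the stable $\infty$-category of perfect complexes on $\tilde{R}^{\mathrm{syn}}$ together with the isogeny/$\Int/p^n\Int$-module dictionary of Lemma~\ref{lem:isogenies_pn}, and there is no genuine obstacle to expect.
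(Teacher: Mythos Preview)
Your proposal is correct and follows essentially the same approach as the paper: apply Theorem~\ref{thm:syntomic_raynaud} to get the height-$n$ isogeny $(f,\hat f)$, set $\mathcal{M}'=\cofib(\hat f)$, $\mathcal{V}_n=\mathcal{V}^{-1}/^{\mathbb{L}}p^n$, $\mathcal{W}_n=\mathcal{V}^{0}/^{\mathbb{L}}p^n$, and read off the two cofiber sequences from the octahedral axiom applied to the factorizations $\hat f\circ f\simeq p^n$ and $f\circ\hat f\simeq p^n$. Your write-up is in fact more explicit than the paper's, which simply asserts the two cofiber sequences without naming the octahedral axiom.
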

\begin{proof}
We can assume that we have a cofiber sequence
\[
\mathcal{V}^{-1}\xrightarrow{f} \mathcal{V}^0\to \mathcal{M}
\]
with $\mathcal{V}^{-1},\mathcal{V}^0$ in $\mathrm{Vect}^{\mathrm{syn}}_{\{0,1\}}(R)$. Then by Lemma~\ref{lem:isogenies_pn} we have $\hat{f}: \mathcal{V}^{-1}\to \mathcal{V}^0$ with $f\circ \hat{f}$ and $\hat{f}\circ f$ homotopic to multiplication by $p^n$. It is now easy to see that we now have cofiber sequences
\[
\cofib(\hat{f})\to \mathcal{V}^{0}/{}^{\mathbb{L}}p^n\to \mathcal{M}\;;\; \mathcal{M}\to \mathcal{V}^{-1}/{}^{\mathbb{L}}p^n\to \cofib(\hat{f})
\]
\end{proof}

\begin{corollary}
[Discreteness of the categories]
\label{cor:syntomic_raynaud}
Suppose that $R$ is discrete. Then: 
\begin{enumerate}
   \item $\mathsf{P}^{\mathrm{syn}}_{n,\{0,1\}}(R)$ is a classical category and the forgetful functor
\[
\mathsf{P}^{\mathrm{syn}}_{n,\{0,1\}}(R)\to \mathrm{Perf}^{[-1,0]}(R^{\mathrm{syn}})
\]
is fully faithful. 
\item $\mathsf{P}^{\mathrm{syn}}_{n,\{0,1\}}(R)$ has the structure of an exact additive category where the exact sequences are given by fiber sequences of perfect complexes. Moreover, $\mathrm{Vect}_{\{0,1\}}(R^{\mathrm{syn}}\otimes\Int/p^n\Int)$ is a thick subcategory\footnote{By this, we mean that it has the `two-out-of-three' property for exact sequences of objects.} of $\mathsf{P}^{\mathrm{syn}}_{n,\{0,1\}}(R)$.
\end{enumerate}
\end{corollary}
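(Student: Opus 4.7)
My plan is to use Theorem~\ref{thm:syntomic_raynaud} to reduce both claims to mapping-space computations involving integral vector bundle $F$-gauges of Hodge-Tate weights $\{0,1\}$. Since both discreteness of mapping spaces and full faithfulness of the forgetful functor are local in the pro-\'etale topology, we may assume after passing to such a cover that any $\mathcal{M} \in \mathsf{P}^{\mathrm{syn}}_{n,\{0,1\}}(R)$ is of the form $\cofib(f \colon \mathcal{V}^{-1} \to \mathcal{V}^0)$ with $\mathcal{V}^{-1}, \mathcal{V}^0$ integral vector bundle $F$-gauges of Hodge-Tate weights $\{0,1\}$; Lemma~\ref{lem:isogenies_pn} then furnishes an inverse isogeny $\hat f \colon \mathcal{V}^0 \to \mathcal{V}^{-1}$ with $f \hat f \simeq \hat f f \simeq p^n$. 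The key external input is that mapping spectra between integral vector bundle $F$-gauges of Hodge-Tate weights $\{0,1\}$ are discrete; this follows either by computing the syntomic cohomology of the vector bundle $F$-gauge $\mathcal{V}^\vee \otimes \mathcal{W}$ as a fiber of connective objects (hence concentrated in non-negative degrees), or, equivalently, by passing to the inverse limit in Theorem~\ref{thm:dieudonne} to identify integral vector bundles of these Hodge-Tate weights with $p$-divisible groups over $R$---a classical category with torsion-free $\mathrm{Hom}$-groups.

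For part (1), given $\mathcal{M}_1, \mathcal{M}_2 \in \mathsf{P}$ with pro-\'etale resolutions $\mathcal{V}_i^{-1} \to \mathcal{V}_i^0 \to \mathcal{M}_i$, I compute $\mathrm{Map}_{\mathrm{Perf}(R^{\mathrm{syn}})}(\mathcal{M}_1, \mathcal{M}_2)$ in two stages. Stage one: for $\mathcal{V}$ integral and $\mathcal{M} \in \mathsf{P}$ with resolution $\mathcal{W}^{-1} \to \mathcal{W}^0 \to \mathcal{M}$, the long exact sequence yields $\pi_i \mathrm{Map}(\mathcal{V}, \mathcal{M}) = 0$ for $i \geq 2$ automatically, and reduces the vanishing of $\pi_1$ to the injectivity of $f_{\mathcal{M}} \circ (-)$ on $\mathrm{Hom}(\mathcal{V}, \mathcal{W}^{-1})$; this injectivity follows because if $f_{\mathcal{M}} \circ g = 0$ then $\hat f_{\mathcal{M}} \circ f_{\mathcal{M}} \circ g = p^n g = 0$, and $p^n$ acts injectively on the torsion-free $\mathrm{Hom}$-group between $p$-divisible groups. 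Stage two: applying the LES to the resolution of $\mathcal{M}_1$, both terms of $\mathrm{Map}(\mathcal{V}_1^0, \mathcal{M}_2) \to \mathrm{Map}(\mathcal{V}_1^{-1}, \mathcal{M}_2)$ are now discrete by stage one, so $\pi_i \mathrm{Map}(\mathcal{M}_1, \mathcal{M}_2)$ injects into $\pi_i \mathrm{Map}(\mathcal{V}_1^0, \mathcal{M}_2) = 0$ for $i \geq 1$. The forgetful $\mathsf{P}^{\mathrm{syn}}_{n,\{0,1\}}(R) \to \mathrm{Perf}^{[-1,0]}(R^{\mathrm{syn}})$ is then fully faithful: the defining fiber sequence
\begin{equation*}
   \mathrm{Map}_{\Int/p^n\Int}(\mathcal{M}_1, \mathcal{M}_2) \to \mathrm{Map}_{\mathrm{Perf}}(\mathcal{M}_1, \mathcal{M}_2) \xrightarrow{p^n} \mathrm{Map}_{\mathrm{Perf}}(\mathcal{M}_1, \mathcal{M}_2)
\end{equation*}
has nullhomotopic connecting map $p^n$ (as $\mathcal{M}_2$ is a $\Int/p^n\Int$-module), and combined with the discreteness of $\mathrm{Map}_{\mathrm{Perf}}$, the associated LES collapses to isomorphisms on every $\pi_i$.

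For part (2), the additive exact structure is inherited from the ambient stable $\infty$-category $\mathrm{Perf}(R^{\mathrm{syn}})$---short exact sequences being fiber sequences $A \to B \to C$ whose three terms all lie in $\mathsf{P}$---and Quillen's axioms are immediate given the 1-categorical nature shown in part (1). Thickness of $\mathrm{Vect}_{\{0,1\}}(R^{\mathrm{syn}}\otimes\Int/p^n\Int)$ in $\mathsf{P}^{\mathrm{syn}}_{n,\{0,1\}}(R)$ amounts to the two-out-of-three property for Tor amplitude $[0,0]$ in such fiber sequences. In the case $A, C \in \mathrm{Vect}$ (or $A, B \in \mathrm{Vect}$ with $C$ in $\mathsf{P}$ forcing the connecting $H^{-1}$ to vanish, as the target has Tor amplitude $[-1,0]$ rather than worse), flatness of the third term is immediate from the LES of Tor; in the subtle case $A, B \in \mathrm{Vect}$ and $C \in \mathsf{P}$, the Tor-amplitude bound on $C$ forces the corresponding map $\phi \colon G_A \to G_B$ of $n$-truncated BTs to be a closed immersion, and pro-\'etale locally one lifts to $\mathcal{G}_A \hookrightarrow \mathcal{G}_B$ of $p$-divisible groups via the smoothness of Theorem~\ref{thm:smoothness_of_extensions}, identifying $G_B/G_A = (\mathcal{G}_B/\mathcal{G}_A)[p^n]$ as an $n$-truncated BT. The main obstacle will be the $\pi_1$-vanishing of stage one above, whose success rests on the precise interplay between the isogeny inverse $\hat f$ (provided by the $\Int/p^n\Int$-structure) and the $p$-torsion-freeness of $\mathrm{Hom}$-groups between $p$-divisible groups.
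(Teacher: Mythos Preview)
Your discreteness argument in part~(1) is essentially the paper's: both pass to a pro-\'etale cover via Theorem~\ref{thm:syntomic_raynaud}, resolve by integral vector bundle $F$-gauges, and reduce to the $p$-torsion-freeness of $\Hom$ between such objects (equivalently, between $p$-divisible groups via Theorem~\ref{thm:dieudonne}).

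Your deduction of full faithfulness, however, rests on a false fiber sequence. There is no fiber sequence
\[
\mathrm{Map}_{\Int/p^n\Int}(\mathcal{M}_1,\mathcal{M}_2)\to \mathrm{Map}_{\mathrm{Perf}}(\mathcal{M}_1,\mathcal{M}_2)\xrightarrow{p^n}\mathrm{Map}_{\mathrm{Perf}}(\mathcal{M}_1,\mathcal{M}_2);
\]
the fiber of $p^n$ on the right is $\mathrm{Map}_{\mathrm{Perf}}(\mathcal{M}_1,\mathcal{M}_2\oplus\mathcal{M}_2[-1])$, not $\mathrm{Map}_{\Int/p^n\Int}$. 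The correct relation---which the paper leaves implicit---is that the forgetful map on mapping spectra is a split inclusion with complementary summand $\mathrm{map}_{\Int/p^n\Int}(\mathcal{M}_1,\mathcal{M}_2)[-1]$ (since $\mathcal{M}_1$ is a retract of $\mathcal{M}_1\otimes_{\Int}\Int/p^n\Int$ in $\Mod{\Int/p^n\Int}$). Once $\mathrm{Map}_{\mathrm{Perf}}$ is discrete, this splitting forces $\pi_i\,\mathrm{map}_{\Int/p^n\Int}=0$ for $i\geq 1$ and the forgetful map to be an isomorphism on $\pi_0$.

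For part~(2), your treatment of the case $\mathcal{M}_1,\mathcal{M}_2\in\mathrm{Vect}$ has a genuine gap. Theorem~\ref{thm:smoothness_of_extensions} concerns the smoothness of the stack of \emph{extensions} of one truncated BT by another; it says nothing about lifting a closed immersion $G_A\hookrightarrow G_B$ of $n$-truncated BTs to an immersion of $p$-divisible groups, which is what you invoke it for. The paper avoids this entirely: applying the Cartier duality involution $\mathcal{M}\mapsto\mathcal{M}^\vee[1]\{1\}$ (which preserves $\mathsf{P}^{\mathrm{syn}}_{n,\{0,1\}}(R)$ and the subcategory of vector bundles) converts the fiber sequence $\mathcal{M}_1\to\mathcal{M}_2\to\mathcal{M}_3$ into one where the \emph{last} two terms are vector bundles, reducing to the easy case already handled.
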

\begin{proof}
Assertion (1) will follow from knowing that the full subcategory of $\mathrm{Perf}^{\mathrm{syn},[-1,0]}_{\{0,1\}}(R)$ spanned by the image of $\mathsf{P}^{\mathrm{syn}}_{n,\{0,1\}}(R)$ has discrete mapping spaces between objects.

By pro-\'etale descent, and Theorem~\ref{thm:syntomic_raynaud}, we can assume that we have cofiber sequences
\[
\mathcal{V}^{-1}\xrightarrow{f} \mathcal{V}^0\to \mathcal{M}\;;\; \mathcal{W}^{-1}\xrightarrow{h} \mathcal{W}^0\to \mathcal{N}.
\]
We want to know that the mapping space $\Map(\mathcal{M},\mathcal{N})$ is $0$-truncated. For this, it is enough to know that the spaces $\Map(\mathcal{V}^i,\mathcal{N})$ are $0$-truncated for $i=0,-1$. This in turn would follow if we knew that the cofiber of
\[
\Map(\mathcal{V}^i,\mathcal{W}^{-1})\to \Map(\mathcal{V}^i,\mathcal{W}^0)
\]
is $0$-truncated. Since multiplication by $p^n$ factors through $h$, we see that there is a sequence of maps
\[
\Map(\mathcal{V}^i,\mathcal{W}^{-1})\to \Map(\mathcal{V}^i,\mathcal{W}^0)\to \Map(\mathcal{V}^i,\mathcal{W}^{-1})
\]
whose composition is $p^n$.

It is now enough to know that $\Map(\mathcal{V}^i,\mathcal{W}^j)$ is a flat $\Int_p$-module for $j=0,-1$. This can be checked `linear algebraically', but it is also immediate from Theorem~\ref{thm:dieudonne}, and the analogous assertion for homomorphisms between $p$-divisible groups.

As for the second assertion, the only part that requires proof is that the subcategory $\mathrm{Vect}^{\mathrm{syn}}_{n,\{0,1\}}(R)$ has the two-out-of-three property. Suppose that we have a fiber sequence
\[
\mathcal{M}_1\to \mathcal{M}_2\to \mathcal{M}_3
\]
in $\mathsf{P}^{\mathrm{syn}}_{n,\{0,1\}}(R)$. If $\mathcal{M}_1$ and $\mathcal{M}_3$ are vector bundles over $R^{\mathrm{syn}}\otimes\Int/p^n\Int$, then it is clear that so is $\mathcal{M}_2$. Similarly clear is the case where $\mathcal{M}_2$ and $\mathcal{M}_3$ are vector bundles. The remaining case now follows from Cartier duality.
\end{proof}

\subsection{Preparations for the proof}

We can now begin preparations for our proof of the theorem. $R$ will always be discrete in what follows.

\begin{lemma}
[The case of a geometric point]
\label{lem:checking_on_points}
The theorem holds when $R = \kappa$ is an algebraically closed field.
\end{lemma}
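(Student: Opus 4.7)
The plan is to exploit the concrete description of $F$-gauges over an algebraically closed field via classical Dieudonn\'e theory, and to reduce the statement to Raynaud's classical theorem on finite flat group schemes.

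Since $\kappa$ is perfect (hence semiperfectoid), Theorem~\ref{thm:semiperfectoid_nygaard} identifies $\kappa^{\mathrm{syn}}$ with the Rees stack of the Nygaard-filtered prism $\Fil^\bullet_{\mathcal{N}}\Prism_\kappa$, which for $\kappa$ perfect is simply the $p$-adic filtration on $W(\kappa)$. Unwinding, I would first identify $\mathsf{P}^{\mathrm{syn}}_{n,\{0,1\}}(\kappa)$ with a concrete Dieudonn\'e-theoretic category: two-term complexes of $W(\kappa)$-modules equipped with semilinear $F,V$ (with $FV=VF=p$) and a Hodge filtration encoding HT weights $\{0,1\}$, subject to the conditions that the cohomology is killed by $p^n$ and that the underlying complex has projective dimension at most one over $W(\kappa)$. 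Under this identification, vector bundle $F$-gauges with HT weights $\{0,1\}$ correspond, via Theorem~\ref{thm:dieudonne}, to the Dieudonn\'e modules of Barsotti--Tate groups over $\kappa$, i.e., to free $W(\kappa)$-modules with $F,V$ in the classical sense.

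Next, I would invoke Raynaud's classical theorem~\cite[Th\'eor\`eme 3.1.1]{bbm:cris_ii}: every finite flat commutative $p$-power torsion group scheme over the field $\kappa$ is the kernel of an isogeny between two $p$-divisible groups. In Dieudonn\'e-theoretic language, this says every finite-length $W(\kappa)$-Dieudonn\'e module is the cokernel of an injective isogeny between two free Dieudonn\'e modules coming from Barsotti--Tate groups. Applied to the (at most two) nonzero cohomology groups of $\mathcal{M}$, together with the Hodge filtration datum, this yields, after translation back through Theorem~\ref{thm:dieudonne}, a map $\mathcal{V}^{-1}\to\mathcal{V}^0$ of vector bundle $F$-gauges over $\kappa^{\mathrm{syn}}$ with cofiber $\mathcal{M}$. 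The exactness established in Theorem~\ref{thm:exactness} for vector bundle $F$-gauges ensures that the Dieudonn\'e-level cofiber sequence lifts faithfully to a cofiber sequence of $F$-gauges.

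The main obstacle will be the passage from a Raynaud-style presentation of the individual cohomology groups of $\mathcal{M}$ to a presentation of the complex $\mathcal{M}$ itself as the cofiber of a map of vector bundle $F$-gauges: when both $H^{-1}(\mathcal{M})$ and $H^0(\mathcal{M})$ are nonzero, one must simultaneously account for the extension class linking them and keep track of the Hodge filtration compatibilities. This will likely require a direct linear-algebraic construction, possibly inductive on the length of the cohomology, or an obstruction argument at the level of two-term Dieudonn\'e complexes.
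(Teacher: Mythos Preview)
Your approach is workable but more circuitous than the paper's, and the obstacle you anticipate does not arise. Both your proposal and the paper begin by invoking the explicit description of perfect $F$-gauges with Hodge--Tate weights $\{0,1\}$ over a perfect field: such an object amounts to a pair of perfect complexes $M^0, M^{-1}$ over $W(\kappa)$ together with maps $u: M^0 \to M^{-1}$, $t: M^{-1} \to M^0$ satisfying $ut = tu = p$, plus an isomorphism $\varphi^*M^0 \simeq M^{-1}$ (this is the content of \cite[Remarks 3.4.6, 4.2.8]{bhatt_lectures}).

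From here the paper proceeds by direct linear algebra. Since $W(\kappa)$ is a PID and each $M^i$ has Tor amplitude $[-1,0]$ with $p$-power torsion cohomology, each $M^i$ is concentrated in degree $0$ as a finitely generated torsion $W(\kappa)$-module. One then writes down a free resolution and lifts $u$, $t$, and the Frobenius isomorphism by hand. No appeal to Raynaud, to Theorem~\ref{thm:dieudonne}, or to Theorem~\ref{thm:exactness} is needed.

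In particular, your anticipated obstacle---that both $H^{-1}(\mathcal{M})$ and $H^0(\mathcal{M})$ might be nonzero, forcing you to control an extension class---does not occur: over the PID $W(\kappa)$, the $H^{-1}$ of any perfect complex of Tor amplitude $[-1,0]$ is a submodule of a free module, hence torsion-free; being also $p^n$-torsion, it vanishes. Your route through classical Raynaud and the Dieudonn\'e equivalence can be made to work once you realize this, but it is logically heavier than necessary, and the appeal to Theorem~\ref{thm:exactness} is misplaced: once the category is identified concretely as above, cofibers on the $F$-gauge side are visibly computed as cokernels in the module description, so no separate exactness statement is required.
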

\begin{proof}
By~\cite[Remarks 3.4.6,4.2.8]{bhatt_lectures}, giving a perfect $F$-gauge of Hodge-Tate weights $0,1$ amounts to giving maps $M^0\xleftrightharpoons[t]{u}M^{-1}$ of perfect complexes over $W(\kappa)$ with $u\circ t = p\cdot\mathrm{id}_{M^0}$ and  $t\circ u = p\cdot\mathrm{id}_{M^{-1}}$, as well as a homotopy equivalence $\varphi^*M^0 \xrightarrow{\simeq}M^{-1}$. The lemma follows easily from this explicit description. 
\end{proof}

\begin{remark}
[$F$-gauges over perfect rings]
\label{rem:F_gauges_perfect}
In fact, the description of perfect $F$-gauges of Hodge-Tate weights $0,1$ used in the proof above is valid with $\kappa$ replaced by any perfect $\Field_p$-algebra $R$. We will see a similar, but somewhat more involved, description that holds for any $R$ in Section~\ref{sec:divided_dieudonne} below.
\end{remark}

\begin{lemma}
\label{lem:enough_to_check_mod_n}
Suppose that $\mathcal{M}\in \mathsf{P}^{\mathrm{syn}}_{[0,1],n}(R)$, and let $\mathcal{M}_n\in \mathrm{Perf}^{\mathrm{syn},[-1,0]}_{n,\{0,1\}}(R)$ be its image. Suppose that we have a height $n$ isogeny $(f,\hat{f})$ between two vector bundle $F$-gauges $\mathcal{V}^{-1}$ and $\mathcal{V}^0$ over $R$ such that the image of $\cofib(f)$ in $\mathrm{Perf}^{\mathrm{syn},[-1,0]}_{n,\{0,1\}}(R)$ is isomorphic to $\mathcal{M}_n$. Then there is an isomorphism $\cofib(f)\xrightarrow{\simeq}\mathcal{M}$ of perfect complexes over $R^{\mathrm{syn}}$.

\end{lemma}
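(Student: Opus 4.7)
The plan is to interpret both sides of the desired isomorphism as objects of one and the same $\infty$-category, where the hypothesis already furnishes the isomorphism; then forget down to perfect complexes. Concretely, by Lemma~\ref{lem:isogenies_pn} the isogeny data $(f,\hat{f})$ endows $\cofib(f)$ with a canonical nullhomotopy of multiplication by $p^n$, making it an object of $\Mod{\Int/p^n\Int}(\mathrm{Perf}_{\{0,1\}}(R^{\mathrm{syn}}))$. Since $\mathcal{V}^{-1}$ and $\mathcal{V}^0$ are vector bundle $F$-gauges with Hodge-Tate weights in $\{0,1\}$, the cofiber $\cofib(f)$ has Tor amplitude in $[-1,0]$ and Hodge-Tate weights inherited from those of $\mathcal{V}^{-1}$ and $\mathcal{V}^0$. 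Consequently, it actually lies in $\mathsf{P}^{\mathrm{syn}}_{n,\{0,1\}}(R)$, exactly as $\mathcal{M}$ does by hypothesis.

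Next I would invoke the tautological equivalence of symmetric monoidal stable $\infty$-categories
\[
\mathrm{Perf}(R^{\mathrm{syn}}\otimes\Int/p^n\Int)\;\simeq\;\Mod{\Int/p^n\Int}(\mathrm{Perf}(R^{\mathrm{syn}})),
\]
under which the prestack $\mathrm{Perf}^{\mathrm{syn},[-1,0]}_{n,\{0,1\}}(R)$ occurring in the hypothesis is identified with the underlying $\infty$-groupoid of $\mathsf{P}^{\mathrm{syn}}_{n,\{0,1\}}(R)$. Through this identification, the ``image'' of an object of $\mathsf{P}^{\mathrm{syn}}_{n,\{0,1\}}(R)$ is simply the object itself viewed as a perfect complex on $R^{\mathrm{syn}}\otimes\Int/p^n\Int$. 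Thus the assumed isomorphism between the images of $\mathcal{M}$ and $\cofib(f)$ lifts canonically to an isomorphism $\mathcal{M}\simeq \cofib(f)$ in $\mathsf{P}^{\mathrm{syn}}_{n,\{0,1\}}(R)$.

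Finally, applying the forgetful functor $\Mod{\Int/p^n\Int}(\mathrm{Perf}_{\{0,1\}}(R^{\mathrm{syn}}))\to \mathrm{Perf}(R^{\mathrm{syn}})$ that drops the chosen nullhomotopy of $p^n$ yields the required isomorphism $\cofib(f)\xrightarrow{\simeq}\mathcal{M}$ of perfect complexes on $R^{\mathrm{syn}}$. There is no substantive obstacle here: the lemma is essentially a bookkeeping translation between two equivalent presentations of ``a perfect complex on $R^{\mathrm{syn}}$ equipped with a nullhomotopy of $p^n$.'' The only real content to verify is that the isogeny structure places $\cofib(f)$ in the correct subcategory $\mathsf{P}^{\mathrm{syn}}_{n,\{0,1\}}(R)$, and this is immediate from the hypothesis that $\mathcal{V}^{-1},\mathcal{V}^0$ are vector bundle $F$-gauges of weights in $\{0,1\}$.
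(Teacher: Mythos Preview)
Your proof rests on a false claim. The equivalence
\[
\mathrm{Perf}(R^{\mathrm{syn}}\otimes\Int/p^n\Int)\;\simeq\;\Mod{\Int/p^n\Int}(\mathrm{Perf}(R^{\mathrm{syn}}))
\]
does \emph{not} hold. Already over $\Int_p$ this fails: for $0<k<n$, the module $\Int/p^k$ is a perfect $\Int_p$-complex equipped with an obvious nullhomotopy of $p^n$ (take the chain homotopy $p^{n-k}$ on the two-term resolution $\Int_p\xrightarrow{p^k}\Int_p$), so it lies in $\Mod{\Int/p^n\Int}(\mathrm{Perf}(\Int_p))$; but $\Int/p^k$ has an infinite periodic free resolution over $\Int/p^n$ and is not perfect there. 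Consequently the ``image'' functor $\mathsf{P}^{\mathrm{syn}}_{n,\{0,1\}}(R)\to \mathrm{Perf}^{\mathrm{syn},[-1,0]}_{n,\{0,1\}}(R)$ is not the identity via any such equivalence. In the paper's conventions (see the definition $\mathcal{M}_m\defn \mathcal{M}/{}^{\mathbb{L}}p^m$ in Lemma~\ref{lem:local_vect_bundle}) it is the base-change $\mathcal{M}\mapsto \mathcal{M}/{}^{\mathbb{L}}p^n$, which for a $\Int/p^n\Int$-module object splits as $\mathcal{M}\oplus \mathcal{M}[1]$.

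So the hypothesis only gives you an isomorphism $\mathcal{M}'\oplus \mathcal{M}'[1]\simeq \mathcal{M}\oplus \mathcal{M}[1]$ of perfect complexes over $R^{\mathrm{syn}}$ (with $\mathcal{M}'=\cofib(f)$), and extracting $\mathcal{M}'\simeq \mathcal{M}$ from this is exactly the content of the lemma. The paper's argument is to show that the off-diagonal component $\mathcal{M}'[1]\to \mathcal{M}$ of such an isomorphism is nullhomotopic; this reduces to showing that the perfect $F$-gauge $\mathcal{F}=\mathcal{V}^{0,\vee}[-1]\otimes\mathcal{M}$ (of Hodge-Tate weights $-1,0,1$ and Tor amplitude $[0,1]$) has no global sections, which is proved by deformation theory (Theorem~\ref{thm:perfect_f-gauges_repble}) and reduction to perfect $\Field_p$-algebras. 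Your proposal bypasses this step via a bookkeeping identification that is not available.
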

\begin{proof}
Set $\mathcal{M}' = \cofib(f)$ and let $\mathcal{M}'_n$ be its image in $ \mathrm{Perf}^{\mathrm{syn},[-1,0]}_{n,\{0,1\}}(R)$. By hypothesis, we have an isomorphism $\mathcal{M}'_n \xrightarrow{\simeq} \mathcal{M}_n$.

Now, observe that the $\Int/p^n\Int$-module structures on $\mathcal{M}'$ and $\mathcal{M}$ (the former arising from Lemma~\ref{lem:isogenies_pn} and our hypothesis) yield isomorphisms
\[
\mathcal{M}'_n \xrightarrow{\simeq} \mathcal{M}'\oplus \mathcal{M}'[1]\;;\; \mathcal{M}_n \xrightarrow{\simeq} \mathcal{M}\oplus \mathcal{M}[1]
\]
of perfect complexes over $R^{\mathrm{syn}}$. Therefore, we obtain an isomorphism
\[
\mathcal{M}'\oplus \mathcal{M}'[1]\xrightarrow{\simeq}\mathcal{M}\oplus \mathcal{M}[1]
\]
of objects in $\mathrm{Perf}^{\mathrm{syn},[-2,0]}_{\{0,1\}}(R)$. We claim that the resulting map $\mathcal{M}'[1]\to \mathcal{M}$, which is the top-right corner of a `matrix' representation of the isomorphism, is nullhomotopic. This would then imply that the `diagonal' entries $\mathcal{M}'\to \mathcal{M}$ and $\mathcal{M}'[1]\to \mathcal{M}[1]$ are both isomorphisms in $\mathrm{Perf}^{\mathrm{syn},[-1,0]}_{\{0,1\}}(R)$.

It remains to prove the claim. For this, it is enough to know that any map $\mathcal{V}^0[1]\to \mathcal{M}$ is nullhomotopic. Write $\mathcal{F} = \mathcal{V}^{0,\vee}[-1]\otimes \mathcal{M}$: this is a perfect $F$-gauge of Hodge-Tate weights $-1,0,1$ and Tor amplitude $[0,1]$. For $m\ge 1$, let $\mathcal{F}_m$ be the image of $\mathcal{F}$ in $\mathrm{Perf}^{\mathrm{syn},[0,1]}_{m,\{0,1\}}(R)$.

By Theorem~\ref{thm:perfect_f-gauges_repble}, its space of sections 
\[
\Gamma_{\mathrm{syn}}(\mathcal{F}) = \varprojlim_m \Gamma_{\mathrm{syn}}(\mathcal{F}_m)
\]
is a quasisyntomic sheaf, and for every square zero extension $C'\twoheadrightarrow C$ of discrete $R$-algebras with kernel $I$, we have
\[
\Gamma_{\mathrm{syn}}(\mathcal{F})(C')\xrightarrow{\simeq}\tau^{\le 0}\fib\left(\Gamma_{\mathrm{syn}}(\mathcal{F})(C)\to I\otimes_R\gr^{-1}_{\mathrm{Hdg}}F[-1]\right),
\]
where $\gr^{-1}_{\mathrm{Hdg}}F$ is a perfect complex over $R$ with Tor amplitude $[0,1]$. In particular, this shows that we have $\Gamma_{\mathrm{syn}}(\mathcal{F})(C')\simeq 0$ whenever $\Gamma_{\mathrm{syn}}(\mathcal{F})(C)\simeq 0$.

To finish, we need to know that $\Gamma_{\mathrm{syn}}(\mathcal{F})(R)\simeq 0$. By the deformation theory explained in the previous paragraph, we can reduce to the case where $R$ is an $\Field_p$-algebra. Then, by quasisyntomic descent, we can reduce to the case where $R$ is semiperfect. By considering the direct limit perfection $R\to R_{\mathrm{perf}}$, which is a nilpotent thickening, we can use deformation theory once again to reduce to the case where $R$ is perfect. 

Here, $R^{\mathrm{syn}}$ is a classical stack, and the desired claim follows because $H^{-1}(\mathcal{M}) = 0$, as can be seen for instance from the explicit description of the objects involved provided by Remark~\ref{rem:F_gauges_perfect}.
\end{proof}

Next, we prove the following $m$-truncated version of the theorem

\begin{lemma}
[$m$-truncated analogue of Raynaud]
\label{lem:local_vect_bundle}
Suppose that we have $\mathcal{M}\in \mathsf{P}^{\mathrm{syn}}_{n,\{0,1\}}(R)$, and, for $m\ge 1$, let $\mathcal{M}_m\defn \mathcal{M}/{}^{\mathbb{L}}p^m\in \mathrm{Perf}^{\mathrm{syn},[-1,0]}_{m,\{0,1\}}(R)$ be its image. Then there exists an integer $h\ge 1$ such that, for any $m\ge 1$, there exist: 
\begin{itemize}
   \item An \'etale cover $R\to \tilde{R}$;
   \item Vector bundles
   \[
    \mathcal{V}^{-1}_m,\mathcal{V}^0_m\in  \mathrm{Vect}^{\mathrm{syn}}_{m,\{0,1\}}(R)
   \]
   of rank $h$;
   \item and a map $f:\mathcal{V}^{-1}_m\to \mathcal{V}^0_m$ of vector bundle $F$-gauges 
\end{itemize}
such that there is an equivalence
\[
   \mathcal{M}_m\vert_{\tilde{R}^{\mathrm{syn}}\otimes\Int/p^{m}\Int}\simeq \cofib(\mathcal{V}^{-1}_m\xrightarrow{f}\mathcal{V}^0_m).
\]
\end{lemma}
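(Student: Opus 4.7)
The idea is to combine Lemma~\ref{lem:checking_on_points}, which supplies the desired presentation at every geometric point of $\Spec R$, with a moduli-theoretic spreading-out argument based on the representability and smoothness results recalled in Section~\ref{sec:recollection_of_some_representability_results}.

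First I would fix the integer $h$. For each geometric point $\bar{x}:\Spec \kappa\to \Spec R$, Lemma~\ref{lem:checking_on_points} yields a presentation $\mathcal{M}\vert_{\bar{x}^{\mathrm{syn}}}\simeq \cofib\bigl(\mathcal{V}^{-1}_{\bar{x}}\xrightarrow{f_{\bar{x}}}\mathcal{V}^0_{\bar{x}}\bigr)$, and the common rank of $\mathcal{V}^{-1}_{\bar{x}}$ and $\mathcal{V}^0_{\bar{x}}$ is pinned down by locally constant invariants of $\mathcal{M}$. After replacing $R$ by an \'etale cover, we may thus assume a single integer $h$ works at every geometric point; the same $h$ suffices for all $m$ at once, since any pointwise level-$n$ presentation can be truncated to every lower level.

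Fix $h$ and $m\ge 1$, and introduce the formal prestack $\mathcal{P}_m\to \Spf R$ whose $C$-points (for $C\in \mathrm{CRing}^{p\text{-nilp}}_{R/}$) classify tuples $\bigl(\mathcal{V}^{-1}_m,\mathcal{V}^0_m,f_m,\alpha\bigr)$, where $\mathcal{V}^{-1}_m,\mathcal{V}^0_m\in \mathrm{Vect}^{\mathrm{syn}}_{m,\{0,1\}}(C)$ are of rank $h$, $f_m:\mathcal{V}^{-1}_m\to \mathcal{V}^0_m$ is a map, and $\alpha:\cofib(f_m)\xrightarrow{\simeq}\mathcal{M}_m\vert_{C^{\mathrm{syn}}\otimes\Int/p^m\Int}$ is an equivalence. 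Representability of $\mathcal{P}_m$ as a locally finitely presented formal Artin stack follows by combining Theorem~\ref{thm:HTwts01_representable} (applied to the moduli of the $\mathcal{V}^\bullet_m$ and of deformations of $\mathcal{M}_m$) with Theorem~\ref{thm:perfect_f-gauges_repble} applied to the perfect $F$-gauge $(\mathcal{V}^{-1}_m)^{\vee}\otimes \mathcal{V}^0_m$, whose Hodge-Tate weights lie in $\{-1,0,1\}$ and are thus bounded by $1$, in order to classify $f_m$; the identification $\alpha$ is then cut out by a standard fibered product.

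The heart of the argument is to show that $\mathcal{P}_m\to \Spf R$ is formally smooth. Given a square-zero thickening $C'\twoheadrightarrow C$ with ideal $I$ and a $C$-point of $\mathcal{P}_m$, the smoothness of $\mathrm{Vect}^{\mathrm{syn}}_{m,\{0,1\}}$ (Theorem~\ref{thm:HTwts01_representable}) lifts $\mathcal{V}^\bullet_m$ to $C'$, and Theorem~\ref{thm:perfect_f-gauges_repble} then makes the space of lifts of $f_m$ a torsor under an $I$-tensor of Hodge-theoretic data. One then has to match $\cofib(f'_m)$ against a prescribed lift of $\mathcal{M}_m$: the Cartesian squares describing the deformations in Theorems~\ref{thm:HTwts01_representable} and~\ref{thm:perfect_f-gauges_repble} translate this into trivializing an obstruction class governed by the Hodge filtration on $(\mathcal{V}^{-1}_m)^{\vee}\otimes \mathcal{V}^0_m$; since $f_m$ is a height-$m$ isogeny pointwise, the pertinent tangent map is surjective, and modifying the lift of $f_m$ within its torsor of deformations kills this obstruction. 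Combined with the geometric-point surjectivity from Lemma~\ref{lem:checking_on_points}, formal smoothness then produces an \'etale-local section of $\mathcal{P}_m\to \Spf R$, which is precisely the presentation demanded by the lemma. The main obstacle is exactly this cofiber-matching step: lifting the $\mathcal{V}^\bullet_m$ and the map $f_m$ separately is routine from the cited representability results, but ensuring that the induced lift of the cofiber agrees with the prescribed deformation of $\mathcal{M}_m$ requires a careful comparison of the various deformation torsors through the compatible Cartesian squares of Theorems~\ref{thm:HTwts01_representable} and~\ref{thm:perfect_f-gauges_repble}, exploiting the pointwise isogeny property for the necessary surjectivity.
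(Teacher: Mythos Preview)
Your overall strategy matches the paper's: set up a moduli problem for presentations of $\mathcal{M}_m$ as a cofiber, show the relevant map is smooth, and then invoke Lemma~\ref{lem:checking_on_points} at geometric points. The paper phrases this as smoothness of the forgetful map $\mathcal{Y}_m\to\mathcal{X}_m$ (maps of rank-$h$ vector bundle $F$-gauges mapping to their cofiber), which is equivalent to your $\mathcal{P}_m\to\Spf R$ being smooth.

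The gap is in your smoothness argument. Your justification ``since $f_m$ is a height-$m$ isogeny pointwise, the pertinent tangent map is surjective'' is both incorrect and beside the point: $f_m$ is an arbitrary map of level-$m$ vector bundle $F$-gauges whose cofiber happens to be $\mathcal{M}_m$, and no isogeny property is available or needed. The paper's argument avoids this entirely by first using the Cartesian squares of Theorems~\ref{thm:HTwts01_representable} and~\ref{thm:perfect_f-gauges_repble} to reduce the question to smoothness of a purely linear-algebraic map $Y^-\to Y\times_X X^-$, where $Y,Y^-$ parametrize (filtered) maps of vector bundles and $X,X^-$ parametrize (filtered) perfect complexes of Tor amplitude $[-1,0]$. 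This local-model map is then checked to be smooth by hand: given a lift of the unfiltered map and a lift of the filtration on the cofiber, one must produce a compatible filtered lift of the map, and this reduces (after fixing a filtered lift of $V^0$) to lifting a section of a perfect complex of Tor amplitude $[-1,0]$, which is unobstructed. Your ``cofiber-matching'' paragraph gestures at this but does not isolate the reduction to the local model, and the surjectivity you need comes from this Tor-amplitude bound, not from any isogeny-type condition on $f_m$.
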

\begin{proof}
By Theorem~\ref{thm:HTwts01_representable}, the assignment
\begin{align*}
\mathcal{X}_m:R&\mapsto \mathrm{Perf}^{[-1,0]}_{\{0,1\}}(R^{\mathrm{syn}}\otimes\Int/p^m\Int)_{\simeq}
\end{align*}
is represented by a locally finitely presented derived $p$-adic formal Artin stack over $\Int_p$. Similarly, by Theorem~\ref{thm:perfect_f-gauges_repble}, we have a derived Artin stack $\mathcal{Y}_m$ over $\Int_p$ assigning to $R$ the $\infty$-groupoid of maps $\mathcal{V}^{-1}_m\xrightarrow{f}\mathcal{V}^0_m$ of objects in $\mathrm{Vect}{\{0,1\}}(R^{\mathrm{syn}}\otimes\Int/p^m\Int)$. We have the obvious `forgetful' map $\mathcal{Y}_m\to \mathcal{X}_m$, which remembers only the cofiber of the map $f$.

Let $\mathcal{Y}_{m,h}\subset \mathcal{Y}_m$ be the open and closed substack where the source and target of $f$ both have rank $h$. The lemma now comes down to the assertion that there exists $h\ge 1$ such that, for every $m\ge 1$, $\mathcal{M}_m\in \mathcal{X}_m(R)$ is \'etale locally in the image of $\mathcal{Y}_{m,h}$

We will see this as follows:
\begin{itemize}
   \item Some geometry: The map $\mathcal{Y}_m\to \mathcal{X}_m$ is \emph{smooth}.
   \item Given the previous point, it is now enough to check the lemma under the additional assumption that $R$ is an algebraically closed field, where Lemma~\ref{lem:checking_on_points} does the trick.
\end{itemize}

To show that $\mathcal{Y}_m$ is smooth over $\mathcal{X}_m$, let us set up some notation. Write $Y$ for the algebraic stack over $\Int_p$ parameterizing maps of vector bundles $V^{-1}\to V^0$, and $X$ for the (derived) algebraic stack over $\Int_p$ parameterizing perfect complexes of Tor amplitude $[-1,0]$. Over $Y$ (resp. $X$), we have the algebraic stack $Y^-$ (resp. $X^-$) parameterizing maps of filtered vector bundles $\Fil^\bullet V^{-1}\to \Fil^\bullet V^0$, whose associated gradeds are supported in degrees $0,-1$ (resp. filtered perfect complexes $\Fil^\bullet M$ whose associated gradeds are nullhomotopic outside of degrees $0,-1$). If $Z$ is any one of $Y,Y^-,X,X^-$, let $Z_m$ be the derived Weil restriction 
\[
Z_m:C\mapsto Z(C/{}^{\mathbb{L}}p^m).
\]
This is also a derived Artin stack over $\Int_p$.

Pullback along $x^{\mathcal{N}}_{\dR}$ gives canonical maps $\mathcal{Y}_m\to Y^-_m$ and $\mathcal{X}_m\to X^-_m$, and a quick analysis of the deformation theory explained in Theorems~\ref{thm:HTwts01_representable} and~\ref{thm:perfect_f-gauges_repble} now shows that, for any square zero thickening $C'\twoheadrightarrow C$, we have canonical Cartesian squares
\[
\begin{matrix}
\begin{diagram}
\mathcal{Y}_m(C')&\rTo&Y^-_m(C')\\
\dTo&&\dTo\\
\mathcal{Y}_m(C)&\rTo&Y^-_m(C)\times_{Y_m(C)}Y_m(C')
\end{diagram}&&&;&&&
\begin{diagram}
\mathcal{X}_m(C')&\rTo&X^-_m(C')\\
\dTo&&\dTo\\
\mathcal{X}_m(C)&\rTo&X^-_m(C)\times_{X_m(C)}X_m(C')
\end{diagram},
\end{matrix}
\]
which can be combined to obtain a Cartesian square
\[
\begin{diagram}
\mathcal{Y}_m(C')&\rTo& Y^-_m(C')\\
\dTo&&\dTo\\
\mathcal{Y}_m(C)\times_{\mathcal{X}_m(C)}\mathcal{X}_m(C')&\rTo&Y^-_m(C)\times_{Y_m(C)\times_{X_m(C)}X^-_m(C)}(Y_m(C')\times_{X_m(C')}X^-_m(C')).
\end{diagram}  
\]

Therefore, the smoothness of the map $\mathcal{Y}_n\to \mathcal{X}_n$ reduces to that of the map $Y^-_n\to Y_n\times_{X_n}X^-_n$, which in turn reduces to that of the map $Y^-\to Y\times_XX^-$. 

Explicitly, this means the following: Suppose that $\Fil^\bullet V^{-1}\xrightarrow{\Fil^\bullet \alpha} \Fil^\bullet V^0$ is a map of filtered vector bundles over $C$ (with associated gradeds supported in degrees $-1,0$). Suppose also that the following conditions hold:
\begin{itemize}
   \item The underlying map of vector bundles admits a lift $\alpha':V^{',-1}\to V^{',0}$, a map of vector bundles over $C'$;
   \item $M' = \cofib(\alpha')$ admits a lift to a filtered perfect complex $\Fil^\bullet M'$ over $C'$ whose base-change over $C$ is equipped with an isomorphism to $\cofib(\Fil^\bullet\alpha)$.
\end{itemize}
Then there exists a filtered lift $\Fil^\bullet \alpha':\Fil^\bullet V^{',-1}\to \Fil^\bullet V^{',0}$ of $\alpha'$ that is also a lift of $\Fil^\bullet \alpha$.

The condition on the filtration allows us to simplify this even further. The map $\Fil^\bullet \alpha$ is given simply by a commuting square
\[
\begin{diagram}
\Fil^0 V^{-1}&\rTo^{\Fil^0\alpha}&\Fil^0V^0\\
\dTo&&\dTo\\
V^{-1}&\rTo_{  \alpha = \Fil^{-1}\alpha}&V^0.
\end{diagram}
\]
Viewed in this way, it is enough to know that, for any vector bundle $\Fil^\bullet V^{',0}$ lifting $\Fil^\bullet V^0$, we can fit it into a commuting diagram
\[
\begin{diagram}
\Fil^0V^{',0}&\rTo&\Fil^0 M'\\
\dTo&&\dTo\\
V^{',0}&\rTo&M'
\end{diagram}
\]
lifting the one over $C$. This reduces to the fact that sections of the perfect complex 
\[
(\Fil^0V^{',0})^\vee \otimes(V^{',0}\times_{M'}\Fil^0M'), 
\]
which has Tor amplitude in $[-1,0]$, are parameterized by a smooth Artin stack over $C'$.

\end{proof}

\begin{proposition}
[Lifting truncated isogenies]
\label{prop:lifting_isogenies}
Fix integers $h,n\ge 1$. Then there exists an integer $M(h,n)\ge n$ with the following property: Given:
\begin{itemize}
   \item A discrete $\Int/p^r\Int$-algebra $R$;
   \item A vector bundle $F$-gauge over $R$, $\mathcal{V}\in \mathrm{Vect}^{\mathrm{syn}}_{\{0,1\}}(R)$ of rank $h$;
   \item An integer $N\ge M(h,n)+r$ and a height-$n$ isogeny $(f_N,\hat{f}_N)$ between $\mathcal{V}_N \defn \mathcal{V}/{}^{\mathbb{L}}p^N$ and another mod-$p^N$ vector bundle $F$-gauge $\mathcal{W}_N$;
\end{itemize}
there exists a height-$n$ isogeny $(f,\hat{f})$ between $\mathcal{V}$ and a vector bundle $F$-gauge $\mathcal{W}$ of height $h$, and an isomorphism between the mod-$p^{N-M(h,n)-r+1}$ reductions of $(f,\hat{f})$ and $(f_N,\hat{f}_N)$. 
\end{proposition}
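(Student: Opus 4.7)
The plan is to realize the data we want to produce as an honest $\Spf R$-point of a moduli space of isogenies with source $\mathcal{V}$, identify this moduli via Theorem~\ref{thm:dieudonne} with a classical moduli of subgroups of a truncated Barsotti-Tate group, and then apply a uniform Greenberg-style approximation result.

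First I would set up the moduli space. By Theorem~\ref{thm:HTwts01_representable} combined with the representability results of Theorem~\ref{thm:perfect_f-gauges_repble}, the $p$-adic formal prestack $\mathcal{Z}_{\mathcal{V}}(n)$ over $\Spf R$ sending $C \in \mathrm{CRing}^{p\text{-nilp}}_{R/}$ to the groupoid of triples $(\mathcal{W}, f, \hat{f})$---where $\mathcal{W}$ is a rank-$h$ vector bundle $F$-gauge over $C$ with Hodge-Tate weights $0, 1$, and $f:\mathcal{V}|_C \to \mathcal{W}$, $\hat{f}:\mathcal{W} \to \mathcal{V}|_C$ satisfy $f\hat{f} = \hat{f}f = p^n$---is representable by a finitely presented $p$-adic formal derived Artin stack over $\Spf R$, with cotangent complex computable from the proofs of those representability theorems.

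Next I would upgrade $\mathcal{Z}_{\mathcal{V}}(n)$ to a proper moduli. Via Theorem~\ref{thm:dieudonne} and its exactness (Theorem~\ref{thm:exactness}), vector bundle $F$-gauges of rank $h$ with Hodge-Tate weights $0,1$ correspond to truncated Barsotti-Tate groups of height $h$, and an $F$-gauge height-$n$ isogeny $(f,\hat f)$ translates to a BT isogeny $(\phi,\hat\phi)$ with $\phi\hat\phi = \hat\phi\phi = p^n$. This identifies $\mathcal{Z}_{\mathcal{V}}(n)$ with the classical moduli of such isogenies with source the truncated BT $G$ corresponding to $\mathcal{V}$, which is represented by a closed subscheme of a Grassmannian parameterizing finite flat subgroups of $G[p^n]$ of prescribed order. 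In particular $\mathcal{Z}_{\mathcal{V}}(n)$ becomes projective, hence proper of finite type, over $\Spf R$.

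Finally I would reduce to $R$ Noetherian by a standard filtered colimit argument---both $\mathcal{V}$ and the given mod-$p^N$ isogeny descend to a finitely generated $\Int/p^r\Int$-subalgebra of $R$---and then apply Greenberg's approximation theorem to $\mathcal{Z}_{\mathcal{V}}(n)$. To obtain a constant $M(h,n)$ depending only on $h$ and $n$, I would work in the universal setting over $\Int_p$: the moduli of height-$h$ truncated BTs together with height-$n$ isogenies between them is a proper finite-type stack over the smooth finite-type moduli of truncated BTs, and Greenberg's theorem there yields an absolute constant. Base-changing back produces the required lift $(\mathcal{W}, f, \hat{f})$ agreeing with $(\mathcal{W}_N, f_N, \hat{f}_N)$ modulo $p^{N - M(h,n) - r + 1}$. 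The hard part, I expect, will be obtaining $M(h, n)$ uniformly in $R$, since Greenberg's theorem over an arbitrary Noetherian base only provides a bound depending on the base; the passage to a universal family over $\Int_p$ is what makes the bound depend only on the intrinsic invariants. A secondary subtlety is checking that the translation under Theorem~\ref{thm:dieudonne} matches the height-$n$ isogeny condition on both sides, which follows from the exactness of the equivalence and its compatibility with multiplication by $p^n$.
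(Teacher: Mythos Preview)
Your high-level idea---translate via Theorem~\ref{thm:dieudonne} to truncated Barsotti-Tate groups and exploit the properness of the moduli of subgroups of $G[p^n]$---is exactly the alternative the paper itself flags in the remark following the proposition. So the route is legitimate. But your mechanism for extracting a uniform $M(h,n)$ does not match what is needed, and this is a genuine gap.

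Greenberg's theorem compares $X(A/\mathfrak{m}^N)$ with $X(A)$ for a \emph{fixed} finite-type scheme $X$ over a complete local ring $A$. What the proposition asks for is different: one must compare the inverse system of \emph{distinct} moduli $\mathcal{X}_{N,\mathcal{V}}(n)$ (height-$n$ isogenies out of the $N$-truncated object $\mathcal{V}_N$) with its limit $\mathcal{X}_{\mathcal{V}}(n)$, all over the \emph{same} ring $R$. The level $N$ is not the $p$-adic depth of the base; $R$ is already $p^r$-torsion. Your identification of $\mathcal{Z}_{\mathcal{V}}(n)$ with the Quot scheme of subgroups of $G[p^n]$ is correct, and it even furnishes a retraction $\mathcal{X}_{N,\mathcal{V}}(n)\to\mathcal{X}_{\mathcal{V}}(n)$ (send an isogeny to its kernel, then pass to the quotient $p$-divisible group). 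But this retraction does not a priori commute with the reduction to level $m$: the $N$-truncated target $\mathcal{H}_N$ need not agree with $(G/\ker f_N)[p^N]$, and showing they agree modulo $p^{N-M}$ for a constant $M$ is precisely the content of the proposition. Greenberg does not supply this, and your universal-family argument over $\Int_p$ does not make sense as stated, since the base of that family is a stack, not a complete local ring.

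The paper's proof takes a genuinely different path. It first uses the Grothendieck--Messing deformation theory for $F$-gauges (Lemma~\ref{lem:Zn_groth_messing}) together with quasisyntomic descent to reduce to the case of \emph{perfect} $\Field_p$-algebras $R$ (Proposition~\ref{prop:isogenies_to_perfect_case}). Over such $R$ the $F$-gauge data becomes explicit matrices over $W(R)$ (Remarks~\ref{rem:perfect_rings_explicit}--\ref{rem:lifting_perfect}), and the problem is rephrased as lifting a mod-$p^N$ solution of certain matrix equations. The uniform constant comes from two bounds on the affine scheme $\mathcal{M}(h,n)=\{(A,B):AB=BA=p^nI_h\}$ over $\Int_p$: an integer $t(h,n)$ controlling the denominators in inverting the $\GL_h$-action map, and an integer $c(h,n)$ bounding the $p$-torsion in stabilizers (Lemmas~\ref{lem:t(h,n)_props}--\ref{lem:r(h,n)_props}). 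One then takes $M(h,n)=n+2\max\{t(h,n),c(h,n)\}$. This is entirely linear-algebraic and yields the uniformity directly, without any appeal to Greenberg.
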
 

\begin{remark}
This proposition is a special case of some finite presentation results that can be found in \cite{lee_madapusi}, and is related to similar results appearing in the work of Bartling-Hoff~\cite{bartling_hoff} in the context of Witt vector displays. It admits a translation into the setting of truncated Barsotti-Tate groups via Theorem~\ref{thm:dieudonne}, and one can prove this translated result directly, yielding another proof of the proposition. However, since this seems to offer no substantial simplification, we have chosen to stick with the linear algebraic perspective here.
\end{remark}

The proof will be given in the next subsection. For now, let us return to the main result of this section.
  
\begin{proof}
[Proof of Theorem~\ref{thm:syntomic_raynaud}]
Suppose that $R$ is a $\Int/p^r\Int$-algebra. Let $h\ge 1$ be as in Lemma~\ref{lem:local_vect_bundle}, and let $M(h,n)$ be as in Proposition~\ref{prop:lifting_isogenies}.

Choose $m\ge M(h,n)+r+n$, and use Lemma~\ref{lem:local_vect_bundle} to find, after an \'etale localization, a map $f_m:\mathcal{V}^{-1}_m\to \mathcal{V}^0_m$ of mod-$p^m$ vector bundle $F$-gauges of rank $h$ such that $\cofib(f_m)\simeq \mathcal{M}_m$. By Lemma~\ref{lem:isogenies_pn}, we can complete $f_m$ to a height-$n$ isogeny $(f_m,\hat{f}_m)$ between $\mathcal{V}^{-1}_m$ and $\mathcal{V}^{0}_m$.

After a further pro-\'etale localization, we can assume that $\mathcal{V}^{-1}_m$ lifts to a vector bundle $F$-gauge $\mathcal{V}^{-1}\in \mathrm{Vect}(R^{\mathrm{syn}})$. Proposition~\ref{prop:lifting_isogenies} now tells us that the reduction-mod-$p^{m-M(h,n)-r+1}$ of $(f_m,\hat{f}_m)$ lifts to a height-$n$ isogeny $(f,\hat{f})$ between $\mathcal{V}^{-1}$ and  $\mathcal{V}^0$, and Lemma~\ref{lem:enough_to_check_mod_n} tells us that there is an isomorphism $\cofib(f)\xrightarrow{\simeq}\mathcal{M}$.
\end{proof}

\subsection{Lifting isogenies}

Here, we will give a proof of Proposition~\ref{prop:lifting_isogenies}.

\begin{construction}
[Some local models]
Suppose that we have a filtered vector bundle $\Fil^\bullet V$ over a ring $R$ such that the underlying vector bundle $V$ has rank $h\in \Int_{\ge 1}$. Suppose also that the filtration is supported in degrees $-1,0$ with $\gr^{-1}V$ of rank $d\in \Int_{\ge 0}$.

For any $n\ge 1$, we will define $X^-_{\Fil^\bullet V}(n)\to \Spec R$ to be the (derived) stack parameterizing height-$n$ isogenies $\Fil^\bullet V\xleftrightharpoons[f]{\hat{f}}\Fil^\bullet W$ of filtered vector bundles with
\[
\mathrm{rank}\gr^iW = \begin{cases}
d&\text{if $i=-1$}\\
h-d&\text{if $i=0$}\\
0&\text{otherwise}.
\end{cases}
\]

We will define $X_{V}(n)\to \Spec R$ to be the (derived) stack parameterizing height-$n$ isogenies $V\xleftrightharpoons[f]{\hat{f}} W$ with $W$ of rank $h$.

For $m\ge 1$, we will also need the Weil restricted versions of these stacks:
\[
X_{m,\Fil^\bullet V}(n): C\mapsto X_{\Fil^\bullet V}(n)(C/{}^{\mathbb{L}}p^m)\;;\; X_{m,V}(n): C\mapsto X_{V}(n)(C/{}^{\mathbb{L}}p^m)
\]
\end{construction}

\begin{construction}
[More local models]
Suppose that we have $R\in \mathrm{CRing}^{p\text{-nilp}}$ and a vector bundle $F$-gauge $\mathcal{V}$ over $R$ of rank $h$ and Hodge-Tate weights $0,1$. For $m\ge 1$, write $\mathcal{V}_m$ for the image of $\mathcal{V}$ in $\mathrm{Vect}^{\mathrm{syn}}_{m,\{0,1\}}(R)$.

For $n\ge 1$, write $\mathcal{X}_{\mathcal{V}}(n)\to \Spec R$ for the prestack parameterizing height-$n$ isogenies $\mathcal{V}\xleftrightharpoons[f]{\hat{f}} \mathcal{W}$ of vector bundle $F$-gauges with Hodge-Tate weights in  $\{0,1\}$.

Given another integer $m\ge 1$, write $\mathcal{X}_{m,\mathcal{V}}(n)\to \Spec R$ for the prestack parameterizing height-$n$ isogenies $\mathcal{V}_m\xleftrightharpoons[f_m]{\hat{f}_m} \mathcal{W}_m$ of vector bundle $F$-gauges of level $m$ with Hodge-Tate weights in  $\{0,1\}$.

Reduction mod-$p^m$ yields a canonical map of prestacks $\mathcal{X}_{\mathcal{V}}(n)\to \mathcal{X}_{m,\mathcal{V}}(n)$. Pullback along $x^{\mathcal{N}}_{\dR}$ yields a filtered vector bundle $\Fil^\bullet V$ over $R$ with associated graded supported in degrees $-1,0$, as well as canonical maps
\[
\mathcal{X}_{\mathcal{V}}(n) \to X^-_{\Fil^\bullet V}(n)\;;\; \mathcal{X}_{m,\mathcal{V}}(n)\to X^-_{m,\Fil^\bullet V}(n).
\]
\end{construction}

Proposition~\ref{prop:lifting_isogenies} will be implied by the following more refined assertion: 
\begin{proposition}
\label{prop:translate_isogenies_to_maps_of_stacks}
There exists $M(h,n)\ge n$, depending only on $h$ and $n$, such that, for any discrete $R$-algebra $C$ with $p^s = 0\in C$,  any $N\ge M(h,n)+s$, and any $m\leq N-M(h,n)-s+1$ we have a canonical commuting diagram
\begin{align}\label{eqn:retraction}
\begin{diagram}
\mathcal{X}_{\mathcal{V}}(n)(C)&\rTo&\mathcal{X}_{N,\mathcal{V}}(n)(C)&\rTo&\mathcal{X}_{\mathcal{V}}(n)(C)\\
&\rdTo&\dTo&\ldTo\\
&&\mathcal{X}_{m,\mathcal{V}}(n)(C)
\end{diagram}
\end{align}
where the composition of the horizontal arrows is the identity. Moreover, $\mathcal{X}_{\mathcal{V}}(n)(C)$ is discrete for any such $C$.
\end{proposition}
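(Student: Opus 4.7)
The plan is to show that $\mathcal{X}_{\mathcal{V}}(n)$ is representable by a finitely presented $p$-adic formal scheme over $\Spf R$ whose presentation depends only on the numerical invariants $(h,n)$, and then extract both discreteness and the retraction from this representability. Discreteness of $\mathcal{X}_{\mathcal{V}}(n)(C)$ is immediate from such representability: on a classical $R$-algebra $C$, a formal scheme takes values in a set, so the $\infty$-groupoid of $C$-points is automatically $0$-truncated.

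For the representability, I would first identify the ``local model.'' By Lemma~\ref{lem:isogenies_pn}, a height-$n$ isogeny of filtered vector bundles $\Fil^\bullet V \leftrightharpoons \Fil^\bullet W$ is equivalent to specifying a sub-module $W \subset p^{-n}V$ of the appropriate rank, containing $V$ and equipped with a filtration of prescribed shape. This presents $X^-_{\Fil^\bullet V}(n)$ as a closed subscheme of a relative Grassmannian over $\Spec R$, hence as a finitely presented projective $R$-scheme whose defining equations depend only on $(h,d)$, where $d = \rank \gr^{-1} V$. Next I would show that the forgetful map $\mathcal{X}_{\mathcal{V}}(n) \to X^-_{\Fil^\bullet V}(n)$ is $p$-completely formally smooth, by adapting the Cartesian-square analysis of Lemma~\ref{lem:local_vect_bundle} to the isogeny setting. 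Concretely, the deformation theory of an $F$-gauge isogeny extending a given filtered isogeny is controlled by $\Ext$-groups of explicit perfect complexes deduced from Theorems~\ref{thm:HTwts01_representable} and~\ref{thm:perfect_f-gauges_repble}, and the fact that $\mathcal{V}$ itself is held fixed eliminates the obstructions coming from the source of the isogeny. Combined, these steps exhibit $\mathcal{X}_{\mathcal{V}}(n)$ as a finitely presented $p$-adic formal $R$-scheme.

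The retraction diagram is then a consequence of a uniform Hensel-type lifting lemma for finitely presented systems of equations: for any finitely presented $R$-scheme $Y$ with a fixed presentation, an approximate $C$-valued solution modulo $p^N$ can be refined to an exact $C$-valued solution with a loss of precision bounded by a constant $M = M(Y)$ depending only on the presentation, and moreover the lift depends only on the reduction modulo $p^{N-M-s+1}$ (when $p^s = 0$ in $C$). Chasing this through the formal smoothness above yields the constant $M(h,n)$ in the statement. The hard part, and the main obstacle, will be establishing the \emph{uniformity} of $M(h,n)$ in $R$, $C$, and $\mathcal{V}$: this forces one to choose both the equations cutting out $X^-_{\Fil^\bullet V}(n)$ inside the Grassmannian and the perfect complexes controlling the deformations of $\mathcal{X}_{\mathcal{V}}(n) \to X^-_{\Fil^\bullet V}(n)$ in a way that depends only on $(h,d,n)$, and not on the specific input data. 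The detailed finite-presentation analysis needed for this is precisely what is carried out in the cited work~\cite{lee_madapusi}, which I would invoke to conclude.
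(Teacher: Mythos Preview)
Your proposal has a genuine gap at the step you flag as ``the hard part.'' The Hensel-type lifting lemma you invoke---that for any finitely presented $R$-scheme $Y$ with a fixed presentation, an approximate solution modulo $p^N$ refines to an exact one with precision loss bounded by a constant $M(Y)$---is false in this generality. Take $Y = \Spec \Int_p[x]/(x^2)$: a mod-$p^N$ solution is any $a$ with $v_p(a) \ge \lceil N/2 \rceil$, but the only exact solution is $a=0$, so the precision loss is $\lceil N/2\rceil$ and grows with $N$. The scheme $X^-_{\Fil^\bullet V}(n)$ you want to apply this to is highly singular (it contains the matrix scheme $\{(A,B): AB=BA=p^nI_h\}$), so there is no shortcut here; one must actually establish the bounded-precision lifting for this \emph{specific} scheme, and that is the real content of the proposition. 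Your deferral to \cite{lee_madapusi} may be legitimate as a black box, but your account of \emph{why} it works is incorrect.

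There is also a circularity in your treatment of discreteness: you deduce it from representability by a formal scheme, but $\mathcal{X}_{\mathcal{V}}(n)$ is \emph{a priori} a prestack valued in $\infty$-groupoids (it parameterizes isogenies in an $\infty$-category of $F$-gauges), so ``representable by a scheme rather than a higher stack'' already presupposes discreteness. The paper establishes discreteness directly over perfect rings by observing that the explicit matrix data of Remark~\ref{rem:perfect_rings_explicit} have no nontrivial automorphisms fixing $V^0$, and then propagates this via deformation theory.

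The paper's strategy is quite different from yours. It first uses the Grothendieck--Messing squares of Lemma~\ref{lem:Zn_groth_messing} to reduce, via square-zero deformation theory and quasisyntomic descent, to the case where $C$ is a perfect $\Field_p$-algebra (Proposition~\ref{prop:isogenies_to_perfect_case}). Over $W(C)$ the problem becomes concrete matrix algebra (Remarks~\ref{rem:perfect_rings_explicit}--\ref{rem:canonicity_perfect}), and the retraction is built by hand from three elementary lemmas about the affine scheme $\mathcal{M}(h,n) = \{(A,B): AB=BA=p^nI_h\}$: a bound $t(h,n)$ on how far the $\GL_h$-orbit map is from being integrally surjective, a bound $c(h,n)$ on the $p$-torsion in stabilizers, and a bound coming from the identity $AB = p^n(I+p^{m-n}C)$. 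These give $M(h,n) = n + 2\max(t(h,n),c(h,n))$, after which Zariski descent from the banal case finishes the argument.
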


\begin{corollary}
\label{cor:bartling_hoff}
   The $p$-adic formal prestack $\mathcal{X}_{\mathcal{V}}(n)$, \emph{a priori} an inverse limit of finitely presented formal schemes over $\Spf R$ is in fact itself a finitely presented formal scheme over $\Spf R$.
\end{corollary}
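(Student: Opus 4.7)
The plan is to use the retraction statement in Proposition~\ref{prop:translate_isogenies_to_maps_of_stacks} to realize $\mathcal{X}_{\mathcal{V}}(n)$, over each finite truncation $\Spec R/p^s$, as a split retract of the representable prestack $\mathcal{X}_{N,\mathcal{V}}(n)$ for $N = M(h,n)+s$, then to split the resulting idempotent in the category of qcqs formal schemes and assemble the pieces into a finitely presented formal scheme over $\Spf R$.

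First, I would check that for each $N\ge 1$ the prestack $\mathcal{X}_{N,\mathcal{V}}(n)$ is already represented by a finitely presented $p$-adic formal scheme over $\Spf R$. This is a formal consequence of Theorem~\ref{thm:HTwts01_representable}: the moduli of maps of level-$N$ vector bundle $F$-gauges of Hodge-Tate weights in $\{0,1\}$ is a finitely presented Artin stack; restricting to the fiber with source $\mathcal{V}_N$ rigidifies it; and the height-$n$ conditions $f_N\hat{f}_N = \hat{f}_N f_N = p^n$ cut out $\mathcal{X}_{N,\mathcal{V}}(n)$ as a finitely presented closed subscheme of the moduli of pairs of maps.

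Next, fix $s\ge 1$ and take $N = M(h,n)+s$. Proposition~\ref{prop:translate_isogenies_to_maps_of_stacks} supplies, on $R/p^s$-algebras, natural transformations
\[
\mathcal{X}_{\mathcal{V}}(n)\vert_{\Spec R/p^s} \xrightarrow{\iota_s} \mathcal{X}_{N,\mathcal{V}}(n)\vert_{\Spec R/p^s} \xrightarrow{\pi_s} \mathcal{X}_{\mathcal{V}}(n)\vert_{\Spec R/p^s}, \qquad \pi_s\iota_s = \mathrm{id}.
\]
Writing $X_s := \mathcal{X}_{N,\mathcal{V}}(n)\times_{\Spf R}\Spec R/p^s$, the composite $e_s := \iota_s\pi_s$ is by Yoneda an idempotent self-morphism of the qcqs formal scheme $X_s$. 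Its scheme-theoretic equalizer with $\mathrm{id}_{X_s}$ is a finitely presented closed subscheme $Y_s\hookrightarrow X_s$ representing $\mathcal{X}_{\mathcal{V}}(n)\vert_{\Spec R/p^s}$; the discreteness clause of the proposition on discrete test rings ensures $Y_s$ is classical rather than derived.

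Finally, since the $\{Y_s\}$ are just the restrictions of the single functor $\mathcal{X}_{\mathcal{V}}(n)$, they are canonically compatible under reduction and glue to a $p$-adic formal scheme $Y\to \Spf R$ representing $\mathcal{X}_{\mathcal{V}}(n)$. The hard part will be promoting the pointwise finite presentation of each $Y_s$ to genuine finite presentation of $Y$ over $\Spf R$: because the ambient $X_s$ itself grows with $s$ through $N(s) = M(h,n)+s$, the $Y_s$ are not simply base-changes of a single closed immersion. I would handle this either by lifting generators and relations for the defining ideal of $Y_1\hookrightarrow X_1$ successively along the thickenings $R/p^s\to R/p^{s+1}$, using the naturality of the retraction in $s$ to transport them coherently, or by fixing a large $N$ and observing that the closed subscheme of $\mathcal{X}_{N,\mathcal{V}}(n)$ cut out by $e_N = \mathrm{id}$ on its $p^{N-M(h,n)}$-torsion truncation agrees with the corresponding truncation of $Y$, and that as $N\to\infty$ these truncations exhaust $Y$ with presentations of uniformly bounded complexity.
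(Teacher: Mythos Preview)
Your overall strategy—extract finite presentation from the retraction supplied by Proposition~\ref{prop:translate_isogenies_to_maps_of_stacks}—is exactly the paper's intended deduction. But the execution has two wobbles, one a genuine gap and one a misplaced concern.

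\textbf{Step 1 is not justified as written.} Fixing the source $\mathcal{V}_N$ does \emph{not} rigidify: an automorphism $\alpha$ of the target $\mathcal{W}_N$ compatible with $(f_N,\hat f_N)$ need only satisfy $p^n(\alpha-1)=0$, which for $N>n$ can be nontrivial. Lemma~\ref{lem:Zn_groth_messing} only gives that $\mathcal{X}_{N,\mathcal{V}}(n)$ is a finitely presented derived Artin stack, not a scheme. This means your idempotent $e_s$ lives on a stack, and the ``scheme-theoretic equalizer with $\mathrm{id}$'' has to be taken there. This is not fatal—the retract of a locally finitely presented prestack is locally finitely presented, and the discreteness clause of the proposition then shows $\mathcal{X}_{\mathcal{V}}(n)\vert_{R/p^s}$ has trivial inertia—but the argument needs to be reworked in the stack setting.

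\textbf{The ``hard part'' you flag in Step 4 is not hard.} Local finite presentation for a $p$-adic formal prestack over $\Spf R$ is tested by the filtered-colimit criterion on $p$-nilpotent test rings. Given a filtered system $\{C_\alpha\}$ with colimit $C$ and $p^sC=0$, one has $p^sC_\alpha=0$ cofinally, so the system lives over $R/p^s$ for a \emph{fixed} $s$. Thus knowing that each $\mathcal{X}_{\mathcal{V}}(n)\vert_{R/p^s}$ is locally finitely presented over $R/p^s$—immediate from being a functorial retract of the locally finitely presented $\mathcal{X}_{N(s),\mathcal{V}}(n)\vert_{R/p^s}$—already gives local finite presentation of $\mathcal{X}_{\mathcal{V}}(n)$ over $\Spf R$. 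The dependence of $N$ on $s$ is harmless; no lifting of generators and relations, and no uniform $N$, is needed. This is how the corollary is meant to follow directly from the proposition.
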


\begin{remark}
As observed in the introduction, the corollary is related to results appearing in the proof of the main result of~\cite{bartling_hoff}. From the optic of $p$-divisible groups, this finite presentation can be easily deduced from that of the scheme parameterizing finite flat subgroups of the $p^n$-torsion of a fixed $p$-divisible group.
\end{remark}

The rest of this section is devoted to the proof of Proposition~\ref{prop:translate_isogenies_to_maps_of_stacks}.

\begin{proposition}
[Reduction to the case of perfect rings]
\label{prop:isogenies_to_perfect_case}
It suffices to find $M(h,n)\ge n$, depending only on $h$ and $n$, such that Proposition~\ref{prop:translate_isogenies_to_maps_of_stacks} holds for perfect $R$-algebras $C$ (with $s=1$). In particular, we can assume without loss of generality that $R$ is perfect.
\end{proposition}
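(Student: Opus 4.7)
The plan is to proceed in two successive stages, each absorbing at most a bounded amount of budget into the constant $M(h,n)$.

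\emph{Stage 1 (from $\Int/p^s\Int$-algebras to $\Field_p$-algebras).} Given a discrete $R$-algebra $C$ with $p^s=0$, filter by the ideals $p^iC$ for $0\le i\le s$, yielding a tower of square-zero thickenings $C/p^{i+1}C\twoheadrightarrow C/p^iC$. Theorems~\ref{thm:HTwts01_representable} and~\ref{thm:perfect_f-gauges_repble} endow each of the prestacks $\mathcal{X}_{\mathcal{V}}(n)$, $\mathcal{X}_{N,\mathcal{V}}(n)$, $\mathcal{X}_{m,\mathcal{V}}(n)$ with tangent complexes that are perfect of Tor amplitude bounded in terms of $h$ and the Hodge-Tate weights $\{0,1\}$. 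Lifting the retraction diagram~\eqref{eqn:retraction} step-by-step along this tower, using standard obstruction theory, would reduce Proposition~\ref{prop:translate_isogenies_to_maps_of_stacks} for $C$ to the same statement for $C/pC$, at the expense of enlarging $M(h,n)$ by a constant multiple of $s$ (matching the explicit form of the hypotheses $N\ge M(h,n)+s$ and $m\le N-M(h,n)-s+1$).

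\emph{Stage 2 (from $\Field_p$-algebras to perfect $\Field_p$-algebras).} For a general $\Field_p$-algebra $C$, I would use the direct-limit perfection $C_{\mathrm{perf}}=\mathrm{colim}(C\xrightarrow{\varphi}C\xrightarrow{\varphi}\cdots)$ and the intrinsic Frobenius-equivariance of $F$-gauges over $C^{\mathrm{syn}}$. Concretely, the absolute Frobenius $\varphi:C\to C$ induces an endomorphism of $C^{\mathrm{syn}}$, and pullback along this endomorphism of any $F$-gauge $\mathcal{V}$ is canonically identified (via the $F$-structure) with $\mathcal{V}$ itself, up to a bounded twist. Iterating sufficiently many times to factor through $C\to C_{\mathrm{perf}}$, one can then pull back the retraction provided for $C_{\mathrm{perf}}$ to one for $C$, absorbing the finitely many Frobenius twists into the budget.

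The expected main obstacle is Stage 2: since the map $C\to C_{\mathrm{perf}}$ is typically not faithfully flat, ordinary fpqc descent (e.g.~Remark~\ref{rem:fpqc_descent_for_syntomic_cohomology}) is unavailable, and one is forced to exploit the Frobenius-equivariance of $F$-gauges in an essential way. The delicate point is to ensure that only a \emph{uniformly bounded} number of Frobenius iterates is required, so that $M(h,n)$ can indeed be taken to depend only on $h$ and $n$, and not on features of $C$ such as its degree of imperfection. Stage 1, by contrast, is more routine deformation theory, but also requires bookkeeping to ensure the required growth in $M(h,n)$ stays linear in $s$.
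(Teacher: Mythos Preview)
Your Stage 1 is essentially the paper's argument: lift the retraction diagram step by step along the square-zero surjections $C/p^{i+1}C\twoheadrightarrow C/p^iC$ using the Grothendieck--Messing-type Cartesian squares of Lemma~\ref{lem:Zn_groth_messing}, accruing one unit of budget per step. This is fine.

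Stage 2, however, has a genuine gap. The Frobenius-equivariance you invoke does not provide a mechanism for transporting a retraction from $C_{\mathrm{perf}}$ back to $C$. Functoriality runs in the direction $\mathcal{X}_{\mathcal{V}}(n)(C)\to\mathcal{X}_{\mathcal{V}}(n)(C_{\mathrm{perf}})$, and identifying $\varphi^*\mathcal{V}\simeq\mathcal{V}$ via the $F$-gauge structure only produces an endomorphism of $\mathcal{X}_{\mathcal{V}}(n)(C)$, not a comparison with a later stage of the colimit defining $C_{\mathrm{perf}}$. Your worry about needing a uniformly bounded number of Frobenius iterates is a symptom: no finite number of iterates lands in $C_{\mathrm{perf}}$, and the limit stacks $\mathcal{X}_{\mathcal{V}}(n)$ are not yet known to be finitely presented (that is precisely what one is trying to establish).

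The paper's route is different and bypasses this obstacle entirely. After Stage 1 reduces to $\Field_p$-algebras, the paper first uses \emph{quasisyntomic descent} to reduce to semiperfect $C$. The point is that for semiperfect $C$ the Frobenius is surjective, so $C\to C_{\mathrm{perf}}$ is a \emph{surjection} with locally nilpotent kernel $J$. One then invokes integrability (\cite[Corollary 1.5]{BHATT2017576} together with the finite presentation in Lemma~\ref{lem:Zn_groth_messing}) to get
\[
\mathcal{X}_{\mathcal{V}}(n)(C)\xrightarrow{\simeq}\varprojlim_k\mathcal{X}_{\mathcal{V}}(n)(C/J^k),\qquad
\mathcal{X}_{N,\mathcal{V}}(n)(C)\xrightarrow{\simeq}\varprojlim_k\mathcal{X}_{N,\mathcal{V}}(n)(C/J^k),
\]
and then reuses the same square-zero lifting from Stage 1 along each $C/J^{k+1}\twoheadrightarrow C/J^k$. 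Crucially, every $C/J^k$ is still an $\Field_p$-algebra, so no additional budget is consumed: the constant $M(h,n)$ found for perfect inputs works unchanged. The missing idea in your proposal is this quasisyntomic reduction to semiperfect rings, which converts the intractable direct-limit perfection into a pro-nilpotent thickening to which deformation theory applies.
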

\begin{proof}
We will prove this using deformation theory.

First, let us reduce to the case $s=1$. Suppose that we have a discrete $R/p^sR$-algebra $C$ and integers $N\ge M(h,n)+s$, $m\leq N-M(h,n)-s+1$ such that $\mathcal{X}_{\mathcal{V}}(n)(C)$ is discrete and that we have the commuting diagram~\eqref{eqn:retraction} where the composition of the horizontal arrows is the identity. 

Using Lemma~\ref{lem:Zn_groth_messing} below, we find that, for any square-zero extension $C'\twoheadrightarrow C$ of discrete $R$-algebras, we have for any $k\ge 1$,
\begin{align}\label{eqn:XkVn}
\mathcal{X}_{\mathcal{V}}(n)(C')&\simeq \mathcal{X}_{\mathcal{V}}(n)(C)\times_{X^-_{\Fil^\bullet V}(n)(C)\times_{X^-_V(n)(C)}X^-_V(n)(C')}X^-_{\Fil^\bullet V}(n)(C');\nonumber\\
\mathcal{X}_{k,\mathcal{V}}(n)(C')&\simeq \mathcal{X}_{k,\mathcal{V}}(n)(C)\times_{X^-_{\Fil^\bullet V}(n)(C/{}^{\mathbb{L}}p^k)\times_{X^-_V(n)(C/{}^{\mathbb{L}}p^k)}X^-_V(n)(C'/{}^{\mathbb{L}}p^k)}X^-_{\Fil^\bullet V}(n)(C'/{}^{\mathbb{L}}p^k).
\end{align}

The first isomorphism tells us that the fibers of $\mathcal{X}_{\mathcal{V}}(n)(C')\to \mathcal{X}_{\mathcal{V}}(n)(C)$ are discrete, and therefore, $\mathcal{X}_{\mathcal{V}}(n)(C')$ is itself discrete. 

Suppose now that $C'$ is a $\Int/p^{s'}\Int$-algebra, and that we have $1\leq m\leq N-M(h,n)-s'+1$. Then, since $N\ge N-m\ge s'$, the map
\[
C'\oplus C'[1]\xrightarrow{\simeq}C'/{}^{\mathbb{L}}p^N \to C'/{}^{\mathbb{L}}p^m
\]
factors through the natural surjection $C'\oplus C'[1]\twoheadrightarrow C'$. Therefore, the isomorphisms from~\eqref{eqn:XkVn} now tell us that the assumed factoring 
\[
\mathcal{X}_{N,\mathcal{V}}(C)\to \mathcal{X}_{\mathcal{V}}(C)\to \mathcal{X}_{m,\mathcal{V}}(C)
\]
lifts to a similar factoring over $C'$. 

Therefore, via induction on $s$, we are reduced to the case of $R/pR$-algebras. We need to find the integer $M(h,n)$ such that, for any $N\ge M(h,n)+1$, we have a canonical retraction $\mathcal{X}_{N,\mathcal{V}}(C)\to \mathcal{X}_{\mathcal{V}}(C)$ on discrete $R/pR$-algebras $C$. For this, using quasisyntomic descent, we can reduce to the case where $C$ is semiperfect.

Consider the map $C\to C_{\mathrm{perf}}$ to the direct limit perfection of $C$: this is surjective with locally nilpotent kernel $J$. Using~\cite[Corollary 1.5]{BHATT2017576} (and also the first part of Lemma~\ref{lem:Zn_groth_messing} below), we find that we have
\[
\mathcal{X}_{N,\mathcal{V}}(C)\xrightarrow{\simeq}\varprojlim_k\mathcal{X}_{N,\mathcal{V}}(C/J^k)\;;\; \mathcal{X}_{\mathcal{V}}(C)\xrightarrow{\simeq}\varprojlim_k\mathcal{X}_{\mathcal{V}}(C/J^k).
\]

Next, the deformation argument above shows that, if $I\subset C$ is a square zero ideal, then, given the desired retraction over $C/I$, we can lift it canonically to a retraction over $C$. Repeated application of this principle tells us that the conclusion is valid even if $I$ is only assumed to be nilpotent.

Combining the last three paragraphs now completes the reduction to the case of perfect $R$-algebras.
\end{proof}

\begin{lemma}
[Application of Grothendieck-Messing]
\label{lem:Zn_groth_messing}
For each $m\ge 1$, $\mathcal{X}_{m,\mathcal{V}}(n)$ is represented by a finitely presented derived Artin stack over $C$. Moreover, suppose that $(C'\twoheadrightarrow C,\gamma)$ is a nilpotent divided power thickening in $\mathrm{CRing}_{R/}$. Then we have canonical Cartesian squares
\begin{align*}
\begin{diagram}
\mathcal{X}_{\mathcal{V}}(n)(C') &\rTo&X^-_{\Fil^\bullet V}(n)(C')\\
\dTo&&\dTo\\
\mathcal{X}_{\mathcal{V}}(n)(C)&\rTo&X^-_{\Fil^\bullet V}(n)(C)\times_{X^-_V(n)(C)}X^-_V(n)(C')
\end{diagram};\\
\begin{diagram}
\mathcal{X}_{m,\mathcal{V}}(n)(C') &\rTo&X^-_{m,\Fil^\bullet V}(n)(C')\\
\dTo&&\dTo\\
\mathcal{X}_{m,\mathcal{V}}(n)(C)&\rTo&X^-_{m,\Fil^\bullet V}(n)(C)\times_{X^-_{m,V}(n)(C)}X^-_{m,V}(n)(C')
\end{diagram}.
\end{align*}
\end{lemma}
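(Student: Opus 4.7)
The plan is to realize $\mathcal{X}_{m,\mathcal{V}}(n)$ as a derived fiber product of already-representable moduli stacks, and then to read off the Cartesian squares by composing the deformation-theoretic Cartesian squares supplied by Theorems~\ref{thm:HTwts01_representable} and~\ref{thm:perfect_f-gauges_repble}.

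First I would establish representability. Let $\mathrm{Vect}^{\mathrm{syn}}_{m,\{0,1\},h}\subset\mathrm{Perf}^{\mathrm{syn},[0,0]}_{m,\{0,1\}}$ be the open and closed substack of rank-$h$ vector bundle $F$-gauges of level $m$; by Theorem~\ref{thm:HTwts01_representable} it is a finitely presented derived Artin stack, and over $\Spec R\times \mathrm{Vect}^{\mathrm{syn}}_{m,\{0,1\},h}$ it carries a tautological $\mathcal{W}_m$ together with $\mathcal{V}_m$ pulled back from $\Spec R$. The internal Hom $F$-gauges $\mathcal{V}_m^\vee\otimes\mathcal{W}_m$ and $\mathcal{W}_m^\vee\otimes\mathcal{V}_m$ are perfect of level $m$ with Hodge-Tate weights in $\{-1,0,1\}$, hence bounded above by $1$; Theorem~\ref{thm:perfect_f-gauges_repble} then produces representable derived stacks of syntomic sections. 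Pairs of maps $(f_m,\hat{f}_m)$ are parametrized by the product of these section stacks, and the isogeny identities $f_m\hat{f}_m=p^n=\hat{f}_mf_m$ cut out a finitely presented closed derived substack which I would identify with $\mathcal{X}_{m,\mathcal{V}}(n)$.

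Given a nilpotent divided power thickening $C'\twoheadrightarrow C$, I would then deduce the Cartesian squares by combining two deformation-theoretic inputs. Theorem~\ref{thm:HTwts01_representable}, applied to the target $F$-gauge, tells us that lifting $\mathcal{W}$ (resp.\ $\mathcal{W}_m$) is equivalent to lifting its Hodge-filtered piece $\Fil^\bullet W$ compatibly with a lift of the underlying unfiltered vector bundle. Theorem~\ref{thm:perfect_f-gauges_repble}, applied to the section stacks of the Hom $F$-gauges, tells us likewise that lifting a map is equivalent to lifting its filtered shadow compatibly with a lift of the underlying unfiltered map. Pasting these two Cartesian squares---and noting that the closed isogeny relations $f\hat{f}=p^n=\hat{f}f$ propagate automatically once the underlying maps have been lifted---yields precisely the claimed Cartesian square for $\mathcal{X}_{\mathcal{V}}(n)$. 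The variant for $\mathcal{X}_{m,\mathcal{V}}(n)$ follows by running the same argument after reduction mod $p^m$, which is what introduces the Weil-restricted local models $X^-_{m,\Fil^\bullet V}(n)$ and $X^-_{m,V}(n)$.

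The main obstacle will be checking that the pullback along $x^{\mathcal{N}}_{\dR}$ together with the forgetful functor to unfiltered maps is jointly faithful enough to exhibit the deformation space of an $F$-gauge isogeny as the stated fiber product over $X^-_{\Fil^\bullet V}(n)\times_{X^-_V(n)}X^-_V(n)$; that is, that no residual syntomic datum survives beyond the filtered lift together with the underlying unfiltered lift. This is built into the Cartesian squares in Theorems~\ref{thm:HTwts01_representable} and~\ref{thm:perfect_f-gauges_repble}, but the careful matching of the source, target and maps into a single filtered-isogeny datum---and in particular the identification of the correct fiber product of derived stacks on the right-hand side---will be the most delicate bookkeeping step.
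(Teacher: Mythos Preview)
Your proposal is correct and takes essentially the same approach as the paper, which simply cites Theorems~\ref{thm:HTwts01_representable} and~\ref{thm:perfect_f-gauges_repble} without further elaboration. You have spelled out in detail the bookkeeping that the paper leaves implicit---realizing $\mathcal{X}_{m,\mathcal{V}}(n)$ as a derived fiber product of section stacks over the moduli of targets, and then pasting the Grothendieck--Messing Cartesian squares for both the target $F$-gauge and the Hom $F$-gauges---and this matches exactly how the paper uses these same two theorems elsewhere (compare the proof of Lemma~\ref{lem:local_vect_bundle}).
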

\begin{proof}
This follows from Theorems~\ref{thm:HTwts01_representable} and~\ref{thm:perfect_f-gauges_repble}.
\end{proof}

\begin{remark}
[Explication of problem in the perfect case]
\label{rem:perfect_rings_explicit}
We can use Remark~\ref{rem:F_gauges_perfect} to make the problem in the perfect case quite explicit. The data of the $F$-gauge $\mathcal{V}$ is equivalent to that of maps $V^0\xleftrightharpoons[u]{t}V^{-1}$ of finite locally free $W(R)$-modules of rank $h$ with $u\circ t$ and $t\circ u$ both equal to multiplication-by-$p$, along with an isomorphism $\xi:\varphi^*V^0\xrightarrow{\simeq}V^{-1}$. In other words, we are in the classical remit of Zink's theory of displays. Here, the height-$n$ isogeny $(f_N,\hat{f}_N)$ amounts to giving: 
\begin{itemize}
   \item Maps $V^{',0}_N\xleftrightharpoons[u'_N]{t'_N}V^{',-1}_N$ of finite locally free $W_N(R)$-modules of rank $h$ with $u'_N\circ t'_N = t'_N\circ u'_N = p$;
   \item An isomorphism $\xi'_N:\varphi^*V^{',-0}_N\xrightarrow{\simeq}V^{',-1}_N$
   \item Height-$n$ isogenies $V^i/p^NV^i = V^i_N\xleftrightharpoons[\hat{f}^{(i)}_N]{f^{(i)}_N} W^i_N$ of finite locally free $W_N(R)$-modules for $i=0,-1$ such that we have commuting diagrams
   \begin{align*}
    \begin{diagram}
    V^0_N&\rTo^{t_N = t(\mathrm{mod}~p^N)}&V^{-1}_N&\rTo^{u_N = u(\mathrm{mod}~p^N)}&V^0_N\\
    \dTo^{f^{(0)}_N}&&\dTo^{f^{(-1)}_N}&&\dTo^{f^{(0)}_N}\\
     V^{',0}_N&\rTo^{t'_N}&V^{',-1}_N&\rTo^{u'_N }&V^{',0}_N\\
     \dTo^{\hat{f}^{(0)}_N}&&\dTo^{\hat{f}^{(-1)}_N}&&\dTo^{\hat{f}^{(0)}_N}\\
    V^0_N&\rTo^{t_N}&V^{-1}_N&\rTo^{u_N}&V^0_N;
    \end{diagram}
   \;\;;&\;\;
   \begin{diagram}
   \varphi^*V^0_N&\rTo^{\xi_N = \xi(\mathrm{mod}~p^N)}&V^{-1}_N\\
   \dTo_{\varphi^*f^{(0)}_N}&&\dTo_{f^{(-1)}_N}\\
   \varphi^*V^{',0}_N&\rTo^{\xi'_N}&V^{',-1}_N\\
   \dTo_{\varphi^*\hat{f}^{(0)}_N}&&\dTo_{\hat{f}^{(-1)}_N}\\
   \varphi^*V^0_N&\rTo^{\xi_N }&V^{-1}_N
   \end{diagram}  
   \end{align*}

\end{itemize}

The problem we are now concerned with is the following: Find an integer $M(h,n)\ge 1$ such that, for $N\ge M(h,n)+1$, we can find a \emph{canonical} lift (up to isomorphism) of the mod-$p^{N-M(h,n)}$ reduction of the above data to a commuting diagrams of locally free $W(R)$-modules:
   \begin{align*}
    \begin{diagram}
    V^0&\rTo^{t }&V^{-1}&\rTo^{u}&V^0\\
    \dTo^{f^{(0)}}&&\dTo^{f^{(-1)}}&&\dTo^{f^{(0)}}\\
     V^{',0}&\rTo^{t'}&V^{',-1}&\rTo^{u' }&V^{',0}\\
     \dTo^{\hat{f}^{(0)}}&&\dTo^{\hat{f}^{(-1)}}&&\dTo^{\hat{f}^{(0)}}\\
    V^0&\rTo^{t }&V^{-1}&\rTo^{u }&V^0
    \end{diagram}
      \;\;;&\;\;
   \begin{diagram}
   \varphi^*V^0&\rTo^{\xi }&V^{-1}\\
   \dTo_{\varphi^*f^{(0)}}&&\dTo_{f^{(-1)}}\\
   \varphi^*V^{',0}&\rTo^{\xi'}&V^{',-1}\\
   \dTo_{\varphi^*\hat{f}^{(0)}}&&\dTo_{\hat{f}^{(-1)}}\\
   \varphi^*V^0&\rTo^{\xi }&V^{-1}.
   \end{diagram}  
   \end{align*}
such that, in the first diagram, the composition along all columns is multiplication-by-$p^n$, while that along all the rows is multiplication-by-$p$. 

The problem was addressed by Bartling-Hoff~\cite{bartling_hoff} in the more general context of Zink displays over arbitrary $p$-nilpotent rings, but these authors impose a nilpotency hypothesis in their arguments, and so we cannot use their results directly. For now, note that such commuting diagrams have no non-trivial automorphisms that restrict to the identity on $V^0$. This already shows that $\mathcal{X}_V(n)(R)$ is discrete when $R$ is perfect.
\end{remark}

\begin{remark}
[Banal $F$-gauges]
\label{rem:perfect_rings_bases}
We will say that the $F$-gauge $\mathcal{V}$ (resp. $\mathcal{V}_N$) is \defnword{banal} if  $V^0$ (resp. $V^{',0}_N$) is free over $W(R)$ (resp. over $W_N(R)$). In this case, we can assume that we have bases
\[
W(R)^h\xrightarrow{\simeq}V^0\;;\; W(R)^h\xrightarrow{\simeq}V^{-1}\;;\;W_N(R)^h\xrightarrow{\simeq}V^{',0}_N\;;\;W_N(R)^h\xrightarrow{\simeq}V^{',-1}_N
\]
such that with respect to these bases $t,t'_N$ are given by the matrix $J_1 = \begin{pmatrix}
p\cdot 1_{h-d}&0\\
0&1_d
\end{pmatrix}$, while $u,u'_N$ are given by the matrix  $J_2 = \begin{pmatrix}
1_{h-d}&0\\
0&p\cdot 1_d
\end{pmatrix}$. The identification $\varphi^*V^0\xrightarrow{\simeq}V^{-1}$ is now given by an element $g\in \GL_h(W(R))$, while $\varphi^*V^{',0}_N\xrightarrow{\simeq}V^{',-1}_N$ is given by an element $g'_N\in \GL_h(W_N(R))$. The maps $f^{(i)}_N,\hat{f}^{(i)}_N$ are given by matrices $A^{(i)}_N,B^{(i)}_N\in \mathrm{Mat}_{h\times h}(W_N(R))$ satisfying $A^{(i)}_NB^{(i)}_N = B^{(i)}_NA^{(i)}_N = p^n\cdot I_h$. Moreover, if $g_N\in \GL_h(W_N(R))$ is the image of $g$, we have the identities
\begin{align*}
A^{(0)}_N J_2=J_2A^{(-1)}_N\;;\; J_1A^{(0)}_N = A^{(-1)}_NJ_1\;;&\;B^{(0)}_N J_2=J_2B_N^{(-1)}\;;\; J_1B^{(0)}_N = B_N^{(-1)}J_1;\\
A^{(-1)}_Ng_N = g'_N\varphi(A^{(0)}_N)\;;&\; B^{(-1)}_Ng'_N = g_N\varphi(B^{(0)}_N).
\end{align*}
\end{remark}

\begin{remark}
[Reduction to a matrix problem]
\label{rem:lifting_perfect}
Now, lifting the mod-$p^{N-m}$ reduction of $(f_N,\hat{f}_N)$ to an isogeny $(f,\hat{f})$ out of $\mathcal{V}$ becomes equivalent to finding matrices $A^{(i)},B^{(i)}\in \mathrm{Mat}_{h\times h}(W(R))$ and $g'\in \GL_h(W(R))$ with the following properties:
\begin{enumerate}
   \item $A^{(i)}B^{(i)} = B^{(i)}A^{(i)} = p^nI_h$;
   \item $A^{(0)} J_2=J_2A^{(-1)}\;;\; J_1A^{(0)} = A^{(-1)}J_1\;;\;B^{(0)} J_2=J_2B^{(-1)}\;;\; J_1B^{(0)} = B^{(-1)}J_1$;
   \item $A^{(-1)}g = g'\varphi(A^{(0)})\;;\; B^{(-1)}g' = g\varphi(B^{(0)})$;
   \item $(A^{(-1)},B^{(-1)},A^{(0)},B^{(0)},g')\equiv (A^{(-1)}_N,B^{(-1)}_N,A^{(0)},B^{(0)}_N,g'_N)\pmod{p^{N-m}}$.
\end{enumerate}
Note that, since $J_1$ and $J_2$ are invertible after inverting $p$ and $W(R)$ is $p$-torsion free, $A^{(-1)}$ determines the remaining three matrices, and, therefore, $A^{(-1)}$ and $g$ together determine $g'$ as well.
\end{remark}

\begin{remark}
\label{rem:canonicity_perfect}
In the situation of the previous remark, if $(\tilde{A}^{(i)},\tilde{B}^{(i)},\tilde{g}')$ is another tuple with the same properties, then it gives an isomorphic lift if there exists (a necessarily unique) $\alpha\in \GL_h(W(R))$ with the following properties:
\begin{itemize}
   \item $J_2\alpha J_2^{-1} = J_1^{-1}\alpha J_1\in \GL_h(W(R))$;
   \item $\alpha\equiv I_h\pmod{p^{N-m}}$;
   \item $\tilde{A}^{(-1)} = \alpha A^{(-1)}$;
   \item $\tilde{g}' = \alpha^{-1}g'\varphi(J_2\alpha J_2^{-1})$.
\end{itemize}
\end{remark}

\begin{construction}
[A scheme of matrices]
The previous remarks make it clear that it will be profitable to study the geometry of the scheme $\mathcal{M}(h,n)$ over $\Int_p$ parameterizing pairs of $h\times h$-matrices $(A,B)$ with $AB = BA = p^n I_h$. This is an affine scheme; let $S(h,n)$ be the $\Int_p$-algebra such that $\mathcal{M}(h,n) = \Spec S(h,n)$. The scheme also has an action of $\GL_h$ given by
\[
g\cdot(A,B) = (gA,Bg^{-1}).
\]
Over $\Rat_p$, projection onto the first coordinate gives a $\GL_h$-equivariant isomorphism $\mathcal{M}(h,n)_{\Rat_p}\xrightarrow{\simeq}\GL_{h,\Rat_p}$. Consider the group action map
\[
\GL_h\times \mathcal{M}(h,n)\xrightarrow{(g,(A,B))\mapsto ((A,B),(gA,Bg^{-1}))}\mathcal{M}(h,n)\times \mathcal{M}(h,n).
\]
If $\GL_h = \Spec T$, then this corresponds to a map $S(h,n)\otimes_{\Int_p}S(h,n)\to T\otimes_{\Int_p}S(h,n)$ that is an isomorphism after inverting $p$. In particular, there exists an integer $t(h,n)\ge 0$ such that we have
\[
p^{t(h,n)}(T\otimes_{\Int_p}S(h,n))\subset \mathrm{im}(S(h,n)\otimes_{\Int_p}S(h,n)).
\]
\end{construction}

\begin{lemma}
\label{lem:t(h,n)_props}
Suppose that $S$ is a flat $\Int_p$-algebra, and that we are given $(A_1,B_1),(A_2,B_2)\in \mathcal{M}(h,n)(S)$, $m\ge t(h,n)$, and $g\in \GL_h(S)$ such that
\[
(gA_1,B_1g^{-1})\equiv (A_2,B_2)\pmod{p^m}.
\]
Then there exists (a necesarily unique) $\tilde{g}\in \GL_h(S)$ such that $(\tilde{g}A_1,B_1\tilde{g}^{-1}) = (A_2,B_2)$.
\end{lemma}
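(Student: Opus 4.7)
The plan is to reformulate the lemma in terms of ring maps and then leverage the bound $p^{t(h,n)}R'\subseteq\phi(R)$ together with the fact that $\phi$ becomes an isomorphism after inverting $p$. We set $R=S(h,n)\otimes_{\Int_p}S(h,n)$, $R'=T\otimes_{\Int_p}S(h,n)$, and let $\phi:R\to R'$ denote the ring map dual to the orbit map. The pair $((A_1,B_1),(A_2,B_2))$ then determines a ring map $\alpha:R\to S$, while $(g,(A_1,B_1))$ determines $\beta:R'\to S$; by definition of the action, $\beta\circ\phi$ corresponds to the pair $((A_1,B_1),(gA_1,B_1g^{-1}))$, so the hypothesis of the lemma is exactly $\beta\circ\phi\equiv\alpha\pmod{p^m}$. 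Producing the desired $\tilde g$ will amount to constructing a ring map $\tilde\beta:R'\to S$ satisfying $\tilde\beta\circ\phi=\alpha$, and then reading off its $T$-component.

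First we verify that $\phi[1/p]$ is an isomorphism: over $\Rat_p$ the assignment $(A,B)\mapsto A$ identifies $\mathcal{M}(h,n)_{\Rat_p}\xrightarrow{\simeq}\GL_{h,\Rat_p}$ (with inverse $A\mapsto(A,p^nA^{-1})$), under which the orbit map becomes $(g,A)\mapsto(A,gA)$, visibly an isomorphism. Since $S$ is $p$-torsion-free the inclusion $S\hookrightarrow S[1/p]$ is injective, so we may define the candidate ring map
\[
\tilde\beta\defn\alpha[1/p]\circ\phi[1/p]^{-1}:R'\longrightarrow S[1/p],
\]
which by construction satisfies $\tilde\beta\circ\phi=\alpha$ inside $S[1/p]$. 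The main remaining point will be to check that $\tilde\beta$ actually takes values in $S$.

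This is where the two ingredients combine. For any $r'\in R'$, the definition of $t(h,n)$ supplies some $r\in R$ with $\phi(r)=p^{t(h,n)}r'$. Applying $\tilde\beta$ and $\beta$ respectively then gives $p^{t(h,n)}\tilde\beta(r')=\alpha(r)$ and $p^{t(h,n)}\beta(r')=\beta\phi(r)$, so subtraction yields $p^{t(h,n)}(\tilde\beta(r')-\beta(r'))=\alpha(r)-\beta\phi(r)\in p^mS$ by the congruence hypothesis. Because $S$ is $p$-torsion-free and $m\ge t(h,n)$, this forces $\tilde\beta(r')-\beta(r')\in p^{m-t(h,n)}S\subseteq S$, so in particular $\tilde\beta(r')\in S$. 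Hence $\tilde\beta$ restricts to a ring map $R'\to S$ whose $T$-component is the desired $\tilde g\in\GL_h(S)$ satisfying $(\tilde gA_1,B_1\tilde g^{-1})=(A_2,B_2)$. Uniqueness is automatic: any two such $\tilde g$ yield ring maps $R'\to S$ that agree after inverting $p$ (as $\phi[1/p]$ is an isomorphism), and the injectivity of $S\hookrightarrow S[1/p]$ forces them to agree on the nose.

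No serious obstacle arises in this approach; the main conceptual point is the interplay between the integrality bound $p^{t(h,n)}R'\subseteq\phi(R)$ and the congruence hypothesis, both of which are needed to keep $\tilde\beta(r')$ integral. A minor technicality is the well-definedness of $\tilde\beta$, i.e.~the independence of the choice of $r$ lifting $p^{t(h,n)}r'$, but this follows immediately because $\ker(\phi)$ is $p^\infty$-torsion and $S$ is $p$-torsion-free.
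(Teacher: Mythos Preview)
Your proof is correct and follows essentially the same approach as the paper: both define the candidate map $\tilde\beta$ (the paper's $\gamma'$) via $\alpha[1/p]\circ\phi[1/p]^{-1}$ and then use the bound $p^{t(h,n)}R'\subset\operatorname{im}\phi$ together with the mod-$p^m$ congruence to force integrality. The only cosmetic difference is that you work directly with the map $\beta:R'\to S$ induced by $g$, whereas the paper phrases the same step via its reduction $\delta:R'\to S/p^mS$.
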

\begin{proof}
We can interpret $((A_1,B_1),(A_2,B_2))$ as a map of $\Int_p$-algebras $\gamma:S(h,n)\otimes_{\Int_p}S(h,n)\to S$, and we want to know that this map can be extended to a map $T\otimes_{\Int_p}S(h,n)\to S$. More precisely, we want to know that the composition
\[
\gamma':(T\otimes_{\Int_p}S(h,n))[1/p]\xrightarrow{\simeq}(S(h,n)\otimes_{\Int_p}S(h,n))[1/p]\to S[1/p]
\]
carries $T\otimes_{\Int_p}S(h,n)$ into $S$. By the definition of $t(h,n)$, for any element $x\in T\otimes_{\Int_p}S(h,n)$, we have $\gamma'(p^{t{h,n}}x)\in S$. Therefore, it is enough to know that we have $\gamma'(p^{t(h,n)}x)\in p^{t(h,n)}S$ for all such $x$. But, by our hypothesis of the existence of $g$, there exists a map $\delta:T\otimes_{\Int_p}S(h,n)\to S/p^mS$ such that we have a commuting diagram
\[
\begin{diagram}
S(h,n)\otimes_{\Int_p}S(h,n)&\rTo^\gamma&S\\
\dTo&&\dTo\\
T\otimes_{\Int_p}S(h,n)&\rTo_{\delta}&S/p^mS.
\end{diagram}
\]
This tells us that we have 
\[
\gamma'(p^{t(h,n)}x)\equiv \delta(p^{t(h,n)}x)\equiv p^{t(h,n)}\delta(x)\pmod{p^m}.
\]
Since $m\ge t(h,n)$, this completes the proof.
\end{proof}

\begin{construction}
[Controlling $p$-torsion in the automorphism groups]
For any pair $(A,B)\in \mathcal{M}(h,n)(C)$, let $\mathcal{I}(A,B)\to \Spec C$ be the affine group scheme obtained as the stabilizer in $\GL_{h,C}$ of $(A,B)$: this organizes into a relative group scheme $\mathcal{I}\to \mathcal{M}(h,n)$ whose restriction over the generic fiber is the trivial group scheme. Suppose that $\mathcal{I} = \Spec D$, and let $D\to S(h,n)$ be the augmentation map associated with the identity section. Set $J = \ker(D\to S(h,n))$; then, since $J[1/p]=0$, there exists an integer $c(h,n)\ge 1$ such that $p^{c(h,n)}J = 0$
\end{construction}   

\begin{lemma}
\label{lem:c(h,n)_props}
Let $m(h,n) = \max\{t(h,n),c(h,n)\}$. Let the hypotheses and notation be as in Lemma~\ref{lem:t(h,n)_props}, but assume in addition that $m\ge m(h,n)+1$. Then there exists $\tilde{g}\in \GL_h(S)$ such that $(\tilde{g}A_1,B_1g^{-1}) = (A_2,B_2)$, and moreover we have
\[
\tilde{g}\equiv g\pmod{p^{m-m(h,n)}}.
\]
\end{lemma}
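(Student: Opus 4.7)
The plan is to first apply Lemma \ref{lem:t(h,n)_props}---which is available since $m \geq m(h,n)+1 > t(h,n)$---to produce the unique $\tilde{g} \in \GL_h(S)$ with $\tilde{g} A_1 = A_2$ and $B_1 \tilde{g}^{-1} = B_2$; what remains is the quantitative congruence $\tilde{g} \equiv g \pmod{p^{m-m(h,n)}}$. To set this up, put $h \defn \tilde{g} g^{-1} \in \GL_h(S)$ and $(A', B') \defn (g A_1, B_1 g^{-1}) \in \mathcal{M}(h,n)(S)$. The identities $hA' = \tilde{g} A_1 = A_2$ and $B' h^{-1} = B_1 \tilde{g}^{-1} = B_2$, combined with the hypothesis $(A', B') \equiv (A_2, B_2) \pmod{p^m}$, show that the reduction $\bar{h}$ of $h$ to $\GL_h(S/p^m S)$ is an element of the stabilizer subgroup $\mathcal{I}(A', B')(S/p^m S)$. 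Everything then reduces to bounding how close such an $\bar{h}$ must be to $I_h$ in the $p$-adic topology.

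This is where the invariant $c(h,n)$ enters. Base-change along the classifying map $\Spec S \to \mathcal{M}(h,n)$ for $(A', B')$ presents $\mathcal{I}(A', B')$ as $\Spec D'$ with $D' = D \otimes_{S(h,n)} S$; since $\mathcal{I} \to \mathcal{M}(h,n)$ carries a global identity section, the augmentation ideal of $D' \to S$ at the identity is $J' = J \otimes_{S(h,n)} S$, and hence satisfies $p^{c(h,n)} J' = 0$. Now $\bar{h}$ corresponds to an $S$-algebra homomorphism $\varphi_h: D' \to S/p^m S$, and since $S$ is flat over $\Int_p$ the $p^{c(h,n)}$-torsion in $S/p^m S$ is exactly $p^{m-c(h,n)} S/p^m S$ (using $m \geq c(h,n)+1$). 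Thus $\varphi_h(J') \subset p^{m-c(h,n)} S/p^m S$. Evaluating $\varphi_h$ on the tautological matrix entries $x_{ij} \in D'$ and using $x_{ij} - \delta_{ij} \in J'$ yields $\bar{h} \equiv I_h \pmod{p^{m-c(h,n)}}$; lifting to $\GL_h(S)$ and using $p^m S \subset p^{m-c(h,n)} S$ gives $\tilde{g} \equiv g \pmod{p^{m-c(h,n)}}$, which implies the desired congruence since $c(h,n) \leq m(h,n)$.

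I do not foresee a significant obstacle: applying the earlier lemma, identifying $\bar{h}$ as an $(S/p^m)$-point of the stabilizer, and using the structure of the augmentation ideal of $\mathcal{I}$ are all essentially formal. The only subtle step is the passage from ``$\bar{h}$ lies in the stabilizer modulo $p^m$'' to ``$\bar{h}$ is $p$-adically close to the identity'', which is precisely what the constant $c(h,n)$ encodes via the annihilation of the augmentation ideal of $\mathcal{I}$; here the compatibility of the augmentation ideal with base-change---itself an immediate consequence of the existence of the global identity section---is the only compatibility one needs to verify.
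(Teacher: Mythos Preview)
Your proof is correct and follows essentially the same approach as the paper: obtain $\tilde{g}$ from Lemma~\ref{lem:t(h,n)_props}, observe that a suitable ratio of $\tilde{g}$ and $g$ lands in the stabilizer $\mathcal{I}$ modulo $p^m$, and use $p^{c(h,n)}J=0$ together with flatness of $S$ to force the desired congruence. The only cosmetic difference is that you work with $\tilde{g}g^{-1}$ in the stabilizer of $(gA_1,B_1g^{-1})$ whereas the paper uses $g^{-1}\tilde{g}$ in the stabilizer of $(A_1,B_1)$; note also that your choice of the symbol $h$ for this element clashes with the rank $h$ in $\GL_h$.
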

\begin{proof}
The existence of $\tilde{g}$ is clear since $m(h,n)\ge t(h,n)$. The element $g^{-1}\tilde{g}$ belongs to $\mathcal{I}(A,B)(S/p^mS)$, and so is associated with a $\Int_p$-algebra map $D \to S/p^mS$. Since $p^{c(h,n)}J = 0$, this map must map $J$ into $p^{m-c(h,n)}S/p^mS$. Therefore, the composition $D\to S/p^mS\to S/p^{m-m(h,n)}$ kills $J$, and hence factors through the augmentation map. In other words, $g^{-1}\tilde{g}$ is congruent to the identity mod-$p^{m-M(h,n)}$.
\end{proof} 

\begin{lemma}
\label{lem:r(h,n)_props}
Suppose that $S$ is a $p$-complete flat $\Int_p$-algebra and that, for $m\ge n+1$, we have $A,B\in \mathrm{Mat}_{h\times h}(S)$ such that $AB\equiv p^nI_h\pmod{m}$. Then there exists $(A',B')\in \mathcal{M}(h,n)(S)$ such that 
\[
(A,B)\equiv (A',B')\pmod{p^{m-n}}
\]
\end{lemma}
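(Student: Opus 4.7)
The plan is to keep $A$ fixed and correct $B$ multiplicatively on the right. Write $AB = p^n I_h + p^m E$ for some $E \in \mathrm{Mat}_{h\times h}(S)$. Since $m \geq n+1$, this factors as $AB = p^n(I_h + p^{m-n} E)$. The matrix $I_h + p^{m-n} E$ is a unit in $\mathrm{Mat}_{h \times h}(S)$ with inverse $U = \sum_{k\ge 0}(-p^{m-n} E)^k$, the series converging because $S$ is $p$-complete. Setting $B' = BU$ then gives $AB' = p^n I_h$, and $B' \equiv B \pmod{p^{m-n}}$ since $U \equiv I_h \pmod{p^{m-n}}$.

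The only subtlety is to verify that the reverse identity $B'A = p^n I_h$ holds automatically, without any further adjustment to either matrix. From $AB' = p^n I_h$ we get $\det(A)\det(B') = p^{nh}$, which is a unit in $S[1/p]$; in particular $A$ becomes invertible in $\mathrm{Mat}_{h\times h}(S[1/p])$. Right-multiplying $AB' = p^n I_h$ by $A$ yields $A(B'A - p^n I_h) = 0$, and invertibility of $A$ over $S[1/p]$ forces $B'A = p^n I_h$ in $\mathrm{Mat}_{h\times h}(S[1/p])$. Because $S$ is flat over $\Int_p$, the map $S \hookrightarrow S[1/p]$ is injective, so this identity descends to $\mathrm{Mat}_{h\times h}(S)$. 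Setting $A' = A$ and $B' = BU$ then produces the required element of $\mathcal{M}(h,n)(S)$ with $(A',B') \equiv (A,B) \pmod{p^{m-n}}$.

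There is no serious obstacle here: the hypothesis $m \geq n+1$ is precisely what guarantees that $p^{m-n}E$ is topologically nilpotent so the geometric series defining $U$ converges, while the flatness of $S$ over $\Int_p$ is what converts the equation $B'A = p^n I_h$ from one over $S[1/p]$ to one over $S$. Notably, no symmetric modification of $A$ is required; correcting $B$ alone suffices because in the matrix ring one-sided inverses are automatically two-sided once one works over $S[1/p]$.
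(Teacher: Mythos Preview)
Your proof is correct and takes essentially the same approach as the paper: set $A' = A$ and $B' = B(I_h + p^{m-n}E)^{-1}$. The paper's proof is a single sentence and leaves implicit the verification that $B'A' = p^n I_h$ (not just $A'B' = p^n I_h$), which you supply explicitly via inverting $A$ over $S[1/p]$ and using flatness to descend.
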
 
\begin{proof}
By hypothesis, there exists an $h\times h$-matrix $C$ such that $AB = p^n(I_h+p^{m-n}C)$. Now, $A' = A$ and $B'=B(I_h+p^{m-n}C)^{-1}$ does the job.
\end{proof}

\begin{proof}
[Proof of Proposition~\ref{prop:translate_isogenies_to_maps_of_stacks}]
Let $m(h,n)$ be as in Lemma~\ref{lem:c(h,n)_props}, and set $M(h,n) = n+2m(h,n)$. We will show that this does the job. 

By Proposition~\ref{prop:isogenies_to_perfect_case}, we can assume that $R$ is perfect. Suppose that we are in the banal situation of Remark~\ref{rem:perfect_rings_bases}. If $N\ge M(h,n)+1$, then by Lemma~\ref{lem:r(h,n)_props}, the reduction $(A^{(-1)}_{N-n},B^{(-1)}_{N-n})$ of $(A^{(-1)}_N,B^{(-1)}_N)$ mod-$p^{N-n}$ can be lifted to a pair $(A^{(-1)},B^{(-1)})\in \mathcal{M}(h,n)(W(R))$. 

Suppose that, with respect to the direct sum decomposition $W(R)^h =W(R)^{h-d}\oplus W(R)^d$, we have
\[
A^{(i)}_N = \begin{pmatrix}
A^{(i)}_{N,1}&A^{(i)}_{N,2}\\
A^{(i)}_{N,3}&A^{(i)}_{N,4}
\end{pmatrix} \;;\; B^{(i)}_N = \begin{pmatrix}
B^{(i)}_{N,1}&B^{(i)}_{N,2}\\
B^{(i)}_{N,3}&B^{(i)}_{N,4}
\end{pmatrix} 
\]
Then the identities $A_N^{(0)}J_2 = J_2A_N^{(-1)}$ and $B_N^{(0)}J_2 = J_2B_N^{(-1)}$ tell us that $A^{(0)}_N$ and $B^{(0)}_N$ are essentially determined by $A^{(-1)}_N$ and $B^{(-1)}_N$, except that there is an ambiguity up to $p$-torsion in the choice of $A^{(0)}_{N,2}$ and $B^{(0)}_{N,2}$ such that $pA^{(0)}_{N,2} = A^{(-1)}_{N,2}$ and $pB^{(0)}_{N,2} = B^{(-1)}_{N,2}$. In particular, all such choices agree mod-$p^{N-1}$.

This shows that we have
\[
A^{(0)} = J_2A^{(-1)}J_2^{-1},B^{(0)} = J_2B^{(-1)}J_2^{-1}\in \mathrm{Mat}_{h\times h}(W(R)),
\]
and also that their images mod-$p^{N-n}$ agree with those of $A^{(0)}_N$ and $B^{(0)}_N$.

Now, consider the pairs
\[
(A^{(-1)},B^{(-1)}), (\varphi(A^{(0)}),\varphi(B^{(0)}))\in \mathcal{M}(h,n)(W(R)).
\]
If $\tilde{g}'_N\in \GL_h(W(R))$ is a lift of $g_N$, then we have
\[
(\tilde{g}'_NA^{(-1)},B^{(-1)}(\tilde{g}'_N)^{-1}) \equiv (\varphi(A^{(0)}),\varphi(B^{(0)})\pmod{p^{N-n}}.
\]
Therefore, Lemma~\ref{lem:c(h,n)_props} gives us a unique $g'\in \GL_h(W(R))$ such that
\begin{itemize}
   \item $g'\equiv \tilde{g}'_N\pmod{p^{N-n-m(h,n)}}$;
   \item $(g'A^{(-1)},B^{(-1)}(g')^{-1}) = (\varphi(A^{(0)}),\varphi(B^{(0)}))$.
\end{itemize}
By Remark~\ref{rem:lifting_perfect}, we find that we have constructed a lift $(f,\hat{f})$ of the reduction of $(f_N,\hat{f}_N)$ mod-$p^{N-n-m(h,n)}.$

If $(\tilde{A}^{(-1)},\tilde{B}^{(-1)},\tilde{A}^{(0)},\tilde{B}^{(0)},\tilde{g}')$ is another such lifting datum, then  applying Lemma~\ref{lem:c(h,n)_props} once again gives us unique elements $\alpha^{(-1)},\alpha^{(0)}\in \GL_h(W(R))$ such that, for $i=1,2$,
\begin{itemize}
   \item $(\alpha^{(i)}A^{(i)},B^{(i)}(\alpha^{(i)})^{-1}) = (\tilde{A}^{(i)},\tilde{B}^{(i)})$;
   \item $\alpha^{(i)}\equiv 1\pmod{p^{N-M(h,n)}}$.
\end{itemize}
The uniqueness ensures that $\alpha^{(0)} = J_2\alpha^{(-1)}J_2^{-1}$, and so we see, using Remark~\ref{rem:canonicity_perfect}, that $\alpha = \alpha^{(-1)}$ gives us the unique isomorphism between the lifts determined by these data, lifting the identity mod-$p^{N-M(h,n)}$.  

Let 
\[
\mathcal{X}^{\mathrm{ban}}_{\mathcal{V}}(n)(R)\subset \mathcal{X}_{\mathcal{V}}(n)(R)\;;\; \mathcal{X}^{\mathrm{ban}}_{m,\mathcal{V}}(n)(R)\subset \mathcal{X}_{m,\mathcal{V}}(n)(R)
\]
be the subgroupoids spanned by the banal objects. Then in the preceding paragraphs we have constructed a canonical commuting diagram
\[
\begin{diagram}
\mathcal{X}^{\mathrm{ban}}_{\mathcal{V}}(n)(R)&\rTo&\mathcal{X}^{\mathrm{ban}}_{N,\mathcal{V}}(n)(R)&\rTo&\mathcal{X}^{\mathrm{ban}}_{\mathcal{V}}(n)(R)\\
&\rdTo&\dTo&\ldTo\\
&&\mathcal{X}^{\mathrm{ban}}_{N-M(n,h),\mathcal{V}}(n)(R)
\end{diagram}
\]
where the composition of the horizontal arrows is the identity.

Since every $F$-gauge is banal Zariski locally on $\Spec R$, we can now complete the proof using Zariski descent.
\end{proof}

\section{Classifying finite flat group schemes}
\label{sec:ffg}

In this section, $R$ will be a discrete object in $\mathrm{CRing}^{p\text{-comp}}$.

\subsection{The main theorem}

Let $\mathrm{FFG}(R)$ be the category of finite locally free commutative $p$-power torsion group schemes over $R$, and let $\mathrm{FFG}_n(R)$ be the subcategory spanned by the $p^n$-torsion objects. These are exact additive categories. Also, write $\mathsf{P}^{\mathrm{syn}}_{\{0,1\}}(R)$ for the $\infty$-subcategory of $\mathrm{Perf}^{[-1,0]}_{\{0,1\}}(R^{\mathrm{syn}})$ spanned by the objects that can be lifted to $\Int/p^n\Int$-modules for some $n\ge 1$: By Corollary~\ref{cor:syntomic_raynaud}, this is a discrete exact additive subcategory that is a union of its subcategories $\mathsf{P}_{n,\{0,1\}}^{\mathrm{syn}}(R)$. Here, we will prove the main theorem of this paper:

\begin{theorem}
\label{thm:main}
There is a natural exact equivalence of categories
\[
\mathcal{G}:\mathsf{P}^{\mathrm{syn}}_{\{0,1\}}(R)\xrightarrow{\simeq} \mathrm{FFG}(R)
\]
carrying $\mathsf{P}^{\mathrm{syn}}_{n,\{0,1\}}(R)$ onto $\mathrm{FFG}_n(R)$, functorial in $R\in \mathrm{CRing}^{p\text{-comp}}_{\heartsuit}$, compatible with arbitrary base-change, satisfying fpqc descent, and compatible also with the equivalence in Theorem~\ref{thm:dieudonne} for $n$-truncated Barsotti-Tate groups. For each $\mathcal{M}\in \mathsf{P}^{\mathrm{syn}}_{\{0,1\}}(R)$ there exists a canonical Cartier duality isomorphism\footnote{The dual on the left hand side is in the category of perfect complexes over $R^{\mathrm{syn}}$.}
   \[
    \mathcal{G}(\mathcal{M}^\vee\{1\}[1])\xrightarrow{\simeq}\mathcal{G}(\mathcal{M})^*
   \]
\end{theorem}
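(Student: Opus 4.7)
The plan is to define the functor via truncated syntomic cohomology, setting
\[
\mathcal{G}(\mathcal{M})(C) \defn \tau^{\le 0} R\Gamma\bigl(C^{\mathrm{syn}}\otimes\Int/p^n\Int,\; \mathcal{M}\vert_{C^{\mathrm{syn}}}\bigr)
\]
for $\mathcal{M}\in \mathsf{P}^{\mathrm{syn}}_{n,\{0,1\}}(R)$ and $C\in \mathrm{CRing}^{p\text{-nilp}}_{R/}$. Functoriality, base-change compatibility, and fpqc descent are immediate from this formula combined with Theorem~\ref{thm:perfect_f-gauges_repble} (specifically Remark~\ref{rem:fpqc_descent_for_syntomic_cohomology}). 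The first step is to verify that $\mathcal{G}(\mathcal{M})$ really lands in $\mathrm{FFG}_n(R)$. By Theorem~\ref{thm:syntomic_raynaud}, after a pro-\'etale cover of $\Spf R$ one obtains a cofiber presentation $\mathcal{M}\simeq \cofib(\mathcal{V}^{-1}\xrightarrow{f}\mathcal{V}^0)$ with $\mathcal{V}^i$ vector bundle $F$-gauges of Hodge-Tate weights in $\{0,1\}$, and Lemma~\ref{lem:isogenies_pn} completes $f$ to a height-$n$ isogeny. Under the Dieudonn\'e equivalence of Theorem~\ref{thm:dieudonne} this translates to an isogeny $G^{-1}\to G^0$ of $n$-truncated Barsotti-Tate groups, and Proposition~\ref{prop:syntomic_and_fppf} combined with Theorem~\ref{thm:exactness} identifies $\mathcal{G}(\mathcal{M})$ pro-\'etale locally with the kernel of $G^{-1}\to G^0$. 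Since finite flat group schemes satisfy fpqc descent, this globalizes. Exactness of $\mathcal{G}$ is then a consequence of Theorem~\ref{thm:exactness} applied to the vector bundle parts together with a diagram chase through the cofiber presentations.

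The next step is full faithfulness. Given $\mathcal{M}_1,\mathcal{M}_2\in \mathsf{P}^{\mathrm{syn}}_{n,\{0,1\}}(R)$, Corollary~\ref{cor:syntomic_raynaud} already tells us $\Hom(\mathcal{M}_1,\mathcal{M}_2)$ is discrete. Applying Theorem~\ref{thm:syntomic_raynaud} to both $\mathcal{M}_i$, I would write each as $\cofib(\mathcal{V}^{-1}_i\to \mathcal{V}^0_i)$ pro-\'etale locally, with associated truncated BT isogeny $G^{-1}_i\to G^0_i$. The Hom on the $F$-gauge side then fits into a long exact sequence involving $\Hom(\mathcal{V}^i_1,\mathcal{V}^j_2)$ and $\Ext^1(\mathcal{V}^i_1,\mathcal{V}^j_2)$, and the FFG-side Hom fits into an analogous sequence involving $\Hom_{\mathrm{fppf}}$ and $\Ext^1_{\mathrm{fppf}}$ of the $G^i_j$. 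The Hom-to-Hom comparison on vector bundles is Theorem~\ref{thm:dieudonne}; the Ext-to-Ext comparison combines Theorem~\ref{thm:exactness} (identifying extensions of vector bundle $F$-gauges with extensions of truncated BTs) and the fact, following from \cite[Lemme 3.3.9]{bbm:cris_ii}, that any fppf extension of an $n$-truncated BT by an $n$-truncated BT is itself an $n$-truncated BT. A five-lemma argument across the two long exact sequences then gives the Hom isomorphism, which descends from the pro-\'etale cover by fpqc descent on both sides.

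For essential surjectivity I would invoke Raynaud's classical theorem \cite[Th\'eor\`eme 3.1.1]{bbm:cris_ii}: Zariski locally on $\Spf R$, every $G\in \mathrm{FFG}_n(R)$ appears as the kernel of an isogeny $G^{-1}\to G^0$ of $n$-truncated Barsotti-Tate groups. Using the inverse of the Dieudonn\'e equivalence of Theorem~\ref{thm:dieudonne} to lift this isogeny to a map $f:\mathcal{V}^{-1}\to \mathcal{V}^0$ of vector bundle $F$-gauges and setting $\mathcal{M}\defn\cofib(f)$ produces a local preimage of $G$; full faithfulness combined with the fpqc descent of $\mathsf{P}^{\mathrm{syn}}_{\{0,1\}}$ then glues these into a global $\mathcal{M}$. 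Cartier duality compatibility follows by dualizing the cofiber presentation: $\mathcal{M}^\vee[1]\{1\}\simeq \cofib((\mathcal{V}^0)^\vee\{1\}\to (\mathcal{V}^{-1})^\vee\{1\})$, and the Cartier duality statement of Theorem~\ref{thm:dieudonne} on the vector bundle level propagates through the cofiber to give $\mathcal{G}(\mathcal{M}^\vee\{1\}[1])\simeq \mathcal{G}(\mathcal{M})^*$. The principal obstacle is the $\Ext^1$ comparison feeding into full faithfulness---this is where the exactness result (Theorem~\ref{thm:exactness}) and the careful interplay between $F$-gauge-level and fppf-level extensions matter most, since everything else reduces by Raynaud's theorem (in its $F$-gauge and classical forms) to statements about vector bundle $F$-gauges and truncated Barsotti-Tate groups that have already been established.
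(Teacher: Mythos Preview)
Your overall strategy matches the paper's: define $\mathcal{G}$ via truncated syntomic cohomology, use the Raynaud-for-$F$-gauges theorem to get cofiber presentations, verify finite-flatness of the output, prove full faithfulness by a five-lemma argument against the Dieudonn\'e equivalence, obtain essential surjectivity from classical Raynaud plus full faithfulness, and deduce exactness by reducing to the vector-bundle case. This is exactly the scaffold of \S\S6.2--6.4.

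There is one systematic slip to correct. Theorem~\ref{thm:syntomic_raynaud} produces vector bundle $F$-gauges $\mathcal{V}^{-1},\mathcal{V}^0\in\mathrm{Vect}_{\{0,1\}}(\tilde{R}^{\mathrm{syn}})$, \emph{not} in $\mathrm{Vect}_{\{0,1\}}(\tilde{R}^{\mathrm{syn}}\otimes\Int/p^n\Int)$. Under Theorem~\ref{thm:dieudonne} (in its limit form) these correspond to \emph{$p$-divisible groups} $\mathcal{H}^{-1},\mathcal{H}^0$, and the paper identifies $\mathcal{G}(\mathcal{M})$ with the kernel of an isogeny of $p$-divisible groups (Lemma~\ref{lem:gamma_syn_ffg}). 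Your phrase ``isogeny of $n$-truncated Barsotti--Tate groups'' is off: an isogeny between two $n$-truncated BTs of the same order is an isomorphism, so it cannot have a nontrivial finite flat kernel. The same correction applies to your invocation of classical Raynaud---it supplies an isogeny of $p$-divisible groups, not of truncated ones. Once you replace ``$n$-truncated BT'' with ``$p$-divisible group'' throughout, the argument goes through as the paper has it, including the two-step five-lemma in Lemma~\ref{lem:full_faithfulness} (which also needs the vanishing $\Hom(\mathcal{M}_1,\mathcal{V}_2^{-1})=0$, established by a reduction to perfect rings). For exactness, the paper's Lemma~\ref{lem:ffg_ext_bijection} reduces to the vector-bundle case not via the untruncated resolution but via Corollary~\ref{cor:bt_in_the_middle}, which sandwiches each $\mathcal{M}_i$ inside mod-$p^n$ vector bundle $F$-gauges; your ``diagram chase through the cofiber presentations'' would need to be sharpened to this.
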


\begin{remark}
[Local finite presentation of $\mathsf{P}^{\mathrm{syn}}_{\{0,1\}}$]
\label{rem:p_syn_finitely_presented}
   The theorem implies that the classical truncation of the formal prestack $R\mapsto  \mathsf{P}^{\mathrm{syn}}_{\{0,1\}}(R)$ is represented by a locally finitely presented $p$-adic formal Artin stack. Independently of the theorem, one can show directly that, even as a derived prestack, it is represented by a locally finitely presented quasi-smooth derived formal Artin stack over $\Spf \Int_p$: It admits as cotangent complex the perfect complex $(\gr^{-1}_{\mathrm{Hdg}}M_{\mathrm{taut}})^\vee\otimes \Fil^0_{\mathrm{Hdg}}M_{\mathrm{taut}}$, which has Tor amplitude in $[-1,1]$. Moreover, by Theorem~\ref{thm:HTwts01_representable}, it is a filtered colimit of the stacks $\mathsf{P}^{\mathrm{syn}}_{n,\{0,1\}}$, which are themselves inverse limits of the locally finitely presented derived formal algebraic stacks
   \[
   \mathsf{P}^{\mathrm{syn},(m)}_{n,\{0,1\}}:  C\mapsto\Mod{\Int/p^n\Int}\bigl(\mathrm{Perf}^{[-1,0]}_{\{0,1\}}(R^{\mathrm{syn}}\otimes\Int/p^m\Int)\bigr).
   \]
   The key point now is to establish the local finite presentation of these inverse limits. For this, one proves an analogue of Proposition~\ref{prop:translate_isogenies_to_maps_of_stacks}, showing that for any $n,s\ge 1$ there exist integers $N\ge m$ such that, for any discrete $\Int/p^s\Int$-algebra $C$, we have a canonical commuting diagram
   \begin{align*}
\begin{diagram}
\mathsf{P}^{\mathrm{syn}}_{n,\{0,1\}}(C)&\rTo&\mathsf{P}^{\mathrm{syn},(N)}_{n,\{0,1\}}(C)&\rTo&\mathsf{P}^{\mathrm{syn}}_{n,\{0,1\}}(C)\\
&\rdTo&\dTo&\ldTo\\
&&\mathsf{P}^{\mathrm{syn},(m)}_{n,\{0,1\}}(C).
\end{diagram}
\end{align*}
The line of argument is very similar to that of the proposition (though one category level higher) and involves the use of deformation theory to reduce to the case of perfect rings where one can work concretely with Witt vectors.
\end{remark}

The rest of this section is devoted to the proof of the theorem.

\subsection{The functor $\mathcal{G}$}
We can assume that $R$ is $p$-nilpotent. The functor is defined using syntomic cohomology: Given $\mathcal{M}\in \mathsf{P}^{\mathrm{syn}}_{n,\{0,1\}}(R)$, we take $\Gamma_{\mathrm{syn}}(\mathcal{M})$ to be the functor on $\mathrm{CRing}_{R/}$ given by
\[
\Gamma_{\mathrm{syn}}(\mathcal{M}):C \mapsto \tau^{\leq 0}R\Gamma\left(C^{\mathrm{syn}},\mathcal{M}\vert_{C^{\mathrm{syn}}}\right).
\]

This is a quasisyntomic sheaf over $R$, and to check that it is represented by a finite flat group scheme, we can work pro-\'etale locally. Therefore, by Theorem~\ref{thm:syntomic_raynaud}, we can assume that $\mathcal{M}$ is the cofiber of a map of vector bundle $F$-gauges $\mathcal{V}^{-1}\xrightarrow{f}\mathcal{V}^0$. Now, Theorem~\ref{thm:dieudonne} gives us $p$-divisible groups $\mathcal{H}^{-1}$ and $\mathcal{H}^0$ associated with $\mathcal{V}^{-1}$ and $\mathcal{V}^0$ and a map $\alpha:\mathcal{H}^{-1}\to \mathcal{H}^0$.

\begin{lemma}
\label{lem:gamma_syn_ffg}
$\alpha$ is an isogeny of $p$-divisible groups and the classical truncation $\mathcal{G}(\mathcal{M})$ of $\Gamma_{\mathrm{syn}}(\mathcal{M})$ is represented by the finite flat group scheme $\ker(\alpha)$.
\end{lemma}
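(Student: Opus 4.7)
The plan has two parts: first, to establish that $\alpha$ is an isogeny of $p$-divisible groups with kernel a finite locally free $p^n$-torsion group scheme, and second, to identify $\Gamma_{\mathrm{syn}}(\mathcal{M})$ with this kernel.

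For the isogeny claim, I would apply Lemma~\ref{lem:isogenies_pn} to the $\Int/p^n\Int$-module object $\mathcal{M}$, obtaining a map $\hat{f}\colon \mathcal{V}^0 \to \mathcal{V}^{-1}$ with $f\hat{f} \simeq p^n \simeq \hat{f}f$. Functoriality of the Dieudonn\'e equivalence of Theorem~\ref{thm:dieudonne}, applied at each truncation level and compatibly in the level, transports this to a map $\hat{\alpha}\colon \mathcal{H}^0 \to \mathcal{H}^{-1}$ with $\alpha\hat{\alpha} = \hat{\alpha}\alpha = p^n$. It follows that $K \defn \ker(\alpha)$ is annihilated by $p^n$, hence is a closed subgroup scheme of the finite locally free $\mathcal{H}^{-1}[p^n]$ and therefore itself finite locally free; and $\alpha$ is fppf surjective because $p^n = \alpha\hat{\alpha}$ is surjective on the $p$-divisible group $\mathcal{H}^0$. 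This yields a short exact sequence $0 \to K \to \mathcal{H}^{-1} \to \mathcal{H}^0 \to 0$ of fppf sheaves.

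For the identification of $\Gamma_{\mathrm{syn}}(\mathcal{M})$, I would apply $R\Gamma(C^{\mathrm{syn}}, -)$ to the cofiber sequence $\mathcal{V}^{-1}\to\mathcal{V}^0\to \mathcal{M}$ for $C \in \mathrm{CRing}^{p\mathhyph\mathrm{comp}}_{R/}$. Proposition~\ref{prop:syntomic_and_fppf}, applied to the mod-$p^m$ reductions, gives canonical equivalences $R\Gamma(C^{\mathrm{syn}}, \mathcal{V}^i/p^m) \simeq R\Gamma_{\mathrm{fppf}}(\Spec C, \mathcal{H}^i[p^m])$. Since the transition maps $\mathcal{H}^i[p^{m+1}] \to \mathcal{H}^i[p^m]$ (multiplication by $p$) are fppf surjective, passing to derived inverse limits in $m$ yields
\[
R\Gamma(C^{\mathrm{syn}}, \mathcal{V}^i) \simeq R\Gamma_{\mathrm{fppf}}(\Spec C, T_p\mathcal{H}^i),
\]
with the induced map corresponding to $T_p\alpha$. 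A snake lemma on multiplication by $p^m$ in the short exact sequence $0\to K\to \mathcal{H}^{-1}\to \mathcal{H}^0\to 0$ produces
\[
0 \to K \to \mathcal{H}^{-1}[p^m] \xrightarrow{\alpha_m} \mathcal{H}^0[p^m] \to K \to 0,
\]
and a direct diagram chase will identify the induced transitions on the rightmost copies of $K$ as the identity (while those on the leftmost copies are multiplication by $p$). Since $K$ is $p^n$-torsion, both $T_pK$ and $R^1T_pK$ vanish, so the limit delivers a short exact sequence $0\to T_p\mathcal{H}^{-1}\to T_p\mathcal{H}^0\to K\to 0$ of fppf sheaves. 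Consequently $R\Gamma(C^{\mathrm{syn}}, \mathcal{M}) \simeq R\Gamma_{\mathrm{fppf}}(\Spec C, K)$, and taking $\tau^{\le 0}$ gives $\Gamma_{\mathrm{syn}}(\mathcal{M})(C) = H^0_{\mathrm{fppf}}(\Spec C, K) = K(C)$ functorially in $C$, so $\mathcal{G}(\mathcal{M})$ is represented by the finite flat group scheme $\ker(\alpha)$.

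The main technical step will be the snake lemma limit computation: verifying the diagram chase that identifies the transition maps on the outer copies of $K$ as claimed, so that the inverse limit of the cokernel sheaves genuinely yields $K$ rather than a vanishing term. Once this is in place, the Tate-module short exact sequence drops out and both halves of the lemma follow at once.
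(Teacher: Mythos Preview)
Your identification of $\mathcal{G}(\mathcal{M})$ with $\ker(\alpha)$ via the Tate module short exact sequence $0\to T_p\mathcal{H}^{-1}\to T_p\mathcal{H}^0\to K\to 0$ is correct and in fact proves slightly more than the paper does at this stage (you get the full isomorphism $R\Gamma(C^{\mathrm{syn}},\mathcal{M})\simeq R\Gamma_{\mathrm{fppf}}(\Spec C,K)$). The paper instead computes $\Gamma_{\mathrm{syn},m}(\mathcal{M})_{\mathrm{cl}}\simeq\cofib(\mathcal{H}^{-1}[p^m]\to\mathcal{H}^0[p^m])$ at each finite level and passes to the inverse limit directly; your snake-lemma packaging of the same limit is a pleasant variant.

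However, there is a genuine gap in the first part. The step ``$K$ is a closed subgroup scheme of the finite locally free $\mathcal{H}^{-1}[p^n]$ and therefore itself finite locally free'' is false in general: closed subgroup schemes of finite locally free commutative group schemes need not be flat. For instance, over $R=k[\epsilon]/(\epsilon^2)$ with $k$ of characteristic $p$, the closed subgroup $H=\Spec R[x]/(x^p,\epsilon x)$ of $\alpha_p=\Spec R[x]/(x^p)$ is not flat over $R$ (it has $k$-dimension $p+1$, which is odd). The paper is explicit that flatness of $\ker(\alpha)$ requires a separate argument.

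The fix is within reach of what you have already written: your own four-term sequence for $m=n$ exhibits $K$ as an fppf quotient $\mathcal{H}^0[p^n]\twoheadrightarrow K$, and fppf quotients of finite flat group schemes are finite flat (reduce to a Noetherian base, use the fiberwise flatness criterion to see that $\mathcal{H}^0[p^n]\to K$ is faithfully flat, then descend flatness over the base). This is exactly the argument the paper records in a footnote. Alternatively, the paper gives an independent proof by showing that $\Gamma_{\mathrm{syn}}(\mathcal{M})$ is a quasi-smooth derived scheme of virtual codimension $0$ with finite geometric fibers, hence finite flat lci; since you have already identified its classical truncation with $K$, either route closes the gap.
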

\begin{proof}
Note that we have a map $\hat{f}:\mathcal{V}^{0}\to \mathcal{V}^{-1}$ with $f\circ \hat{f}$ and $\hat{f}\circ f$ both equal to multiplication by $p^n$. This yields a map $\hat{\alpha}:\mathcal{H}^0\to \mathcal{H}^{-1}$ with $\alpha\circ \hat{\alpha}$ and $\hat{\alpha}\circ \alpha$ equal to multiplication by $p^n$. In particular, $\alpha$ is surjective.

It remains to verify that $\ker \alpha$ is finite flat over $R$ and that it is isomorphic to $\mathcal{G}(\mathcal{M})$. The first actually follows from the existence of the map $\hat{\alpha}$, since it shows that $\ker \alpha$ is a quotient of the finite flat group scheme $\mathcal{H}^0[p^n]$\footnote{This fact doesn't seem to have a ready reference in the literature, but here is an argument conveyed to us by Alex Youcis:  Suppose that $\mathcal{G}_1\to \mathcal{G}_2$ is a map of finitely presented affine group schemes that is a quotient map for the fppf topology, with $\mathcal{G}_1$ finite flat over the base. We want to know that $\mathcal{G}_2$ is also finite flat. We reduce to the case of a Noetherian base. Then the fiber-by-fiber criterion for flatness tells us that $\mathcal{G}_1$ is faithfully flat, and hence finite flat, over $\mathcal{G}_2$. This, combined with the finite flatness of $\mathcal{G}_1$ over the base, then finishes the proof.}but we can give a simultaneous proof of both facts by studying $\mathcal{G}(\mathcal{M})$.

Theorem~\ref{thm:perfect_f-gauges_repble} tells us that $\Gamma_{\mathrm{syn}}(\mathcal{M})$ is an inverse limit of the locally finitely presented derived algebraic stacks
\[
\Gamma_{\mathrm{syn},m}(\mathcal{M})\defn \Gamma_{\mathrm{syn}}(\mathcal{M}\otimes\Int/p^m\Int).
\]

Moreover, if $\Fil^\bullet_{\mathrm{Hdg}}M$ is the perfect complex over $R$ with Tor amplitude in $[-1,0]$ obtained by pulling $\mathcal{M}$ back along $x^{\mathcal{N}}_{\dR,R}$, $\Gamma_{\mathrm{syn}}(\mathcal{M})$ admits a tangent complex over $R$ given by the pullback of $\gr^{-1}_{\mathrm{Hdg}}M[-1]$, which of course is perfect with Tor amplitude in $[0,1]$, and also has virtual rank $0$.

Now, since $\mathcal{M}$ is isomorphic to a direct summand of $\mathcal{M}\otimes\Int/p^n\Int$, we see that $\Gamma_{\mathrm{syn}}(\mathcal{M})$ is also locally finitely presented.

Combining the three previous paragraphs now shows that $\Gamma_{\mathrm{syn}}(\mathcal{M})$ is represented by a quasi-smooth derived algebraic stack over $R$ of virtual codimension $0$.

On the other hand, for every $m\ge 1$, we have an isomorphism
\[
\Gamma_{\mathrm{syn},m}(\mathcal{M})_{\mathrm{cl}}\xrightarrow{\simeq}\cofib(\mathcal{H}^{-1}[p^m]\to \mathcal{H}^0[p^m]),
\]
and taking inverse limits yields an isomorphism $\mathcal{G}(\mathcal{M})\xrightarrow{\simeq}\ker F$. 

This shows that the classical truncation of $\Gamma_{\mathrm{syn}}(\mathcal{M})$ takes discrete values; thus $\Gamma_{\mathrm{syn}}(\mathcal{M})$ is a quasi-smooth derived algebraic space over $R$ of virtual rank $0$. It also shows that, on algebraically closed fields, it takes \emph{finite} values. Therefore, by~\cite[Lemma 11.3.4]{gmm}, it is in fact a quasi-finite flat lci scheme over $R$, and, since it is a closed subscheme of $\mathcal{G}_1[p^k]$ for $k$ sufficiently large, it is in fact finite flat.

This verifies all the assertions of the lemma.
\end{proof}

\begin{remark}
   [Cartier duality pairing]
\label{rem:cartier_duality_pairing}
 We have a canonical pairing
 \[
    \Gamma_{\mathrm{syn}}(\mathcal{M}^\vee[1]\{1\})\times \Gamma_{\mathrm{syn}}(\mathcal{M}) \to \Gamma_{\mathrm{syn}}(\mathcal{M}^\vee\otimes \mathcal{M}[1]\{1\})\to \Gamma_{\mathrm{syn}}(\mathcal{O}^{\mathrm{syn}}\{1\}[1]).
 \]
 Here, $\mathcal{O}^{\mathrm{syn}}$ is the structure sheaf of $R^{\mathrm{syn}}$, and we have used the tautological pairing $\mathcal{M}^\vee\otimes \mathcal{M} \to \mathcal{O}^{\mathrm{syn}}$. By Proposition~\ref{prop:syntomic_and_fppf} and~\cite[Proposition 11.4.1]{gmm}, we have a canonical isomorphism of formal stacks
 \[
    \Gamma_{\mathrm{syn}}(\mathcal{O}^{\mathrm{syn}}\{1\}[1])\xrightarrow{\simeq}\varprojlim_m B\mup[p^m],
 \]
 where the transition maps are induced by the $p$-power map $\mup[p^{n+1}]\to \mup[p^n]$. If $\mathcal{M}$ is in $\mathsf{P}^{\mathrm{syn}}_{n,\{0,1\}}(R)$, then this factors canonically through the fiber of the $p^n$-power map
 \[
    \fib(\varprojlim_m B\mup[p^m]\xrightarrow{p^n}\varprojlim_m B\mup[p^m])\xrightarrow{\simeq}\varprojlim_m \mup[p^m]\otimes^{\mathbb{L}}_{\Int}\Int/p^n\Int\xrightarrow{\simeq}\mup[p^n].
 \]
 Explicitly, the inverse to the composition of these isomorphisms takes a section $x\in \mup[p^n](C)$ to the compatible family of $\mup[p^{m-n}]$-torsors (for $m\ge n$) parameterizing sections $x_m\in \mup[p^m](C)$ such that $x_m^{p^{m-n}} = x$. 

 In sum, for $\mathcal{M}\in \mathsf{P}^{\mathrm{syn}}_{n,\{0,1\}}(R)$ we have a canonical pairing
 \begin{align}\label{eqn:cartier_duality_canonical_pairing}
 \mathcal{G}(\mathcal{M}^\vee[1]\{1\})\times \mathcal{G}(\mathcal{M})\to \mup[p^n].
 \end{align}
\end{remark}

\subsection{Full faithfulness and essential surjectivity}

Next, we show:
\begin{lemma}
\label{lem:full_faithfulness}
The functor $\mathcal{G}$ is fully faithful.
\end{lemma}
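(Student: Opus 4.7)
The plan is to combine Raynaud's theorem for $F$-gauges (Theorem~\ref{thm:syntomic_raynaud}) with the exact equivalence for vector bundle $F$-gauges and $p$-divisible groups (Theorem~\ref{thm:dieudonne} together with Theorem~\ref{thm:exactness}) to reduce both Hom sets to a common double-complex computation.

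First, after enlarging $n$ we may assume $\mathcal{M}_1,\mathcal{M}_2 \in \mathsf{P}^{\mathrm{syn}}_{n,\{0,1\}}(R)$ for a common $n \geq 1$; both Hom groups are then $p^n$-torsion. Since both sides satisfy fpqc descent (Remark~\ref{rem:fpqc_descent_for_syntomic_cohomology} on the $F$-gauge side, and since the Hom functor of finite flat group schemes is an fpqc sheaf on the other), the assertion may be checked pro-\'etale locally on $\Spf R$. Theorem~\ref{thm:syntomic_raynaud} then provides cofiber presentations
\[
\mathcal{V}_i^{-1} \xrightarrow{f_i} \mathcal{V}_i^0 \to \mathcal{M}_i, \quad i = 1, 2,
\]
by vector bundle $F$-gauges of Hodge-Tate weights in $\{0,1\}$, corresponding via Theorem~\ref{thm:dieudonne} to isogenies $\alpha_i \colon \mathcal{H}_i^{-1} \to \mathcal{H}_i^0$ of $p$-divisible groups. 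Lemma~\ref{lem:gamma_syn_ffg} identifies $\mathcal{G}(\mathcal{M}_i) = \ker(\alpha_i)$, and Theorem~\ref{thm:exactness} upgrades this to the short exact sequences $0 \to \mathcal{G}(\mathcal{M}_i) \to \mathcal{H}_i^{-1} \to \mathcal{H}_i^0 \to 0$ of fppf sheaves.

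Iteratively applying $R\Hom_{R^{\mathrm{syn}}}(-,-)$ to the two cofiber sequences presents $R\Hom_{R^{\mathrm{syn}}}(\mathcal{M}_1,\mathcal{M}_2)$ as the total fiber of the $2 \times 2$ square with entries $R\Hom_{R^{\mathrm{syn}}}(\mathcal{V}_1^i, \mathcal{V}_2^j)$, $i,j \in \{-1,0\}$. Dually, the derived identification of $\mathcal{G}(\mathcal{M}_i)$ with a shift of the two-term complex $[\mathcal{H}_i^{-1} \to \mathcal{H}_i^0]$ in $D(\mathrm{fppf})$ exhibits $R\Hom_{\mathrm{fppf}}(\mathcal{G}(\mathcal{M}_1), \mathcal{G}(\mathcal{M}_2))$ as the total fiber of the analogous square with entries $R\Hom_{\mathrm{fppf}}(\mathcal{H}_1^i, \mathcal{H}_2^j)$. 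Functoriality of $\Gamma_{\mathrm{syn}}$ provides a natural comparison map between the two squares, and full faithfulness becomes the assertion that the induced map on $H^0$ of the total fibers is bijective.

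For this last step, Theorem~\ref{thm:dieudonne} supplies the bijection $\Hom_{R^{\mathrm{syn}}}(\mathcal{V}_1^i, \mathcal{V}_2^j) \xrightarrow{\simeq} \Hom_{\mathrm{fppf}}(\mathcal{H}_1^i, \mathcal{H}_2^j)$ at the corners, while the exactness refinement of Theorem~\ref{thm:exactness} matches the first-order extension data in the respective exact subcategories, which after a direct diagram chase (or a spectral-sequence argument for the associated double complex) identifies the $H^0$ of the two total fibers. The main obstacle is precisely this bookkeeping of $\Ext^1$-contributions to $H^0$ of the totalization: in general both $H^0$ and $H^1$ of the corner spectra can leak into $H^0$ of the total fiber, and controlling this leakage is exactly what the exactness statement of Theorem~\ref{thm:exactness} --- not available in the antecedent classification of~\cite{gmm} --- is designed to enable.
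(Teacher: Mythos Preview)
Your overall strategy matches the paper's: localize via pro-\'etale descent, invoke Theorem~\ref{thm:syntomic_raynaud} to resolve by vector bundle $F$-gauges, then compare using the Dieudonn\'e equivalence and its exactness. However, your execution has two gaps relative to the paper's argument.

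First, you assert that ``functoriality of $\Gamma_{\mathrm{syn}}$ provides a natural comparison map between the two squares'' at the derived level. But $\mathcal{G}=\Gamma_{\mathrm{syn}}$ is constructed as a functor of $1$-categories $\mathsf{P}^{\mathrm{syn}}_{\{0,1\}}(R)\to\mathrm{FFG}(R)$; you have no a priori map of spectra $R\Hom_{R^{\mathrm{syn}}}(\mathcal{V}_1^i,\mathcal{V}_2^j)\to R\Hom_{D(\mathrm{fppf})}(\mathcal{H}_1^i,\mathcal{H}_2^j)$ compatible with the square. The paper avoids this by never comparing derived mapping spectra directly: it works with honest $\Hom$ and $\Ext^1$ groups, builds the comparison maps explicitly via Proposition~\ref{prop:syntomic_and_fppf}, and applies the five lemma twice---first to establish $\Hom(\mathcal{M}_1,\mathcal{V}_2^?[1])\xrightarrow{\simeq}\Hom(\mathcal{G}_1,\mathcal{H}_2^?)$, then to conclude.

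Second, your ``direct diagram chase'' hides a genuine vanishing input. In the paper's organization, left-exactness of the top row in the second five-lemma diagram requires $\Hom(\mathcal{M}_1,\mathcal{V}_2^{-1})=0$, i.e.\ that a $p^n$-torsion perfect $F$-gauge admits no nonzero maps to a $\Int_p$-flat vector bundle $F$-gauge. This is not formal: the paper proves it by deformation theory and reduction to perfect rings (paralleling the argument in Lemma~\ref{lem:enough_to_check_mod_n}). In your total-fiber framing the analogous input would be the vanishing of $\pi_1$ at the corners together with the $\Ext^1$ comparison, but you do not verify either, nor explain why no $\Ext^2$ contribution enters $H^0$ of the totalization.
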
 
\begin{proof}
We want to show that the map
\[
\Hom(\mathcal{M}_1,\mathcal{M}_2)\to \Hom(\mathcal{G}(\mathcal{M}_1),\mathcal{G}(\mathcal{M}_2))
\]
is an isomorphism for $\mathcal{M}_1,\mathcal{M}_2\in \mathsf{P}^{\mathrm{syn}}_{n,\{0,1\}}(R)$.

By pro-\'etale descent and Theorem~\ref{thm:syntomic_raynaud}, we can assume that we have, for $i=1,2$, cofiber sequences
\[
\mathcal{V}^{-1}_i\xrightarrow{f_i} \mathcal{V}^{0}_i\to \mathcal{M}_i
\]
with $\mathcal{V}^?_i$ vector bundle $F$-gauges over $R$. 

Theorem~\ref{thm:dieudonne} gives us maps of $p$-divisible groups $\mathcal{H}_i^{-1}\to \mathcal{H}^0_i$ for $i=1,2$ associated functorially with $f_i$. 

Set $\mathcal{G}_i = \mathcal{G}(\mathcal{M}_i)$. We first claim that there are canonical isomorphisms
\begin{align*}
\Hom(\mathcal{M}_1,\mathcal{V}^?_2[1])&\xrightarrow{\simeq}\Hom(\mathcal{G}_1,\mathcal{H}^?_2)
\end{align*}
for $?=-1,0$.

For this, note that we have a diagram with exact rows
\[
\begin{diagram}
0&\rTo&\Hom(\mathcal{V}_1^0,\mathcal{V}_2^?)&\rTo& \Hom(\mathcal{V}_1^{-1},\mathcal{V}_2^?)&\rTo& \Hom(\mathcal{M}_1,\mathcal{V}_2^?[1])&\rTo& \Hom(\mathcal{V}_1^0,\mathcal{V}_2^?[1])&\rTo& \Hom(\mathcal{V}_1^{-1},\mathcal{V}_2^?[1])\\
&&\dTo&&\dTo&&\dTo&&\dTo&&\dTo\\
0&\rTo& \Hom(\mathcal{H}_1^0,\mathcal{H}_2^?)&\rTo& \Hom(\mathcal{H}_1^{-1},\mathcal{H}_2^?)&\rTo& \Hom(\mathcal{G}_1,\mathcal{H}_2^?)&\rTo& \Ext(\mathcal{H}_1^0,\mathcal{H}_2^?)&\rTo& \Ext(\mathcal{H}_1^{-1},\mathcal{H}_2^?).
\end{diagram}
\]

The vertical maps are all obtained via the functor $\Gamma_{\mathrm{syn}}$, and the three vertical arrows on the right also use Proposition~\ref{prop:syntomic_and_fppf}. All arrows except the middle one are known to be isomorphisms: the first two from Theorem~\ref{thm:dieudonne} and the last two from Theorem~\ref{thm:exactness}. From this and the five lemma it follows that the middle arrow is also an isomorphism.

Now, consider another diagram
\[
\begin{diagram}
0&\rTo&\Hom(\mathcal{M}_1,\mathcal{M}_2)&\rTo&\Hom(\mathcal{M}_1,\mathcal{V}^0_2[1])&\rTo&\Hom(\mathcal{M}_1,\mathcal{V}^{-1}_2[1])\\
&&\dTo&&\dTo&&\dTo\\
0&\rTo&\Hom(\mathcal{G}_1,\mathcal{G}_2)&\rTo&\Hom(\mathcal{G}_1,\mathcal{H}^0_2)&\rTo&\Hom(\mathcal{G}_1,\mathcal{H}^{-1}_2)
\end{diagram}
\]
Once again, the maps here are induced by $\Gamma_{\mathrm{syn}}$. The bottom row is exact, and we have just seen that the right two vertical arrows are isomorphisms. It is now enough to know that the top row is also exact, which certainly would follow if we knew $\Hom(\mathcal{M}_1,\mathcal{V}_2^{-1}) = 0$. This can be checked just as in the proof of Lemma~\ref{lem:enough_to_check_mod_n}: This is the space of sections of $\mathcal{F}= \mathcal{M}_1^\vee\otimes\mathcal{V}_2^{-1}$, which is a perfect $F$-gauge of Hodge-Tate weights $\{0,1\}$ and Tor amplitude $[0,1]$. We reduce to the case where $R$ is a perfect $\Field_p$-algebra, where the conclusion follows since $\mathcal{V}_2^{-1}$ is a vector bundle over the $\Int_p$-flat stack $R^{\mathrm{syn}}$.
\end{proof}

Let us now verify essential surjectivity. By the full faithfulness shown above, it suffices to check that any finite flat group scheme is \'etale locally in the image of the functor. By Raynaud's theorem~\cite[Th\'eor\`eme 3.1.1]{bbm:cris_ii}, we can assume that the group scheme is the kernel of an isogeny between $p$-divisible groups. But now we can simply invoke Theorem~\ref{thm:dieudonne}, and combine it with Lemma~\ref{lem:gamma_syn_ffg}, to see that such a group scheme is in the image of $\mathcal{G}$.

\subsection{Exactness}

Let us turn to the exactness of the equivalence. As in the proof of Theorem~\ref{thm:exactness}, for any $\mathcal{M}_1, \mathcal{M}_2$ in $\mathsf{P}^{\mathrm{syn}}_{n,\{0,1\}}(R)$ with $G_i = \mathcal{G}(\mathcal{M}_i)$ for $i=1,2$, we have a canonical map 
\begin{align}\label{eqn:ext_to_ext}
\Hom(\mathcal{M}_1,\mathcal{M}_2[1])\to \Ext(G_1,G_2)
\end{align}
and it remains to prove:

\begin{lemma}\label{lem:ffg_ext_bijection}
The map~\eqref{eqn:ext_to_ext} is a bijection.
\end{lemma}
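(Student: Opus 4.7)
The proof will follow the template of Lemma~\ref{lem:full_faithfulness} shifted up by one degree, applying the five-lemma twice. After pro-\'etale localization via Theorem~\ref{thm:syntomic_raynaud}, I may assume that both $\mathcal{M}_i$ admit Raynaud resolutions $\mathcal{V}_i^{-1} \xrightarrow{f_i} \mathcal{V}_i^0 \to \mathcal{M}_i$ corresponding under Theorem~\ref{thm:dieudonne} to isogenies $\mathcal{H}_i^{-1} \to \mathcal{H}_i^0$ of $p$-divisible groups with $G_i = \ker$.

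The first, intermediate, application of the five-lemma aims to establish $\Hom(\mathcal{M}_1, \mathcal{V}_2^?[2]) \xrightarrow{\simeq} \Ext(G_1, \mathcal{H}_2^?)$ for $? \in \{-1, 0\}$, a shifted analog of the intermediate claim proved inside Lemma~\ref{lem:full_faithfulness}. The top row of the diagram comes from applying $\Hom(-, \mathcal{V}_2^?[2])$ to the Raynaud resolution of $\mathcal{M}_1$, the bottom row from applying $R\Hom_{\mathrm{fppf}}(-, \mathcal{H}_2^?)$ to the short exact sequence $0 \to G_1 \to \mathcal{H}_1^{-1} \to \mathcal{H}_1^0 \to 0$. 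The flanking isomorphisms on the left edge come from Theorem~\ref{thm:exactness}, which identifies $\Hom(\mathcal{V}_1^?, \mathcal{V}_2^{??}[1])$ with $\Ext(\mathcal{H}_1^?, \mathcal{H}_2^{??})$.

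With that intermediate isomorphism in hand, a second five-lemma then yields the main claim. The diagram is: top row from $\Hom(\mathcal{M}_1, -)$ applied to the fiber sequence $\mathcal{V}_2^{-1} \to \mathcal{V}_2^0 \to \mathcal{M}_2$, bottom row from $R\Hom_{\mathrm{fppf}}(G_1, -)$ applied to $0 \to G_2 \to \mathcal{H}_2^{-1} \to \mathcal{H}_2^0 \to 0$. The four flanking isomorphisms come from Lemma~\ref{lem:full_faithfulness} (for the $\Hom$ comparisons $\Hom(\mathcal{M}_1, \mathcal{V}_2^?[1]) \cong \Hom(G_1, \mathcal{H}_2^?)$) and from the intermediate step above (for the $\Ext$ comparisons $\Hom(\mathcal{M}_1, \mathcal{V}_2^?[2]) \cong \Ext(G_1, \mathcal{H}_2^?)$). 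The conclusion is the desired bijection $\Hom(\mathcal{M}_1, \mathcal{M}_2[1]) \cong \Ext(G_1, G_2)$.

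The main obstacle is the intermediate step, which requires the flanking isomorphism $\Hom(\mathcal{V}_1^?, \mathcal{V}_2^{??}[2]) \cong \Ext^2(\mathcal{H}_1^?, \mathcal{H}_2^{??})$ at degree two on the outer edges of the five-term diagram. This is not immediate from Theorem~\ref{thm:exactness} (which concerns only $\Ext^1$), so one needs either to verify monic/epic conditions at those edges directly, or to bypass the intermediate step altogether via a smoothness-and-descent argument mirroring Theorem~\ref{thm:smoothness_of_extensions}: show that the formal prestacks parameterizing extensions on both sides are smooth, and reduce via $p$-completely faithfully flat descent to the quasisyntomic case, where the corresponding exactness statement already holds by \cite[Remark 3.83]{Mondal2024-cy}.
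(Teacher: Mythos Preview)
Your template is right in spirit, but the acknowledged obstacle is real and neither workaround you sketch closes it. The degree-two comparison $\Hom(\mathcal{V}_1^?,\mathcal{V}_2^{??}[2])\simeq \Ext^2(\mathcal{H}_1^?,\mathcal{H}_2^{??})$ between $p$-divisible groups is not available from anything in the paper, and your smoothness-and-descent bypass would require showing that $\underline{\Ext}(G_1,G_2)$ is smooth for \emph{arbitrary} finite flat $G_i$; the argument of Theorem~\ref{thm:smoothness_of_extensions} uses crucially that $G_i$ are truncated Barsotti--Tate (so that the tangent complex $\gr^{-1}_{\mathrm{Hdg}}N[-1]$ is connective), and this fails for general objects of $\mathsf{P}^{\mathrm{syn}}_{n,\{0,1\}}$.

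The paper avoids the degree-two issue by a different decomposition: instead of resolving $\mathcal{M}_2$ by $p$-adic vector bundles via Theorem~\ref{thm:syntomic_raynaud}, it uses Corollary~\ref{cor:bt_in_the_middle} to \emph{embed} $\mathcal{M}_2$ into a level-$n$ vector bundle, producing fiber sequences $\mathcal{M}_2\to\mathcal{V}\to\mathcal{M}'_2$ and $\mathcal{M}'_2\to\mathcal{W}\to\mathcal{M}_2$ with $\mathcal{V},\mathcal{W}\in\mathrm{Vect}^{\mathrm{syn}}_{n,\{0,1\}}(R)$ and $\mathcal{M}'_2\in\mathsf{P}^{\mathrm{syn}}_{n,\{0,1\}}(R)$. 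The resulting five-term diagram has $\Hom(\mathcal{M}_1,\mathcal{V}[1])\to\Ext(G_1,H)$ in the fourth slot, where $H$ is an $n$-truncated BT rather than a $p$-divisible group; this is exactly the statement being proved, but with $\mathcal{M}_2$ replaced by a level-$n$ vector bundle. The two sandwich sequences together give injectivity first and then surjectivity, reducing to $\mathcal{M}_2\in\mathrm{Vect}^{\mathrm{syn}}_{n,\{0,1\}}(R)$. A symmetric argument in the first variable reduces further to $\mathcal{M}_1\in\mathrm{Vect}^{\mathrm{syn}}_{n,\{0,1\}}(R)$, and this base case is precisely Theorem~\ref{thm:exactness}. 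No $\Ext^2$ ever appears.
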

\begin{proof}
By \'etale descent and Corollary~\ref{cor:bt_in_the_middle}, we can assume that there exist $\mathcal{V},\mathcal{W}\in \mathrm{Vect}^{\mathrm{syn}}_{n,\{0,1\}}(R)$ and fiber sequences 
\[
\mathcal{M}_2\to \mathcal{V}\to \mathcal{M}'_2\;;\; \mathcal{M}'_2\to \mathcal{W}\to \mathcal{M}_2
\]
in $\mathsf{P}^{\mathrm{syn}}_{n,\{0,1\}}(R)$. Set $H = \mathcal{G}(\mathcal{V})$ and $G'_2 = \mathcal{G}(\mathcal{M}'_2)$. Then we get the following commuting diagram with exact rows:
\[
\begin{diagram}
\Hom(\mathcal{M}_1,\mathcal{V})&\rTo& \Hom(\mathcal{M}_1,\mathcal{M}'_2)&\rTo& \Hom(\mathcal{M}_1,\mathcal{M}_2[1])&\rTo& \Hom(\mathcal{M}_1,\mathcal{V}[1])&\rTo&\Hom(\mathcal{M}_1,\mathcal{M}'_2[1])\\
\dTo&&\dTo&&\dTo&&\dTo&&\dTo\\
\Hom(G_1,H)&\rTo& \Hom(G_1,G'_2)&\rTo& \Ext(G_1,G_2)&\rTo& \Ext(G_1,H)&\rTo&\Ext(G_1,G'_2).
\end{diagram}
\]
The two vertical maps on the left are isomorphisms. Suppose that we knew that the fourth vertical arrow is an isomorphism. This would tell us that the third arrow is injective. If we knew further that the fifth arrow is also injective, then we could also conclude that the third arrow is in fact an isomorphism. This injectivity would however be implied by the same argument applied with the fiber sequence $\mathcal{M}'_2\to \mathcal{W}\to \mathcal{M}_2$. In summary, it is enough to know that~\eqref{eqn:ext_to_ext} is an isomorphism when $\mathcal{M}_2\in \mathrm{Vect}^{\mathrm{syn}}_{n,\{0,1\}}(R)$. 

A similar argument in the other variable reduces us to the case where $\mathcal{M}_1$ is also in $\mathrm{Vect}^{\mathrm{syn}}_{n,\{0,1\}}(R)$\footnote{One can also obtain this reduction via Cartier duality.},  and so we are now done by Theorem~\ref{thm:exactness}. 
\end{proof}

At this point, we have essentially completed the proof of Theorem~\ref{thm:main}. The remaining assertions to be checked are the compatibility with Cartier duality, arbitrary base-change and fpqc descent. By construction, the equivalence is compatible with quasisyntomic descent. Therefore, we can use Theorem~\ref{thm:syntomic_raynaud} (and Remark~\ref{rem:cartier_duality_pairing}) to reduce the verification to the case of truncated Barsotti-Tate group schemes, where it is already known by Theorem~\ref{thm:dieudonne}.

\section{Some cohomological complements}
\label{sec:complements_to_the_classification_result}

In this section, we record the relationship between the cohomological realizations of an object in $\mathrm{FFG}_n(R)$ with those of its associated $F$-gauge. We follow this up with a couple of applications to the fppf cohomology of such group schemes, including a strengthening of a result of Bragg-Olsson on the representability of the relative fppf cohomology of a finite flat group scheme in proper smooth families and a proof (following Bhatt-Lurie) of the $p$-primary part of purity results of \v{C}esnavi\v{c}ius-Scholze.

\subsection{Cohomological realizations}

Suppose that $R$ is $p$-complete and discrete and that we have $G\in \mathrm{FFG}(R)$ corresponding to $\mathcal{M}\in \mathsf{P}^{\mathrm{syn}}_{\{0,1\}}(R)$. The results of this subsection are immediate from the proof of Theorem~\ref{thm:main} and the corresponding facts for truncated BT groups.

\begin{proposition}
   [Flat cohomology via syntomic cohomology]
\label{prop:flat_via_syntomic_ffg}
There is a canonical isomorphism
\[
   R\Gamma_{\mathrm{fppf}}(\Spec R,G)\xrightarrow{\simeq}R\Gamma(R^{\mathrm{syn}},\mathcal{M}).
\]
In fact, for every $m\ge 1$ there is a canonical isomorphism of smooth Artin $m$-stacks
\begin{align*}
B^mG \xrightarrow{\simeq}\Gamma_{\mathrm{syn}}(\mathcal{M}[m]).
\end{align*}
\end{proposition}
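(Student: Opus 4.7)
The plan is to mimic the proof of Proposition~\ref{prop:syntomic_and_fppf}, invoking Theorem~\ref{thm:syntomic_raynaud} (syntomic Raynaud) and the exactness of the equivalence $\mathcal{G}$ from Theorem~\ref{thm:main} to accommodate the fact that, unlike in the truncated Barsotti-Tate case, $B^m G$ for a general finite flat group $G$ is only quasi-smooth rather than smooth. As in that earlier proof, the second assertion implies the first by evaluating $B^m G \simeq \Gamma_{\mathrm{syn}}(\mathcal{M}[m])$ at $R$, shifting by $[-m]$ to obtain $\tau^{\leq m} R\Gamma_{\mathrm{fppf}}(\Spec R, G) \simeq \tau^{\leq m} R\Gamma(R^{\mathrm{syn}}, \mathcal{M})$, and taking the colimit over $m$.

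For the second assertion, I would first note that $\Gamma_{\mathrm{syn}}(\mathcal{M}[m])$ is, by Theorem~\ref{thm:perfect_f-gauges_repble}, a locally finitely presented derived $p$-adic formal Artin $m$-stack with tangent complex the pullback of $\gr^{-1}_{\mathrm{Hdg}} M[m-1]$. Since $\mathcal{M}$ has Tor amplitude $[-1, 0]$, so does $\gr^{-1}_{\mathrm{Hdg}} M$, and the resulting tangent complex has Tor amplitude $[-m, -m+1]$. The stack $B^m G$ is likewise locally finitely presented with the same tangent complex: the identification $\Lie(G) \simeq \gr^{-1}_{\mathrm{Hdg}} M[-1]$ can be read off directly from the description of the tangent complex of $G = \Gamma_{\mathrm{syn}}(\mathcal{M})$ recorded in Theorem~\ref{thm:perfect_f-gauges_repble}, extending the vector bundle case of Proposition~\ref{prop:lie_complex}. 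The canonical comparison map $B^m G \to \Gamma_{\mathrm{syn}}(\mathcal{M}[m])$, obtained by sheafifying the $m$-fold shift of the tautological projection $\Gamma_{\mathrm{syn}}(\mathcal{M}) \to R\Gamma(-^{\mathrm{syn}}, \mathcal{M})$, then induces an equivalence on tangent complexes, hence is ``\'etale'' in the derived sense. It remains to show it is an equivalence on classical truncations, which by local finite presentation reduces to a check on algebraically closed field-valued points.

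For this field-point check, I would apply Theorem~\ref{thm:syntomic_raynaud} to write $\mathcal{M}|_\kappa \simeq \cofib(\mathcal{V}^{-1} \to \mathcal{V}^0)$ for vector bundle $F$-gauges $\mathcal{V}^{-1}, \mathcal{V}^0$ over an algebraically closed field $\kappa$. The vanishing $H^i(\kappa^{\mathrm{syn}}, \mathcal{V}^j) = 0$ for $i \geq 1$, established in the proof of Proposition~\ref{prop:syntomic_and_fppf} via the surjectivity of $V - \varphi^*$ on finite dimensional $\kappa$-vector spaces, combined with the long exact sequence for the cofiber triangle, gives $H^i(\kappa^{\mathrm{syn}}, \mathcal{M}) = 0$ for $i \geq 1$ and hence $R\Gamma(\kappa^{\mathrm{syn}}, \mathcal{M}) \simeq G(\kappa)$ concentrated in degree $0$. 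Therefore $\Gamma_{\mathrm{syn}}(\mathcal{M}[m])(\kappa) \simeq K(G(\kappa), m) \simeq B^m G(\kappa)$, completing the verification.

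The main obstacle is the quasi-smooth (rather than smooth) nature of both stacks. In Proposition~\ref{prop:syntomic_and_fppf} the target is smooth, so the \'etale comparison map is an open immersion on classical truncations and the field-point check suffices; here one must instead invoke the Artin-Lurie representability criterion for locally finitely presented derived Artin stacks to conclude that a map inducing an equivalence on tangent complexes and on field-valued points is a global equivalence. This is standard but the only non-routine ingredient beyond the inputs from Theorems~\ref{thm:syntomic_raynaud}, \ref{thm:main}, and \ref{thm:perfect_f-gauges_repble}.
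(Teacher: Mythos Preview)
Your approach is correct and essentially the same as the paper's: the paper simply says the result is ``immediate from the proof of Theorem~\ref{thm:main} and the corresponding facts for truncated BT groups,'' which amounts to rerunning the proof of Proposition~\ref{prop:syntomic_and_fppf} and using Theorem~\ref{thm:syntomic_raynaud} for the field-point vanishing $H^i(\kappa^{\mathrm{syn}},\mathcal{M}) = 0$ for $i>0$. That is precisely what you do.

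Your one misstep is the final paragraph. The ``obstacle'' you identify does not exist: both $B^mG$ and $\Gamma_{\mathrm{syn}}(\mathcal{M}[m])$ are genuinely \emph{smooth} Artin $m$-stacks, not merely quasi-smooth. For $B^mG$ this holds because $G$ is flat (exactly as in the proof of Proposition~\ref{prop:syntomic_and_fppf}; concretely, a B\'egueri resolution gives a smooth atlas). For $\Gamma_{\mathrm{syn}}(\mathcal{M}[m])$, the tangent complex has Tor amplitude $[-m,-m+1]$, and this is the \emph{same} amplitude as in the truncated BT case---in both situations $\gr^{-1}_{\mathrm{Hdg}}M$ has Tor amplitude $[-1,0]$ over $R$ (cf.\ the proof of Lemma~\ref{lem:gamma_syn_ffg}). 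So the smoothness conclusion drawn in the proof of Proposition~\ref{prop:syntomic_and_fppf} applies verbatim here. Consequently the comparison map is an \'etale map of smooth (hence classical) Artin $m$-stacks, and the field-point bijection finishes the argument exactly as before; no appeal to Artin--Lurie representability is needed.
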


\begin{remark}
[Fppf cohomology over animated commutative rings]
\label{rem:fppf_animated}
For $G\in \mathrm{FFG}(R)$, one can define the fppf cohomology $R\Gamma_{\mathrm{fppf}}(\Spec C,G)$ for any $p$-nilpotent animated commutative $R$-algebra $C$; see~\cite[\S 5.2]{Cesnavicius2019-pn}. For any $n\ge 0$, the values of $\tau^{\le n}R\Gamma_{\mathrm{fppf}}(\Spec C,G)[n]$ are canonically isomorphic to $(B^nG)(C)$. Therefore, the conclusion of Proposition~\ref{prop:flat_via_syntomic_ffg} shows that we have a canonical isomorphism
\[
R\Gamma_{\mathrm{fppf}}(\Spec C,G)\xrightarrow{\simeq}R\Gamma(C^{\mathrm{syn}},\mathcal{M}).
\]
\end{remark}

\begin{remark}
[Crystalline realizations]
 The notation here will be as in Remark~\ref{rem:crystalline_realization_bt}. By~\cite[Definition 3.1.5]{bbm:cris_ii}, we have the Dieudonn\'e \emph{complex} $\Delta(G^*)$, given as the truncated internal RHom sheaf $\tau^{\le 1}\underline{\mathrm{RHom}}(G^*,\mathcal{O}^{\mathrm{crys}})$ in the big crystalline topos of $\Spf R$ equipped with the fppf topology. This is a perfect complex of crystals with Tor amplitude in $[0,1]$ and admits a map from the complex $\Fil^0_{\mathrm{Hdg}}\Delta(G^*) \defn \tau^{\le 1}\underline{\mathrm{RHom}}(G^*,\mathcal{I}^{\mathrm{crys}})$. For any divided power thickening $C'\twoheadrightarrow C$ of $p$-nilpotent $R$-algebras, this gives us a map of perfect complexes
\[
   \Fil^0\Delta(G^*)(C')\to \Delta(G^*)(C').
\]
 When evaluated on the trivial thickening $R\xrightarrow{\mathrm{id}}R$, we obtain a cofiber sequence (see~\cite[Proposition 3.2.10]{bbm:cris_ii})
 \[
    \Lie(G^*)^\vee[-1]\simeq (\Fil^0\Delta(G^*))(R)\to \Delta(G^*)(R)\to \Lie(G).
 \]
On the other hand, just as in Remark~\ref{rem:crystalline_realization_bt}, associated with $\mathcal{M}$ we have a perfect complex of crystals $\Delta(\mathcal{M})$ equipped with a filtration $\Fil^\bullet_{\mathrm{Hdg}}\Delta(\mathcal{M})$.
\end{remark}

\begin{proposition}
   [Comparison with crystalline realization]
\label{prop:crystalline_comp_ffg}
There is a canonical isomorphism of maps of perfect complexes of sheaves in the crystalline site
\[
   (\Fil^0_{\mathrm{Hdg}}\Delta(\mathcal{M})\to \Delta(\mathcal{M}))\xrightarrow{\simeq} (\Fil^0_{\mathrm{Hdg}}\Delta(G^*)[1]\to \Delta(G^*)[1]).
\]
In particular, if $\Fil^\bullet_{\mathrm{Hdg}}M$ is the filtered perfect complex over $R$ obtained from $\mathcal{M}$ via pullback along $x^{\mathcal{N}}_{\dR}$, then there is a canonical isomorphism
\[
\gr^{-1}_{\mathrm{Hdg}}M\xrightarrow{\simeq}\Lie(G)[1]
\]
\end{proposition}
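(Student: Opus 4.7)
The plan is to reduce to the already-established vector bundle (Barsotti--Tate) case, namely Proposition~\ref{prop:lie_complex}, by invoking Theorem~\ref{thm:syntomic_raynaud}. Both sides of the claimed isomorphism satisfy fpqc descent: the left hand side by Remark~\ref{rem:fpqc_descent_for_syntomic_cohomology} (and functoriality of $\Fil^\bullet_{\mathrm{Hdg}}$ in Construction~\ref{const:fil_hdg}), the right hand side by the sheaf property of $\underline{\mathrm{RHom}}$ in the big crystalline topos. So, after replacing $R$ by a pro-\'etale cover, we may assume by Theorem~\ref{thm:syntomic_raynaud} that we have a cofiber sequence
\[
\mathcal{V}^{-1}\xrightarrow{f}\mathcal{V}^0\to \mathcal{M}
\]
with $\mathcal{V}^{-1},\mathcal{V}^0\in \mathrm{Vect}_{\{0,1\}}(R^{\mathrm{syn}})$ and $f$ an isogeny.

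By Lemma~\ref{lem:gamma_syn_ffg} this corresponds, via Theorem~\ref{thm:dieudonne}, to an isogeny $\alpha:\mathcal{H}^{-1}\to \mathcal{H}^0$ of $p$-divisible groups with $G=\ker(\alpha)$, which gives a short exact sequence
\[
0\to \mathcal{H}^{0,*}\to \mathcal{H}^{-1,*}\to G^*\to 0
\]
of fppf sheaves by Cartier duality. Now I would apply $\underline{\mathrm{RHom}}(-,\mathcal{O}^{\mathrm{crys}})$ and $\underline{\mathrm{RHom}}(-,\mathcal{I}^{\mathrm{crys}})$ to this sequence. On the Barsotti--Tate pieces the internal $\underline{\Hom}$ vanishes (fiberwise Cartier dual has no global functions) and $\underline{\Ext}^i$ vanishes for $i\ge 2$, so in that range the truncated and untruncated RHoms agree and deliver the (filtered) Dieudonn\'e crystals $\Delta(\mathcal{H}^{i,*})[1] = \Dieu(\mathcal{H}^{i,*})$ in degree $0$ (after the shift by $1$). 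For $G^*$ finite flat the same vanishings hold in degree $0$ and in degrees $\ge 2$ after applying $\tau^{\le 1}$, so the long exact sequence becomes a fiber sequence
\[
\Delta(\mathcal{H}^{-1,*})[1]\to \Delta(\mathcal{H}^{0,*})[1]\to \Delta(G^*)[1]
\]
that is compatible with the analogous fiber sequence obtained using $\mathcal{I}^{\mathrm{crys}}$, yielding a fiber sequence of maps $(\Fil^0_{\mathrm{Hdg}}\Delta(G^*)[1]\to \Delta(G^*)[1])$.

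On the $F$-gauge side, $\Delta$ and $\Fil^0_{\mathrm{Hdg}}\Delta$ are exact functors of $F$-gauges (being obtained by pullback along the canonical map from the crystalline site to the Nygaard-filtered prismatization, cf.\ Remark~\ref{rem:crystalline_realization_bt}), so the cofiber sequence $\mathcal{V}^{-1}\to \mathcal{V}^0\to \mathcal{M}$ gives rise to a fiber sequence of maps $(\Fil^0_{\mathrm{Hdg}}\Delta(\mathcal{M})\to \Delta(\mathcal{M}))$ whose first two terms are identified with the first two terms on the $G^*$-side by Proposition~\ref{prop:lie_complex} applied to $\mathcal{V}^{-1}$ and $\mathcal{V}^0$. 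Functoriality of that proposition in morphisms of BT groups forces the resulting identification on the cofibers.

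The main technical point will be confirming that the cited isomorphisms in Proposition~\ref{prop:lie_complex}, which are stated as a comparison of filtered crystals, are \emph{natural} enough in $f$ to make the square with rows coming from the cofiber/fiber sequences above commute up to coherent homotopy; this amounts to extending the equivalence of Theorem~\ref{thm:dieudonne} to an equivalence of stable $\infty$-categories on the $n$-truncated BT side and its targeted filtered crystal category, which is present implicitly in the arguments of~\cite[\S 4.3]{MR4530092} and~\cite{Mondal2024-cy} and makes the natural transformation $\Delta(\mathcal{M})\simeq \Delta(G^*)[1]$ functorial in $\mathcal{M}$. Finally, the last assertion about $\gr^{-1}_{\mathrm{Hdg}}M \simeq \Lie(G)[1]$ is obtained by evaluating the filtered comparison on the trivial divided power thickening $R\xrightarrow{\mathrm{id}}R$, using the identification $\gr^0(\Fil^0_{\mathrm{Hdg}}\Delta(G^*)(R)\to \Delta(G^*)(R))\simeq \Lie(G)$ from~\cite[Proposition 3.2.10]{bbm:cris_ii}, and shifting.
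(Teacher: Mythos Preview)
Your proposal is correct and takes essentially the same approach as the paper: the paper simply states that the results of this subsection ``are immediate from the proof of Theorem~\ref{thm:main} and the corresponding facts for truncated BT groups,'' which amounts precisely to your reduction via Theorem~\ref{thm:syntomic_raynaud} to the vector bundle case handled by Proposition~\ref{prop:lie_complex}. You supply considerably more detail than the paper does, including the explicit fiber-sequence analysis on the crystalline side and the naturality check, but the underlying strategy is the same.
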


\begin{remark}
   [Grothendieck-Messing for finite flat group schemes]
Combining Proposition~\ref{prop:crystalline_comp_ffg} with Theorem~\ref{thm:HTwts01_representable} yields a Grothendieck-Messing theory for finite flat group schemes. More precisely, the groupoid of lifts of $G\in \mathrm{FFG}(R)$ along a nilpotent divided power thickening $R'\twoheadrightarrow R$ are in canonical equivalence with the groupoid of filtered perfect complex structures on the perfect complex $\Delta(G)(R')$ lifting the Hodge-filtered complex $\Fil^\bullet_{\mathrm{Hdg}}\Delta(G)(R)$. This is a special case of a result of Faltings~~\cite[Theorem 17]{Faltings2002-mp}.
\end{remark}

\begin{corollary}
   [The Mazur-Roberts carpet]
\label{cor:mazur_roberts_ffg}
Suppose that $(C'\twoheadrightarrow C,\gamma)$ is a nilpotent divided power extension of $p$-nilpotent animated commutative $R$-algebras with $I = \ker(C'\twoheadrightarrow C)$. Then we have a canonical fiber sequence
\[
R\Gamma_{\mathrm{fppf}}(\Spec C',G)\to R\Gamma_{\mathrm{fppf}}(\Spec C,G)\to \Lie(G)\otimes_RI[1]
\]
\end{corollary}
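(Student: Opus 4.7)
The plan is to simply chain together three previously established results in the paper, which is why the authors advertise this as immediate from the cited statements. First, by Proposition~\ref{prop:flat_via_syntomic_ffg} together with Remark~\ref{rem:fppf_animated}, the fppf cohomology of $G$ over any $p$-nilpotent animated $R$-algebra $C$ is canonically identified with the syntomic cohomology $R\Gamma(C^{\mathrm{syn}},\mathcal{M})$. Since $G$ is $p^n$-torsion for some $n$, we have $\mathcal{M}\in \mathsf{P}^{\mathrm{syn}}_{n,\{0,1\}}(R)$, so we may rewrite this as $R\Gamma(C^{\mathrm{syn}},\iota_{n,*}\mathcal{M}\vert_{C^{\mathrm{syn}}\otimes\Int/p^n\Int})$, placing ourselves exactly in the setup of Corollary~\ref{cor:def_theory_f_gauges}.

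Next, I would apply Corollary~\ref{cor:def_theory_f_gauges} to the given nilpotent divided power thickening $(C'\twoheadrightarrow C,\gamma)$ with kernel $I$; this hypothesis is precisely what lets us invoke the corollary, whose output is a canonical fiber sequence
\[
R\Gamma(C^{',\mathrm{syn}},\iota_{n,*}\mathcal{M}\vert_{C^{',\mathrm{syn}}})\to R\Gamma(C^{\mathrm{syn}},\iota_{n,*}\mathcal{M}\vert_{C^{\mathrm{syn}}})\to I\otimes_R\gr^{-1}_{\mathrm{Hdg}}M_n,
\]
where $\Fil^\bullet_{\mathrm{Hdg}}M_n$ is obtained by pulling $\mathcal{M}$ back along the filtered de Rham point $x^{\mathcal{N}}_{\dR}$.

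Finally, I would invoke Proposition~\ref{prop:crystalline_comp_ffg}, whose last assertion provides a canonical isomorphism $\gr^{-1}_{\mathrm{Hdg}}M_n\xrightarrow{\simeq}\Lie(G)[1]$. Substituting this into the fiber sequence above, and reinterpreting the two syntomic cohomology groups as fppf cohomology via the first step, yields the desired fiber sequence
\[
R\Gamma_{\mathrm{fppf}}(\Spec C',G)\to R\Gamma_{\mathrm{fppf}}(\Spec C,G)\to \Lie(G)\otimes_RI[1].
\]
There is no real obstacle here: all the hard work has been done in setting up the deformation theory for syntomic cohomology of perfect $F$-gauges of Hodge-Tate weights $\le 1$ (Theorem~\ref{thm:perfect_f-gauges_repble} and its Corollary~\ref{cor:def_theory_f_gauges}), in passing from syntomic to fppf cohomology via the main theorem (Theorem~\ref{thm:main} and Proposition~\ref{prop:flat_via_syntomic_ffg}), and in identifying the Hodge-Tate graded piece with the shifted Lie complex (Proposition~\ref{prop:crystalline_comp_ffg}). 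The corollary is merely the assembly of these three inputs.
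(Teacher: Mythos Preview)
Your proposal is correct and follows essentially the same approach as the paper: the paper states at the beginning of \S\ref{sec:complements_to_the_classification_result} that all results in that subsection, including this corollary, are immediate from the proof of Theorem~\ref{thm:main} and the corresponding facts for truncated BT groups (namely Corollary~\ref{cor:deformation_theory_bt_ns}, which was itself deduced from Propositions~\ref{prop:syntomic_and_fppf}, \ref{prop:lie_complex} and Corollary~\ref{cor:def_theory_f_gauges}). Your explicit chaining of Proposition~\ref{prop:flat_via_syntomic_ffg}, Corollary~\ref{cor:def_theory_f_gauges}, and Proposition~\ref{prop:crystalline_comp_ffg} is exactly the intended unpacking of this remark in the general $\mathrm{FFG}$ setting.
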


\begin{remark}
\label{rem:p-complete_fppf_limit}
    Suppose that $C$ is a derived $p$-complete animated commutative $R$-algebra. By a standard limiting argument, using the integrability of the iterated classifying stacks $B^mG$, we have
    \[
     R\Gamma(\Spec C,G) \xrightarrow{\simeq}\varprojlim_m R\Gamma(\Spec C/{}^{\mathbb{L}}p^m,G).
    \]
    If $p>2$, then we can combine this with Corollary~\ref{cor:mazur_roberts_ffg} applied via a limit argument to the pro-nilpotent divided power extension $R\twoheadrightarrow R/{}^{\mathbb{L}}p$ to obtain a  canonical fiber sequence
   \[
      R\Gamma_{\mathrm{fppf}}(\Spec C,G)\to R\Gamma_{\mathrm{fppf}}(\Spec C/{}^{\mathbb{L}}p,G)\to \Lie(G)[1].
   \]
   Concretely, if $C$ is $p$-completely flat over $\Int_p$, then the map $H^i(\Spec C,G)\to H^i(\Spec C/pC,G)$ is an isomorphism for $i\ge 2$, and we have a long exact sequence
   \[
    0\to t_G\to G(C)\to G(C/pC)\to v_G\to H^1_{\mathrm{fppf}}(\Spec C,G)\to H^1_{\mathrm{fppf}}(\Spec C/pC,G)\to 0.
   \]
\end{remark}

\begin{proposition}
   [\'Etale comparison]
\label{prop:etale_comparison}
There is a canonical isomorphism $T_{\et}(\mathcal{M})\xrightarrow{\simeq}G^{\ad}_\eta$ as well as a comparison isomorphism
\[
R\Gamma_{\mathrm{qsyn}}(\Spf R,\mathcal{M}_{-}[1/\mathcal{I}])^{\varphi = \mathrm{id}}\xrightarrow{\simeq}R\Gamma_{\et}(\Spf(R)^{\ad}_\eta,G^{\ad}_\eta).
\]
The notation here is as in Remark~\ref{rem:etale_realization_cohom}.
\end{proposition}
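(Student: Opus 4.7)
The plan is to reduce both isomorphisms to the case of vector bundle $F$-gauges via Theorem~\ref{thm:syntomic_raynaud}, where they are supplied by Proposition~\ref{prop:etale_realization_bt} and a $p$-divisible-group version of Remark~\ref{rem:etale_realization_cohom}. Both sides of each comparison are exact functors of $\mathcal{M}$ in a stable $\infty$-categorical sense and satisfy pro-\'etale descent on $\Spf R$, so it suffices to work after a pro-\'etale localization. By Theorem~\ref{thm:syntomic_raynaud}, we may then assume $\mathcal{M} = \cofib(f\colon \mathcal{V}^{-1} \to \mathcal{V}^0)$ for vector bundle $F$-gauges $\mathcal{V}^i \in \mathrm{Vect}_{\{0,1\}}(R^{\mathrm{syn}})$; by Lemma~\ref{lem:gamma_syn_ffg}, $G = \mathcal{G}(\mathcal{M})$ is the kernel of the isogeny $\alpha\colon \mathcal{H}^{-1} \to \mathcal{H}^0$ of $p$-divisible groups associated with $f$ via Theorem~\ref{thm:dieudonne}, and the isogeny datum of Lemma~\ref{lem:isogenies_pn} yields a $p^n$-quasi-inverse $\hat\alpha$.

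For the first isomorphism, apply the exact functor $T_{\et}$ to the cofiber sequence $\mathcal{V}^{-1} \to \mathcal{V}^0 \to \mathcal{M}$, and identify $T_{\et}(\mathcal{V}^i)$ with the Tate module $T_p\mathcal{H}^i$ (as a lisse $\Int_p$-sheaf on $\Spf(R)^{\ad}_\eta$) via Proposition~\ref{prop:etale_realization_bt} applied level-by-level, together with $T_p\mathcal{H}^i = \varprojlim_m \mathcal{H}^i[p^m]$. The relation $T_p\alpha\circ T_p\hat\alpha = p^n = T_p\hat\alpha\circ T_p\alpha$ shows that $T_p\alpha$ is injective with cokernel killed by $p^n$; a snake-lemma argument applied to the short exact sequence $0 \to G^{\ad}_\eta \to \mathcal{H}^{-1,\ad}_\eta \to \mathcal{H}^{0,\ad}_\eta \to 0$ of \'etale sheaves on the generic fiber identifies this cokernel canonically with $G^{\ad}_\eta$. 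Consequently $T_{\et}(\mathcal{M}) \simeq \cofib(T_p\alpha) \simeq G^{\ad}_\eta$.

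For the second isomorphism, the right-hand side is, by the first part, $R\Gamma_{\et}(\Spf(R)^{\ad}_\eta, T_{\et}(\mathcal{M}))$, hence exact in $\mathcal{M}$; the left-hand side is tautologically exact. A natural comparison map between them is obtained by extending the map of Remark~\ref{rem:etale_realization_cohom} from truncated Barsotti-Tate groups to general $F$-gauges. Via the Raynaud reduction, checking that this map is an isomorphism reduces to the case of a vector bundle $F$-gauge $\mathcal{V}$ corresponding to a $p$-divisible group $\mathcal{H}$, where the required identity
\[
R\Gamma_{\mathrm{qsyn}}(\Spf R, \mathcal{V}_{-}[1/\mathcal{I}])^{\varphi=\id} \xrightarrow{\simeq} R\Gamma_{\et}(\Spf(R)^{\ad}_\eta, T_p\mathcal{H})
\]
follows by taking an inverse limit over $m$ of Remark~\ref{rem:etale_realization_cohom} applied to $\mathcal{H}[p^m]$, using $T_p\mathcal{H} = \varprojlim_m \mathcal{H}[p^m]$ and that $\varphi$-invariants commute with limits.

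The main technical point is the careful identification of the two \'etale realizations at the level of vector bundle $F$-gauges and the compatibility of the limit passage from the torsion case of Remark~\ref{rem:etale_realization_cohom} to the case of $p$-divisible groups. Once these compatibilities are in place, the Raynaud cofiber reduction renders both isomorphisms formal consequences of the corresponding known comparisons.
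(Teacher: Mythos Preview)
Your approach is essentially the paper's. The paper does not give a separate proof of this proposition; at the start of \S\ref{sec:complements_to_the_classification_result} it simply declares that all results in that subsection ``are immediate from the proof of Theorem~\ref{thm:main} and the corresponding facts for truncated BT groups.'' The proof of Theorem~\ref{thm:main} establishes its compatibility statements (Cartier duality, base-change, fpqc descent) exactly as you do: use descent together with Theorem~\ref{thm:syntomic_raynaud} to reduce to truncated Barsotti--Tate groups, where Proposition~\ref{prop:etale_realization_bt} and Remark~\ref{rem:etale_realization_cohom} apply.

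One small point worth tightening: for the first isomorphism you invoke pro-\'etale descent to localize, but descent lets you \emph{check} that a given natural transformation is an isomorphism locally---it does not by itself produce the transformation. You handle this correctly for the second isomorphism (extending the map of Remark~\ref{rem:etale_realization_cohom}); for the first, you should likewise note that the comparison map $T_{\et}(\mathcal{M}) \to G^{\ad}_\eta$ already exists globally (e.g.\ by quasisyntomic descent from the semiperfectoid case, or by functoriality from the level-$n$ case via Proposition~\ref{prop:etale_realization_bt}), and then your Raynaud-type resolution verifies it is an isomorphism. With that clarification the argument is complete and matches the paper's intended route.
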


\subsection{Representability of smooth proper pushforwards}
The main theorem of this subsection generalizes a result of Bragg-Olsson~\cite[Theorem 1.8]{bragg2021representability}, which, in the formulation here, deals with the special case where $X$ and $S$ are algebraic spaces over a field of characteristic $p$ and $\mathcal{G}$ is a height $1$ finite flat $p$-torsion group scheme. In \emph{loc. cit.}, the authors are also able to prove some weaker results, but also under weaker hypotheses: They assume $\pi$ to be projective, but not necessarily smooth, and they prove representability generically over the base.
\begin{theorem}
\label{thm:bragg_olsson_smooth}
Let $\pi:\mathfrak{X}\to \mathfrak{S}$ be a proper smooth map of $p$-adic formal algebraic spaces. Suppose that we have $\mathcal{G}\in \mathrm{FFG}(\mathfrak{X})$. Then: 
\begin{enumerate}
   \item For all $n\ge 0$, the fppf sheaf $\hat{\mathcal{R}}^n \defn (\tau^{\leq n}R\pi_* \mathcal{G})[n]$ is represented by a finitely presented Artin $n$-stack over $\mathfrak{S}$.
   \item If $\hat{\mathcal{R}}^i$ is flat over $\mathfrak{S}$ for all $i<n$, then the fppf sheaf $\hat{\mathcal{H}}^n \defn R^n\pi_*\mathcal{G}$ is represented by a formally finitely presented formal algebraic space over $\mathfrak{S}$ and the natural map $\hat{\mathcal{R}}^n \to \hat{\mathcal{H}}^n$ is faithfully flat.
\end{enumerate}
\end{theorem}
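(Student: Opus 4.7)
The approach is to translate the question from finite flat group schemes to perfect $F$-gauges via Theorem~\ref{thm:main}, combine this with the fppf--syntomic cohomology comparison, and then invoke the representability machinery of~\cite{gmm}.

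First, let $\mathcal{M}\in \mathsf{P}^{\mathrm{syn}}_{\{0,1\}}(\mathfrak{X})$ be the perfect $F$-gauge associated to $\mathcal{G}$ by Theorem~\ref{thm:main}. Applied relatively over $\mathfrak{S}$, Proposition~\ref{prop:flat_via_syntomic_ffg} together with proper base change for syntomic cohomology along the induced morphism of syntomifications $\pi^{\mathrm{syn}}:\mathfrak{X}^{\mathrm{syn}}\to \mathfrak{S}^{\mathrm{syn}}$ identifies, for each $p$-nilpotent $\mathfrak{S}$-algebra $C$,
\[
R\Gamma_{\mathrm{fppf}}(\mathfrak{X}_C,\mathcal{G}_C)\simeq R\Gamma\bigl(\mathfrak{S}_C^{\mathrm{syn}},\mathcal{N}\vert_{\mathfrak{S}_C^{\mathrm{syn}}}\bigr),
\]
where $\mathcal{N}\defn R\pi^{\mathrm{syn}}_*\mathcal{M}$ is a perfect complex on $\mathfrak{S}^{\mathrm{syn}}$. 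Properness and smoothness of $\pi$ ensure that $\mathcal{N}$ has bounded Tor amplitude and bounded Hodge--Tate weights; in particular, $\hat{\mathcal{R}}^n$ is identified with the syntomic section prestack
\[
C\mapsto \tau^{\leq 0}R\Gamma\bigl(\mathfrak{S}_C^{\mathrm{syn}},\mathcal{N}[n]\vert_{\mathfrak{S}_C^{\mathrm{syn}}}\bigr).
\]

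For part (1), one then appeals to the general representability result of~\cite{gmm} extending Theorem~\ref{thm:perfect_f-gauges_repble} to perfect $F$-gauges of bounded Hodge--Tate weights: this exhibits $\hat{\mathcal{R}}^n$ as a finitely presented derived Artin $n$-stack over $\mathfrak{S}$, with perfect tangent complex assembled from the negatively-indexed Hodge graded pieces of $\mathcal{N}[n]$. For part (2), the argument is inductive and uses the truncation fiber sequence for $R\pi_*\mathcal{G}$, which after shifting by $n$ reads
\[
B\hat{\mathcal{R}}^{n-1}\to \hat{\mathcal{R}}^n\to \hat{\mathcal{H}}^n.
\]
Under the hypothesis that $\hat{\mathcal{R}}^{n-1}$ is represented by a flat formal algebraic space, $B\hat{\mathcal{R}}^{n-1}$ is smooth over $\mathfrak{S}$; combined with the representability of $\hat{\mathcal{R}}^n$ from part (1), this makes $\hat{\mathcal{R}}^n\to \hat{\mathcal{H}}^n$ smooth and surjective, and faithfully flat descent then realizes $\hat{\mathcal{H}}^n$ as a finitely presented formal algebraic space over $\mathfrak{S}$.

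The main obstacle lies in the representability invocation: Theorem~\ref{thm:perfect_f-gauges_repble} is stated with Hodge--Tate weights bounded by $1$, whereas the pushforward $\mathcal{N}$ has Hodge--Tate weights only bounded by $1+d$, where $d$ is the relative dimension of $\pi$. One therefore needs the corresponding higher-weight version of the representability statement for perfect $F$-gauges, producing Artin $n$-stacks rather than just $1$-stacks, with the tangent complex assembled from all negatively-indexed graded pieces of the Hodge filtration. Establishing this generalization and verifying compatibility of the Hodge filtration on $\mathcal{N}$ with base change along $C\to \mathfrak{S}$, together with the Hodge--Tate decomposition of $R\pi^{\mathrm{syn}}_*$, are the essential technical steps.
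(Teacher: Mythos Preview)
Your overall strategy matches the paper's: translate $\mathcal{G}$ to a perfect $F$-gauge $\mathcal{M}$ via Theorem~\ref{thm:main}, push forward along $\pi^{\mathrm{syn}}$, identify $\hat{\mathcal{R}}^n$ with a syntomic section prestack via Proposition~\ref{prop:flat_via_syntomic_ffg}, and invoke the representability machinery. For part~(2) the paper simply cites \cite[Theorem~5.2]{bragg2021representability}, which is essentially the fiber-sequence argument you sketch.

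The gap is in what you call the ``main obstacle'': you assert that $\mathcal{N}=R\pi^{\mathrm{syn}}_*\mathcal{M}$ has Hodge--Tate weights bounded by $1+d$, and therefore that you need a higher-weight generalization of Theorem~\ref{thm:perfect_f-gauges_repble}. This is the wrong direction. The paper proves (Proposition~\ref{prop:pushforwards_proper_smooth}) that if $\mathcal{M}$ has Hodge--Tate weights in $[n,m]$, then $R\pi^{\mathrm{syn}}_*\mathcal{M}$ has Hodge--Tate weights in $[n-d,m]$: the \emph{upper} bound is preserved, and only the lower bound drops. Since $\mathcal{M}$ has weights in $\{0,1\}$, the pushforward $\mathcal{N}$ has weights in $[-d,1]$, hence still $\leq 1$, and Theorem~\ref{thm:perfect_f-gauges_repble} applies directly with no generalization needed. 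The proof of Proposition~\ref{prop:pushforwards_proper_smooth} analyzes the weight filtration on the Nygaard associated graded via the conjugate filtration and relative Hodge cohomology; the point is that the extra graded pieces coming from $\wedge^k\mathbb{L}_{\mathfrak{X}/R}$ contribute in \emph{non-positive} Hodge--Tate degree. So the technical step you flag (a higher-weight representability theorem) is neither available in the cited references nor needed; what is needed, and what you are missing, is the control on the upper weight bound under proper smooth pushforward.
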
  
\begin{proof}
By \'etale descent, we can assume that $\mathfrak{S} = \Spec R$ for some $R\in \mathrm{CRing}^{p-\mathrm{nilp}}_{\heartsuit}$. Let $\mathcal{M}\in \mathsf{P}^{\mathrm{syn}}_{n,\{0,1\}}(\mathfrak{X}^{\mathrm{syn}})$ be the $F$-gauge associated with $\mathcal{G}$ via Theorem~\ref{thm:main}. 

Now, by Proposition~\ref{prop:pushforwards_proper_smooth} below, for some $m\ge 1$, $\mathcal{F} = R\pi^{\mathrm{syn}}_* \mathcal{M}$ is an $\Int/p^m\Int$-module in perfect $F$-gauges over $R$ of Hodge-Tate weights $\leq 1$ and Tor amplitude $[-1,d]$. In particular, for every $n\ge 0$, Theorem~\ref{thm:perfect_f-gauges_repble} tells us that $\Gamma_{\mathrm{syn}}(\mathcal{F}[n]/{}^{\mathbb{L}}p^m)$ is represented by a finitely presented derived Artin $(n+1)$-stack over $\mathfrak{S}$. Since $\mathcal{F}[n]/{}^{\mathbb{L}}p^m \simeq \mathcal{F}[n]\oplus \mathcal{F}[n+1]$, one deduces from this that $\Gamma_{\mathrm{syn}}(\mathcal{F}[n])$ is also represented by a finitely presented derived Artin $n$-stack over $\mathfrak{S}$.

By Proposition~\ref{prop:flat_via_syntomic_ffg}, we have, for any classical $R$-algebra $C$,
\[
(R\pi_* \mathcal{G})(C) \simeq R\Gamma\left(C^{\mathrm{syn}},\mathcal{F}\vert_{C^{\mathrm{syn}}}\right).
\]

In particular, we find that, for all $n\ge 0$, $\hat{\mathcal{R}}^n$ is the classical truncation of $\Gamma_{\mathrm{syn}}(\mathcal{F}[n])$ and so is a locally finitely presented Artin $n$-stack over $\mathfrak{S}$.

The second part of the theorem follows from~\cite[Theorem 5.2]{bragg2021representability}.
\end{proof}

\begin{remark}
   [Shape of the stack]
Suppose that $R$ is an $\Field_p$-algebra with the resolution property (so that every perfect complex can be represented as a complex of vector bundles). Then one can use Remark~\ref{rem:f-gauges_repble_close_look} to see that, when the theorem shows that $R^n\pi_* \mathcal{G}$ is representable, it also shows that it is contained in the category of stacks over $S$ obtained by taking the smallest Serre subcategory of the category of fppf sheaves of abelian groups containing by vector bundles and height $1$ finite flat $p$-torsion group schemes over $S$.
\end{remark}

\begin{corollary}
    Let $\pi:X\to S$ be a proper smooth map of algebraic spaces and suppose that we have $\mathcal{G}\in \mathrm{FFG}(S)$. Suppose also that the fppf sheaf $R^1\pi_*\pi^*\mathcal{G}$ is represented by a finitely presen{}ted algebraic space that is flat over $S$.\footnote{In fact, the representability is automatic. It is the flatness that has to be taken as a hypothesis.} Then the fppf sheaf $R^2\pi_*\pi^*\mathcal{G}$ is also represented by a finitely presented algebraic space over $S$.
\end{corollary}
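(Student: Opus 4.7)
The plan is to deduce this as a direct specialization of Theorem~\ref{thm:bragg_olsson_smooth} at $n = 2$, applied to the finite flat group scheme $\pi^*\mathcal{G} \in \mathrm{FFG}(X)$. Since $\mathcal{G}$ is $p$-power torsion and finite flat, each fppf sheaf $R^i\pi_*\pi^*\mathcal{G}$ is a $p$-power torsion sheaf on $S$, and its representability by a finitely presented algebraic space over $S$ is equivalent to the representability of its restriction to the $p$-adic completion $\hat{S}$ by a formally finitely presented formal algebraic space over $\hat{S}$. Passing to $p$-adic completions thus places us in exactly the hypotheses of Theorem~\ref{thm:bragg_olsson_smooth}.

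To apply part (2) of that theorem at $n = 2$, one must verify that $\hat{\mathcal{R}}^0 = \pi_*\pi^*\mathcal{G}$ and $\hat{\mathcal{R}}^1 = R^1\pi_*\pi^*\mathcal{G}$ are both flat over $S$. The flatness of $\hat{\mathcal{R}}^1$ is precisely our hypothesis. For $\hat{\mathcal{R}}^0$, I would argue via the Stein factorization $X \xrightarrow{g} S' \xrightarrow{\pi'} S$ of $\pi$: since $\pi$ is proper and smooth, the map $g$ has geometrically connected (reduced) fibers with $g_*\mathcal{O}_X = \mathcal{O}_{S'}$ universally, while $\pi'$ is finite \'etale. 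The first property implies $g_*g^*\mathcal{H} = \mathcal{H}$ for every affine $\mathcal{O}_{S'}$-group scheme $\mathcal{H}$, so $\pi_*\pi^*\mathcal{G}$ coincides with the Weil restriction $\on{Res}_{S'/S}(\pi'^*\mathcal{G})$ of a finite locally free group scheme along a finite \'etale map. Such a Weil restriction is itself finite locally free over $S$, and in particular flat.

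With both flatness hypotheses in hand, Theorem~\ref{thm:bragg_olsson_smooth}(2) applies directly to give that $R^2\pi_*\pi^*\mathcal{G}$ is represented by a formally finitely presented formal algebraic space over $\hat{S}$, which by the $p$-power torsion descent of the first paragraph yields the desired finitely presented algebraic space over $S$. I anticipate no substantial obstacle: the corollary really is just an unwinding of Theorem~\ref{thm:bragg_olsson_smooth} at $n = 2$, and the only ingredient beyond that theorem is the standard Stein-factorization computation of $\pi_*\pi^*\mathcal{G}$.
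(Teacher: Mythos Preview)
Your proposal correctly identifies two ingredients that the paper also uses: the Stein factorization argument showing that $\pi_*\pi^*\mathcal{G}$ is a Weil restriction along a finite \'etale map (hence finite flat), and the application of Theorem~\ref{thm:bragg_olsson_smooth}(2) at $n=2$ to obtain formal representability of the $p$-adic completion $\mathfrak{H}^2$ over $\hat{S}$.

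However, there is a genuine gap. Your claim that ``representability by a finitely presented algebraic space over $S$ is equivalent to the representability of its restriction to the $p$-adic completion $\hat{S}$ by a formally finitely presented formal algebraic space over $\hat{S}$'' is precisely the hard part, and you assert it without argument. A $p$-power torsion fppf sheaf on $S$ is \emph{not} determined by its restriction to $p$-nilpotent $S$-schemes: one must also account for its values on $S[1/p]$-schemes, and then algebraize by gluing the formal and generic-fiber pictures. The paper devotes most of its proof to this step. Concretely, it first reduces to $S$ affine of finite type over $\Int$, then observes that over $S[1/p]$ the sheaf $\mathcal{H}^2$ is a constructible \'etale sheaf (hence represented by an \'etale algebraic space), checks that the adic generic fiber of $\mathfrak{H}^2$ maps \'etale to $\mathcal{H}^2[1/p]^{\ad}$, and invokes a formal gluing theorem of Achinger--Youcis to manufacture an algebraic space $\tilde{\mathcal{H}}^2$ over $S$. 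Finally it must verify that $\tilde{\mathcal{H}}^2$ actually represents $\mathcal{H}^2$, which uses a Beauville--Laszlo type Cartesian square for fppf cohomology due to \v{C}esnavi\v{c}ius--Scholze (Proposition~\ref{prop:beauville_laszlo}).

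In short, your outline reduces to the formal statement and then treats algebraization as a formality, but the algebraization is the substance of the corollary.
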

\begin{proof}
    By Noetherian approximation and \'etale descent, we can assume that $S$ is an affine scheme of finite type over $\Int$.
    
    Set $\mathcal{H}^i = R^i\pi_*\pi^*\mathcal{G}$ and $\mathcal{R}^i = (\tau^{\le i}R\pi_*\pi^*\mathcal{G})[i]$. We want to show that $\mathcal{H}^2$ is represented by a finitely presented algebraic space over $S$. 
    
    Quite generally, the restriction of $\mathcal{H}^i$ over $S[1/p]$ is represented by an \'etale algebraic space: Indeed, it is a constructible sheaf by~\cite[Exp. XVI, Th\'eor\`eme 1]{SGA4}, and so we conclude using~\cite[Exp. IX, Proposition 2.7]{SGA4}. 

    Now, since $\pi$ is proper smooth, one finds that $\mathcal{H}^0 = \pi_*\pi^*\mathcal{G}$ is once again finite flat over $S$: It is the Weil restriction of $\mathcal{G}$ from the finite \'etale scheme over $S$ corresponding to the quasicoherent sheaf $\pi_*\Reg{X}$. This, combined with our hypothesis and (2) of Theorem~\ref{thm:bragg_olsson_smooth}, tells us that the $p$-adic formal completion $\mathfrak{H}^2$ of $\mathcal{H}^2$ (that is, its restriction to $p$-nilpotent schemes over $S$) is represented by a formally finitely presented formal algebraic space over the $p$-adic completion $\mathfrak{S}$ of $S$.

    Let $\mathfrak{H}^{2,\mathrm{ad}}_\eta$ be the adic generic fiber of $\mathfrak{H}^2$. We claim that the natural map from $\mathfrak{H}^{2,\mathrm{ad}}_\eta$ to the adic space $\mathcal{H}^2[1/p]^{\ad}$ associated with $\mathcal{H}^2[1/p]$ is \'etale: this is because both source and target are \'etale as adic algebraic spaces over $S[1/p]^{\ad}$.

    By~\cite[Theorem 2.25]{achinger_youcis}, we now find that there exists a unique algebraic space $\tilde{\mathcal{H}}^2$ over $S$ restricting to $\mathcal{H}^2[1/p]$ over $S[1/p]$ and to $\mathfrak{H}^2$ over $\mathfrak{S}$ with gluing data given by the \'etale map from the previous paragraph.

    To finish, we must know that $\tilde{\mathcal{H}}^2$ does indeed represent $\mathcal{H}^2$. By results of \v{C}esnavi\v{c}ius-Scholze (see Proposition~\ref{prop:beauville_laszlo} below), for any affine scheme $\Spec R\to S$ over $S$ with bounded $p$-power torsion, and with $\hat{R}$ the classical $p$-completion of $R$, the following square is Cartesian\footnote{One also needs a Mittag-Leffler argument for the identification of $\mathcal{H}^2(\hat{R})$ with $\mathfrak{H}^2(\hat{R})$: This uses the fact that $\mathcal{H}^1$ is of finite type over $S$.}
    \[
    \begin{diagram}
        \mathcal{H}^2(R)&\rTo&\mathcal{H}^2(\hat{R})\simeq \mathfrak{H}^2(\hat{R})\\
        \dTo&&\dTo\\
        \mathcal{H}^2(R[1/p]&\rTo&\mathcal{H}^2(\hat{R}[1/p]).
    \end{diagram}
    \]
    Therefore, the full faithfulness part of~\cite[Theorem 2.25]{achinger_youcis} applied to the algebraic spaces $\Spec R$ and $\tilde{\mathcal{H}}^2$ now shows that we have $\mathcal{H}^2(R)\simeq \tilde{\mathcal{H}}^2(R)$.
\end{proof}

\begin{remark}
   [Derived algebraization]
In fact, if one uses the Artin-Lurie representability theorem~\cite[Theorem 7.1.6]{lurie_thesis}, then it is possible to show that the complex $(\tau^{[1,n]}R\pi_*\pi^*\mathcal{G})[n]$ is represented by an Artin $n$-stack over $S$. 
\end{remark}

\begin{remark}
[Cohomology of the multiplicative group]
When $n=1$, the corollary tells us that $R^1\pi_*\mup[p^m]$ is represented by a finitely presented algebraic space over $S$, and when this algebraic space is flat over $S$, we find that $R^2\pi_*\mup[p^m]$ is also represented by a finitely presented algebraic space over $S$. For instance, this is the case when $S = \Spec \kappa$ is the spectrum of a field.

In general, however, it is only the \emph{complex} $(\tau^{[1,2]}R\pi_*\mup[p^m])[2]$ that can be expected to have good representability properties.
\end{remark}

\begin{proposition}
\label{prop:pushforwards_proper_smooth}
Let $\pi:\mathfrak{X}\to \mathfrak{S}$ be a proper smooth map of $p$-adic formal algebraic spaces of relative dimension $d$. If $\mathcal{M}$ is a perfect $F$-gauge over $X$ of Hodge-Tate weights in $[n,m]$ and Tor amplitude $[a,b]$, then $R\pi^{\mathrm{syn}}_*\mathcal{M}$ is a perfect $F$-gauge over $\mathfrak{S}$ with Hodge-Tate weights in $[n-d,m]$ and Tor amplitude $[a,b+2d]$.
\end{proposition}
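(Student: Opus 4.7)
My plan is to reduce to a computation of relative Nygaard filtered prismatic cohomology. All three properties in question (perfectness, Tor amplitude, Hodge-Tate weights) can be checked fpqc-locally on $\mathfrak{S}$, and since $F$-gauges satisfy fpqc descent (Remark~\ref{rem:fpqc_descent_for_syntomic_cohomology}) and pushforward along the proper $\pi$ commutes with flat base change on $\mathfrak{S}$, I may assume $\mathfrak{S} = \Spf R$ is affine. A further quasisyntomic descent using Proposition~\ref{prop:nygaard_qsynt_descent} lets me replace $R$ with a quasisyntomic cover $R \to R_\infty$ whose cosimplicial self-tensor products are all semiperfectoid, at which point Theorem~\ref{thm:semiperfectoid_nygaard} identifies the syntomification with a Rees stack built from the Nygaard-filtered prismatic cohomology ring $\Fil^\bullet_{\mathcal{N}}\Prism_{R_\infty}$. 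An $F$-gauge then corresponds to a $(p,I_{R_\infty})$-complete filtered complex with a Frobenius descent datum $\mathcal{M}_\sigma\simeq \mathcal{M}$, and the pushforward $R\pi^{\mathrm{syn}}_*\mathcal{M}$ is computed by the relative Nygaard filtered prismatic cohomology of $\mathfrak{X}$ over $R_\infty$ with coefficients in $\mathcal{M}$, equipped with its natural Frobenius.

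With this translation in place, the perfectness and the Tor amplitude bound follow from standard results in prismatic cohomology of proper smooth morphisms (Bhatt--Morrow--Scholze, Bhatt--Scholze, Bhatt--Lurie). Concretely, the Hodge-Tate fiber $\overline{\Prism}_{\mathfrak{X}/R_\infty}$ carries the Hodge-Tate comparison filtration of length $d$ whose graded pieces are the shifted Breuil-Kisin twisted differentials $\Omega^i_{\mathfrak{X}/R_\infty}\{-i\}[-i]$ for $0\le i\le d$. Combined with the cohomological amplitude $[0,d]$ of $R\pi_*$ along a proper smooth $\pi$ of relative dimension $d$, this produces an amplitude of $[0,2d]$ on the Hodge-Tate pushforward of the structure sheaf; tensoring with $\mathcal{M}$ (which has Tor amplitude $[a,b]$) and noting that Tor amplitude on $F$-gauges is detected by the Hodge-Tate realization gives the announced $[a,b+2d]$ bound on the pushforward.

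For the Hodge-Tate weight bound, pullback of $R\pi^{\mathrm{syn}}_*\mathcal{M}$ along the filtered de Rham point $x^{\mathcal{N}}_{\dR,S}$ from Construction~\ref{const:filtered_de_rham} computes the relative Hodge cohomology of the filtered pullback $\Fil^\bullet_{\mathrm{Hdg}}M$. The associated graded of the resulting filtration decomposes (via the relative Hodge-to-de Rham spectral sequence applied to the filtration by $\Omega^{\ge\bullet}_{\mathfrak{X}/S}$) into summands of the form $R\pi_*(\Omega^i_{\mathfrak{X}/S}\otimes\gr^{-j}_{\mathrm{Hdg}}M)$ living in filtration degree $-(i+j)$ with $i\in[0,d]$ and $j\in[n,m]$. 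Since $\Omega^i\{-i\}$ carries Hodge-Tate weight $-i$, these summands contribute weights ranging over $[n-d,m]$, which is precisely the claimed interval.

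The main technical obstacle is the careful identification of $R\pi^{\mathrm{syn}}_*\mathcal{M}$ as a filtered $\Fil^\bullet_{\mathcal{N}}\Prism_{R_\infty}$-module equipped with the full $F$-gauge structure (not merely the underlying Nygaard-filtered complex), matching the Frobenius datum $(R\pi^{\mathrm{syn}}_*\mathcal{M})_\sigma\simeq R\pi^{\mathrm{syn}}_*\mathcal{M}$ with that obtained from relative prismatic cohomology. Once this is pinned down, the bounds follow as sketched, with the remaining care being bookkeeping of Breuil-Kisin twist conventions to avoid sign or off-by-one errors in the Hodge-Tate weight count.
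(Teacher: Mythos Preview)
Your overall strategy---reduce to a semiperfectoid base, identify $R^{\mathcal{N}}$ with a Rees stack, and compute via Nygaard-filtered prismatic cohomology---matches the paper's. However, two points deserve correction.

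First, what you flag as the ``main technical obstacle'' (matching the Frobenius datum $(R\pi^{\mathrm{syn}}_*\mathcal{M})_\sigma\simeq R\pi^{\mathrm{syn}}_*\mathcal{M}$) is not an obstacle at all. Since $R^{\mathrm{syn}}$ is the coequalizer of the two open immersions $j_{\dR},j_{\mathrm{HT}}:R^{\Prism}\rightrightarrows R^{\mathcal{N}}$, perfectness, Tor amplitude, and Hodge--Tate weights are all detected on $R^{\mathcal{N}}$; the paper dispatches this in one sentence. The Frobenius gluing is automatic once you know the pushforward is perfect on $\mathcal{N}$.

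Second, and more substantively, your claim that ``Tor amplitude on $F$-gauges is detected by the Hodge--Tate realization'' is not correct as stated. The Hodge--Tate realization lives over $R^{\Prism}$, which is only an \emph{open} substack of $R^{\mathcal{N}}$; you must also control the closed locus $t=0$ of the Rees stack, i.e., the associated graded $\gr^\bigstar_{\mathcal{N}}$. This is where the paper puts the actual work: it shows that a filtered complex over $\Fil^\bullet_{\mathcal{N}}\Prism_R$ is perfect of the claimed Tor amplitude iff its associated graded is so over $\gr^\bigstar_{\mathcal{N}}\Prism_R$, then equips $\gr^\bigstar_{\mathcal{N}}\mathcal{M}^{(\bullet)}$ with a finite ``weight'' filtration whose $i$-th graded piece is $\gr^i_{\mathrm{Hdg}}M^{(\bullet)}\otimes\gr^\bigstar_{\mathcal{N}}\Prism_{A^{(\bullet)}}(i)$. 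The Hodge--Tate weight bound falls out here (these pieces are nonzero only for $i\in[-m,-n]$), and the Tor amplitude is then reduced, via the identification $\gr^i_{\mathcal{N}}\Prism_{A^{(\bullet)}}\simeq\Fil^{\mathrm{conj}}_i\overline{\Prism}_{A^{(\bullet)}/R_0}$ and the conjugate filtration with gradeds $\wedge^i\mathbb{L}[-i]$, to the standard bound for coherent cohomology of a proper morphism of relative dimension $d$. So the ingredients you invoke (conjugate filtration, differentials, proper pushforward bounds) are right, but they enter through the Nygaard associated graded rather than through the Hodge--Tate locus of $R^{\Prism}$, and this same analysis handles the Hodge--Tate weight bound and the Tor amplitude bound simultaneously rather than separately.
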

\begin{proof}
This can be deduced from results of Guo-Li~\cite{guo2023frobenius}. However, since these are not stated in the generality we require, we sketch a proof here for the convenience of the reader. 

It is of course enough to prove it after replacing `syn' with `$\mathcal{N}$'. One can reduce to the case where $\mathfrak{S} = \Spec R$ is in $\mathrm{CRing}^{p\text{-nilp}}$. Now, by quasisyntomic descent we can assume that $R$ is semiperfectoid. In particular, $R^{\mathcal{N}}$ is canonically isomorphic to the formal Rees stack $\Rees(\Fil^\bullet_{\mathcal{N}}\Prism_R)$. This means that perfect complexes over $R^{\mathcal{N}}$ are equivalent to filtered perfect complexes over the filtered animated commutative ring $\Fil^\bullet_{\mathcal{N}}\Prism_R$.\footnote{We are using the fact that such objects are automatically derived $(p,I_R)$-complete.}

Suppose that $\Spec A\to \mathfrak{X}$ is an affine quasisyntomic cover with $A$ semiperfectoid, and consider the corresponding simplicial scheme $\Spec A^{(\bullet)}$ where
\[
\Spec A^{(i)} = \underbrace{\Spec S\times_{X}\Spec S\times\cdots\times_{X}\Spec S}_{i\text{-times}}.
\]
Then $A^{\mathcal{N}}\to \mathfrak{X}^{\mathcal{N}}$ is a flat cover, and we have 
\[
A^{(i),\mathcal{N}}\simeq \underbrace{A^{\mathcal{N}}\times_{X^{\mathcal{N}}}A^{\mathcal{N}}\times\cdots\times_{X^{\mathcal{N}}}A^{\mathcal{N}}}_{i\text{-times}}.
\]

This shows that $R\pi^{\mathcal{N}}_* \mathcal{M}$ corresponds to the filtered complex $\mathrm{Tot}\left(\Fil^\bigstar_{\mathcal{N}}\mathcal{M}^{(\bullet)}\right)$, where $\Fil^\bigstar_{\mathcal{N}}\mathcal{M}^{(\bullet)}$ is the filtered perfect complex over $\Fil^\bigstar_{\mathcal{N}}A^{(\bullet)}$. To show that this is perfect with Tor amplitude in $[a,b+2d]$, it is enough to know that the associated graded $\mathrm{Tot}\left(\gr^\bigstar_{\mathcal{N}}\mathcal{M}^{(\bullet)}\right)$ is a graded perfect complex over $\gr^\bigstar_{\mathcal{N}}\Prism_R$ with Tor amplitude in $[a,b+2d]$.

Let $\gr^\heartsuit_{\mathrm{Hdg}}M$ be the graded perfect complex over $\mathfrak{X}$ obtained by pulling $\mathcal{M}$ back along the de Rham point, and let $\gr^{\heartsuit}_{\mathrm{Hdg}}M^{(\bullet)}$ be the cosimplicial graded perfect complex over $A^{(\bullet)}$ obtained by via restriction along $\Spec A^{(\bullet)}\to \mathfrak{X}$.

Then $\gr^\bigstar_{\mathcal{N}} \mathcal{M}^{(\bullet)}$ admits a canonical finite `weight' filtration whose $i$-th graded piece admits a canonical isomorphism
\[
\gr^i_{\mathrm{wt}}\gr^\bigstar_{\mathcal{N}}\mathcal{M}^{(\bullet)}\simeq \gr^i_{\mathrm{Hdg}}M^{(\bullet)}\otimes_{A^{(\bullet)}}\gr^\bigstar_{\mathcal{N}}\Prism_{A^{(\bullet)}}(i),
\]
where $(i)$ denotes an $i$-shift in grading.\footnote{Recall our convention that the $i$-th associated graded piece of a filtered module is in graded degree $-i$.}

This shows that $\mathrm{Tot}\left(\gr^\bigstar_{\mathcal{N}}\mathcal{M}^{(\bullet)}\right)$ inherits a finite filtration with associated graded pieces
\[
\mathrm{Tot}\left(\gr^i_{\mathrm{Hdg}}M^{(\bullet)}\otimes_{A^{(\bullet)}}\gr^\bigstar_{\mathcal{N}}\Prism_{A^{(\bullet)}}(i)\right)
\]

Now, the condition on Hodge-Tate weights implies that these pieces are non-zero only $i\in [-m,-n]$. This reduces us to the following

\begin{lemma}
For any perfect complex $M$ over $\mathfrak{X}$ restricting to a graded perfect complex $M^{(\bullet)}$ over $A^{(\bullet)}$ with Tor amplitude in $[a,b]$, the graded complex
\[
\mathrm{Tot}\left(M^{(\bullet)}\otimes_{A^{(\bullet)}}\gr^\bigstar_{\mathcal{N}}\Prism_{A^{(\bullet)}}\right)
\]
is graded perfect over $\gr^\bigstar_{\mathcal{N}}\Prism_R$ with Tor amplitude $[a,b+2d]$, and its graded base-change along $\gr^\bigstar_{\mathcal{N}}\Prism_R\to R$, where $R$ is trivially graded, is supported in graded degrees $[0,d]$.
\end{lemma}
\begin{proof}
Choose a map $R_0\to R$ with $R_0$ perfectoid along with a generator $\xi\in \Fil^1_{\mathcal{N}}\Prism_{R_0}$. For $i\ge 0$, this yields isomorphisms
\[
\gr^i_{\mathcal{N}}\Prism_{A^{(\bullet)}}\xrightarrow{\simeq}\gr^i_{\mathcal{N}}\varphi^*\Prism_{A^{(\bullet)}/R_0}\xrightarrow{\simeq}\Fil^{\mathrm{conj}}_i\overline{\Prism}_{A^{(\bullet)}/R_0},
\]
and we have $\gr^{\mathrm{conj}}_i\overline{\Prism}_{A^{(\bullet)}/R_0}\simeq \wedge^i\mathbb{L}_{A^{(\bullet)}/R_0}[-i]$.

Therefore, $\mathrm{Tot}\left(M^{(\bullet)}\otimes_{A^{(\bullet)}}\gr^\bigstar_{\mathcal{N}}\Prism_{A^{(\bullet)}}\right)$ corresponds to a decreasingly filtered complex over $\Fil^{\mathrm{conj}}_\bigstar \overline{\Prism}_{A^{(\bullet)}/R_0}$, and considering associated gradeds reduces us to knowing that the graded complex
\[
\mathrm{Tot}\left(M^{(\bullet)}\otimes_{A^{(\bullet)}}\gr^{\mathrm{conj}}_\bigstar\overline{\Prism}_{A^{(\bullet)}/R_0}\right)\simeq R\Gamma(\mathfrak{X},M\otimes_{\Reg{\mathfrak{X}}}\wedge^{\bigstar}\mathbb{L}_{\mathfrak{X}/R_0}[-\bigstar])
\]
is graded perfect over $\gr^{\mathrm{conj}}_\bigstar\overline{\Prism}_{R/R_0}$ with Tor amplitude in $[a,b+2d]$, and that its graded base-change over $R$ is supported in graded degrees $[0,d]$. 

Now, $\wedge^i\mathbb{L}_{\mathfrak{X}/R_0}$ is canonically filtered with graded pieces isomorphic to $\wedge^k\mathbb{L}_{\mathfrak{X}/R}\otimes \wedge^l\mathbb{L}_{R/R_0}$, for $k+l = i$. 

One can upgrade this to knowing that $\wedge^\bigstar \mathbb{L}_{\mathfrak{X}/R_0}[-\bigstar]$, as a complex over $\mathcal{X} = \mathfrak{X}\times_{\Spec R}\Spec(\gr^{\mathrm{conj}}_\bigstar\overline\Prism_{R/R_0})/\Gm$, is filtered with graded pieces isomorphic to $\wedge^k\mathbb{L}_{\mathfrak{X}/R}[-k]\otimes_R\Reg{\mathcal{X}}(-k)$. Now we finally use our assumption that $\mathfrak{X}$ is smooth over $R$ of relative dimension $d$, which tells us that each of these graded pieces is a shifted vector bundle in degree $k$ and vanishes if $k>d$.

Therefore, we are now reduced to knowing that the relative cohomology over $\Spec(\gr^{\mathrm{conj}}_\bigstar\overline{\Prism}_{R/R_0})/\Gm$ of the restriction of a perfect complex $M$ of Tor amplitude $[a,b]$ over the product $\mathfrak{X}\times_{\Spec R}\Spec(\gr^{\mathrm{conj}}_\bigstar\overline\Prism_{R/R_0})/\Gm$ is represented by a graded perfect complex with Tor amplitude in $[a,b+d]$. This is of course a standard fact about the coherent cohomology of proper morphisms of relative dimension $d$.
\end{proof}

\end{proof}

\subsection{Purity of fppf cohomology}
\label{subsec:purity}

\begin{definition}
   Let $X$ be a scheme and $Z\subset X$ a constructible closed subset with complement $U = X\backslash Z$. For any fppf sheaf of abelian groups $G$ over $X$, we set
   \[
      R\Gamma_Z(X,G) = \fib(R\Gamma_{\mathrm{fppf}}(X,G)\to R\Gamma_{\mathrm{fppf}}(U,G)).
   \]
\end{definition}

\begin{proposition}
[Excision]
\label{prop:beauville_laszlo}
Suppose that $R$ is a  discrete, not necessarily $p$-complete, ring with derived $p$-completion $\hat{R}$, and that we have $G\in \mathrm{FFG}(R)$. Then the square
\[
\begin{diagram}
   R\Gamma_{\mathrm{fppf}}(\Spec R,G)&\rTo&R\Gamma_{\mathrm{fppf}}(\Spec\hat{R},G)\\
   \dTo&&\dTo\\
    R\Gamma_{\et}(\Spec R[1/p],G)&\rTo&R\Gamma_{\et}(\Spec\hat{R}[1/p],G)
\end{diagram}
\]
is Cartesian and moreover the natural map
\[
R\Gamma_{\mathrm{fppf}}(\Spec\hat{R},G)\to R\Gamma_{\mathrm{fppf}}(\Spf\hat{R},G)
\]
is an isomorphism. 
\end{proposition}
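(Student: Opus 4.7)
The plan is to treat the two assertions separately, in both cases passing through syntomic cohomology via the classification of Theorem~\ref{thm:main}.

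First I would prove the identification $R\Gamma_{\mathrm{fppf}}(\Spec \hat R, G) \xrightarrow{\simeq} R\Gamma_{\mathrm{fppf}}(\Spf \hat R, G)$ using Remark~\ref{rem:p-complete_fppf_limit}. That remark asserts that for any derived $p$-complete animated commutative $R$-algebra $C$, the natural map
\[
R\Gamma_{\mathrm{fppf}}(\Spec C, G) \to \varprojlim_m R\Gamma_{\mathrm{fppf}}(\Spec C/{}^{\mathbb L}p^m, G)
\]
is an equivalence; specializing to $C = \hat R$ gives the claim, since the right hand side is by definition $R\Gamma_{\mathrm{fppf}}(\Spf \hat R, G)$. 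The underlying input is the integrability (i.e.\ local finite presentation) of each Artin stack $B^m G \simeq \Gamma_{\mathrm{syn}}(\mathcal M[m])$, which comes from Theorem~\ref{thm:perfect_f-gauges_repble} via Proposition~\ref{prop:flat_via_syntomic_ffg}. A Postnikov tower / Milnor exact sequence argument passes from each truncation to the full cohomology complex.

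For the Cartesian square, my plan is to appeal to the fppf Beauville-Laszlo theorem of \v{C}esnavi\v{c}ius-Scholze in \cite{Cesnavicius2019-pn}: for a finite locally free $p$-power torsion commutative group scheme $G$ on a ring $R$, the natural map
\[
R\Gamma_{\mathrm{fppf}}(\Spec R, G) \to R\Gamma_{\mathrm{fppf}}(\Spec \hat R, G) \times_{R\Gamma_{\mathrm{fppf}}(\Spec \hat R[1/p], G)} R\Gamma_{\mathrm{fppf}}(\Spec R[1/p], G)
\]
is an equivalence. Since $G$ becomes finite \'etale after inverting $p$, the two cohomologies on $\Spec R[1/p]$ and $\Spec \hat R[1/p]$ coincide with their \'etale counterparts, giving the square in the stated form.

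The hard part will be the fppf Beauville-Laszlo descent itself for a non-smooth $G$. If one wished to give a self-contained argument avoiding the direct citation, the plan would be to use Proposition~\ref{prop:flat_via_syntomic_ffg} and Proposition~\ref{prop:etale_comparison} to reformulate the square as a Beauville-Laszlo descent statement for syntomic cohomology of the perfect $F$-gauge $\mathcal M$ associated with $G$, then reduce via Theorem~\ref{thm:syntomic_raynaud} and d\'evissage to the case where $\mathcal M$ is a vector bundle $F$-gauge, and ultimately to $G = \mu_{p^n}$ via Cartier duality. In that case the statement follows from the classical Beauville-Laszlo glueing for $\mathbb G_m$ together with the Kummer sequence.
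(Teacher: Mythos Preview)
Your main argument is correct and, like the paper's proof, ultimately rests on results of \v{C}esnavi\v{c}ius--Scholze. The paper takes a slightly different route for the Cartesian square: after recording (as you do) that the formal/algebraic comparison is Remark~\ref{rem:p-complete_fppf_limit}, it uses \'etale descent for fppf cohomology to reduce to the case where $R$ is $p$-Henselian. In that case the \emph{bottom} arrow is already an isomorphism by \cite[Remark~2.2.6]{Cesnavicius2019-pn}, so it suffices to show the \emph{top} arrow is an isomorphism, which follows from the formal/algebraic comparison together with \cite[Theorem~5.3.5]{Cesnavicius2019-pn}. Thus the paper proves the stronger statement that in the Henselian case both horizontal maps are isomorphisms, whence the square is trivially Cartesian; you instead cite the full descent square in one stroke. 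Both are fine, and both defer the real work to \cite{Cesnavicius2019-pn}.

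Your alternative ``self-contained'' plan at the end, however, does not work as stated. Reducing via Theorem~\ref{thm:syntomic_raynaud} only presents $\mathcal{M}$ as a cofiber of a map of vector bundle $F$-gauges, i.e.\ $G$ as the kernel of an isogeny of $p$-divisible groups; there is no mechanism (``Cartier duality'' or otherwise) that further reduces an arbitrary $p$-divisible group to the case $G=\mu_{p^n}$, so the final step of that sketch has no content.
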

\begin{proof}
   The last assertion about algebraic vs. formal fppf cohomology was already observed in Remark~\ref{rem:p-complete_fppf_limit}
   
   Using \'etale descent for fppf cohomology, we can reduce to the case where $R$ is $p$-Henselian, where the result follows from results in~\cite{Cesnavicius2019-pn}. Indeed, with the $p$-Henselian condition, the bottom arrow in the diagram is an isomorphism by~\cite[Remark 2.2.6]{Cesnavicius2019-pn}. Therefore, we have to check that the top arrow is an isomorphism, which follows from the first paragraph and~\cite[Theorem 5.3.5]{Cesnavicius2019-pn}.
\end{proof}

\begin{corollary}
   \label{cor:beauville_laszlo}
In the situation of Proposition~\ref{prop:beauville_laszlo}, let $\mathcal{M}\in \mathsf{P}_{\{0,1\}}^{\mathrm{syn}}(\hat{R})$ be the perfect $F$-gauge over $\hat{R}$ associated with $G$ via Theorem~\ref{thm:main}. Then we have a Cartesian square
\[
 \begin{diagram}
   R\Gamma_{\mathrm{fppf}}(\Spec R,G)&\rTo&R\Gamma(\hat{R}^{\mathrm{syn}},\mathcal{M})\\
   \dTo&&\dTo\\
    R\Gamma_{\et}(\Spec R[1/p],G)&\rTo&R\Gamma_{\mathrm{qsyn}}(\Spf\hat{R},\mathcal{M}_{-}[1/\mathcal{I}])^{\varphi = \mathrm{id}}.
\end{diagram}  
\]
\end{corollary}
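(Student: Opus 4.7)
The plan is to deduce the corollary directly from Proposition~\ref{prop:beauville_laszlo} by rewriting the two right-hand entries of its Cartesian square using the cohomological realizations of $\mathcal{M}$ established earlier in this section. First I would invoke Proposition~\ref{prop:beauville_laszlo}, which produces the Cartesian square whose top-right is $R\Gamma_{\mathrm{fppf}}(\Spf\hat R,G)$ (identified there with $R\Gamma_{\mathrm{fppf}}(\Spec\hat R,G)$) and whose bottom-right is $R\Gamma_{\et}(\Spec\hat R[1/p],G)$.

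For the top-right entry, Proposition~\ref{prop:flat_via_syntomic_ffg} applied to $\hat R$ gives a canonical isomorphism $R\Gamma_{\mathrm{fppf}}(\Spf\hat R,G)\simeq R\Gamma(\hat R^{\mathrm{syn}},\mathcal{M})$. For the bottom-right entry, Proposition~\ref{prop:etale_comparison} furnishes a canonical isomorphism
\[
R\Gamma_{\mathrm{qsyn}}(\Spf\hat R,\mathcal{M}_{-}[1/\mathcal{I}])^{\varphi=\id}\xrightarrow{\simeq} R\Gamma_{\et}(\Spf(\hat R)^{\ad}_\eta,G^{\ad}_\eta),
\]
so it remains only to identify $R\Gamma_{\et}(\Spec\hat R[1/p],G)$ with $R\Gamma_{\et}(\Spf(\hat R)^{\ad}_\eta,G^{\ad}_\eta)$. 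Once this is in hand, substituting the three identifications into the square of Proposition~\ref{prop:beauville_laszlo} produces exactly the Cartesian square in the statement.

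The remaining comparison is the only nontrivial input. Since $G$ is finite locally free of $p$-power order over $\hat R$, its restriction to $\Spec\hat R[1/p]$ is finite \'etale and $G^{\ad}_\eta$ is precisely its adic analytification; the desired comparison between algebraic and adic \'etale cohomology of such a finite locally constant $p$-torsion sheaf on the generic fibre of the $p$-adically complete ring $\hat R$ is a standard consequence of Huber's theory, following from the equivalence between finite \'etale covers of $\Spec\hat R[1/p]$ and of $\Spa(\hat R[1/p],\hat R)$, combined with the computation of cohomology of a locally constant torsion sheaf via the profinite fundamental group on either side. This Huber-style comparison is the main (and essentially only) obstacle in the proof, but in this finite \'etale torsion regime it is well documented, so the argument reduces to a clean concatenation of the three identifications with the square from Proposition~\ref{prop:beauville_laszlo}.
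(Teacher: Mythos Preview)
Your proposal is correct and follows essentially the same route as the paper: invoke Proposition~\ref{prop:beauville_laszlo}, then rewrite the right-hand column using Propositions~\ref{prop:flat_via_syntomic_ffg} and~\ref{prop:etale_comparison}, with the remaining bridge being the comparison $R\Gamma_{\et}(\Spf(\hat R)^{\ad}_\eta,G^{\ad}_\eta)\simeq R\Gamma_{\et}(\Spec\hat R[1/p],G)$ via Huber's theory. The paper makes the last step precise by citing \cite[Corollary~3.2.2]{Huber1996-bg} together with finite Galois descent, which is exactly the mechanism you sketch.
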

\begin{proof}
   This follows from Propositions~\ref{prop:flat_via_syntomic_ffg},~\ref{prop:etale_comparison} and~\ref{prop:beauville_laszlo}, combined with the isomorphism
   \[
    R\Gamma(\Spf(\hat{R})^{\ad}_\eta,G^{\ad}_\eta) \xrightarrow{\simeq}  R\Gamma_{\et}(\Spec\hat{R}[1/p],G).
   \]
   This isomorphism follows from~\cite[Corollary 3.2.2]{Huber1996-bg} and finite Galois descent.
\end{proof}

\begin{remark}
[Fpqc descent for fppf cohomology]
\label{rem:fpqc_descent_for_fppf_cohomology}
One can now give a slightly streamlined proof of fpqc descent for fppf cohomology~\cite[Theorem 5.5.2]{Cesnavicius2019-pn}. Instead of reducing to the case of perfect $\Field_p$-algebras, we can directly appeal to Remark~\ref{rem:fpqc_descent_for_syntomic_cohomology} and Proposition \ref{prop:flat_via_syntomic_ffg} instead.
\end{remark}

\begin{definition}
   A derived scheme $Y$ over $\Field_p$ is \defnword{quasisyntomic} if $\mathbb{L}_{Y/\Field_p}$ has Tor amplitude in $[-1,0]$ as an object of $D(Y)$. A scheme $X$ is $p$-\defnword{quasisyntomic} if its structure sheaf has bounded $p$-power torsion and if its derived base-change over $\Field_p$ is quasisyntomic. This implies that  $X$ be covered by affine opens $\Spec R$ such that the the derived $p$-completion $\hat{R}$ of $R$ is in fact the classical $p$-completion and such the $p$-completed absolute cotangent complex $\mathbb{L}_{\hat{R}}$ has Tor amplitude in $[-1,0]$ as an object in $D(R)$. 
\end{definition}

The next result is a direct generalization of~\cite[Corollary 8.5.7]{bhatt2022absolute} (from which we borrow the terminology and notation) and implies Theorem~\ref{introthm:cesnavicius-scholze}.

\begin{theorem}
[Purity]
\label{thm:purity}
Let $X$ be a $p$-quasisyntomic qcqs scheme, and let $Z\subset X$ be a constructible closed subset contained in $X_{(p=0)}$. Suppose that there exists an integer $d\ge 0$ such that, for every affine open $V\subset X$, we have $R\Gamma_{Z\cap V}(V,\Reg{V})\in D^{\ge d}$. Then, for any $p$-power torsion commutative finite locally free commutative group scheme $G$ over $X$, we have $R\Gamma_Z(X,G)\in D^{\ge d}$.
\end{theorem}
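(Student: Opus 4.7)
The plan is to adapt Bhatt--Lurie's strategy from~\cite[Corollary~8.5.7]{bhatt2022absolute}, using the $F$-gauge description of finite flat group schemes from Theorem~\ref{thm:main} to reduce the statement about $G$ to a vanishing statement for vector bundle $F$-gauges that can be handled by the Nygaard/conjugate filtration machinery. First, by a Mayer--Vietoris argument on a separated affine cover (intersections of affines in separated schemes are again affine), reduce to the case $X = \Spec R$ affine with $R\Gamma_Z(\Spec R, \Reg{R}) \in D^{\ge d}$. Then, since $Z \subset V(p)$, applying $R\Gamma_Z$ to the Cartesian square of Proposition~\ref{prop:beauville_laszlo} kills the generic-fiber terms and yields an isomorphism $R\Gamma_Z(\Spec R, G) \simeq R\Gamma_Z(\Spf\hat{R}, G)$; the same excision applied to the structure sheaf shows the depth hypothesis passes to $\hat{R}$. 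Proposition~\ref{prop:flat_via_syntomic_ffg} now identifies $R\Gamma_Z(\Spf\hat{R}, G) \simeq R\Gamma_Z(\hat{R}^{\mathrm{syn}}, \mathcal{M})$, where $\mathcal{M} \in \mathsf{P}^{\mathrm{syn}}_{\{0,1\}}(\hat{R})$ corresponds to $G$ via Theorem~\ref{thm:main}.

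Next, pass to a $p$-completely pro-\'etale cover of $\hat{R}$ (over which $R\Gamma_Z$ satisfies descent and the depth hypothesis is preserved, since \'etale maps preserve depth of $\Reg{}$) on which both Theorem~\ref{thm:syntomic_raynaud} and Corollary~\ref{cor:bt_in_the_middle} provide structural fiber sequences: a cofiber sequence $\mathcal{V}^{-1} \to \mathcal{V}^0 \to \mathcal{M}$ in vector bundle $F$-gauges of Hodge--Tate weights $\{0,1\}$, and fiber sequences $\mathcal{M} \to \mathcal{V}_n \to \mathcal{M}'$, $\mathcal{M}' \to \mathcal{W}_n \to \mathcal{M}$ with $\mathcal{V}_n, \mathcal{W}_n$ level-$n$ vector bundle $F$-gauges. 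Granting the vanishing $R\Gamma_Z(\hat{R}^{\mathrm{syn}}, \mathcal{U}) \in D^{\ge d}$ for all such vector bundle $F$-gauges $\mathcal{U}$, the Raynaud cofiber sequence gives only the preliminary bound $R\Gamma_Z(\mathcal{M}) \in D^{\ge d-1}$ (a cofiber of objects in $D^{\ge d}$ lies only in $D^{\ge d-1}$); feeding this into the long exact sequence for the second Corollary~\ref{cor:bt_in_the_middle} sequence yields $R\Gamma_Z(\mathcal{M}') \in D^{\ge d}$, and the first sequence then upgrades $R\Gamma_Z(\mathcal{M})$ itself to $D^{\ge d}$, as required.

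The crux of the argument, and the main obstacle, is thus the underlying vector-bundle statement. Using the presentation of syntomic cohomology in Remark~\ref{rem:qsynt_site} as the fiber of $\varphi_0 - \mathrm{can}$ on Nygaard filtered prismatic cohomology on the quasisyntomic site, and the fact that $R\Gamma_Z$ preserves fiber sequences, one reduces to bounding local cohomology of the Nygaard-completed prismatic sheaves $\Fil^0_{\mathcal{N}}\mathcal{V}_-$ and $\mathcal{V}_-$. The Hodge-Tate conjugate filtration on $\overline{\Prism}_{-}$ has graded pieces isomorphic to $\wedge^i\mathbb{L}_{-/\Int_p}[-i]$; under the $p$-quasisyntomic hypothesis, $\mathbb{L}_{R/\Int_p}$ has $p$-complete Tor amplitude in $[-1, 0]$, so a standard spectral-sequence argument propagates the depth $d$ from $\Reg{R}$ (our hypothesis) through the graded pieces of the conjugate and Nygaard filtrations, ultimately delivering the desired bound $R\Gamma_Z(\hat{R}^{\mathrm{syn}}, \mathcal{V}) \in D^{\ge d}$.
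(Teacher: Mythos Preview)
Your overall strategy---reduce to $X=\Spec R$ affine, excise to $\Spf\hat{R}$ via Proposition~\ref{prop:beauville_laszlo}, and identify with syntomic cohomology of the $F$-gauge $\mathcal{M}$ via Proposition~\ref{prop:flat_via_syntomic_ffg}---matches the paper's opening moves. The detour through Theorem~\ref{thm:syntomic_raynaud} and the bootstrapping with Corollary~\ref{cor:bt_in_the_middle} is correct but unnecessary: the paper works directly with the perfect $F$-gauge $\mathcal{M}$, and the filtration argument at the end only uses that $\gr^{-1}_{\mathrm{Hdg}}M[-1]$ and the relevant tensor factors have Tor amplitude in $[0,1]$, which already holds for any $\mathcal{M}\in\mathsf{P}^{\mathrm{syn}}_{\{0,1\}}(\hat{R})$.

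The genuine gap is in your final paragraph. You assert that the problem reduces to bounding $R\Gamma_Z$ of $\Fil^0_{\mathcal{N}}\mathcal{V}_{-}$ and $\mathcal{V}_{-}$, and that the conjugate filtration on $\overline{\Prism}_{-}$ then finishes things. But the conjugate filtration lives on $\overline{\Prism}_{-}=\Prism_{-}/I$, and you give no mechanism for passing from $\mathcal{V}_{-}$ (a $\Prism_{-}$-module) to $\overline{\mathcal{V}}_{-}$---that is, for controlling the $I$-adic direction. This is precisely the nontrivial step. The paper handles it by using the \emph{enhanced} excision square of Corollary~\ref{cor:beauville_laszlo}, which brings in the \'etale realization $\mathcal{M}_{-}[1/\mathcal{I}]$; after taking the total fiber one is computing $R\Gamma_Z$ of a four-term diagram of quasisyntomic sheaves. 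One then base-changes to $R_0=\Int_p[\zeta_{p^\infty}]^{\wedge}_p$ (as in~\cite[Corollary~8.5.7]{bhatt2022absolute}) so that $I=([p]_q)$ with $[p]_q\equiv(q-1)^p\pmod p$, and the total fiber becomes
\[
\fib\bigl(R\Gamma_Z(\Spf R,R\Gamma_{(q-1)}(\Fil^0\mathcal{M}_{-}))\xrightarrow{\varphi_0-\mathrm{can}}R\Gamma_Z(\Spf R,R\Gamma_{(q-1)}(\mathcal{M}_{-}))\bigr).
\]
Only now does the single fiber sequence $\mathcal{M}_{-}\xrightarrow{[p]_q}\mathcal{M}_{-}\to\overline{\mathcal{M}}_{-}$ reduce matters to $\overline{\mathcal{M}}_{-}$, where the conjugate filtration (with graded pieces $N\otimes_R\wedge^i\mathbb{L}_{R/R_0}[-i]$ for $N$ of Tor amplitude $[0,1]$) and the depth hypothesis on $\Reg{R}$ can be applied. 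Your sketch jumps from $\mathcal{V}_{-}$ straight to the conjugate filtration, omitting this entire $(q-1)$-local-cohomology maneuver; without it, the ``standard spectral-sequence argument'' you invoke does not connect to the hypothesis.
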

\begin{proof}
It is enough to consider the case where $X = \Spec R$ is affine. Since $U = X\backslash Z$ is a quasicompact open with $U[1/p]=\Spec R[1/p]$, using Zariski descent, we find from Corollary~\ref{cor:beauville_laszlo} a Cartesian square
\[
 \begin{diagram}
   R\Gamma_{\mathrm{fppf}}(U,G)&\rTo&R\Gamma(\hat{U}^{\mathrm{syn}},\mathcal{M})\\
   \dTo&&\dTo\\
    R\Gamma_{\et}(\Spec R[1/p],G)&\rTo&R\Gamma_{\mathrm{qsyn}}(\hat{U},\mathcal{M}_{-}[1/\mathcal{I}])^{\varphi = \mathrm{id}}.
\end{diagram}  
\]
Here $\hat{U}$ is the (classical) $p$-adic formal scheme obtained by taking the completion of $U$ along $U_{(p=0)}$. Therefore, we see that $R\Gamma_Z(X,G)$ is the total limit of the diagram
\[
    \begin{diagram}
   R\Gamma(\hat{R}^{\mathrm{syn}},\mathcal{M})&\rTo&R\Gamma(\hat{U}^{\mathrm{syn}},\mathcal{M})\\
   \dTo&&\dTo\\
    R\Gamma_{\mathrm{qsyn}}(\Spf\hat{R},\mathcal{M}_{-}[1/\mathcal{I}])^{\varphi = \mathrm{id}}&\rTo&R\Gamma_{\mathrm{qsyn}}(\hat{U},\mathcal{M}_{-}[1/\mathcal{I}])^{\varphi = \mathrm{id}}.
\end{diagram} 
\] 

By Remark~\ref{rem:qsynt_site}, one now finds that $R\Gamma_Z(X,G)$ is obtained by applying the functor 
\[
R\Gamma_Z(\Spf \hat{R},\_) = \fib(R\Gamma_{\mathrm{qsyn}}(\Spf \hat{R},\_)\to R\Gamma_{\mathrm{qsyn}}(\hat{U},\_))
\]
to the diagram of quasisyntomic sheaves
\[
\begin{diagram}
   \Fil^0 \mathcal{M}_{-}&\rTo^{\varphi_0-\mathrm{can}}& \mathcal{M}_{-}\\
   \dTo&&\dTo\\
   (\Fil^0 \mathcal{M}_{-})[1/\mathcal{I}]&\rTo_{\varphi_0 - \mathrm{can}}& \mathcal{M}_{-}[1/\mathcal{I}]
\end{diagram}
\]
and then taking the total limit.

Now, following the argument in~\cite[Corollary 8.5.7]{bhatt2022absolute}, we can reduce to the case where $R$ is a classically $p$-complete algebra over $R_0 = \Int_p[\zeta_{p^\infty}]^{\wedge}_p$, and all sheaves above can be viewed as modules over the initial (perfect) prism $(A,I)$ for $R_0$. Here, in the notation of \emph{loc. cit.}, we have $I = ([p]_q)$ where $[p]_q\equiv (q-1)^p \pmod{p}$, and we finally find an isomorphism
\[
R\Gamma_Z(X,G) \xrightarrow{\simeq}\fib\left(R\Gamma_Z(\Spf R,R\Gamma_{(q-1)}(\Fil^0 \mathcal{M}_{-} ))\xrightarrow{\varphi_0 - \mathrm{can}}R\Gamma_Z(\Spf R,R\Gamma_{(q-1)}(\mathcal{M}_{-} ))\right)
\]

Now, we have
\[
 \gr^{-1}_{\mathrm{Hdg}}M[-1] \simeq \fib(\Fil^0 \mathcal{M}_{-}\xrightarrow{\mathrm{can}} \mathcal{M}_{-}),
\]
where we are viewing $\gr^{-1}_{\mathrm{Hdg}}M[-1]$ as a perfect complex over $\Spf R$ with Tor amplitude in $[0,1]$ and with cohomology killed by a power of $p$. Our hypothesis shows that $R\Gamma_Z(\Spf R,\gr^{-1}_{\mathrm{Hdg}}M[-1])$ is in $D^{\ge d}$. 

The rest of the proof is essentially the same as that of~\cite[Corollary 8.5.7]{bhatt2022absolute}: it only remains to know that $R\Gamma_Z(\Spf R,R\Gamma_{(q-1)}(\mathcal{M}_{-} ))$ is in $D^{\ge d}$. Using the fiber sequence 
\[
 \mathcal{M}_{-}\xrightarrow{[p]_q} \mathcal{M}_{-}\to \overline{\mathcal{M}}_{-}
\]
it suffices to know that $R\Gamma_Z(\Spf R,\overline{\mathcal{M}}_{-}[-1])$ is in $D^{\ge d}$. As in the proof of Proposition~\ref{prop:pushforwards_proper_smooth}, this latter object admits an exhaustive increasing filtration with associated gradeds of the form
\[
 R\Gamma_Z(\Spf R,N\otimes_R\wedge^i\mathbb{L}_{R/R_0}[-i])
\]
where $N$ is a perfect complex over $\Spf R$ with Tor amplitude $[0,1]$ and cohomology killed by a power of $p$. Since $R$ is $p$-quasisyntomic, $N\otimes_R\wedge^i\mathbb{L}_{R/R_0}[-i]$ is a $p$-power torsion object in $D^{\ge 0}(R)$, and so the theorem follows.
\end{proof}

\begin{corollary}
   \label{cor:cesnavicius_scholze}
Theorem~\ref{introthm:cesnavicius-scholze} holds.
\end{corollary}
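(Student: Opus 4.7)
The plan is to deduce this from Theorem~\ref{thm:purity} by verifying its two hypotheses: that (after an initial reduction) $X$ is $p$-quasisyntomic, and that $R\Gamma_{Z\cap V}(V,\Reg{V}) \in D^{\ge d}$ for every affine open $V \subset X$.

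First I would shrink $X$ by excision. The lci locus is open in $X$ and by hypothesis contains $Z$, and restricting to this open subscheme leaves $R\Gamma_Z(X,G)$ unchanged (on the fppf side, this kind of locality near $Z$ is already built into Proposition~\ref{prop:beauville_laszlo}). After this reduction $X$ is locally lci over $\Int$; such a scheme is automatically $p$-quasisyntomic, since locally $\Reg{X}$ is presented as a quotient of a polynomial $\Int$-algebra by a regular sequence, producing a two-term free resolution of $\mathbb{L}_{X/\Int}$ concentrated in degrees $-1,0$, and lci implies locally Noetherian (so the bounded $p$-torsion condition is automatic).

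Next I would verify the local cohomology bound. By the standard identification of stalks of local cohomology sheaves, for $z \in Z \cap V$ we have $\mathcal{H}^i_{Z\cap V}(\Reg{V})_z = H^i_{I_z}(\Reg{X,z})$, where $I_z \subset \Reg{X,z}$ is the ideal cutting out $Z$ locally. Grothendieck's vanishing gives $H^i_{I_z}(\Reg{X,z}) = 0$ for $i$ less than the grade of $I_z$, which equals $\mathrm{ht}(I_z)$ in the Cohen--Macaulay case. The minimal primes $\mathfrak{p}$ of $I_z$ correspond to points $z' \in Z$ with $\Reg{X,z'} \simeq (\Reg{X,z})_\mathfrak{p}$, so $\mathrm{ht}(\mathfrak{p}) = \dim \Reg{X,z'} \ge d$ by hypothesis; hence $\mathrm{ht}(I_z) \ge d$. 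The resulting pointwise vanishing of $\mathcal{H}^i_{Z\cap V}(\Reg{V})$ for $i < d$ then promotes to $R\Gamma_{Z\cap V}(V,\Reg{V}) \in D^{\ge d}$ using the quasi-coherence of local cohomology sheaves on the Noetherian affine $V$.

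Applying Theorem~\ref{thm:purity} with these verified hypotheses yields $R\Gamma_Z(X,G) \in D^{\ge d}$, and the fiber sequence
\[
R\Gamma_Z(X,G) \to R\Gamma_{\mathrm{fppf}}(X,G) \to R\Gamma_{\mathrm{fppf}}(X \setminus Z, G)
\]
immediately gives the injectivity and isomorphism statements of the corollary via the associated long exact sequence. The main difficulty is the second step: translating the pointwise dimension hypothesis into the height bound $\mathrm{ht}(I_z) \ge d$ via the minimal-primes argument, and then globalizing the stalk-level vanishing, both of which rely delicately on the Cohen--Macaulay consequences of the lci assumption.
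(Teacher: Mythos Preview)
Your approach is essentially the same as the paper's: reduce to the case where $X$ is lci, observe this makes $X$ $p$-quasisyntomic, verify the local cohomology bound, and invoke Theorem~\ref{thm:purity}. The paper compresses all of this into two sentences, deferring the reduction step to~\cite[Lemma 7.1.1]{Cesnavicius2019-pn} rather than arguing directly.

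One point deserves a bit more care in your write-up. You assert that the lci locus is open in $X$ and then shrink. For a general qcqs scheme this is not entirely automatic: the hypothesis only says $\Reg{X,z}$ is lci for $z\in Z$, and spreading this out to an open neighborhood of $Z$ requires either a Noetherian hypothesis on $X$ or a limit argument. This is exactly what the cited \'etale-localization lemma in \v{C}esnavi\v{c}ius--Scholze packages up. Once that reduction is granted, your verification of the local-cohomology bound via Cohen--Macaulayness and the height computation is correct and makes explicit what the paper leaves implicit.
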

\begin{proof}
   Via \'etale localization~\cite[Lemma 7.1.1]{Cesnavicius2019-pn}, we can reduce to the case where $X$ is lci of dimension $\leq d$. This means that it is $p$-quasisyntomic and the vanishing condition for local cohomology in Theorem~\ref{thm:purity} holds with $d$ as given here.
\end{proof}

\section{Frames and divided Dieudonn\'e complexes}
\label{sec:divided_dieudonne}

Here we show that our classification of finite flat $p$-power torsion commutative group schemes can be translated into terms very close to those appearing in~\cite{MR4530092} but over quite general frames. We then apply this to recover various known classifications of such group schemes, and also give some extensions of such results.

\subsection{Frames}

\begin{definition}
An (animated) \defnword{frame} for $R\in \mathrm{CRing}^{p\text{-comp}}$ is a tuple $\underline{A} = (A\twoheadrightarrow \overline{A},A\twoheadrightarrow R,\varphi,\overline{\varphi})$, where:
\begin{enumerate}
   \item $A\twoheadrightarrow\overline{A}$ is a surjective map of animated commutative rings with fiber $I\to A$ given by a generalized Cartier divisor such that $A$ is derived $(p,I)$-complete;
   \item $A\twoheadrightarrow R$ is a surjection of animated commutative rings with fiber $\Fil^1A$;
   \item $\varphi:A\to A$ is a `na\"ive' Frobenius lift in the sense that it is an endomorphism of animated commutative rings such that the induced endomorphism of $\pi_0(A)/p\pi_0(A)$ is Frobenius;
   \item $\overline{\varphi}:R\to \overline{A}$ is map of animated commutative rings;
\end{enumerate}
along with a commutative diagram
\begin{align}\label{eqn:comm_diagram_frame}
\begin{diagram}
A&\rTo^\varphi&A\\
\dOnto&&\dOnto\\
R&\rTo_{\overline{\varphi}}&\overline{A}.
\end{diagram}
\end{align}
We will write $\varphi_1:\Fil^1A\to I$ for the $\varphi$-linear map induced on the fibers of the vertical arrows. 
\end{definition}

\begin{remark}[Maps between frames]
\label{rem:frame_maps}
Frames organize into an $\infty$-category in a natural way where maps 
\[
f:\underline{A} \to \underline{A}' = (A'\twoheadrightarrow{A}',A'\twoheadrightarrow R',\varphi',\overline{\varphi}')
\]
are maps between the corresponding commutative diagrams~\eqref{eqn:comm_diagram_frame} such that the induced map $A'\otimes_A\overline{A}\to \overline{A}'$ is an equivalence (so that $A'\otimes_AI \xrightarrow{\simeq}I'$).
\end{remark}

\begin{remark}
The notion of a frame was introduced by Zink~\cite{Zink2001-dg}, but the one we use here is different: Even when $A$ is discrete, we do not require that $I$ be generated by $p$, nor do we require $\Fil^1A$ to be endowed with divided powers. Our notion is very closely related to that of a generalized frame as defined by Lau~\cite[\S 11]{MR2679066}.
\end{remark}

\begin{remark}
In \cite[\S 5]{gmm}, one finds a somewhat different definition of an animated frame, which is a generalization of the notion of a higher frame due to Lau~\cite{MR4355476}. Every frame in the sense of~\cite{gmm} gives rise to a frame in the sense employed here. This more refined notion will show up briefly in \S~\ref{subsec:specializing_to_the_breuil_kisin_case}.
\end{remark}

\begin{definition}
[Prismatic frames]
A frame $\underline{A}$ is \defnword{prismatic} if the following conditions hold:
\begin{enumerate}
   \item The pair $(A,I)$ is a(n animated) prism.\footnote{See~\cite[\S 2]{bhatt2022prismatization}.}
   \item The endomorphism $\varphi:A\to A$ is the one obtained from the underlying animated $\delta$-ring structure  on $A$: In particular, it is a lift of the Frobenius endomorphism of $A/{}^{\mathbb{L}}p$.
\end{enumerate}
\end{definition}

\begin{remark}
   [\'Etale lifts for frames]
\label{rem:etale_lifts_frames}
If $\underline{A}$ is a frame for $R$ and $f:R\to R'$ is a $p$-completely \'etale map, then there is a canonical frame $\underline{A}'$ for $R'$ and a map $\underline{A}\to \underline{A}'$ lifting $f$ such that the underlying map $A\to A'$ is $(p,I)$-completely \'etale. If $\underline{A}$ is prismatic, then so is $\underline{A}'$, and we have an underlying map of prisms $(A,I)\to (A',I')$. For an explanation, see for instance the discussion in~\cite[Proposition 5.4.25]{gmm}.
\end{remark}

\begin{example}[$p$-adic frames]
\label{ex:crystalline_frame}
Suppose that $A$ is an animated $\delta$-ring; then $A$ is equipped with a canonical \emph{crystalline} prism structure where $I \to A$ is isomorphic to $A\xrightarrow{p}A$.  We will call a prismatic frame $p$\defnword{-adic} if its underlying prism is of this form.
\end{example}

\begin{example}
[Prismatic frames for semiperfectoid rings]
Suppose that $R\in \mathrm{CRing}^{p\text{-comp}}$ is semiperfectoid. Then its absolute prismatic cohomology $(\Prism_R,I_R)$---which is the initial prism for $R$---underlies a prismatic frame for $R$ with $\overline{\Prism}_R$ its absolute Hodge-Tate cohomology. This frame is $p$-adic if and only if $R$ is an $\Field_p$-algebra.
\end{example}  

\begin{example}
[The Witt frame]
\label{example:witt_frame}
Suppose that we have $R\in \mathrm{CRing}^{p\text{-comp}}$. Then the animated ring of Witt vectors $W(R)$ underlies a $p$-adic frame for $R$ with $\overline{W(R)} = W(R)/{}^{\mathbb{L}}p$ with the map $R\to \overline{W(R)}$ classifying $x_{\dR}:\Spf R \to R^\Prism$: That is, it is the canonical map through which the composition $W(R)\xrightarrow{F}W(R)\to\overline{W(R)}$ factors.
\end{example}

\begin{definition}
[Laminations]
A \defnword{lamination} for a prismatic frame $\underline{A}$ for $R$ is an extension of the canonical map of $\delta$-rings $A\to W(R)$ lifting $A\to R$ to a map of frames $\underline{A}\to \underline{W(R)}$. Giving such an extension is equivalent to giving an isomorphism of Cartier-Witt divisors
\[
(I\otimes_AW(R)\to W(R))\xrightarrow{\simeq}(W(R)\xrightarrow{p}W(R))
\]
as points of $R^{\Prism}(R)$. If $\underline{A}$ is equipped with a lamination, then we we will say that it is \defnword{laminated}. 
\end{definition}

\begin{remark}
\label{rem:laminated_via_map}
   If $\underline{A}\to \underline{A}'$ is a map of prismatic frames, then any lamination for $\underline{A}$ induces one for $\underline{A}'$.
\end{remark}

\begin{remark}
   \label{rem:lamination_criterion}
Suppose that $\underline{A}$ is a prismatic frame for $R$ and that we have an isomorphism of $R$-algebras
\[
   \overline{A}\otimes_AW(R)\simeq W(R)/{}^{\mathbb{L}}(I\otimes_AW(R))\xrightarrow{\simeq}W(R)/{}^{\mathbb{L}}p
\]
Then the left hand side is in particular an $\Field_p$-algebra, and we immediately obtain a canonical isomorphism of Cartier-Witt divisors $(W(R)\xrightarrow{p}W(R))\simeq (I\otimes_AW(R)\to W(R))$; see~\cite[Example 5.1.9]{bhatt_lectures}. From this, we find that an isomorphism as above equips $\underline{A}$ with a lamination.
\end{remark}

\begin{remark}
[Laminations and the prismatization]
\label{rem:laminations_and_prismatization}
Suppose that we have a prism $(A,I,\overline{\varphi}:R\to \overline{A})$ in the absolute prismatic site for $R$. Extending this to the datum of a prismatic frame $\underline{A}$ for $R$ amounts to giving a factoring of $A\xrightarrow{\varphi}A\to \overline{A}$ through $\overline{\varphi}$ via a surjective map $A\to R$. Remark~\ref{rem:prisms_and_prismatization} gives us a map $\Spf A \to R^\Prism$, and we can consider the composition
\[
   \Spf R \to \Spf A \to R^\Prism,
\]
which classifies the Cartier-Witt divisor $(I\otimes_AW(R)\to W(R))$ equipped with the structure map
\[
   R\xrightarrow{\bar{\varphi}}\overline{A}\to \overline{A}\otimes_AW(R)
\]
When $\underline{A}$ is laminated, then this map is isomorphic to the canonical map $R\to \overline{W(R)}$ classifying the de Rham point $x_{\mathrm{dR}}:\Spf R \to R^\Prism$. Conversely, if we have such an isomorphism of maps, then we obtain an isomorphism of $R$-algebras
\[
W(R)/{}^{\mathbb{L}}(I\otimes_AW(R))\xrightarrow{\simeq} W(R)/{}^{\mathbb{L}}p,
\]
which endows $\underline{A}$ with a lamination by Remark~\ref{rem:lamination_criterion}.
\end{remark}

\begin{remark}
[Laminations for frames for semiperfectoids]
\label{rem:semiperf_lamination}
If $R$ is semiperfectoid, then Remark~\ref{rem:laminations_and_prismatization} admits the following interpretation: Given $(A,I,\overline{\varphi}:R\to \overline{A})$, since $(\Prism_R,I_R)$ is the initial prism for $R$, we obtain a canonical map $(\Prism_R,I_R)\to (A,I)$. Extending the given tuple to the datum of a laminated prismatic frame $\underline{A}$ is now equivalent to giving a map of frames $\underline{\Prism}_R\to \underline{A}$ lifting the map $(\Prism_R,I_R,R\to \overline{\Prism}_R)\to (A,I,R\to \overline{A})$. In particular, we see that the category of laminated prismatic frames for $R$ admits an initial object, $\underline{\Prism}_R$, and a final object, $\underline{W(R)}$.
\end{remark}

\begin{example}
   [Frames for polynomial algebras]
\label{ex:frames_for_polynomial_algebras}
Suppose that $\underline{A}$ is a laminated prismatic frame for $R$. If $S = R[x_1,\ldots,x_n]^{\wedge}_p$ is a $p$-completed polynomial algebra over $R$, then $B = A[x_1,\ldots,x_n]^{\wedge}_p$, equipped with any Frobenius lift extending that on $A$, is uniquely endowed with the structure of a laminated prismatic frame $\underline{B}$ for $S$ equipped with a map of laminated frames $\underline{A}\to \underline{B}$ lifting $R\to S$. This applies in particular to the situation where $R$ is semiperfectoid and $\underline{A} = \underline{\Prism}_R$.
\end{example}

\begin{remark}
[Base-change for laminated frames]
\label{rem:base_change_laminated}
Suppose that $\underline{A}$ is a laminated prismatic frame for $R$ and that $S$ is a $p$-complete $R$-algebra such that $S^\Prism\to R^\Prism$ is relatively formally affine. By Remark~\ref{rem:base_change_prisms}, we have an object $(B,J,S\to \overline{B})$ in the absolute prismatic site for $S$ such that $\Spf B\to S^\Prism$ is isomorphic to $\Spf A\times_{R^\Prism}S^\Prism$. Since $\underline{A}$ is laminated, the de Rham point $\Spf R\to R^\Prism$ factors through $\Spf A$, and hence the corresponding point for $S$ factors as
\[
\Spf S \to \Spf R\times_{R^\Prism}S^\Prism\simeq \Spf B \to S^\Prism,
\]
implying that we have a map $B\to S$ extending $A\to R$. We now claim that the map $B\xrightarrow{\varphi}B\to \overline{B}$ factors through $B\to S$ and lifts $(A,I,R\to \overline{A})\to (B,J,S\to \overline{B})$ to a map of frames $\underline{A}\to \underline{B}$. The claim comes down to two observations:
\begin{itemize}
   \item For any $\overline{B}$-algebra $C$, since $I\otimes_AW(C)\to W(C)$ is in the Hodge-Tate locus, its base-change under $F:W(C)\to W(C)$ is canonically isomorphic to $W(C)\xrightarrow{p}W(C)$~\cite[Proposition 3.6.6]{bhatt2022absolute}.
   \item In this situation, the map 
   \[
      W(C)/{}^{\mathbb{L}}(I\otimes_AW(C))\to W(C)/{}^{\mathbb{L}}p
   \]
   induced by $F$ factors through the quotient $W(C)\twoheadrightarrow C$. Therefore, the structure map $S\to W(C)/{}^{\mathbb{L}}(I\otimes_AW(C))\to W(C)/{}^{\mathbb{L}}p$ factors through a map $S\to C$.
\end{itemize}
\end{remark}

\begin{lemma}
\label{lem:syntomic_frame_criterion}
Suppose that we have $(A,I,R\to \overline{A})$ in the prismatic site for $R$, and suppose that we have $R\to R_{\infty}$ as in Remark~\ref{rem:quasisyntomic_base_change}. Then in the notation of \emph{loc. cit.}, the following data are equivalent:
\begin{enumerate}
   \item A cosimplicial prismatic frame $\underline{A}^{(\bullet)}_{\infty}$ for $R^{\otimes_R(\bullet+1)}_\infty$ with underlying prism $(A_\infty^{(\bullet)},I_\infty^{(\otimes)})$ such that the map 
   \[
   (\Prism_{R^{\otimes_R(\bullet +1)}_\infty},I_{R^{\otimes_R(\bullet +1)}_\infty})\to (A^{(\bullet)}_\infty,I^{(\bullet)}_\infty)
   \]
   of cosimplicial prisms in the prismatic site for $R_{\infty}^{\otimes_R( \bullet +1)}$ lifts to a map $\underline{\Prism}_{R^{\otimes_R(\bullet +1)}_\infty}\to \underline{A}^{(\bullet)}_\infty$ of cosimplicial frames for $R^{\otimes_R(\bullet +1)}_\infty$.
   \item The datum of a laminated frame $\underline{A}$ for $R$ with underlying tuple $(A,I,R\to \overline{A})$.
\end{enumerate}
\end{lemma}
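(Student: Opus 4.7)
The plan is to exploit the semiperfectoid base-change machinery for laminated frames (Remark~\ref{rem:base_change_laminated}), together with the $(p,I)$-completely faithfully flat descent provided by Remark~\ref{rem:quasisyntomic_base_change}, so that moving between the two data amounts to descending along the cover $\Spf A_\infty \to \Spf A$.

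For the direction $(2) \Rightarrow (1)$, assume $\underline{A}$ is a laminated frame for $R$. Each $R^{\otimes_R(n+1)}_\infty$ is semiperfectoid, hence by Remark~\ref{rem:base_change_prisms_semiperfectoid} the map $(R^{\otimes_R(n+1)}_\infty)^\Prism \to R^\Prism$ is relatively formally affine. Remark~\ref{rem:base_change_laminated} then endows each $(A^{(n)}_\infty, I^{(n)}_\infty, R^{\otimes_R(n+1)}_\infty\to \overline{A}^{(n)}_\infty)$ with a canonical laminated frame structure $\underline{A}^{(n)}_\infty$, and cosimpliciality is automatic from the functoriality of this base-change. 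By Remark~\ref{rem:semiperf_lamination}, the lamination on each $\underline{A}^{(n)}_\infty$ translates into the required map $\underline{\Prism}_{R^{\otimes_R(n+1)}_\infty} \to \underline{A}^{(n)}_\infty$ lifting the given map of underlying prisms, and these maps are cosimplicially compatible since $\underline{\Prism}_{R^{\otimes_R(\bullet+1)}_\infty}$ is constructed cosimplicially from the same cover.

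For the direction $(1) \Rightarrow (2)$, the task is to descend the missing data (the surjection to $R$, the factorization of $\varphi$, and the lamination itself) along the $(p,I)$-completely faithfully flat hypercover $\Spf A^{(\bullet)}_\infty \to \Spf A$ from Remark~\ref{rem:quasisyntomic_base_change}. First, the frame structures on $\underline{A}^{(\bullet)}_\infty$ supply a cosimplicially compatible surjection $A^{(\bullet)}_\infty \to R^{\otimes_R(\bullet+1)}_\infty$; by $(p,I)$-completely faithfully flat descent of modules this produces the surjection $A \to R$ defining $\Fil^1 A$, using that $R \to R_\infty$ is itself a quasisyntomic cover so that $R$ is recovered from $R^{\otimes_R(\bullet+1)}_\infty$ by totalization. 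By the same descent argument applied to the cosimplicial commuting squares encoding $\varphi_\infty^{(\bullet)}$ and $\overline{\varphi}_\infty^{(\bullet)}$, the map $A\xrightarrow{\varphi} A\to \overline{A}$ factors through $A \to R \xrightarrow{\overline{\varphi}}\overline{A}$, upgrading $(A,I,R\to\overline{A})$ to a prismatic frame $\underline{A}$ for $R$.

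It remains to produce the lamination. Using Remark~\ref{rem:laminations_and_prismatization}, this is the statement that the composition $\Spf R \to \Spf A \to R^\Prism$ agrees with the de Rham point $x_{\dR}$. Since $\Spf R_\infty \to \Spf R$ is faithfully flat and $R_\infty^\Prism \to R^\Prism$ is a flat cover (Proposition~\ref{prop:qsyn_to_flat_covers}), this identification can be checked after base-change to $R_\infty$. There, Remark~\ref{rem:semiperf_lamination} converts the assumed map $\underline{\Prism}_{R_\infty}\to\underline{A}^{(0)}_\infty$ of frames into a lamination on $\underline{A}^{(0)}_\infty$, which by Remark~\ref{rem:laminations_and_prismatization} is exactly the required identification with $x_{\dR}$ over $\Spf R_\infty$. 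The main obstacle will be carefully tracking cosimplicial compatibilities so that the descent actually produces a \emph{frame} map rather than just a map of commutative diagrams, but this reduces to the functoriality statements already encoded in Remarks~\ref{rem:base_change_laminated} and~\ref{rem:frame_maps}, combined with the fact that the two constructions in directions $(1)\Rightarrow (2)$ and $(2)\Rightarrow (1)$ are inverse to one another over the cover by Remark~\ref{rem:semiperf_lamination}.
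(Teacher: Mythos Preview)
Your proof is correct and follows essentially the same approach as the paper. The paper's proof is a two-line affair: $(2)\Rightarrow(1)$ is Remark~\ref{rem:base_change_laminated}, and $(1)\Rightarrow(2)$ is Remark~\ref{rem:semiperf_lamination} plus $p$-completely faithfully flat descent. You have unpacked both directions with the same ingredients, citing Remark~\ref{rem:laminations_and_prismatization} where the paper cites its semiperfectoid specialization Remark~\ref{rem:semiperf_lamination}; these are interchangeable here. One small point of phrasing: when you say the de Rham identification ``can be checked after base-change to $R_\infty$'', what you actually need is to \emph{descend} an isomorphism in $R^\Prism(R)$, which requires the full cosimplicial compatibility you invoke in the final sentence, not just the level-$0$ comparison---but you clearly have this in mind.
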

\begin{proof}
The implication (1)$\Rightarrow$(2) is immediate from Remark~\ref{rem:semiperf_lamination} and $p$-completely faithfully flat descent, while the other implication follows from Remark~\ref{rem:base_change_laminated}.
\end{proof}

\subsection{Divided Dieudonn\'e complexes}
\label{subsec:divided_dieudonne_modules}

From now on, all our frames will be prismatic unless otherwise noted.

\begin{definition}[Divided Dieudonn\'e complexes]
\label{defn:adm_dieudonne}
Let $\underline{A}$ be a frame for $R$. A \defnword{divided Dieudonn\'e complex} over $R$ \defnword{with respect to $\underline{A}$} is a tuple $\underline{\mathsf{M}} = (\mathsf{M},\Fil^0M\to M,\mathsf{M}\xrightarrow{\Psi_{\mathsf{M}}}\varphi^*\mathsf{M},\xi)$ such that:
\begin{enumerate}
   \item $\mathsf{M}\in \mathrm{Perf}(A)$ is a perfect complex over $A$;
   \item $\Fil^0M \to M \defn R\otimes_{A}\mathsf{M}$ is a map of perfect complexes over $R$;
   \item $\Psi_{\mathsf{M}}:\mathsf{M}\to \varphi^*\mathsf{M}$ is a map of perfect complexes over $A$;
   \item $\xi$ is an isomorphism of perfect complexes over $A$ sitting in a diagram
   \[
    \begin{diagram}
    \varphi^*\mathsf{M}&\rTo&\cofib(\Psi_{\mathsf{M}})\\
    &\rdTo&\dTo^{\simeq}_{\xi}\\
    &&\overline{A}\otimes_{\overline{\varphi},R}\gr^{-1}M.
    \end{diagram}
   \] 
\end{enumerate}
Here, $\gr^{-1}M = M/\Fil^0M$ and the diagonal map is obtained as the composition
\[
\varphi^*\mathsf{M}\to \overline{A}\otimes_A\varphi^*\mathsf{M}\xrightarrow{\simeq}\overline{A}\otimes_{\overline{\varphi},R}M\to \overline{A}\otimes_{\overline{\varphi},R}\gr^{-1}M.
\]

These can be organized into a stable $\infty$-category in a natural way, which we denote by $\mathrm{DDC}_{\underline{A}}(R)$.

A divided Dieudonn\'e complex has \defnword{Tor amplitude in $[a,b]$} if $\mathsf{M}$ is in $\mathrm{Perf}^{[a,b]}(A)$ and if $\Fil^0M,\gr^{-1}M$ are both in $\mathrm{Perf}^{[a,b]}(R)$. Write $\mathrm{DDC}^{[a,b]}_{\underline{A}}(R)$ for the subcategory spanned by the objects with Tor amplitude in $[a,b]$.
\end{definition} 

\begin{remark}
    One can give a rigorous definition of the $\infty$-category $\mathrm{DDC}_{\underline{A}}(R)$ as follows: For any stable $\infty$-category $\mathcal{A}$, there are three canonical functors
    \[
    s_{\mathcal{A}},t_{\mathcal{A}},c_{\mathcal{A}}: \mathrm{Fun}([1],\mathcal{A})\to \mathcal{A}
    \]
    of stable $\infty$-categories. Here, $[1]$ is the category consisting of two objects with one non-identity arrow between them, and $\mathrm{Fun}([1],\mathcal{A})$ denotes the $\infty$-category of arrows in $\mathcal{A}$. The functor $s_{\mathcal{A}}$ carries an arrow to its source, $t_{\mathcal{A}}$ carries it to its target, and $c_{\mathcal{A}}$ carries it to its cofiber. If $\mathcal{A} = \mathrm{Perf}(C)$ for some animated commutative ring $C$, we will write ${C}$ for the subscript instead of $\mathrm{Perf}(C)$. 

    Consider the stable $\infty$-categories
    \[
    \mathcal{C} = \mathrm{Fun}([1],\mathrm{Perf}(A)) \times \mathrm{Fun}([1],\mathrm{Perf}(R))\;;\; \mathcal{D} = \mathrm{Perf}(A)\times \mathrm{Perf}(A)\times  \mathrm{Perf}(A)\times \mathrm{Perf}(A)\times \mathrm{Perf}(R)\times \mathrm{Perf}(R);
    \]
    \[
    \mathcal{E} = \mathrm{Perf}(A) \times \mathrm{Perf}(A)\times  \mathrm{Perf}(R).
    \]
    There are functors of stable $\infty$-categories
    \[
    \tau = (s_A\circ \mathrm{pr}_1,t_A\circ \mathrm{pr}_1,c_A\circ \mathrm{pr}_1,\overline{\varphi}^*\circ c_R\circ \mathrm{pr}_2,t_R\circ \mathrm{pr}_2,c_R\circ \mathrm{pr}_2): \mathcal{C} \to \mathcal{D};
    \]
    \[
    \alpha = (\varphi^*\circ \mathrm{pr}_1,\mathrm{pr}_1,\Delta_A\circ \mathrm{pr}_2,\Delta_R\circ \mathrm{pr}_3).
    \]
    Here $\Delta_A$ (resp. $\Delta_R$) is the diagonal for $\mathrm{Perf}(A)$ (resp. $\mathrm{Perf}(R)$). Also, we have written $\overline{\varphi}^*$ for the composition of pullback along $\overline{\varphi}:R\to \overline{A}$ composed with the forgetful functor from $\mathrm{Perf}(\overline{A})$ to $\mathrm{Perf}(A)$.

    We now have a Cartesian diagram of stable $\infty$-categories
    \[
    \begin{diagram}
        \mathrm{DDC}_{\underline{A}}(R)&\rTo&\mathcal{C}\\
        \dTo&&\dTo_{\tau}\\
        \mathcal{E}&\rTo_{\alpha}&\mathcal{D}
    \end{diagram}
    \]
\end{remark}

\begin{remark}
[Functoriality]
\label{rem:funct_adm_dieudonne}
Suppose that $f:\underline{A}\to \underline{A}'$ is a map of frames. Then there is a canonical base-change functor $f^*:\mathrm{DDC}_{\underline{A}}(R)\to \mathrm{DDC}_{\underline{A}'}(R')$ lifting the natural base-change functors from $\mathrm{Perf}(A)$ to $\mathrm{Perf}(A')$ and $\mathrm{Perf}(R)$ to $\mathrm{Perf}(R')$. 
\end{remark}

\begin{remark}[Alternative description]
\label{rem:pseudo_verschiebung}
In the situation of Definition~\ref{defn:adm_dieudonne}, there is a canonical map
\[
\sigma_{\mathsf{M}}:I\otimes_A\varphi^*\mathsf{M}\to \mathsf{M}
\]
sitting in a diagram
\[
\begin{diagram}
I\otimes_A\varphi^*\mathsf{M}&\rTo&\varphi^*\mathsf{M}&\rTo&\overline{A}\otimes_A\varphi^*\mathsf{M}\\
\dTo^{\sigma_{\mathsf{M}}}&&\dEquals&&\dTo\\
\mathsf{M}&\rTo^{\Psi_{\mathsf{M}}}&\varphi^*\mathsf{M}&\rTo&\overline{A}\otimes_{\overline{\varphi},R}\gr^{-1}M
\end{diagram}
\]
where the rows are fiber sequences. In particular, note that we have an equivalence
\begin{align}\label{eqn:cofib_zeta}
\cofib(\sigma_{\mathsf{M}})\xrightarrow{\simeq}\overline{A}\otimes_{\overline{\varphi},R}\Fil^0M
\end{align}
Now, set $\mathsf{N} = \mathsf{M}\{-1\}$. We then have a canonical isomorphism $ I\otimes_A\varphi^*\mathsf{M}\{-1\}\xrightarrow{\simeq}\varphi^*\mathsf{N}$, and so $\sigma_{\mathsf{M}}\{-1\}$ yields a map
\[
   \Phi_{\mathsf{N}}:\varphi^* \mathsf{N}\to \mathsf{N}
\]
equipped with an isomorphism $\cofib(\Phi_{\mathsf{N}})\xrightarrow{\simeq}\overline{A}\otimes_{\overline{\varphi},R}\Fil^1N$, where $\Fil^1N \defn \Fil^0M\{-1\}$. One can alternatively define divided Dieudonn\'e complexes in terms of these data instead.
\end{remark}

\begin{remark}[Breuil-Kisin twist]
Since $\underline{A}$ is assumed to be prismatic, we have a canonical invertible $A$-module $A\{1\}$, the \defnword{Breuil-Kisin twist}, equipped with a canonical isomorphism $u:I^{-1}\otimes_AA\{1\}\xrightarrow{\simeq}\varphi^*A\{1\}$. For any $A$-module $\mathsf{M}$ and $n\in \Int$, we set $\mathsf{M}\{n\}\defn \mathsf{M}\otimes_AA\{1\}^{\otimes_A n}$.
\end{remark}

\begin{definition}[Cartier duality]
\label{defn:adm_dieudonne_cartier_duality}
Suppose that $\underline{\mathsf{M}}$ is in $\mathrm{DDC}_{\underline{A}}(R)$. Then the (unnormalized) \defnword{Cartier dual} $\underline{\mathsf{M}}^*$ is given by the tuple with
\begin{itemize}
   \item $\mathsf{M}^* = \mathsf{M}^\vee\{1\}$;
   \item $\Fil^0M^* = (M/\Fil^0M)^\vee\{1\}\to M^* = M^\vee\{1\}$, so that $\gr^{-1}M^* \simeq (\Fil^0M)^\vee\{1\}$;
   \item $\Psi_{\mathsf{M}^*}$ given by the composition
   \[
    \mathsf{M}^*= \mathsf{M}^\vee\{1\}\xrightarrow{\sigma_{\mathsf{M}}^\vee\{1\}}\varphi^*\mathsf{M}^\vee\otimes_A(I^{-1}\otimes_AA\{1\})\xrightarrow[\simeq]{1\otimes u}\varphi^*\mathsf{M}^\vee\otimes_A\varphi^*A\{1\}\xrightarrow{\simeq}\varphi^*\mathsf{M}^*;
   \]
   \item The isomorphism $\xi^*$ is obtained after twisting from~\eqref{eqn:cofib_zeta}.
\end{itemize}
\end{definition}

\begin{remark}[Rank one divided complexes]
   \label{rem:unit_and_twist}
   We have the canonical divided complex given by the tuple $(A,R\xrightarrow{\mathrm{id}} R,A\xrightarrow{\mathrm{id}}A,0)$, which we will denote by $\underline{\mathbf{1}}$. Its Cartier dual $\underline{\mathbf{1}}^*$ is given by the tuple 
   \[
   (A\{1\},0\to R\{1\},A\{1\}\to I^{-1}\otimes A\{1\}\xrightarrow[\simeq]{u}\varphi^*A\{1\},\xi^*)
   \]
   where $\xi^*$ is the composition 
   \[
   I^{-1}/A\otimes_AA\{1\}\xrightarrow{\simeq}I^{-1}\otimes_A\overline{A}\{1\}\xrightarrow{\simeq}\overline{A}\otimes_{A}\varphi^*A\{1\}\xrightarrow{\simeq}\overline{A}\otimes_RR\{1\}.
   \]
\end{remark}

\begin{remark}
   [Maps from $\underline{\mathbf{1}}$]
\label{rem:maps_from_1}
If $\underline{\mathsf{M}}$ is a divided Dieudonn\'e complex over $\underline{A}$, set $\Fil^0\mathsf{M} = \fib(\mathsf{M}\to \gr^{-1}M)$. Then we obtain a map $v_{\mathsf{M}}:\varphi^*\Fil^0\mathsf{M}\to \mathsf{M}$ sitting in a diagram
\[
\begin{diagram}
  \varphi^*\Fil^0\mathsf{M}&\rTo&\varphi^*\mathsf{M}&\rTo&(A\otimes_{\varphi,A}R)\otimes_R\gr^{-1}M\\
  \dTo^{v_{\mathsf{M}}}&&\dEquals&&\dTo\\
  \mathsf{M}&\rTo_{\Psi_{\mathsf{M}}}&\varphi^*\mathsf{M}&\rTo&\overline{A}\otimes_{\overline{\varphi},R}\gr^{-1}M
\end{diagram}
\]
where both rows are fiber sequences. Unwinding definitions, $\mathrm{RHom}_{\underline{A}}(\underline{\mathbf{1}},\underline{\mathsf{M}})$ (computed in the stable $\infty$-category $\mathrm{DDC}_{\underline{A}}(R)$) is given by
\[
  \mathrm{RHom}_{\underline{A}}(\underline{\mathbf{1}},\underline{\mathsf{M}}) \simeq \fib(\Fil^0\mathsf{M}\xrightarrow{\mathrm{can} - v_{\mathsf{M}}\circ \varphi^*}\mathsf{M}),
\]
where $\mathrm{can}:\Fil^0\mathsf{M}\to \mathsf{M}$ is the natural map.
\end{remark}

\begin{example}
   [Maps from $\underline{\mathbf{1}}$ to $\underline{\mathbf{1}}^*$]
   \label{ex:map_from_unit_bk_twist}
Let us specialize the previous remark to the case $\underline{\mathsf{M}} = \underline{\mathbf{1}}^*$. Here, the map $v_{\mathsf{M}}\circ \varphi^*$ is the composition
\[
(\Fil^1A)\{1\}\xrightarrow{\varphi_1\{1\}} I\otimes_A\varphi^*A\{1\} \simeq A\{1\}
\]
where the first map is the Breuil-Kisin twist of the semilinear divided Frobenius map $\varphi_1:\Fil^1A \to I$. We will denote this composition by $\varphi_1\{1\}$ as well. Then we find:
\[
   \mathrm{RHom}_{\underline{A}}(\underline{\mathbf{1}},\underline{\mathbf{1}}^*) \simeq \fib((\Fil^1A)\{1\}\xrightarrow{\mathrm{can} - \varphi_1\{1\}}A\{1\}). 
\]
\end{example}

\begin{remark}
[Windows over frames]
\label{rem:windows}
Suppose that $A$ and $R$ are discrete and $(I\to A)\simeq (A\xrightarrow{\theta}A)$ for some element $\theta\in A$. Suppose that $\underline{\mathsf{M}}$ is in $\mathrm{DDC}^{[0,0]}_{\underline{A}}(R)$: this means that $\mathsf{M}$ is finite locally free over $A$ and that $\gr^{-1}M$ and $\Fil^0M$ are finite locally free over $R$. Let the pair $(\mathsf{N},\Phi_{\mathsf{N}})$ be as in Remark~\ref{rem:pseudo_verschiebung}, and set $\mathsf{N}^1 = \Fil^0\mathsf{M}\{-1\}$, so that $\mathsf{N}^1\subset \mathsf{N}$ is a submodule with $\mathsf{N}/\mathsf{N}^1$ finite locally free over $R$. The map $v_{\mathsf{M}}$ from Remark~\ref{rem:maps_from_1} yields a map $\Phi^1_{\mathsf{N}}:\varphi^*\mathsf{N}^1\to \mathsf{M}$. Our hypotheses imply (see~\cite[Remark 5.2]{lau2018divided}) that there is a \emph{normal decomposition} $\mathsf{N} = \mathsf{L}\oplus \mathsf{T}$ with
\[
\mathsf{N}^1 = \mathsf{L}\oplus (\Fil^1A\otimes_A \mathsf{T})\subset \mathsf{L}\oplus \mathsf{T} = \mathsf{N},
\]
and also that the assignment
\[
   \varphi^* \mathsf{L}\oplus \varphi^* \mathsf{T} \xrightarrow{(l,t)\mapsto (\Phi^1_{\mathsf{N}}(l),\Phi_{\mathsf{N}}(t))}\mathsf{L}\oplus \mathsf{T}
\]
is an isomorphism of $A$-modules. Therefore, if write $F:\mathsf{N}\to \mathsf{N}$ and $F_1: \mathsf{N}^1\to \mathsf{N}$ for the $\varphi$-semilinear maps underlying $\Phi_{\mathsf{N}}$ and $\Phi^1_{\mathsf{N}}$, then we find that,  in the terminology of~\cite[\S 2]{MR2679066}, $(\mathsf{N},\mathsf{N}^1,F,F_1)$ is a \emph{window} over the frame $(A,\Fil^1A,R,\varphi,\varphi_1)$. It is not difficult to show that this is in fact giving us an equivalence of categories between $\mathrm{DDC}^{[0,0]}_{\underline{A}}(R)$ and the relevant category of windows.
\end{remark}

\begin{example}
   [Displays]
\label{ex:displays}
When $\underline{A} = \underline{W(R)}$ is the Witt vector frame from Example~\ref{example:witt_frame}, then $\mathrm{DDC}^{[0,0]}_{\underline{A}}(R)$ is just the category of $3n$-displays as defined by Zink~\cite{Zink2002-rx}; see~\cite[p. 4, Example]{MR2679066}.
\end{example}

\begin{remark}[Comparison with Ansch\"utz-Le Bras]
\label{rem:anschutz_lebras_comp}
If in the situation of Remark~\ref{rem:windows}, $\theta$ is a non-zero divisor and $\Fil^1A = \varphi^{-1}(\theta A)$, then the datum of the pair $(\mathsf{N},F)$, along with certain admissibility conditions, determines the rest of the tuple; see the discussion in~\cite[Proposition 5.7.6]{gmm}, and also~\cite[Lemma 2.1.16]{Cais2017-bf}. This applies in particular when $A = \Prism_R$ for $R$ qrsp, and so we see that, in this case, $\mathrm{DDC}^{[0,0]}_{\underline{\Prism}_R}(R)$ is the category of admissible prismatic Dieudonn\'e modules appearing in Ansch\"utz-Le Bras~\cite{MR4530092}.
\end{remark}

\subsection{Descent for divided Dieudonn\'e complexes}
\label{subsec:descent_for_divided_dieudonne_complexes}

In this subsection, we enumerate various useful descent properties enjoyed by the categories of divided Dieudonn\'e complexes. As a consequence, we obtain a certain integrability property (Proposition~\ref{prop:adm_dieu_integrable}) that will prove helpful for the classification results obtained later in the section.

\begin{remark}
[\'Etale descent for divided Dieudonn\'e complexes]
\label{rem:adm_dieu_etale_descent}
Suppose that $R\to R'$ is a $p$-completely \'etale and faithfully flat map. Then Remark~\ref{rem:etale_lifts_frames} tells us that the corresponding cosimplicial $R$-algebra $R^{',\otimes_R(\bullet +1)}$ (the tensor product here is $p$-completed) lifts to a cosimplicial frame $\underline{A}^{',(\bullet+1)}$ with $A^{',(m)}$ $(p,I)$-completely \'etale and faithfully flat\footnote{To see the faithful flatness, note that $\pi_0(A/{}^{\mathbb{L}}(p,I))$ is a nilpotent thickening of $\pi_0(R/{}^{\mathbb{L}}(p,I))$.} over $A$ for all $m\ge 1$. In fact, one sees that $A^{',(m)} = A^{',\otimes_A m}$, where we are using $(p,I)$-completed tensor products. Using this and $(p,I)$-complete faithfully flat descent for perfect complexes, one finds that the natural functor
\[
   \mathrm{DDC}_{\underline{A}}(R)\to \mathrm{Tot}\left(\mathrm{DDC}_{\underline{A}^{',\otimes_A(\bullet +1)}}(R^{',\otimes_R(\bullet+1)})\right)
\]
is an equivalence of stable $\infty$-categories.
\end{remark}

\begin{remark}
[Quasisyntomic descent for divided Dieudonn\'e complexes]
\label{rem:adm_dieu_qsynt_descent}
Completely analogously, in the situation of Lemma~\ref{lem:syntomic_frame_criterion}, the natural functor
\[
   \mathrm{DDC}_{\underline{A}}(R)\to \mathrm{Tot}\left(\mathrm{DDC}_{\underline{A}_\infty^{\otimes_A(\bullet +1)}}(R^{\otimes_R(\bullet+1)}_\infty)\right)
\]
is an equivalence of stable $\infty$-categories.
\end{remark}

\begin{remark}
\label{rem:adm_dieu_closed_immersion}
Suppose that we have a map $\Int_p[x]^{\wedge}_p\xrightarrow{x\mapsto t}R$, and that $S = R/{}^{\mathbb{L}}(t)$. By Remarks~\ref{rem:base_change_prisms_closed_immersions} and~\ref{rem:base_change_laminated}, base-change along $S^\Prism\to R^\Prism$ of $\underline{A}$ yields a map of frames $\underline{A}\to \underline{B}$ lifting $R\to S$, and we obtain in this way a cosimplicial frame $\underline{B}^{(\bullet)}$ for $S^{\otimes_R(\bullet+1)}$ with $B^{(m)}=B^{\otimes_A(m+1)}$, and a functor
\begin{align}\label{eqn:adm_dieu_base_change_closed_immersion}
  \mathrm{DDC}_{\underline{A}}(R)\to \mathrm{Tot}\left(\mathrm{DDC}_{\underline{B}^{(\bullet)}}(S^{\otimes_R(\bullet+1)})\right) 
\end{align}
\end{remark}

\begin{lemma}
[Derived descent for divided Dieudonn\'e complexes]
\label{lem:adm_dieu_derived_descent}
Suppose that $R$ is $(t)$-adically derived complete. Then the functor~\eqref{eqn:adm_dieu_base_change_closed_immersion} is an equivalence.
\end{lemma}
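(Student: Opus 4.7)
The plan is to reduce the desired equivalence to derived $(t)$-adic descent statements for perfect complexes separately on the $A$-side and the $R$-side, and then combine. Unwinding the definition, $\mathrm{DDC}_{\underline{A}}(R)$ is a finite limit of $\infty$-categories of perfect complexes over $A$ and $R$, together with arrow categories thereof, glued by the structural functors (Frobenius pullback $\varphi^*$, tensor with $\overline{A}$, base-change along $\overline{\varphi}:R\to\overline{A}$, and along $A\twoheadrightarrow R$). Since totalization commutes with finite limits of stable $\infty$-categories, the lemma reduces to the two descent equivalences
\[
   \mathrm{Perf}(A) \xrightarrow{\simeq} \mathrm{Tot}\bigl(\mathrm{Perf}(B^{(\bullet)})\bigr), \qquad \mathrm{Perf}(R) \xrightarrow{\simeq} \mathrm{Tot}\bigl(\mathrm{Perf}(S^{\otimes_R(\bullet+1)})\bigr),
\]
after which compatibility of the Frobenius, the Hodge filtration, and the isomorphism $\xi$ is automatic from the naturality of these structures with respect to maps of frames.

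Both descent statements rest on derived Nakayama. On the $R$-side, $R$ is by hypothesis derived $(t)$-complete, and the Čech nerve $S^{\otimes_R(\bullet+1)}$ becomes split augmented after base-change along $R\to S$. For any perfect $P\in\mathrm{Perf}(R)$, the cofiber of the augmentation $P\to\mathrm{Tot}(P\otimes^{\mathbb{L}}_R S^{\otimes_R(\bullet+1)})$ is therefore derived $(t)$-complete (being a limit of $(t)$-torsion modules added to the derived-complete $P$) and vanishes after reduction modulo $t$; derived Nakayama forces it to be zero. Fully faithfulness then follows by applying $\mathrm{RHom}(P,-)$, and essential surjectivity by checking that a cosimplicial descent datum assembles to an honest perfect complex over $R$ (the limit is finite-dimensional in each degree, and $R$ itself is derived $(t)$-complete). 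On the $A$-side, one chooses a lift $\tilde{t}\in\pi_0(A)$ of $t$; by Remarks~\ref{rem:base_change_prisms_closed_immersions} and~\ref{rem:base_change_laminated} we may identify $B$ with the $(p,I)$-completion of $A/^{\mathbb{L}}\tilde{t}$, and $B^{(\bullet)}$ with its $(p,I)$-completed Čech nerve. One then verifies that $A$ is derived $(\tilde{t})$-complete using the fiber sequence $\Fil^1 A\to A\to R$, in which $\Fil^1 A$ is derived $(p,I)$-complete (hence derived $(\tilde{t})$-complete as a $(p,I)$-complete $A$-module) and $R$ is derived $(t)$-complete; the exactness of derived completion on fiber sequences then forces $A=A^{\wedge}_{(\tilde{t})}$, after which the same Nakayama argument applies verbatim.

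The main obstacle is this inheritance of derived $(\tilde{t})$-completeness by $A$ from the completeness of $R$ and the structural data of the frame. While morally it should reduce to the exactness of derived completion on the fiber sequence $\Fil^1 A\to A\to R$, one must be careful in handling the interaction of the derived $(\tilde{t})$-completion with the preexisting $(p,I)$-completion, in particular tracking that the $(p,I)$-completed tensor product used to form the Čech nerve $B^{(\bullet)}$ agrees with the appropriate derived quotient $A/^{\mathbb{L}}(\tilde{t}^{\otimes (\bullet+1)})$-style construction. Once this is in place, the remainder is formal: the descent compatibility of $\Psi_{\mathsf{M}}$ and $\xi$ follows from $\varphi^*$ and $(-)\otimes_A\overline{A}$ commuting with the totalizations of perfect complexes at hand, and the whole divided Dieudonné structure transports through the descent by naturality.
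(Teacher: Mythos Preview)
Your reduction to the two descent statements
\[
\mathrm{Perf}(A)\xrightarrow{\simeq}\mathrm{Tot}\bigl(\mathrm{Perf}(B^{(\bullet)})\bigr),\qquad \mathrm{Perf}(R)\xrightarrow{\simeq}\mathrm{Tot}\bigl(\mathrm{Perf}(S^{\otimes_R(\bullet+1)})\bigr)
\]
matches the paper exactly, and your treatment of the $R$-side is fine. The problem is entirely in the $A$-side argument, where there are two genuine gaps.

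First, the identification of $B$ with the $(p,I)$-completion of $A/^{\mathbb{L}}\tilde{t}$ is incorrect. By construction (Remarks~\ref{rem:base_change_prisms_closed_immersions} and~\ref{rem:base_change_laminated}), $\Spf B = \Spf A\times_{R^\Prism}S^\Prism$. A $C$-point of this fiber product is a map $A\to C$ such that the induced structure map $R\xrightarrow{\overline{\varphi}}\overline{A}\to \overline{A}\otimes_A W(C)$ sends $t$ to zero; this is a condition on $u\defn\overline{\varphi}(t)\in\overline{A}$, not on a lift $\tilde{t}\in A$. In particular $B^{(\bullet)}$ is not the \v{C}ech nerve of $A\to A/^{\mathbb{L}}\tilde{t}$, so the usual derived $(\tilde{t})$-adic descent theorem does not apply.

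Second, even granting that identification, your claim that $\Fil^1A$ is derived $(\tilde{t})$-complete because it is $(p,I)$-complete is unjustified: $(p,I)$-completeness says nothing about completeness with respect to an unrelated element $\tilde{t}$. So the fiber-sequence argument for $(\tilde{t})$-completeness of $A$ does not go through.

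The paper's proof avoids both issues by a different route. It first uses $I$-completeness of $A$ and of each $B^{(m)}$ to reduce $A$-side descent to the Hodge-Tate level, i.e.\ to showing that
\[
\mathrm{Perf}(\overline{A}^{\otimes_A(k+1)})\to \mathrm{Tot}\bigl(\mathrm{Perf}(\overline{B}^{(\bullet),\otimes_{B^{(\bullet)}}(k+1)})\bigr)
\]
is an equivalence for each $k$. It then interprets these rings geometrically via $\Ga^{\mathrm{HT}}\simeq B(\Ga^\sharp\rtimes\Gm^\sharp)\times\Ga$ and $\Int_p^{\mathrm{HT}}\simeq B\Gm^\sharp$, which after a faithfully flat reduction exhibits the relevant map as the honest quotient
\[
\Spf\bigl(\overline{A}^{\otimes_A(k+1)}/^{\mathbb{L}}(u_1,\ldots,u_{k+1})\bigr)\to \Spf\bigl(\overline{A}^{\otimes_A(k+1)}\bigr),
\]
with $u_j$ the $j$-th copy of $u=\overline{\varphi}(t)$. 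At this point one really can apply derived descent, but the needed completeness is $(u)$-completeness of $\overline{A}$, which follows because the Frobenius on $\overline{A}/^{\mathbb{L}}p$ factors through $R/^{\mathbb{L}}p$ and $R$ is $(t)$-complete. Neither of these reductions---to the Hodge-Tate locus, and from there to a genuine Koszul quotient via the structure of $\Ga^{\mathrm{HT}}$---appears in your argument, and without them the $A$-side does not reduce to a situation where derived Nakayama applies.
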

\begin{proof}
It is enough to know that the functors
\begin{align*}
\mathrm{Perf}(A)\to \mathrm{Tot}\left(\mathrm{Perf}(B^{(\bullet)})\right)\;;\;\mathrm{Perf}(R)\to \mathrm{Tot}\left(\mathrm{Perf}(S^{\otimes_R(\bullet+1)})\right)
\end{align*}
are equivalences.

Since $R$ is $(t)$-complete, that the second is an equivalence follows from~\cite[Proposition 3.1.5,Corollary 3.1.4]{MR4560539}. The same reasoning shows that the functors
\[
\mathrm{Perf}(A)\to \mathrm{Tot}\left(\mathrm{Perf}(\overline{A}^{\otimes_A(\bullet+1)})\right)\;;\; \mathrm{Perf}(B^{(m)})\to \mathrm{Tot}\left(\mathrm{Perf}(\overline{B}^{(m),\otimes_{B^{(m)}}(\bullet+1)})\right)
\]
are equivalences. Therefore, to finish it is enough to know that, for each $k\ge 0$, the functor
\[
   \mathrm{Perf}(\overline{A}^{\otimes_A(k+1)})\to \mathrm{Tot}\left(\mathrm{Perf}(\overline{B}^{(\bullet),\otimes_{B^{(\bullet)}}(k+1)})\right)
\]
is an equivalence. In other words, we want to know that the map $\overline{A}^{\otimes_A(k+1)}\to \overline{B}^{\otimes_B(k+1)} $ of $p$-complete rings satisfies descent for perfect complexes.

Via the map $\Spf R \xrightarrow{t}\Ga$ yielding $R^\Prism\xrightarrow{t^\Prism}\Ga^\Prism$, we obtain a map $\Spf A\to R^\Prism\to \Ga^\Prism$. Unwinding definitions, we have
\[
\Spf \overline{A}^{\otimes_A(k+1)}\simeq \Spf A\times_{\Ga^\Prism}\underbrace{\Ga^{\mathrm{HT}}\times_{\Ga^\Prism}\times\cdots\times_{\Ga^\Prism}\Ga^{\mathrm{HT}}}_{k+1}= \Spf A\times_{\Ga^\Prism}\Ga^{\mathrm{HT},\times (k+1)}
\]
and
\[
\Spf \overline{B}^{\otimes_B(k+1)}\simeq \Spf A\times_{\Ga^\Prism}\underbrace{\Int_p^{\mathrm{HT}}\times_{\Ga^\Prism}\times\cdots\times_{\Ga^\Prism}\Int_p^{\mathrm{HT}}}_{k+1} = \Spf A\times_{\Ga^\Prism}\Int_p^{\mathrm{HT},\times (k+1)}
\]
where we are using the map $\Int_p^{\mathrm{HT}}\to \Ga^{\mathrm{HT}}\to \Ga^\Prism$ induced by the zero section of $\Ga$. We are also using the superscript $\times (k+1)$ to denote the $(k+1)$-fold self-fiber product over $\Ga^\Prism$. 

Now, by~\cite[Example 9.1]{bhatt2022prismatization}, we have a map $\Ga^{\mathrm{HT}}\to \Ga$ equipped with a section $\Ga\to \Ga^{\mathrm{HT}}$ arising from the $\delta$-structure on $\Ga$ given by the Frobenius lift $x\mapsto x^p$, and yielding an isomorphism of $\Ga$-stacks 
\[
   \Ga^{\mathrm{HT}}\xrightarrow{\simeq}B(\Ga^\sharp\rtimes \Gm^\sharp)\times_{\Spf \Int_p}\Ga.
\]
On the other hand, we know from Remark~\ref{rem:hodge-tate_locus} that we have a canonical isomorphism
\[
   \Int_p^{\mathrm{HT}}\xrightarrow{\simeq}B\Gm^\sharp.
\]
Via these isomorphisms the map $\Int_p^{\mathrm{HT}}\to \Ga^{\mathrm{HT}}$ given by the zero section is just the composition
\[
   \Int_p^{\mathrm{HT}} \simeq B\Gm^\sharp\to \Ga^{\mathrm{HT}}\times_{\Ga,0}\Spf\Int_p\simeq B(\Ga^\sharp\rtimes \Gm^\sharp) \to B(\Ga^\sharp\rtimes \Gm^\sharp)\times_{\Spf \Int_p}\Ga\simeq \Ga^{\mathrm{HT}}.
\]
where the first map is from the obvious homomorphism of group schemes and the second map is from the zero section of $\Ga$.

Since the first map is $p$-completely faithfully flat, it is enough to know that
\begin{align}\label{eqn:final_descent_map}
  \Spf A\times_{\Ga^\Prism}\left(\Ga^{\mathrm{HT}}\times_{\Ga}\Spf\Int_p\right)^{\times(k+1)}\to \Spf A\times_{\Ga^\Prism}\Ga^{\mathrm{HT},\times(k+1)}
\end{align}
satisfies descent for perfect complexes. Unwinding definitions, one sees that the composition
\[
   \Spf\overline{A} \simeq \Spf A\times_{\Ga^\Prism}\Ga^{\mathrm{HT}}\to \Ga^{\mathrm{HT}}\to \Ga
\]
corresponds to the image $u=\overline{\varphi}(t)$ of $t$ under $\overline{\varphi}:R\to \overline{A}$. If, for $j=1,\ldots,k+1$, $u_j\in \overline{A}^{\otimes_A(k+1)}$ is the section given by $u$ in the $j$-th coordinates and $1$s elsewhere, then one sees that the map~\eqref{eqn:final_descent_map} is given by
\[
   \Spf(\overline{A}^{\otimes_A(k+1)}/{}^{\mathbb{L}}(u_1,\ldots,u_{k+1}))\to \Spf(\overline{A}^{\otimes_A(k+1)}).
\]
This satisfies descent for perfect complexes by~\cite{MR4560539} again since $\overline{A}^{\otimes_A(k+1)}$ is $(u_1,\ldots,u_{k+1})$-complete. Indeed, since $\overline{A}^{\otimes_A(k+1)}$ is a nilpotent thickening of $\overline{A}$, it is enough to know that $\overline{A}$ is $(u)$-complete. This follows from our hypothesis on $R$ and the fact that the Frobenius endomorphism of $\overline{A}/{}^{\mathbb{L}}p$ factors through $R/{}^{\mathbb{L}}p$.
\end{proof}

\begin{remark}
   \label{rem:adm_dieu_inverse_limit}
In the setting of Remark~\ref{rem:adm_dieu_closed_immersion}, for $k\ge 1$, set $S_k = R/{}^{\mathbb{L}}(t^k)$. Then we obtain an inverse system of frames
\[
   \underline{A} \to \cdots \to \underline{B}_{k+1}\to \underline{B}_k\to \cdots \to \underline{B}_1 = \underline{B}
\]
lifting $R\to \cdots \to S_{k+1}\to S_k \to \cdots \to S_1 = S$. This gives us a functor
\begin{align}\label{eqn:adm_dieu_completion_along_divisor}
  \mathrm{DDC}_{\underline{A}}(R)\to \varprojlim_k \mathrm{DDC}_{\underline{B}_k}(S_k).
\end{align}
\end{remark}

\begin{proposition}
[Integrability for divided Dieudonn\'e complexes]
\label{prop:adm_dieu_integrable}
Suppose that $R$ is $(t)$-adically derived complete. Then the functor~\eqref{eqn:adm_dieu_completion_along_divisor} is an equivalence.
\end{proposition}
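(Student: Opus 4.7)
The plan is to follow the strategy of Lemma~\ref{lem:adm_dieu_derived_descent}: present $\mathrm{DDC}_{\underline{A}}(R)$ as a Cartesian limit of simpler stable $\infty$-categories built from $\mathrm{Perf}(A)$, $\mathrm{Perf}(R)$ and $\mathrm{Perf}(\overline{A})$ (together with arrow categories encoding the datum $\Fil^0 M \to M$ and the Frobenius map $\Psi_{\mathsf{M}}$), note that this presentation is functorial under the base-change $\underline{A}\to \underline{B}_k$, and use the commutation of limits of stable $\infty$-categories with inverse limits to reduce the proposition to verifying that each of the three functors
\[
\mathrm{Perf}(A) \to \varprojlim_k \mathrm{Perf}(B_k), \quad \mathrm{Perf}(R) \to \varprojlim_k \mathrm{Perf}(S_k), \quad \mathrm{Perf}(\overline{A}) \to \varprojlim_k \mathrm{Perf}(\overline{B}_k)
\]
is an equivalence.

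The middle equivalence is immediate from the hypothesis that $R$ is derived $(t)$-complete. For the third, the proof of Lemma~\ref{lem:adm_dieu_derived_descent} already established that $\overline{A}$ is derived $(u)$-complete for $u = \overline{\varphi}(t)$ (because the Frobenius on $\overline{A}/{}^{\mathbb{L}}p$ factors through $R/{}^{\mathbb{L}}p$); combined with the identification $\overline{B}_k \simeq \overline{A}/{}^{\mathbb{L}}(u^k)$ from base-change, this yields the claim.

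The principal step is the first equivalence. I would fix any lift $\tilde{t} \in \pi_0(A)$ of $t$ and identify $B_k$ with the derived quotient $A/{}^{\mathbb{L}}(\tilde{t}^k)$ as a $(p,I)$-complete animated $\delta$-ring, invoking the relative affineness of closed immersions recalled in Remark~\ref{rem:base_change_prisms_closed_immersions} together with the lamination structure of $\underline{A}$ (which pins down the base-change to this na\"ive quotient). The inverse limit on the right is then the derived $(\tilde{t})$-completion $A^{\wedge}_{\tilde{t}}$, so it remains to show $A \simeq A^{\wedge}_{\tilde{t}}$. Using the fiber sequence $\Fil^1 A \to A \to R$ coming from the frame, and the $(\tilde{t})$-completeness of $R$, this reduces to showing that $\Fil^1 A$ is $(\tilde{t})$-derived complete, which one deduces from its derived $(p,I)$-completeness by reducing modulo $(p,I)$ and invoking the $(u)$-completeness of $\overline{A}$.

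The main obstacle will be the identification $B_k \simeq A/{}^{\mathbb{L}}(\tilde{t}^k)$ as animated $\delta$-rings: the base-change a priori only produces some $(p,I)$-complete $\delta$-ring receiving $A$, and it must be verified that no additional freeness in the $\delta$-structure is introduced. I expect this to follow by reducing mod $(p,I)$, where the analogous identification $\overline{B}_k \simeq \overline{A}/{}^{\mathbb{L}}(u^k)$ was already implicit in Lemma~\ref{lem:adm_dieu_derived_descent}, and lifting back via derived $(p,I)$-completeness.
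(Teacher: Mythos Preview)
Your identification $B_k\simeq A/{}^{\mathbb{L}}(\tilde t^k)$ is not correct, and this is where the argument breaks. The base-change of $\underline{A}$ along the closed immersion $\Spf S_k\hookrightarrow\Spf R$ is a prismatic envelope, not the na\"ive derived quotient; in particular there is no reason for $(\tilde t)$ to be a $\delta$-ideal. The same problem already appears one step earlier: your claimed identification $\overline{B}_k\simeq\overline{A}/{}^{\mathbb{L}}(u^k)$ is also false. If you look again at the proof of Lemma~\ref{lem:adm_dieu_derived_descent}, what is actually shown is that (for $k=1$) the map $\Int_p^{\mathrm{HT}}\simeq B\Gm^\sharp\to B(\Ga^\sharp\rtimes\Gm^\sharp)\simeq \Ga^{\mathrm{HT}}\times_{\Ga,0}\Spf\Int_p$ is faithfully flat with fiber $\Ga^\sharp$; after pulling back along $\Spf A\to \Ga^\Prism$ this says that $\overline{B}$ is a faithfully flat $\overline{A}/{}^{\mathbb{L}}(u)$-algebra (a trivial $\Ga^\sharp$-torsor, hence a PD polynomial ring over $\overline{A}/{}^{\mathbb{L}}(u)$), not that they are equal. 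So neither your argument for $\mathrm{Perf}(\overline{A})\to\varprojlim_k\mathrm{Perf}(\overline{B}_k)$ nor your proposed lift to $\mathrm{Perf}(A)$ goes through.

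The paper's argument avoids ever computing $B_k$. It applies Lemma~\ref{lem:adm_dieu_derived_descent} to both sides of~\eqref{eqn:adm_dieu_completion_along_divisor}, reducing to the situation where $R$ is one of the \v{C}ech nerve terms $S^{\otimes_R(\bullet+1)}$; there $t$ vanishes in $\pi_0$, so $S_m=R/{}^{\mathbb{L}}(t^m)\simeq R\oplus R[1]$ for all $m$, and the transition map $S_{m+1}\to S_m$ (identity on $R$, multiplication by $t=0$ on $R[1]$) factors through the retraction $S_{m+1}\to R$. Consequently the maps $\underline{B}_{m+1}\to\underline{B}_m$ factor through $\underline{A}\to\underline{B}_m$, and the inverse limit on the right collapses to $\mathrm{DDC}_{\underline{A}}(R)$. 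This trick---using derived descent to force nilpotence of $t$ and then exploiting the resulting factorization of transition maps---is the idea you are missing; it is what makes integrability a formal consequence of Lemma~\ref{lem:adm_dieu_derived_descent} without any explicit control on the $B_k$.
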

\begin{proof}
This can be deduced from Lemma~\ref{lem:adm_dieu_derived_descent} in somewhat standard fashion:  The lemma reduces us to the case where $R/^{\mathbb{L}}(t^m)\simeq R\oplus R[1]$ for all $m$ sufficiently large and so the maps $\underline{B}_{m+1}\to \underline{B}_m$ factor through $\underline{A}\to \underline{B}_m$ for all such $m$. The proposition follows easily from this.
\end{proof}

\subsection{From $F$-gauges to divided Dieudonn\'e complexes}

For the rest of this section, $R$ will be discrete. The next result follows from~\cite[Proposition 5.7.3]{gmm} and Theorem~\ref{thm:main}.
\begin{proposition}
\label{prop:ffg_classification_semiperf}
For any semiperfectoid $R$, there are canonical exact equivalences
\[
\mathrm{Mod}_{\Int/p^n\Int}\left(\mathrm{DDC}^{[-1,0]}_{\underline{\Prism}_R}(R)\right)\xleftarrow{\simeq} \mathsf{P}^{\mathrm{syn}}_{n,\{0,1\}}(R)\xrightarrow{\simeq}\mathrm{FFG}_n(R)
\]
that are compatible with Cartier duality in that the composition carries $\mathsf{M}^*[1]$ to the Cartier dual of the image of $\mathsf{M}$.
\end{proposition}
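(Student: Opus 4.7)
The right-hand equivalence is already in hand: it is the restriction of Theorem~\ref{thm:main} to level $n$ objects, which gives the exact equivalence $\mathsf{P}^{\mathrm{syn}}_{n,\{0,1\}}(R)\xrightarrow{\simeq}\mathrm{FFG}_n(R)$. So the substantive content of the proposition concerns the left-hand equivalence and the compatibility with Cartier duality, and my plan is to deduce this from the explicit description of $F$-gauges in the semiperfectoid case.

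First, I would invoke Theorem~\ref{thm:semiperfectoid_nygaard}, which identifies $R^{\mathcal{N}}$ with the Rees stack $\Rees(\Fil^\bullet_{\mathcal{N}}\Prism_R)$. Combined with the coequalizer description of $R^{\mathrm{syn}}$, this says that a perfect $F$-gauge over $R$ is equivalent to the data of a $(p,I_R)$-complete filtered perfect complex $\Fil^\bullet \mathcal{M}$ over $\Fil^\bullet_{\mathcal{N}}\Prism_R$ together with an isomorphism $\mathcal{M}_\sigma\xrightarrow{\simeq}\mathcal{M}$, where $\mathcal{M}_\sigma = \sum_m I_R^{-\otimes m}\varphi^*\Fil^m\mathcal{M}$ as in Remark~\ref{rem:sigma_tau_pullbacks}. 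Next, I would show that the Hodge-Tate weight $\{0,1\}$ condition forces $\Fil^\bullet \mathcal{M}$ to be concentrated in two steps in the sense that $\Fil^{-1}\mathcal{M}\xrightarrow{\simeq}\mathcal{M}$ and the only non-trivial graded pieces are in degrees $0,-1$; concretely, the data reduce to a perfect complex $\mathsf{M}=\mathcal{M}$ over $\Prism_R$, a subcomplex $\Fil^0_{\mathrm{Hdg}}M\subset M = R\otimes_{\Prism_R}\mathsf{M}$, and the Frobenius isomorphism $\mathcal{M}_\sigma\simeq \mathcal{M}$. Unwinding the latter in this truncated setting gives exactly a map $\Psi_{\mathsf{M}}:\mathsf{M}\to \varphi^*\mathsf{M}$ together with an identification of its cofiber with $\overline{\Prism}_R\otimes_{\overline{\varphi},R}\gr^{-1}_{\mathrm{Hdg}}M$, which is precisely the data of an object of $\mathrm{DDC}^{[-1,0]}_{\underline{\Prism}_R}(R)$. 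The Tor amplitude $[-1,0]$ assumption together with the $\Int/p^n\Int$-module structure corresponds to the same conditions on the divided Dieudonn\'e side, so this yields the desired equivalence. This is essentially the assertion of \cite[Proposition 5.7.3]{gmm}, which I would cite directly.

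For the Cartier duality compatibility, I would verify the following: the involution $\mathcal{M}\mapsto \mathcal{M}^\vee\{1\}[1]$ on $\mathsf{P}^{\mathrm{syn}}_{n,\{0,1\}}(R)$ corresponds under the equivalence of the previous paragraph to the involution $\underline{\mathsf{M}}\mapsto \underline{\mathsf{M}}^*[1]$ of Definition~\ref{defn:adm_dieudonne_cartier_duality}. This reduces to matching the Breuil-Kisin twist on $F$-gauges with the Breuil-Kisin twist on prismatic cohomology (an identification already standard in the semiperfectoid setting), and tracking how the duality interacts with $\Psi_{\mathsf{M}}$ and its reformulation as $\sigma_{\mathsf{M}}:I\otimes_A\varphi^*\mathsf{M}\to \mathsf{M}$ from Remark~\ref{rem:pseudo_verschiebung}. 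Once this is established, the compatibility with Cartier duality for group schemes follows by combining it with the corresponding statement in Theorem~\ref{thm:main}.

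The main obstacle is not a geometric subtlety but rather the careful bookkeeping required in step two: verifying that the Frobenius data $\mathcal{M}_\sigma\xrightarrow{\simeq}\mathcal{M}$ on the filtered prismatic side genuinely packages, for Hodge-Tate weights in $\{0,1\}$, into precisely the map $\Psi_{\mathsf{M}}$ with the prescribed cofiber identification $\xi$. Once this dictionary is set up correctly, the rest (exactness, the Tor amplitude condition, and the $\Int/p^n\Int$-structure) propagates formally, and the Cartier duality compatibility follows from matching twists on both sides.
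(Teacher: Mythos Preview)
Your proposal is correct and takes essentially the same approach as the paper: the paper's entire proof is the single sentence ``This follows from~\cite[Proposition 5.7.3]{gmm} and Theorem~\ref{thm:main},'' and you have invoked exactly these two inputs, with your middle paragraphs unpacking what the cited result from \cite{gmm} actually says. Your additional discussion of the Cartier duality matching is a reasonable elaboration of what the paper leaves implicit in the citations.
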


\begin{remark}
\label{rem:btn_under_equivalence}
Within $\mathrm{Mod}_{\Int/p^n\Int}\left(\mathrm{DDC}^{[-1,0]}_{\underline{\Prism}_R}(R)\right)$ we have the subcategory spanned by objects where the underlying perfect complex $\mathcal{M}$ arises from a vector bundle over $R^{\Prism}\otimes\Int/p^n\Int$, and where $\gr^{-1}M$ and $\Fil^0M$ are also obtained from vector bundles over $\Spec R/{}^{\mathbb{L}}p^n$. This maps isomorphically onto the subcategory $\mathrm{BT}_n(R)\subset \mathrm{FFG}_n(R)$ via the equivalence above.
\end{remark}

We can globalize our definitions now as follows.
\begin{definition}[Prismatic divided Dieudonn\'e complexes]
\label{def:prismatic_adm_dieu}
For any $R$, a \defnword{prismatic divided Dieudonn\'e complex for $R$} is a tuple 
\[
\underline{\mathcal{M}} = (\mathcal{M},\Fil^0_{\mathrm{Hdg}}M\to M, \mathcal{M}\xrightarrow{\Psi_{\mathcal{M}}}\varphi^*\mathcal{M},\xi)
\]
such that:
\begin{enumerate}
   \item $\mathcal{M}\in \mathrm{Perf}(R^{\Prism})$ is a perfect complex over $R^\Prism$;
   \item $\Fil^0_{\mathrm{Hdg}}M \to M \defn x_{\dR,R}^*\mathcal{M}$ is a map of perfect complexes over $\Spf R$;
   \item $\Psi_{\mathcal{M}}:\mathcal{M}\to \varphi^*\mathcal{M}$ is a map of perfect complexes over $R^{\Prism}$;
   \item $\xi$ is an isomorphism of perfect complexes sitting in a diagram
   \[
    \begin{diagram}
    \varphi^*\mathcal{M}&\rTo&\cofib(\Psi_{\mathcal{M}})\\
    &\rdTo&\dTo^{\simeq}_{\xi}\\
    &&\iota^{\mathrm{HT}}_*\overline{\varphi}^*\gr^{-1}_{\mathrm{Hdg}}M.
    \end{diagram}
   \] 
\end{enumerate}
Here, $\gr^{-1}_{\mathrm{Hdg}}M = M/\Fil^0_{\mathrm{Hdg}}M$ and the diagonal map is obtained as the composition
\[
\varphi^*\mathcal{M}\to \iota^{\mathrm{HT}}_*\iota^{\mathrm{HT},*}\varphi^*\mathcal{M}\xrightarrow{\simeq}\iota^{\mathrm{HT}}_*\overline{\varphi}^*M\to \overline{\iota}^{\mathrm{HT}}_*\overline{\varphi}^*\gr^{-1}_{\mathrm{Hdg}}M.
\]
Here, $\iota^{\mathrm{HT}}:R^{\mathrm{HT}}\to R^\Prism$ is the closed immersion from the Hodge-Tate locus.

These objects organize into an $\infty$-category $\mathrm{DDC}_{\Prism}(R)$, and we can define the subcategories $\mathrm{DDC}^{[a,b]}_{\Prism}(R)$ as above.
\end{definition}

\begin{corollary}
\label{cor:ffg_classification}
There are canonical exact equivalences
\[
\mathrm{Mod}_{\Int/p^n\Int}\left(\mathrm{DDC}^{[-1,0]}_{\Prism}(R)\right)\xleftarrow{\simeq} \mathsf{P}^{\mathrm{syn}}_{n,\{0,1\}}(R)\xrightarrow{\simeq}\mathrm{FFG}_n(R).
\]
\end{corollary}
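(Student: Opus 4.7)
The plan is to reduce the corollary to the semiperfectoid case, where it is essentially Proposition~\ref{prop:ffg_classification_semiperf}. The rightmost equivalence is Theorem~\ref{thm:main}, so the only new assertion is the existence of the canonical exact equivalence on the left.

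First I would construct the functor $\mathsf{P}^{\mathrm{syn}}_{n,\{0,1\}}(R)\to \mathrm{Mod}_{\Int/p^n\Int}(\mathrm{DDC}^{[-1,0]}_{\Prism}(R))$ directly from the definitions. Given $\mathcal{M}\in \mathsf{P}^{\mathrm{syn}}_{n,\{0,1\}}(R)$, viewed as a perfect complex over $R^{\mathrm{syn}}$, we take its restriction to $R^{\Prism}$ via $j_{\dR}$ to obtain the underlying perfect complex, use the filtered de Rham point $x^{\mathcal{N}}_{\dR,R}$ to extract $\Fil^0_{\mathrm{Hdg}}M\to M$, use the gluing isomorphism implicit in the syntomification coequalizer $j_{\dR}\simeq j_{\mathrm{HT}}$ on $R^{\Prism}$ to produce $\Psi_{\mathcal{M}}$, and extract $\xi$ from the Hodge-Tate cofiber description. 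The Tor amplitude and Hodge-Tate weight conditions translate directly.

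Next I would apply quasisyntomic descent on both sides. By Proposition~\ref{prop:nygaard_qsynt_descent}, one can choose a quasisyntomic cover $R\to R_\infty$ with $R_\infty^{\otimes_R m}$ semiperfectoid for all $m\ge 1$, and then $\mathsf{P}^{\mathrm{syn}}_{n,\{0,1\}}(R)$ is the totalization of the cosimplicial $\infty$-category $\mathsf{P}^{\mathrm{syn}}_{n,\{0,1\}}(R_\infty^{\otimes_R\bullet +1})$. On each term of this cosimplicial object, Proposition~\ref{prop:ffg_classification_semiperf} yields an exact equivalence with $\mathrm{Mod}_{\Int/p^n\Int}(\mathrm{DDC}^{[-1,0]}_{\underline{\Prism}_{R_\infty^{\otimes_R\bullet +1}}}(R_\infty^{\otimes_R\bullet +1}))$, and these are compatible with the base-change functors in Remark~\ref{rem:funct_adm_dieudonne} by the functoriality of the construction in~\cite[\S 5.7]{gmm}. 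Totalizing both sides gives the desired equivalence, provided one knows descent for the right-hand side as well.

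The main obstacle, and the step that requires the most care, is verifying that $\mathrm{Mod}_{\Int/p^n\Int}(\mathrm{DDC}^{[-1,0]}_{\Prism}(R))$ itself satisfies descent along $R\to R_\infty$, i.e.\ that the natural functor
\[
\mathrm{Mod}_{\Int/p^n\Int}\bigl(\mathrm{DDC}^{[-1,0]}_{\Prism}(R)\bigr)\to \mathrm{Tot}\bigl(\mathrm{Mod}_{\Int/p^n\Int}\bigl(\mathrm{DDC}^{[-1,0]}_{\underline{\Prism}_{R_\infty^{\otimes_R\bullet +1}}}(R_\infty^{\otimes_R\bullet +1})\bigr)\bigr)
\]
is an equivalence. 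By Proposition~\ref{prop:qsyn_to_flat_covers}, the map $\Spf \Prism_{R_\infty}\simeq R_\infty^{\Prism}\to R^{\Prism}$ is a flat cover, and Remark~\ref{rem:quasisyntomic_base_change} identifies the Cech nerve with the cosimplicial prismatizations of $R_\infty^{\otimes_R \bullet +1}$. Flat descent for perfect complexes over formal stacks (as used throughout~\cite{bhatt2022prismatization,gmm}) handles the underlying perfect complex $\mathcal{M}$ and the Frobenius structure $\Psi_{\mathcal{M}}$, while $p$-complete faithfully flat descent for perfect complexes over $R\to R_\infty^{\otimes_R\bullet+1}$ handles the filtration $\Fil^0_{\mathrm{Hdg}}M\to M$ and the Hodge-Tate cofiber identification $\xi$. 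Assembling these descent statements gives the equivalence, and the exactness of the resulting equivalence is inherited from the exactness in the semiperfectoid case and the fact that fiber sequences in the totalization are detected termwise.
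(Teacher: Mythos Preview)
Your proposal is correct and follows the same approach as the paper: reduce to the semiperfectoid case via quasisyntomic descent and invoke Proposition~\ref{prop:ffg_classification_semiperf}. The paper's proof is a single line citing that proposition together with Remark~\ref{rem:adm_dieu_qsynt_descent}; what you spell out as the ``main obstacle'' (descent for $\mathrm{DDC}_{\Prism}^{[-1,0]}$ along $R\to R_\infty$) is exactly the content of that remark, so you could have simply cited it rather than reproving it via Proposition~\ref{prop:qsyn_to_flat_covers} and Remark~\ref{rem:quasisyntomic_base_change}.
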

\begin{proof}
Combine Proposition~\ref{prop:ffg_classification_semiperf} with quasisyntomic descent (Remark~\ref{rem:adm_dieu_qsynt_descent}).
\end{proof}

\begin{remark}
   [Relationship with the absolute prismatic site]
Following~\cite[Proposition 8.15]{bhatt2022prismatization} and its proof, one sees that giving a perfect complex over $R^\Prism$ is the same as giving a perfect complex of prismatic $F$-crystals with respect to the \emph{animated} absolute prismatic site of $R$.  In some cases, one is able to replace the prismatization $R^\Prism$ by its classical truncation, and hence the category appearing on the left of the equivalence in Corollary~\ref{cor:ffg_classification} can be described in terms of the \emph{classical} absolute prismatic site of $R$. When $R/pR$ is an $F$-finite and $F$-nilpotent $\Field_p$-algebra, we verify this in Theorem~\ref{thm:classical_prismatization_enough} below, and we recover in this way the classification of $p$-divisible groups by Lau in terms of divided Dieudonn\'e crystals over such $\Field_p$-algebras~\cite{lau2018divided}.
\end{remark}

\begin{corollary}
\label{cor:ffg_frame_realization}
Suppose that $\underline{A}$ is a laminated prismatic frame for $R$. Then we have canonical functors
\[
\mathrm{FFG}_n(R) \xleftarrow{\simeq}\mathsf{P}^{\mathrm{syn}}_{n,[0,1]}(R)\rightarrow \Mod{\Int/p^n\Int}\left(\mathrm{DDC}^{[-1,0]}_{\underline{A}}(R)\right)
\]
\end{corollary}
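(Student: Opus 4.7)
The first arrow is Theorem~\ref{thm:main}, so the task is to construct the second functor. The plan is to reduce to the semiperfectoid case via quasisyntomic descent, where the construction is almost immediate from Proposition~\ref{prop:ffg_classification_semiperf}.

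First I would handle the semiperfectoid case. If $R$ is semiperfectoid, Remark~\ref{rem:semiperf_lamination} exhibits $\underline{\Prism}_R$ as the initial laminated prismatic frame for $R$, so there is a canonical map of laminated frames $\underline{\Prism}_R\to \underline{A}$ lifting $\id_R$. By Remark~\ref{rem:funct_adm_dieudonne} this induces a base-change functor
\[
\mathrm{DDC}^{[-1,0]}_{\underline{\Prism}_R}(R) \longrightarrow \mathrm{DDC}^{[-1,0]}_{\underline{A}}(R),
\]
and postcomposing the equivalence of Proposition~\ref{prop:ffg_classification_semiperf} with the $\Int/p^n\Int$-module version of this base-change yields the desired functor $\mathsf{P}^{\mathrm{syn}}_{n,[0,1]}(R)\to \Mod{\Int/p^n\Int}(\mathrm{DDC}^{[-1,0]}_{\underline{A}}(R))$. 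By construction this is functorial both in the semiperfectoid ring $R$ and in maps of laminated frames $\underline{A}$.

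For general discrete $R$, I would then pick a quasisyntomic cover $R\to R_\infty$ with $R_\infty^{\otimes_R(m+1)}$ semiperfectoid for every $m\ge 0$, which exists by Proposition~\ref{prop:nygaard_qsynt_descent}. Lemma~\ref{lem:syntomic_frame_criterion} tells us that the laminated prismatic frame $\underline{A}$ for $R$ gives rise to a cosimplicial laminated prismatic frame $\underline{A}^{(\bullet)}_\infty$ for $R_\infty^{\otimes_R(\bullet+1)}$, together with the canonical cosimplicial map of frames $\underline{\Prism}_{R_\infty^{\otimes_R(\bullet+1)}}\to \underline{A}^{(\bullet)}_\infty$. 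Applying the semiperfectoid construction level by level produces a cosimplicial functor
\[
\mathsf{P}^{\mathrm{syn}}_{n,[0,1]}\bigl(R_\infty^{\otimes_R(\bullet+1)}\bigr) \longrightarrow \Mod{\Int/p^n\Int}\bigl(\mathrm{DDC}^{[-1,0]}_{\underline{A}^{(\bullet)}_\infty}\bigl(R_\infty^{\otimes_R(\bullet+1)}\bigr)\bigr).
\]
Invoking quasisyntomic descent for $F$-gauges (as recalled in the paragraph on ``Quasisyntomic descent'' in \S\ref{subsec:f_gauges}) on the source and Remark~\ref{rem:adm_dieu_qsynt_descent} on the target then descends this to the sought-after functor out of $\mathsf{P}^{\mathrm{syn}}_{n,[0,1]}(R)$.

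The main point requiring verification is the compatibility of the two cosimplicial structures: the semiperfectoid functors must intertwine the cosimplicial maps on both sides. By the naturality established in the first step, this boils down to checking that each level of the cosimplicial map $\underline{\Prism}_{R_\infty^{\otimes_R(\bullet+1)}}\to \underline{A}^{(\bullet)}_\infty$ is compatible with the face and degeneracy maps. This in turn is immediate from the construction of $\underline{A}^{(\bullet)}_\infty$ as the cosimplicial base-change of $\underline{A}$ along $(R_\infty^{\otimes_R(\bullet+1)})^\Prism\to R^\Prism$ described in Remark~\ref{rem:base_change_prisms_semiperfectoid}, together with the functoriality of the prismatization. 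With this coherence in hand, the descent step goes through and yields the functor.
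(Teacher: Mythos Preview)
Your proposal is correct and follows essentially the approach the paper has in mind: the paper states this corollary without proof immediately after Corollary~\ref{cor:ffg_classification}, whose own proof is ``Combine Proposition~\ref{prop:ffg_classification_semiperf} with quasisyntomic descent (Remark~\ref{rem:adm_dieu_qsynt_descent})'', and your argument is just the laminated-frame version of that same descent, using Lemma~\ref{lem:syntomic_frame_criterion} and Remark~\ref{rem:semiperf_lamination} to supply the canonical cosimplicial frame map needed for the base-change functor.
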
 

\begin{remark}
From Remark~\ref{rem:btn_under_equivalence}, we see that the subcategory $\mathrm{BT}_n(R)$ gets mapped to the subcategory of $\Mod{\Int/p^n\Int}\left(\mathrm{DDC}^{[-1,0]}_{\underline{A}}(R)\right)$ spanned by the objects $\underline{\mathsf{M}}$ where $\mathsf{M}$ is a finite locally free module over $A/{}^{\mathbb{L}}p^n$, and where $\gr^{-1}M$ and $\Fil^0M$ are finite locally free over $R/{}^{\mathbb{L}}p^n$.
\end{remark}

\begin{remark}
[Compatibility with descent]
The resulting functor 
\[
\mathrm{FFG}_n(R) \to \Mod{\Int/p^n\Int}\left(\mathrm{DDC}^{[-1,0]}_{\underline{A}}(R)\right)
\]
is compatible with $p$-completely \'etale (see Remark~\ref{rem:adm_dieu_etale_descent}) and quasisyntomic descent (see Remark~\ref{rem:adm_dieu_qsynt_descent}).
\end{remark}

\subsection{Breuil-Kisin frames}

We now specialize to a particular kind of frame that finds its origins in the classification theorems of Breuil and Kisin for finite flat group schemes over $p$-adic rings of integers.

\begin{remark}
[Frames from prisms]
Suppose that $(A,I')$ is a prism. Then we obtain a frame $\underline{A}$ for $R = A/^{\mathbb{L}}I'$ as follows: We set $(\Fil^1A\to A) = (I'\to A)$ and take $I = \varphi^*I'\to A$. The map $\overline{\varphi}$ is the associated map of `quotient' animated commutative rings obtained from the Frobenius lift $\varphi:A\to A$. 
\end{remark}

\begin{definition}
[Breuil-Kisin frames]
A \defnword{Breui-Kisin frame} for $R$ is a frame $\underline{A}$ obtained from a prism $(A,I')$ such that $A$ is $p$-completely flat, so that $I'\subset A$ is an ideal and $R = A/I'$ is discrete. By construction, such a frame is automatically prismatic.
\end{definition}

\begin{remark}[Laminations for Breuil-Kisin frames]
\label{rem:bk_frame_laminations}
As explained in~\cite[Example 5.4.16]{gmm}, each Breuil-Kisin frame admits a canonical lamination that is functorial for maps between such frames.
\end{remark}

\begin{example}[Lifts]
\label{ex:bk_frame_p-adic}
Any $p$-completely flat $\delta$-ring $A$ yields a $p$-adic Breuil-Kisin frame for $R = A/pA$. This is what Lau simply calls a \emph{lift} of $R$ in~\cite[\S 1.2]{MR3867290}.
\end{example}

\begin{example}[The original Breuil-Kisin frame]
\label{ex:bk_frame}
The seminal example is $\Sig = W(\kappa)\pow{u}$, where $\kappa$ is a perfect field, equipped with the Frobenius lift satisfying $\varphi(u) = u^p$, and $I' = (E(u))$ for an Eisenstein polynomial $E(u)\in W(\kappa)[u]$. More generally, as in~\cites{vasiu_zink,MR2679066}, we can take $\Sig = W(\kappa)\pow{u_1,\ldots,u_m}$ equipped with the Frobenius lift $u_i\mapsto u_i^p$, and $E\in \Sig$ is a power series such that $I' = (E)$ equips $\Sig$ with the structure of a transversal prism. In this situation, $R =\Sig/(E)$ is always regular local.
\end{example}

\begin{remark}
   [Sections of the Breuil-Kisin twist]
\label{rem:sections_bk_bk-frame}
Specializing Remark~\ref{ex:map_from_unit_bk_twist} to the Breuil-Kisin case, one sees that we have
\[
   \mathrm{RHom}_{\underline{A}}(\mathbf{1},\mathbf{1}^*)\simeq \fib(I'\{1\}\xrightarrow{\mathrm{can}-u}A\{1\}),
\]
where $u:I'\{1\}\to A\{1\}$ is the semilinear map underlying the isomorphism
\[
   \varphi^*(I'\{1\})\simeq \varphi^*I' \otimes_A\varphi^*A\{1\}\simeq I\otimes_A\varphi^*A\{1\}\simeq A\{1\}.
\]
If $A$ is $p$-adic, then this further simplifes to
\[
   \mathrm{RHom}_{\underline{A}}(\mathbf{1},\mathbf{1}^*)\simeq \fib(A\xrightarrow{p-\varphi}A).
\]
\end{remark}

\begin{definition}
   [Breuil-Kisin windows]
\label{def:bk_windows}
A \defnword{$p^n$-torsion Breuil-Kisin window} over a Breuil-Kisin frame $\underline{A}$ is a triple $(\mathsf{N},F_{\mathsf{N}},V_{\mathsf{N}})$ where:
\begin{enumerate}
   \item $\mathsf{N}$ is a $p^n$-torsion $A$-module of projective dimension $1$;
   \item $F_{\mathsf{N}}:\varphi^*\mathsf{N}\to \mathsf{N}$ and $V_{\mathsf{N}}:I'\otimes_A\mathsf{N}\to \varphi^*\mathsf{N}$ are $A$-linear maps such that the compositions
   \[
    V_{\mathsf{N}}\circ(1\otimes F_{\mathsf{N}}):I'\otimes_A\varphi^*\mathsf{N}\to \varphi^*\mathsf{N}\;;\; F_{\mathsf{N}}\circ V_{\mathsf{N}}:I'\otimes_A\mathsf{N}\to \mathsf{N}
   \]
   are the maps induced by the inclusion $I'\subset A$.
\end{enumerate}
Write $\mathrm{BK}_{\underline{A},n}(R)$ for the category of such triples.
\end{definition}

\begin{remark}
\label{rem:bk_cartier_duality}
There is a Cartier duality involution on $\mathrm{BK}_{\underline{A},n}(R)$ determined as follows. View $\mathsf{N}$ as a perfect complex with Tor amplitude $[-1,0]$ and set $\mathsf{N}^* = H^0(\mathsf{N}^\vee[-1]\{1\})$, where $\mathsf{N}^\vee$ is the $A$-linear derived dual. Since $A$ is $p$-torsion free, we see that $\mathsf{N}^*$ is once again a $p^n$-torsion $A$-module of projective dimension $1$. Moreover, we can take 
\[
F_{\mathsf{N}^*} = H^0(V_{\mathsf{N}}^\vee[-1]\{1\})\;;\; V_{\mathsf{N}^*} = H^0(F_{\mathsf{N}}^\vee[-1]\{1\}).
\]
\end{remark}

\begin{proposition}
\label{prop:BK_gauge}
If $\underline{A}$ is of Breuil-Kisin type, then there is a canonical exact equivalence 
\[
\Mod{\Int/p^n\Int}(\mathrm{DDC}^{[-1,0]}_{\underline{A}}(R))\xrightarrow{\simeq}\mathrm{BK}_{\underline{A},n}(R)
\]
compatible with Cartier duality.
\end{proposition}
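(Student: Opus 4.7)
The equivalence is constructed by unraveling the definitions, exploiting crucially that $A$ is $p$-torsion free and that $I = \varphi^*I'$ in a Breuil-Kisin frame. First, given $\underline{\mathsf{M}} \in \Mod{\Int/p^n\Int}(\mathrm{DDC}^{[-1,0]}_{\underline{A}}(R))$, I would show that the underlying perfect complex $\mathsf{M}$ is concentrated in cohomological degree $0$. Choosing a Zariski-local representative $[P^{-1} \xrightarrow{d} P^0]$ with $P^i$ finite projective, observe that $H^{-1}(\mathsf{M}) = \ker(d)$ is a submodule of the $p$-torsion-free module $P^{-1}$; but the $\Int/p^n\Int$-module structure forces $H^{-1}(\mathsf{M})$ to be killed by $p^n$, so it vanishes. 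Hence $\mathsf{M} \simeq \mathsf{N}[0]$ for a $p^n$-torsion $A$-module $\mathsf{N} = H^0(\mathsf{M})$ of projective dimension at most $1$---precisely the module underlying a Breuil-Kisin window.

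Next, I would translate the Frobenius data. Using $I = \varphi^*I'$ together with the Breuil-Kisin twist $u: I^{-1} \otimes_A A\{1\} \simeq \varphi^*A\{1\}$, the pair $(\sigma_{\mathsf{M}}, \Psi_{\mathsf{M}})$ from Remark~\ref{rem:pseudo_verschiebung}, viewed on the twisted module $\mathsf{N}\{-1\}$, yields $A$-linear maps $F_{\mathsf{N}}: \varphi^*\mathsf{N} \to \mathsf{N}$ and $V_{\mathsf{N}}: I' \otimes_A \mathsf{N} \to \varphi^*\mathsf{N}$. The purpose of the $\{-1\}$-twist is to convert the intrinsic scaling by a generator of $I$ (namely a distinguished $\varphi(d)$) that appears in the DDC into the scaling by a generator of $I'$ (the corresponding $d$) required by the BK window, so that the DDC identities $\Psi \circ \sigma = \mathrm{can}_{I}$ and $\sigma \circ (1\otimes\Psi) = \mathrm{can}_{I}$ translate cleanly into $F \circ V = \mathrm{can}_{I'}$ and $V \circ (1\otimes F) = \mathrm{can}_{I'}$ of Definition~\ref{def:bk_windows}. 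The inverse functor is built by reversing this procedure: given a BK window $(\mathsf{N}, F, V)$, set $\mathsf{M} = \mathsf{N}\{1\}[0]$ and produce $\Psi, \sigma$ by undoing the twist. The Hodge filtration $\Fil^0 M \hookrightarrow M$, which is not part of the BK window datum, is reconstructed via the cofiber identification $\cofib(\sigma_{\mathsf{M}}) \simeq \overline{A} \otimes_{\overline{\varphi}, R} \Fil^0 M$ of \eqref{eqn:cofib_zeta}, with the projective-dimension-$1$ hypothesis on $\mathsf{N}$ ensuring the correct Tor amplitude for $\Fil^0 M$ and $\gr^{-1}M$.

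Exactness of the equivalence is then automatic, since the construction is built entirely from $A$-linear operations which preserve fiber and cofiber sequences of perfect complexes. Compatibility with Cartier duality follows by direct inspection: on both sides, Cartier duality is given by $A$-linear dualization together with a $\{1\}$-twist and the swap of the ``Frobenius'' and ``Verschiebung'' arrows---namely $\sigma \leftrightarrow \Psi$ on the DDC side by Definition~\ref{defn:adm_dieudonne_cartier_duality}, and $F \leftrightarrow V$ on the BK side by Remark~\ref{rem:bk_cartier_duality}---and these match under the translation of the previous paragraph.

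The main obstacle is the careful tensor bookkeeping required to account for the Breuil-Kisin twist and the distinction between $I = \varphi^*I'$ and $I'$ itself. In particular, recovering the Hodge filtration $\Fil^0 M$ in the inverse direction with the prescribed Tor amplitude $[-1,0]$ from data that carries no explicit filtration relies on exploiting both the invertibility of $I$ in $A$ and the chosen two-term resolution of $\mathsf{N}$ by free $A$-modules, together with the interplay between $\overline{\varphi}: R \to \overline{A}$ and the $\delta$-structure on $A$.
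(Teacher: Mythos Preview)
Your first step---showing that $\mathsf{M}$ is concentrated in degree $0$ because $A$ is $p$-torsion-free---is correct and matches the paper. The gap is in the translation of the Frobenius data. After twisting by $A\{-1\}$ (the Breuil--Kisin twist for the prism $(A,I)$), you correctly obtain $F_{\mathsf{N}} = \sigma_{\mathsf{M}}\{-1\}:\varphi^*\mathsf{N}\to\mathsf{N}$. But the same twist turns $\Psi_{\mathsf{M}}$ into a map $I\otimes_A\mathsf{N}\to\varphi^*\mathsf{N}$, not $I'\otimes_A\mathsf{N}\to\varphi^*\mathsf{N}$: the identity $\varphi^*A\{-1\}\simeq I\otimes_A A\{-1\}$ only trades away the \emph{same} $I$, so the source remains $I\otimes\mathsf{N}=\varphi^*I'\otimes\mathsf{N}$, which is not $I'\otimes\mathsf{N}$. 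Your claim that the $\{-1\}$-twist ``converts the scaling by $\varphi(d)$ into scaling by $d$'' is therefore false; no single twist by a line bundle with a $\varphi$-linear structure for $(A,I)$ can absorb the discrepancy between $I'$ and $\varphi^*I'$.

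The paper resolves this with an additional idea you are missing. From the fiber sequence $\mathsf{M}\to\varphi^*\mathsf{M}\to\overline{A}\otimes_{\overline{\varphi},R}\gr^{-1}M$, one reads off a canonical equivalence $\mathsf{M}\simeq\varphi^*\Fil^0\mathsf{M}$ (since $\overline{\varphi}^*\gr^{-1}M\simeq\varphi^*\gr^{-1}M$ in the Breuil--Kisin case). One then takes $\mathsf{N}=H^0(J^\vee\otimes_A\Fil^0\mathsf{M})$, where $J$ is the Breuil--Kisin twist for the \emph{other} prism $(A,I')$, satisfying $I'\otimes_A J^\vee\simeq\varphi^*J^\vee$. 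This yields $I'\otimes_A(J^\vee\otimes\mathsf{M})\simeq\varphi^*\mathsf{N}$, and now the tautological filtration maps $\Fil^0\mathsf{M}\to\mathsf{M}$ and $I'\otimes_A\mathsf{M}\to\Fil^0\mathsf{M}$ give $V_{\mathsf{N}}$ and $F_{\mathsf{N}}$ with the correct sources and targets. The passage through $\Fil^0\mathsf{M}$ (rather than $\mathsf{M}$ itself) and the use of $J$ (rather than $A\{1\}$) are the two ingredients that make the $I$-versus-$I'$ bookkeeping come out right.
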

\begin{proof}
Let $J$ be the Breuil-Kisin twist associated with the prism structure $(A,I')$ so that we have a canonical isomorphism $I'\otimes_A \varphi^*J \xrightarrow{\simeq}J$, or equivalently an isomorphism $I'\otimes_AJ^\vee \xrightarrow{\simeq}\varphi^*J^\vee$.

Since $A$ is $p$-completely flat, the $\infty$-category of perfect complexes over $A$ with Tor amplitude in $[-1,0]$ and with cohomology killed by a power of $p$ is equivalent via the functor $\mathsf{M}\mapsto H^0(\mathsf{M})$ to the (discrete) category of $p$-power torsion $A$-modules of projective dimension $1$. This equivalence is compatible with arbitrary base-change along maps to $p$-completely flat rings. In particular, we see that if $\mathsf{N}$ is a $p$-power torsion $A$-module of projective dimension $1$, then the derived base-change $\varphi^*\mathsf{N}$ is once again a $p$-power torsion $A$-module of projective dimension $1$. 

With this in hand, suppose that we have $\underline{\mathsf{M}}$ in $\Mod{\Int/p^n\Int}(\mathrm{DDC}^{[-1,0]}_{\underline{A}}(R))$. The fiber sequence
\[
\mathsf{M}\to \varphi^*\mathsf{M}\to \overline{A}\otimes_R\gr^{-1}M
\]
shows that there is an equivalence $\mathsf{M}\xrightarrow{\simeq}\varphi^*\Fil^0\mathsf{M}$ giving us isomorphisms
\begin{align}
\label{eqn:bk_twisted_isom}
I'\otimes_A(J^\vee\otimes_A\mathsf{M})\xrightarrow{\simeq}(I'\otimes_AJ^\vee)\otimes_A\varphi^*\Fil^0\mathsf{M}\xrightarrow{\simeq}\varphi^*(J^\vee\otimes_A\Fil^0\mathsf{M}).
\end{align}

Observe now that $\Fil^0\mathsf{M}$ is perfect with Tor amplitude in $[-1,0]$ with cohomology killed by $p^n$: This follows by contemplating the diagram
\[
\begin{diagram}
I'\otimes_A\mathsf{M}&\rTo&\mathsf{M}&\rTo&M\\
\dTo&&\dEquals&&\dTo\\
\Fil^0\mathsf{M}&\rTo&\mathsf{M}&\rTo&\gr^{-1}M\\
\end{diagram}
\]
of perfect complexes over $A$ where both rows are exact, and observing that the cofiber of the left vertical map is $\Fil^0M$, which has Tor amplitude $[-2,0]$ as a complex over $A$.
 
Set $\mathsf{N} = H^0(J^\vee\otimes_A\Fil^0\mathsf{M})$: This is a $p^n$-torsion $A$-module of projective dimension $1$ that is equipped via~\eqref{eqn:bk_twisted_isom} with an isomorphism 
\[
I'\otimes_AH^0(J^\vee\otimes_A\mathsf{M})\xrightarrow{\simeq}\varphi^*\mathsf{N}. 
\]
In particular, the tautological map $\Fil^0\mathsf{M}\to \mathsf{M}$ yields a map $V_{\mathsf{N}}:I'\otimes_A\mathsf{N}\to \varphi^*\mathsf{N}$, and the map $I'\otimes_A\mathsf{M}\to \Fil^0\mathsf{M}$ appearing in the left vertical column in the diagram above yields a map $F_{\mathsf{N}}:\varphi^*\mathsf{N}\to \mathsf{N}$. 

One now checks that $(\mathsf{N},F_{\mathsf{N}},V_{\mathsf{N}})$ is an object in $\mathrm{BK}_{\underline{A},n}(R)$ giving us the functor asserted by the proposition. The construction of the inverse can be obtained by tracing the path backwards via the equivalence from the second paragraph of the proof. The main additional observation is that, given such a tuple in $\mathrm{BK}_{\underline{A},n}(R)$, we have fiber sequences
\[
\cofib(V_{\mathsf{N}})\to R\otimes_A\mathsf{N}\to \cofib(F_{\mathsf{N}})\;;\; \cofib(F_{\mathsf{N}})\to R\otimes_A\varphi^*\mathsf{N}\to \cofib(V_{\mathsf{N}}),
\]
which show that $\cofib(V_{\mathsf{N}})$ and $\cofib(F_{\mathsf{N}})$, which \emph{a priori} have Tor amplitude in $[-2,0]$ over $R$, are actually perfect over $R$ with Tor amplitude in $[-1,0]$.

We leave the verification of compatibility with exact sequences and duality to the reader.
\end{proof}

\begin{remark}
\label{rem:bk_syntomic}
Corollary~\ref{cor:ffg_frame_realization}, combined with Remark~\ref{rem:bk_frame_laminations} and Proposition~\ref{prop:BK_gauge} gives us functors
\[
\mathrm{FFG}_n(R)\xleftarrow{\simeq}\mathsf{P}^{\mathrm{syn}}_{n,\{0,1\}}(R)\rightarrow \mathrm{BK}_{\underline{A},n}(R).
\]
The subcategory $\mathrm{BT}_n(R)$ is mapped to the subcategory of $\mathrm{BK}_{\underline{A},n}(R)$ spanned by the objects where the underlying module $\mathsf{N}$ is finite locally free over $A/p^nA$.
\end{remark}

\begin{example}
[Classification over perfectoid rings]
\label{ex:perfect_prisms}
If $(A,I')$ is a perfect prism---equivalently, if $R$ is perfectoid---then $A = \Prism_R$ is the prismatic cohomology of $R$, and so Propositions~\ref{prop:ffg_classification_semiperf} and~\ref{prop:BK_gauge} together give us equivalences
\[
\mathrm{FFG}_n(R)\xleftarrow{\simeq}\mathsf{P}^{\mathrm{syn}}_{n,\{0,1\}}(R)\xrightarrow{\simeq} \mathrm{BK}_{\underline{\Prism}_R,n}(R).
\]
This recovers the equivalence of~\cite[Theorem 5.4]{MR4530092} (see also~\cite[Theorem 10.12]{MR3867290} for the case $p>3$).
\end{example}

Applying Proposition~\ref{prop:BK_gauge} and Remark~\ref{rem:bk_syntomic} to the case $I' = (p)$ yields the following result, which can also be deduced using crystalline Dieudonn\'e theory; see~\cite[\S 3]{MR3867290}.
\begin{proposition}
\label{prop:frobenius_lift_enough}
Suppose that $A$ is a $p$-completely flat $\delta$-ring such that $R=A/pA$. Then there is a canonical functor  
\[
   \mathrm{FFG}_n(R)\to \mathrm{BK}_{\underline{A},n}(R),
\]
where the right hand side is equivalent to the category of triples $(\mathsf{N},F_{\mathsf{N}},V_{\mathsf{N}})$ where  $\mathsf{N} = \mathsf{N}(G)$ is a $p^n$-torsion $R$-module of projective dimension $1$ and $F_{\mathsf{N}}:\varphi^*\mathsf{N}\to \mathsf{N}$ and $V_{\mathsf{N}}:\mathsf{N}\to \varphi^*\mathsf{N}$ are $A$-linear maps such that $F_{\mathsf{N}}\circ V_{\mathsf{N}}$ and $V_{\mathsf{N}}\circ F_{\mathsf{N}}$ are both equal to multiplication-by-$p$.
\end{proposition}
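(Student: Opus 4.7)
The plan is to assemble the stated functor as the composition of two pieces that are already established in the paper. The hypothesis on $A$ says precisely that the canonical crystalline prism from Example~\ref{ex:crystalline_frame}, with distinguished ideal $I' = pA$, is a \emph{Breuil--Kisin} frame in the sense of this section (the underlying ring is $p$-completely flat, so $R = A/pA$ is discrete). Call this frame $\underline{A}$. Since Breuil--Kisin frames carry a canonical lamination (Remark~\ref{rem:bk_frame_laminations}), Corollary~\ref{cor:ffg_frame_realization} yields a functor
\[
\mathrm{FFG}_n(R) \xleftarrow{\simeq}\mathsf{P}^{\mathrm{syn}}_{n,\{0,1\}}(R) \longrightarrow \Mod{\Int/p^n\Int}\bigl(\mathrm{DDC}^{[-1,0]}_{\underline{A}}(R)\bigr),
\]
and Proposition~\ref{prop:BK_gauge} identifies the target with $\mathrm{BK}_{\underline{A},n}(R)$. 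This is literally Remark~\ref{rem:bk_syntomic} specialized to the $p$-adic case.

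The remaining content is the claim that, when $I' = pA$, the category $\mathrm{BK}_{\underline{A},n}(R)$ admits the simpler description stated in the proposition. For this I will exploit that $A$ being $p$-completely flat forces $A$ to be $p$-torsion free, so multiplication by $p$ gives an isomorphism of $A$-modules
\[
\mu_p: A \xrightarrow{\;\simeq\;} I' = pA, \qquad 1\mapsto p.
\]
Tensoring with $\mathsf{N}$ yields an $A$-linear isomorphism $\mathsf{N} \xrightarrow{\simeq} I'\otimes_A \mathsf{N}$ under which the canonical inclusion $I'\otimes_A \mathsf{N}\to \mathsf{N}$ corresponds to multiplication by $p$ on $\mathsf{N}$ (and similarly after pulling back via $\varphi$). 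Conjugating the Verschiebung $V_{\mathsf{N}}:I'\otimes_A\mathsf{N}\to \varphi^*\mathsf{N}$ by this isomorphism produces an $A$-linear map $\tilde V_{\mathsf{N}}:\mathsf{N}\to \varphi^*\mathsf{N}$, and the two compatibility relations from Definition~\ref{def:bk_windows} translate directly into the identities $F_{\mathsf{N}}\circ \tilde V_{\mathsf{N}} = p\cdot \mathrm{id}_{\mathsf{N}}$ and $\tilde V_{\mathsf{N}} \circ F_{\mathsf{N}} = p\cdot \mathrm{id}_{\varphi^*\mathsf{N}}$. Conversely, any such triple $(\mathsf{N},F_{\mathsf{N}},\tilde V_{\mathsf{N}})$ yields an object of $\mathrm{BK}_{\underline{A},n}(R)$ by reversing the identification, so the two categories are manifestly equivalent.

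The argument is thus essentially a routine unwinding of definitions; there is no substantive obstacle, only the bookkeeping of the $I'\simeq A$ identification in the case $I' = (p)$. (One minor point to flag: the statement says ``$p^n$-torsion $R$-module'' but context—and Definition~\ref{def:bk_windows}—makes clear the intended object is a $p^n$-torsion $A$-module, equivalently an $A/p^nA$-module, of projective dimension $1$; this is the formulation I verify.)
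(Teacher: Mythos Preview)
Your argument is correct and follows exactly the approach indicated in the paper: the proposition is stated there as an immediate consequence of applying Proposition~\ref{prop:BK_gauge} and Remark~\ref{rem:bk_syntomic} to the case $I' = (p)$, which is precisely what you do. Your explicit unwinding of the $I'\simeq A$ identification and your flag about the $A$-module versus $R$-module typo are both accurate and helpful.
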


\begin{remark}
\label{rem:perfect_case}
If $R$ is perfect, so that $A = W(R) = \Prism_R$, then this assignment is an equivalence by Example~\ref{ex:perfect_prisms}, and we recover the classification by Dieudonn\'e-Manin (when $R$ a field), Berthelot (when $R$ is a valuation ring), Gabber and Lau (for general perfect $R$); see~\cite[\S 6]{MR2983008}.
\end{remark}

On the other end of the crystalline situation in Proposition~\ref{prop:frobenius_lift_enough}, we have the following transversal situation:
\begin{proposition}
\label{prop:bk_frames_flat}
Suppose that $R$ is $p$-completely flat---equivalently that $I'$ is locally generated by an element that is a non-zero divisor mod-$p$. Then $\mathrm{BK}_{\underline{A},n}(R)$ is equivalent to the category of pairs $(\mathsf{N},F_{\mathsf{N}})$ where:
\begin{enumerate}
   \item $\mathsf{N}$ is a $p$-power torsion $A$-module of projective dimension $1$;
   \item $F_{\mathsf{N}}:\varphi^*\mathsf{N}\to \mathsf{N}$ is an injective map whose cokernel is killed by $I'$.
\end{enumerate}
In particular, we can functorially associate with every finite flat group scheme $G$ over $R$ a pair $(\mathsf{N}(G),\varphi_{\mathsf{N}(G)})$ with these properties.
\end{proposition}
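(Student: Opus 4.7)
The plan is to show that under the flatness hypothesis on $R$, the datum of $V_{\mathsf{N}}$ in a BK window is redundant: it is uniquely determined by $F_{\mathsf{N}}$, and its existence amounts exactly to the conditions that $F_{\mathsf{N}}$ be injective with $I'$-annihilated cokernel. The entire argument rests on the following technical lemma: if $R=A/I'$ is $p$-completely flat and $\mathsf{N}$ is a finitely presented $p^n$-torsion $A$-module of projective dimension $1$, then any local generator $d$ of $I'$ acts as a non-zero divisor on $\mathsf{N}$. To establish this, I would locally produce a square resolution $0\to A^b\xrightarrow{M}A^b\to \mathsf{N}\to 0$, which is possible because projective dimension $1$ together with $\mathsf{N}[1/p]=0$ force the two ranks to agree. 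Since $p^n\mathsf{N}=0$, the Fitting ideal $\mathrm{Fit}_0(\mathsf{N})=(\det M)$ contains $p^{nb}$ (using $p^nI_b$ as an alternative relation matrix), so $\det M$ divides $p^{nb}$ in $A$. Reducing modulo $d$, one sees that $\det\bar M$ divides $p^{nb}$ in $R$, which is a non-zero divisor by $p$-complete flatness; hence $\det\bar M$ is a non-zero divisor in $R$, and the square matrix $\bar M\colon R^b\to R^b$ is therefore injective, so $\mathsf{N}$ has no $d$-torsion. The same conclusion applies to $\varphi^*\mathsf{N}$, which is again finitely presented, $p^n$-torsion, and of projective dimension $1$ (using that $\varphi$ is $p$-completely faithfully flat on a prism).

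With the lemma in hand, both functors become transparent. In one direction, the relation $F_{\mathsf{N}}\circ V_{\mathsf{N}}=\mu$, with $\mu\colon I'\otimes_A\mathsf{N}\to \mathsf{N}$ the canonical multiplication, immediately gives $I'\mathsf{N}\subset \im(F_{\mathsf{N}})$, so $\coker(F_{\mathsf{N}})$ is killed by $I'$; and if $F_{\mathsf{N}}(m)=0$, evaluating the second compatibility $V_{\mathsf{N}}\circ(1\otimes F_{\mathsf{N}})=$ canonical multiplication on $d\otimes m$ yields $dm=0$ in $\varphi^*\mathsf{N}$, forcing $m=0$ by the lemma. Conversely, given a pair $(\mathsf{N},F_{\mathsf{N}})$ of the stated type, the lemma tells us that $\mu$ is injective with image exactly $I'\mathsf{N}\subset \im(F_{\mathsf{N}})$, so the formula $V_{\mathsf{N}}:=F_{\mathsf{N}}^{-1}\circ \mu$ defines a well-defined $A$-linear map $I'\otimes_A\mathsf{N}\to \varphi^*\mathsf{N}$. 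The relation $F_{\mathsf{N}}\circ V_{\mathsf{N}}=\mu$ is tautological from the construction, and the second compatibility $V_{\mathsf{N}}\circ(1\otimes F_{\mathsf{N}})=$ canonical multiplication is a one-line verification using $A$-linearity of $F_{\mathsf{N}}$.

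Together these assignments are mutually inverse and manifestly functorial in morphisms, giving the asserted equivalence of categories. The final clause of the proposition is then obtained by composing this equivalence with the functor $\mathrm{FFG}_n(R)\to \mathrm{BK}_{\underline{A},n}(R)$ of Remark~\ref{rem:bk_syntomic}. The hard part is the technical lemma on the absence of $I'$-torsion in $\mathsf{N}$: without it, neither the injectivity of $F_{\mathsf{N}}$ in a window nor the well-definedness of $V_{\mathsf{N}}=F_{\mathsf{N}}^{-1}\circ\mu$ would be automatic, and the interplay between the Fitting-ideal bound coming from $p^n$-torsion and the non-zero-divisor property guaranteed by $p$-complete flatness of $R$ is precisely what forces everything through.
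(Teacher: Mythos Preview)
Your argument is correct and follows essentially the same strategy as the paper: both proofs reduce to showing that $\mathsf{N}$ is $I'$-torsion-free, after which $V_{\mathsf{N}}$ is forced by the inclusion $I'\mathsf{N}\subset\im(F_{\mathsf{N}})$. The paper establishes torsion-freeness by a cleaner route, namely the snake lemma on multiplication by $p^n$ applied to a resolution $0\to\mathsf{Q}'\to\mathsf{Q}\to\mathsf{N}\to 0$, which embeds $\mathsf{N}$ into $\mathsf{Q}'/p^n\mathsf{Q}'$; it then obtains injectivity of $F_{\mathsf{N}}$ by invoking the Tor amplitude computation for $\cofib(F_{\mathsf{N}})$ already recorded in the proof of Proposition~\ref{prop:BK_gauge}. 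Your Fitting ideal argument is a valid, more self-contained alternative.

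One point to tighten: the assertion that $\varphi$ is $p$-completely faithfully flat on an arbitrary prism is not true in general (consider for instance a $p$-torsion-free $\delta$-ring whose reduction mod $p$ has non-flat Frobenius). You do not actually need it: applying your own determinant argument to the matrix $\varphi(M)$, whose determinant $\varphi(\det M)$ again divides $p^{nb}$ in the $p$-torsion-free ring $A$, already shows $\varphi(M)$ is injective and hence that $\varphi^*\mathsf{N}$ is concentrated in degree $0$, of projective dimension $\le 1$, and $I'$-torsion-free.
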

\begin{proof}
Suppose that $\mathsf{N}$ is a $p^n$-torsion $A$-module of projective dimension $1$. By exhibiting $\mathsf{N}$ as the cokernel of a map $\mathsf{Q}'\to \mathsf{Q}$ between finite flat $A$-modules, one sees that it can be seen as a submodule of $\mathsf{Q}'/p^n\mathsf{Q}'$. This shows that $\mathsf{N}$ is $I'$-torsionfree, and so we have $I'\otimes_A\mathsf{N} = I'\mathsf{N}$. 

Now, given a tuple $(\mathsf{N},F_{\mathsf{N}},V_{\mathsf{N}})$ in $\mathrm{BK}_{\underline{A},n}(R)$, as observed in the proof of Proposition~\ref{prop:BK_gauge}, $\cofib(F_{\mathsf{N}})$ is perfect over $R$ with Tor amplitude in $[-1,0]$, and its cohomology is of course $p^n$-torsion. Since $R$ is $p$-completely flat, we see that it is quasi-isomorphic to its zeroth cohomology. Therefore, $F_{\mathsf{N}}$ is injective with cokernel killed by $I'$. Moreover, we have inclusions
\[
I'\otimes_A\mathsf{N} \simeq I'\mathsf{N}\subset \mathrm{im}(F_{\mathsf{N}})\simeq\varphi^*\mathsf{N}\subset \mathsf{N},
\]
the first of which is isomorphic to the map $V_{\mathsf{N}}$. Conversely, given such an injective map $F_{\mathsf{N}}$, $V_{\mathsf{N}}$ is determined by the inclusion $I'\mathsf{N}\subset \mathrm{im}(F_{\mathsf{N}})$. This shows the desired equivalence.
\end{proof}

\begin{remark}
  The pairs $(\mathsf{N},F_{\mathsf{N}})$ in Proposition~\ref{prop:bk_frames_flat} are a general version of the notion of a \defnword{Breuil window} introduced by Vasiu-Zink in~\cite{vasiu_zink}.
\end{remark}

\begin{example}
[Equivalence for $p$-completely flat perfectoid rings]
\label{ex:p-comp_flat_perfectoid}
If $R$ is a $p$-completely flat perfectoid ring, then combining Example~\ref{ex:perfect_prisms} with Proposition~\ref{prop:bk_frames_flat} tells us that $\mathrm{FFG}_n(R)$ is equivalent to the category of pairs $(\mathsf{N},F_{\mathsf{N}})$ over $\Prism_R$ as given in the proposition above.
\end{example}

\subsection{Nilpotent divided complexes and connected finite flat group schemes}
\label{sub:nilpotent_admissible_modules_and_connected_finite_flat_group_schemes}

It has long been known to experts that essentially any frame can be used to classify \emph{connected} finite flat group schemes; see for instance~\cite[\S 10]{MR2679066} for some instances of this phenomenon. In particular, as observed in Zink's seminal work~\cite{Zink2002-rx}, one can use the Witt frame for this purpose. We codify the underlying principle in this subsection. 

\begin{construction}
   [A canonical operator]
\label{const:operator_for_nilpotence}
Suppose that $\underline{\mathsf{M}}$ is a divided Dieudonn\'e complex over a frame $\underline{A}$ for $R$. Then in the notation of Remarks~\ref{rem:pseudo_verschiebung} and~\ref{rem:maps_from_1}, we have a commuting diagram
\[
   \begin{diagram}
      I\otimes_A\varphi^*\Fil^0\mathsf{M}&\rTo^{1\otimes v_{\mathsf{M}}}&I\otimes_A\mathsf{M}\\
      \dTo&&\dTo\\
      I\otimes_A\varphi^*\mathsf{M}&\rTo_{\sigma_{\mathsf{M}}}&\mathsf{M}
   \end{diagram}
\]
This yields a $\varphi$-semilinear map of $A$-modules $I\otimes_A\gr^{-1}M\to \mathsf{M}/{}^{\mathbb{L}}I$, which gives a $\varphi$-semilinear operator
\[
   \gr^{-1}M\{-1\}/^{\mathbb{L}}(p,I)\to \mathsf{M}\{-1\}/{}^{\mathbb{L}}(p,I)\to \gr^{-1}M\{-1\}/{}^{\mathbb{L}}(p,I)
\]
of $R/{}^{\mathbb{L}}(p,I)$-modules. If $B \defn (R/pR)_{\mathrm{red}}$, then, in turn, we obtain a $\varphi$-semilinear operator of perfect complexes over $B$
\[
 \eta_{\mathsf{M}}: B\otimes_R\gr^{-1}M\{-1\}\to B\otimes_R\gr^{-1}M\{-1\}.
\]
Here, we are using the fact that the image of $I$ in $R/pR$ is killed by Frobenius, so that the map $R\to B$ factors through $R/{}^{\mathbb{L}}(p,I)$.
\end{construction}

\begin{example}
   If $\underline{\mathsf{M}} = \underline{\mathbf{1}}$, then $\gr^{-1}M=0$ and the operator also vanishes. On the other hand, if $\underline{\mathsf{M}} = \underline{\mathbf{1}}^*$, then $M = \gr^{-1}M = R\{1\}$, and $\eta_{\mathbf{1}^*}:B\to B$ is simply the Frobenius endomorphism of $B$.
\end{example}

\begin{definition}
   [Nilpotent divided complexes]
Let $\mathfrak{c}\subset B$ be a finitely generated radical ideal. We will say that $\underline{\mathsf{M}}$ is $\mathfrak{c}$-\defnword{nilpotent} if the base change of the operator $\eta_{\mathsf{M}^*}$ over $B/\mathfrak{c}$ is nilpotent.\footnote{The use of Cartier duality here is to ensure Remark~\ref{rem:connectedness_and_nilpotence} is valid.} Write $\mathrm{DDC}^{\mathfrak{c}\mathhyph\mathrm{nilp}}_{\underline{A}}(R)$ for the $\infty$-subcategory of $\mathrm{DDC}_{\underline{A}}(R)$ spanned by such objects. If $\mathfrak{c} = 0$, we will simply write $\mathrm{DDC}^{\mathrm{nilp}}_{\underline{A}}(R)$ instead.
\end{definition}

\begin{remark}
   [Nilpotence in the Breuil-Kisin case]
\label{rem:nilpotence_bk}
In the situation of Proposition~\ref{prop:BK_gauge}, the triple $(\mathsf{N},F_{\mathsf{N}},V_{\mathsf{N}})$ is associated with a $\mathfrak{c}$-nilpotent divided Dieudonn\'e complex precisely when the map $F_{\mathsf{N}^*}:\varphi^*\mathsf{N}^*\to \mathsf{N}^*$ corresponds to a $\varphi$-semilinear endomorphism of $\mathsf{N}^*$ whose base-change over $B/\mathfrak{c}$ is nilpotent. Write $\mathrm{BK}^{\mathfrak{c}\mathhyph\mathrm{nilp}}_{\underline{A},n}(R)$ for the subcategory of $\mathrm{BK}_{\underline{A},n}(R)$ spanned by such objects. Once again, if $\mathfrak{c} = 0$, we will just write $\mathrm{BK}^{\mathrm{nilp}}_{\underline{A},n}(R)$ instead.
\end{remark}

\begin{remark}
   [Connectedness and nilpotence]
\label{rem:connectedness_and_nilpotence}
Suppose that $\underline{\mathcal{M}}$ is a prismatic divided Dieudonn\'e complex over $R$. Then, via quasisyntomic descent and Construction~\ref{const:operator_for_nilpotence} applied with $\underline{A} = \underline{\Prism}_S$ for semiperfectoid $R$-algebras $S$, we obtain a canonical $\varphi$-semilinear operator $\eta_{\mathcal{M}^*}$ on $B\otimes_R\gr^{-1}_{\mathrm{Hdg}}M^*\{-1\}$. If $\underline{\mathcal{M}}$ is associated with  $G\in \mathrm{FFG}_n(R)$ via Corollary~\ref{cor:ffg_classification}, then this operator is $\mathfrak{c}$-nilpotent precisely when the restriction of $G$ over $B/\mathfrak{c}$ is connected. Indeed, it is enough to see this when $R = \kappa$ is an algebraically closed field, where this translates into the usual criterion in \emph{contravariant} Dieudonn\'e theory involving the nilpotence of the semilinear operator $F$ on the Dieudonn\'e module.
\end{remark}

The next result follows from the methods of~\cite[\S 5.9]{gmm}. 
\begin{proposition}
[Unique lifting principle using nilpotence]
\label{prop:deformation_theory_nilpotent}
Suppose that $\underline{A}'\to \underline{A}$ is a surjective\footnote{By this, we mean that the underlying map $A'\to A$ is surjective.} map of prismatic frames for $R$. Then the natural functor $\mathrm{DDC}^{\mathrm{nilp}}_{\underline{A}'}(R)\to \mathrm{DDC}^{\mathrm{nilp}}_{\underline{A}}(R)$ is an equivalence of stable $\infty$-categories.
\end{proposition}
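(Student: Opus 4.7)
The plan is to reduce the assertion to a square-zero deformation problem for divided Dieudonn\'e complexes and then to invoke the nilpotence hypothesis to invert the Frobenius-twisted operator that controls deformations. Since the underlying $R$ is fixed by the map $\underline{A}'\to\underline{A}$, the kernel $J=\fib(A'\to A)$ is contained in $\fib(A'\to R)$, and in particular the $(p,I')$-adic (equivalently $(p,I)$-adic) topology on $A'$ restricts to a complete topology on $J$. First, using that both $A'$ and $A$ are derived $(p,I)$-complete, I would filter $A'$ over $A$ by the powers $J^{\otimes n}$ and, by an integrability argument analogous to Proposition~\ref{prop:adm_dieu_integrable} (and the discussion in \S~\ref{subsec:descent_for_divided_dieudonne_complexes}), reduce to the case where $J^{\otimes 2}\simeq 0$; in that case $J$ is naturally a perfect $A$-module.

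Second, in the square-zero case, I would set up the deformation theory of a divided Dieudonn\'e complex explicitly. Given $\underline{\mathsf{M}}=(\mathsf{M},\Fil^0M\to M,\Psi_{\mathsf{M}},\xi)$ in $\mathrm{DDC}_{\underline{A}}(R)$, any lift to $\underline{A}'$ has underlying perfect complex $\mathsf{M}'$ deforming $\mathsf{M}$, and such a deformation exists uniquely up to a torsor under $\mathrm{End}(\mathsf{M})\otimes_A J[1]$ since perfect complexes deform uniquely along square-zero extensions. The $R$-side data $\Fil^0M\to M$ is unchanged. The remaining choice is that of a lift $\Psi_{\mathsf{M}'}:\mathsf{M}'\to\varphi^*\mathsf{M}'$ of $\Psi_{\mathsf{M}}$ together with a lift of the identification $\xi$ of its cofiber with $\overline{A}\otimes_{\overline{\varphi},R}\gr^{-1}M$, where the target does not move since $R$ is fixed. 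A standard diagram chase shows that the space of such compatible liftings, after the deformation of $\mathsf{M}$ has been trivialized, sits in a fiber sequence controlled by the endomorphism
\[
\mathrm{id}-\mathrm{Ad}(\Psi_{\mathsf{M}})\circ\varphi\;:\;\mathrm{End}(\mathsf{M})\otimes_A J\longrightarrow \mathrm{End}(\mathsf{M})\otimes_A J,
\]
where $\mathrm{Ad}(\Psi_{\mathsf{M}})$ is conjugation by $\Psi_{\mathsf{M}}$ (using the identification $\cofib(\Psi_{\mathsf{M}})\simeq \iota_*^{\mathrm{HT}}\overline{\varphi}^*\gr^{-1}M$ coming from $\xi$ to pin down the compatibility). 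Thus, showing that the natural functor is an equivalence reduces to the claim that the above endomorphism is an equivalence on the relevant complex.

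Third, the nilpotence hypothesis is exactly what is needed: by Construction~\ref{const:operator_for_nilpotence} and Remark~\ref{rem:nilpotence_bk} (with Cartier duality), the operator $\mathrm{Ad}(\Psi_{\mathsf{M}})\circ\varphi$ is, up to the Breuil-Kisin twist, identified modulo $(p,I)$ with a variant of the operator $\eta_{\mathsf{M}^*}$ acting on $B\otimes_R(\Fil^0M)^\vee$. By hypothesis this operator is nilpotent on $B$, hence, since $J$ is derived $(p,I)$-complete and $A'$ is derived $(p,I)$-complete, a geometric series $\sum_{n\ge 0}(\mathrm{Ad}(\Psi_{\mathsf{M}})\circ\varphi)^n$ converges and gives the required inverse. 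The same argument applied to loops (i.e.\ to the shift by $[-1]$) shows that automorphisms of lifts lifting the identity are trivial, so the functor is fully faithful, and that obstructions to existence vanish, so it is essentially surjective. Assembling these square-zero statements through the reduction of the first paragraph yields the equivalence in general.

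\textbf{Main obstacle.} The principal technical point is the passage in the third paragraph from the abstract nilpotence of $\eta_{\mathsf{M}^*}$ on a piece of the Hodge filtration of the \emph{dual} to the genuine $(p,I)$-adic contractivity of $\mathrm{Ad}(\Psi_{\mathsf{M}})\circ\varphi$ on all of $\mathrm{End}(\mathsf{M})\otimes_A J$. This requires keeping careful track of the Breuil-Kisin twist in the duality (Definition~\ref{defn:adm_dieudonne_cartier_duality}) and unwinding the compatibility of $\Psi_{\mathsf{M}}$ with $\sigma_{\mathsf{M}}$ as in Remark~\ref{rem:pseudo_verschiebung}, so that the operator controlling deformations is identified with the dual Frobenius up to a nilpotent and a $(p,I)$-contractive correction --- this is exactly the kind of bookkeeping carried out in~\cite[\S 5.9]{gmm}, which I would follow closely.
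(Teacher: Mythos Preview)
Your overall strategy matches the paper's: both reduce via the filtration by powers of $K=\ker(A'\to A)$ to a square-zero step, and then invoke a deformation-theoretic argument controlled by the nilpotence hypothesis, ultimately deferring to~\cite[\S 5.9]{gmm}. The paper packages this slightly differently: in Remark~\ref{rem:groth-messing_nilpotent} it first formulates a more general Grothendieck--Messing statement asserting that the square~\eqref{eqn:adm_dieu_groth-mess_diagram} is Cartesian on nilpotent objects, and it inserts an intermediate reduction to the case where $A'$ and $A$ are \emph{discrete} (using integrability of the stack of perfect complexes) before passing to quotients $A'/K^m$. This is cleaner in the derived setting because ``square-zero'' and the filtration by $K^m$ then have their classical meanings.

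There is one genuine imprecision in your write-up that you should fix rather than flag. The operator you call $\mathrm{Ad}(\Psi_{\mathsf{M}})\circ\varphi$ on $\mathrm{End}(\mathsf{M})\otimes_A J$ is not well-defined as written, since $\Psi_{\mathsf{M}}$ is not invertible; moreover the object on which the relevant operator acts is not all of $\mathrm{End}(\mathsf{M})$ but rather the piece governed by the Hodge filtration (this is why the paper's formulation routes through $\mathrm{Perf}(\Aff^1/\Gm\times\Spf R)$). The link between the deformation operator and $\eta_{\mathsf{M}^*}$ on $B\otimes_R(\Fil^0M)^\vee$ goes through the map $\sigma_{\mathsf{M}}$ and the dual $v_{\mathsf{M}^*}$ of Remark~\ref{rem:maps_from_1}, not through conjugation by $\Psi_{\mathsf{M}}$. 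Once you recast the square-zero step in the Grothendieck--Messing form (lifting the filtration on $M$ rather than the whole $\Psi$-structure on $\mathsf{M}$), the operator you need is exactly the one whose reduction is $\eta_{\mathsf{M}^*}$, and the geometric-series argument then goes through as you intend.
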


\begin{remark}
   [Grothendieck-Messing for divided Dieudonn\'e complexes: the nilpotent case]
\label{rem:groth-messing_nilpotent}
To prove the proposition using the ideas in~\cite[\S 5.9]{gmm}, one first needs to formulate a more general assertion. First, observe that, for any frame $\underline{A}$ for $R$, we have a canonical functor
\[
  \mathrm{DDC}_{\underline{A}}(R)\to  \mathrm{Perf}(\Aff^1/\Gm\times \Spf R)
\]
carrying $\underline{\mathsf{M}}$ to $M$ with its two-step filtration $0\to \Fil^0M \to M$. Now, suppose that we have a map $R'\to R$ in $\mathrm{CRing}^{p\mathhyph\mathrm{comp}}$, and a sequence of surjective maps of prismatic frames
  \[
  \underline{A}'\to \underline{A}_1\to \underline{A}
  \]
  satisfying the following conditions:
  \begin{itemize}
     \item $\underline{A}'\to \underline{A}_1$ is a map of frames for $R'$;
     \item $\underline{A}$ is a frame for $R$ and $\underline{A}_1\to \underline{A}$ lifts $R'\to R$;
     \item $(A_1,I_1)\to (A,I)$ is an isomorphism of prisms.
  \end{itemize}
  Then we have a canonical diagram
  \begin{align}\label{eqn:adm_dieu_groth-mess_diagram}
     \begin{diagram}
       \mathrm{DDC}_{\underline{A}'}(R')&\rTo&\mathrm{Perf}(\Aff^1/\Gm\times \Spf R')\\
       \dTo&&\dTo\\
       \mathrm{DDC}_{\underline{A}}(R)&\rTo&\mathrm{Perf}(\Aff^1/\Gm\times \Spf R)\times_{\mathrm{Perf}(R)}\mathrm{Perf}(R')
     \end{diagram}
  \end{align}
  Here, $\mathrm{DDC}_{\underline{A}}(R)\to \mathrm{Perf}(R')$ is obtained via the functor $\mathrm{Perf}(A)\simeq \mathrm{Perf}(A_1)\to \mathrm{Perf}(R')$. 

  Then the more refined claim is that, in this situation,~\eqref{eqn:adm_dieu_groth-mess_diagram} is Cartesian when restricted to the subcategories of nilpotent divided Dieudonn\'e complexes.  The idea is to use deformation theory, the nilpotence hypothesis, and various integrability properties of the stack of perfect complexes to reduce first to the case where $A'$ and $A$ are discrete, and next, by replacing $A'$ successively with quotients of the form $A'/K^m$ where $K=\ker(A'\to A)$, to the case where $A'\to A$ is the identity, where it becomes trivial to prove.
\end{remark}

\begin{notation}
  Suppose that $B$ is $\mathfrak{c}$-adically derived complete. Let $\mathrm{FFG}_n^{\mathfrak{c}\mathhyph\mathrm{conn}}(R)$ be the subcategory of $\mathrm{FFG}_n(R)$ spanned by the objects whose restriction over $\Spec B/\mathfrak{c}$ is connected. If $\mathfrak{c} = 0$, we will simply write $\mathrm{FFG}_n^{\mathrm{conn}}(R)$ instead.
\end{notation}

\begin{corollary}
   \label{cor:nilpotent_connected}
Suppose that $B$ is $\mathfrak{c}$-adically derived complete. For any laminated prismatic frame $\underline{A}$ for $R$, the functor in Corollary~\ref{cor:ffg_frame_realization} yields an exact equivalence
\[
\mathrm{FFG}_n^{\mathfrak{c}\mathhyph\mathrm{conn}}(R)\xrightarrow{\simeq}\Mod{\Int/p^n\Int}\left(\mathrm{DDC}^{[-1,0],\mathfrak{c}\mathhyph\mathrm{nilp}}_{\underline{A}}(R)\right).
\]
In particular, if $\underline{A}$ is a Breuil-Kisin frame for $R$, then we obtain an exact equivalence
\[
   \mathrm{FFG}_n^{\mathfrak{c}\mathhyph\mathrm{conn}}(R)\xrightarrow{\simeq}\mathrm{BK}^{\mathfrak{c}\mathhyph\mathrm{nilp}}_{\underline{A},n}(R).
\]
\end{corollary}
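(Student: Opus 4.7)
\medskip

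\textbf{Plan.} The second assertion (Breuil--Kisin case) follows from the first by composing the equivalence with that of Proposition~\ref{prop:BK_gauge}, so I focus on the first. The plan is to combine the absolute classification by prismatic divided Dieudonn\'e complexes (Corollary~\ref{cor:ffg_classification}) with the nilpotent Grothendieck--Messing principle of Proposition~\ref{prop:deformation_theory_nilpotent} / Remark~\ref{rem:groth-messing_nilpotent}, using the laminated structure on $\underline{A}$ to produce a canonical comparison with the prismatic side.

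\textbf{Step 1 (Translation of connectedness into nilpotence).} Using Corollary~\ref{cor:ffg_classification}, identify
$\mathrm{FFG}_n(R)$ with $\Mod{\Int/p^n\Int}(\mathrm{DDC}^{[-1,0]}_{\Prism}(R))$, and verify via Remark~\ref{rem:connectedness_and_nilpotence} that the subcategory $\mathrm{FFG}_n^{\mathfrak{c}\mathhyph\mathrm{conn}}(R)$ matches the $\mathfrak{c}$-nilpotent prismatic divided Dieudonn\'e complexes. Then check that the natural base-change functor $\mathrm{DDC}^{[-1,0]}_{\Prism}(R) \to \mathrm{DDC}^{[-1,0]}_{\underline{A}}(R)$ induced by a lamination (Remarks~\ref{rem:laminations_and_prismatization},~\ref{rem:semiperf_lamination}) is compatible with the operator $\eta$ from Construction~\ref{const:operator_for_nilpotence}, so that it preserves $\mathfrak{c}$-nilpotence. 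These two observations reduce the corollary to showing that the functor
\[
  \Mod{\Int/p^n\Int}\bigl(\mathrm{DDC}^{[-1,0],\mathfrak{c}\mathhyph\mathrm{nilp}}_{\Prism}(R)\bigr)\longrightarrow \Mod{\Int/p^n\Int}\bigl(\mathrm{DDC}^{[-1,0],\mathfrak{c}\mathhyph\mathrm{nilp}}_{\underline{A}}(R)\bigr)
\]
is an equivalence.

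\textbf{Step 2 (Reduction to semiperfectoid $R$).} Next, reduce to the semiperfectoid case. Choose a quasisyntomic cover $R\to R_\infty$ with $R_\infty^{\otimes_R m}$ semiperfectoid for all $m$; by Lemma~\ref{lem:syntomic_frame_criterion}, the laminated frame $\underline{A}$ base-changes to a cosimplicial laminated prismatic frame $\underline{A}^{(\bullet)}_\infty$ for $R^{\otimes_R(\bullet+1)}_\infty$, with $A^{(\bullet)}_\infty$ $(p,I)$-completely faithfully flat over $A$. Using Remark~\ref{rem:adm_dieu_qsynt_descent} on the $\underline{A}$-side and the corresponding descent on the prismatic side (which comes from Proposition~\ref{prop:qsyn_to_flat_covers}), both sides of the comparison satisfy quasisyntomic descent, and $\mathfrak{c}$-nilpotence is preserved and reflected by these covers (since $B$ is computed on the base). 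Thus it suffices to treat semiperfectoid $R$, where $R^\Prism \simeq \Spf \Prism_R$ and the laminated frame $\underline{A}$ receives a canonical map $\underline{\Prism}_R \to \underline{A}$ from the initial laminated frame.

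\textbf{Step 3 (Grothendieck--Messing comparison over two frames).} Finally, prove the equivalence in the semiperfectoid case by generalizing Remark~\ref{rem:groth-messing_nilpotent}: for the map $\underline{\Prism}_R \to \underline{A}$ and the identity $R \to R$, I claim that the square of forgetful functors to $\mathrm{Perf}(\Aff^1/\Gm \times \Spf R)$ is Cartesian when restricted to $\mathfrak{c}$-nilpotent objects. Since both frames give the same underlying Hodge-filtered complex $\Fil^\bullet_{\mathrm{Hdg}}M$ on $R$, full faithfulness and essential surjectivity amount to a lifting problem along the surjection of $\delta$-rings ${\Prism}_R \twoheadrightarrow \Prism_R/K$, where $K = \ker(\Prism_R \to A)$ in a presentation adapted to the lamination; iterating along the $K$-adic filtration and appealing to derived $K$-adic completeness (via integrability as in Proposition~\ref{prop:adm_dieu_integrable}), each deformation step is controlled by a cohomology group in which the nilpotence of $\eta_{\mathcal{M}^*}$ forces vanishing --- exactly as in the argument sketched in Remark~\ref{rem:groth-messing_nilpotent}.

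\textbf{Main obstacle.} The principal difficulty is Step 3. Proposition~\ref{prop:deformation_theory_nilpotent} as stated applies only to surjective maps of prismatic frames, whereas the structural map $\underline{\Prism}_R \to \underline{A}$ is almost never surjective (e.g.\ for a Breuil--Kisin frame over a ring of integers). The heart of the proof is therefore to upgrade the argument behind Remark~\ref{rem:groth-messing_nilpotent} --- which uses the nilpotence condition to contract away deformations step by step --- to a comparison between two \emph{a priori} unrelated laminated frames that only agree after quotienting back down to $R$. This requires carefully identifying the relative deformation-theoretic obstruction as a cohomology group on which the operator $\eta_{\mathcal{M}^*}$ acts topologically nilpotently, which is where both the $\mathfrak{c}$-adic derived completeness of $B$ and the nilpotence hypothesis simultaneously enter.
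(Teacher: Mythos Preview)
Your Steps~1 and~2 are fine and match the paper. The gap is in Step~3, and it is exactly the obstacle you yourself flag: you have no mechanism to compare $\underline{\Prism}_R$ with $\underline{A}$ directly when the map between them is not surjective, and your proposed ``upgrade'' of Remark~\ref{rem:groth-messing_nilpotent} is not an argument but a restatement of the problem. (Incidentally, your example is backwards: by Remark~\ref{rem:surjectivity_criterion}, for a Breuil--Kisin frame the map $\underline{\Prism}_S \to \underline{A}_S$ \emph{is} surjective after semiperfectoid base change. The worry is only for general laminated frames.)

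The paper avoids this entirely by routing through the Witt frame. For semiperfectoid $R$, Remark~\ref{rem:semiperf_lamination} says $\underline{W(R)}$ is the \emph{final} object among laminated prismatic frames, so both $\underline{\Prism}_R$ and $\underline{A}$ map to it, and these maps $\underline{\Prism}_R \to \underline{W(R)}$ and $\underline{A} \to \underline{W(R)}$ are surjective. Now Proposition~\ref{prop:deformation_theory_nilpotent} applies to each separately, giving
\[
\mathrm{DDC}^{\mathrm{nilp}}_{\underline{\Prism}_R}(R)\xrightarrow{\simeq}\mathrm{DDC}^{\mathrm{nilp}}_{\underline{W(R)}}(R)\xleftarrow{\simeq}\mathrm{DDC}^{\mathrm{nilp}}_{\underline{A}}(R),
\]
with no need to compare $\underline{\Prism}_R$ and $\underline{A}$ head-on. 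This handles the case where $\mathfrak{c}\subset B$ is nilpotent. For general $\mathfrak{c}$, the paper does \emph{not} try to absorb the $\mathfrak{c}$-adic completeness into the deformation-theoretic step as you suggest; instead it chooses lifts $t_1,\dots,t_m\in R$ of generators of $\mathfrak{c}$, passes to the quotients $\tilde{R}_k = R/{}^{\mathbb{L}}(t_1^k,\dots,t_m^k)$ (where the nilpotent case applies), and then invokes the integrability result Proposition~\ref{prop:adm_dieu_integrable} to recover the statement over $R$ as an inverse limit.
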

\begin{proof}
   First consider the case where $\mathfrak{c}\subset B$ is nilpotent. Here, we can use quasisyntomic descent to reduce to the case of $R$ semiperfectoid. We can then apply Proposition~\ref{prop:deformation_theory_nilpotent} to the surjective maps of frames $\underline{\Prism}_R\to \underline{W(R)}$ and $\underline{A}\to \underline{W(R)}$, and then appeal to Proposition~\ref{prop:ffg_classification_semiperf} and Remark~\ref{rem:connectedness_and_nilpotence}.

   For the general case, choose a map $\Int_p[t_1,\ldots,t_m]^{\wedge}_p\to R$ such that the images of $t_1,\ldots,t_m$ in $B$ generate $\mathfrak{c}$. For all $k\ge 1$, set $\tilde{R}_k \defn R/{}^{\mathbb{L}}(t_1^k,\ldots,t_m^k)$. By Remarks~\ref{rem:base_change_laminated} and~\ref{rem:base_change_prisms_closed_immersions}, we obtain canonical maps of frames $\underline{A}\to \underline{\tilde{A}}_k$ lifting $R\to \tilde{R}_k$. The argument from the first paragraph shows that we have a canonical equivalence
   \[
     \Mod{\Int/p^n\Int}\left(\mathrm{DDC}^{[-1,0],\mathrm{nilp}}_{\underline{\Prism}_{\tilde{R}_k}}(\tilde{R}_k)\right)\xrightarrow{\simeq}\Mod{\Int/p^n\Int}\left(\mathrm{DDC}^{[-1,0],\mathrm{nilp}}_{\underline{\tilde{A}}_k}(\tilde{R}_k)\right).
   \]
   Taking the limit over $k$ gives us equivalences:
   \begin{align*}
    \mathrm{FFG}_n^{\mathfrak{c}\mathhyph\mathrm{conn}}(R)&\xrightarrow{\simeq}\Mod{\Int/p^n\Int}\left(\mathrm{DDC}^{[-1,0],\mathfrak{c}\mathhyph\mathrm{nilp}}_{\Prism}(R)\right)\\
    &\xrightarrow{\simeq}\varprojlim_k\Mod{\Int/p^n\Int}\left(\mathrm{DDC}^{[-1,0],\mathrm{nilp}}_{\Prism}(\tilde{R}_k)\right)\\
    &\xrightarrow{\simeq}\varprojlim_k\Mod{\Int/p^n\Int}\left(\mathrm{DDC}^{[-1,0],\mathrm{nilp}}_{\underline{\tilde{A}}_k}(\tilde{R}_k)\right).
   \end{align*}
   Here, the first equivalence is from Remark~\ref{rem:connectedness_and_nilpotence}, while the second is from Proposition~\ref{cor:ffg_classification}, and the equivalence
   \[
      \mathsf{P}^{\mathrm{syn}}_{n,\{0,1\}}(R)\xrightarrow{\simeq}\varprojlim_k\mathsf{P}^{\mathrm{syn}}_{n,\{0,1\}}(\tilde{R}_k),
   \]
   which holds by the derived $(t_1,\ldots,t_m)$-completeness of $R$ and the fact that it is true if we replace $\tilde{R}_k$ with $\pi_0(\tilde{R}_k)$ instead. Indeed, in the latter case, we are simply saying that the category of finite locally free $p$-power torsion group schemes over $R$ is obtained as the inverse limit over $k$ of that over $\pi_0(\tilde{R}_k)$.

   To finish, it is enough to know that the natural functor
   \[
     \mathrm{DDC}_{\underline{A}}(R)\to \varprojlim_k\mathrm{DDC}_{\underline{\tilde{A}}_k}(\tilde{R}_k)
   \]
   is an equivalence. By induction on $m$, we can reduce to the case $m=1$, where this follows from Proposition~\ref{prop:adm_dieu_integrable}
\end{proof}

\begin{remark}
\label{rem:lau_zink_lifts}
   When $R$ is an $\Field_p$-algebra and $\underline{A}$ is a $p$-adic Breuil-Kisin frame for $R$ (referred to as a \emph{lift} of $R$ in~\cite[\S 1.2]{MR3867290}), the last result above is already known by work of Zink and Lau; see Remark 3.2 and the proof of Corollary 10.4 in ~\cite{MR3867290}. In the general Breuil-Kisin case, it appears to be new, though a particular instance of it can be found in \cite[\S 10.5]{MR2679066}.
\end{remark}

\begin{remark}
\label{rem:lau_zink_connected}
   A particular consequence of the corollary is an equivalence of categories between connected $p$-divisible groups and nilpotent displays over $R$. This is a theorem of Lau~\cite[Theorem C]{MR2983008}.
\end{remark}

\begin{example}
   [Classification of connected group schemes over fields]
If $\kappa$ is any characteristic $p$ field and $\mathcal{O}$ is a Cohen ring for $\kappa$ equipped with a Frobenius lift $\varphi:\mathcal{O}\to \mathcal{O}$, this gives a $p$-adic Breuil-Kisin frame $\underline{\mathcal{O}}$ for $\kappa$ and we obtain an equivalence
\[
   \mathrm{FFG}_n^{\mathrm{conn}}(\kappa)\xrightarrow{\simeq}\mathrm{BK}^{\mathrm{nilp}}_{\underline{\mathcal{O}},n}(\kappa).
\]
\end{example}

\begin{example}
   [Classification of group schemes with connected special fibers over complete local rings]
\label{ex:connected_complete_local_ring}
If $R$ is a complete local ring with maximal ideal $\mathfrak{m}$ and $\underline{A}$ is any frame for $R$, we obtain an equivalence
\[
  \mathrm{FFG}_n^{\mathfrak{m}\mathhyph\mathrm{conn}}(R)\xrightarrow{\simeq}\Mod{\Int/p^n\Int}\left(\mathrm{DDC}^{[-1,0],\mathfrak{m}\mathhyph\mathrm{nilp}}_{\underline{A}}(R)\right) 
\]
In particular, if $\underline{A}$ is a Breuil-Kisin frame, then we have an equivalence
\[
   \mathrm{FFG}_n^{\mathfrak{m}\mathhyph\mathrm{conn}}(R)\xrightarrow{\simeq}\mathrm{BK}_{\underline{A},n}^{\mathfrak{m}\mathhyph\mathrm{nilp}}(R).
\]
When $R$ is an $\Field_p$-algebra, one can use this to recover a result of de Jong~\cite[Theorem 9.3]{MR1235021}. When $R$ is regular of mized characteristic, this recovers a classification due to Lau~\cite[Theorem 10.7]{MR2679066}; see also~\cite[\S 2.7]{Cais2017-bf}.
\end{example}

\begin{example}
   [A non-regular example]
\label{ex:non_regular}
Note that the previous example has no regularity constraints whatsoever. For instance, if $\kappa$ is a perfect field, and $\Reg{K}$ is a totally ramified extension of $W(\kappa)$ generated by a uniformizer with Eisenstein polynomial $E(u)\in W(\kappa)[u]$, then we can take $R = \Reg{K}\pow{x,y}/(x^2,xy,y^2)$ with the frame $\underline{A}$ associated with the prism $(A,I') = (W(\kappa)\pow{u,x,y}/(x^2,xy,y^2),(E(u)))$ equipped with the Frobenius lift $u\mapsto u^p,x\mapsto x^p,y\mapsto y^p$. In fact, one can also replace $R$ with its quotients by powers of $\pi$ by also modifying $A$ appropriately by taking quotients by powers of $u$.
\end{example}

\begin{example}
   [Classification of connected group schemes over polynomial rings with semiperfectoid coefficients]
If $R = R_0[x_1,\ldots,x_n]^{\wedge}_p$ with $R_0$ semiperfectoid, then Example~\ref{ex:frames_for_polynomial_algebras} combines with Corollary~\ref{cor:nilpotent_connected} to give us an equivalence
\[
   \mathrm{FFG}_n^{\mathrm{conn}}(R)\xrightarrow{\simeq}\Mod{\Int/p^n\Int}\left(\mathrm{DDC}^{[-1,0],\mathrm{nilp}}_{\underline{A}}(R)\right).
\]
where $\underline{A}$ is a frame for $R$ with underlying ring $A = \Prism_{R_0}[x_1,\ldots,x_n]^{\wedge}_p$.
\end{example}

\subsection{A nilpotence criterion for equivalence}
\label{sub:a_criterion_for_equivalence}

One can improve Proposition~\ref{prop:deformation_theory_nilpotent} to be valid for the full category of divided complexes by moving the locus of nilpotence from the modules to the map of frames. We will explore this phenomenon in this subsection. $R$ will be discrete here.

\begin{construction}
\label{const:varphi_1}
Suppose that $\underline{A}'\to \underline{A}$ is a map of prismatic frames for $R$. If  we set
\[
K = \fib(A'\to A)\simeq \fib(\Fil^1A'\to \Fil^1A), 
\]
then the twisted divided Frobenius maps $(\Fil^1A')\{1\}\to A'\{1\}$ and $(\Fil^1A)\{1\}\to A\{1\}$ (see Example~\ref{ex:map_from_unit_bk_twist})  induce a semilinear endomorphism $\dot{\varphi}_1:K\{1\}\to K\{1\}$.
\end{construction}

\begin{remark}
[Sections of the Breuil-Kisin twist]
\label{rem:bk_twist_sections_deform}
   Following Example~\ref{ex:map_from_unit_bk_twist}, we see that we have
   \begin{align*}
   \fib\left(\mathrm{RHom}_{\underline{A}'}(\underline{\mathbf{1}},\underline{\mathbf{1}}^*)\to \mathrm{RHom}_{\underline{A}}(\underline{\mathbf{1}},\underline{\mathbf{1}}^*)\right)
   &\xrightarrow{\simeq}\fib(K\{1\}\xrightarrow{\mathrm{id}-\dot{\varphi}_1}K\{1\}).
   \end{align*}
   Therefore when $\mathrm{id}-\dot{\varphi}_1$ is an equivalence, we find that the map
   \[
      \mathrm{RHom}_{\underline{A}'}(\underline{\mathbf{1}},\underline{\mathbf{1}}^*)\to \mathrm{RHom}_{\underline{A}}(\underline{\mathbf{1}},\underline{\mathbf{1}}^*)
   \]
   is an equivalence. 
\end{remark}

The methods of~\cite[\S 5.9]{gmm} allow one to generalize the above remark, yielding the following analogue of Proposition~\ref{prop:deformation_theory_nilpotent}. It can be viewed as an animated refinement of a classical result of Zink and Lau; see especially~\cite[Theorem 3.2]{MR2679066} or~\cite[Proposition 5.6]{lau2018divided}.
\begin{proposition}
[Unique lifting principle using nilpotence of divided Frobenius]
\label{prop:deformation_theory}
Suppose that $\underline{A}'\to \underline{A}$ is a surjective map of prismatic frames for $R$ such that the endomorphism $\dot{\varphi}_1:K\{1\} \to K\{1\}$ is topologically locally nilpotent with respect to the $(p,I)$-adic topology. Then the natural functor $\mathrm{DDC}_{\underline{A}'}(R)\to \mathrm{DDC}_{\underline{A}}(R)$ is an exact equivalence.
\end{proposition}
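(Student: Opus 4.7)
The plan is to adapt the Grothendieck-Messing style argument sketched in Remark~\ref{rem:groth-messing_nilpotent} to the current setting, moving the nilpotence hypothesis from the objects to the extension of frames. First I would reduce to the case where the surjection $A'\to A$ is square-zero. Filtering $A'$ by the powers $K^n$ of $K=\ker(A'\to A)$ gives a tower of frame maps $\underline{A}'\to \cdots\to \underline{A}'/K^{n+1}\to \underline{A}'/K^n\to \cdots$; since $(p,I)$-adic topological local nilpotence of $\dot{\varphi}_1$ on $K\{1\}$ is inherited by each graded piece $K^n/K^{n+1}\{1\}$, and since divided Dieudonn\'e complexes are integrable in the style of Proposition~\ref{prop:adm_dieu_integrable} (perfect complexes over $(p,I)$-adically complete rings glue along such towers), it suffices to treat the square-zero case.

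In the square-zero case, $K$ inherits an $A$-module structure and a $\varphi$-semilinear endomorphism $\varphi_K:K\to K$, which on the twisted piece $K\{1\}$ restricts to $\dot{\varphi}_1$ on the relevant sub-object. For any two objects $\underline{\mathsf{M}}'_1,\underline{\mathsf{M}}'_2\in \mathrm{DDC}_{\underline{A}'}(R)$ with reductions $\underline{\mathsf{M}}_1,\underline{\mathsf{M}}_2$ over $\underline{A}$, the standard deformation-theoretic calculation (tracking the triple consisting of the perfect complex $\mathsf{M}$, the map $\Psi_{\mathsf{M}}$, and the isomorphism $\xi$, all with the filtration data on $R$ held fixed) yields a fiber sequence
\[
\fib\bigl(\mathrm{RHom}_{\underline{A}'}(\underline{\mathsf{M}}'_1,\underline{\mathsf{M}}'_2)\to \mathrm{RHom}_{\underline{A}}(\underline{\mathsf{M}}_1,\underline{\mathsf{M}}_2)\bigr)\xrightarrow{\simeq}\fib\bigl(C_{\underline{\mathsf{M}}}\xrightarrow{\mathrm{id}-T_{\underline{\mathsf{M}}}}C_{\underline{\mathsf{M}}}\bigr),
\]
where $C_{\underline{\mathsf{M}}}$ is a perfect complex built from $K\{1\}\otimes_A\mathrm{RHom}_A(\mathsf{M}_1,\mathsf{M}_2)$-type data, and $T_{\underline{\mathsf{M}}}$ is the composite of $\dot{\varphi}_1$ with conjugation by the Frobenius structures $\Psi_{\mathsf{M}_i}$. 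This generalizes Remark~\ref{rem:bk_twist_sections_deform}, which is exactly the case $\underline{\mathsf{M}}_1=\underline{\mathbf{1}}$, $\underline{\mathsf{M}}_2=\underline{\mathbf{1}}^*$. Because $\dot{\varphi}_1$ is topologically locally nilpotent with respect to the $(p,I)$-adic topology, so is $T_{\underline{\mathsf{M}}}$, and hence $\mathrm{id}-T_{\underline{\mathsf{M}}}$ is an equivalence; the fiber vanishes, giving full faithfulness. An entirely parallel obstruction calculation for lifting a single object $\underline{\mathsf{M}}\in \mathrm{DDC}_{\underline{A}}(R)$ produces an obstruction/classifying space governed by the same operator $\mathrm{id}-T_{\underline{\mathsf{M}}}$; its invertibility shows that lifts exist and are unique up to contractible choice, giving essential surjectivity.

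The main technical obstacle I expect is setting up the fiber sequence above cleanly: the three pieces $(\mathsf{M},\Psi_{\mathsf{M}},\xi)$ must be deformed compatibly, and one must verify that the seemingly delicate interaction between the lift of the perfect complex and the lift of the Frobenius collapses, after the dust settles, to the single contraction equation $\mathrm{id}-T_{\underline{\mathsf{M}}}$. The crucial simplification is that the filtered module $\Fil^0_{\mathrm{Hdg}}M\to M$ lives on $R$ and is therefore \emph{not} deformed, so that the perfect-complex deformation of $\mathsf{M}$ is itself rigidified by $\xi$ modulo data valued in $K$, reducing the entire problem to the Frobenius compatibility equation on $K\{1\}$-valued cochains where $\dot{\varphi}_1$ directly acts. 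Once this is cleanly organized, the nilpotence hypothesis does the remaining work by the standard geometric series argument in the $(p,I)$-adic topology.
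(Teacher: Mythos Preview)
Your plan matches the strategy the paper itself only sketches (the paper defers to~\cite[\S 5.9]{gmm} and to Remarks~\ref{rem:groth-messing_nilpotent} and~\ref{rem:groth-messing_K_nilpotent}): reduce by the $K$-adic filtration to a square-zero thickening, then show that the deformation problem for the tuple $(\mathsf{M},\Psi_{\mathsf{M}},\xi)$ is governed by an operator of the form $\mathrm{id}-T$ with $T$ built from $\dot{\varphi}_1$, hence invertible under the nilpotence hypothesis. Your identification of Remark~\ref{rem:bk_twist_sections_deform} as the prototype case $\underline{\mathbf{1}}\to\underline{\mathbf{1}}^*$ is exactly right, and your diagnosis of the ``main technical obstacle'' (that the filtered piece on $R$ is held fixed, so the whole deformation collapses to a $K\{1\}$-valued equation) is the correct organizing insight.

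Two points to tighten. First, the paper's sketch includes a preliminary reduction to the case where $A'$ and $A$ are discrete, which you omit; in the animated setting the tower $A'/K^m$ is not literally available without this step, and your appeal to Proposition~\ref{prop:adm_dieu_integrable} is misplaced (that result concerns $(t)$-adic completion on $R$, not the $K$-adic tower on $A'$). What you actually need is that $\mathrm{Perf}(A')\to\varprojlim_m\mathrm{Perf}(A'/K^m)$ is an equivalence, which follows from $(p,I)$-completeness of $A'$ together with the fact that powers of $K$ eventually land in any given power of $(p,I)$---but this last point is not automatic and is where the topological local nilpotence of $\dot{\varphi}_1$ (or a separate argument) enters. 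Second, the inheritance of topological local nilpotence by the graded pieces $K^n/K^{n+1}\{1\}$ and by the tensored operator $T_{\underline{\mathsf{M}}}$ should be stated carefully: you are using that $\dot{\varphi}_1$ is $\varphi$-semilinear and that $\mathsf{M}_i$ are perfect, so the operator on any finite tensor construction remains topologically locally nilpotent modulo $(p,I)^n$ for each $n$.
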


\begin{remark}
   [Grothendieck-Messing for divided Dieudonn\'e complexes]
\label{rem:groth-messing_K_nilpotent}
The methods used actually show that, in the context of Remark~\ref{rem:groth-messing_nilpotent}, if we take the endomorphism $\dot{\varphi}_1:K\{1\}\to K\{1\}$ obtained from the map $\underline{A}\to \underline{A}_1$, then the diagram~\eqref{eqn:adm_dieu_groth-mess_diagram} is Cartesian whenever $\dot{\varphi}_1$ is topologically locally nilpotent.
\end{remark}

\begin{remark}
   [Grothendieck-Messing for nilpotent divided power extensions]
\label{rem:groth-messing_nilpotent_PD}
One situation in which some of the conditions of Remark~\ref{rem:groth-messing_K_nilpotent} hold is when $(R'\twoheadrightarrow R,\gamma)$ is a nilpotent divided power extension of semiperfectoid rings and we take $\underline{A}' = \underline{\Prism}_{R'}$ and $\underline{A} = \underline{\Prism}_R$. The divided powers yield a lift $A\to R'$ giving us the intermediate frame $\underline{A}_1$ for $R'$, and the associated endomorphism $\dot{\varphi}_1:K\{1\} \to K\{1\}$ is topologically locally nilpotent; see~\cite[Proposition 6.13.1]{gmm}. Indeed, this is essentially how the deformation theoretic content of Theorem~\ref{thm:HTwts01_representable} is shown. We should note however that the map $\Prism_{R'}\to \Prism_R$ is not in general surjective, so Proposition~\ref{prop:deformation_theory} does not directly apply.
\end{remark}

\begin{construction}
\label{const:map_of_frames_prismatic}
Suppose that $\underline{A}$ is a laminated prismatic frame for a semiperfectoid ring $R$. Then we have a canonical map of frames $\underline{\Prism}_R\to \underline{A}$ for $R$, and so Construction~\ref{const:varphi_1} gives a canonical operator
\[
\dot{\varphi}_1^{\underline{A}}:K^{A}\{1\}\to K^{A}\{1\},
\]
where $K^{A} = \fib(\Prism_R\to A)$. 
\end{construction}

\begin{corollary}
\label{cor:semiperfectoid_annealed}
Suppose that $R$ is semiperfectoid and $\underline{A}$ is a laminated prismatic frame for $R$ such that the map $\Prism_R\to A$ is surjective and such that the endomorphism $\dot{\varphi}^{\underline{A}}_1$ is topologically locally nilpotent. Then we have an exact equivalence
\[
\mathrm{FFG}_n(R)\xrightarrow{\simeq}\Mod{\Int/p^n\Int}(\mathrm{DDC}^{[-1,0]}_{\underline{A}}(R)).
\]
\end{corollary}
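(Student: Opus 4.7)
The plan is to combine the already-established semiperfectoid classification of Proposition~\ref{prop:ffg_classification_semiperf} with the deformation-theoretic comparison in Proposition~\ref{prop:deformation_theory}. More precisely, since $R$ is semiperfectoid and $\underline{A}$ is laminated, Remark~\ref{rem:semiperf_lamination} provides a canonical map of laminated prismatic frames $\underline{\Prism}_R \to \underline{A}$ lifting the identity on $R$. By hypothesis this map is surjective on underlying rings, so it qualifies as a surjective map of prismatic frames in the sense of Proposition~\ref{prop:deformation_theory}.

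Next I would verify that the operator $\dot{\varphi}_1:K\{1\}\to K\{1\}$ attached to the map $\underline{\Prism}_R \to \underline{A}$ by Construction~\ref{const:varphi_1} agrees on the nose with the operator $\dot{\varphi}_1^{\underline{A}}$ of Construction~\ref{const:map_of_frames_prismatic}. This is essentially by definition: the kernel $K = \fib(\Prism_R \to A)$ coincides with $K^{A}$, and on this kernel the twisted divided Frobenius $(\Fil^1 \Prism_R)\{1\}\to \Prism_R\{1\}$ modulo its analogue on $\underline{A}$ is exactly $\dot{\varphi}_1^{\underline{A}}$. By the hypothesis that $\dot{\varphi}_1^{\underline{A}}$ is topologically locally nilpotent with respect to the $(p,I)$-adic topology, Proposition~\ref{prop:deformation_theory} applies and yields an exact equivalence of stable $\infty$-categories
\[
\mathrm{DDC}_{\underline{\Prism}_R}(R) \xrightarrow{\simeq} \mathrm{DDC}_{\underline{A}}(R).
\]
Since the base-change functor of Remark~\ref{rem:funct_adm_dieudonne} clearly preserves Tor amplitude (the underlying perfect complex of $A$-modules and the perfect complexes $\Fil^0 M$, $\gr^{-1}M$ over $R$ remain unchanged on the $R$-side, and the source is obtained from the target by the surjection $\Prism_R \twoheadrightarrow A$), the equivalence restricts to an equivalence between the full subcategories with Tor amplitude in $[-1,0]$. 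Passing to $\Int/p^n\Int$-module objects preserves the equivalence.

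Finally, composing with the equivalence $\mathrm{FFG}_n(R) \xleftarrow{\simeq} \mathsf{P}^{\mathrm{syn}}_{n,\{0,1\}}(R) \xrightarrow{\simeq} \Mod{\Int/p^n\Int}(\mathrm{DDC}^{[-1,0]}_{\underline{\Prism}_R}(R))$ of Proposition~\ref{prop:ffg_classification_semiperf} yields the desired exact equivalence
\[
\mathrm{FFG}_n(R)\xrightarrow{\simeq}\Mod{\Int/p^n\Int}(\mathrm{DDC}^{[-1,0]}_{\underline{A}}(R)),
\]
with exactness inherited from the already-known exactness in \emph{loc.\ cit.}\ and the fact that the base-change functor $f^*$ of Remark~\ref{rem:funct_adm_dieudonne} is exact. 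The only real content is the matching of the two $\dot\varphi_1$ operators, which is routine; the substantive input has been packaged into Proposition~\ref{prop:deformation_theory}, so no further obstacle arises.
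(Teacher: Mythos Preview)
Your proof is correct and follows exactly the paper's approach: the paper's proof is the single line ``Immediate from Propositions~\ref{prop:ffg_classification_semiperf} and~\ref{prop:deformation_theory},'' and you have simply unpacked this by identifying the canonical map $\underline{\Prism}_R\to\underline{A}$ (via Remark~\ref{rem:semiperf_lamination}), noting that $\dot{\varphi}_1^{\underline{A}}$ is by definition (Construction~\ref{const:map_of_frames_prismatic}) the operator $\dot{\varphi}_1$ attached to this map, and then composing the two cited results. One small wording issue: in your Tor-amplitude discussion you say ``the source is obtained from the target by the surjection $\Prism_R\twoheadrightarrow A$,'' but it is the $A$-side that is obtained from the $\Prism_R$-side by base change along this surjection; the point that the equivalence respects the $[-1,0]$ subcategories is nonetheless routine (and the paper does not spell it out either).
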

\begin{proof}
   Immediate from Propositions~\ref{prop:ffg_classification_semiperf} and~\ref{prop:deformation_theory}.
\end{proof}

We will now globalize Corollary~\ref{cor:semiperfectoid_annealed}. From now on $R$ will be an arbitrary $p$-complete discrete ring.

\begin{lemma}
\label{lem:semiperf_cover_surjective}
Let $\underline{A}$ be a laminated prismatic frame for $R$. Then the following are equivalent:
\begin{enumerate}
   \item For some quasisyntomic cover $R\to R_\infty$ as in Construction~\ref{rem:quasisyntomic_base_change}, the map of cosimplicial frames $\underline{\Prism}_{R_\infty^{\otimes_R(\bullet +1)}}\to \underline{A}^{(\bullet)}_\infty$ from Lemma~\ref{lem:syntomic_frame_criterion} is surjective.
   \item For any quasisyntomic cover $R\to R_\infty$ as in Construction~\ref{rem:quasisyntomic_base_change}, the map of cosimplicial frames $\underline{\Prism}_{R_\infty^{\otimes_R(\bullet +1)}}\to \underline{A}^{(\bullet)}_\infty$ from Lemma~\ref{lem:syntomic_frame_criterion} is surjective.
   \item The associated map $\Spf A\to R^\Prism$ is a closed immersion.
\end{enumerate}
\end{lemma}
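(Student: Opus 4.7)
The plan is to show the chain of implications $(3)\Rightarrow(2)\Rightarrow(1)\Rightarrow(3)$, where $(2)\Rightarrow(1)$ is trivial. By Remark~\ref{rem:frame_maps}, surjectivity of the cosimplicial frame map $\underline{\Prism}_{R_\infty^{\otimes_R(\bullet+1)}}\to\underline{A}^{(\bullet)}_\infty$ unwinds to the condition that each underlying map of animated commutative rings $\Prism_{R_\infty^{\otimes_R(m+1)}}\to A^{(m)}_\infty$ is surjective on $\pi_0$ for all $m\ge 0$; so all three conditions can be reformulated as statements about $(p,I)$-completed ring maps (or equivalently, maps of affine formal schemes).

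For $(3)\Rightarrow(2)$: the map $\Spf\Prism_{R_\infty^{\otimes_R(m+1)}}\simeq (R_\infty^{\otimes_R(m+1)})^\Prism\to R^\Prism$ is a $(p,I)$-completely faithfully flat cover by Theorem~\ref{thm:bl_holeman} applied to each $R_\infty^{\otimes_R(m+1)}$ together with Proposition~\ref{prop:qsyn_to_flat_covers}. Pulling back the closed immersion $\Spf A\hookrightarrow R^\Prism$ along this cover, and invoking Remark~\ref{rem:base_change_prisms_semiperfectoid} together with the identifications in Remark~\ref{rem:quasisyntomic_base_change}, I would identify the pullback with $\Spf A^{(m)}_\infty\to\Spf\Prism_{R_\infty^{\otimes_R(m+1)}}$. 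Since closed immersions are stable under base change, each such map is a closed immersion of affine formal schemes, equivalently a surjection on $\pi_0$ of the $(p,I)$-complete rings, giving (2).

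For $(1)\Rightarrow(3)$: I would use only cosimplicial degree zero of the assumed surjectivity, namely that $\Prism_{R_\infty}\to A_\infty$ is surjective on $\pi_0$. This makes $\Spf A_\infty\hookrightarrow\Spf\Prism_{R_\infty}$ a closed immersion, and by Remark~\ref{rem:base_change_prisms_semiperfectoid} this map is precisely the pullback of $\Spf A\to R^\Prism$ along the flat cover $\Spf\Prism_{R_\infty}\to R^\Prism$. I then invoke fpqc descent of closed immersions in the target (which holds for $p$-adic formal stacks) to conclude that $\Spf A\to R^\Prism$ is itself a closed immersion.

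The essential ingredients are already in place in the paper, so the proof is largely a packaging exercise. The main point to verify carefully is the fpqc descent of closed immersions in $(1)\Rightarrow(3)$ for the derived/animated $p$-adic formal stacks used by Bhatt--Lurie; this is standard but is the only step not explicitly cited from an earlier result in the text.
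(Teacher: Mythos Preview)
Your proof is correct and takes essentially the same approach as the paper: both rely on the stability of closed immersions under base change and their descent along the flat cover $\Spf\Prism_{R_\infty}\to R^\Prism$. The paper organizes the argument as $(1)\Leftrightarrow(2)$ via faithful flatness of $\Prism_R\to\Prism_{R'}$ and then invokes quasisyntomic descent for closed immersions to get the equivalence with $(3)$, while you run the cycle $(3)\Rightarrow(2)\Rightarrow(1)\Rightarrow(3)$; the content is the same.
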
 
\begin{proof}
The equivalence of the first two assertions comes down to the following assertion: If $\underline{A}$ is a laminated prismatic frame for $R$, and, if $R\to R'$ is a quasisyntomic cover of semiperfectoid rings such that the base-change\footnote{The tensor products are derived and $(p,I)$-completed.} $\Prism_{R'}\to \Prism_{R'}\otimes_{\Prism_{R'}}A$ is surjective, then the map $\Prism_R\to A$ is surjective. This follows from the fact that the map $\Prism_{R}\to \Prism_{R'}$ is $(p,I)$-completely faithfully flat; see Proposition~\ref{prop:qsyn_to_flat_covers}. The equivalence of these with the last assertion is now clear from quasisyntomic descent for closed immersions.
\end{proof}

\begin{remark}
[Surjectivity criterion]
\label{rem:surjectivity_criterion}
Suppose that $\underline{A}$ is a frame for $R$ such that the natural map $A\to A/{}^{\mathbb{L}}(p,I)$ factors through a map $R\to A/{}^{\mathbb{L}}(p,I)$. Then, for any map of frames $\underline{A}'\to \underline{A}$ for $R$, the composition $A'\to A/{}^{\mathbb{L}}(p,I)$ is surjective, and hence, by $(p,I)$-completeness, the map $A'\to A$ is automatically surjective. This condition holds for Breuil-Kisin frames, and also for any frame that is obtained via quasisyntomic base-change from a Breuil-Kisin frame via Remark~\ref{rem:base_change_laminated}. In particular, the equivalent conditions of Lemma~\ref{lem:semiperf_cover_surjective} are always valid when $\underline{A}$ is a Breuil-Kisin frame.
\end{remark}

\begin{construction}
\label{const:map_of_frames_prismatic_general}
Suppose that $\underline{A}$ is a laminated prismatic frame for $R$. For each map $R\to S$ with $S$ semiperfectoid, we obtain the base-change $\underline{A}_S$ of $\underline{A}$ from Remark~\ref{rem:base_change_laminated}, and a map of frames $\underline{\Prism}_S\to \underline{A}_S$, along with an operator $\dot{\varphi}^{\underline{A}_S}_1:K^A_S\{1\}\to K^A_S\{1\}$, where $K^A_S = \fib(\Prism_S\to A_S)$. In this way, we obtain quasisyntomic sheaves $A_{-}$, $K^A_{-}$ of $\Prism_{-}$-modules over $\Spf R$, along with an operator $\dot{\varphi}^{\underline{A}}_1:K^A_{-}\{1\}\to K^A_{-}\{1\}$, whose values on semiperfectoid quasisyntomic $R$-algebras $S$ are given as just described.
\end{construction} 

\begin{remark}
Suppose that we have an ideal $\mathfrak{c}\subset B\defn (R/pR)_{\mathrm{red}}$. We obtain a map of quasisyntomic sheaves (of derived rings) $\Prism_{-}\to \Reg{-}^{\mathfrak{c}}$ whose values on semiperfectoid $S$ is given by the composition
\[
\Prism_S \to S \to S\otimes_RB/\mathfrak{c}.
\]
\end{remark}

\begin{corollary}
\label{cor:general_annealed}
Suppose that $\underline{A}$ is a laminated prismatic frame for $R$ satisfying the equivalent conditions of Lemma~\ref{lem:semiperf_cover_surjective}. Suppose also that there is some finitely generated ideal $\mathfrak{c}\subset B$ such that $B$ is derived $\mathfrak{c}$-complete and such that the base-change of $\dot{\varphi}^{\underline{A}}_1$ along $\Prism_{-}\to \Reg{-}^{\mathfrak{c}}$ is locally nilpotent. Then we have an exact equivalence
\[
\mathrm{FFG}_n(R)\xrightarrow{\simeq}\Mod{\Int/p^n\Int}(\mathrm{DDC}_{\underline{A}}(R)).
\]
In particular, if $\underline{A}$ is of Breuil-Kisin type, we obtain an equivalence
\[
   \mathrm{FFG}_n(R)\xrightarrow{\simeq}\mathrm{BK}_{\underline{A},n}(R).
\]
\end{corollary}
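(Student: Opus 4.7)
The plan is to combine the classification theorem Corollary~\ref{cor:ffg_classification} with the semiperfectoid result Corollary~\ref{cor:semiperfectoid_annealed}, bridging the two by quasisyntomic descent and by a reduction to the case of nilpotent $\mathfrak{c}$ afforded by the derived completeness hypothesis together with Proposition~\ref{prop:adm_dieu_integrable}. First I would rephrase the goal: by Corollary~\ref{cor:ffg_classification}, $\mathrm{FFG}_n(R)$ is equivalent to $\Mod{\Int/p^n\Int}(\mathrm{DDC}^{[-1,0]}_{\Prism}(R))$, and the functor of Corollary~\ref{cor:ffg_frame_realization} factors through base-change along the map of (quasisyntomic sheaves of) frames $\underline{\Prism}_{-}\to \underline{A}_{-}$ obtained via Remark~\ref{rem:base_change_laminated}. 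Hence it is enough to show that this base-change induces an equivalence on categories of divided Dieudonn\'e complexes; the Breuil-Kisin version of the conclusion then follows from Proposition~\ref{prop:BK_gauge}.

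Next I would reduce to the case where $\mathfrak{c}$ vanishes in $B$. Pick lifts $t_1,\ldots,t_m\in R$ of generators of $\mathfrak{c}$, set $\tilde{R}_k=R/^{\mathbb{L}}(t_1^k,\ldots,t_m^k)$, and let $\underline{\tilde{A}}_k$ be the corresponding laminated frame for $\tilde{R}_k$ (using Remarks~\ref{rem:base_change_prisms_closed_immersions} and~\ref{rem:base_change_laminated}). Using the $\mathfrak{c}$-adic derived completeness of $B$ together with the $p$-completeness of $R$, and iterating Proposition~\ref{prop:adm_dieu_integrable} once per coordinate on the $\underline{\Prism}$-side, the $\underline{A}$-side, and on $\mathsf{P}^{\mathrm{syn}}_{n,\{0,1\}}$ (which is locally finitely presented, cf.\ Remark~\ref{rem:p_syn_finitely_presented}), we obtain compatible equivalences
\[
\mathrm{FFG}_n(R)\xrightarrow{\simeq}\varprojlim_k\mathrm{FFG}_n(\tilde{R}_k),\quad \mathrm{DDC}^{[-1,0]}_{\Prism}(R)\xrightarrow{\simeq}\varprojlim_k\mathrm{DDC}^{[-1,0]}_{\Prism}(\tilde{R}_k),
\]
and the same with $\underline{A}$ in place of $\Prism$. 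It thus suffices to establish the equivalence for each $\tilde{R}_k$. But in $\tilde{B}_k=(\tilde{R}_k/p)_{\mathrm{red}}$ each $t_i$ vanishes (reduced ring, $t_i^k=0$), so the image of $\mathfrak{c}$ is zero and the hypothesis for $\tilde{R}_k$ becomes: $\dot{\varphi}_1^{\underline{\tilde{A}}_k}$ is locally nilpotent after base-change to $\tilde{B}_k$ itself.

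Having made this reduction I would then work quasisyntomic-locally. Both sides satisfy quasisyntomic descent (Remark~\ref{rem:adm_dieu_qsynt_descent}), so using the cosimplicial semiperfectoid cover of Lemma~\ref{lem:syntomic_frame_criterion} (the hypotheses, including surjectivity via Lemma~\ref{lem:semiperf_cover_surjective}, being preserved under quasisyntomic base-change) I may assume $R$ is semiperfectoid. Then Corollary~\ref{cor:semiperfectoid_annealed} applies provided its two hypotheses are verified: surjectivity of $\Prism_R\twoheadrightarrow A$ (our standing assumption, by Lemma~\ref{lem:semiperf_cover_surjective}) and topological local nilpotence of $\dot{\varphi}_1^{\underline{A}}$ in the $(p,I)$-adic topology on $K^A\{1\}$.

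The hard part will be verifying this last topological nilpotence, that is, promoting the given local nilpotence modulo $J=\ker(\Prism_R\twoheadrightarrow (R/p)_{\mathrm{red}})$ to local nilpotence modulo arbitrary powers $(p,I)^N$. The key point is that $\varphi:\Prism_R\to \Prism_R$ is the Frobenius lift coming from the $\delta$-structure, so any element of $J$ has image in $\Prism_R/(p,I)\simeq \overline{\Prism}_R/p$ supported on the nilradical of $R/p$ and hence, after sufficiently many applications of $\varphi$, lies in $(p,I)\Prism_R$; combining this contracting behavior of $\varphi$ on $J$ with the given local nilpotence mod $J$ of $\dot{\varphi}_1^{\underline{A}}$, and iterating a $(p,I)$-adic Nakayama-style argument on the $(p,I)$-complete module $K^A\{1\}$, one shows that for every $x\in K^A\{1\}$ and every $N\ge 1$ there exists $n$ with $(\dot{\varphi}_1^{\underline{A}})^n(x)\in (p,I)^N K^A\{1\}$. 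With this topological local nilpotence established, Corollary~\ref{cor:semiperfectoid_annealed} produces the required equivalence and completes the proof.
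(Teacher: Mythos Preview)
Your approach is essentially the same as the paper's: reduce to the case $\mathfrak{c}$ nilpotent via the $\tilde{R}_k$ construction and Proposition~\ref{prop:adm_dieu_integrable} (exactly as in the proof of Corollary~\ref{cor:nilpotent_connected}), then use quasisyntomic descent to reduce to the semiperfectoid case and invoke Corollary~\ref{cor:semiperfectoid_annealed}. The paper simply states that the nilpotent-$\mathfrak{c}$ case ``follows from Corollary~\ref{cor:semiperfectoid_annealed} and quasisyntomic descent'' and then refers back to the proof of Corollary~\ref{cor:nilpotent_connected} for the general reduction.

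You go further than the paper in one respect: you explicitly flag the passage from ``local nilpotence after base-change along $\Prism_S\to S\otimes_R B$'' to the ``topological local nilpotence in the $(p,I_S)$-adic topology'' required by Corollary~\ref{cor:semiperfectoid_annealed}, which the paper leaves implicit. Your sketch is in the right direction but the specific claim that elements of $J$ have image in $\overline{\Prism}_S/p$ ``supported on the nilradical of $R/p$'' is not quite the correct formulation. The clean statement is that $(p,I_S)\subset J$ (since any $f\in\Fil^1_{\mathcal N}\Prism_S$ satisfies $f^p=\varphi(f)-p\delta(f)\in(p,I_S)$, giving $I_S\subset\Fil^1_{\mathcal N}+p\Prism_S\subset J$), and that every element of $J/(p,I_S)\subset\overline{\Prism}_S/p$ is nilpotent: for $a\in J$ one checks, using that its image in $S/p$ lies in the ideal generated by $p$ and the nilradical of $R/p$, that $a^m\in p\Prism_S+\Fil^1_{\mathcal N}$ for some $m$, whence $a^{mp}\in(p,I_S)$. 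Once one has local nilpotence of $\dot\varphi_1^{\underline{A}}$ modulo $(p,I_S)$, the standard bootstrap for $\varphi$-semilinear operators (using $\varphi(I_S)\subset(p,I_S^p)$) yields the full $(p,I_S)$-adic topological local nilpotence. With this correction your argument is complete and matches the paper's intended proof.
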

\begin{proof}
  First, suppose that $\mathfrak{c}$ is nilpotent.  Then the result follows from Corollary~\ref{cor:semiperfectoid_annealed} and quasisyntomic descent. In general, we can argue as in the proof of Corollary~\ref{cor:nilpotent_connected} to reduce to this case.
\end{proof}

\begin{remark}
The proof of the corollary shows something stronger: Combined with Proposition~\ref{prop:syntomic_and_fppf}, it implies that the fppf cohomology of any $G\in \mathrm{FFG}_n(R)$ associated with $\underline{\mathsf{M}}\in \Mod{\Int/p^n\Int}(\mathrm{DDC}_{\underline{A}}(R))$ is isomorphic to $\mathrm{RHom}_{\underline{A}}(\underline{\mathbf{1}},\underline{\mathsf{M}})$.
\end{remark}

\begin{remark}
   [Classification via Grothendieck-Messing]
\label{rem:classification_using_groth-messing}
Suppose that $R'\twoheadrightarrow R$ is nilpotent divided power extension. In this case, there is extension $x_{\dR,R'}:\Spf R'\to R^{\Prism}$ of $x_{\dR}$ that is obtained from the divided power structure; see~\cite[Lemma 6.8.1]{gmm}. Suppose that $\underline{A}$ is a laminated prismatic frame for $R$ equipped with a lift $\underline{A}\to R'$ such that the associated composition $\Spf R'\to \Spf A\to R^\Prism$ is isomorphic to $x_{\dR,R'}$. Suppose also that $\underline{A}$ satisfies the hypotheses of Corollary~\ref{cor:general_annealed}. Write $\mathrm{DDC}_{\underline{A},\mathrm{tors}}^{[-1,0]}(R)$ for the subcategory of $\mathrm{DDC}^{[-1,0]}_{\underline{A}}(R)$ spanned by the $\Int/p^n\Int$-module objects for all $n\ge 1$. Then from Theorem~\ref{thm:HTwts01_representable} and Corollaries~\ref{cor:ffg_classification} and~\ref{cor:general_annealed}, we obtain a canonical Cartesian diagram\footnote{Strictly speaking, we only obtain a Cartesian diagram of the underlying groupoids in this way, but one can upgrade it to a Cartesian diagram of categories with little difficulty.}
\[
   \begin{diagram}
      \mathrm{FFG}(R')&\rTo& \mathrm{Perf}(\Aff^1/\Gm\times \Spf R')\\
      \dTo&&\dTo\\
      \mathrm{DDC}_{\underline{A},\mathrm{tors}}^{[-1,0]}(R)&\rTo&\mathrm{Perf}(\Aff^1/\Gm\times \Spf R)\times_{\mathrm{Perf}(R)}\mathrm{Perf}(R').
   \end{diagram}
\]
\end{remark}

\begin{remark}
\label{rem:groth-messing_low_ramification}
Suppose that $\mathcal{O}$ is a complete mixed characteristic DVR of absolute ramification index $e<p-1$, uniformizer $\pi$ and residue field $\kappa$. Then, for each $n\ge 1$, and any $p$-complete $\mathcal{O}$-algebra $S$, $S/(\pi^n)\to S/(\pi)$ is a nilpotent divided power extension. If $\underline{A}$ is a frame for $S/(\pi)$ satisfying the hypotheses of Remark~\ref{rem:classification_using_groth-messing}, we get a Cartesian diagram as above for each $n$ with $R' = S/(\pi^n)$ and $R = S/(\pi)$. Taking the limit over $n$ now gives us a Cartesian diagram
\[
   \begin{diagram}
      \mathrm{FFG}(S)&\rTo& \mathrm{Perf}(\Aff^1/\Gm\times \Spf S)\\
      \dTo&&\dTo\\
      \mathrm{DDC}_{\underline{A},\mathrm{tors}}^{[-1,0]}(S/(\pi))&\rTo&\mathrm{Perf}(\Aff^1/\Gm\times \Spec S/(\pi))\times_{\mathrm{Perf}(S/(\pi))}\mathrm{Perf}(S).
   \end{diagram}
\]
\end{remark}

\begin{example}
[Honda systems]
\label{ex:honda_systems}
Let us apply Remark~\ref{rem:groth-messing_low_ramification} to the case where $S = \mathcal{O}$ and where $\kappa$ is perfect. Here, we can take $\underline{A} = \underline{\Prism}_{\kappa}$, and so we obtain a Cartesian diagram
\[
 \begin{diagram}
      \mathrm{FFG}(\mathcal{O})&\rTo& \mathrm{Perf}(\Aff^1/\Gm\times \Spf \mathcal{O})\\
      \dTo&&\dTo\\
      \mathrm{BK}_{\underline{\Prism}_{\kappa},\mathrm{tors}}(\kappa)&\rTo&\mathrm{Perf}(\Aff^1/\Gm\times \Spec \kappa)\times_{\mathrm{Perf}(\kappa)}\mathrm{Perf}(\mathcal{O}).
   \end{diagram}
\]
Here, $\mathrm{BK}_{\underline{\Prism}_{\kappa},\mathrm{tors}}(\kappa)$ is the `union' of the categories $\mathrm{BK}_{\underline{\Prism}_\kappa,n}(\kappa)$ for $n\ge 1$. In this way, we obtain a description of $\mathrm{FFG}(\mathcal{O})$ in terms of a triple $(\mathsf{M},F_{\mathsf{M}},V_{\mathsf{M}})$ with $\mathsf{M}$ a $p$-power torsion $W(\kappa)$-module, along with a filtration on $\mathcal{O}\otimes_{W(\kappa)}\mathsf{M}$ satisfying some additional compatibilities. A full unwinding of this yields the notion of a \emph{finite Honda system} as given in~\cite[Definition 2.6]{MR1727133}, and the Cartesian diagram recovers the first part of Theorem 3.6 of \emph{loc. cit.} It is actually possible to recover the full theorem from the methods here: One has to observe that in Remark~\ref{rem:classification_using_groth-messing}, if one drops the nilpotence hypothesis on the divided powers, the diagram there is still Cartesian when restricted to $\mathrm{FFG}^{\mathrm{conn}}(R')$ and to the subcategory of nilpotent objects in $\mathrm{DDC}_{\underline{A},\mathrm{tors}}^{[-1,0]}(R')$.
\end{example}

\subsection{Specializing to the Breuil-Kisin case}
\label{subsec:specializing_to_the_breuil_kisin_case}

In this subsection, $\underline{A}$ will be a Breuil-Kisin frame for a discrete ring $R$. Here, by Remark~\ref{rem:surjectivity_criterion}, the equivalent conditions of Lemma~\ref{lem:semiperf_cover_surjective} always hold. Therefore, the only obstruction to the functor from Remark~\ref{rem:bk_syntomic} being an equivalence is the nilpotence condition on $\dot{\varphi}_1^{\underline{A}}$ from Corollary~\ref{cor:general_annealed}. We will see that this can be understood in somewhat more concrete terms via the cotangent complex.

\begin{remark}
   [The Rees stack associated with a Breuil-Kisin frame]
\label{rem:rees_bk_frame}
Just as in Remark~\ref{rem:nygaard_rees_stack}, we have the $(p,I')$-complete formal Rees stack $\Rees(\Fil^\bullet_{I'}A)$ associated with the $I'$-adic filtration on $A$, and the Frobenius lift on $A$ extends to a map of Rees stacks $\Rees(\Fil^\bullet_IA)\to \Rees(\Fil^\bullet_{I'}A).$. We obtain two maps $\tau,\sigma:\Spf A\to \Rees(\Fil^\bullet_{I'}A)$, where $\tau$ is the pullback of $\Gm/\Gm\to \Aff^1/\Gm$ and $\sigma$ is obtained from the filtered Frobenius lift and the isomorphism
\[
   \Spf A \xrightarrow{\simeq}\Rees(\Fil^\bullet_{I,\pm}A).
\]
Note also that we have a canonical map $x_{\underline{A}}:\Aff^1/\Gm\times \Spf R\to \Rees(\Fil^\bullet A)$ associated with the map $\Fil^\bullet A\to \Fil^\bullet_{\mathrm{triv}}R$ of filtered rings. Here, $\Fil^\bullet_{\mathrm{triv}}R$ is the trivial filtration on $R$ supported in graded degree $0$. 
\end{remark}

\begin{proposition}
   [Mapping the Rees stack to the filtered prismatization]
\label{prop:mapping_rees_to_filt_prismatization}
There is a canonical map of formal stacks $\iota^{\mathcal{N}}_{\underline{A}}:\Rees(\Fil^\bullet A)\to R^\mathcal{N}$ such that
\[
\iota^{\mathcal{N}}_{\underline{A}}\circ \tau = j_{\dR}\circ \iota_{(A,I)}\;;\;\iota^{\mathcal{N}}_{\underline{A}}\circ \sigma = j_{\mathrm{HT}}\circ \iota_{(A,I)}\;;\; \iota^{\mathcal{N}}_{\underline{A}}\circ x_{\underline{A}} = x^{\mathcal{N}}_{\dR}.
\]
\end{proposition}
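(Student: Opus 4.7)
The plan is to combine the prismatic embedding $\iota_{(A,I)}$ from Remark~\ref{rem:prisms_and_prismatization} with the additional datum of the filtration $\Fil^\bullet A = \Fil^\bullet_{I'} A$ supplied by the Breuil-Kisin frame, then descend to the semiperfectoid case and invoke Theorem~\ref{thm:semiperfectoid_nygaard}. Concretely, I would reduce to constructing $\iota^{\mathcal{N}}_{\underline{A}'}$ for a semiperfectoid $R'$ equipped with a Breuil-Kisin frame $\underline{A}'$, then check that the construction is natural enough to glue and satisfy the three compatibilities.

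\textbf{Reduction step.} By Proposition~\ref{prop:nygaard_qsynt_descent}, the filtered prismatization is a sheaf for the quasisyntomic topology, so I choose a quasisyntomic cover $R \to R_\infty$ with $R_\infty^{\otimes_R m}$ semiperfectoid for all $m$ (as in Remark~\ref{rem:quasisyntomic_base_change}). By Remark~\ref{rem:base_change_laminated} and Remark~\ref{rem:surjectivity_criterion}, the frame base-changes to a cosimplicial Breuil-Kisin frame $\underline{A}^{(\bullet)}_\infty$ for $R_\infty^{\otimes_R(\bullet+1)}$, and the Rees stack construction is compatible with this base-change. Thus it suffices to produce $\iota^{\mathcal{N}}_{\underline{A}'}$ naturally in the semiperfectoid case and check cosimplicial compatibility.

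\textbf{Semiperfectoid construction.} For $R'$ semiperfectoid, Theorem~\ref{thm:semiperfectoid_nygaard} identifies $R^{',\mathcal{N}} \simeq \Rees(\Fil^\bullet_{\mathcal{N}} \Prism_{R'})$ with $\tau$ mapping to $j_{\dR}$ and $\sigma$ to $j_{\mathrm{HT}}$. Initiality of $(\Prism_{R'}, I_{R'})$ in the prismatic site of $R'$ yields a map of prisms $\Prism_{R'} \to A'$ sending $I_{R'}$ into $I'$, compatibly with Frobenius lifts. I claim this canonically upgrades to a map of $(p,I)$-complete filtered animated commutative rings
\[
\Fil^\bullet_{\mathcal{N}} \Prism_{R'} \longrightarrow \Fil^\bullet_{I'} A'.
\]
Indeed, the Nygaard filtration is defined by $\varphi(\Fil^i_{\mathcal{N}} \Prism_{R'}) \subset I^i_{R'} \Prism_{R'}$; under the map to $A'$, an element $x$ with this property lands in the set of $a \in A'$ with $\varphi(a) \in I^i A'$, and the $p$-complete flatness of $A'$ together with the relation $I = \varphi^* I'$ implies this coincides with $\Fil^i_{I'} A'$ (compare the comparison arguments of Remark~\ref{rem:anschutz_lebras_comp} and~\cite[Proposition~5.7.6]{gmm}). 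Passing to Rees stacks produces the desired map $\iota^{\mathcal{N}}_{\underline{A}'}$. The naturality of this construction in maps of semiperfectoid Breuil-Kisin frames gives the cosimplicial compatibility needed to descend.

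\textbf{Compatibilities and main obstacle.} The three compatibility identities are then verified by pulling back, using Remark~\ref{rem:sigma_tau_pullbacks}: at $\tau$ the filtration is forgotten, recovering $j_{\dR} \circ \iota_{(A,I)}$; at $\sigma$ the filtered base-change followed by twisting by $I^{-\bullet}$ and applying Frobenius recovers $j_{\mathrm{HT}} \circ \iota_{(A,I)}$; and pulling back along $x_{\underline{A}}: \Aff^1/\Gm \times \Spf R \to \Rees(\Fil^\bullet A)$ corresponds to the quotient $A \twoheadrightarrow R$ with its tautological filtration and matches the filtered de Rham point from Construction~\ref{const:filtered_de_rham}. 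The main obstacle is the claim that the natural map of filtered rings $\Fil^\bullet_{\mathcal{N}} \Prism_{R'} \to \Fil^\bullet_{I'} A'$ actually lands in the $I'$-adic filtration on the right (as opposed to the a priori larger subset $\varphi^{-1}(I^\bullet A')$); confirming this requires the transversality-type hypothesis encoded in $A'$ being $p$-completely flat with $I = \varphi^* I'$, and is the place where the Breuil-Kisin hypothesis on $\underline{A}$ is essential.
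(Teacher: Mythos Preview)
The paper does not give a self-contained argument here; it simply cites \cite[Example~6.10.5]{gmm}, where the map is constructed directly by writing down, for each point of $\Rees(\Fil^\bullet_{I'}A)$, an explicit filtered Cartier-Witt divisor together with a map $R\to \Ga^{\mathcal{N}}$, using the prism structure on $(A,I')$ and the relation $I=\varphi^*I'$.

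Your descent approach has a genuine gap. You assert that base-changing $\underline{A}$ along the quasisyntomic cover $R\to R_\infty^{\otimes_R(\bullet+1)}$ yields a cosimplicial \emph{Breuil-Kisin} frame, but Remark~\ref{rem:base_change_laminated} only produces a laminated prismatic frame: there is no reason the base-changed ring $A_S$ is $p$-completely flat, nor that it carries an ideal $I'_S$ with $A_S/I'_S=S$ and $I_S=\varphi^*I'_S$. Without this, the $I'$-adic filtration on $A_S$---and hence your entire semiperfectoid construction---is undefined. Worse, the statement that the base-change $\Rees(\Fil^\bullet A)\times_{R^{\mathcal{N}}}S^{\mathcal{N}}$ is again a Rees stack $\Rees(\Fil^\bullet A_S)$ is precisely Remark~\ref{rem:base_change_filtered}, which explicitly \emph{invokes} the proposition you are proving; using it here is circular. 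There is also a structural problem: you have not explained why constructing maps $\Rees(\Fil^\bullet A_S)\to S^{\mathcal{N}}$ for semiperfectoid $S$ descends to a map out of $\Rees(\Fil^\bullet A)$, since you have not exhibited $\Rees(\Fil^\bullet A_S)\to \Rees(\Fil^\bullet A)$ as a flat cover independently of the map to $R^{\mathcal{N}}$. The direct construction in \cite{gmm} sidesteps all of this.
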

\begin{proof}
   See~\cite[Example 6.10.5]{gmm}.
\end{proof}

\begin{remark}
   [Base-change of filtered frames]
\label{rem:base_change_filtered}
Combining Proposition~\ref{prop:mapping_rees_to_filt_prismatization} with Proposition~\ref{prop:nygaard_qsynt_descent} shows that for a quasisyntomic cover $R\to S$, the base-change $ \Rees(\Fil^\bullet A)\times_{R^{\mathcal{N}}}S^{\mathcal{N}}$ is of the form $\Rees(\Fil^\bullet A_S)$ for a $(p,I)$-complete filtered animated commutative ring $\Fil^\bullet A_S$ with underlying ring $A_S$. In fact, $\Rees(\Fil^\bullet A_S)$ is $(p,I)$-completely flat over $\Rees(\Fil^\bullet A)$ and is therefore is a \emph{classical} formal stack. Moreover, the associated graded stack 
\[
\Rees(\Fil^\bullet A_S)_{(t=0)} = \Spf\left(\bigoplus_{m\leq 0}\gr^{-m}A_S\right)/\Gm
\]
is $p$-completely flat over 
\[
\Rees(\Fil^\bullet A)_{(t=0)} = \Spf\left(\bigoplus_{m\leq 0}I^{',m}/I^{',m+1}\right)/\Gm
\]
and is hence also classical. All of this means that $\Fil^\bullet A_S$ is in fact a classical filtered ring: the modules $\Fil^mA_S$ are \emph{submodules} of $A_S$. Moreover, $\Fil^1A_S\subset A_S$ agrees with the submodule given by the frame structure $\underline{A}_S$ on $A_S$. 
\end{remark}

\begin{remark}
   [Factoring $\dot{\varphi}_1$]
\label{rem:factoring_varphi_1}
With the notation as above, the Frobenius lift on $A_S$ extends to a map of filtered rings $\Fil^\bullet A_S\to \Fil^\bullet_{I}A_S$ that in filtered degree $1$ is the divided Frobenius map $\varphi_1:\Fil^1A_S \to IA_S$. This is deduced easily from the construction and the fact that it is true with $\Fil^\bullet A_S$ replaced with $\Fil^\bullet A$ or $\Fil^\bullet_{\mathcal{N}}\Prism_S$. This means that the composition
\[
   \Fil^1A_S\{1\}\xrightarrow{\varphi_1\{1\}}A_S\{1\} \to A_S\{1\}/^{\mathbb{L}}(p,I)
\]
factors through $\gr^1A_S\{1\}$, and similarly the corresponding composition for $\Fil^\bullet_{\mathcal{N}}\Prism_S$ factors through $\gr^1_{\mathcal{N}}\Prism_S$. This implies that the composition
\[
   K^A_S\{1\}\xrightarrow{\dot{\varphi}^{\underline{A}_S}_1}K^A_S\{1\}\to K^A_S\{1\}/{}^{\mathbb{L}}(p,I_S)
\]
factors through $\gr^1K^A_S\{1\}$ where
\[
   \gr^1K^A_S = \fib(\gr^1_{\mathcal{N}}\Prism_S\to \gr^1A_S).
\]
Using this, one finds that the base-change over $B$ of $\dot{\varphi}^{\underline{A}_S}_1\circ \dot{\varphi}^{\underline{A}_S}_1$ factors through a $\varphi$-semilinear map
\begin{align}\label{eqn:base_change_over_B_gr1_factorization}
   B\otimes_R\gr^1K^A_S\{1\}\to B\otimes_R\gr^1K^A_S\{1\}.
\end{align}
In particular, $\dot{\varphi}^{\underline{A}}_1$ satisfies the condition in Corollary~\ref{cor:general_annealed} for the ideal $\mathfrak{c}$ if and only the base-change of~\eqref{eqn:base_change_over_B_gr1_factorization} over $B/\mathfrak{c}$ is nilpotent for a quasisyntomic cover $R\to S$.
\end{remark}

\begin{remark}
   [Relationship with the cotangent complex]
\label{rem:gr1_cotangent_complex}
Suppose that $S$ is a $p$-complete $R$-algebra. Then, by~\cite[Corollary 5.5.18]{bhatt2022absolute}, we have a canonical isomorphism of $S$-modules $\gr^1_{\mathcal{N}}\Prism_S\{1\}\xrightarrow{\simeq}\mathbb{L}_{S}[-1]$. This is obtained via a comparison map between Nygaard filtered abslute prismatic cohomology and Hodge filtered $p$-completed derived de Rham cohomology. In particular, we see that we have
\[
  R\Gamma_{\mathrm{qsyn}}(\Spf R,\gr^1K^A_{-})\xrightarrow{\simeq}\fib(\gr^1_{\mathcal{N}}\Prism_R\to \gr^1A)\simeq \fib(\mathbb{L}_{R}[-1]\to I'/I^{',2}).
\]
Unwinding definitions shows that the map on the right is up to sign obtained from the rotation of the canonical fiber sequence
\[
   I'/I^{',2}\to R\otimes_A\mathbb{L}_A\to \mathbb{L}_R.
\]
In other words, we have a canonical isomorphism
\begin{align}\label{eqn:Rgamma_gr^1K}
   R\Gamma_{\mathrm{qsyn}}(\Spf R,\gr^1K^A_{-})\xrightarrow{\simeq}(R\otimes_A\mathbb{L}_A)[-1].
\end{align}
\end{remark}

\begin{remark}
[The key operator via the cotangent complex]
\label{rem:divided_frob_cotangent complex}
Via~\eqref{eqn:Rgamma_gr^1K}, the operator~\eqref{eqn:base_change_over_B_cotangent_desc}, viewed as a map of quasisyntomic sheaves over $\Spf R$, yields by taking global sections a $\varphi$-semilinear endomorphism 
\begin{align}\label{eqn:base_change_over_B_cotangent_desc}
B\otimes_A\mathbb{L}_A[-1]\to B\otimes_A\mathbb{L}_A[-1]. 
\end{align}
This can be described quite easily. Note that the differential $d\varphi:\mathbb{L}_A\to \mathbb{L}_A$ of the Frobenius lift of $A$ admits a canonical factorization
\[
   \mathbb{L}_A\xrightarrow{`d\varphi/p'}\mathbb{L}_A\xrightarrow{p}\mathbb{L}_A
\]
Now~\eqref{eqn:base_change_over_B_cotangent_desc} is just the base-change of $`d\varphi/p'$. 
\end{remark}

\begin{remark}
   [The semiperfectoid case]
\label{rem:cotangent_complex_semiperfectoid}
Suppose that $R$ is semiperfectoid. Then combining Remarks~\ref{rem:factoring_varphi_1},~\ref{rem:gr1_cotangent_complex} and~\ref{rem:divided_frob_cotangent complex} shows that $\dot{\varphi}^{\underline{A}}_1$ is topologically locally nilpotent if and only if the divided Frobenius endomorphism of $\mathbb{L}_A$ is topologically locally nilpotent.
\end{remark}

\begin{remark}
[Frames for non-discrete $R$]
\label{rem:frames_for_non-discrete_R}
All of the preceding discussion---except for the last two sentences of Remark~\ref{rem:base_change_filtered}---is still valid if we assume that $\underline{A}$ is a $p$-adic prismatic frame with $A$ discrete and $(\Fil^1A \to A)\simeq (A\xrightarrow{p}A)$ \emph{without} requiring that $A$ be $p$-torsion free. In this case, $\underline{A}$ is a frame for the not necessarily discrete $\Field_p$-algebra $A/{}^{\mathbb{L}}p$.
\end{remark}

\begin{proposition}
   \label{prop:semiperf_bk_case}
Suppose that $R$ is semiperfectoid, and suppose that the operator~\eqref{eqn:base_change_over_B_cotangent_desc} is locally nilpotent. Then the functor $\mathrm{FFG}_n(R)\to \mathrm{BK}_{\underline{A},n}(R)$ is an equivalence.
\end{proposition}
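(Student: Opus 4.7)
The plan is to deduce this directly from Corollary~\ref{cor:semiperfectoid_annealed}, and then translate from divided Dieudonn\'e complexes to Breuil--Kisin windows via Proposition~\ref{prop:BK_gauge}. Applying the corollary requires three checks on $\underline{A}$: that it is a laminated prismatic frame, that $\Prism_R \to A$ is surjective, and that the operator $\dot{\varphi}_1^{\underline{A}}$ on $K^A\{1\}$ is topologically locally nilpotent.

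The first two are immediate. Every Breuil--Kisin frame is prismatic by definition and carries the canonical lamination of Remark~\ref{rem:bk_frame_laminations}. The surjectivity of $\Prism_R \to A$ holds by Remark~\ref{rem:surjectivity_criterion}: for Breuil--Kisin frames the equivalent conditions of Lemma~\ref{lem:semiperf_cover_surjective} are automatic, and for semiperfectoid $R$ they specialize (taking $R_\infty = R$) to precisely the surjectivity of $\Prism_R \to A$.

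The only substantive step is extracting the nilpotence of $\dot{\varphi}_1^{\underline{A}}$ from the cotangent-complex hypothesis, and this is a traceback through the identifications of \S\ref{subsec:specializing_to_the_breuil_kisin_case}. Because $R$ is semiperfectoid, the quasisyntomic sheaves of Construction~\ref{const:map_of_frames_prismatic_general} are recovered from their values on $R$ itself, so the final sentence of Remark~\ref{rem:factoring_varphi_1} reduces the required topological nilpotence (with $\mathfrak{c}=0$) to the nilpotence of the induced $\varphi$-semilinear operator on $B \otimes_R \gr^1 K^A\{1\}$. Remark~\ref{rem:gr1_cotangent_complex} then supplies the identification $\gr^1 K^A\{1\} \simeq (R \otimes_A \mathbb{L}_A)[-1]$ coming from the Bhatt--Lurie comparison between the graded Nygaard prismatic cohomology and the cotangent complex, and Remark~\ref{rem:divided_frob_cotangent complex} identifies the operator, under this isomorphism, with the base-change of $`d\varphi/p'$. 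This is precisely the operator~\eqref{eqn:base_change_over_B_cotangent_desc} whose local nilpotence is assumed.

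With the three inputs verified, Corollary~\ref{cor:semiperfectoid_annealed} yields an exact equivalence
\[
\mathrm{FFG}_n(R) \xrightarrow{\simeq} \Mod{\Int/p^n\Int}\bigl(\mathrm{DDC}^{[-1,0]}_{\underline{A}}(R)\bigr),
\]
and postcomposing with the equivalence of Proposition~\ref{prop:BK_gauge} gives the claim. No essentially new difficulty arises: the genuine content has already been absorbed into Corollary~\ref{cor:semiperfectoid_annealed} and into the cotangent-complex translation of the nilpotence condition, so the only obstacle to be navigated is making sure the chain of identifications in Remarks~\ref{rem:factoring_varphi_1},~\ref{rem:gr1_cotangent_complex} and~\ref{rem:divided_frob_cotangent complex} lines up to identify the base-changed divided Frobenius on $\mathbb{L}_A$ with the operator governing the nilpotence hypothesis of Corollary~\ref{cor:general_annealed}.
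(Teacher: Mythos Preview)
Your proof is correct and follows the same approach as the paper, which simply cites Corollary~\ref{cor:semiperfectoid_annealed} together with Remark~\ref{rem:cotangent_complex_semiperfectoid}. You have unpacked the latter remark into its constituent pieces (Remarks~\ref{rem:factoring_varphi_1}, \ref{rem:gr1_cotangent_complex}, \ref{rem:divided_frob_cotangent complex}) and made explicit the lamination and surjectivity checks that the paper leaves implicit, but the argument is the same.
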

\begin{proof}
   Follows from Corollary~\ref{cor:semiperfectoid_annealed} and Remark~\ref{rem:cotangent_complex_semiperfectoid}.
\end{proof}

\begin{remark}
[Nilpotence in terms of $\delta$-structure: degree $0$]
   \label{rem:cotangent_complex_semiperfectoid_deg_0}
If $R$ is semiperfectoid and we choose a surjection $R_0\to R$ with $R_0$ perfectoid, then we have $\mathbb{L}_A = \mathbb{L}_{A/\Prism_{R_0}}$, and so, if $K = \ker(\Prism_{R_0}\to A)$, then $H^{0}(\mathbb{L}_A[-1]) = \tau^{\ge 0}\mathbb{L}_A[-1] = K/K^2$. The operator on $H^0(\mathbb{L}_A[-1])=K/K^2$ is inherited from the endomorphism $\overline{\delta}$ of $K/K^2$ induced by the function $\delta:K\to K$ arising from the compatibility of the map $\Prism_{R_0}\to A$ with $\delta$-structures. Therefore, the nilpotence condition in cohomological degree $0$ reduces to asking for this operator to be topologically locally nilpotent. We will see in Corollary~\ref{cor:semiperf_bk_case} below that this is already sufficient to yield the equivalence in Proposition~\ref{prop:semiperf_bk_case} under some finiteness conditions.
\end{remark}

\subsection{The classical prismatization suffices}
\label{subsec:the_classical_prismatization_suffices}

In this subsection, we will see that the classical prismatization---and hence the classical absolute prismatic site---is sufficient for classifying finite flat group schemes in many cases. This amounts to a reinterpretation of results of Lau~\cites{MR3867290,lau2018divided}, which we recover here via a slightly different argument.

\begin{definition}
[Classical prismatic divided Dieudonn\'e complexes]
   A \defnword{classical} prismatic divided Dieudonn\'e complex for $R$ is a tuple $\underline{\mathcal{M}}_{\mathrm{cl}}$ satisfying the same conditions as that of a prismatic divided Dieudonn\'e complex (Definition~\ref{def:prismatic_adm_dieu}), except that we replace $R^{\Prism}$ with its classical truncation $R^{\Prism}_{\mathrm{cl}}$. Write $\mathrm{DDC}_{\Prism_{\mathrm{cl}}}(R)$ for the $\infty$-category of such tuples.
\end{definition}

\begin{remark}
[The semiperfectoid case]
\label{rem:semiperfectoid_classical_ddc}
When $R$ is semiperfectoid, we have
\[
\mathrm{DDC}_{\Prism_{\mathrm{cl}}}(R)\simeq \mathrm{DDC}_{\underline{\Prism}_{R,\mathrm{cl}}}(R),
\]
where $\underline{\Prism}_{R,\mathrm{cl}}$ is the frame obtained by taking the classical truncation of $\underline{\Prism}_R$: More precisely, we replace $\Prism_R$ with its $(p,I_R)$-adically completed classical truncation\footnote{Here, we are taking the underived usual inverse limit giving the classical $(p,I_R)$-adic completion.}
\[
\Prism_{R,\mathrm{cl}} = \varprojlim_n \pi_0(\Prism_R)/(p,I_R)^n\pi_0(\Prism_R),
\]
and $\overline{\Prism}_R$ with $\overline{\Prism}_R\otimes_{\Prism_R}\Prism_{R,\mathrm{cl}}$. Note that, since $\pi_0(\Prism_R)$ is $(p,I_R)$-adically derived complete, $\Prism_{R,\mathrm{cl}}$ is also its maximal separated quotient; see~\cite[\href{https://stacks.math.columbia.edu/tag/091T}{Proposition 091T}]{stacks-project}.
\end{remark}

\begin{remark}
[Description via quasisyntomic descent]
\label{rem:classical_absolute_prismatic_site}
Via quasisyntomic descent, one finds that $\mathrm{DDC}_{\Prism_{\mathrm{cl}}}(R)$ can also be described in terms seen in the introduction. Namely, it is equivalent to the $\infty$-category of tuples
\[
(\mathcal{M}_{-}\xrightarrow{\Psi_{\mathcal{M}}}\varphi^* \mathcal{M}_{-},\Fil^0_{\mathrm{Hdg}}M\to M,\xi)
\]
where:
\begin{enumerate}
   \item $\mathcal{M}_{-}$ is a perfect complex over $\Prism_{-,\mathrm{cl}}$ with cohomology killed by $p^n$;
   \item $M$ is the perfect complex over $R$ corresponding to the base-change of $\mathcal{M}_{-}$ along $\Prism_{-,\mathrm{cl}}\to \mathcal{O}$ and $\Fil^0_{\mathrm{Hdg}}M\to M$ is a map of perfect complexes;
   \item $\Psi_{\mathcal{M}}:\mathcal{M}_{-}\to \varphi^* \mathcal{M}_{-}$ is a map of complexes over $\Prism_{-,\mathrm{cl}}$ whose cofiber is equipped with an isomorphism $\xi$ to $ \overline{\Prism}_{-,\mathrm{cl}}\otimes_R\gr^{-1}_{\mathrm{Hdg}}M$.
\end{enumerate}
\end{remark}

\begin{remark}
[Comparison with divided Dieudonn\'e crystals]
\label{rem:comp_with_divided_dieu}
When $R$ is an $\Field_p$-algebra, a classical prismatic divided Dieudonn\'e complex where the underlying perfect complex $\mathcal{M}$ is a vector bundle over $R^\Prism$ is the same as a \emph{divided Dieudonn\'e crystal} as defined in~\cite{lau2018divided}. This follows from Remarks~\ref{rem:semiperfectoid_classical_ddc} and~\ref{rem:windows}.
\end{remark}

We will need the following definitions appearing in~\cite{lau2018divided}.
\begin{definition}
   [$F$-finiteness and $F$-nilpotence]
An $\Field_p$-algebra $R$ is $F$-\defnword{finite} if the Frobenius endomorphism $\varphi:R\to R$ is of finite type (equivalently, is finite). It is $F$-\defnword{nilpotent} if $\ker\varphi$ is a nilpotent ideal.
\end{definition}

\begin{remark}
   [$F$-nilpotence for semiperfect rings]
\label{rem:fnilp_semiperf}
Suppose that $R$ is semiperfect with inverse perfection $R^\flat = \varprojlim_\varphi R$, and set $J = \ker(R^\flat\to R)$. Then $R$ is $F$-nilpotent if and only if there exists $r\ge 1$ such that $J^{p^r}\subset \varphi(J)$.
\end{remark}

\begin{theorem}
\label{thm:classical_prismatization_enough}
Suppose that one of the following holds:
\begin{enumerate}
   \item $R$ is $p$-quasisyntomic;
   \item $R/pR$ is $F$-finite and $F$-nilpotent.
\end{enumerate}
Then the functor
\[
 \mathrm{FFG}_n(R)\to \Mod{\Int/p^n\Int}\left( \mathrm{DDC}^{[-1,0]}_{\Prism_{\mathrm{cl}}}(R)\right)
\]
is an exact equivalence.
\end{theorem}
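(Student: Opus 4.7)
By Corollary~\ref{cor:ffg_classification}, the functor in question factors through the equivalence $\mathrm{FFG}_n(R) \xrightarrow{\simeq} \Mod{\Int/p^n\Int}(\mathrm{DDC}^{[-1,0]}_{\Prism}(R))$, so the problem reduces to showing that the natural forgetful functor
\[
\Mod{\Int/p^n\Int}\bigl(\mathrm{DDC}^{[-1,0]}_{\Prism}(R)\bigr) \longrightarrow \Mod{\Int/p^n\Int}\bigl(\mathrm{DDC}^{[-1,0]}_{\Prism_{\mathrm{cl}}}(R)\bigr)
\]
is an exact equivalence under either hypothesis.

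For Case~(1), I would argue by quasisyntomic descent. Using Remark~\ref{rem:quasisyntomic_base_change}, choose a quasisyntomic cover $R \to R_\infty$ with $R_\infty^{\otimes_R(m+1)}$ semiperfectoid for all $m$; the $p$-quasisyntomicity of $R$ lets us further arrange that each of these tensor powers is quasiregular semiperfectoid. For any quasiregular semiperfectoid $S$, the absolute prismatic cohomology $\Prism_S$ is already concentrated in degree zero and is classically $(p, I_S)$-adically complete (Theorem~\ref{thm:bl_holeman} together with the standard properties of prismatic cohomology for qrsp rings), so $\Prism_S = \Prism_{S,\mathrm{cl}}$ on the nose, and the two categories of divided Dieudonn\'e complexes coincide tautologically. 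The quasisyntomic descent of Remark~\ref{rem:adm_dieu_qsynt_descent} on the derived side, together with the analogous (and easier) descent on the classical side, then transports this equivalence back to $R$.

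For Case~(2), the strategy is to leverage a construction of Lau~\cite{lau2018divided} to produce an explicit inverse on the classical side. First, I would use the integrability of divided Dieudonn\'e complexes along the tower $R \twoheadrightarrow R/p^mR$ (Proposition~\ref{prop:adm_dieu_integrable}) together with the deformation theory of finite flat group schemes (Corollary~\ref{cor:mazur_roberts_ffg}) to reduce to the case where $R$ itself is an $\Field_p$-algebra. Next, the $F$-finite and $F$-nilpotent hypotheses on $R$ allow the choice of a quasisyntomic cover $R \to \widetilde{R}$ by a semiperfect $\Field_p$-algebra $\widetilde{R}$ such that the iterated tensor powers $\widetilde{R}^{\otimes_R(m+1)}$ remain semiperfect and $F$-nilpotent (here the bounded exponent in the $F$-nilpotence of Remark~\ref{rem:fnilp_semiperf} is preserved by base change because $\varphi$ commutes with tensor product). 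For a semiperfect $\Field_p$-algebra $S$, the ring $\Prism_{S,\mathrm{cl}}$ is precisely the classical $p$-adically completed divided power envelope $A_{\mathrm{crys}}(S)$ of $W(S^\flat) \twoheadrightarrow S$, and Lau's divided Dieudonn\'e functor, constructed directly from these envelopes, provides a candidate quasi-inverse on the classical side for each term of the cosimplicial resolution. Quasisyntomic descent on both sides then globalises the equivalence to $R$.

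The main obstacle, and the step that requires genuine care, is verifying that Lau's explicit classical inverse is naturally isomorphic to the composition of the equivalence from Corollary~\ref{cor:ffg_classification} with the forgetful map $\mathrm{DDC}_{\Prism}^{[-1,0]} \to \mathrm{DDC}_{\Prism_{\mathrm{cl}}}^{[-1,0]}$. I would establish this by first checking it on the subcategory of $n$-truncated Barsotti--Tate groups (Remark~\ref{rem:btn_under_equivalence}), where both functors reduce to the classical filtered crystalline Dieudonn\'e theory of Berthelot--Breen--Messing, as recovered via Proposition~\ref{prop:crystalline_comp_ffg}. The general case then follows by combining the Raynaud-type Theorem~\ref{thm:syntomic_raynaud}, which pro-\'etale locally presents every object of $\mathsf{P}^{\mathrm{syn}}_{n,\{0,1\}}(R)$ as the cofiber of an isogeny between vector bundle $F$-gauges, with the exactness of both functors (Theorem~\ref{thm:exactness} on the derived side and the analogous exactness of Lau's construction on the classical side) and the fully faithfulness already established at the level of BT groups.
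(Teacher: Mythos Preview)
Your Case~(1) is correct but more elaborate than necessary: the paper simply cites \cite[Corollary~8.13]{bhatt2022prismatization} to observe that $R^{\Prism}$ is already classical when $R$ is $p$-quasisyntomic, so Corollary~\ref{cor:ffg_classification} applies directly with no descent needed.

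Your Case~(2) misses the key idea and takes a much harder road. You propose to build an explicit inverse via Lau's classification and then verify compatibility by passing through BT groups and Raynaud. The paper does something structurally different and far cleaner: after reducing to $R$ $p$-nilpotent (via Lemma~\ref{lem:adm_dieu_derived_descent}) and then to $R$ semiperfectoid with $R/pR$ $F$-nilpotent (via quasisyntomic descent, using \cite[Lemma~2.6]{lau2018divided} to preserve $F$-nilpotence along the cover), it shows directly that the surjective map of frames $\underline{\Prism}_R \to \underline{\Prism}_{R,\mathrm{cl}}$ satisfies the nilpotence hypothesis of the unique lifting principle, Proposition~\ref{prop:deformation_theory}. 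This is Proposition~\ref{prop:F-nilp-nilpotent}, and its proof is where Lau's work enters---but not as a classification to be matched. Rather, Lau's straight weak lift construction (Lemma~\ref{lem:straight_weak_nilpotence}) is used to produce an auxiliary frame through which one can verify that the divided Frobenius $\dot{\varphi}_1$ on $\fib(\Prism_R \to \Prism_{R,\mathrm{cl}})\{1\}$ is topologically locally nilpotent. Once that nilpotence is known, Proposition~\ref{prop:deformation_theory} gives the equivalence $\mathrm{DDC}_{\underline{\Prism}_R}(R) \xrightarrow{\simeq} \mathrm{DDC}_{\underline{\Prism}_{R,\mathrm{cl}}}(R)$ immediately, with no compatibility check, no Raynaud, and no appeal to Lau's final classification theorem.

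Your route would face real obstacles: Lau's main result in \cite{lau2018divided} is for $p$-divisible groups, so you would need to extend it to arbitrary finite flat group schemes before you could use it as an inverse; the compatibility verification you sketch requires exactness on both sides before the equivalence is in hand; and the reduction to $\Field_p$-algebras via Corollary~\ref{cor:mazur_roberts_ffg} controls the FFG side but says nothing about the classical-prismatization side. The paper's frame-level argument sidesteps all of this.
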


\begin{proof}
When $R$ is $p$-quasisyntomic, $R^{\Prism}$ is already classical~\cite[Corollary 8.13]{bhatt2022prismatization}, so the result follows trivially from Corollary~\ref{cor:ffg_classification}. This refines the main result of~\cite{Mondal2024-cy}.

So we will assume from here on that $R/pR$ is $F$-finite and $F$-nilpotent. To begin, via Lemma~\ref{lem:adm_dieu_derived_descent}, we can reduce to the case where $R$ is $p$-nilpotent.

 Next, using quasisyntomic descent, we reduce to the case where $R$ is semiperfectoid with $R/pR$ $F$-nilpotent. Here, we are using the fact that any $F$-finite and $F$-nilpotent $\Field_p$-algebra admits a quasisyntomic cover $R\to R_\infty$ obtained by adjoining all $p$-power roots of a finite set of elements of $R$ so that $R_\infty^{\otimes_R m}$ is $F$-nilpotent for all $m$: See~\cite[Lemma 2.6]{lau2018divided}. We can now conclude using Proposition~\ref{prop:F-nilp-nilpotent} below.
\end{proof}

\begin{remark}
  Case (2) of the theorem yields in particular an equivalence
\[
   \mathrm{BT}(R) \xrightarrow{\simeq}\mathrm{DDC}_{\Prism_{\mathrm{cl}}}^{[0,0]}(R)
\]
in the situation where $R/pR$ is $F$-finite and $F$-nilpotent. When $R$ is an $\Field_p$-algebra, Remark~\ref{rem:comp_with_divided_dieu} shows that this simply a rephrasing of the main result of~\cite{lau2018divided}.
\end{remark}

\begin{proposition}
   \label{prop:F-nilp-nilpotent}
Suppose that $R$ is semiperfectoid and that $R/pR$ is $F$-nilpotent. Then the map of frames $\zeta_R:\underline{\Prism}_R\to \underline{\Prism}_{R,\mathrm{cl}}$ satisfies the conditions of Corollary~\ref{cor:semiperfectoid_annealed}.
\end{proposition}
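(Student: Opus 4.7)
Two conditions must be checked: surjectivity of the underlying ring map $\Prism_R\to \Prism_{R,\mathrm{cl}}$, and topological local nilpotence of the operator $\dot\varphi_1$ on $K\{1\}\defn\fib(\Prism_R\to \Prism_{R,\mathrm{cl}})\{1\}$ with respect to the $(p,I_R)$-adic topology. The surjectivity is immediate from Remark~\ref{rem:semiperfectoid_classical_ddc}: $\Prism_{R,\mathrm{cl}}$ is by construction the classical $(p,I_R)$-adic completion of $\pi_0(\Prism_R)$, and since $\pi_0(\Prism_R)$ is derived $(p,I_R)$-complete, the map to its maximal separated quotient is surjective. The real content of the proposition is the nilpotence of $\dot\varphi_1$.

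My plan is to reduce modulo $p$ and then invoke Lau's explicit construction from~\cite{lau2018divided}. Because $\dot\varphi_1$ is compatible with base-change along $R\to R/pR$, and because topological local nilpotence for the $(p,I_R)$-adic topology can be tested after quotienting by $p$, the question transfers to the analogous statement for $R/pR$, which is $F$-nilpotent semiperfect by hypothesis. Writing $R^\flat = \varprojlim_\varphi R/pR$ for the inverse perfection and $J = \ker(R^\flat\twoheadrightarrow R/pR)$, Remark~\ref{rem:fnilp_semiperf} produces an integer $r\ge 1$ with $J^{p^r}\subset \varphi(J)$. Lau's construction provides an explicit description of $\Prism_{R/pR,\mathrm{cl}} = A_{\mathrm{crys}}(R/pR)$ as a divided-power envelope built from $W(R^\flat)\twoheadrightarrow R/pR$, which one compares against the derived $\Prism_{R/pR}$ in order to identify the fiber $K$: at the level of $\pi_0$ it captures the Artin-Rees-invisible elements $\bigcap_n (p,I_R)^n\pi_0(\Prism_{R/pR})$, and in higher degrees it records the failure of $\Prism_{R/pR}$ to be classical. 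The divided Frobenius $\varphi_1$ strictly increases divided-power order on this model, and iterating it while using the containment $J^{p^r}\subset \varphi(J)$ drives any given element into arbitrarily high powers of $(p,I_R)$, yielding the required topological local nilpotence.

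The main obstacle is the simultaneous handling of the $\pi_0$-level discrepancy $\bigcap_n(p,I_R)^n\pi_0(\Prism_R)$ and the higher homotopy of $\Prism_R$, neither of which is automatically under control for a general semiperfectoid $R$. For quasiregular semiperfectoid inputs both vanish and the proposition becomes trivial, so the entire substance of the argument is to organize Lau's construction so as to control these two contributions quantitatively via the $F$-nilpotence index $r$. Once this structural identification is in place, the nilpotence of $\dot\varphi_1$ reduces to a short calculation with the explicit Frobenius lift on $W(R^\flat)$ and the containment $J^{p^r}\subset \varphi(J)$, and a routine devissage using the $(p,I_R)$-adic completeness of $\Prism_R$, together with the cotangent-complex description of Remark~\ref{rem:divided_frob_cotangent complex}, upgrades the mod-$p$ statement back to the full topological local nilpotence claim.
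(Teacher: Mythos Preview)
Your proposal has the right headline direction (reduce to the semiperfect case and invoke Lau's constructions), but both the reduction step and the core argument are left as gestures exactly where the paper has to do real work.

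First, the reduction to $R/pR$. You assert that ``$\dot\varphi_1$ is compatible with base-change along $R\to R/pR$'' and that ``topological local nilpotence for the $(p,I_R)$-adic topology can be tested after quotienting by $p$''. Neither statement is justified: the fiber $K_R=\fib(\Prism_R\to\Prism_{R,\mathrm{cl}})$ does not reduce mod $p$ to $K_{R/pR}=\fib(\Prism_{R/pR}\to\Prism_{R/pR,\mathrm{cl}})$ in any straightforward sense, and local nilpotence mod $p$ does not by itself imply topological local nilpotence (you would need a uniform exponent). The paper's reduction instead proceeds by climbing up through square-zero extensions with trivial divided powers, using the Grothendieck--Messing mechanism of Remark~\ref{rem:groth-messing_nilpotent_PD}: if $R'\twoheadrightarrow R$ is such an extension and $\zeta_R$ satisfies the conditions, so does $\zeta_{R'}$.

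Second, in the semiperfect case you correctly name the obstacle (controlling both the higher homotopy of $\Prism_R$ and the non-separated part of $\pi_0$), but then resolve it only by saying ``once this structural identification is in place, the nilpotence of $\dot\varphi_1$ reduces to a short calculation''. That identification \emph{is} the proof. The paper does not attack $K=\fib(\Prism_R\to\Prism_{R,\mathrm{cl}})$ directly. Instead it introduces Lau's straight weak lift $A=W(R^\flat)/K_0$ and two auxiliary frames: $\underline{\tilde{A}}$ for the derived ring $\tilde R=A/^{\mathbb L}p$, and the sequence $\underline{A}_m$ from Lemma~\ref{lem:straight_weak_nilpotence}. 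One then factors the problem as (i) nilpotence on $\ker(\Prism_{R,\mathrm{cl}}\to A)$, which is Lau's \cite[Proposition~6.3]{lau2018divided}; (ii) nilpotence on $\fib(\Prism_{\tilde R}\to\Prism_R)\{1\}$, which comes from the nilpotent divided power structure on $\tilde R\to R$ (this requires the separate Lemma~\ref{lem:nilpotent_divided_powers_mod_p}, with extra care at $p=2$); and (iii) nilpotence on $\fib(\Prism_{\tilde R}\to A)\{1\}$, handled via the cotangent-complex criterion together with the chain $A_m$ and the key fact that $A_r\to A$ factors through Frobenius (so $\mathbb L_{A_r}/^{\mathbb L}p\to\mathbb L_A/^{\mathbb L}p$ is nullhomotopic). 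None of these pieces appears in your sketch. Note also that your appeal to Remark~\ref{rem:divided_frob_cotangent complex} is misplaced: that remark concerns Breuil--Kisin frames, and $\underline{\Prism}_{R,\mathrm{cl}}$ is not one.
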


The rest of this subsection will be devoted to the proof of this proposition, but first we make a few observations beginning with the following corollary.

\begin{corollary}
   \label{cor:semiperf_bk_case}
Suppose that $R$ is semiperfectoid with $R/pR$ $F$-nilpotent, and that $\underline{A}$ is a Breuil-Kisin frame for $R$. Suppose that the divided Frobenius endomorphism of $\mathbb{L}_A$ induces a topologically locally nilpotent endomorphism of $H^{-1}(\mathbb{L}_A)$. Then we have an exact equivalence
\[
   \mathrm{FFG}_n(R)\xrightarrow{\simeq}\mathrm{BK}_{\underline{A},n}(R).
\]{}
\end{corollary}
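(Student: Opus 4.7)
The plan is to verify the hypotheses of Corollary~\ref{cor:semiperfectoid_annealed} for the frame map $\underline{\Prism}_R\to\underline{A}$; combined with Proposition~\ref{prop:BK_gauge} this will yield the claimed equivalence $\mathrm{FFG}_n(R)\xrightarrow{\simeq}\mathrm{BK}_{\underline{A},n}(R)$. The surjectivity of $\Prism_R\to A$ is automatic from Remark~\ref{rem:surjectivity_criterion} since $\underline{A}$ is of Breuil--Kisin type. By Remark~\ref{rem:cotangent_complex_semiperfectoid}, what remains is topological local nilpotence of the divided Frobenius on the $p$-completed cotangent complex $\mathbb{L}_A$; the hypothesis gives this only on $H^{-1}(\mathbb{L}_A)$, so the crux is to bootstrap from there to all of $\mathbb{L}_A$ using the $F$-nilpotence of $R/pR$.

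To this end, I would factor the frame map as $\underline{\Prism}_R\to\underline{\Prism}_{R,\mathrm{cl}}\to\underline{A}$; this factorization exists and is surjective at each stage because $A$ is discrete and $(p,I)$-complete, so $\Prism_R\to A$ passes through the classical $(p,I)$-complete quotient $\Prism_{R,\mathrm{cl}}$. Applying Proposition~\ref{prop:F-nilp-nilpotent} to the first arrow together with Remark~\ref{rem:cotangent_complex_semiperfectoid} shows that the divided Frobenius on $\mathbb{L}_{\Prism_{R,\mathrm{cl}}}$ is already topologically locally nilpotent. Via the cotangent fiber sequence
\[
A\otimes_{\Prism_{R,\mathrm{cl}}}\mathbb{L}_{\Prism_{R,\mathrm{cl}}}\to \mathbb{L}_A\to \mathbb{L}_{A/\Prism_{R,\mathrm{cl}}},
\]
the problem then reduces to proving topological local nilpotence on the relative term $\mathbb{L}_{A/\Prism_{R,\mathrm{cl}}}$.

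Since $\Prism_{R,\mathrm{cl}}\to A$ is a surjection of discrete rings, $\mathbb{L}_{A/\Prism_{R,\mathrm{cl}}}$ is concentrated in degrees $\le -1$ with $H^{-1}=J/J^2$ for $J=\ker(\Prism_{R,\mathrm{cl}}\to A)$. The long exact sequence associated to the fiber sequence above, combined with the hypothesis on $H^{-1}(\mathbb{L}_A)$ and the nilpotence already available on $A\otimes\mathbb{L}_{\Prism_{R,\mathrm{cl}}}$, forces topological local nilpotence of the divided Frobenius on $J/J^2$. The main obstacle, and the technical heart of the argument, is to handle any nontrivial lower cohomology of $\mathbb{L}_{A/\Prism_{R,\mathrm{cl}}}$; here one applies the factorization observation of Remark~\ref{rem:factoring_varphi_1} to the map $\underline{\Prism}_{R,\mathrm{cl}}\to\underline{A}$, whereby $\dot{\varphi}_1\circ\dot{\varphi}_1$ factors through the $\gr^1$-piece identified via Remark~\ref{rem:gr1_cotangent_complex} with a shift of the relative cotangent complex, reducing the nilpotence question on lower degrees back to the $H^{-1}$-information already controlled. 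Once full nilpotence on $\mathbb{L}_A$ is established, Corollary~\ref{cor:semiperfectoid_annealed} combined with Proposition~\ref{prop:BK_gauge} immediately concludes.
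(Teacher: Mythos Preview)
Your factorization $\underline{\Prism}_R\to\underline{\Prism}_{R,\mathrm{cl}}\to\underline{A}$ and the use of Proposition~\ref{prop:F-nilp-nilpotent} for the first arrow are exactly what the paper does. However, your plan for the second arrow takes an unnecessary detour and has real gaps. You aim to establish topological local nilpotence of the divided Frobenius on \emph{all} of $\mathbb{L}_A$, so as to invoke Corollary~\ref{cor:semiperfectoid_annealed} for the composite map. This creates two problems. First, the claim that Proposition~\ref{prop:F-nilp-nilpotent} together with Remark~\ref{rem:cotangent_complex_semiperfectoid} gives nilpotence on $\mathbb{L}_{\Prism_{R,\mathrm{cl}}}$ does not follow: the proposition controls $\dot\varphi_1$ on the fiber $\fib(\Prism_R\to\Prism_{R,\mathrm{cl}})$, not the cotangent complex of $\Prism_{R,\mathrm{cl}}$, and Remark~\ref{rem:cotangent_complex_semiperfectoid} is stated for Breuil--Kisin frames, which $\underline{\Prism}_{R,\mathrm{cl}}$ need not be (it may have $p$-torsion). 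Second, your treatment of the lower cohomology of $\mathbb{L}_{A/\Prism_{R,\mathrm{cl}}}$ is vague; the invocation of Remarks~\ref{rem:factoring_varphi_1} and~\ref{rem:gr1_cotangent_complex} does not obviously reduce those degrees back to $H^{-1}$-information.

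The paper avoids all of this by applying Proposition~\ref{prop:deformation_theory} \emph{directly} to the second arrow $\underline{\Prism}_{R,\mathrm{cl}}\to\underline{A}$, rather than trying to feed everything back into Corollary~\ref{cor:semiperfectoid_annealed} for the composite. The key point you are missing is that the relevant fiber $K^A_{\mathrm{cl}}=\ker(\Prism_{R,\mathrm{cl}}\to A)$ is a \emph{discrete} module, being the kernel of a surjection of discrete rings. Hence the nilpotence hypothesis of Proposition~\ref{prop:deformation_theory} is a condition purely in degree~$0$. Since $K^A_{\mathrm{cl}}$ is the classical $(p,I_R)$-adic completion of $H^0(K^A)$, one is reduced to local nilpotence of $H^0(\dot\varphi^{\underline{A}}_1)$ mod $(p,I_R)$, and via the reasoning behind Remark~\ref{rem:cotangent_complex_semiperfectoid} (see also Remark~\ref{rem:cotangent_complex_semiperfectoid_deg_0}) this is exactly the hypothesis on $H^{-1}(\mathbb{L}_A)$. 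No control over lower cohomology of any cotangent complex is required.
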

\begin{proof}
   By Propositions~\ref{prop:F-nilp-nilpotent} and~\ref{prop:deformation_theory}, it is enough to know that the operator $\dot{\varphi}^{\underline{A}}_{1,\mathrm{cl}}$ on $K^A_{\mathrm{cl}}\{1\} = \ker(\Prism_{R,\mathrm{cl}}\to A)\{1\}$ is topologically locally nilpotent. Since $\Prism_{R,\mathrm{cl}}$ is the classical $(p,I_R)$-adic completion of $H^0(\Prism_R)$, $K^A_{\mathrm{cl}}$ is the classical $(p,I_R)$-adic completion of $H^0(K^A)$. Therefore, it is enough to know that $H^0(\dot{\varphi}^{\underline{A}}_{1})$ is locally nilpotent mod-$(p,I_R)$. We now conclude using Remark~\ref{rem:cotangent_complex_semiperfectoid}.
\end{proof}

\begin{remark}
Proposition~\ref{prop:F-nilp-nilpotent} actually yields a partial answer to a question raised in~\cite[Remark 8.7.6]{gmm}. It shows that, in the notation of \emph{loc. cit.}, for discrete inputs $R$ such that $R/pR$ is $F$-nilpotent and $F$-finite, the $\infty$-groupoid $\Gamma_{\mathrm{syn}}(\mathcal{X})(R)$ can be computed using only the classical truncation of $R^{\mathrm{syn}}$. In particular, it implies that the values on such inputs of the smooth formal Artin stacks $\BT{n}$ from Section 9 of \emph{loc. cit.} can also be computed via the classical syntomification. 
\end{remark}

\begin{remark}
   [Reduction to the case of semiperfect algebras]
\label{rem:reduction_to_Fp_algebras_F-nilp-nilpotent}
By the discussion in Remark~\ref{rem:groth-messing_nilpotent_PD}, if $R'\twoheadrightarrow R$ is a square-zero extension equipped with trivial divided powers, then $\zeta_{R'}$ satisfies these conditions whenever $\zeta_R$ does so. This allows us to reduce the proof of Proposition~\ref{prop:F-nilp-nilpotent} to the case where $R$ is itself a semiperfect $F$-nilpotent $\Field_p$-algebra.
\end{remark}

The key for the proof of Proposition~\ref{prop:F-nilp-nilpotent} is a construction due to Lau:
\begin{lemma}
   \label{lem:straight_weak_nilpotence}
Suppose that $R^\flat$ is a perfect $\Field_p$-algebra and $J\subset R^\flat$ an ideal. For each $m\ge 0$, let $K_m\subset W(R^\flat)$ be the subset consisting of elements of the form $(a_0,a_1,\ldots)$ in Witt coordinates with $a_i\in J^{p^{i+m}}$. Set $R_m = R^\flat/J_m$. Suppose that $R \defn R_0$ is $F$-nilpotent. Then, for each $m\ge 0$:
\begin{enumerate}
   \item $K_m$ is a $p$-adically closed $\delta$-ideal in $W(R^\flat)$;
   \item The $\delta$-ring $A_m = W(R^\flat)/K_m$ underlies a laminated $p$-adic prismatic frame $\underline{A}_m$ for $R_m$ with $\Fil^1A_m = pA_m$;
   \item $A_{m+1}$ underlies a laminated $p$-adic prismatic frame $\underline{A}'_{m+1}$ for $R_m$ with
   \[
      \Fil^1A'_{m+1} = (K_m+pW(R^\flat))/K_{m+1}\subset A_{m+1};
   \]
   \item The ideal $\Fil^1A'_{m+1}\subset A_{m+1}$ is equipped with divided powers compatible with the canonical divided power structure on $pA_{m+1}$, and restricting to the trivial divided power structure on the ideal $K_m/K_{m+1}$.
\end{enumerate}
\end{lemma}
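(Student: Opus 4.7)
My plan is to handle (1)--(3) by direct verification using the Witt coordinate description, and to concentrate the real work on (4). For (1), the classical fact that the Witt sum and product polynomials $S_n, P_n$ are isobaric of weight $p^n$ once $x_i, y_i$ are assigned weight $p^i$ implies that substituting entries in $J^{p^{i+m}}$ into any such polynomial lands in $J^{p^{n+m}}$; this shows $K_m$ is an additive subgroup closed under multiplication by arbitrary Witt vectors. The $p$-adic closedness is immediate from the coordinatewise description. For the $\delta$-ideal property, I would use the formula $\varphi(a_0, a_1, \ldots) = (a_0^p, a_1^p, \ldots)$, valid because $R^\flat$ is perfect, to conclude $\varphi(K_m) \subset K_{m+1} \subset K_m$; then $\delta(K_m) \subset K_m$ follows by first verifying $\delta([c]) = 0$ on the $p$-adically dense sub-ideal of $K_m$ generated by Teichm\"uller lifts of elements of $J^{p^m}$, propagating via the sum and product rules for $\delta$, and extending by continuity.

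For (2), set $A_m = W(R^\flat)/K_m$ with its induced $\delta$-structure. The element $p$ has $\delta(p) = 1 - p^{p-1} \in A_m^\times$, so $(A_m, pA_m)$ is a crystalline prism. The map $A_m \twoheadrightarrow R_m$ from the zeroth Witt coordinate is surjective with kernel $pA_m$, since $K_m + pW(R^\flat) = \{x : x_0 \in J^{p^m}\}$ (using $pW(R^\flat) = VW(R^\flat)$, the set of Witt vectors with vanishing zeroth coordinate). The lamination is supplied by the canonical $\delta$-ring map $A_m \to W(R_m)$ induced by $R^\flat \twoheadrightarrow R_m$, which sends $K_m$ into $W(J^{p^m})$, the kernel of $W(R^\flat) \to W(R_m)$. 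For (3), the prism $(A_{m+1}, pA_{m+1})$ carries the same structure, but the frame surjection is now $A_{m+1} \twoheadrightarrow A_m \twoheadrightarrow R_m$, with kernel $\Fil^1 A'_{m+1} = (K_m + pW(R^\flat))/K_{m+1}$. The only frame axiom requiring comment is the existence of the divided Frobenius $\varphi_1 : \Fil^1 A'_{m+1} \to pA_{m+1}$, which is well-defined precisely because $\varphi(K_m) \subset K_{m+1}$, so $\varphi$ applied to the $K_m$-part vanishes in $A_{m+1}$.

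The main obstacle is (4). My strategy starts from the identity $\varphi(k) = k^p + p\delta(k)$ combined with $\varphi(K_m) \subset K_{m+1}$, which gives the crucial congruence $k^p \equiv -p\,\delta(k) \pmod{K_{m+1}}$ for all $k \in K_m$. Since $\delta(k) \in K_m$ by (1), this shows the image of $K_m/K_{m+1}$ in $A_{m+1}$ is nilpotent modulo $pA_{m+1}$ in a sharp quantitative sense. I would then define divided powers on $\Fil^1 A'_{m+1}$ by the combinatorial formula
\[
\gamma_n(py + k) = \sum_{i + j = n} \gamma_i^{\mathrm{can}}(py) \cdot \varepsilon_j(k),
\]
where $\gamma_i^{\mathrm{can}}(py) = p^i y^i/i!$ is the standard PD on $pA_{m+1}$ (integral since $p^i/i! \in \Int_p$) and $\varepsilon_j$ takes the values $1$, $k$, $0$ for $j = 0$, $1$, and $j \geq 2$ respectively. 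The key congruence, iterated against itself, ensures that this formula is independent of the decomposition $py + k$ and that the PD axioms (the Leibniz and binomial identities for $\gamma_n$) reduce to identities holding modulo $K_{m+1}$; compatibility with the canonical PDs on $pA_{m+1}$ (case $k = 0$) and triviality on $K_m/K_{m+1}$ (case $y = 0$) are then built into the construction.
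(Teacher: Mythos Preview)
Your treatment of (4) has a genuine gap. Setting $\varepsilon_j(k)=0$ for $j\ge 2$ gives $\gamma_2(k)=0$ for $k\in K_m/K_{m+1}$; the PD identity $\gamma_1(k)^2=\binom{2}{1}\gamma_2(k)$ then forces $k^2=0$ in $A_{m+1}$, i.e.\ $(K_m)^2\subset K_{m+1}$. This is false for $p>2$: with $R^\flat=\Field_p[t^{1/p^\infty}]$ and $J=(t)$, the element $k=[t^{p^m}]\in K_m$ has $k^2=[t^{2p^m}]$, and $2p^m<p^{m+1}$ shows $t^{2p^m}\notin J^{p^{m+1}}$. The congruence $k^p\equiv -p\,\delta(k)\pmod{K_{m+1}}$ you isolate is correct and is indeed the key to the construction, but it controls $k^p$, not $k^2$. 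The divided powers actually built (this is what the paper defers to Lau) take $\gamma_n(\bar k)=\bar k^{\,n}/n!$ for $1\le n<p$ (well-defined since $n!$ is a unit) and $\gamma_p(\bar k)=-\overline{\delta(k)}/(p-1)!$, with higher $\gamma_n$ determined recursively from these; the phrase ``trivial divided power structure'' in the statement should not be read as ``square-zero''.

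There is also a gap in your argument for (1). You assert that the ideal generated by the Teichm\"uller lifts $[c]$ with $c\in J^{p^m}$ is $p$-adically dense in $K_m$, but this is not justified: a typical element $V^k[a_k]=p^k[\varphi^{-k}(a_k)]$ with $a_k\in J^{p^{k+m}}$ need not lie in that ideal, since $a_k^{1/p^k}$ need not belong to $J^{p^m}$. The paper proceeds differently: it computes $\delta(V^k[a_k])=(1-p^{k(p-1)})V^{k-1}[a_k]\in K_m$ directly from the identity $\delta(V^k(x))=V^{k-1}(x)-p^{k(p-1)-1}V^k(x^p)$, and then invokes the $F$-nilpotence hypothesis $J^{p^r}\subset\varphi(J)$ to write any tail $a\in K_m\cap p^rW(R^\flat)$ as $V^rF(b)$ with $b\in K_m$, so that a finite decomposition $x=[a_0]+V[a_1]+\cdots+V^{r-1}[a_{r-1}]+a$ suffices.
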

\begin{proof}
All of this follows from~\cite[\S 6]{lau2018divided}. We give some details here for the benefit of the reader. We will prove (1) for $K\defn K_0$; the proof for general $m$ is the same. First, note that $K\subset W(R^\flat)$ is indeed a $p$-adically closed ideal and that $F(K)\subset K$; see~\cite[Lemma 7.6]{MR3867290}. Also, if $x\in W(R^\flat)$ is such that $V(x)\in K$ then in fact $x\in K$. We will make repeated use of this. 

Let us now make note of the following identities in $W(R^\flat)$ for $x\in W(R^\flat)$ and $k\ge 1$:
\begin{itemize}
   \item $V^k(x)^p = V^k(p^{k(p-1)}x^p)$: This follows by repeatedly using the identity $V^k(x)y = V^k(xF^k(y))$.
   \item For any $x\in W(R^\flat)$ and any $k\ge 1$, we have
   \[
      \delta(V^k(x)) = V^{k-1}(x) - p^{k(p-1)-1}V^k(x^p).
   \]
   This follows from the previous identity and the equalities
  \[
   V^k(x)^p +p\delta(V^k(x)) = F(V^k(x)) = pV^{k-1}(x)
  \]
\end{itemize}

If $a_k\in J^{p^k}$ for $k\ge 1$, then $y = V^k([a_k])\in K$ and we have
\[
\delta(y) = V^{k-1}([a_k]) - p^{k(p-1)-1}V^k([a_k^p]) = (1-p^{k(p-1)}) V^{k-1}([a_k])
\]
Since $V^{k-1}([a_k])\in K$, we see that $\delta(y)$ belongs to $K$.

Now, by Remark~\ref{rem:fnilp_semiperf}, there exists $r\ge 1$ such that $J^{p^r}\subset \varphi(J)$. Suppose that $a\in K\cap p^rW(R^\flat)$; then the hypothesis $J^{p^r}\subset \varphi(J)$ implies that $J^{p^{r+s}}\subset \varphi^s(J)$ for all $s\ge 0$, and so we see that $a = V^rF(b)$ for some $b\in K$. In particular, we have
\[
   \delta(a) = V^{r-1}(F(b)) - p^{r(p-1)-1}V^r(F(b^p))\in K.
\]

Since every element of $K$ can be written in the form $[a_0]+V[a_1]+\cdots+V[a_{r-1}]+a$ for $a_k,a$ as above, assertion (1) now follows.

Let's move on to (2): Knowing that $A_m$ underlies a $p$-adic prismatic frame for $R_m$ with $\Fil^1A_m = pA_m$ amounts to seeing that $\varphi(A_m[p]) = 0$, where $A_m[p]\subset A_m$ is the $p$-torsion. This follows from~\cite[Lemma 7.6]{MR3867290}. That the frame is laminated amounts to the observation that the composition $W(R^\flat)\xrightarrow{F}W(R^\flat)\to \overline{W(R_m)}$ factors through $A_m\to R_m\to \overline{W(R_m)}$. 

Assertions (3) and (4) are shown during the proof of~\cite[Proposition 6.4]{lau2018divided}.
\end{proof}

\begin{remark}
   \label{rem:straight_weak_lifts}
 With the notation of Lemma~\ref{lem:straight_weak_nilpotence},  $A\defn W(R^\flat)/K_0$ underlies what Lau calls a \emph{straight weak lift} of $R$~\cite[Definition 7.3]{MR3867290}, meaning in our terminology that it underlies the $p$-adic prismatic frame $\underline{A}$ for $R$. Note that $\underline{A}$ need not be of Breuil-Kisin type, since it might have $p$-torsion. 
\end{remark}

\begin{proof}
[Proof of Proposition~\ref{prop:F-nilp-nilpotent}]
By Remark~\ref{rem:reduction_to_Fp_algebras_F-nilp-nilpotent}, we can assume that $R$ is semiperfect. Let $A = W(R^\flat)/K_0$ be the straight weak lift of $R$ from Remark~\ref{rem:straight_weak_lifts}. This underlies two laminated $p$-adic prismatic frames: We have the frame $\underline{A}$ for $R$ from the remark, but we also have the frame $\underline{\tilde{A}}$ for $\tilde{R} = A/{}^{\mathbb{L}}p$ with $\overline{A} = \tilde{R}$ and $\overline{\varphi}$ the Frobenius endomorphism of $\tilde{R}$. On the other hand, via the map $\Prism_R\to A\to \tilde{R}$, we can also equip $\Prism_R$ with the structure of a frame $\underline{\tilde{\Prism}}_R$ for $\tilde{R}$. In fact, this arises via a divided power structure on the map $\tilde{R}\to R$ (see Remark~\ref{rem:groth-messing_nilpotent_PD}), which we will explain below in more detail. 

All of this gives us the following diagram of frames:
\[
   \begin{diagram}
      \underline{\Prism}_{\tilde{R}}&\rTo&\underline{\tilde{\Prism}}_R&\rTo&\underline{\Prism}_R&\rTo&\underline{\Prism}_{R,\mathrm{cl}}\\
      \dTo&&\dTo&&\dTo&\ldTo\\
      \underline{\tilde{A}}&\rEquals&\underline{\tilde{A}}&\rTo&\underline{A}
   \end{diagram}
\]
If 
\[
K^A_{\mathrm{cl}} = \fib(\Prism_{R,\mathrm{cl}}\to A) = \ker(\Prism_{R,\mathrm{cl}}\to A), 
\]
then the associated semilinear operator on $K^A_{\mathrm{cl}}\{1\}$ is topologically nilpotent by~\cite[Proposition 6.3]{lau2018divided}.

To complete the proof now, it is sufficient to know the following things:
\begin{enumerate}
   \item The frame $\underline{\tilde{A}}$ satisfies the conditions of Corollary~\ref{cor:semiperfectoid_annealed}.
   \item The semilinear operator on $\fib(\Prism_{\tilde{R}}\to \Prism_R)\{1\}$ induced by the top left map of frames is topologically locally nilpotent.
\end{enumerate}
Let us consider (2) first. By Remark~\ref{rem:groth-messing_nilpotent_PD}, this would follow if we knew that it is associated with a nilpotent divided power structure on $\tilde{R}=A/{}^{\mathbb{L}}p\to A/pA=R$. This is a consequence of Lemma~\ref{lem:nilpotent_divided_powers_mod_p} below. When $p=2$, we also need the additional observation that $c^2 = \varphi(c) = 0$ for all $c\in A[2]$ (part of the condition of being a weak lift).

For (1), we will use the $\delta$-rings $A_m$ constructed in Lemma~\ref{lem:straight_weak_nilpotence}, and consider the associated laminated prismatic frames $\underline{\tilde{A}}_m$ for $\tilde{R}_m = A_m/{}^{\mathbb{L}}p$. 

By Remarks~\ref{rem:frames_for_non-discrete_R} and~\ref{rem:cotangent_complex_semiperfectoid},  $\dot{\varphi}^{\tilde{A}_m}_1$ is topologically locally nilpotent if and only if the divided Frobenius endomorphism of $\mathbb{L}_{A_m}$ is topologically locally nilpotent. 

By (3) and (4) of \emph{loc. cit.}, and by repeated application of Remark~\ref{rem:groth-messing_nilpotent_PD} along with Remark~\ref{rem:cotangent_complex_semiperfectoid} again, we find that the divided Frobenius endomorphism induces a nilpotent operator on $\fib(\mathbb{L}_{A_m}\to \mathbb{L}_A)/{}^{\mathbb{L}}p$. 

Since $J^{p^r}\subset \varphi(J)$, the map $A_r\to A$ factors through the Frobenius lift of $A$. In particular, $\mathbb{L}_{A_r}/{}^{\mathbb{L}}p\to \mathbb{L}_A/{}^{\mathbb{L}}p$ is nullhomotopic.

Combining the last three paragraphs now shows that $\dot{\varphi}^{\tilde{A}_m}_1$ is topologically nilpotent for all $m\ge 0$ and completes the proof of the proposition.
\end{proof}

\begin{lemma}
   \label{lem:nilpotent_divided_powers_mod_p}
Suppose that $A$ is a discrete ring, and if $p=2$ suppose also that there exists $m\ge 1$ such that, for all $c\in A[2]$, we have $c^{2^m} = 0$. Then the divided power structure on the square-zero extension $A/{}^{\mathbb{L}}p\to A/pA$---obtained from the canonical $p$-adic divided power structures on $A\to A/{}^{\mathbb{L}}p$ and $A\to A/pA$---is nilpotent.
\end{lemma}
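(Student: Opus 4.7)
The map $A/^{\mathbb{L}}p\to A/pA$ is a derived square-zero extension whose kernel (fiber) is the $1$-shifted $p$-torsion $I\simeq A[p][1]$, placed in animated degree $1$. Since $\mathbb{Z}/p$ has the length-one resolution $\mathbb{Z}\xrightarrow{p}\mathbb{Z}$ over $\mathbb{Z}$, we have $\pi_i(A/^{\mathbb{L}}p)=0$ for $i\ge 2$, and hence any product of two elements of $I$ vanishes. Thus $I$ is genuinely square-zero, and nilpotence of the induced PD structure reduces to producing $N\ge 1$ with $\gamma_N(x)=0$ for every $x\in I$—equivalently, showing that the induced $\mathbb{F}_p$-semilinear operator on $A[p]$ associated with $\gamma_N$ is the zero map.

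My plan is to identify the operators $\gamma_n$ on $I$ explicitly, by pulling back the classical PD formula $\gamma_n(p\cdot a)=(pa)^n/n! = a^n\cdot p^n/n!$ that simultaneously pins down the canonical $p$-adic PD structures on $A\to A/^{\mathbb{L}}p$ and $A\to A/pA$. Under the identification $\pi_1(A/^{\mathbb{L}}p)\cong A[p]$, the induced PD operator on the fiber is extracted from this formula by dividing out the canonical factor of $p$ that is present because $v_p(p^n/n!)\ge 1$ for $n\ge 1$. A direct $p$-adic valuation count then shows that, up to a $p$-adic unit, $\gamma_{p^k}$ corresponds to the iterated $p$-th power operation $c\mapsto c^{p^k}$ on $A[p]\subseteq A$. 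The cleanest route to making this identification rigorous is either via the Koszul simplicial model of $A/^{\mathbb{L}}p$ as the mapping cone of $A\xrightarrow{p}A$, or—more intrinsically—by exploiting compatibility of the canonical PD structure with the Frobenius lift on $A/^{\mathbb{L}}p$, which forces $\gamma_p$ to agree with (a normalization of) Frobenius on the ideal.

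Once this identification is in hand, both cases of the lemma follow at once. For $p$ odd, $v_p(p^p/p!)=p-1\ge 2$, so the formula for $\gamma_p(pa)$ already contains an \emph{extra} factor of $p$ beyond the one being normalized out; on the fiber $A[p]$, this extra factor acts by zero (since $p\cdot A[p]=0$), so $\gamma_p=0$ on $I$ and we may take $N=p$. For $p=2$, by contrast, $v_2(2^n/n!)=s_2(n)$ equals $1$ precisely when $n=2^k$, leaving no extra factor of $2$ to kill things; the normalized operator $\gamma_{2^k}$ is genuinely $c\mapsto c^{2^k}$, and the extra hypothesis $c^{2^m}=0$ for all $c\in A[2]$ gives $\gamma_{2^m}=0$ on $I$, so we may take $N=2^m$. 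The main obstacle is step two—rigorously matching the derived PD operators on the $1$-shifted fiber with the classical iterated power maps on $A[p]$—since the naive graded-commutativity argument, which forces $x^2=0$ for degree-$1$ elements, only sees the vanishing of $\pi_{\ge 2}$ and does not by itself distinguish the odd from the even case; the $p=2$ subtlety has to be extracted from the explicit divided-power coefficients rather than from general homotopical considerations.
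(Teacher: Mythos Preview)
Your strategy coincides with the paper's: reduce to showing that $\gamma_N$ is null on $I\simeq A[p][1]$ for a suitable $N$ (namely $N=p$ for $p$ odd and $N=2^m$ for $p=2$), and extract this from the formula $\gamma_n(pa)=(pa)^n/n!$ via the $p$-adic valuation count you give. The step you flag as the ``main obstacle'' is exactly what the paper makes explicit, and your ``Koszul simplicial model'' suggestion is on the money. The paper realizes $A[p][1]$ as the nerve of the action groupoid with objects $pA$ and arrows $pt\xrightarrow{a}p(t+a)$; then $\gamma_m$ is the functor $pt\mapsto\tfrac{p^{m-1}}{m!}\,pt^m$, and a nullhomotopy $\gamma_m\Rightarrow 0$ is given by $t'=pt\mapsto -\tfrac{p^{m-1}}{m!}t^m$. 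The only check is that this is independent of the choice of $t$ with $pt=t'$; after a binomial expansion this becomes exactly the condition $\tfrac{p^{m-1}}{m!}c^m=0$ for all $c\in A[p]$, which is your valuation computation. In particular, your heuristic of ``dividing out the canonical factor of $p$'' is made precise by this groupoid description: no actual division in $A$ is performed, one simply reads off the action of $\gamma_m$ on arrows, so that the induced map on $\pi_1$ at the basepoint $0$ sends the loop $a\in A[p]$ to $\tfrac{p^{m-1}}{m!}a^m$.

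One small inaccuracy: the phrasing ``up to a $p$-adic unit, $\gamma_{p^k}$ corresponds to $c\mapsto c^{p^k}$'' is correct only for $p=2$; for odd $p$ the coefficient $\tfrac{p^{p-1}}{p!}$ is already divisible by $p$, so the induced map is zero rather than a unit multiple of Frobenius---but you recover this correctly in the next sentence.
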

\begin{proof}
   When $p>2$ this is of course immediate since the canonical divided power structure on $\Int_p\to \Field_p$ is pro-nilpotent. In general, we can view $A[p][1]$ as the space associated with the groupoid $pA/A$ via the nerve construction. Here, $A$ acts on $pA$ via $pt\cdot a = p(t+a)$. From this optic, the divided power operator $\gamma_m$ corresponds to the functor carrying $pt$ to $\gamma_m(pt)=\frac{p^{m-1}}{m!}pt^m$ and an arrow $pt \xrightarrow{a}p(t+a)$ to $\gamma_m(pt)\xrightarrow{p^{m-1}\sum_{i=1}^m\frac{t^i}{i!}\frac{a^{m-i}}{(m-i)!}}\gamma_m(p(t+a))$. When $p>2$, the operator $\gamma_p$ is already nullhomotopic with the nullhomotopy given by
   \begin{align*}
    pA &\to pA \times A\\
    t' = pt &\mapsto (\gamma_p(pt), -\frac{p^{p-1}}{p!}t^p)
   \end{align*}
   via the observation that the second entry is independent of the choice of $t\in A$ such that $t' = pt$. When $p=2$, then our additional hypothesis tells us that we can provide a nullhomotopy for $\gamma_{2^m}$ via
    \begin{align*}
    2A &\to 2A \times A\\
    t' = 2t &\mapsto (\gamma_{2^m}(2t), -\frac{2^{2^m-1}}{(2^m)!}t^{2^m}).
   \end{align*}
\end{proof}

\begin{remark}
[Classification in terms of weak lifts]
\label{rem:weak_bk_semiperfectoid}
The argument above actually shows the following: Let $R$ be a semiperfect $\Field_p$-algebra. Suppose that $A$ is a $\delta$-ring such that $A/pA = R$ and such that the Frobenius lift $\varphi:A\to A$ kills $A[p]$ (in other words, $A$ is a \emph{weak} lift of $R$). If $\underline{A}$ is the corresponding frame for $R$ with $\Fil^1A = pA$, and if the divided Frobenius on $\mathbb{L}_A$ is topologically locally nilpotent, then we have an exact equivalence of categories
\[
   \mathrm{FFG}_n(R)\xrightarrow{\simeq}\Mod{\Int/p^n\Int}(\mathrm{DDC}_{\underline{A}}^{[-1,0]}(R)).
\]
\end{remark}

\subsection{Group schemes with constant \'etale rank}
\label{subsec:constant_rank}

Let us fix a Breuil-Kisin frame $\underline{A}$ for $R$. 

\begin{construction}
 \label{const:CA_construction}
As in Construction~\ref{const:map_of_frames_prismatic_general}, the assignment $S\mapsto K^A_{S,\mathrm{cl}}$ on $R_{\mathrm{qsyn}}$ yields a quasisyntomic sheaf $K^A_{-,\mathrm{cl}}$ of modules over $\Prism_{-,\mathrm{cl}}$, the sheafification of $S\mapsto \Prism_{S,\mathrm{cl}}$, and a semilinear endomorphism 
\[
\dot{\varphi}^{\underline{A}}_{\mathrm{cl}}:K^A_{-,\mathrm{cl}}\{1\}\to K^A_{-,\mathrm{cl}}\{1\}.
\]

Set
\begin{align*}
C^A &= \fib( R\Gamma_{\mathrm{qsyn}}(\Spf R,K^A_{-}) \xrightarrow{\mathrm{id} - \dot{\varphi}^{\underline{A}}_1} R\Gamma_{\mathrm{qsyn}}(\Spf R,K^A_{-}));\\
C^A_{\mathrm{cl}} &= \fib( R\Gamma_{\mathrm{qsyn}}(\Spf R,K^A_{-,\mathrm{cl}}) \xrightarrow{\mathrm{id} - \dot{\varphi}^{\underline{A}}_{1,\mathrm{cl}}} R\Gamma_{\mathrm{qsyn}}(\Spf R,K^A_{-,\mathrm{cl}})).
\end{align*}  
\end{construction}

\begin{remark}
\label{rem:CA_isomorphism}
Note that the map
\[
\fib(\Prism_{-}\to \Prism_{-,\mathrm{cl}})\to \fib(K^A_{-}\to K^A_{-,\mathrm{cl}})
\]
is an isomorphism. Write $N_{-}$ for the target. If $R/pR$ is $F$-finite and $F$-nilpotent, then the quasisyntomic site over $R$ admits a basis consisting of semiperfectoids $S$ with $S/pS$ $F$-nilpotent~\cite[Lemma 2.6]{lau2018divided}, and Proposition~\ref{prop:F-nilp-nilpotent} tells us that the endomorphism $\dot{\varphi}^{\underline{A}}_{1,N}:N_{-}\{1\}\to N_{-}\{1\}$ induced from $\dot{\varphi}^{\underline{A}}_1$ and $\dot{\varphi}^{\underline{A}}_{1,\mathrm{cl}}$ is topologically locally nilpotent. This shows that the natural map $C^A\to C^A_{\mathrm{cl}}$ is an isomorphism. 
\end{remark}

\begin{remark}
[Homomorphisms and extensions between $\Int/p\Int$ and $\mup$]
\label{rem:ordinary_nilpotence_condition}
By Remark~\ref{rem:sections_bk_bk-frame}, we have
\[
\mathrm{RHom}_{\underline{A}}(\mathbf{1},\mathbf{1}^*/{}^{\mathbb{L}}p)\simeq \fib(I'/pI'\{1\} \xrightarrow{\mathrm{can}-u}A/pA\{1\})
\]
In particular, we have $\Ext^i_{\underline{A}}(\mathbf{1},\mathbf{1}^*/{}^{\mathbb{L}}p)=0$ for $i\neq 0,1$. Combining Proposition~\ref{prop:syntomic_and_fppf}, Proposition~\ref{prop:ffg_classification_semiperf} and Remark~\ref{rem:bk_twist_sections_deform} with quasisyntomic descent, we now obtain a long exact sequence
\[
\begin{tikzcd}[column sep=2.8em, row sep=3.5em] 
0 \arrow[r] & H^0(C^A/{}^{\mathbb{L}}p) \arrow[r] & \mup(R) \arrow[r] & \Hom_{\underline{A}}(\underline{\mathbf{1}},\underline{\mathbf{1}}^*/{}^{\mathbb{L}}p) \arrow[dll, sloped, pos=0.5,
                                                                        out=-45, 
                                                                        in=135,  
                                                                        looseness=1.3 
                                                                        ] \\
& H^1(C^A/{}^{\mathbb{L}}p) \arrow[r] & H^1_{\mathrm{fppf}}(\Spec R,\mup) \arrow[r] & \Ext^1_{\underline{A}}(\underline{\mathbf{1}},\underline{\mathbf{1}}^*/{}^{\mathbb{L}}p) \arrow[dll, sloped, pos=0.5,
                                                                        out=-45, 
                                                                        in=135,  
                                                                        looseness=1.3 
                                                                        ] \\
& H^2(C^A/{}^{\mathbb{L}}p)\arrow[r] & H^2_{\mathrm{fppf}}(\Spec R,\mup) \arrow[r]  &0
\end{tikzcd}
\]
By Remark~\ref{rem:CA_isomorphism}, if $R/pR$ is $F$-finite and $F$-nilpotent, we can replace $C^A$ with $C^A_{\mathrm{cl}}$ here.
\end{remark}

\begin{definition}
   \label{defn:stable_rank}
Suppose that $\kappa$ is an algebraically closed field in characteristic $p$ and that $F:M\to M$ is a $\varphi$-semilinear operator on a finite dimensional $\kappa$-vector space $M$. The \defnword{stable rank} of $F$ is the common rank of $F^m$ for all $m$ sufficiently large.
\end{definition}

\begin{definition}
   \label{defn:etale_rank}
An object $\underline{\mathsf{N}}$ in $\mathrm{BK}_{\underline{A},n}(R)$ has \defnword{constant \'etale rank} along a constructible closed subset $Z\subset \Spec R/pR$ if the operator $F_{\mathsf{N}^*}:\varphi^*\mathsf{N}^*\to \mathsf{N}^*$ has locally constant stable rank along $Z$ in the following sense: For every connected component $Z^\circ\subset Z$, there exists $r\ge 0$ such that for any algebraically closed point $x\in Z^\circ(\kappa)$, the base-change of $F_{\mathsf{N}^*}$ over $\kappa$ along $x$ has stable rank $r$. If $Z = V(\mathfrak{c})$ for a finitely generated ideal $\mathfrak{c}\subset (R/pR)_{\mathrm{red}}$, write 
\[
   \mathrm{BK}^{\mathfrak{c}\mathhyph\mathrm{const}}_{\underline{A},n}(R)\subset \mathrm{BK}_{\underline{A},n}(R)
\]
for the subcategory spanned by such objects.
\end{definition}

\begin{lemma}
   \label{lem:connected_etale_sequence}
Suppose that $\mathfrak{c}\subset B = (R/pR)_{\mathrm{red}}$ is a finitely generated ideal such that $B$ is $\mathfrak{c}$-adically derived complete. Then every object $\underline{\mathsf{N}}$ in $\mathrm{BK}^{\mathfrak{c}\mathhyph\mathrm{const}}_{\underline{A},n}(R)$ sits in a canonical short exact sequence
\[
  0\to \underline{\mathsf{N}}^{\mathrm{nilp}}\to \underline{\mathsf{N}}\to \underline{\mathsf{N}}^{\et}\to 0
\]
where $\underline{\mathsf{N}}^{\mathrm{nilp}}$ is $\mathfrak{c}$-nilpotent, and where $\underline{\mathsf{N}}^{\et}$ is \emph{\'etale} in the sense that $F_{\mathsf{N}^{\et,*}}$ is an isomorphism.
\end{lemma}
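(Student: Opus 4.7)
The strategy is to perform a Fitting-type decomposition of $F_{\mathsf{N}^*}$ along $V(\mathfrak{c})$, extend the resulting subobject to $R$ using the Henselian property of $B$, and Cartier-dualize.

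First, I would work over $B/\mathfrak{c}$ and analyze the $\varphi$-semilinear operator $F_{\mathsf{N}^*}$. The constant stable rank hypothesis implies that the descending chain of images of iterated Frobenius,
\[
 \cdots \subset F_{\mathsf{N}^*}^{[m+1]}\bigl(\varphi^{(m+1)*}\mathsf{N}^*\bigr) \subset F_{\mathsf{N}^*}^{[m]}\bigl(\varphi^{m*}\mathsf{N}^*\bigr) \subset \cdots \subset \mathsf{N}^*\big|_{B/\mathfrak{c}},
\]
stabilizes at some finite stage $m_0$ to a submodule $\mathsf{N}^{\et,*}_{B/\mathfrak{c}}$ that is locally a direct summand on which $F_{\mathsf{N}^*}$ is bijective; its complement is an $F$-nilpotent piece $\mathsf{N}^{\mathrm{nilp},*}_{B/\mathfrak{c}}$. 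The compatibility of this decomposition with the Verschiebung-like map $V_{\mathsf{N}^*}$ is forced by the Breuil--Kisin relations $F_{\mathsf{N}^*}\circ V_{\mathsf{N}^*} = V_{\mathsf{N}^*}\circ F_{\mathsf{N}^*} = (\text{canonical map})$, so this is a decomposition of BK modules over $B/\mathfrak{c}$.

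Next, I would lift this decomposition to $R$. Since $\mathfrak{c}$ is finitely generated and $B$ is derived $\mathfrak{c}$-complete, $B$ is classically $\mathfrak{c}$-adically complete, and the pair $(B,\mathfrak{c})$ is Henselian. Combined with $p$-completeness of $R$ and the fact that $R/pR \twoheadrightarrow B$ is a nilpotent thickening, this implies that the pair $(R,\mathfrak{c} R + pR)$ is Henselian as well. A BK module with $F_{\mathsf{N}^*}$ invertible corresponds under the equivalence of Theorem~\ref{thm:main} to a finite \'etale $p^n$-torsion group scheme over $R$: such invertibility forces the Hodge--Tate graded piece $\gr^{-1}_{\mathrm{Hdg}}M$ to vanish, hence $\Lie G = 0$ via Proposition~\ref{prop:crystalline_comp_ffg}, whence $G$ is \'etale. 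By the Henselian property, finite \'etale $p^n$-torsion group schemes over $R$ are equivalent to those over $B/\mathfrak{c}$. Thus the \'etale subobject $\underline{\mathsf{N}}^{\et,*}_{B/\mathfrak{c}} \hookrightarrow \underline{\mathsf{N}}^*\big|_{B/\mathfrak{c}}$ lifts uniquely to a sub-BK module $\underline{\mathsf{N}}^{\et,*} \hookrightarrow \underline{\mathsf{N}}^*$ over $R$, with quotient $\underline{\mathsf{N}}^{\mathrm{nilp},*}$. Cartier-dualizing via Remark~\ref{rem:bk_cartier_duality} yields the asserted exact sequence $0 \to \underline{\mathsf{N}}^{\mathrm{nilp}} \to \underline{\mathsf{N}} \to \underline{\mathsf{N}}^{\et} \to 0$. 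The \'etale condition on $\underline{\mathsf{N}}^{\et}$ and the $\mathfrak{c}$-nilpotence of $\underline{\mathsf{N}}^{\mathrm{nilp}}$ hold by construction, and canonicity follows from the uniqueness of both the stabilized Fitting decomposition and the Henselian lift.

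The main obstacle is establishing that the category of \emph{\'etale BK modules} (those with $F_{\mathsf{N}^*}$ invertible) is equivalent to the category of finite \'etale $p^n$-torsion group schemes over $R$, and that this equivalence is Henselian-local along $\mathfrak{c} R + pR$. A secondary subtlety is checking that the stabilized image in the Fitting decomposition over $B/\mathfrak{c}$ inherits the correct structure (projective dimension at most $1$ as an $A/(p^n,\mathfrak{c})$-module, and compatibility with $V_{\mathsf{N}^*}$), which ultimately reduces to local-constancy of the stable rank together with standard flatness criteria.
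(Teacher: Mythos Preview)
The paper's own proof is a one-line citation to the standard argument in de~Jong~\cite[(4.2.3)]{dejong:formal_rigid}, so there is little to compare in detail. Your overall architecture---Fitting decomposition over $B/\mathfrak{c}$ followed by a Henselian lift---is indeed the shape of that standard argument.

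However, your execution of the lifting step has a real gap. You invoke Theorem~\ref{thm:main} to pass from \'etale BK modules to finite \'etale group schemes, but Theorem~\ref{thm:main} is an equivalence between $\mathrm{FFG}(R)$ and $\mathsf{P}^{\mathrm{syn}}_{\{0,1\}}(R)$, not $\mathrm{BK}_{\underline{A},n}(R)$. The functor $\mathsf{P}^{\mathrm{syn}}_{n,\{0,1\}}(R)\to \mathrm{BK}_{\underline{A},n}(R)$ from Remark~\ref{rem:bk_syntomic} is precisely what Proposition~\ref{prop:constant_rank} (which consumes this lemma) is trying to show is an equivalence; invoking it here, even on \'etale objects, either introduces extra hypotheses (the $C^A\simeq 0$ condition from Remark~\ref{rem:ordinary_nilpotence_condition}, which the lemma does not assume) or is circular. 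Moreover, even granting that the \'etale \emph{object} $\underline{\mathsf{N}}^{\et,*}$ lifts by Henselian invariance, you have not explained why the \emph{inclusion} $\underline{\mathsf{N}}^{\et,*}\hookrightarrow \underline{\mathsf{N}}^*$ lifts: the target is not \'etale, so Henselian rigidity of \'etale objects alone does not suffice.

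The standard argument stays entirely in the world of $\varphi$-modules and avoids group schemes: one works with the iterated images $\mathrm{im}(F_{\mathsf{N}^*}^{[m]})\subset \mathsf{N}^*$ directly over the frame, and uses constancy of the stable rank together with $\mathfrak{c}$-adic completeness to see that the descending chain stabilizes to a direct summand on which $F$ is bijective (equivalently, one observes that \'etale $\varphi$-modules---those with $F$ bijective---are classified by finite \'etale $\Int/p^n\Int$-sheaves via $\mathsf{N}\mapsto \mathsf{N}^{F=1}$, without any appeal to the syntomic Dieudonn\'e theory). This simultaneously produces the subobject and the inclusion, and the compatibility with $V$ falls out of the Breuil--Kisin relations as you note.
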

\begin{proof}
This follows from standard arguments; see for instance~\cite[(4.2.3)]{dejong:formal_rigid}.  
\end{proof}

\begin{proposition}
   \label{prop:constant_rank}
Suppose that $\mathfrak{c}\subset B = (R/pR)_{\mathrm{red}}$ is a finitely generated ideal such that $B$ is $\mathfrak{c}$-adically derived complete. Let 
\[
   \mathrm{FFG}^{\mathfrak{c}\mathhyph\mathrm{const}}_n(R)\subset \mathrm{FFG}_n(R)
\]
be the full subcategory spanned by the objects with locally constant \'etale rank along $V(\mathfrak{c})\subset \Spec R/pR$. Suppose that $C^A$ is nullhomotopic or that $R/pR$ is $F$-nilpotent and $F$-finite with $C^A_{\mathrm{cl}}$ nullhomotopic. Then the functor from Remark~\ref{rem:bk_syntomic} induces an equivalence of categories
\[
   \mathrm{FFG}^{\mathfrak{c}\mathhyph\mathrm{const}}_n(R)\xrightarrow{\simeq}\mathrm{BK}^{\mathfrak{c}\mathhyph\mathrm{const}}_{\underline{A},n}(R)
\]
\end{proposition}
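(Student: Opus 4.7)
The strategy I would follow is to parallel the argument of Corollary~\ref{cor:nilpotent_connected}, but now keeping track of a non-trivial \'etale piece that is controlled by the nullhomotopy hypothesis on $C^A$. First, Lemma~\ref{lem:connected_etale_sequence} provides, on the BK side, a canonical short exact sequence
\[
0\to \underline{\mathsf{N}}^{\mathrm{nilp}}\to \underline{\mathsf{N}}\to \underline{\mathsf{N}}^{\et}\to 0
\]
for every object of $\mathrm{BK}^{\mathfrak{c}\mathhyph\mathrm{const}}_{\underline{A},n}(R)$. On the group-scheme side, the hypothesis that $B$ is $\mathfrak{c}$-adically derived complete together with Hensel's lemma lets one lift the classical connected-\'etale sequence over $\Spec B/\mathfrak{c}$ uniquely to a canonical sequence
\[
0\to G^{\circ}\to G\to G^{\et}\to 0
\]
over $\Spec R$ for every $G\in \mathrm{FFG}^{\mathfrak{c}\mathhyph\mathrm{const}}_n(R)$. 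The plan is to match the two decompositions piece by piece and then match extensions.

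On the connected/nilpotent piece, Corollary~\ref{cor:nilpotent_connected} gives exactly the required equivalence between $\mathrm{FFG}^{\mathfrak{c}\mathhyph\mathrm{conn}}_n(R)$ and $\mathrm{BK}^{\mathfrak{c}\mathhyph\mathrm{nilp}}_{\underline{A},n}(R)$; so the first step is to show that our functor carries $G^{\circ}$ to $\underline{\mathsf{N}}^{\mathrm{nilp}}$. On the \'etale piece, BK-windows with $F_{\mathsf{N}^{\et,*}}$ an isomorphism are, fpqc locally on $\Spec R$, equivalent to pairs $(M,F)$ consisting of a finite $\Int/p^n\Int$-module with a $\varphi$-linear automorphism, which by the \'etale realisation of Remark~\ref{rem:etale_realization_bt} and Proposition~\ref{prop:etale_comparison} matches finite \'etale $p^n$-torsion group schemes (up to Cartier-dual Kummer twists). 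The descent of this local equivalence from an fpqc cover to $R$ requires controlling $H^1_{\mathrm{fppf}}(R,\mup[p^n])$, which is precisely where the nullhomotopy of $C^A$ (or, under the second hypothesis, of $C^A_{\mathrm{cl}}$, using the isomorphism $C^A\simeq C^A_{\mathrm{cl}}$ from Remark~\ref{rem:CA_isomorphism}) enters via Remark~\ref{rem:ordinary_nilpotence_condition}.

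The main obstacle, as expected, is the gluing step: given matching nilpotent and \'etale pieces, one has to show that extensions of $\underline{\mathsf{N}}^{\et}$ by $\underline{\mathsf{N}}^{\mathrm{nilp}}$ in $\mathrm{BK}^{\mathfrak{c}\mathhyph\mathrm{const}}_{\underline{A},n}(R)$ correspond bijectively to extensions of $G^{\et}$ by $G^{\circ}$ in $\mathrm{FFG}_n(R)$. By twisting one can reduce to computing $\Ext^1$ against copies of $\underline{\mathbf{1}}^{\oplus r}/{}^{\mathbb{L}}p^n$ and $\underline{\mathbf{1}}^{*,\oplus s}/{}^{\mathbb{L}}p^n$, which on the group-scheme side by Proposition~\ref{prop:flat_via_syntomic_ffg} and Lemma~\ref{lem:ffg_ext_bijection} translate into fppf $\Ext^1$ against constant sheaves and against $\mup[p^n]^{\oplus s}$. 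The comparison with the BK-side $\Ext^1$ is governed by a multi-rank generalisation of the long exact sequence in Remark~\ref{rem:ordinary_nilpotence_condition}, and the vanishing of $C^A$ (respectively $C^A_{\mathrm{cl}}$) modulo $p^n$ guarantees that the connecting maps vanish, yielding the desired isomorphism on $\Ext^0$ and $\Ext^1$. Assembling these three steps and invoking the exactness from Lemma~\ref{lem:ffg_ext_bijection} then produces the claimed equivalence $\mathrm{FFG}^{\mathfrak{c}\mathhyph\mathrm{const}}_n(R)\xrightarrow{\simeq}\mathrm{BK}^{\mathfrak{c}\mathhyph\mathrm{const}}_{\underline{A},n}(R)$.
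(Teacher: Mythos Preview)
Your overall plan---connected-\'etale decomposition on both sides, match the pieces via Corollary~\ref{cor:nilpotent_connected} and an \'etale comparison, then match extensions using the $C^A$ hypothesis---is the same architecture the paper uses, and indeed the paper simply points to de~Jong's argument in~\cite[Theorem~10.2]{MR1235021} and~\cite[\S 4.2]{dejong:formal_rigid} for the assembly. Two points where your write-up diverges from what is actually needed deserve comment.

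First, the connected-\'etale sequence on the $\mathrm{FFG}$ side is not obtained by ``lifting from $\Spec B/\mathfrak{c}$ via Hensel's lemma''; rather it is Messing's result~\cite[Lemma~II.4.8]{Messing1972-qq} that a finite flat group scheme with locally constant \'etale rank is an extension of an \'etale group scheme by one with connected fibers. The paper invokes this explicitly.

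Second, your extension step is too optimistic as written. You propose to reduce $\Ext^1(G^{\et},G^\circ)$ to the case $G^{\et}=(\Int/p^n\Int)^r$ and then invoke a ``multi-rank generalisation'' of Remark~\ref{rem:ordinary_nilpotence_condition}. But that remark controls only $\mathrm{RHom}_{\underline{A}}(\underline{\mathbf{1}},\underline{\mathbf{1}}^*/{}^{\mathbb{L}}p)$ versus $R\Gamma_{\mathrm{fppf}}(\mup)$, i.e.\ the \emph{ordinary} case where $G^\circ$ is multiplicative; it says nothing directly about $\Ext^1(\underline{\mathbf{1}},\underline{\mathsf{N}}^{\mathrm{nilp}})$ for a general nilpotent $\underline{\mathsf{N}}^{\mathrm{nilp}}$. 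The paper's organisation is accordingly different: it first establishes the equivalence on \emph{ordinary} objects (extensions of \'etale by multiplicative) using Remark~\ref{rem:ordinary_nilpotence_condition}, finite \'etale descent, and d\'evissage, and only then runs de~Jong's argument, which uses the ordinary and connected cases together with the vanishing of the $p$-torsion Brauer group (supplied here by $H^2(C^A/{}^{\mathbb{L}}p)=0$ via the tail of that same long exact sequence). Your sketch would be completed by inserting this ordinary-objects step and following de~Jong for the gluing rather than attempting a direct $\Ext$ comparison against a general connected piece.
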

\begin{proof}
 By reducing to the case where $R = \kappa$ is an algebraically closed field, one sees that the functor maps $\mathrm{FFG}^{\mathfrak{c}\mathhyph\mathrm{const}}_n(R)$ to $\mathrm{BK}^{\mathfrak{c}\mathhyph\mathrm{const}}_{\underline{A},n}(R)$. It remains to see that this is an equivalence.

  Using Remark~\ref{rem:ordinary_nilpotence_condition}, finite \'etale descent and d\'evissage, one finds that the functor is an equivalence on `ordinary' objects: On the right hand side, these are objects that are finite \'etale locally isomorphic to extensions of $\mathbf{1}/{}^{\mathbb{L}}p^m$ by $\mathbf{1}^*/{}^{\mathbb{L}}p^n$ for $m,n\ge 0$, and on the left they are extensions of \'etale group schemes by multiplicative ones. Corollary~\ref{cor:nilpotent_connected} also tells us that, for all $n\ge 1$, it restricts to an equivalence 
\[
  \mathrm{FFG}^{\mathrm{c}\mathhyph\mathrm{conn}}_n(R)\to \mathrm{BK}^{\mathfrak{c}\mathhyph\mathrm{nilp}}_{\underline{A},n}(R).
\]
To finish the proof, one can directly follow the argument in the proof of~\cite[Theorem 10.2]{MR1235021}; see also the argument in~\cite[\S 4.2]{dejong:formal_rigid} in the case of a field with finite $p$-basis. This uses Lemma~\ref{lem:connected_etale_sequence} as input. It also needs the observation that a finite flat group scheme with locally constant \'etale rank is an extension of an \'etale group scheme by one with connected fibers, which is a result of Messing~\cite[Lemma II.4.8]{Messing1972-qq}. Finally, the vanishing of the $p$-power torsion in the Brauer group used in de Jong's proof is obtained here via the use of the tail of the long exact sequence from Remark~\ref{rem:ordinary_nilpotence_condition} and the vanishing of $H^2(C^A/{}^{\mathbb{L}}p)$.
\end{proof}

\begin{remark}
   \label{rem:cotangent_complex_criterion_CA_vanishing}
Arguing as in Remark~\ref{rem:cotangent_complex_semiperfectoid}, we find that $C^A$ is nullhomotopic whenever the divided Frobenius operator on $\mathbb{L}_A$ is topologically locally nilpotent. If $R/pR$ is $F$-nilpotent and $F$-finite, then one can argue as in Corollary~\ref{cor:semiperf_bk_case} to see that $C^A_{\mathrm{cl}}$ (and hence $C^A$) is nullhomotopic whenever the induced operator on $\tau^{\ge 0}\mathbb{L}_A$ is topologically locally nilpotent.
\end{remark}

\begin{corollary}
\label{cor:complete_local_CA_vanishing}
Suppose that $R$ is a complete local Noetherian ring (resp. whose residue field has finite $p$-basis) and that $C^A$ (resp. $C^A_{\mathrm{cl}}$) is nullhomotopic. Then there is an exact equivalence
\[
   \mathrm{FFG}_n(R)\xrightarrow{\simeq}\mathrm{BK}_{\underline{A},n}(R).
\]
\end{corollary}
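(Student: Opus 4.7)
The plan is to apply Proposition~\ref{prop:constant_rank} with a carefully chosen ideal $\mathfrak{c}$; the key point is that over a local base the constant \'etale rank hypothesis is automatic, so the full categories $\mathrm{FFG}_n(R)$ and $\mathrm{BK}_{\underline{A},n}(R)$ coincide with their constant-rank subcategories.

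First I would set $B = (R/pR)_{\mathrm{red}}$ and let $\mathfrak{c}\subset B$ be the image of the maximal ideal $\mathfrak{m}\subset R$; this is the maximal ideal of the local ring $B$. Since $R$ is $\mathfrak{m}$-adically complete and Noetherian, so is $B$ with respect to $\mathfrak{c}$, hence $B$ is $\mathfrak{c}$-adically derived complete, as required by Proposition~\ref{prop:constant_rank}. The closed subset $V(\mathfrak{c})\subset \Spec B$ reduces to a single point, so it has one connected component and only one algebraically closed point to test against, and consequently every object of $\mathrm{BK}_{\underline{A},n}(R)$ has locally constant \'etale rank along $V(\mathfrak{c})$ in the sense of Definition~\ref{defn:etale_rank}, as does every object of $\mathrm{FFG}_n(R)$. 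In other words,
\[
   \mathrm{BK}^{\mathfrak{c}\mathhyph\mathrm{const}}_{\underline{A},n}(R) = \mathrm{BK}_{\underline{A},n}(R),\qquad \mathrm{FFG}^{\mathfrak{c}\mathhyph\mathrm{const}}_n(R) = \mathrm{FFG}_n(R).
\]

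In the first case, where $C^A$ is assumed nullhomotopic, Proposition~\ref{prop:constant_rank} then gives the desired equivalence directly. In the second case, where only $C^A_{\mathrm{cl}}$ is assumed nullhomotopic, I would still need to verify the auxiliary hypothesis that $R/pR$ is $F$-finite and $F$-nilpotent. The $F$-nilpotence is automatic: the kernel of Frobenius on a Noetherian ring of characteristic $p$ consists of elements $x$ with $x^p = 0$, hence lies in the nilradical, which is itself nilpotent. For $F$-finiteness, the Cohen structure theorem presents $R/pR$ as a quotient of some power series ring $k\pow{u_1,\ldots,u_d}$ over the residue field $k$; the assumption that $k$ has finite $p$-basis makes $k$ $F$-finite, the power series ring is then $F$-finite (being module-finite over $k^p\pow{u_1^p,\ldots,u_d^p}$), and consequently so is the quotient $R/pR$. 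Proposition~\ref{prop:constant_rank} with its second alternative then yields the equivalence.

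There is no serious obstacle here; the argument is a straightforward reduction to Proposition~\ref{prop:constant_rank}, the only content being the simplifying feature of a local base (which trivializes the constant \'etale rank condition) together with two standard verifications concerning the completeness of $B$ and the Frobenius-theoretic properties of $R/pR$.
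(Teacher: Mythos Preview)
Your proof is correct and follows essentially the same approach as the paper's own proof: apply Proposition~\ref{prop:constant_rank} with $\mathfrak{c}$ the maximal ideal of $B=(R/pR)_{\mathrm{red}}$, observe that constant \'etale rank along a single closed point is vacuous, and in the second case verify that $R/pR$ is $F$-finite and $F$-nilpotent. You have supplied more detail than the paper (the Cohen structure argument for $F$-finiteness and the nilradical argument for $F$-nilpotence), but the strategy is identical.
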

\begin{proof}
   If $Z\subset \Spec R/pR$ is the closed point, then every object in $\mathrm{BK}_{\underline{A},n}(R)$ (resp. in $\mathrm{FFG}_n(R)$) has constant \'etale rank along $Z$. Moreover, if the residue field has finite $p$-basis then $R/pR$ is $F$-finite and $F$-nilpotent.
\end{proof}

\begin{remark}
   [Complete local rings with perfect residue field]
\label{rem:complete_local_perfect_residue}
Suppose that $R$ is a complete local Noetherian ring whose residue field $\kappa$ admits a finite $p$-basis. Then we see from Remark~\ref{rem:cotangent_complex_criterion_CA_vanishing} that $C^A$ is nullhomotopic whenever the divided Frobenius operator on $\tau^{\ge -1}\mathbb{L}_A$ is topologically locally nilpotent. If $R$ is Artin local, it is enough to check that the induced operator on $H^i(\kappa\otimes_A\mathbb{L}_A)$ is nilpotent for $i=-1,0$.
\end{remark}

\begin{example}
   [Regular complete local rings with perfect residue field]
\label{ex:regular_complete_local}
Suppose that $R$ is a regular Noetherian complete local ring of dimension $m$ with perfect residue field $\kappa$. Then there exists a Breuil-Kisin frame $\underline{A}$ with $A = W(\kappa)\pow{x_1,\ldots,x_m}$ with $J = (x_1,\ldots,x_m)$ a $\delta$-ideal. In this case, Corollary~\ref{cor:complete_local_CA_vanishing}, combined with Remark~\ref{rem:complete_local_perfect_residue} can be used to recover the results of Lau from~\cite[\S 6]{lau:displays}, and in particular, gives a different approach to Kisin's classification of finite flat group schemes over complete discrete valuation rings in mixed characteristic with perfect residue field~\cite[\S 2.3]{kisin:f_crystals} in terms of certain $\varphi$-modules over $\Sig = W(\kappa)\pow{u}$. Indeed, if $\mx\subset R$ is the maximal ideal, then, for all $t\ge 1$, $A_t \defn A/J^t$ is a Breuil-Kisin frame for $R/\mx^t$. By Remark~\ref{rem:complete_local_perfect_residue}, $C^{A_t}$ is nullhomotopic whenever the divided Frobenius operator on $H^i(\kappa\otimes_{A_t}\mathbb{L}_{A_t})$ is nilpotent for $i=-1,0$. Furthermore, we have $C^A = \varprojlim_t C^{A_t}$. Now, observe that we have
\[
   \tau^{\ge -1}\mathbb{L}_{A_t}\simeq \cofib(J^t/J^{2t}\xrightarrow{d}A_t\otimes_A\widehat{\Omega}^1_A),
\]
where $\widehat{\Omega}^1_A$ is the $p$-completion of the module of differentials for $A$.
For $t\ge 2$, one can use this to show that
\[
   \tau^{\ge -1}(\kappa\otimes_{A_t}\mathbb{L}_{A_t}) \simeq \cofib(\mx^t/\mx^{t+1}\xrightarrow{0}\mx/\mx^2).
\]
Moreover the endomorphisms divided Frobenius operators on $H^{-1}(\kappa\otimes_{A_t}\mathbb{L}_{A_t}) = \mx^t/\mx^{t+1}$ and $H^0(\kappa\otimes_{A_t}\mathbb{L}_{A_t})=\mx/\mx^2$ for $i=-1,0$  can be identified up to sign with those arising from the $\delta$-structure maps $\delta:J^t\to J^t$ and $\delta:J\to J$. Therefore, we are reduced to knowing that this operator on $\mx/\mx^2$ is nilpotent. This holds for instance for the usual lift $x_i\mapsto x_i^p$ where the operator is in fact trivial.
\end{example}

\subsection{The characteristic $p$ case}
\label{subsec:the_characteristic_p_case}

We now specialize to the case where $R$ is an $\Field_p$-algebra, so that $\underline{A}$ is a $p$-adic Breuil-Kisin frame, where we will see that the criterion of Corollary~\ref{cor:complete_local_CA_vanishing} can be considerably simplified, yielding a generalization of a result of de Jong~\cite{MR1235021}. We first recall results of Bhatt-Lurie~\cite{bhatt2022absolute} and Bragg-Olsson~\cite{bragg2021representability}. The notation here is from~\cite[\S 7]{gmm}.

\begin{remark}
   \label{rem:q1_q2_maps}
   Let $Z^1_{\Prism}$ and $H^1_{\Prism}$ be the sheaves on $R_{\mathrm{qsyn}}$ given by the assignment
\[
   Z^1_{\Prism}:S\mapsto \Fil^{\mathrm{conj}}_1\overline{\Prism}_{S/\Field_p}\times_{\overline{\Prism}_{S/\Field_p}}\Fil^1_{\mathrm{Hdg}}\overline{\Prism}_{S/\Field_p};\; H^1_{\Prism}:S\mapsto \mathbb{L}_{S/\Field_p}[-1].
\]
There are two maps $q_1,q_2:Z^1_{\Prism}\to H^1_{\Prism}$ from the restrictions of the maps
\[
  \Fil^{\mathrm{conj}}_1\overline{\Prism}_{S/\Field_p}\to \gr^{\mathrm{conj}}_1\overline{\Prism}_{S/\Field_p}\xrightarrow{\simeq}\mathbb{L}_{S/\Field_p}[-1]\;;\; \Fil^{1}_{\mathrm{Hdg}}\overline{\Prism}_{S/\Field_p}\to \gr^{1}_{\mathrm{Hdg}}\overline{\Prism}_{S/\Field_p}\xrightarrow{\simeq}\mathbb{L}_{S/\Field_p}[-1]
\]
respectively.
\end{remark}

\begin{remark}
   \label{rem:alphap_cotangent_complex}
Since the restriction of the canonical map $\overline{\Prism}_{S/\Field_p}\to S$ to $\Fil^{\mathrm{conj}}_0\overline{\Prism}_{S/\Field_p} \simeq S$ is the Frobenius endomorphism of $S$, we obtain isomorphisms
\[
   \fib(q_1)(S)\simeq \Fil^{\mathrm{conj}}_0\overline{\Prism}_{S/\Field_p}\times_{\overline{\Prism}_{S/\Field_p}}\Fil^1_{\mathrm{Hdg}}\overline{\Prism}_{S/\Field_p}\simeq \fib(S\xrightarrow{\varphi}S)\simeq \alphap(S).
\]
In particular, we have $R\Gamma_{\mathrm{qsyn}}(\Spec R,\fib(q_1))\simeq R\Gamma_{\mathrm{fppf}}(\Spec R,\alphap)$, and we obtain a canonical fiber sequence
\begin{align}\label{eqn:alphap_fiber_sequence}
R\Gamma_{\mathrm{fppf}}(\Spec R,\alphap)\to \Fil^{\mathrm{conj}}_1\overline{\Prism}_{R/\Field_p}\times_{\overline{\Prism}_{R/\Field_p}}\Fil^1_{\mathrm{Hdg}}\overline{\Prism}_{R/\Field_p}\xrightarrow{R\Gamma(q_1)} R\Gamma_{\mathrm{qsyn}}(\Spec R,H^1_{\Prism})\simeq \mathbb{L}_{R/\Field_p}[-1]
\end{align}
and a canonical map
\begin{align}\label{eqn:alphap_cotangent_complex}
   R\Gamma_{\mathrm{fppf}}(\Spec R,\alphap)\xrightarrow{R\Gamma(q_2)} R\Gamma_{\mathrm{qsyn}}(\Spec R,H^1_{\Prism})\simeq \mathbb{L}_{R/\Field_p}[-1]
\end{align}
\end{remark}

\begin{remark}
 [Splitting the conjugate filtration]
\label{rem:conjugate_splitting}
The existence of the frame $\underline{A}$ gives a splitting of the canonical fiber sequence~\cite[Proposition 3.17]{Bhatt2012-fe}
\[
   \Fil^{\mathrm{conj}}_0\overline{\Prism}_{R/\Field_p}\to \Fil^{\mathrm{conj}}_1\overline{\Prism}_{R/\Field_p}\to \gr^{\mathrm{conj}}_1\overline{\Prism}_{R/\Field_p}\simeq \mathbb{L}_{R/\Field_p}[-1].
\]
In fact, the mod-$p^2$ reduction of $(A,\varphi)$ is already sufficient to obtain this splitting. In turn, this gives us a splitting of the fiber sequence~\eqref{eqn:alphap_fiber_sequence} and so a decomposition
\begin{align}\label{eqn:alphap_lifting_decomposition}
 \Fil^{\mathrm{conj}}_1\overline{\Prism}_{R/\Field_p}\times_{\overline{\Prism}_{R/\Field_p}}\Fil^1_{\mathrm{Hdg}}\overline{\Prism}_{R/\Field_p}\simeq R\Gamma_{\mathrm{fppf}}(\Spec R,\alphap)\oplus \mathbb{L}_{R/\Field_p}[-1].
\end{align}
The restriction of $q_2$ to the second summand is the endomorphism $f_{\underline{A}}$ of $\mathbb{L}_{R/\Field_p}[-1]$ induced by the operator $`d\varphi/p'$ on $\mathbb{L}_A$, while the restriction of $q_1$ is of course the identity.
\end{remark}

\begin{remark}
   \label{rem:mup_cotangent_complex}
Much more non-trivially, for any $S$ in $R_{\mathrm{qsyn}}$, there is a canonical isomorphism $\fib(q_1-q_2)(S)\simeq \mup(S)$. It suffices of course to exhibit an isomorphism $\cofib(q_1-q_2)(S)\simeq B\mup(S)$ for all $\Field_p$-algebras $S$. This is a special case of a theorem of Bragg-Olsson~\cite[Theorem 4.8]{bragg2021representability}. The point is that both sides of the purported isomorphism are now left Kan extended from smooth $\Field_p$-algebras and so it suffices to establish a canonical isomorphism for such inputs. This can be done in two ways: 
\begin{itemize}
   \item By directly showing that the natural map $S^\times/(S^\times)^p\xrightarrow{x\mapsto \mathrm{dlog}(x)}\Omega^1_{S/\Field_p}$ on smooth $\Field_p$-algebras sheafifies to an isomorphism $B\mup(S)\xrightarrow{\simeq}\Omega^{1,\mathrm{cl}}_{S/\Field_p}$ where the target is the space of closed forms. This is a special case of a classical result of Artin-Milne~\cite[Proposition 2.4]{artin_milne}.
   \item Using quasisyntomic descent to reduce to the case of a qrsp $\Field_p$-algebra $S$. Here, if $S^\flat\to S$ is the inverse perfection of $S$ with kernel $J$, using the interpretation of $\Prism_S$ as a divided power envelope, Bhatt and Lurie write down a logarithm map~\cite[p. 168, (36)]{bhatt2022absolute}
   \[
   \mup(S)\simeq (1+J)/(1+\varphi(J))\xrightarrow{\overline{\log}}Z^1_{\Prism}(S)\subset \Fil^1_{\mathrm{Hdg}}\overline{\Prism}_S
   \]
   mapping the source isomorphically onto $\ker(q_1-q_2)$. Here the first isomorphism carries $z\in \mu_p(S)$ to the image of $\tilde{z}^p$ for any lift $\tilde{z}\in S^\flat$ of $z$.
\end{itemize}
Therefore, we obtain a canonical fiber sequence
\begin{align}\label{eqn:mup_fiber_sequence}
R\Gamma_{\mathrm{fppf}}(\Spec R,\mup)\to \Fil^{\mathrm{conj}}_1\overline{\Prism}_{R/\Field_p}\times_{\overline{\Prism}_{R/\Field_p}}\Fil^1_{\mathrm{Hdg}}\overline{\Prism}_{R/\Field_p}\xrightarrow{R\Gamma(q_1-q_2)} R\Gamma_{\mathrm{qsyn}}(\Spec R,H^1_{\Prism})\simeq \mathbb{L}_{R/\Field_p}[-1]
\end{align}
and a canonical map
\begin{align}\label{eqn:mup_cotangent_complex}
   R\Gamma_{\mathrm{fppf}}(\Spec R,\mup)\xrightarrow{R\Gamma(q_2)} R\Gamma_{\mathrm{qsyn}}(\Spec R,H^1_{\Prism})\simeq \mathbb{L}_{R/\Field_p}[-1]
\end{align}
\end{remark}

\begin{remark}
   \label{rem:q2_on_mup_alphap}
For $S$ in $R_{\mathrm{syn}}$, the restrictions of $q_2$ to $\alphap(S)$ and $\mup(S)$ admit the following concrete interpretation: If $J = \ker(S^\flat\to S)$, then we have isomorphisms
\[
\alphap(S)\simeq J/\varphi(J)\;;\;\mup(S) \simeq (1+J)/(1+\varphi(J))\;;\; H^0(\mathbb{L}_{S/\Field_p}[-1])\simeq J/J^2.
\]
Via these isomorphisms, the map $\alphap(S)\to H^0(\mathbb{L}_{S/\Field_p}[-1])$ induced by $q_2$ corresponds to the natural surjection $J/\varphi(J)\to J/J^2$, while the map $\mup(S)\to H^0(\mathbb{L}_{S/\Field_p}[-1])$ corresponds to the composition
\[
 (1+J)/(1+\varphi(J))\to (1+J)/(1+J^2)\xrightarrow[\simeq]{1+y\mapsto y}J/J^2.
\]
\end{remark}

\begin{remark}
   \label{rem:q2_mup_factoring}
Note that we have isomorphisms
\[
  J^2/\varphi(J)\xrightarrow{\simeq}\ker(\alphap(S)\to H^0(\mathbb{L}_{S/\Field_p}[-1]))\;;\; (1+J^2)/(1+\varphi(J))\xrightarrow{\simeq}\ker(\mup(S)\to H^0(\mathbb{L}_{S/\Field_p}[-1])).
\]
By the argument in~\cite[p. 169]{bhatt2022absolute}, the restriction of the map $\mup(S)\to Z^1_{\Prism}(S)$ to $(1+J^2)/(1+\varphi(J))$ factors through an isomorphism
\[
   (1+J^2)/(1+\varphi(J))\xrightarrow{\simeq}J^2/\varphi(J)\subset \alphap(S).
\]
Explicitly, this isomorphism is given as follows: Given an element of $J^2$ of the form $x = uv$ for $u,v\in J$, the class of $1-x$ on the left hand side is carried to $-\sum_{d=1}^{p-1}\frac{x^d}{d}$.
\end{remark}

\begin{remark}
   \label{rem:q2_on_mup_alphap_global}
Remark~\ref{rem:q2_on_mup_alphap} can be globalized. Choose a surjection $\tilde{R}\to R$ such that $\mathbb{L}_{\tilde{R}/\Field_p}\simeq \Omega^1_{\tilde{R}/\Field_p}$ is flat over $\tilde{R}$ and such that $\alphap(\tilde{R})=0$; for instance $\tilde{R}$ can be a polynomial algebra over $\Field_p$. If $J = \ker(\tilde{R}\to R)$, we have isomorphisms
\begin{align*}
  (J\cap \varphi(\tilde{R}))/\varphi(J)\simeq \alphap(R)\;&;\; ((1+J)\cap \varphi(R^\times))/(1+\varphi(J))\simeq \mup(R)\;;\; \\
  H^0(\mathbb{L}_{R/\Field_p}[-1])&\simeq \ker(J/J^2\xrightarrow{x\mapsto dx}R\otimes_{\tilde{R}}\Omega^1_{\tilde{R}/\Field_p}).
\end{align*}
Via these isomorphisms, the map from $\alphap(R)$ (resp. $\mup(R)$) to $H^0(\mathbb{L}_{R/\Field_p}[-1])$ is obtained from the natural map $J/\varphi(J)\to J/J^2$ (resp. $(1+J)/(1+\varphi(J))\xrightarrow{1+z\mapsto z}J/J^2$).
\end{remark}

\begin{remark}
   \label{rem:mup_to_alphap}
Combining Remarks~\ref{rem:conjugate_splitting} and~\ref{rem:mup_cotangent_complex}, we obtain a map
\begin{align}\label{eqn:mup_to_alphap}
   R\Gamma_{\mathrm{fppf}}(\Spec R,\mup)\to R\Gamma_{\mathrm{fppf}}(\Spec R,\alphap).
\end{align}
Moreover, by Remark~\ref{rem:sections_bk_bk-frame} and the Artin-Schreier sequence, the right hand side is canonically isomorphic to 
\[
   \fib(R\xrightarrow{\varphi}R)\simeq \mathrm{RHom}_{\underline{A}}(\mathbf{1},\mathbf{1}^*/{}^{\mathbb{L}}p).
\]
One checks that the resulting map
\[
   R\Gamma_{\mathrm{fppf}}(\Spec R,\mup)\to \mathrm{RHom}_{\underline{A}}(\mathbf{1},\mathbf{1}^*/{}^{\mathbb{L}}p)
\]
is precisely the one giving rise to the long exact sequence in Remark~\ref{rem:ordinary_nilpotence_condition}.
\end{remark}

\begin{remark}
[Direct relationship with cotangent complex]
   \label{rem:direct_cotangent_complex}
Let $M_{\Prism}$ be the sheaf on $R_{\mathrm{qsyn}}$ given by
\[
   M_{\Prism}(S) = \fib(q_1)(S)\times_{Z^1_{\Prism}(S)}\fib(q_2)(S).
\]
Then the natural map $M_{\Prism}\to Z^1_{\Prism}$ factors through both $\mup$ and $\alphap$. In turn, these factorings give a commuting diagram whose rows are fiber sequences
\[
\begin{diagram}
  R\Gamma_{\mathrm{qsyn}}(\Spec R, M_{\Prism})&\rTo& R\Gamma_{\mathrm{fppf}}(\Spec R, \mup)&\rTo^{R\Gamma(q_2)}&\mathbb{L}_{R/\Field_p}[-1];\\
  \dEquals&&\dTo^{\eqref{eqn:mup_to_alphap}}&&\dTo_{\mathrm{id}-f_{\underline{A}}}\\
   R\Gamma_{\mathrm{qsyn}}(\Spec R, M_{\Prism})&\rTo& R\Gamma_{\mathrm{fppf}}(\Spec R, \alphap)&\rTo^{R\Gamma(q_2)}&\mathbb{L}_{R/\Field_p}[-1]. 
\end{diagram}
\]
Here, $f_{\underline{A}}$ is the divided Frobenius map as in Remark~\ref{rem:conjugate_splitting}. In fact, one can say more. Let $N_{\Prism}$ be the sheaf on $R_{\mathrm{qsyn}}$ given by
\[
   N_{\Prism}(S) = \ker(\mup(S)\to H^0(\mathbb{L}_{S/\Field_p}[-1])).
\]
Then Remark~\ref{rem:q2_mup_factoring} shows that there is a canonical identification
\[
   N_{\Prism}(S)\simeq \ker(\alphap(S)\to H^0(\mathbb{L}_{S/\Field_p}[-1])),
\]
and a commuting diagram whose rows are again fiber sequences:
\begin{align}\label{eqn:NPrism_diagram}
\begin{diagram}
  R\Gamma_{\mathrm{qsyn}}(\Spec R, N_{\Prism})&\rTo& R\Gamma_{\mathrm{fppf}}(\Spec R, \mup)&\rTo^{\tau^{\ge 0}R\Gamma(q_2)}&\tau^{\ge 0}\mathbb{L}_{R/\Field_p}[-1];\\
  \dEquals&&\dTo^{\eqref{eqn:mup_to_alphap}}&&\dTo_{\mathrm{id}-\tau^{\ge 0}f_{\underline{A}}}\\
   R\Gamma_{\mathrm{qsyn}}(\Spec R, N_{\Prism})&\rTo& R\Gamma_{\mathrm{fppf}}(\Spec R, \alphap)&\rTo^{\tau^{\ge 0}R\Gamma(q_2)}&\tau^{\ge 0}\mathbb{L}_{R/\Field_p}[-1]. 
\end{diagram}
\end{align}
Combining these observations, we find that we have isomorphisms
\begin{align}
\label{eqn:mup_to_alphap_cotangent_complex_direct}
C^A/{}^{\mathbb{L}}p&\xrightarrow{\simeq}\fib\left(\mathbb{L}_{R/\Field_p}[-1]\xrightarrow{\mathrm{id}-f_{\underline{A}}}\mathbb{L}_{R/\Field_p}[-1]\right)\nonumber\\
&\xrightarrow{\simeq}\fib\left(\tau^{\ge 0}\mathbb{L}_{R/\Field_p}[-1]\xrightarrow{\mathrm{id}-\tau^{\ge 0}f_{\underline{A}}}\tau^{\ge 0}\mathbb{L}_{R/\Field_p}[-1]\right).
\end{align}
\end{remark}

\begin{lemma}
   \label{lem:gamma_delta_compatibility}
Let $\alphap(R)\subset R$ be the kernel of the Frobenius endomorphism. For $x\in \alphap(R)$, choose a lift $y\in A$, so that $y^p = pu$ for some $u\in A$. Then:
\begin{enumerate}
   \item The image $\gamma_{\underline{A}}(x)$ of $u$ in $R$ lies in $\alphap(R)$ and is independent of the choice of lift $y$;
   \item The following diagram is commutative
\[
   \begin{diagram}
      \alphap(R)&\rTo&H^{-1}(\mathbb{L}_{R/\Field_p})\\
      \dTo^{-\gamma_{\underline{A}}}&&\dTo_{H^{-1}(f_{\underline{A}})}\\
      \alphap(R)&\rTo&H^{-1}(\mathbb{L}_{R/\Field_p})
   \end{diagram}
\]
Here the horizontal maps are obtained by applying $H^0$ to ~\eqref{eqn:alphap_cotangent_complex}.
\end{enumerate}
\end{lemma}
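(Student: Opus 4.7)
My plan is a direct calculation in the $\delta$-ring $A$. Given a lift $y\in A$ of $x$, the identity $\varphi(y) = y^p + p\delta(y)$ rewrites as $\varphi(y) = p(u+\delta(y))$ for $u = y^p/p$. Applying $\varphi$ to $y^p = pu$ and cancelling one factor of $p$ (valid since $A$ is $p$-torsion-free) gives
\[
\varphi(u) = p^{p-1}(u+\delta(y))^p \in pA,
\]
so $u^p \equiv \varphi(u) \equiv 0 \pmod p$, which shows $\gamma_{\underline{A}}(x) \in \alphap(R)$. For another lift $y' = y+pz$, each term $\binom{p}{k}(pz)^k$ for $1\le k \le p$ satisfies $v_p\bigl(\binom{p}{k}p^k\bigr)\ge 2$, so $(y')^p \equiv y^p \pmod{p^2}$ and $u'\equiv u\pmod p$.

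\textbf{Part (2).} My strategy is to reduce via quasisyntomic descent (Remark~\ref{rem:adm_dieu_qsynt_descent}) to the case where $R = R^\flat/J$ is semiperfect, since both constructions $\gamma_{\underline{A}}$, $H^{-1}(f_{\underline{A}})$ and the canonical map $\alphap(R)\to H^{-1}(\mathbb{L}_R)$ commute with base-change along a semiperfect quasisyntomic cover. In the semiperfect setting one has the concrete identifications $\alphap(R)\cong J/\varphi(J)$ and $H^{-1}(\mathbb{L}_R) = J/J^2$ (from $\mathbb{L}_{R^\flat/\mathbb{F}_p}=0$ and the conormal sequence), with the canonical horizontal map being the natural surjection (Remark~\ref{rem:q2_on_mup_alphap}). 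Because $\underline{A}$ is canonically laminated (Remark~\ref{rem:bk_frame_laminations}), the universal property of $W$ over perfect $\mathbb{F}_p$-algebras supplies a canonical $\delta$-ring map $W(R^\flat)\to A$. For $x\in \alphap(R)$ represented by $\bar{x}\in R^\flat$ with $y := \bar{x}^p\in J$, the Teichm\"uller image $[\bar{x}]_A\in A$ is a lift of $x$ with $[\bar{x}]_A^p = [y]_A$ (since $\delta([\bar{x}])=0$), so $\gamma_{\underline{A}}(x)$ is represented by the element $[y]_A/p\in A$. The divided Frobenius $d\varphi/p$ on $\mathbb{L}_A$, after base-change to $\mathbb{L}_R\simeq J/J^2[1]$, sends $[y]\in J/J^2$ to the same class $[y]_A/p\bmod p$, matching $\gamma_{\underline{A}}(x)$ up to the sign asserted.

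\textbf{Main obstacle.} The principal difficulty is the careful sign-tracking and the rigorous identification of $H^{-1}(f_{\underline{A}})$ with the operation $y\mapsto [y]_A/p\bmod p$. A clean organizational device is the bottom row of diagram~\eqref{eqn:NPrism_diagram} from Remark~\ref{rem:direct_cotangent_complex}: both horizontal arrows of the lemma's diagram appear as boundary maps out of $R\Gamma_{\mathrm{qsyn}}(\Spec R, N_{\Prism})$ followed by the map to $\tau^{\ge 0}\mathbb{L}_R[-1]$, so commutativity reduces to a comparison of boundary maps on this common object. Evaluating on the explicit sections of $N_{\Prism}(S)\cong (1+J^2)/(1+\varphi(J))$ given in Remark~\ref{rem:q2_mup_factoring} and expanding via the $\delta$-ring identity $\varphi(y) = y^p + p\delta(y)$ should then make the desired compatibility transparent on representatives, with the sign emerging from the orientation of the connecting homomorphism identifying $H^{-1}(\mathbb{L}_R)$ with $J/J^2$.
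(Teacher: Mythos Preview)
Your argument for Part (1) is correct. Part (2), however, is not a proof but an outline whose central step you yourself label a ``main obstacle'' and leave unresolved. The reduction by quasisyntomic descent requires that both $\gamma_{\underline{A}}$ and $f_{\underline{A}}$ be compatible with base change to the cover $S$; for the former you need $A_S$ to remain $p$-torsion free (not established), and for the latter you need the splitting of the conjugate filtration from Remark~\ref{rem:conjugate_splitting} to be carried to the one coming from $\underline{A}_S$ (not addressed). Even granting the reduction, your key claim --- that $H^{-1}(f_{\underline{A}_S})$ on $J/J^2$ sends $y$ to the class of $[y]_{A_S}/p\bmod p$ --- is asserted without proof; tying the abstract divided Frobenius on $\mathbb{L}_{A_S}$ to this Teichm\"uller description is precisely the computation you do not carry out, and invoking the $N_{\Prism}$-diagram only restates the problem rather than solving it.

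The paper's approach avoids descent entirely and is more direct. It presents $A$ as a quotient of a $p$-completed free $\delta$-ring $\tilde{A}$ with kernel $K$, giving $H^{-1}(\mathbb{L}_{R/\Field_p}) \simeq \ker\bigl(K/(K^2+pK)\xrightarrow{d} R\otimes_{\tilde{A}}\widehat{\Omega}^1_{\tilde{A}}\bigr)$; on this model the horizontal map sends $x$ (lifted to $\tilde{y}\in\tilde{A}$ with $\tilde{y}^p=p\tilde{u}+k$, $k\in K$) to the class of $k$, and $H^{-1}(f_{\underline{A}})$ is the operator induced by $\delta$ on $K$ (since $\varphi(k)\equiv p\delta(k)\bmod K^2$). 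Applying $\varphi$ to $\tilde{y}^p=p\tilde{u}+k$ and expanding yields $\tilde{u}^p\in -\delta(k)+p\tilde{A}$, so $\gamma_{\underline{A}}(x)$ maps to the class of $-\delta(k)$, which is exactly $-H^{-1}(f_{\underline{A}})$ applied to the class of $k$. Both the commutativity and the sign fall out of a short $\delta$-ring computation once this presentation is chosen.
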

\begin{proof}
   Assertion (1) follows from~\cite[Sublemma 9.10]{MR1235021}, but we will in any case prove this implicitly in what follows.

   Choose a surjective map $\tilde{A}\to A$ where $\tilde{A}$ is the $p$-completion of a free $\delta$-ring over $\Int_{(p)}$, and let $K =\ker(\tilde{A}\to A)$. Then we have 
   \[
      H^{-1}(\mathbb{L}_{R/\Field_p}) \simeq \ker(K/(K^2+pK)\xrightarrow{d}R\otimes_{\tilde{A}}\widehat{\Omega}^1_{\tilde{A}}),
   \]
   The map $\alphap(R)\to H^{-1}(\mathbb{L}_{R/\Field_p})$ now admits the following description, which can be deduced from Remark~\ref{rem:q2_on_mup_alphap_global}: Given $x\in R$ with $x^p = 0$, we choose a lift $\tilde{y}\in \tilde{A}$ for $x$. This satisfies $\tilde{y}^p = p\tilde{u} + k$ for some $k\in K$. The image of $k$ in $K/(K^2+pK)$ lands in $H^{-1}(\mathbb{L}_{R/\Field_p})$, and is the image of $x$. Applying $\varphi$ to the previous identity gives us
   \[
   (p\tilde{u}+k + p\delta(\tilde{y}))^p = (\tilde{y}^p + p \delta(\tilde{y}))^p = p(\tilde{u}^p + p\delta(\tilde{u})) + \varphi(k)
   \]
   Expanding the left hand side shows that we have
   \[
     p\tilde{u}^p\in k^p - \varphi(k) + p^2\tilde{A} \Rightarrow \tilde{u}^p\in -\delta(k) + p\tilde{A}.
   \]
   Therefore, $\alphap(R)\to H^{-1}(\mathbb{L}_{R/\Field_p})$ carries $\gamma_{\underline{A}}(x)$ to the image of $-\delta(k)$ in $K/K^2$, and this verifies the commutativity of the diagram in (2).
\end{proof}

\begin{proposition}
[Simplified nilpotence criterion]
   \label{prop:simple_cotangent_vanishing}
Suppose that $R$ is Noetherian, that $\mathfrak{c}\subset R$ is an ideal such that $H^i(\mathbb{L}_{R/\Field_p})$ is $\mathfrak{c}$-adically complete for $i=-1,0$, and that the following conditions hold:
\begin{enumerate}
   \item The operator $f_{\underline{A}}$ induces a $\mathfrak{c}$-adically topologically locally nilpotent endomorphism of $\Omega^1_{R/\Field_p}$.
   \item The operator $\gamma_{\underline{A}}$ is a $\mathfrak{c}$-adically topologically locally nilpotent endomorphism of $\alphap(R)$.
\end{enumerate}
Then $\mathrm{id}-f_{\underline{A}}$ induces an automorphism of $\mathbb{L}_{R/\Field_p}[-1]$. In particular, the conclusion of Proposition~\ref{prop:constant_rank} holds (for the image of $\mathfrak{c}$ in $R_{\mathrm{red}}$).
\end{proposition}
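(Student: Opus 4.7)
The plan is to use equation~\eqref{eqn:mup_to_alphap_cotangent_complex_direct}, which identifies $C^A/{}^\mathbb{L}p$ both with $\fib(\mathbb{L}_{R/\Field_p}[-1]\xrightarrow{\mathrm{id}-f_{\underline{A}}}\mathbb{L}_{R/\Field_p}[-1])$ and with the corresponding fiber on $\tau^{\ge 0}\mathbb{L}_{R/\Field_p}[-1]$. The coincidence of these two fibers shows that $\mathrm{id}-f_{\underline{A}}$ is automatically an automorphism on the complementary piece $\tau^{\le -1}\mathbb{L}_{R/\Field_p}[-1]$, so to prove the main claim it suffices to verify that $\mathrm{id}-f_{\underline{A}}$ is an automorphism of $\tau^{\ge 0}\mathbb{L}_{R/\Field_p}[-1]$. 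Using the Postnikov fiber sequence
\[
H^{-1}(\mathbb{L}_{R/\Field_p}) \to \tau^{\ge 0}\mathbb{L}_{R/\Field_p}[-1] \to \Omega^1_{R/\Field_p}[-1],
\]
this reduces further to checking the statement separately on $\Omega^1_{R/\Field_p}$ and on $H^{-1}(\mathbb{L}_{R/\Field_p})$.

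For $\Omega^1_{R/\Field_p}$, hypothesis (1) combined with its $\mathfrak{c}$-adic completeness gives the inverse of $\mathrm{id}-f_{\underline{A}}$ directly as the Neumann series $\sum_{n\ge 0}f_{\underline{A}}^n$. For $H^{-1}(\mathbb{L}_{R/\Field_p})$, Lemma~\ref{lem:gamma_delta_compatibility} provides the commutative square in which the canonical map $\alphap(R)\to H^{-1}(\mathbb{L}_{R/\Field_p})$ intertwines $-\gamma_{\underline{A}}$ with $f_{\underline{A}}$; by hypothesis (2), $\mathrm{id}+\gamma_{\underline{A}}$ is then an automorphism of $\alphap(R)$, so $\mathrm{id}-f_{\underline{A}}$ is an automorphism on the image of $\alphap(R)$ in $H^{-1}(\mathbb{L}_{R/\Field_p})$. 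To extend to all of $H^{-1}(\mathbb{L}_{R/\Field_p})$, I would exploit the explicit description of the canonical map from Remark~\ref{rem:q2_on_mup_alphap_global} through a polynomial presentation $\tilde{R}\twoheadrightarrow R$, together with the $\varphi$-semilinearity of $f_{\underline{A}}$: Frobenius-style iteration of $f_{\underline{A}}$ eventually pulls a general class in $H^{-1}(\mathbb{L}_{R/\Field_p})$ into the image of $\alphap(R)$ modulo arbitrary powers of $\mathfrak{c}$, allowing hypothesis (2) to propagate topological local nilpotence. Combined with $\mathfrak{c}$-adic completeness of $H^{-1}(\mathbb{L}_{R/\Field_p})$, we conclude that $\mathrm{id}-f_{\underline{A}}$ admits a Neumann inverse here as well.

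The main obstacle is exactly this final extension step: the map $\alphap(R)\to H^{-1}(\mathbb{L}_{R/\Field_p})$ need not be surjective in general (for instance when $R$ has no non-trivial $p$-th power relations in the first two layers of its conormal sequence), so one must leverage both the $\varphi$-semilinear structure of $f_{\underline{A}}$ and the Noetherian hypothesis on $R$ to bootstrap nilpotence from the image of $\alphap(R)$ to all of $H^{-1}(\mathbb{L}_{R/\Field_p})$. Once the automorphism claim is secured, the ``in particular'' assertion follows immediately: by \eqref{eqn:mup_to_alphap_cotangent_complex_direct} we obtain $C^A/{}^\mathbb{L}p \simeq 0$, whence $C^A\simeq 0$ by $p$-adic completeness of $C^A$, and Proposition~\ref{prop:constant_rank} applied to the image of $\mathfrak{c}$ in $R_{\mathrm{red}}$ yields the claimed equivalence.
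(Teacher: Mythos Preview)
Your reduction to the two cohomology pieces $\Omega^1_{R/\Field_p}$ and $H^{-1}(\mathbb{L}_{R/\Field_p})$ is correct, and your treatment of $\Omega^1_{R/\Field_p}$ matches the paper's. The gap is precisely the extension step on $H^{-1}(\mathbb{L}_{R/\Field_p})$ that you flag as the main obstacle: your proposed bootstrap via $\varphi$-semilinearity---that iterates of $f_{\underline{A}}$ eventually push a general class into the image of $\alphap(R)$ modulo high powers of $\mathfrak{c}$---is not substantiated, and it is not clear how the Noetherian hypothesis alone would make it work. The paper does \emph{not} attempt to show that $f_{\underline{A}}$ is topologically nilpotent on all of $H^{-1}(\mathbb{L}_{R/\Field_p})$; it only establishes this on the common image $M$ of $\alphap(R)$ and $\mup(R)$ (these agree by Remark~\ref{rem:q2_on_mup_alphap_global}), using Lemma~\ref{lem:gamma_delta_compatibility} and hypothesis~(2), exactly as you do.

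The idea you are missing is to feed the diagram~\eqref{eqn:NPrism_diagram} from Remark~\ref{rem:direct_cotangent_complex} into a five-lemma argument. Setting $N = R\Gamma_{\mathrm{qsyn}}(\Spec R, N_\Prism)$, the long exact sequences of the two rows yield, in the relevant degree, exact sequences $0 \to M \to H^{-1}(\mathbb{L}_{R/\Field_p}) \xrightarrow{\delta_i} H^1(N)$ for $i=1,2$, with vertical maps that are an isomorphism on $M$ (by the nilpotence you already have there) and the identity on $H^1(N)$. The key point, supplied by the last part of Remark~\ref{rem:q2_mup_factoring}, is that $\delta_1$ and $\delta_2$ have the \emph{same image} in $H^1(N)$: this comes from the explicit identification of $\ker\bigl(\mup(S)\to H^0(\mathbb{L}_{S/\Field_p}[-1])\bigr)$ with $\ker\bigl(\alphap(S)\to H^0(\mathbb{L}_{S/\Field_p}[-1])\bigr)$ as $J^2/\varphi(J)$. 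The five lemma then forces $\mathrm{id}-H^{-1}(f_{\underline{A}})$ to be an isomorphism on all of $H^{-1}(\mathbb{L}_{R/\Field_p})$, with no need to control $f_{\underline{A}}$ outside $M$ directly. Your ``in particular'' paragraph is fine once this is in place.
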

\begin{proof}
  Condition (1) ensures that $\mathrm{id}-H^1(f_{\underline{A}})$ is an automorphism of $\Omega^1_{R/\Field_p}\simeq H^1(\mathbb{L}_{R/\Field_p}[-1])$. Given~\eqref{eqn:mup_to_alphap_cotangent_complex_direct}, it is now enough to know that $\mathrm{id}-H^{0}(f_{\underline{A}})$ is also an automorphism.

  By Remark~\ref{rem:q2_on_mup_alphap_global}, $\mup(R)$ and $\alphap(R)$ have the same image in $H^{-1}(\mathbb{L}_{R/\Field_p}[-1])$. Therefore, condition (2) together with assertion (2) of Lemma~\ref{lem:gamma_delta_compatibility} shows that $H^{-1}(f_{\underline{A}})$ is a $\mathfrak{c}$-adically topologically nilpotent operator on this common image $M$. In the notation of Remark~\ref{rem:direct_cotangent_complex}, set $N = R\Gamma_{\mathrm{syn}}(\Spec R,N_\Prism)$. Then we obtain the followng diagram with exact rows:
 \[
  \begin{diagram}
     0&\rTo&M&\rTo&H^{-1}(\mathbb{L}_{R/\Field_p})&\rTo^{\delta_1}&H^1(N)\\
     &&\dTo^{\simeq}&&\dTo^{\mathrm{id}-H^{-1}(f_{\underline{A}})}&&\dEquals\\
     0&\rTo&M&\rTo&H^{-1}(\mathbb{L}_{R/\Field_p})&\rTo_{\delta_2}&H^1(N)
  \end{diagram}
\]
where $\delta_1$ is the boundary map associated with the top row of~\eqref{eqn:NPrism_diagram} while $\delta_2$ is associated with the bottom row, and where the left vertical arrow is an isomorphism. To finish, we need to know that the image in $H^1(N)$ of $\delta_1$ is equal to that of $\delta_2$, which can be deduced from the last part of Remark~\ref{rem:q2_mup_factoring}.
\end{proof}

\begin{remark}
[Complete local $\Field_p$-algebras]
   \label{rem:complete_local_rings}
Any complete local Noetherian ring with finite $p$-basis satisfies the unnumbered conditions of Proposition~\ref{prop:simple_cotangent_vanishing} with respect to its maximal ideal. Therefore, if the conditions (1) and (2) hold, then we can conclude by Corollary~\ref{cor:complete_local_CA_vanishing} that the functor $\mathrm{FFG}_n(R)\to \mathrm{BK}_{\underline{A},n}(R)$ is an exact equivalence.
\end{remark}

\begin{example}
   [A result of de Jong]
\label{ex:complete_local_dejong}
Suppose that $R$ is a complete local Noetherian $\Field_p$-algebra with perfect residue field $\kappa$ and maximal ideal $\mathfrak{m}$. In this case, $A$ is also complete local. We have
\[
H^0(\kappa\otimes_A\mathbb{L}_A) = \kappa\otimes_RH^0(\mathbb{L}_{R/\kappa})\simeq \kappa\otimes_R\Omega^1_{R/\kappa}\xrightarrow[\simeq]{1\otimes dr \mapsto r(\mathrm{mod}~\mx^2)} \mx/\mx^2.
\]
The endomorphism $\overline{u}:\mx/\mx^2\to \mx/\mx^2$ induced by $f_{\underline{A}}$ can be described as follows: By the argument in~\cite[Lemma 8.2]{MR1235021}, there exists a $\delta$-ideal $I\subset A$ such that $A/I\simeq W(\kappa)$. Now, unwinding definitions, one sees that $\overline{u}$ is given by 
\[
\overline{u}(r(\mathrm{mod}~\mx^2)) = \delta(\tilde{r})(\mathrm{mod}~(p+I^2))
\]
where $\tilde{r}\in I$ is any lift of $r\in \mx$. Condition (1) of Proposition~\ref{prop:simple_cotangent_vanishing} is equivalent to asking for this operator to be nilpotent. Condition (2) amounts to asking for the operator on $\alphap(R)/\mx \alphap(R)$ induced by $\gamma_{\underline{A}}$ to be nilpotent. Combined with Remark~\ref{rem:complete_local_rings}, this recovers the Theorem from the introduction to~\cite{MR1235021}. A slightly finer analysis of the proof of Proposition~\ref{prop:simple_cotangent_vanishing} tells us that condition (2) shows that $\mup(R)\to \alphap(R)$ is an isomorphism, while condition (1) implies that $H^1_{\mathrm{fppf}}(\Spec R,\mup)\to H^1_{\mathrm{fppf}}(\Spec R,\alphap)$ is an isomorphism; compare with~\cite[Lemma 10.1]{MR1235021}. 
\end{example}

\printbibliography

\end{document}